\documentclass[11pt,a4paper]{amsart}
\IfFileExists{glyphtounicode.tex}{\input{glyphtounicode}\pdfgentounicode=1}{}
\usepackage{cmap}
\usepackage[T1]{fontenc}
\usepackage[utf8]{inputenc}
\usepackage{lmodern,microtype}
\usepackage[a4paper,margin=1in]{geometry}
\usepackage{amsmath,amssymb,amsthm,mathtools,mathrsfs,bm}
\usepackage{booktabs,array,enumitem,needspace}
\usepackage{cite}
\usepackage[bookmarks=true,bookmarksnumbered=true,hidelinks]{hyperref}
\hypersetup{pdftitle={Gaussian Operator Algebras: Observability, Reconstruction, and Microscopic Limits},pdfauthor={Guangqian Zhao}}
\allowdisplaybreaks[2]
\numberwithin{equation}{section}
\newtheorem{theorem}{Theorem}[section]
\newtheorem{proposition}[theorem]{Proposition}
\newtheorem{lemma}[theorem]{Lemma}
\newtheorem{corollary}[theorem]{Corollary}
\theoremstyle{definition}
\newtheorem{definition}[theorem]{Definition}
\newtheorem{example}[theorem]{Example}
\theoremstyle{remark}

\newcommand{\R}{\mathbb R}
\newcommand{\C}{\mathbb C}
\newcommand{\N}{\mathbb N}

\newcommand{\E}{\mathbb E}
\newcommand{\Prob}{\mathbb P}
\newcommand{\HH}{\mathcal H}
\newcommand{\cC}{\mathcal C}
\newcommand{\cE}{\mathcal E}
\newcommand{\Sch}{\mathcal S}
\newcommand{\Coeff}{\mathfrak C}
\newcommand{\Wc}{\mathfrak W}
\newcommand{\Alg}{\mathfrak A}
\newcommand{\Data}{\mathfrak D}
\newcommand{\Gaus}{\mathfrak G}
\newcommand{\Flat}{\mathcal F}
\newcommand{\Prof}{\mathfrak P}
\newcommand{\PP}{\mathcal P}
\newcommand{\LL}{\mathfrak L}
\DeclareMathOperator{\UCP}{UCP}
\DeclareMathOperator{\Id}{Id}
\DeclareMathOperator{\Tr}{Tr}
\DeclareMathOperator{\tr}{tr}
\DeclareMathOperator{\rank}{rank}
\DeclareMathOperator{\ran}{ran}
\DeclareMathOperator{\Span}{span}
\DeclareMathOperator{\diag}{diag}
\DeclareMathOperator{\dist}{dist}
\DeclareMathOperator{\Law}{Law}
\DeclareMathOperator{\Sym}{Sym}
\DeclareMathOperator{\sym}{sym}
\DeclareMathOperator{\ad}{ad}

\newcommand{\op}{\mathrm{op}}
\newcommand{\norm}[1]{\left\lVert#1\right\rVert}

\newcommand{\ip}[2]{\left\langle#1,#2\right\rangle}
\newcommand{\Nn}[2]{\mathcal N_{#1,s}(#2)}
\newcommand{\En}[2]{\mathcal E_{#1,s}(#2)}
\newcommand{\Dens}{\mathscr D_s}
\newcommand{\Jdu}{\mathscr J_s}
\newcommand{\arxiv}[1]{\href{https://arxiv.org/abs/#1}{\nolinkurl{arXiv:#1}}}
\newcommand{\doi}[1]{\href{https://doi.org/#1}{\nolinkurl{doi:#1}}}
\title[Gaussian operator algebras]{Gaussian Operator Algebras:\\
Observability, Reconstruction, and Microscopic Limits}
\author{Guangqian Zhao}
\address{School of Mathematical Sciences, University of Science and Technology of China, Hefei, Anhui 230026, China}
\email{zhaoguangqian@mail.ustc.edu.cn}
\subjclass[2020]{60B20, 60G15, 47B10, 46B70, 47A57}
\keywords{Gaussian operator algebras, observability, intrinsic innovations, Schatten classes, stable reconstruction, microscopic limits, spectral quotients}
\date{}
\begin{document}
\begin{abstract}
We reconstruct Gaussian operator algebras from finite source-labelled observations, recovering coefficients, ordinary multiplication, and completion in independently specified Schatten norms. Gaussian duality and ordered contractions give a reconstruction theorem including the trace-class endpoint, while entire Mehler responses describe the resulting topology. In the Hilbert coefficient geometry, symmetric Rademacher systems realize the same algebra as a fluctuation limit: microscopic products converge at inverse site-number rate in the original norm scale, and finite decoding commutes with this limit. Relative Gram residuals determine target-dependent inverse costs. For bounded tuples, positive word observations close into completely positive maps, with contextual Schwarz defects controlling realization and multiplication. Positive mass and source innovations characterize compactness. Spectral invariants, Wick operators on closed manifolds, and Gaussian networks provide applications with explicit noise and tail bounds.
\end{abstract}
\maketitle
\pagestyle{plain}
\raggedbottom

\section{Introduction}\label{sec:intro}
A fluctuation limit can retain a multiplication law even when it forgets much of the underlying microscopic system. For symmetric sums of independent signs, shared sites disappear in pairs and produce the contractions of Gaussian chaos. Recovering this limiting algebra requires more than convergence in distribution: the product must converge in a topology that supports its operations, and finite observations must determine its coefficients with controlled error.

We develop such a reconstruction theory for Gaussian operator algebras. Its analytic core identifies source-labelled observations, independently defined Schatten coefficient spaces, and ordinary random-operator multiplication. The construction includes every finite Schatten exponent, including trace class. In the Hilbert coefficient geometry, a symmetric Rademacher model realizes the same product as a quantitative microscopic limit. A second part of the theory treats represented algebras and their observation boundaries: ordered moments recover relations and multiplication tables, while anchored positive kernels distinguish actual tuples from more general completely positive limits.

The central distinction is between an observable relation, an inverse estimate, and a realization of limiting data. These requirements have separate mathematical content. The responses $gP$ and $-gP$, for a standard Gaussian $g$ and nonzero rank-one projection $P$, have the same marginal law but opposite source correlations. Likewise, $H_m(g)P/\sqrt{m!}$ has eventually zero observations at every fixed degree, while the constant coefficient of its square remains $P$. Source-labelled tests remove the first ambiguity; a norm scale controlling all contraction depths removes the second.

\subsection{Reconstruction in the original coefficient geometry}
Fix a real separable Hilbert space $H$, an isonormal Gaussian process $W$ over $H$, and a complex separable physical Hilbert space $E$. On finite-source, finite-matrix kernels, write
\begin{equation}\label{eq:intro-eval}
 F=I(K)=\sum_{m\ge0}I_m(K_m),\qquad
 \E|I_m(f)|^2=m!\norm f_2^2.
\end{equation}
All products are taken in this one Gaussian realization. Symmetry of the source variables leaves the order of the operator coefficients unchanged. Consequently, ordinary multiplication contracts source indices, multiplies physical coefficients in their original order, and produces both higher and lower chaos degrees.

For a degree-$m$ kernel, a subset $S\subset[m]$ specifies which source legs are placed on the input side of a flattening $\Flat_S$. The coefficient space $\Coeff_{m,s}$ is defined using the quotient sum of these Schatten-cut spaces for $s<2$, their compatible intersection for $s>2$, and their common Hilbert norm for $s=2$. The precise construction is given in Section~\ref{sec:coeff}. Let $\Wc_{m,s}$ be its source-symmetric subspace. Before applying random evaluation, define
\begin{equation}\label{eq:intro-scale}
 \Nn\rho K=\sum_{m\ge0}\rho^m\sqrt{m!}\norm{K_m}_{\Wc_{m,s}},\qquad
 \Alg_s=\{K:\Nn\rho K<\infty\text{ for every }\rho\ge1\}.
\end{equation}
The weighted norm is defined for every $\rho>0$; radii $\rho\ge1$ specify the topology. Integer radii are cofinal, so this is a countable norm scale. The degree-zero space is $\Sch_s(E)$. When $E$ is infinite-dimensional, constants proportional to the identity are treated in the unitization.

Independently, let $\mathcal H_m(\Sch_s)$ be the closed space of $\Sch_s$-valued homogeneous Gaussian chaoses of degree $m$, and set
\begin{equation}\label{eq:intro-random}
 \begin{aligned}
 \Gaus_s&=\left\{F=\sum_{m\ge0}F_m:F_m\in\mathcal H_m(\Sch_s),\
                      \En\rho F<\infty\ \text{for all }\rho\ge1\right\},\\
 \En\rho F&=\sum_{m\ge0}\rho^m\norm{F_m}_{L^2(\Sch_s)}.
 \end{aligned}
\end{equation}
The sum converges in $L^2(\Sch_s)$, and the homogeneous components are unique.

For observations, choose a positive injective Hilbert--Schmidt operator $R$ with $Re_k=r_ke_k$, $r_k>0$. Let $(h_\alpha)_{|\alpha|=m}$ be the orthonormal symmetric tensor basis normalized by $I_m(h_\alpha)=\sqrt{m!}\Psi_\alpha$. A finite budget $a=(M,N,L)$ records
\begin{equation}\label{eq:intro-marks}
 z_{\alpha;k\ell}=\E[\Psi_\alpha(W)(RFR)_{k\ell}],
 \quad |\alpha|\le M,\quad\operatorname{supp}\alpha\subset[L],\quad k,\ell\le N.
\end{equation}
These are joint source--response observations. Their inverse $D_a$ divides each coordinate by $r_kr_\ell\sqrt{|\alpha|!}$ and sets all other coefficients to zero. Let $P_n$ denote the corresponding kernel truncations along an increasing cofinal sequence of budgets $a_n$, let $O_n$ denote the observations, and write $\widehat K_n(y)=D_{a_n}y_{a_n}$. Define $\Data_s$ to consist of compatible arrays satisfying
\begin{equation}\label{eq:intro-admissible}
 \lim_{N\to\infty}\sup_{n\ge N}
 \Nn\rho{\widehat K_n(y)-\widehat K_N(y)}=0
 \qquad(\rho\ge1).
\end{equation}
Thus admissibility is expressed entirely through finite inverses and the prescribed coefficient norms.

\begin{theorem}[Reconstruction of operations and completion]\label{thm:main}
Let $1\le s<\infty$. The ordered Gaussian contraction product on finite kernels extends uniquely to $\Alg_s$. Random evaluation and finite decoding give topological $*$-algebra isomorphisms
\begin{equation}\label{eq:main-diagram}
 \Data_s\ \xleftrightarrow[\ (O_n)_n\ ]{\ \lim_nD_{a_n}\ }\
 \Alg_s\ \xleftrightarrow[\ \text{coefficient recovery}\ ]{\ I\ }\ \Gaus_s.
\end{equation}
The data product is obtained from finite computations:
\[
 (y\diamond z)_n=\lim_{M\to\infty}
 O_n\big(\widehat K_M(y)\star\widehat K_M(z)\big).
\]
For every output radius $\sigma\ge1$, there are explicit $R_s(\sigma)$ and $C_{s,\sigma}<\infty$ such that
\begin{equation}\label{eq:intro-product}
 \Nn\sigma{K\star L}
 \le C_{s,\sigma}\Nn{R_s(\sigma)}K\Nn{R_s(\sigma)}L.
\end{equation}
The data norms and tails recover the original ones exactly:
\begin{align}\label{eq:intro-tail-identity}
 \sup_n\Nn\rho{D_{a_n}O_nK}&=\Nn\rho K,\notag\\
 \sup_{n\ge N}\Nn\rho{D_{a_n}O_nK-D_{a_N}O_NK}
 &=\Nn\rho{K-P_NK}.
\end{align}
If the total squared observation error at degree $m$ is at most $\delta_m^2$, the finite decoded output satisfies
\begin{equation}\label{eq:intro-noise}
 \Nn\rho{\widehat K_a-K}
 \le N^{(1/s-1/2)_+}\kappa_N\sum_{m=0}^M\rho^m\delta_m
       +\Nn\rho{K-P_aK},\qquad
 \kappa_N=(\min_{k\le N}r_k)^{-2}.
\end{equation}
If the true inputs have budgets $B_K,B_L$ and reconstruction errors $\varepsilon_K,\varepsilon_L$ at the required input radius, their product error at radius $\sigma$ is at most
\begin{equation}\label{eq:intro-bilinearerror}
 C_{s,\sigma}\big(B_L\varepsilon_K+B_K\varepsilon_L+
                  \varepsilon_K\varepsilon_L\big).
\end{equation}
The same reconstruction therefore determines every fixed finite $*$-polynomial.
\end{theorem}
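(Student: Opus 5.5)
The plan is to build everything on the coefficient side, $\Alg_s$, and then transport it to $\Gaus_s$ and $\Data_s$ by isometries.

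\emph{Contraction estimate.} Write the product of finite kernels as the ordered contraction sum
\[
 K_m\star L_n=\sum_{r\le m\wedge n} r!\binom mr\binom nr\,\sym(K_m\otimes_r L_n),
\]
with physical coefficients multiplied in their original order. For each $r$-fold contraction I need a H\"older-type inequality
\[
 \norm{K_m\otimes_r L_n}_{\Wc_{m+n-2r,s}}\le\norm{K_m}_{\Wc_{m,s}}\norm{L_n}_{\Wc_{n,s}},
\]
uniform over output cuts $S$. The argument depends on $s$:
\begin{itemize}
 \item For $s=2$ it is Cauchy--Schwarz together with $\norm{AB}_{\Sch_2}\le\norm A_{\Sch_2}\norm B_{\op}$.
 \item For $s>2$ (intersection norm), choose for each output cut $S$ the input cuts that place the contracted legs on compatible sides. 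The contraction is then a composition of flattened operators, and Schatten H\"older applies to $\Flat_{S_1}K\cdot\Flat_{S_2}L$.
 \item For $s<2$ (quotient-sum norm), dualize: the quotient norm is dual to an intersection norm. Alternatively, decompose $K=\sum_S K^{(S)}$ nearly optimally and apply the same composition bound to each piece. The cut combinatorics contribute a factor at most $2^{m+n}$.
\end{itemize}
Summing with the weights $\rho^m\sqrt{m!}$, the combinatorial factor $\sqrt{(m+n-2r)!}\,r!\binom mr\binom nr\le C^{m+n}\sqrt{m!}\sqrt{n!}$ is absorbed by raising the radius. This gives $R_s(\sigma)=c_s\sigma$ with an explicit constant $c_s$ (for example $2\sigma$ times the cut factor), which is \eqref{eq:intro-product}. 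The unique extension of $\star$ to $\Alg_s$ then follows from density of finite kernels in every $\Nn\rho\cdot$ and continuity. The involution is entrywise adjoint with leg order preserved, and it is isometric. The bilinear error bound \eqref{eq:intro-bilinearerror} follows by writing
\[
 \widehat K\star\widehat L-K\star L=(\widehat K-K)\star L+K\star(\widehat L-L)+(\widehat K-K)\star(\widehat L-L)
\]
and applying \eqref{eq:intro-product} to each term.

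\emph{Evaluation $I$.} On a finite kernel, $\norm{I_m(K_m)}_{L^2(\Sch_s)}$ should be equivalent, up to $C_s^m$, to $\sqrt{m!}\norm{K_m}_{\Wc_{m,s}}$. This is the main obstacle, particularly the trace-class endpoint $s=1$, where Khintchine-type inequalities for operator-valued chaos fail in general.
\begin{itemize}
 \item For $s=2$ the equivalence is an identity.
 \item For general $s$, I would take the comparison from the paper's Section~\ref{sec:coeff} results, presumably Gaussian duality plus noncommutative Khintchine/decoupling for $\Sch_s$-valued chaoses. Since the coefficient spaces were built precisely from cut norms, I expect two-sided constants of the form $C_s^m$. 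Only exponential constants are needed, because these are absorbed by the countable radius scale.
\end{itemize}
Granting this, $\En\rho{IK}$ and $\Nn{\rho'}K$ dominate each other after rescaling radii. So $I$ is a topological isomorphism onto $\Gaus_s$, with surjectivity by completeness and uniqueness of chaos components. Because both sides are taken in one Gaussian realization, the ordinary product satisfies $I(K)I(L)=I(K\star L)$: this holds on finite kernels by the product formula and extends by continuity.

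\emph{Data side.} The observations are $z=\E[\Psi_\alpha (RFR)_{k\ell}]=\sqrt{|\alpha|!}\,r_kr_\ell\langle K_{|\alpha|},h_\alpha\rangle_{k\ell}$. Hence $D_{a_n}O_nK=P_nK$ exactly. I would check that the truncations $P_n$ (coordinate projections in source basis and matrix indices) are contractive on each $\Wc_{m,s}$ and increase strongly to the identity. Then $\sup_n\Nn\rho{P_nK}=\Nn\rho K$, and since $P_n-P_N=P_n(I-P_N)$ for nested budgets, the tail identities \eqref{eq:intro-tail-identity} follow. Admissibility \eqref{eq:intro-admissible} is exactly Cauchy-ness in every norm, so $\lim_nD_{a_n}$ maps $\Data_s$ onto $\Alg_s$, inverse to $(O_n)$. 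The data product formula is then the transport of $\star$ using continuity of $O_n$.

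\emph{Noise bound.} Split $\widehat K_a-K=D_a(y-O_aK)+(P_aK-K)$. On a degree-$m$ block, $D_a$ scales entries by at most $\kappa_N/\sqrt{m!}$. The $\Sch_s$-type cut norm of an $N\times N$-supported block is at most $N^{(1/s-1/2)_+}$ times its Hilbert--Schmidt norm, and the total Hilbert--Schmidt error at degree $m$ is at most $\delta_m$. Summing with the weights $\rho^m\sqrt{m!}$ gives \eqref{eq:intro-noise}. Finite $*$-polynomials follow by iterating the product bound.
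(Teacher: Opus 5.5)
Your overall architecture matches the paper's. The coefficient $*$-algebra and the isomorphism $I:\Alg_s\to\Gaus_s$ come from a product estimate combined with the Wick inverse of Theorem~\ref{thm:wickrecover}. The data side is the abstract completion of Theorems~\ref{thm:completion} and~\ref{thm:operationcompletion} applied to $D_aO_a=P_a$. The noise and bilinear bounds go as you describe; for $s<2$, the rank factor comes from the single extreme cut whose small side is the $N$-dimensional physical corner, not from an arbitrary cut.

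\textbf{The gap.} It lies in the step you treat as routine: the cut-level inequality $\norm{\Gamma_r(K_m,L_n)}_{\Wc_{k,s}}\lesssim C^{m+n}\norm{K_m}\norm{L_n}$. Take an output cut that places an uncontracted leg of $K$ on the input side, or an uncontracted leg of $L$ on the output side. Its flattening is then not a product $\Flat_{S_1}K\cdot\Flat_{S_2}L$ for any choice of input cuts, because those legs must be carried through the other factor. What you actually get is $(\Flat_{S_1}K\otimes I)(I\otimes\Flat_{S_2}L)$ with infinite-rank identities, and Schatten H\"older then gives nothing dimension-free. This already happens for $K=\sum_ae_a\otimes K_a$, $L=\sum_be_b\otimes L_b$, $r=0$, and the cut with the leg of $K$ on the input side: the output flattening is $\bar e_a\otimes\xi\mapsto\sum_be_b\otimes K_aL_b\xi$, which has exactly this form. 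For $s>2$, the max-cut norm requires every output cut. For $s<2$, your decomposition argument needs a single-cut bound for each pair of pieces. The suggested dualization only converts the problem into a mixed-exponent contraction estimate of the same kind. So both branches rest on a factorization that fails.

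\textbf{How the paper avoids it.} The paper never contracts on the coefficient side when $s\ne2$. In Theorem~\ref{thm:nativeproduct} the product is an actual random operator product, where $\norm{AB}_s\le\norm A_s\norm B_s$ holds samplewise and $L^4$ hypercontractivity controls it in $L^2$. The output degrees are then separated by the Mehler--Chebyshev extraction of Lemma~\ref{lem:chebyshev} and returned to $\Wc_{j,s}$ by the Wick inverse. This is what produces $R_s(\sigma)=u_s\sqrt3\,\chi_{v_s\sigma}$, and it covers $s=1$ through Proposition~\ref{prop:traceendpoint}.

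\textbf{A possible repair.} Your lemma can be rescued, but only with a different tool. Write the output flattening as $\sum_\beta K_\beta\otimes L_\beta$ over the bond $\beta$, meaning the contracted source legs together with the shared physical index.
\begin{enumerate}
\item For $s\ge2$: at $s=\infty$ the row--column factorization gives $\norm{\sum_\beta K_\beta\otimes L_\beta}\le\norm{(\sum_\beta K_\beta K_\beta^*)^{1/2}}\,\norm{(\sum_\beta L_\beta^*L_\beta)^{1/2}}$. Both factors are operator norms of genuine cuts of $K$ and $L$. Bilinear complex interpolation with the Hilbert case, on these fixed cuts, then covers all $s\ge2$.
\item For $s\le2$: take rank-one decompositions in input cuts $S',S''$ and use $\norm{AB^{*}}_1\le\norm A_2\norm B_2$. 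This gives a trace-class bound in the output cut formed by the uncontracted legs of $S'$ and $S''$. Interpolation with $s=2$ finishes.
\end{enumerate}
Completed this way, your route is more elementary than the paper's and gives a much smaller input radius. As written, however, the key lemma is unproved.
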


The coefficient norms in this theorem are fixed before random evaluation. Directed-cut duality gives the Gaussian inverse, and an exponential predual estimate covers trace class. Ordered contractions then give the product bound, while compatible finite inverses identify the completion. The proof is completed in Section~\ref{sec:data}. At the Hilbert exponent, the exact multi-input region is
\begin{equation}\label{eq:intro-arity}
 \sum_{i=1}^k\frac1{1+\rho_i^2}\le\frac1{1+\sigma^2},\qquad\sigma\ge1,
\end{equation}
with optimal constant one (Theorem~\ref{thm:arity}). A single norm cannot make both ordinary Gaussian multiplication and expectation bounded; the changing input radius is therefore intrinsic to this form of algebraic control.

The recovered topology also has an analytic description. A source-measurable $F$ belongs to $\Gaus_s$ precisely when its Mehler response extends to an entire $L^2(\Sch_s)$-valued curve
\[
 \mathscr M_F(z)=\sum_{m\ge0}z^mF_m.
\]
The compact-open topology of these curves agrees with the coefficient topology (Theorem~\ref{thm:mehler_entire}). Compactness further requires control of physical and source directions. Theorem~\ref{thm:fullcompact} characterizes relatively compact families by boundedness at every radius, uniformly vanishing positive physical mass tails, and uniformly vanishing anchored source innovations at each fixed degree. Finite corner energies give measurable versions of the source certificate.

\subsection{Microscopic realization and finite decoding}
The Gaussian product can be obtained from a discrete multiplication law. For each source direction take $N$ independent signs and define the symmetric response
\[
 Q_{m,N}=m!N^{-m/2}\sum_{|I|=m}\prod_{a\in I}\xi_a.
\]
Multiplication groups terms according to $r=|I\cap J|$. For $k=m+n-2r$, the corresponding coefficient is
\[
 r!\binom mr\binom nr\frac{(N-k)_{\underline r}}{N^r}.
\]
The remaining-site factor tends to one, leaving the Gaussian contraction coefficient. Tensoring the source blocks while retaining the order of the physical operator coefficients defines a product $\star_N$ in the original Hilbert coefficient coordinates.

Let $p_\rho$ denote $\mathcal N_{\rho,2}$ in these normalized coordinates. Theorem~\ref{mic:algebra_limit} proves a bound uniform over $N$ in exactly the radius region \eqref{eq:intro-arity}, and for the binary product gives
\begin{equation}\label{eq:intro_micro_rate}
 p_\sigma(A\star_NB-A\star B)
 \le\frac{q^2}{N(1-q)^3}p_{R_+}(A)p_{S_+}(B),
\end{equation}
where $R_+>\rho$, $S_+>\eta$, $q=\max(\rho/R_+,\eta/S_+)<1$, and $(\rho,\eta,\sigma)$ satisfies the binary radius condition. Thus multiplication converges in the original norm scale, uniformly on stronger-radius bounded sets. A quadratic response attains the inverse site-number order.

Finite normalized site marks determine the microscopic coefficients exactly. For every fixed observation budget, Theorem~\ref{mic:decoder} gives
\[
 D_b^{(N)}O_b^{(N)}\bigl(\Phi_N(A)\Phi_N(B)\bigr)
 =P_b(A\star_NB)\longrightarrow
 P_b(A\star B)=D_bO_b\bigl(\Phi(A)\Phi(B)\bigr).
\]
The full error is the sum of amplified observation noise, a target tail, and the collision bias \eqref{eq:intro_micro_rate}. The inverse cost of observing degrees through $M\le N$ includes the exact factor
\[
 \chi_{M,N}=\left(\frac{N^M}{(N)_{\underline M}}\right)^{1/2}.
\]
It tends to $e^{c^2/4}$ when $M/\sqrt N\to c$, and is unbounded when $M/\sqrt N\to\infty$. This identifies how microscopic scale and structural resolution interact under the specified normalized-mark noise model. The response laws converge jointly, and an explicit auxiliary coupling gives convergence of fixed noncommutative polynomials in every finite probability norm. The microscopic theorem concerns $\Alg_2$; the all-exponent Gaussian reconstruction is established independently.

\subsection{Observable relations, target costs, and genuine limits}
An observation protocol selects a quotient only when the forgotten relations are preserved by the permitted operations. Closing its dual tests under partial adjoints gives the smallest invariant family containing the initial tests, and the corresponding largest compatible invisible subspace. Section~\ref{sec:principle} proves this dual description and the factorization criterion for stable target recovery.

For an increasing represented test family, the new information is its quotient by the preceding observations. In Hilbert coordinates this is the relative Gram residual
\[
 \mathfrak S_{V,W}=W^*(I-P_{\overline{\ran V}})W.
\]
The same distance construction removes target-null nuisance directions. Its optimal inverse constant depends on the target and, in an innovation tower, on the complete triangular regression. If $G_N=U_N^*D_NU_N$, a target $L_N$ is recoverable precisely when it annihilates the represented kernel; its exact cost is then
$\norm{L_NU_N^{-1}D_N^{\dagger/2}}$.
Section~\ref{sec:ordered} applies this mechanism to ordered moment data, PBW coordinates, and multiplication tables. The finite constants distinguish a visible layer from a stable tower.

Fixing an ambient Hilbert space introduces a further realization question. The full anchored word observations of bounded tuples have compact closure
\[
 \overline{\{\mathcal Q(\pi_T):\max_j\norm{T_j}\le M\}}^{\norm\cdot_1}
 =\mathcal Q\bigl(\UCP(\mathcal U_M,B(H))\bigr).
\]
Here $\mathcal U_M$ is the universal algebra of norm-bounded generators. The Schwarz score
\[
 \delta_R(\phi)=\sum_a\Tr\!\left(R[\phi(a^*a)-\phi(a)^*\phi(a)]R\right),
\]
summing over generators and adjoints, vanishes exactly at representations. Theorem~\ref{thm:strongclosure} identifies it with output mass escaping from the anchored realization. An exact two-boundary word identity makes the criterion quantitative: contextual defects control the error of replacing $\phi(w)$ by the corresponding word in its compressed generators (Theorem~\ref{thm:contextual_multiplicativity}). Finite matrix observations provide noisy upper certificates for that error.

Spectral invariants retain a different quotient. For a skew Hilbert--Schmidt operator with positive block singular values $(a_j)$, compatible exterior coordinates give $c_k=e_k(a^2)$. On $a_j\le Cj^{-1/\rho}$, $0<\rho<2$, finite moment data yield a rank- and gap-independent inverse in every strictly subcritical weak ideal. Critical recovery uses a relative-tail class. With only a square-mass bound, the determinant closure consists of
\[
 e^{\tau z}\prod_j(1+za_j^2),\qquad\tau\ge0,
\]
where $\tau$ is mass transferred to the spectral point zero. Section~\ref{sec:pfaffian} proves these inverse estimates and identifies their completions.

\subsection{Proof organization and related work}
Sections~\ref{sec:principle}--\ref{sec:product} establish the quotient principles, Gaussian inverse, and ordinary product. Section~\ref{sec:microscopic} constructs the discrete fluctuation realization. Section~\ref{sec:data} proves finite reconstruction, compactness, and completion. Ordered moments and positive realizations occupy Sections~\ref{sec:ordered}--\ref{sec:realizationboundary}; common-noise evolution, spectral recovery, and geometric Wick systems follow in Sections~\ref{sec:time}--\ref{sec:realizations}. Appendix~\ref{sec:fockcosts} computes sharp Fock-space observation costs, including
\begin{equation}\label{eq:intro_branch}
 \sum_i c_i^{-2}<1
\end{equation}
for the existence of a faithful trace-one anchor with the prescribed creation-operator multiplier costs. Appendices~\ref{sec:obsappendix} and \ref{sec:physicalboundary} treat bounded joint-law protocols and Gaussian network continuations.

The analytic inputs are the noncommutative Khintchine and predual inequalities of Lust-Piquard--Pisier, Buchholz, and Haagerup--Pisier~\cite{LP,Buchholz,HP}, together with Gaussian chaos and decoupling~\cite{Janson,Maas}. The binary boundary in \eqref{eq:intro-arity} agrees with the scalar weighted Gaussian multiplication boundary of Da Pelo--Lanconelli--Stan~\cite{DPLS}; here the positive sum controls each ordered contraction in an $\ell^1$ degree scale. Its scalar all-radius topology is related to the smooth-random-variable spaces of Potthoff--Timpel~\cite{PT}. Krawtchouk--Hermite asymptotics, discrete product formulas, and universality of homogeneous sums provide the background for the microscopic construction~\cite{Min,NPRR,NPR}.

PBW symmetrization, shorted operators, positive-kernel representations, and multiplicative domains supply the structural inputs~\cite{CP,AT,Ando,Stinespring,Choi,CJK}. Their use here retains the original coefficient norms and quantifies the finite inverse, the transport of operations, and the tails that select a genuine limit. The drift-generated commutant in common-noise evolution is related to the decoherence-free algebra~\cite{DFSU}, while spectral moments and continuation experiments connect the construction to classical invariant and graph-algebra methods~\cite{KongValiant,Lovasz}.

Throughout, Hilbert spaces are separable and inner products are linear in the first variable. Matrix entries are $T_{kl}=\ip{Te_l}{e_k}$. Banach adjoints are denoted by a prime and Hilbert adjoints by a star. Coefficient pairings are linear in the first argument and conjugate-linear in the second, with the usual conjugate identification for Banach duals. The coefficient algebra and its compactness criterion are defined for $1\le s<\infty$; the full-data linear retraction and source-convexity inequality use $1<s<\infty$. Chaos degree, word length, spectral moment order, and observation resolution have their stated independent roles.

\section{Observable quotients and stable recovery}\label{sec:principle}\label{sec:targetquotients}
An observation protocol and its allowed operations determine the relations that may be forgotten. Closing the dual tests under partial adjoints produces the operational quotient. Distances from nuisance observation ranges then determine which targets admit a bounded inverse. The constructions in this section keep these algebraic and norm requirements separate.

\subsection{Operational quotients and invariant tests}
Let $(X_s)$ be a family of Banach spaces indexed by types. A basic operation is a prescribed bounded multilinear map
\[
 \mu:X_{s_1}\times\cdots\times X_{s_k}\longrightarrow X_t.
\]
Choose closed subspaces $N_s\subset X_s$, put $V_s=N_s^\perp\subset X_s'$, and let $q_s:X_s\to X_s/N_s$ be the quotient maps. Fixing every argument of $\mu$ except the $j$th gives a bounded linear map $T_{\mu,j}:X_{s_j}\to X_t$.

\begin{proposition}[Operational descent and dual invariance]\label{prop:congruence}
The basic operations descend to the quotient spaces if and only if, for every partial map,
\begin{equation}\label{eq:invariance}
 T_{\mu,j}N_{s_j}\subset N_t.
\end{equation}
Equivalently, $T_{\mu,j}'V_t\subset V_{s_j}$. The norm of each quotient operation is at most the norm of the original operation. For any bounded $T:X_s\to X_t$, the leakage of invisible inputs has the exact dual expression
\begin{equation}\label{eq:leakage-dual}
 \norm{q_tT|_{N_s}}
 =\sup_{\lambda\in V_t,\ \norm\lambda\le1}
       \dist(T'\lambda,V_s).
\end{equation}
\end{proposition}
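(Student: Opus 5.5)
The plan is to treat the proposition in three parts: descent of multilinear operations, the dual reformulation, and the leakage formula. All three rest on the standard duality between quotients and annihilators.

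\emph{Descent.} A multilinear $\mu$ descends to $\prod_j X_{s_j}/N_{s_j}\to X_t/N_t$ exactly when $q_t\mu(x_1,\dots,x_k)$ depends only on the classes $q_{s_j}x_j$.

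For sufficiency, I would telescope the difference. If $x_j-x_j'\in N_{s_j}$ for every $j$, write
\[
 \mu(x)-\mu(x')=\sum_j \mu(x_1',\dots,x_{j-1}',x_j-x_j',x_{j+1},\dots,x_k).
\]
Each summand is a partial map $T_{\mu,j}$ applied to an element of $N_{s_j}$, so by \eqref{eq:invariance} it lies in $N_t$.

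For necessity, the quotient operation would have to send $(q x_1,\dots,0,\dots,q x_k)$ to zero, since multilinearity gives zero when one argument vanishes. Hence $q_tT_{\mu,j}n=0$ for all $n\in N_{s_j}$.

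For the norm bound, fix classes with quotient norms below $1$. Choose representatives $x_j$ with $\norm{x_j}<1+\varepsilon$. Then $\norm{q_t\mu(x)}\le\norm{\mu(x)}\le\norm\mu(1+\varepsilon)^k$, and letting $\varepsilon\to0$ gives the claim.

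\emph{Dual form.} Since $N_t$ is closed, Hahn--Banach gives $N_t=\{y:\lambda(y)=0\ \forall\lambda\in V_t\}$, i.e.\ $N_t={}^\perp V_t$; the same holds for $N_s$. Thus $TN_s\subset N_t$ holds iff $\lambda(Tn)=(T'\lambda)(n)=0$ for all $\lambda\in V_t$ and $n\in N_s$. This says precisely that $T'\lambda\in N_s^\perp=V_s$.

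\emph{Leakage formula.} Here I use the isometric identifications $(X_t/N_t)'\cong N_t^\perp=V_t$ and $N_s'\cong X_s'/N_s^\perp=X_s'/V_s$; the second is Hahn--Banach extension.

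The operator $S=q_tT|_{N_s}:N_s\to X_t/N_t$ has the same norm as its adjoint $S'$. Under these identifications, $S'$ sends $\lambda\in V_t$ to the restriction $(T'\lambda)|_{N_s}$. The norm of that restriction in $N_s'$ equals the quotient norm of $T'\lambda$ in $X_s'/V_s$, which is $\dist(T'\lambda,V_s)$.

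Taking the supremum over the unit ball of $V_t$ then gives \eqref{eq:leakage-dual}. The only care needed is that the identification $(X_t/N_t)'\cong V_t$ is isometric, which is standard.

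The main obstacle is bookkeeping rather than depth. For complex spaces with the paper's conjugate-linear pairing conventions, one has to check that the adjoint convention does not change the annihilators. It does not, since annihilators are the same under either convention.
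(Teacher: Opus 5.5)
Your proposal is correct and follows essentially the same route as the paper. The paper also uses telescoping through the partial maps for descent and the norm bound, the identity $N_t={}^\perp(N_t^\perp)$ for the dual invariance, and the identifications $(X_t/N_t)'\cong V_t$ and $N_s'\cong X_s'/V_s$ (restriction as a metric quotient by Hahn--Banach) for the leakage formula; your explicit telescoping sum and zero-class necessity argument simply write out steps the paper states in one line.
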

\begin{proof}
Changing the representative in one argument changes the output by the corresponding partial map applied to an element of $N_{s_j}$. A telescoping replacement of the arguments proves that well-definedness is equivalent to~\eqref{eq:invariance}. Since $N_t=(N_t^\perp)_\perp$, this condition is equivalent to the stated adjoint invariance. Choosing representatives with norms arbitrarily close to the quotient norms proves the norm bound.

The dual of $X_t/N_t$ is isometrically $V_t$, so the left-hand side of~\eqref{eq:leakage-dual} equals
\[
 \sup_{\lambda\in V_t,\ \norm\lambda\le1}
       \norm{(T'\lambda)|_{N_s}}.
\]
The restriction map $X_s'\to N_s'$ is a metric quotient by Hahn--Banach, with kernel $N_s^\perp=V_s$. Its quotient norm is the distance to that kernel, which gives the formula.
\end{proof}

Given initial observations $O_s:X_s\to Y_s$, define the successive test spaces by
\[
 \begin{split}
 V_s^{(0)}&=\overline{\ran O_s'}^{\,w^*},\\
 V_s^{(n+1)}&=\overline{\Span\big(V_s^{(n)}\cup
 \{T_{\mu,j}'\lambda:s_j=s,\ \lambda\in V_t^{(n)}\}\big)}^{\,w^*}.
 \end{split}
\]
For a conjugate-linear involution the additional descent condition is invariance of the corresponding invisible subspace under that involution. It may equivalently be included in the saturation by working over the underlying real spaces. The partial maps range over all allowed choices of the other arguments. Put
$V_s^{(\infty)}=\overline{\bigcup_nV_s^{(n)}}^{\,w^*}$ and
$N_s^{(n)}=(V_s^{(n)})_\perp$. Adjoint maps are weak-$*$ continuous, so $V^{(\infty)}$ is invariant. It is the smallest weak-$*$ closed invariant family containing the initial tests. Dually, $N^{(\infty)}$ is the largest operationally compatible closed family contained in the initial observation kernels: any such family is annihilated by every stage of the construction, by induction.

For linear operations $A_i:X\to X$ of one type, this reads
\begin{equation}\label{eq:word-observability}
 N_n=\bigcap_{|w|\le n}\ker(OA_w),\qquad
 V_n=\overline{\Span\{A_w'O'y':|w|\le n\}}^{\,w^*}=N_n^\perp.
\end{equation}
The invisible spaces decrease as the test spaces increase. For operations on a set, the analogous equivalence relation is equality of $OA_wx$ for every allowed observation context $w$. For multilinear operations, contexts include all choices of the other arguments. When the allowed operations on an algebra include every left multiplication, every right multiplication, and the involution, a linear invisible class is a two-sided $*$-ideal.
The quotient has a useful universal property. If $(M_s)$ is any closed operationally compatible family with $M_s\subset\ker O_s$, saturation gives $M_s\subset N_s^{(\infty)}$. Hence the identity on each original space induces a unique contractive quotient map
\[
 X_s/M_s\longrightarrow X_s/N_s^{(\infty)},\qquad
 [x]_{M_s}\longmapsto[x]_{N_s^{(\infty)}}.
\]
These maps intertwine every descended basic operation and every contextual observation. Thus $X_s/N_s^{(\infty)}$ is the coarsest compatible quotient retaining all permitted readings, not a claim that the initial readings already compute the saturated ones.

The saturated tests may require additional contextual experiments. They describe the information sufficient for the allowed operations; they are not automatically computable from the initial datum $Ox$. The annihilator description separates this protocol requirement from the subsequent norm cost of decoding.

For a complex algebra, the involution is conjugate-linear. Its descent condition is $N^*=N$; equivalently one may apply the same invariance argument on the underlying real spaces. No complex-linear adjoint of the involution is being used.

\begin{proposition}[Duality of successive layers]\label{prop:layerdual}
If $N_n\subset N_{n-1}$ are closed, restriction gives an isometric identification
\begin{equation}\label{eq:layerdual}
 (N_{n-1}/N_n)'\simeq N_n^\perp/N_{n-1}^\perp=V_n/V_{n-1}.
\end{equation}
Suppose a bounded linear map $J:V_n\to\HH$ is used to represent this dual layer in a Hilbert space. Its orthogonal residual realizes the Banach quotient with equivalent norms precisely when it has a lower bound
\begin{equation}\label{eq:layer-angle}
 \dist(Jf,\overline{J(V_{n-1})})
 \ge c\,\dist(f,V_{n-1}),\qquad c>0.
\end{equation}
\end{proposition}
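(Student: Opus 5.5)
The plan has two parts: the layer duality \eqref{eq:layerdual}, which is Hahn--Banach bookkeeping, and the angle criterion \eqref{eq:layer-angle}, which compares two norms on the same quotient. Write $X$ for the ambient space, so $N_n\subset N_{n-1}\subset X$ and $V_k=N_k^\perp\subset X'$.

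For the duality, I first identify $N_{n-1}'$ with $X'/N_{n-1}^\perp=X'/V_{n-1}$. This is isometric by Hahn--Banach, exactly as in the proof of Proposition~\ref{prop:congruence}. Next, the dual of the quotient $N_{n-1}/N_n$ is isometrically the annihilator of $N_n$ inside $N_{n-1}'$. Under the identification, this annihilator consists of the classes $[f]$ with $f|_{N_n}=0$, that is, $f\in V_n$. The quotient norm of such a class is $\dist(f,V_{n-1})$, computed in $X'$. It remains to check that this equals the distance computed inside $V_n$, which holds because $V_{n-1}\subset V_n$. Indeed, $N_n\subset N_{n-1}$ gives $N_{n-1}^\perp\subset N_n^\perp$, and any $g\in V_{n-1}$ is admissible in both distances. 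The map $V_n/V_{n-1}\to(N_{n-1}/N_n)'$ is therefore restriction followed by descent. It is isometric by the computation above and surjective by extending functionals. This gives \eqref{eq:layerdual}.

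For the representation statement, let $P$ be the orthogonal projection onto $\overline{J(V_{n-1})}$. The residual map is $f\mapsto(I-P)Jf$. It vanishes on $V_{n-1}$, so it descends to $\widetilde J:V_n/V_{n-1}\to\HH$ with $\norm{\widetilde J[f]}=\dist(Jf,\overline{J(V_{n-1})})$. The upper bound always holds: for every $g\in V_{n-1}$,
\[
\dist(Jf,\overline{J(V_{n-1})})\le\norm{J(f-g)}\le\norm J\norm{f-g},
\]
so $\norm{\widetilde J[f]}\le\norm J\,\dist(f,V_{n-1})$. Consequently, the residual norm is equivalent to the Banach quotient norm exactly when the reverse inequality \eqref{eq:layer-angle} holds. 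If the lower bound holds, $\widetilde J$ is an isomorphism onto its range, and that range is closed because $V_n/V_{n-1}$ is complete. Conversely, equivalence of norms is literally \eqref{eq:layer-angle} with $c$ equal to the lower equivalence constant.

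The only point needing care is the meaning of ``realizes''. I would state it as: $\widetilde J$ is injective, with closed range, and the norms are equivalent. I would also observe that injectivity alone does not suffice, since without the lower bound the range need not be closed. Beyond that, I expect no real obstacle.
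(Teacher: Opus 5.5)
Your proof is correct and follows essentially the same route as the paper: the duality is the same Hahn--Banach metric-quotient argument (the paper works directly with restriction $V_n\to(N_{n-1}/N_n)'$, whose kernel is $V_{n-1}$, rather than composing two standard identifications), and the angle criterion is the same comparison of the descended residual map $\widetilde J$ with the quotient norm. The one addition in the paper is that the range of $\widetilde J$ is dense in $\overline{J(V_n)}\ominus\overline{J(V_{n-1})}$ (approximate by $Jf_k$ and apply $I-P$), so under \eqref{eq:layer-angle} the closed range is exactly this orthogonal innovation space; this identification is what ``realizes'' is meant to pin down, and your closed-range observation does not yet say which closed subspace it is.
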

\begin{proof}
Let $r:V_n\to(N_{n-1}/N_n)'$ be restriction to $N_{n-1}$. Every $f\in V_n=N_n^\perp$ annihilates $N_n$, so this restriction defines a functional on the quotient, and $\ker r=N_{n-1}^\perp=V_{n-1}$. Conversely, a functional on $N_{n-1}/N_n$ lifts isometrically to $N_{n-1}$ and extends to $X$ with unchanged norm by Hahn--Banach. The extension still annihilates $N_n$. Hence $r$ is a metric quotient and gives \eqref{eq:layerdual}.

For the represented layer put $P=P_{\overline{J(V_{n-1})}}$. The map
\[
 \widetilde J:V_n/V_{n-1}\longrightarrow\HH,
 \qquad \widetilde J[f]=(I-P)Jf
\]
is well defined. For any $g\in V_{n-1}$, its norm is at most $\norm J\norm{f-g}$, so it is bounded by $\norm J\norm{[f]}$. Its range is dense in
$\overline{J(V_n)}\ominus\overline{J(V_{n-1})}$: approximate a vector in $\overline{J(V_n)}$ by $Jf_k$ and then apply $I-P$. Condition~\eqref{eq:layer-angle} is exactly the lower bound $\norm{\widetilde J[f]}\ge c\norm{[f]}$. It makes the range closed, since the Banach quotient is complete, and therefore equal to the entire orthogonal difference. Conversely, an isomorphism onto that difference has precisely this lower bound.
\end{proof}

\subsection{Stable inverses and linear retractions}
Let $E:X\to Y$ and $L:X\to Z$ be bounded linear maps, with $Z$ complete. The inequality and factorization
\begin{equation}\label{eq:stablefactor}
 \norm{Lx}\le C\norm{Ex}
 \quad\Longleftrightarrow\quad
 L=DE,\qquad D:\overline{E(X)}\to Z,\quad\norm D\le C
\end{equation}
are equivalent. They are also equivalent to the following lifting property: for each $z'\in Z'$ there is $y'\in Y'$ with $E'y'=L'z'$ and $\norm{y'}\le C\norm{z'}$. Indeed, the inequality makes $D(Ex)=Lx$ well-defined and continuous; each $z'D$ extends by Hahn--Banach; and taking the supremum over $z'$ recovers the inequality. This pointwise lifting property is distinct from a bounded linear choice of all the lifts.

\begin{theorem}[Paired reconstruction and retraction]\label{thm:paired}
Suppose the pairing of $X_0$ and $X_0^\sharp$ norms both spaces, while $Z$ and $Z^\sharp$ satisfy $|[z,w]|\le\norm z\norm w$. Let linear encodings $J:X_0\to Z$ and $K:X_0^\sharp\to Z^\sharp$ have norm bounds $\alpha,\beta$, and suppose
\[
 |\ip{x}{\ell}-[Jx,K\ell]|
 \le\eta\norm x\norm\ell,\qquad0\le\eta<1.
\]
Then
\begin{equation}\label{eq:pairedreverse}
 \frac{1-\eta}{\beta}\norm x\le\norm{Jx}\le\alpha\norm x,
 \qquad
 \frac{1-\eta}{\alpha}\norm\ell\le\norm{K\ell}\le\beta\norm\ell.
\end{equation}
In complete target spaces, both maps extend to isomorphisms from the domain completions onto closed images.

In addition, let $X$ be reflexive, and let $E:X\to Y$ and $F:X'\to Y'$ be bounded linear maps with $E'F=I$. If $J_X,J_Y$ are the canonical bidual embeddings, then
\begin{equation}\label{eq:retract}
 R=J_X^{-1}F'J_Y:Y\to X,
 \qquad RE=I,\qquad\norm R\le\norm F.
\end{equation}
Thus $ER$ is a bounded projection onto $E(X)$. Conversely, a bounded retraction $RE=I$ gives $F=R'$; this converse holds without reflexivity.
\end{theorem}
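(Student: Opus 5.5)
The first part is a perturbation of the norming duality. Since the pairing of $X_0$ and $X_0^\sharp$ norms $X_0$, for $x\in X_0$ and $\epsilon>0$ I will pick $\ell$ with $\norm\ell\le1$ and $|\ip x\ell|\ge(1-\epsilon)\norm x$. The hypothesis then gives
\[
 (1-\epsilon)\norm x\le|\ip x\ell|\le|[Jx,K\ell]|+\eta\norm x
 \le\norm{Jx}\,\beta+\eta\norm x .
\]
Letting $\epsilon\to0$ yields $(1-\eta)\norm x\le\beta\norm{Jx}$. The upper bound $\norm{Jx}\le\alpha\norm x$ is just the assumed norm bound. The estimate for $K$ is symmetric, using that the pairing also norms $X_0^\sharp$.

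A map that is bounded above and below extends uniquely by continuity to the completion of its domain when the target is complete. The extended map keeps the same two-sided bounds. It is therefore injective with closed range: a Cauchy sequence in the image pulls back to a Cauchy sequence in the domain completion. This gives the isomorphism onto a closed image.

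For the retraction, I will use reflexivity to define $R=J_X^{-1}F'J_Y$. This makes sense because $F':Y''\to X''$ and $J_X$ is onto $X''$. To check $RE=I$, it suffices to verify $F'J_YEx=J_Xx$ as functionals on $X'$. For $\phi\in X'$,
\[
 (F'J_YEx)(\phi)=(J_YEx)(F\phi)=(F\phi)(Ex)=(E'F\phi)(x)=\phi(x),
\]
where the last equality uses $E'F=I$. The norm bound follows because $J_X,J_Y$ are isometries and $\norm{F'}=\norm F$. Then $(ER)^2=E(RE)R=ER$, so $ER$ is a bounded projection, and its range equals $E(X)$ because $ERE=E$. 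For the converse, if $RE=I$ with $R$ bounded, set $F=R'$. Then $E'F=E'R'=(RE)'=I_{X'}$, and no reflexivity is needed for this.

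The main obstacle is modest. It is the bookkeeping of the canonical embeddings in the verification of $RE=I$, in particular that $F'$ is applied to $J_YEx\in Y''$ and that the result is identified through $J_X^{-1}$. I will also state explicitly that the pairing inequality $|[z,w]|\le\norm z\norm w$ is what gives $|[Jx,K\ell]|\le\beta\norm{Jx}\norm\ell$ in the first part.
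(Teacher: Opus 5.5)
Your proposal is correct and follows essentially the same route as the paper: the lower bounds come from inserting the pairing estimate into the norming supremum (your near-maximizing $\ell$ is the same step), and the retraction is verified by the identical computation $(F'J_YEx)(\phi)=(E'F\phi)(x)=\phi(x)$, with the converse obtained by taking adjoints of $RE=I$. Your explicit check that $ERE=E$ identifies the range of $ER$ is a small detail the paper leaves implicit.
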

\begin{proof}
For $x\in X_0$ and $\ell\in X_0^\sharp$, the pairing estimate gives
\[
 |\ip{x}{\ell}|
 \le\norm{Jx}\norm{K\ell}+\eta\norm x\norm\ell
 \le\beta\norm{Jx}\norm\ell+\eta\norm x\norm\ell.
\]
Taking the norming supremum over $\norm\ell\le1$ proves the first lower bound in \eqref{eq:pairedreverse}. Taking the supremum over $\norm x\le1$ proves the second. The upper bounds are the assumed bounds for the encodings. Thus the encodings and their inverses on the images are uniformly continuous. They extend to the completions, and the lower bounds make their extended images closed.

For the retraction, $F'J_Y$ maps $Y$ into $X''$. Reflexivity is used exactly here: it allows us to regard this map as an $X$-valued map by applying $J_X^{-1}$. For $x\in X$ and $\ell\in X'$,
\[
 \ip{F'J_YEx}{\ell}=(F\ell)(Ex)=(E'F\ell)(x)=\ell(x).
\]
Consequently $RE=I$, and the isometry of the canonical embeddings gives $\norm R\le\norm F$. The operator $ER$ is bounded, has range $E(X)$, and satisfies $(ER)^2=ER$. Conversely, if $RE=I$, taking adjoints gives $E'R'=I$; this direction uses no reflexivity.
\end{proof}

A retraction gives an actual output for arbitrary noisy data: $\norm{R(Ex+e)-x}\le\norm F\norm e$. A closed-image inverse alone is defined only on compatible data.

For an encoded operation, suppose $R_YJ_Y=I$ and put $\mathfrak e=J_YA-BJ_X$. Then
\begin{equation}\label{eq:operator-defect}
 R_YBJ_X-A=-R_Y\mathfrak e,\qquad
 (R_YBJ_X-A)'=-\mathfrak e'R_Y'.
\end{equation}
The same compatibility defect therefore controls both the reconstructed output and the pulled-back test. If the maps are bounded only on an encoded image, the dual in this identity is the dual of that image.

\subsection{The residual space and its universal inverse}
The Hilbert geometry used below is a distance construction. Its formulation does not require a closed range or a bounded pseudoinverse.

\begin{lemma}[Relative residual and elimination in stages]\label{lem:relative_residual}
Let $V:E_0\to\mathcal K$ and $W:E_1\to\mathcal K$ be bounded linear maps between Hilbert spaces. Put
\[
 P=P_{\overline{\ran V}},\qquad R_{V,W}=(I-P)W,
 \qquad \mathfrak S_{V,W}=R_{V,W}^*R_{V,W}.
\]
Then
\begin{equation}\label{eq:residual_distance}
 \norm{R_{V,W}y}^2=\inf_{x\in E_0}\norm{Vx+Wy}^2.
\end{equation}
The completion of $E_1/\ker R_{V,W}$ in the norm $[y]\mapsto\norm{R_{V,W}y}$ is canonically $\overline{\ran R_{V,W}}$. For a bounded linear map $L:E_1\to Z$ into a Banach space, the following are equivalent, with the same least constant $C$:
\begin{equation}\label{eq:residual_factorization}
 \begin{gathered}
 \norm{Ly}\le C\norm{Vx+Wy}\quad(x\in E_0,\ y\in E_1),\\
 L=D R_{V,W},\qquad
 D:\overline{\ran R_{V,W}}\to Z\ \text{bounded},\quad\norm D\le C.
 \end{gathered}
\end{equation}
The map $D$, when it exists, is unique.

For an additional bounded column map $U:E_2\to\mathcal K$, let $Q$ be the projection onto $\overline{\ran R_{V,W}}$. Then
\begin{equation}\label{eq:residual_staged}
 P_{\overline{\ran[V,W]}}=P+Q,
 \qquad
 (I-P_{\overline{\ran[V,W]}})U=(I-Q)(I-P)U.
\end{equation}
Thus removing the combined range of $V,W$ is the same as first removing $V$ and then the residual range of $W$.
\end{lemma}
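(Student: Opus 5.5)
The proof is elementary Hilbert-space geometry, and I would prove the four claims in order: the distance identity, the completion, the factorization equivalence, and then the staged elimination.

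\emph{Distance identity.} For fixed $y$, the infimum of $\norm{Vx+Wy}$ over $x\in E_0$ is the distance from $Wy$ to $-\ran V$, that is, to $\ran V$. Since $\ran V$ is dense in $\overline{\ran V}$, this distance equals the distance to the closed subspace $\overline{\ran V}$. By the projection theorem that distance is $\norm{(I-P)Wy}=\norm{R_{V,W}y}$. Squaring gives \eqref{eq:residual_distance}. No closed-range assumption enters, because only the closure of $\ran V$ is used.

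\emph{Completion.} The map $[y]\mapsto R_{V,W}y$ is well defined on $E_1/\ker R_{V,W}$. It is isometric from the residual norm onto $\ran R_{V,W}$. Hence it extends uniquely to an isometry of the completion onto the closure $\overline{\ran R_{V,W}}$, which gives the canonical identification.

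\emph{Factorization.} Assume the inequality in \eqref{eq:residual_factorization}. Taking the infimum over $x$ and using \eqref{eq:residual_distance} gives $\norm{Ly}\le C\norm{R_{V,W}y}$. So $D_0(R_{V,W}y)=Ly$ is well defined on $\ran R_{V,W}$ and bounded by $C$. Because $Z$ is complete, $D_0$ extends by continuity to $D$ on the closure, with $\norm D\le C$. Conversely, if $L=DR_{V,W}$, then $\norm{Ly}\le\norm D\norm{R_{V,W}y}\le\norm D\norm{Vx+Wy}$ for every $x$, again by \eqref{eq:residual_distance}. The two directions together show that the least constants coincide. $D$ is unique because its values are prescribed on the dense set $\ran R_{V,W}$.

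\emph{Staged elimination.} The only genuine point is the decomposition $\overline{\ran[V,W]}=\overline{\ran V}\oplus\overline{\ran R_{V,W}}$.
\begin{itemize}
\item \emph{Orthogonality.} $\ran R_{V,W}\subset\ran(I-P)$, so $\overline{\ran R_{V,W}}\perp\overline{\ran V}$. Hence $PQ=QP=0$ and $P+Q$ is the orthogonal projection onto the closed sum $\overline{\ran V}\oplus\overline{\ran R_{V,W}}$.
\item \emph{Containment in $\overline{\ran[V,W]}$.} $\overline{\ran V}$ lies there trivially. $R_{V,W}y=Wy-PWy$, and $PWy\in\overline{\ran V}$, so $\ran R_{V,W}\subset\overline{\ran[V,W]}$. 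Taking closures gives the inclusion.
\item \emph{Reverse containment.} Write $Vx+Wy=(Vx+PWy)+R_{V,W}y$. The right side lies in the orthogonal sum, which is closed. So the sum contains $\ran[V,W]$ and therefore its closure.
\end{itemize}
Thus $P_{\overline{\ran[V,W]}}=P+Q$. Then
\[
I-P-Q=(I-Q)(I-P),
\]
which is checked by expanding and using $QP=0$. Applying this identity to $U$ gives \eqref{eq:residual_staged}.

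The step needing most care is this range identity, in particular keeping all closures in place so that no closed-range hypothesis slips in.
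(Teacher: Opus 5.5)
Your proof is correct and follows essentially the same route as the paper. The distance to $\ran V$, equivalently to its closure, is computed by orthogonal projection; the quotient is identified isometrically with $\ran R_{V,W}$; $D$ is defined on $\ran R_{V,W}$ and extended using completeness of $Z$; and the staged identity comes from the orthogonal decomposition $\overline{\ran[V,W]}=\overline{\ran V}\oplus\overline{\ran R_{V,W}}$, which forces $PQ=QP=0$. Your explicit check of both inclusions and of $I-P-Q=(I-Q)(I-P)$ only spells out steps the paper states briefly.
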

\begin{proof}
The infimum in \eqref{eq:residual_distance} is the squared distance of $-Wy$ to $\ran V$, equivalently to its closure. Orthogonal projection computes this distance. The map $[y]\mapsto R_{V,W}y$ is therefore an isometry of the stated normed quotient onto $\ran R_{V,W}$, and extends to the asserted completion.

Taking the infimum in $x$ turns the first inequality in \eqref{eq:residual_factorization} into $\norm{Ly}\le C\norm{R_{V,W}y}$. It makes $D(R_{V,W}y)=Ly$ well defined and bounded. Completeness of $Z$ extends $D$ uniquely to the closure without changing its norm. Conversely, a factorization gives the inequality because an orthogonal residual has norm at most $\norm{Vx+Wy}$. This also identifies the least constant.

Let $\mathcal M=\overline{\ran[V,W]}$. It contains $\ran P$ and each $Wy$, hence also $(I-P)Wy$. Conversely every $Vx+Wy$ is a sum of a vector in $\ran P$ and one in $\overline{\ran R_{V,W}}$. These two closed subspaces are orthogonal, so
$\mathcal M=\ran P\oplus\ran Q$. Consequently $PQ=QP=0$, which gives both identities in \eqref{eq:residual_staged}.
\end{proof}

In Gram coordinates this is the distance form of shorting~\cite{AT,Ando}. It identifies the residual norm, not an independently specified target norm: the latter is recovered only when the factorization bound holds. The projection onto a closed range also matters. For example, on $\ell^2(\N)$ let $Ve_j=j^{-1}e_j$ and $w=(j^{-1})_j$. The map $O(x,t)=Vx+tw$ is injective, because $w\notin\ran V$, but $\overline{\ran V}=\ell^2$. The target $t$ is therefore algebraically determined and has zero residual norm. No bounded inverse for this target exists. The nuisance direction can approximate cancellation without achieving it.

\subsection{Eliminating target-null observations}
Let $X,Y$ be Hilbert spaces, let $O\in B(X,Y)$, and let $N\subset X$ be a closed subspace of directions forgotten by the target. Set
\[
 H=N^\perp,\qquad Y_N=\overline{ON},\qquad
 O_N=(I-P_{Y_N})O|_H:H\longrightarrow Y_N^\perp.
\]
The quotient $X/N$ is identified isometrically with $H$ by $[x]\mapsto P_Hx$. Since nuisance vectors in $N$ are unrestricted,
\begin{equation}\label{eq:target_short_variation}
 \inf_{n\in N}\norm{O(h+n)}^2
 =\dist(Oh,Y_N)^2=\norm{O_Nh}^2\qquad(h\in H).
\end{equation}
Consequently the effective Gram operator is
\begin{equation}\label{eq:target_short_gram}
 S_N=O_N^*O_N=\operatorname{Short}_{H}(O^*O)|_H.
\end{equation}
Indeed, in the decomposition $X=H\oplus N$, the variational formula identifies $S_N\oplus0$ as the largest positive operator below $O^*O$ whose range is contained in $H$. This definition applies for arbitrary $ON$. If
$O^*O=\left(\begin{smallmatrix}A&B\\B^*&C\end{smallmatrix}\right)$
in $H\oplus N$ and $C$ has closed range, then $S_N=A-BC^\dagger B^*$.

A bounded positive operator $W$ on $H$ specifies the target seminorm
\[
 \norm{x}_{N,W}=\norm{W^{1/2}P_Hx}.
\]
After quotienting its zero space, its Hilbert completion is canonically
\begin{equation}\label{eq:target_weight_completion}
 \widehat{\mathcal Q}_{N,W}\simeq\overline{\ran W^{1/2}}\subset H.
\end{equation}
The identification sends the class of $x$ to $W^{1/2}P_Hx$ and equips the target with the weighted norm induced by $W$.

\begin{theorem}[Sharp recovery of a weighted target quotient]\label{thm:target_short}
The following conditions are equivalent:
\begin{enumerate}[label=\textup{(\roman*)}]
\item there is $C<\infty$ such that
$\norm{W^{1/2}P_Hx}\le C\norm{Ox}$ for every $x\in X$;
\item $W\le C^2S_N$ for some $C<\infty$;
\item $\ran W^{1/2}\subset\ran O_N^*$;
\item there is a unique bounded linear map
$D:\overline{\ran O}\to\widehat{\mathcal Q}_{N,W}$ satisfying
$D(Ox)=W^{1/2}P_Hx$ for every $x\in X$.
\end{enumerate}
Define
\begin{equation}\label{eq:target_weight_cost}
 \kappa_{N,W}:=
 \begin{cases}
 \displaystyle\sup_{Ox\ne0}
 \frac{\norm{W^{1/2}P_Hx}}{\norm{Ox}},
 & \ker O\subset\ker(W^{1/2}P_H),\\[2mm]
 +\infty,&\text{otherwise}.
 \end{cases}
\end{equation}
The supremum of an empty set of nonnegative ratios is taken to be zero. When the equivalent conditions hold, the common optimal constant satisfies
\begin{equation}\label{eq:target_weight_cost_equiv}
 \kappa_{N,W}
 =\inf\{C\ge0:W\le C^2S_N\}
 =\norm D.
\end{equation} For every $\delta>0$,
\begin{equation}\label{eq:target_unrestricted_risk}
 \inf_{\widehat D:Y\to\widehat{\mathcal Q}_{N,W}}
 \sup_{x\in X,\ \norm e\le\delta}
 \norm{\widehat D(Ox+e)-W^{1/2}P_Hx}
 =\delta\kappa_{N,W},
\end{equation}
where the infimum is over arbitrary deterministic reconstruction maps. If $W=0$, both sides are zero.

For $W=I_H$ and $H\ne\{0\}$, these conditions are equivalent to
$S_N\ge m^2I_H$ for some $m>0$, and also to $H\subset\ran O^*$. In this case
$\kappa_{N,I}=\norm{S_N^{-1/2}}$.
\end{theorem}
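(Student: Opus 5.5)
The plan is to reduce everything to the variational identity \eqref{eq:target_short_variation} and then apply Douglas' range inclusion lemma. Since $W^{1/2}P_Hx$ depends only on $h=P_Hx$, and $x=h+n$ with $n\in N$ arbitrary, condition (i) with constant $C$ is equivalent to $\norm{W^{1/2}h}\le C\inf_{n\in N}\norm{O(h+n)}=C\norm{O_Nh}$ for $h\in H$. Squaring, this is $\ip{Wh}{h}\le C^2\ip{S_Nh}{h}$, which is (ii), with the same optimal constant. Douglas' lemma applied to $W^{1/2}$ and $O_N^*$ shows that $W^{1/2}W^{1/2}\le C^2O_N^*O_N$ for some $C$ holds exactly when $\ran W^{1/2}\subset\ran O_N^*$. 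This gives (ii)$\Leftrightarrow$(iii).

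For (i)$\Leftrightarrow$(iv), I will use the factorization \eqref{eq:stablefactor} with $E=O$ and $L=W^{1/2}P_H$, taking values in the closed subspace $\widehat{\mathcal Q}_{N,W}\simeq\overline{\ran W^{1/2}}$ from \eqref{eq:target_weight_completion}. Uniqueness of $D$ follows because $D$ is prescribed on the dense set $\ran O$. This also gives $\norm D$ equal to the least constant in (i).

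Next I will identify this least constant with $\kappa_{N,W}$. Under (i) we have $\ker O\subset\ker(W^{1/2}P_H)$, so the supremum in \eqref{eq:target_weight_cost} is exactly the least $C$. Conversely, if that kernel inclusion fails, (i) fails, which is consistent with $\kappa=\infty$. The empty-supremum convention handles $O=0$ with $W^{1/2}P_H=0$. This proves \eqref{eq:target_weight_cost_equiv}.

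For the minimax identity \eqref{eq:target_unrestricted_risk}, the upper bound uses $\widehat D=D\circ P_{\overline{\ran O}}$. The error is then $\norm{DP(e)}\le\kappa\delta$, because $DPOx=W^{1/2}P_Hx$. The lower bound is the main obstacle, since $\widehat D$ may be arbitrary and nonlinear; I will use a two-point argument. Given $x$ with $\norm{Ox}\le\delta$, the data $O(x)+(-Ox)=0$ and $O(0)+0=0$ coincide. Any $\widehat D$ therefore errs by at least $\tfrac12\norm{W^{1/2}P_H(x)}$ on one of these two inputs. To remove the factor $\tfrac12$, I will use $\pm x$ instead: the data $O(x)-Ox=0=O(-x)+Ox$ coincide, the targets are $\pm W^{1/2}P_Hx$, and so one error is at least $\norm{W^{1/2}P_Hx}$. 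Taking the supremum over $\norm{Ox}\le\delta$ gives $\delta\kappa$. When $\kappa=\infty$, one can take $x\in\ker O$ scaled arbitrarily, or $Ox$ small with a large ratio, so both sides are infinite. When $W=0$, both sides are trivially zero.

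Finally, in the case $W=I_H$, condition (ii) reads $S_N\ge C^{-2}I_H$, and $\kappa=\norm{S_N^{-1/2}}$ by the spectral theorem. Condition (iii) becomes $H=\ran O_N^*$. To show this is equivalent to $H\subset\ran O^*$, I will note that $O_N^*=O|_H^*(I-P_{Y_N})$ composed with $P_H$. If $H\subset\ran O^*$, then Douglas' lemma gives $P_H\le C^2O^*O$, which is (i) for $W=I$. Conversely, (i) with $W=P_H$ yields $P_H\le C^2O^*O$ as operators on $X$, so $\ran P_H\subset\ran O^*$ by Douglas' lemma again.
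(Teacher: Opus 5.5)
Your proposal is correct and follows essentially the same route as the paper. The shared steps are the variational reduction of (i) to $\norm{W^{1/2}h}\le C\norm{O_Nh}$, Douglas' theorem for (ii)$\Leftrightarrow$(iii) and for the case $W=I_H$, zero-extension of $D$ off $\overline{\ran O}$ for the upper risk bound, and the $\pm x$ two-point argument for the lower bound. The only minor difference is in constructing $D$. You obtain it directly from the factorization \eqref{eq:stablefactor} with $E=O$, whereas the paper builds it on the splitting $\overline{\ran O}=Y_N\oplus\overline{\ran O_N}$ from Lemma~\ref{lem:relative_residual}, which also shows that $D$ vanishes on $Y_N$.
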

\begin{proof}
Write $x=h+n$ in $H\oplus N$. Apply Lemma~\ref{lem:relative_residual} with nuisance columns $O|_N$ and target columns $O|_H$. It gives
\[
 \inf_{n\in N}\norm{O(h+n)}=\norm{O_Nh},\qquad
 \overline{\ran O}=Y_N\oplus\overline{\ran O_N}.
\]
Thus (i) is equivalent to the reduced bound
\begin{equation}\label{eq:target_reduced_bound}
 \norm{W^{1/2}h}\le C\norm{O_Nh}\qquad(h\in H).
\end{equation}
Squaring proves (i)$\Leftrightarrow$(ii). Douglas' range-inclusion theorem, applied to $W^{1/2}$ and $O_N^*$, gives (ii)$\Leftrightarrow$(iii). This is actual range inclusion, not merely inclusion of the closed ranges.

The same reduced bound defines a unique map
\[
 L:\overline{\ran O_N}\longrightarrow\overline{\ran W^{1/2}},
 \qquad L(O_Nh)=W^{1/2}h.
\]
Its norm is the least constant in \eqref{eq:target_reduced_bound}. On the orthogonal decomposition of $\overline{\ran O}$ above, define $D(y_N+z)=Lz$. This gives (iv), $\norm D=\norm L$, and $D(Ox)=W^{1/2}P_Hx$. Uniqueness follows from the density of $\ran O$. Conversely (iv) immediately gives (i). The least constant in (i) is the ratio in \eqref{eq:target_weight_cost}, so \eqref{eq:target_weight_cost_equiv} follows. This also constructs the decoder on every compatible limiting datum, not only on $\ran O$.

Extend $D$ to all of $Y$ by zero on $(\overline{\ran O})^\perp$. The resulting decoder has the same norm and proves the upper risk bound $\delta\kappa_{N,W}$. For the lower bound choose $x$ with $Ox\ne0$ and rescale it to $\norm{Ox}=\delta$. The states $x,-x$, with errors $-Ox,Ox$, give the same datum zero. Every deterministic estimator must therefore miss one of their targets by at least $\norm{W^{1/2}P_Hx}$. Taking the supremum of the ratios proves equality. If the ratios are unbounded, this lower bound is infinite. If some $x\in\ker O$ has a nonzero target, arbitrary scaling of $x$ gives infinite risk even without noise. The zero target is immediate.

For $W=I_H$, domination is exactly $S_N\ge m^2I_H$. Applying Douglas directly to $P_H$ and $O$ gives the equivalent condition $H\subset\ran O^*$. For nonzero $H$, the optimal lower bound is $m=\norm{S_N^{-1/2}}^{-1}$, proving the last assertion.
\end{proof}

The decoder factors the observation through the target completion and may have a nontrivial kernel, reflecting observation directions discarded by the target. If $O$ is compact, an unweighted Hilbert quotient with bounded inverse is necessarily finite-dimensional: otherwise the compact operator $O_N$ would be bounded below on an infinite-dimensional space, forcing the identity on $H$ to be compact. Weighted targets can remain infinite-dimensional.

\subsection{Finite decoding and target-visible tails}
Let $X,Z$ now be Banach spaces, let $T\in B(X,Z)$ be a \emph{bounded linear} target, and let $M_a:X\to Y_a$, $J_a:Y_a\to X$ be bounded linear finite observation and reconstruction maps. Write $\Pi_a=J_aM_a$. The target estimate below depends on the projected residual $T(I-\Pi_a)$ rather than on full-state convergence of $\Pi_a$.
\begin{proposition}[Target conditioning and transverse tails]\label{prop:target_tail}
For $y=M_ax+e$, $\norm e\le\delta$, the output $TJ_ay$ satisfies
\begin{equation}\label{eq:target_tail_bound}
 \norm{TJ_ay-Tx}\le\norm{TJ_a}\delta+\norm{T(I-\Pi_a)x}.
\end{equation}
Consequently, uniform recovery on a class $\mathcal K$ follows from
$\sup_{x\in\mathcal K}\norm{T(I-\Pi_a)x}\to0$ together with vanishing target-amplified noise. For the quotient target $T:X\to X/N$, the tail is exactly
$\dist((I-\Pi_a)x,N)$.
\end{proposition}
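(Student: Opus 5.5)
The plan is a direct decomposition followed by reading off each claim. Write
\[
 TJ_ay-Tx=TJ_ae+T(J_aM_ax-x)=TJ_ae-T(I-\Pi_a)x,
\]
using linearity of $J_a$ and $y=M_ax+e$. The triangle inequality and $\norm{TJ_ae}\le\norm{TJ_a}\norm e\le\norm{TJ_a}\delta$ then give \eqref{eq:target_tail_bound}. This step only needs $T$ and $J_a$ to be bounded linear maps. It makes no use of $\Pi_a$ being a projection or of $\Pi_ax\to x$.

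For the uniform statement, take the supremum of \eqref{eq:target_tail_bound} over $x\in\mathcal K$ and over $\norm e\le\delta$. This gives
\[
 \sup_{x\in\mathcal K,\ \norm e\le\delta}\norm{TJ_a(M_ax+e)-Tx}
 \le\norm{TJ_a}\delta+\sup_{x\in\mathcal K}\norm{T(I-\Pi_a)x}.
\]
The hypothesis ``vanishing target-amplified noise'' means that along the chosen budgets $\norm{TJ_a}\delta_a\to0$. Under that hypothesis both terms tend to zero, so recovery is uniform on $\mathcal K$. I would state explicitly that only the amplified product $\norm{TJ_a}\delta_a$ has to vanish; neither $\norm{J_a}$ nor $\norm{TJ_a}$ needs to stay bounded.

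For the quotient target $T=q:X\to X/N$, the quotient norm of a class is the distance of any representative to $N$. Hence
\[
 \norm{q(I-\Pi_a)x}_{X/N}=\dist((I-\Pi_a)x,N),
\]
and the tail is exactly this distance. Components of the error that lie in $N$ are invisible to the target, and the estimate shows this directly.

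None of these steps is a real obstacle. The only point to handle carefully is the interpretation of the hypothesis: ``vanishing target-amplified noise'' has to be read as $\norm{TJ_a}\delta\to0$ for the admissible noise levels, not as a bound on $\norm{J_a}$ alone.
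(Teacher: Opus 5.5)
Your proposal is correct and takes essentially the paper's approach. The paper uses the same linear identity $TJ_ay-Tx=TJ_ae-T(I-\Pi_a)x$, then the triangle inequality, and gets the tail from the quotient-norm formula $\norm{q(z)}_{X/N}=\dist(z,N)$.
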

\begin{proof}
Linearity gives $TJ_ay-Tx=TJ_ae-T(I-\Pi_a)x$. Take norms; the quotient-norm formula gives the final assertion.
\end{proof}

For the Gaussian finite inverse of Theorem~\ref{thm:finitedecoder}, this becomes
\[
 \norm{TD_a\widetilde z-TK}
 \le\norm{TD_a}\,\delta+\norm{T(K-P_aK)}.
\]
Here $T$ must be bounded on the chosen coefficient space, with any required input radius specified. The spatial and source defects in Section~\ref{sec:data} imply this condition, while a target that annihilates unresolved directions can admit a strictly smaller tail certificate. For a nonlinear target of the form $\Phi\circ T$, compose \eqref{eq:target_tail_bound} with a uniform modulus of $\Phi$ on the relevant target set. General nonlinear invariants are handled by the fibre criterion below.

\begin{proposition}[Stable quotients without stable full reconstruction]\label{prop:target_strict_separation}
Let $G\ne\{0\}$ be a Hilbert space and, on $X=\ell^2(\mathbb N_0;G)$, fix $0<\varrho<1$ and
$O_\varrho x=(\varrho^jx_j)_{j\ge0}$. On the prior ball $\norm x\le M$, with $M>0$ and deterministic data noise at most $\delta>0$, the optimal full-state risk and the optimal risk for the first $r+1$ coordinates are
\begin{equation}\label{eq:target_strict_risks}
 \mathcal R^{\rm full}(M,\delta)=M,\qquad
 \mathcal R_r^{\rm quot}(M,\delta)=\min\{M,\varrho^{-r}\delta\}.
\end{equation}
Nevertheless $O_\varrho$ is injective.
\end{proposition}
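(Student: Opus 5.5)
Injectivity is immediate: $\varrho^jx_j=0$ for all $j$ forces $x=0$. For both risk formulas I will establish a matching upper bound, from an explicit decoder, and lower bound, from a two-point argument in the spirit of the lower bound in Theorem~\ref{thm:target_short}. Risk means the worst-case error over $\norm x\le M$ and $\norm e\le\delta$, minimized over arbitrary deterministic maps $\widehat D$.

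\emph{Full state.} The upper bound $M$ comes from the zero estimator, whose error is $\norm x\le M$. For the lower bound, fix a unit vector $g\in G$ and take $x_\pm=\pm M g\,e_j$, the vector whose $j$th coordinate is $\pm Mg$. Then $O_\varrho x_\pm=\pm\varrho^jMg\,e_j$. Choose $j$ with $\varrho^jM\le\delta$, which is possible since $\varrho<1$, and use the errors $e_\pm=\mp\varrho^jMg\,e_j$, each of norm at most $\delta$. Both states then produce the datum $0$. Any estimator outputs one value $v$ there, and
\[
\max_\pm\norm{v-x_\pm}\ge\tfrac12\norm{x_+-x_-}=M.
\]
So $\mathcal R^{\rm full}=M$.

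\emph{Quotient target.} Let $T$ denote projection onto coordinates $0,\dots,r$. The zero estimator again gives risk at most $M$. The second upper bound comes from the linear decoder $\widehat D y=(\varrho^{-j}y_j)_{j\le r}$. Its error on $y=O_\varrho x+e$ is $(\varrho^{-j}e_j)_{j\le r}$, whose norm is at most $\varrho^{-r}\norm e\le\varrho^{-r}\delta$, because $\varrho^{-j}\le\varrho^{-r}$ for $j\le r$. Hence the risk is at most $\min\{M,\varrho^{-r}\delta\}$.

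For the lower bound, put $t=\min\{M,\varrho^{-r}\delta\}$ and $x_\pm=\pm tg\,e_r$. Then $\norm{x_\pm}=t\le M$. Moreover $\norm{O_\varrho x_\pm}=\varrho^rt\le\delta$, so the errors $e_\pm=-O_\varrho x_\pm$ are admissible and both states give the datum $0$. Since $\norm{Tx_+-Tx_-}=2t$, every estimator misses one of the two targets by at least $t$. This proves $\mathcal R^{\rm quot}_r=t$.

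There is no real obstacle. The one point to get right is to choose the same-datum perturbation at the right index: a high index $j$ (so $\varrho^jM\le\delta$) for the full state, and index $r$ for the quotient. The two-point bound must also use the half-diameter of the two targets, not their distance.
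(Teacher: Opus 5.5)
Your proposal is correct and follows the paper's own proof essentially step for step. Injectivity comes from the positive weights. The upper bounds come from the zero decoder and the inversion of the first $r+1$ weights, which has norm $\varrho^{-r}$. The lower bounds come from the symmetric pair $\pm x$ with common zero datum, placed at index $r$ with amplitude $\min\{M,\varrho^{-r}\delta\}$ for the quotient, and at a high index $j$ with $M\varrho^j\le\delta$ for the full state.
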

\begin{proof}
All weights are positive, proving injectivity. Inverting the first $r+1$ weights has norm $\varrho^{-r}$, while the zero decoder has error at most $M$. For a unit $v\in G$ use
$x=\min\{M,\varrho^{-r}\delta\}e_r\otimes v$.
The states $\pm x$ give common zero data under errors $\mp O_\varrho x$ of norm at most $\delta$; their half-separation proves the quotient lower bound. For full recovery, choose $j$ with $M\varrho^j\le\delta$ and use $x=Me_j\otimes v$. This gives the lower bound $M$, attained by the zero decoder.
\end{proof}

This is a first-chaos operator model: with $G=\Sch_2(E)$, the series
$F_x=\sum_jg_jx_j$ converges in $L^2(\Sch_2)$, and
$\varrho^j\E[g_jF_x]=\varrho^jx_j$.
Thus the strict separation holds within the same Gaussian observation system. The diagonal observation is compact when $G$ is finite-dimensional, but not when $G$ is infinite-dimensional; compactness is not used in \eqref{eq:target_strict_risks}.

The weighted target $T_\tau x=(\tau^jx_j)$, $0<\tau\le1$, is globally Lipschitz recoverable exactly when $\tau\le\varrho$, since
$\sup_j(\tau/\varrho)^j<\infty$ is precisely the operator domination in Theorem~\ref{thm:target_short}. Thus $\tau=1$ separates algebraic identifiability from stable recovery.

\subsection{Nonlinear observation fibres and uniform completion}
Let $\mathcal Q$ be a target set, possibly nonlinear, with a prescribed separated uniform structure generated by countably many pseudometrics $d_\alpha$. Let $j_i:\mathcal Q\to Z_i$ be observations into metric spaces, and write $J_m=(j_1,\ldots,j_m)$ with the finite maximum metric $q_m$. The complete coordinate family is required to separate the target classes themselves.

\begin{theorem}[Finite inverse criteria and invariant completion]\label{thm:invariant_uniform}
Suppose each $j_i$ is uniformly continuous, the complete family separates $\mathcal Q$, and, for every $\alpha$ and $\varepsilon>0$, there are $m$ and $\eta>0$ such that
\begin{equation}\label{eq:invariant_uniform_inverse}
 q_m(J_mu,J_mv)<\eta\quad\Longrightarrow\quad
 d_\alpha(u,v)<\varepsilon\qquad(u,v\in\mathcal Q).
\end{equation}
Then $J=(j_i)_i$ is a uniform isomorphism of $\mathcal Q$ onto $J(\mathcal Q)$. If $\widehat Z_i$ denotes the metric completion of $Z_i$ and the product is equipped with its product uniformity, then $J$ extends uniquely to a uniform isomorphism
\begin{equation}\label{eq:invariant_completion_iso}
 \widehat{\mathcal Q}\ \simeq\
 \overline{J(\mathcal Q)}^{\,\prod_i\widehat Z_i}.
\end{equation}
A sufficient quantitative hypothesis for \eqref{eq:invariant_uniform_inverse} is, on the specified certificate class,
\begin{equation}\label{eq:invariant_inverse_tail}
 d_\alpha(u,v)\le\omega_\alpha\!\big(q_{m}(J_mu,J_mv)+b_{\alpha,m}\big),
 \qquad b_{\alpha,m}\longrightarrow0,
\end{equation}
where $\omega_\alpha$ is nondecreasing and $\omega_\alpha(t)\to0$ as $t\downarrow0$.

For a nonempty fibre of candidates whose $J_m$-data lie within $\delta$ of one observed datum, every pair of candidates has data discrepancy at most $2\delta$. Hence any pairwise inverse estimate yields a bound for the target diameter of the fibre. If a sequence of fibres contains one common target and its target diameters tend to zero, every candidate selection converges to that target. If the fibres are nested, nonempty, and their diameters tend to zero in every $d_\alpha$, every candidate selection is Cauchy and all selections determine the same point of $\widehat{\mathcal Q}$.
\end{theorem}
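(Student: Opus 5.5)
The proof splits into four independent parts: the uniform-isomorphism statement, the completion, the quantitative sufficient condition, and the fibre assertions.

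\emph{Uniform isomorphism.} Equip $\prod_iZ_i$ with the product uniformity, generated by the pseudometrics $q_m\circ(\mathrm{pr}_1,\dots,\mathrm{pr}_m)$. Since each $j_i$ is uniformly continuous, so is every $J_m$, and hence $J$ is uniformly continuous into the product. Because the complete family separates $\mathcal Q$, $J$ is injective. Hypothesis \eqref{eq:invariant_uniform_inverse} says that for every entourage $\{d_\alpha<\varepsilon\}$ there is a basic product entourage $\{q_m<\eta\}$ whose preimage lies inside it. This is exactly uniform continuity of $J^{-1}:J(\mathcal Q)\to\mathcal Q$. The one subtlety is that the uniform structure on $\mathcal Q$ must be generated by the $d_\alpha$. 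Since it is, finitely many $d_\alpha$ can be handled by taking the maximum of the corresponding $m$'s and the minimum of the $\eta$'s.

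\emph{Completion.} The product $\prod_i\widehat Z_i$ is complete and separated, so the closure $\overline{J(\mathcal Q)}$ is complete. Moreover $J(\mathcal Q)$ is dense in it and uniformly isomorphic to $\mathcal Q$. A uniform isomorphism between dense subspaces of complete separated spaces extends uniquely to a uniform isomorphism of the completions, by the standard extension theorem for uniformly continuous maps applied to $J$ and to $J^{-1}$. This gives \eqref{eq:invariant_completion_iso}, including uniqueness. Countability of the generating pseudometrics is not essential here; it only ensures the completion can be described by sequences.

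\emph{Quantitative sufficient condition.} Given $\alpha$ and $\varepsilon$, choose $t_0>0$ with $\omega_\alpha(t_0)<\varepsilon$. Then choose $m$ with $b_{\alpha,m}<t_0/2$, and set $\eta=t_0/2$. Monotonicity of $\omega_\alpha$ turns \eqref{eq:invariant_inverse_tail} into \eqref{eq:invariant_uniform_inverse}. If the estimate is assumed only on a certificate class, the statement is read on that class as $\mathcal Q$.

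\emph{Fibres.} Two candidates within $\delta$ of a common datum are within $2\delta$ of each other by the triangle inequality in $q_m$. Any pairwise inverse estimate $d_\alpha\le\omega(q_m+b)$ then bounds the fibre diameter by $\omega(2\delta+b)$. If every fibre contains a fixed target $u$ and the diameters tend to zero, then $d_\alpha(u_n,u)\le\operatorname{diam}_\alpha\to0$ for any selection $u_n$. If the fibres are nested, then for $n'\ge n$ the points $u_{n'}$ and $u_n$ lie in fibre $n$, so $d_\alpha(u_{n'},u_n)\le\operatorname{diam}_\alpha(\text{fibre }n)\to0$, and every selection is Cauchy. Interleaving two selections still produces a selection from the same nested fibres, so it is Cauchy, and the two selections have the same limit in $\widehat{\mathcal Q}$.

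No step is a real obstacle. The only point requiring care is the correct matching of the generated uniformities in the first step.
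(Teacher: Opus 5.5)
Your proposal is correct and follows essentially the same route as the paper. It establishes uniform continuity of $J$ from the finite-coordinate nature of the product uniformity, reads off uniform continuity of $J^{-1}$ directly from \eqref{eq:invariant_uniform_inverse}, and extends $J$ and $J^{-1}$ through the complete closure in $\prod_i\widehat Z_i$. It then makes the same order of choices for the quantitative condition (threshold for $\omega_\alpha$, then $m$, then the tolerance), and uses the $2\delta$ triangle-inequality argument for the fibres. The only cosmetic difference is that you identify the limits of two selections by interleaving, whereas the paper observes directly that after stage $N$ both selections lie in the $N$th fibre. This gives $d_\alpha(u_n,v_n)\le\operatorname{diam}_\alpha\to0$ without forming a combined sequence.
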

\begin{proof}
The product uniformity is the uniformity of finite experiments: each basic entourage imposes tolerances on only finitely many coordinates. Forward uniform continuity of every $j_i$ therefore makes $J$ uniformly continuous. Separation makes $J$ injective. Condition~\eqref{eq:invariant_uniform_inverse} says exactly that each target entourage contains the inverse image of one finite-data entourage, and hence that $J^{-1}$ is uniformly continuous on $J(\mathcal Q)$.

A uniform isomorphism preserves Cauchy filters and their equivalence. The product of the complete spaces $\widehat Z_i$ is complete, and the completion of its uniform subspace $J(\mathcal Q)$ is its closure. Extending $J$ and its inverse to completions proves \eqref{eq:invariant_completion_iso}. To check the sufficient quantitative condition, first choose $m$ so that $b_{\alpha,m}$ is within the small-argument range of $\omega_\alpha$, and then choose the tolerance of that finite experiment. This proves \eqref{eq:invariant_uniform_inverse} without interchanging observation order and noise precision.

For the fibre assertions, two candidates within $\delta$ of the same observed datum have finite-data discrepancy at most $2\delta$. A pairwise inverse bound therefore controls the whole target diameter, independently of the candidate-selection rule. If the true target belongs to every fibre, its distance from any selected candidate is bounded by that diameter. If instead the fibres are nested, two selections made after stage $N$ both belong to the $N$th fibre. Vanishing diameters in every $d_\alpha$ make all such selections Cauchy and make any two selections equivalent. Their common limit belongs to the completed target; membership in the original target requires its closedness.
\end{proof}

\begin{proposition}[Compact certificate classes and optimal recovery]\label{prop:invariant_compact}
Let $(\mathcal K,d)$ be a nonempty compact metric target class and let $j_i$ be continuous real coordinates. Define
\begin{equation}\label{eq:invariant_fibre_modulus}
 \omega_m(t)=\max\{d(u,v):u,v\in\mathcal K,\
                 \norm{J_mu-J_mv}_\infty\le t\}.
\end{equation}
The coordinates separate $\mathcal K$ if and only if, for every $\varepsilon>0$, some $m$ and $t>0$ satisfy $\omega_m(t)<\varepsilon$. In that case $J(\mathcal K)$ is compact and closed. For deterministic noise in $\R^m$ with $\ell^\infty$ norm at most $\delta\ge0$, define
\[
 \mathcal R_m(\delta)=\inf_{D:\R^m\to\mathcal K}
 \sup_{u\in\mathcal K,\ \norm e_\infty\le\delta}
 d\big(D(J_mu+e),u\big).
\]
Then
\begin{equation}\label{eq:invariant_compact_risk}
 \tfrac12\omega_m(2\delta)\le\mathcal R_m(\delta)\le\omega_m(2\delta).
\end{equation}
When the coordinates separate $\mathcal K$, for every $\varepsilon>0$ there are finitely many polynomials $P_i$ in a common $J_m$ and a finite $L_\varepsilon$ such that
\begin{equation}\label{eq:invariant_norming_tests}
 d(u,v)\le4\varepsilon+\max_i|P_i(J_mu)-P_i(J_mv)|
 \le4\varepsilon+L_\varepsilon\norm{J_mu-J_mv}_\infty.
\end{equation}
\end{proposition}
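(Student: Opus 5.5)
The plan has three parts: a compactness argument for the separation criterion, two-point and covering arguments for the risk bounds, and Stone--Weierstrass for the norming tests.

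\emph{Separation criterion.} Compactness of $\mathcal K\times\mathcal K$ and continuity of $J_m$ show that the maximum in \eqref{eq:invariant_fibre_modulus} is attained. They also show that $\omega_m(t)$ is nondecreasing in $t$ and nonincreasing in $m$.

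For the direction from separation to small moduli, argue by contradiction. Suppose some $\varepsilon>0$ has $\omega_m(1/m)\ge\varepsilon$ for every $m$. Choose pairs $(u_m,v_m)$ with $d(u_m,v_m)\ge\varepsilon$ and $\norm{J_mu_m-J_mv_m}_\infty\le1/m$. Extract a convergent subsequence $(u_m,v_m)\to(u,v)$. Each fixed coordinate is eventually included in $J_m$, so $j_i(u)=j_i(v)$ for every $i$, while $d(u,v)\ge\varepsilon$. This contradicts separation.

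Conversely, if $u\ne v$, take $\varepsilon<d(u,v)$ and the corresponding $m$ and $t$. Then $J_mu\ne J_mv$, since otherwise $d(u,v)\le\omega_m(0)\le\omega_m(t)<\varepsilon$. Once separation holds, $J$ is a continuous injection from a compact space into $\R^{\N}$ with the product topology. Hence $J(\mathcal K)$ is compact and therefore closed.

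\emph{Risk bounds.} For the lower bound in \eqref{eq:invariant_compact_risk}, use the two-point argument of Theorem~\ref{thm:target_short}. Take $u,v$ attaining $\omega_m(2\delta)$ and let $y=(J_mu+J_mv)/2$. This datum is within $\delta$ in $\ell^\infty$ of both $J_mu$ and $J_mv$. Any decoder output is therefore at distance at least $d(u,v)/2$ from one of $u,v$.

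For the upper bound, let $D(y)$ be any $w\in\mathcal K$ with $\norm{J_mw-y}_\infty\le\delta$ whenever such $w$ exists. If $y=J_mu+e$ with $\norm e_\infty\le\delta$, then $u$ itself is admissible, so such $w$ exist. Then $\norm{J_mw-J_mu}_\infty\le2\delta$, which gives $d(w,u)\le\omega_m(2\delta)$. On data far from $J_m(\mathcal K)$, set $D$ to an arbitrary fixed point. This decoder needs no measurability, since arbitrary maps are allowed. The minor subtlety is that the infimum over $D$ need not be attained, but the bound holds for this particular $D$.

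\emph{Norming polynomials.} This is the main step. Choose $m$ and $t$ with $\omega_m(t)<\varepsilon$, and cover $\mathcal K$ by finitely many balls $B(u_i,\varepsilon)$. The functions $f_i=d(\cdot,u_i)$ are continuous on $\mathcal K$, but they need not factor through $J_m$, so the plan is to replace them by functions of $J_m$.

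Set $g_i(y)=\min\{d(w,u_i):w\in\mathcal K,\ J_mw=y\}$ on the compact set $J_m(\mathcal K)\subset\R^m$. If $J_mw=J_mw'$ then $d(w,w')\le\omega_m(0)<\varepsilon$, so $|g_i(J_mw)-f_i(w)|\le\varepsilon$. Next, $g_i$ is lower semicontinuous but perhaps not continuous. To fix this, take $h_i(y)=\inf_{y'}\bigl(g_i(y')+\Lambda\norm{y-y'}_\infty\bigr)$, the Lipschitz regularization of $g_i$ with a large constant $\Lambda$. It is $\Lambda$-Lipschitz, and $h_i\le g_i$.

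For the reverse comparison, if $\norm{y-y'}_\infty\le t$ with $y=J_mw$ and $y'=J_mw'$, then $d(w,w')\le\omega_m(t)<\varepsilon$. Hence $g_i(y')\ge g_i(y)-2\varepsilon$, since the $\varepsilon$ from the fibre of $y$ and the $\varepsilon$ from moving to $w'$ add. Points with $\norm{y-y'}_\infty>t$ contribute at least $\Lambda t$, which exceeds $\operatorname{diam}\mathcal K$ once $\Lambda$ is large. So $h_i\ge g_i-2\varepsilon$, and therefore $|h_i\circ J_m-f_i|\le3\varepsilon$.

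Stone--Weierstrass on the compact set $J_m(\mathcal K)$ then supplies polynomials $P_i$ with $|P_i-h_i|$ small, say at most $\varepsilon/2$. The total error is then at most $4\varepsilon$ after adjusting constants. Finally,
\[
 d(u,v)\le\varepsilon+\max_i|f_i(u)-f_i(v)|
\]
by choosing $u_i$ near $u$, since $f_i(u)<\varepsilon$ and $f_i(v)\ge d(u,v)-\varepsilon$. Adjusting the constants gives the first inequality of \eqref{eq:invariant_norming_tests}. The polynomials are Lipschitz on the bounded convex hull of $J_m(\mathcal K)$, which gives $L_\varepsilon$.
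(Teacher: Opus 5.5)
Your separation criterion and both risk bounds follow the paper's arguments: sequential compactness applied to pairs with $\norm{J_mu_m-J_mv_m}_\infty\le1/m$, the midpoint two-point lower bound, and a feasible-candidate decoder. The norming step is correct but takes a different route.

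The paper does not use the modulus there. It applies Stone--Weierstrass in $C(\mathcal K)$ to the unital algebra generated by all the separating coordinates. Each $f_i=d(\cdot,z_i)$ is approximated within $\varepsilon$ by a polynomial in finitely many coordinates, and $m$ is the largest index that occurs. The count $2\varepsilon+2\varepsilon$ then gives exactly $4\varepsilon$.

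You instead fix $m,t$ with $\omega_m(t)<\varepsilon$ first. You push $f_i$ down to the fibre minimum $g_i$ on $J_m(\mathcal K)$, regularize by inf-convolution with $\Lambda\ge\operatorname{diam}\mathcal K/t$, and only then use classical Weierstrass on $J_m(\mathcal K)\subset\R^m$.

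The paper's route is shorter and needs nothing beyond separation. Yours ties the observation order to the same finite experiment that certifies the modulus. It also produces explicit $\Lambda$-Lipschitz tests $h_i\circ J_m$ before any polynomial approximation. A quantitative scheme, for instance tensor Bernstein polynomials of the extended $h_i$ on a rectangle, could therefore make $L_\varepsilon$ explicit; the paper's qualitative argument does not.

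Two constants need repair, which is harmless since $\varepsilon$ is arbitrary. With an $\varepsilon$-net one only gets $d(u,v)\le2\varepsilon+\max_i|f_i(u)-f_i(v)|$, not $\varepsilon+\cdots$. The reverse comparison actually loses just one $\varepsilon$, because $\omega_m(t)<\varepsilon$ already bounds the distance between any points of the two fibres. Rescaling $\varepsilon$ then gives $4\varepsilon$.
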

\begin{proof}
If uniform recovery fails despite separation, choose $u_m,v_m$ with $d(u_m,v_m)\ge\varepsilon_0$ and $\norm{J_mu_m-J_mv_m}\le1/m$. A common convergent subsequence has limits agreeing in every fixed coordinate, but at distance at least $\varepsilon_0$, a contradiction. The reverse implication is immediate. Since a continuous injection from a compact space to a Hausdorff space is a homeomorphism onto its image, the inverse is uniformly continuous on the compact image.

For the risk upper bound, if the datum lies within $\delta$ of $J_m(\mathcal K)$, select one feasible candidate; the true and selected targets have $J_m$-distance at most $2\delta$ and hence target distance at most $\omega_m(2\delta)$. On infeasible data the decoder may be defined arbitrarily. For the lower bound, a pair realizing the maximum can produce its coordinate midpoint with noise at most $\delta$. By the triangle inequality any estimate misses one target by at least half their distance. This is the standard two-point optimal-recovery argument~\cite{MR,EF}.

Finally choose a finite $\varepsilon$-net $(z_i)$ and set $f_i(u)=d(u,z_i)$. For some $i$, $|f_i(u)-f_i(v)|\ge d(u,v)-2\varepsilon$. The algebra generated by the real separating coordinates contains constants. Stone--Weierstrass approximates each of the finitely many $f_i$ uniformly within $\varepsilon$ by a polynomial in a common finite $J_m$. This gives the first inequality in \eqref{eq:invariant_norming_tests}. Bound the finitely many polynomial gradients on a rectangle containing $J_m(\mathcal K)$ to obtain the second.
\end{proof}

Proposition~\ref{prop:invariant_compact} is qualitative: it does not specify the observation order or the constant $L_\varepsilon$. Section~\ref{sec:pfaffian} supplies these inverse controls from explicit finite-moment estimates and then derives compactness from the same spectral envelope.

\subsection{Which operations survive the target quotient?}
Recovery of a target is separate from closure of its operations. If $A\in B(X)$ preserves $N$, its quotient action is $\overline A=P_HA|_H$. It extends to \eqref{eq:target_weight_completion} whenever
\begin{equation}\label{eq:target_operation_weight}
 \overline A^*W\overline A\le C_A^2W.
\end{equation}
Indeed this inequality preserves the additional zero seminorm space and bounds the induced map by $C_A$ before completion. The extension is unique by density. For multilinear operations, Proposition~\ref{prop:congruence} supplies algebraic descent and the corresponding target-norm estimate supplies continuous extension. For the nonlinear spectral quotient of Section~\ref{subsec:pfaffian_cross}, addition fails to descend in general. Continuous operations on a recovered quotient are therefore obtained by verifying congruence and the corresponding target-norm estimate.

\section{Gaussian coefficient geometry}\label{sec:coeff}
We now realize the abstract inverse mechanism in coefficient spaces defined before random evaluation. Directed cuts specify their norm and duality, Gaussian pairings give a norming lift, and same-source polarization transfers the inverse to Wick chaoses. These analytic estimates recover the original Banach geometry; a Hilbert Gram representation is not substituted for it. The exponential degree costs are the input needed for the all-radius algebra in Section~\ref{sec:product}.

\subsection{Directed cuts and coefficient spaces}
Let $H_1,\ldots,H_m$ be complexifications of real separable Hilbert spaces, and let $\cC,\cE$ be the physical input and output spaces. On the finite-support, finite-matrix core, write
\[
 K=\sum_Jh_{1,j_1}\otimes\cdots\otimes h_{m,j_m}\otimes K_J,
 \qquad K_J:\cC\to\cE.
\]
For $S\subset[m]$, let $H_S=\bigotimes_{j\in S}H_j$, with empty tensor product $\C$. Placing the indices $(J_S,\cC)$ on the input side gives
\begin{equation}\label{eq:cuts}
 \Flat_S(K):\overline{H_S}\otimes_2\cC\longrightarrow H_{S^c}\otimes_2\cE,
 \qquad \Prof_{m,s}(K)=\max_{S\subset[m]}\norm{\Flat_S(K)}_s.
\end{equation}
At $s=2$, all these norms are the norm of the same Hilbert tensor, under unitary rearrangement.

\begin{definition}[Coefficient geometry]\label{def:cutspace}
The space $\Coeff_{m,s}$ is the completion of the core in the norm
\begin{equation}\label{eq:cutnorm}
 \norm K_{\Coeff_{m,s}}=
 \begin{cases}
 \displaystyle\inf_{K=\sum_SK^S}\sum_{S\subset[m]}\norm{\Flat_S(K^S)}_s,&1\le s<2,\\[2mm]
 \norm K_{\bigotimes_jH_j\otimes_2\overline{\cC}\otimes_2\cE},&s=2,\\[1mm]
 \displaystyle\max_{S\subset[m]}\norm{\Flat_S(K)}_s,&2<s<\infty.
 \end{cases}
\end{equation}
When all source legs are labeled copies of one real Hilbert space, $\Wc_{m,s}$ is the closed image of source symmetrization, with the inherited norm. At degree zero, the space is $\Sch_s(\cC,\cE)$.
\end{definition}

Finite-factor compressions are contractions on every cut. Source permutations permute the cuts, so their average is a contractive symmetrization projection. Physical adjoints exchange the input and output spaces and complement the cuts. The high-exponent completion can contain compatible Schatten arrays whose total Hilbert tensor norm is infinite.

\begin{lemma}[Polarity and reflexivity]\label{lem:polarity}
For $1<s<\infty$ and $s'=s/(s-1)$, the core pairing
\begin{equation}\label{eq:coeffpair}
 \langle K,L\rangle=\sum_J\Tr(K_JL_J^*)
 =\Tr\big(\Flat_S(K)\Flat_S(L)^*\big)
\end{equation}
is independent of $S$. Its continuous extension identifies $\Coeff_{m,s'}$ isometrically with the conjugate dual of $\Coeff_{m,s}$. Both spaces are reflexive, as are their symmetric subspaces. Common finite-factor compressions converge strongly to the identity on both sides.
\end{lemma}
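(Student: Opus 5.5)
We begin with the core identity in \eqref{eq:coeffpair}. For finite-support, finite-matrix kernels, the flattening $\Flat_S$ only rearranges indices: the source indices $J_S$ together with the physical input go to one side, the rest to the other. So $\Tr(\Flat_S(K)\Flat_S(L)^*)$ expands as the same finite sum $\sum_J\sum_{c,e}(K_J)_{ec}\overline{(L_J)_{ec}}=\sum_J\Tr(K_JL_J^*)$, whatever $S$ is. We must keep track of the conjugation $\overline{H_S}$ so that the pairing stays sesquilinear in the stated way.

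For duality we treat $1<s<2$ and $2<s<\infty$ together; the case $s=2$ is Hilbert self-duality. Fix $s<2$, so $s'>2$. Put $X=\bigoplus_{S}^{\ell^1}\Sch_s(\overline{H_S}\otimes_2\cC,H_{S^c}\otimes_2\cE)$. Then $\Coeff_{m,s}$ is the quotient of (the closure in $X$ of) the core under the summation map $\Sigma:(K^S)_S\mapsto\sum_S\Flat_S^{-1}K^S$. Schatten duality $\Sch_s'=\Sch_{s'}$ gives $X'=\bigoplus^{\ell^\infty}_S\Sch_{s'}$. The dual of a quotient $X/\ker\Sigma$ is the annihilator $(\ker\Sigma)^\perp$. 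A functional annihilates $\ker\Sigma$ exactly when its components are the flattenings $\Flat_S(L)$ of one common array $L$. Such an array has norm $\max_S\norm{\Flat_S L}_{s'}=\norm L_{\Coeff_{m,s'}}$, which identifies $(\Coeff_{m,s})'$ isometrically with $\Coeff_{m,s'}$. To make this rigorous we work with finite-factor compressions. Let $P_n$ be the orthogonal projections onto spans of the first $n$ basis vectors in each $H_j$, $\cC$ and $\cE$, and let $\Pi_n$ be the induced compression. It contracts every cut, so it contracts both norms. A functional $\phi$ on $\Coeff_{m,s}$ therefore gives finite arrays $L_n$ with $\phi(\Pi_nK)=\langle K,L_n\rangle$ and $\max_S\norm{\Flat_SL_n}_{s'}\le\norm\phi$. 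These arrays are compatible under further compression, so they define a compatible array $L$. Its membership in the completion $\Coeff_{m,s'}$ follows once we know that $\Pi_n\to I$ strongly.

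Strong convergence is the main obstacle. On the finite core, $\Pi_nK=K$ for large $n$, and $\norm{\Pi_n}\le1$ uniformly, so an $\varepsilon/3$ argument gives $\Pi_nK\to K$ on the completion $\Coeff_{m,s}$. This holds for every $s$. The remaining difficulty is to show that the dual array $L$ obtained above is a norm limit of its compressions $\Pi_nL$ in $\Coeff_{m,s'}$, rather than just a bounded compatible array. For $s'$ finite, we would use that each $\Flat_S(L)$ lies in $\Sch_{s'}$, where compressions converge in norm. Since there are finitely many cuts, $\max_S\norm{\Flat_S(L-\Pi_nL)}_{s'}\to0$, and so $L$ lies in the closure of the core.

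The isometry goes as follows. The estimate $|\langle K,L\rangle|\le\norm K_{s}\norm L_{s'}$ follows by decomposing $K=\sum K^S$, applying Hölder cutwise, and taking the infimum. The reverse inequality comes from Hahn--Banach applied to the quotient description. Symmetrically, $(\Coeff_{m,s'})'=\Coeff_{m,s}$ follows as the dual of the closed diagonal subspace $\{(\Flat_SL)_S\}\subset\bigoplus^{\ell^\infty}\Sch_{s'}$ of a reflexive space, namely a quotient of $\bigoplus^{\ell^1}\Sch_s$. Reflexivity then follows, since $\Coeff_{m,s}$ is a quotient of the reflexive space $\bigoplus_S\Sch_s$, a finite sum of reflexive spaces. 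Finally, $\Wc_{m,s}$ is the range of the contractive symmetrization projection, which is self-adjoint for the pairing. It is therefore a complemented, closed subspace and is reflexive, with dual $\Wc_{m,s'}$.
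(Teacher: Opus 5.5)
Your proposal is correct and follows essentially the paper's route: the $\ell^1$-sum quotient for $s<2$, dual data compressed to finite arrays whose cut norms converge because $s'<\infty$ and there are finitely many cuts, reflexivity from finite sums and quotients, and the contractive symmetrization projection that is self-adjoint for the pairing. The differences are cosmetic. You build $L$ from $\phi$ on the compressed ranges rather than from a family annihilating $\ker\Sigma$, and you get the reverse duality via annihilators rather than biduality. The one step you assert without proof is that the core infimum norm coincides with the quotient norm of $X/\ker\Sigma$; it follows from applying one large common compression that fixes a given core tensor and contracts every summand of its decomposition, exactly as in the paper.
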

\begin{proof}
We first construct the low-exponent space and identify its dual. For $1<s<2$, pull the Schatten space of each cut back to a tensor space $X_{S,s}$. Each embeds continuously in the common Hilbert tensor space $\mathscr K$, since $\norm A_2\le\norm A_s$. The addition map
\[
 \Sigma:\bigoplus_S^{\ell^1}X_{S,s}\longrightarrow\mathscr K,
 \qquad (K^S)_S\longmapsto\sum_SK^S
\]
has closed kernel. Its quotient is a Banach space with the infimum norm in \eqref{eq:cutnorm}.

This quotient agrees with the completion of the finite core. To verify the point that matters here, take a decomposition of a core tensor and apply one sufficiently large common finite-factor compression. It fixes the total tensor, contracts every summand, and turns the decomposition into a core decomposition with no larger cost. Conversely, for an arbitrary quotient element, compress a near-minimal decomposition in each of its finitely many summands. These compressions converge in their Schatten norms. Thus the core has the stated norm and is dense in the quotient.

The dual of the finite $\ell^1$ sum is the $\ell^\infty$ sum of the conjugate-dual Schatten spaces. A family $(B_S)_S$ defines a quotient functional precisely when it annihilates $\ker\Sigma$. Testing a tensor and its negative in the two positions $S,S_0$ gives
\[
 \Tr(\Flat_S(K)B_S^*)
 =\Tr(\Flat_{S_0}(K)B_{S_0}^*)
 \qquad(K\text{ in the core}).
\]
These are exactly the compatibility conditions between the cuts. Indeed, compress all the $B_S$ by the same finite factors and recover a finite tensor $L_n$ from the $S_0$-cut. Testing matrix units forces its other cuts to be the other compressed $B_S$. Since $s'<\infty$, those compressions converge in every Schatten-$s'$ norm. There are finitely many cuts, so $(L_n)$ converges in their maximum norm. Its limit represents the original functional. Conversely, every compatible limit annihilates $\ker\Sigma$. The quotient-dual norm is the maximum of the cut norms, proving the isometric identification.

For a completed low-exponent kernel, the resulting pairing is interpreted as
\[
 \langle K,L\rangle
 =\sum_S\Tr\big(\Flat_S(K^S)\Flat_S(L)^*\big),
 \qquad K=\sum_SK^S.
\]
Its value is independent of the decomposition by the annihilator condition, and its absolute value is at most
$\sum_S\norm{\Flat_S(K^S)}_s\norm L_{\Coeff_{m,s'}}$.
In particular, this formula does not require a low-exponent sum-space element to belong to every individual cut space.

Finite sums and closed quotients of reflexive Schatten spaces are reflexive. Biduality now gives the reverse identification. At $s=2$ all statements are Hilbert-space identities. Source symmetrization is a contractive projection self-adjoint for the coefficient pairing, so restricting to its range proves the symmetric version. Finally, common compressions converge in each of the finitely many cuts in the high branch and in each summand of a quotient decomposition in the low branch. This proves the asserted strong approximation on both sides.
\end{proof}

For example, the degree-one coefficients $K_j=e_je_1^*$, $1\le j\le N$, have cut norms $\sqrt N$ and $N^{1/s}$. For $s<2$, their quotient-sum norm is $\sqrt N$: one cut gives the upper bound, and the common Hilbert norm gives the lower bound. Thus the maximum-cut norm would give a genuinely different low-exponent geometry.

\subsection{Forward estimates and closed-image recovery}
For independent standard real Gaussian families $g_{\nu,j}$, define on the core
\begin{equation}\label{eq:dec-eval}
 E_mK=T_K=\sum_{j_1,\ldots,j_m}
       \left(\prod_{\nu=1}^mg_{\nu,j_\nu}\right)K_{j_1\cdots j_m}.
\end{equation}
The same norm estimates hold for standard circular complex Gaussians normalized by $\E|g|^2=1$.

\begin{lemma}[Forward bounds]\label{lem:forward}
For $1\le s\le2$,
\begin{equation}\label{eq:lowforward}
 \norm{T_K}_{L^2(\Sch_s)}\le\norm K_{\Coeff_{m,s}}.
\end{equation}
For $p,s\ge2$, an absolute constant $C$ gives
\begin{equation}\label{eq:highforward}
 \norm{T_K}_{L^p(\Sch_s)}
 \le C^m(p+s)^{m/2}\max_S\norm{\Flat_S(K)}_s.
\end{equation}
The constants are independent of the source and physical dimensions.
\end{lemma}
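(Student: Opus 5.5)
For the low-exponent bound \eqref{eq:lowforward}, I will first treat a single cut and then pass to the quotient norm. Since $E_m$ is linear, take a near-optimal decomposition $K=\sum_SK^S$. The triangle inequality in $L^2(\Sch_s)$ then reduces matters to showing
\[
 \norm{T_{K}}_{L^2(\Sch_s)}\le\norm{\Flat_S(K)}_s\qquad(1\le s\le2)
\]
for each fixed $S$. Because the estimate is stable under finite-factor compressions, it is enough to prove it on the core.

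Write $T_K=\sum_J g_J K_J$ with $g_J=\prod_\nu g_{\nu,j_\nu}$. I will view $T_K$ as a product of three factors. The first is a random row or column operator built from the Gaussians attached to the legs in $S$. The second is $\Flat_S(K)$. The third is a random column built from the legs in $S^c$. Concretely, let $G_S$ be the random map $\cC\to\overline{H_S}\otimes\cC$ given by $x\mapsto\sum_{J_S}g_{J_S}\,\overline{e_{J_S}}\otimes x$, and define $G_{S^c}$ similarly on the output side. Then
\[
 T_K=G_{S^c}^{*}\,\Flat_S(K)\,G_S
\]
as operators. Hölder's inequality gives $\norm{T_K}_s\le\norm{G_{S^c}}_\infty\norm{\Flat_S(K)}_s\norm{G_S}_\infty$, but $\norm{G_S}_\infty$ is unbounded, so this crude route fails. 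I will use expectation instead.

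For $s=2$ the bound is an identity, because the $g_J$ are orthonormal. For $1\le s<2$ I plan to use duality. With $s'\ge2$, it suffices to bound $\E\Tr(T_KB^*)$ for a random $B$ with $\norm B_{L^2(\Sch_{s'})}\le1$. Projecting $B$ onto the $m$-th decoupled chaos, this pairing becomes $\langle\Flat_S(K),\Flat_S(\widetilde B)\rangle$, where $\widetilde B_J=\E[g_JB]$. The bound then reduces to the contractive lower estimate
\[
 \norm{\Flat_S(\widetilde B)}_{s'}\le\norm B_{L^2(\Sch_{s'})},
\]
that is, the Gaussian coefficient map is contractive from $L^2(\Sch_{s'})$ into every cut. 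This is the dual form of a "Gaussian projection is contractive into the cut" statement. I expect to prove it one leg at a time. For a single Gaussian family, $\widetilde B=\E[G^*B]$ is a conditional-expectation-type map, and its contractivity from $L^2(\Sch_{s'})$ into the column or row Schatten space follows from the convexity inequality $\E[X]^*\E[X]\le\E[X^*X]$ combined with the operator monotonicity of $t\mapsto t^{s'/2}$ for $s'/2\ge1$. Here I will have to take care to use the correct version, the trace inequality for convex functions, since operator monotonicity fails above exponent one. Iterating over the $m$ legs by independence, with row legs for $S$ and column legs for $S^c$, gives the constant one. This iteration, together with keeping the order correct when mixing row and column legs, is the main obstacle.

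For the high-exponent bound \eqref{eq:highforward}, I will induct on $m$ by conditioning on all Gaussian families except one. Fix the families $g_{2},\dots,g_m$. Then $T_K=\sum_j g_{1,j}A_j$ is a first-order Gaussian series with $\Sch_s$ coefficients. The noncommutative Khintchine inequality in $L^p(\Sch_s)$ with the Lust-Piquard--Pisier/Buchholz constant $C\sqrt{p+s}$ bounds its conditional $L^p$ norm by
\[
 C\sqrt{p+s}\,\max\Bigl(\bigl\|(\textstyle\sum A_j^*A_j)^{1/2}\bigr\|_s,\ \bigl\|(\textstyle\sum A_jA_j^*)^{1/2}\bigr\|_s\Bigr).
\]
These two square functions are themselves Gaussian chaoses of order $m-1$, with coefficients given by $K$ with leg $1$ moved to the input side or to the output side respectively. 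Taking $L^p$ norms over the remaining families and applying the induction hypothesis to the column and row operators produces the constant $C^{m}(p+s)^{m/2}$ and the maximum over all $2^m$ cuts. Dimension-independence is inherited at each step because the Khintchine constant is dimension-free. The only check needed is that the column and row operators are again decoupled chaoses of the same form, in Schatten classes on the enlarged spaces, which is immediate.
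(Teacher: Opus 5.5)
Your high-exponent argument is the paper's. You condition on all legs but one, apply the one-leg noncommutative Khintchine inequality, and pass to the row and column operators. These operators, not the square functions themselves, are the order-$(m-1)$ chaoses, and they have the same $\Sch_s$ norms. Iterating, the $2^m$ terminal choices are the cuts. Two details are elided:
\begin{enumerate}
\item The maximum of the two random branch norms must be pulled out of $L^p$, for example by $\norm{\max(X,Y)}_{L^p}\le2^{1/p}\max(\norm X_{L^p},\norm Y_{L^p})$. This costs a factor that is absorbed into $C$.
\item The one-leg constant $C\sqrt{p+s}$ for $p\ne s$ needs the $L^s(\Sch_s)$ inequality together with monotonicity when $p<s$, or Gaussian moment comparison when $p>s$.
\end{enumerate}

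For the low exponents your route genuinely differs from the paper's. The paper proves the single-cut bound at $s=1$ from a singular value decomposition of $\Flat_S(K)$ and Minkowski: each rank-one response $V_\ell U_\ell^*$ has $L^2(\Sch_1)$ norm one because $V_\ell,U_\ell$ use disjoint Gaussian families. It then uses orthogonality at $s=2$ and interpolates. Your duality reduction to $\norm{\Flat_S(\widetilde B)}_{s'}\le\norm B_{L^2(\Sch_{s'})}$ for $2\le s'\le\infty$ is valid and avoids interpolation. It also gives directly the dual fact that Gaussian coefficient extraction is contractive into every cut for $s'\ge2$.

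However, the mechanism you name for this key step fails as written. Applied to $X=GB$, the inequality $\E[X]^*\E[X]\le\E[X^*X]$ yields $\E[(\sum_jg_j^2)B^*B]$, which grows with the source dimension. A trace-Jensen inequality for $t^{s'/2}$ would produce $\norm B_{L^{s'}(\Sch_{s'})}$, which dualizes only to an $L^s(\Sch_s)$ bound, not the $L^2(\Sch_s)$ bound you need.

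Use orthonormality of the $g_J$ instead. Bessel's inequality for the vectors $Bx\in L^2(\Omega;\cE)$ gives $\sum_J\E[g_JB]^*\E[g_JB]\le\E[B^*B]$. Then apply monotonicity of $\norm\cdot_{s'/2}$ on positive operators, and Minkowski, $\norm{\E Y}_{s'/2}\le\E\norm Y_{s'/2}$; this is exactly where $s'\ge2$ enters.

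With this, your ``main obstacle'' disappears if you treat all $S^c$-legs at once. The column $B_1=(\E_{S^c}[g_{J_{S^c}}B])_{J_{S^c}}$, random in the $S$-families, satisfies $\E_S\norm{B_1}_{s'}^2\le\E\norm B_{s'}^2$ by Bessel in the $S^c$ variables. Then $\Flat_S(\widetilde B)$ is the row $(\E_S[g_{J_S}B_1])_{J_S}$. The row form of Bessel, $\Flat_S(\widetilde B)\Flat_S(\widetilde B)^*\le\E_S[B_1B_1^*]$, finishes the estimate with constant one. Only the scalar $\norm{B_1}_{s'}$ passes between the two steps, so there is no ordering issue. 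The case $s'=\infty$, that is $s=1$, is included.
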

\begin{proof}
Fix a cut. At $s=1$, take a singular-value decomposition
$\Flat_SK=\sum_\ell a_\ell v_\ell u_\ell^*$.
Evaluate the source variables on its two sides. The resulting random vectors $V_\ell,U_\ell$ depend on disjoint Gaussian families, and each has second moment one. Hence
\[
 \norm{V_\ell U_\ell^*}_{L^2(\Sch_1)}^2
 =\E(\norm{V_\ell}^2\norm{U_\ell}^2)=1.
\]
Minkowski gives the trace-norm bound $\sum_\ell a_\ell$. At $s=2$, Gaussian orthogonality gives an isometry. Interpolation between rectangular Schatten spaces and Bochner $L^2$ spaces~\cite{BL} yields the single-cut estimate for $1\le s\le2$. Apply it to each summand of a near-minimal cut decomposition and use the triangle inequality to obtain~\eqref{eq:lowforward}.

For one Gaussian leg, the rectangular noncommutative Khintchine inequality~\cite{LP,Buchholz} is
\[
 \left\|\sum_jg_jA_j\right\|_{L^p(\Sch_s)}
 \le C\sqrt{p+s}\max\left\{
 \left\|(\sum_jA_j^*A_j)^{1/2}\right\|_s,
 \left\|(\sum_jA_jA_j^*)^{1/2}\right\|_s\right\}.
\]
The rectangular form follows by a corner embedding or self-adjoint dilation. For $p\le s$, the $L^s$ bound and monotonicity of probability norms suffice; for $p>s$, Gaussian Banach-valued moment comparison supplies the factor $\sqrt{p/s}$.

Set $q=\max(p,s)$. Condition on all but one source leg and apply the one-leg inequality at probability exponent $q$. The two square functions are precisely the row and column linearizations of the remaining rectangular array. In $L^q$, the maximum of these two branches is bounded by their $q$-sum. Repeating for each leg gives at most one absolute factor per step. The $2^m$ terminal row/column choices are the cuts $\Flat_SK$. Since $q+s\le2(p+s)$, all absolute factors are absorbed into $C^m$, proving~\eqref{eq:highforward}.
\end{proof}

Fix a sufficiently large absolute constant $C_0$, and set
\begin{equation}\label{eq:as}
 a_s=\begin{cases}1,&1\le s\le2,\\ C_0\sqrt{s+2},&s>2.\end{cases}
\end{equation}
Thus $E_m:\Coeff_{m,s}\to L^2(\Sch_s)$ has norm at most $a_s^m$.

\begin{theorem}[Dual recovery and a retraction on the full data space]\label{thm:coeffrecover}
For $1<s<\infty$,
\begin{equation}\label{eq:dec-equivalence}
 a_{s'}^{-m}\norm K_{\Coeff_{m,s}}
 \le\norm{E_mK}_{L^2(\Sch_s)}
 \le a_s^m\norm K_{\Coeff_{m,s}}.
\end{equation}
The image of $E_m$ is closed. There is a bounded linear retraction
$R_{m,s}:L^2(\Sch_s)\to\Coeff_{m,s}$ with $R_{m,s}E_m=I$ and
$\norm{R_{m,s}}\le a_{s'}^m$. Writing $G_J=\prod_{\nu=1}^mg_{\nu,j_\nu}$, its coordinates are
\begin{equation}\label{eq:rawdecoder}
 (R_{m,s}Y)_J=
 \begin{cases}
 \E(G_JY),&\text{real Gaussian sources},\\
 \E(\overline{G_J}Y),&\text{circular complex Gaussian sources}.
 \end{cases}
\end{equation}
Consequently, $E_mR_{m,s}$ is a projection onto the image with norm at most $(a_sa_{s'})^m$.
\end{theorem}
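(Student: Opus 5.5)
Our plan is to realize Theorem~\ref{thm:coeffrecover} as an instance of the paired reconstruction principle, Theorem~\ref{thm:paired}, with $X_0$ the finite core of $\Coeff_{m,s}$, $X_0^\sharp$ the finite core of $\Coeff_{m,s'}$, encodings $J=E_m$ into $Z=L^2(\Sch_s)$ and $K=E_m$ into $Z^\sharp=L^2(\Sch_{s'})$, and the pairing $[Y,Y']=\E\Tr(YY'^*)$, which satisfies $|[Y,Y']|\le\norm Y_{L^2(\Sch_s)}\norm{Y'}_{L^2(\Sch_{s'})}$ by H\"older in Schatten and probability norms. By Lemma~\ref{lem:polarity} the core pairing \eqref{eq:coeffpair} norms both spaces, since $\Coeff_{m,s'}$ is the conjugate dual of $\Coeff_{m,s}$ and the core is dense.

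The key identity is exact Gaussian duality: for core $K,L$,
\[
 \E\Tr(T_KT_L^*)=\sum_{J,J'}\E(G_J\overline{G_{J'}})\Tr(K_JL_{J'}^*)=\sum_J\Tr(K_JL_J^*)=\langle K,L\rangle,
\]
because the $m$ legs use independent families and $\E(G_J\overline{G_{J'}})=\delta_{JJ'}$ (products of independent standard variables, one from each leg; in the circular case use $\overline{G}$ and $\E|g|^2=1$). Thus the hypothesis of Theorem~\ref{thm:paired} holds with $\eta=0$. The forward bounds of Lemma~\ref{lem:forward}, restated as $\norm{E_m}\le a_s^m$ and $\norm{E_m}\le a_{s'}^m$ on the two sides (for $s>2$ use \eqref{eq:highforward} at $p=2$ and $\max_S\norm{\Flat_S K}_s=\norm K_{\Coeff_{m,s}}$; for $s\le2$ use \eqref{eq:lowforward}), give $\alpha=a_s^m$ and $\beta=a_{s'}^m$. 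Then \eqref{eq:pairedreverse} yields \eqref{eq:dec-equivalence} on the core, and extension to the completion gives the closed image.

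For the retraction, we apply the second half of Theorem~\ref{thm:paired} with $X=\Coeff_{m,s}$ (reflexive by Lemma~\ref{lem:polarity}), $E=E_m:X\to Y=L^2(\Sch_s)$, and $F=E_m:\Coeff_{m,s'}\cong X'\to L^2(\Sch_{s'})$, viewed inside $Y'$ through the conjugate duality $L^2(\Sch_s)'\supset L^2(\Sch_{s'})$. The duality identity says precisely that $E'F=I$ on the core, hence everywhere by density and boundedness. This produces $R_{m,s}$ with $R_{m,s}E_m=I$, $\norm{R_{m,s}}\le\norm F\le a_{s'}^m$, and $E_mR_{m,s}$ a projection of norm at most $(a_sa_{s'})^m$. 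To identify the coordinates \eqref{eq:rawdecoder}, we pair $R_{m,s}Y$ with the elementary coefficient $L=e_J\otimes A$ for finite-rank $A$: by construction $\langle R_{m,s}Y,L\rangle=\E\Tr(Y(G_JA)^*)$ in the real case, so $(R_{m,s}Y)_J=\E(G_JY)$ weakly; in the circular case, $T_L=\overline{\,\cdot\,}$ conjugation appears, giving $\E(\overline{G_J}Y)$.

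The main obstacle is bookkeeping of conjugate-linear dualities, namely checking that the Banach dual of $L^2(\Sch_s)$ contains $L^2(\Sch_{s'})$ isometrically under the trace pairing so that $F$ really lands in $Y'$ with norm at most $a_{s'}^m$, and that the coordinate formula is consistent in the circular case. A secondary point is the use of \eqref{eq:highforward} only at $p=2$: for $s'<2$ we rely on \eqref{eq:lowforward}, which is consistent since $a_{s'}=1$ there. Trace class $s=1$ is excluded, since reflexivity fails and it is handled separately.
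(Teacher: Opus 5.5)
Your proposal is correct and is essentially the paper's own argument. The exact Gaussian orthogonality identity $\langle K,L\rangle=\E\Tr((E_mK)(E_mL)^*)$ (so $\eta=0$), the forward bounds at $s$ and $s'$, and polarity give \eqref{eq:dec-equivalence}. The retraction $R_{m,s}Y$ is the element representing the bounded functional $L\mapsto\E\Tr(Y(E_mL)^*)$ on $\Coeff_{m,s'}$; this is exactly the mechanism of Theorem~\ref{thm:paired} that the paper invokes, and the coordinates are read off by testing against $e_J\otimes A$.

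The bookkeeping point you flag is easier than you expect. Hölder's contractive embedding $L^2(\Sch_{s'})\to L^2(\Sch_s)'$ already suffices for $\norm F\le a_{s'}^m$, so no isometry is needed. The two conjugate-linear identifications compose to make $F$ linear.
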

\begin{proof}
For finite kernels, Gaussian orthogonality gives the norming identity
\begin{equation}\label{eq:gaussianpair}
 \langle K,L\rangle=\E\Tr\big((E_mK)(E_mL)^*\big).
\end{equation}
It extends by the forward bounds to $K\in\Coeff_{m,s}$ and $L\in\Coeff_{m,s'}$. Schatten H\"older and probability Cauchy--Schwarz therefore yield
\[
 |\langle K,L\rangle|
 \le a_{s'}^m\norm{E_mK}_{L^2(\Sch_s)}\norm L_{\Coeff_{m,s'}}.
\]
The polarity lemma turns this into the lower estimate in \eqref{eq:dec-equivalence}; the upper estimate is the forward bound. In particular $E_m$ is injective and its image is closed.

To recover arbitrary, possibly incompatible data $Y\in L^2(\Sch_s)$, use the same identity as a definition. The functional
\[
 L\longmapsto\E\Tr\big(Y(E_mL)^*\big)
\]
is bounded on $\Coeff_{m,s'}$ with norm at most $a_{s'}^m\norm Y_{L^2(\Sch_s)}$. Polarity and reflexivity give a unique $R_{m,s}Y\in\Coeff_{m,s}$ representing it. This construction is linear in $Y$, has the claimed norm, and satisfies $R_{m,s}E_mK=K$ by \eqref{eq:gaussianpair}. It is the retraction mechanism of Theorem~\ref{thm:paired}, with the Gaussian evaluation of dual kernels supplying the lifts.

Testing a single source coordinate and a physical matrix unit gives \eqref{eq:rawdecoder}; the adjoint supplies the conjugate in the circular model. The Bochner expectations exist because $G_J\in L^2$. Their assembly into an element of the full coefficient space follows from the bounded functional above, not from coordinatewise integrability. Finally $(E_mR_{m,s})^2=E_mR_{m,s}$, and the product of the two norm bounds gives its projection cost.
\end{proof}

For $s\ge2$, the reverse constant in~\eqref{eq:dec-equivalence} is one. The trace endpoint uses a different predual estimate.

\begin{proposition}[Trace recovery with an exponential degree cost]\label{prop:traceendpoint}
For each $m\ge1$, there is a dimension-independent constant $C_m$ such that
\begin{equation}\label{eq:traceendpoint}
 \norm K_{\Coeff_{m,1}}\le C_m\norm{T_K}_{L^1(\Sch_1)}.
\end{equation}
One may take $C_m=(8\sqrt{2/\pi})^m$ for real sources and $C_m=(8/\sqrt\pi)^m$ for circular sources. Together with~\eqref{eq:lowforward}, this gives closed-image recovery with an exponential, rather than an unspecified, degree cost. Degree zero is the identity map.
\end{proposition}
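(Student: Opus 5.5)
We prove \eqref{eq:traceendpoint} by duality. The plan is to norm $\norm K_{\Coeff_{m,1}}$ by test kernels $L$ whose Gaussian evaluation $T_L$ is \emph{bounded in operator norm}, up to a factor $c^m$. Reflexivity is unavailable at $s=1$, so the argument of Theorem~\ref{thm:coeffrecover} cannot be applied directly. By the quotient-sum definition and the Hahn--Banach theorem (as in the proof of Lemma~\ref{lem:polarity}, now with the $\ell^\infty$-sum of $B(\cdot)$ duals), the dual of $\Coeff_{m,1}$ is the space of compatible arrays whose every cut $\Flat_S(L)$ is a contraction. So
\[
 \norm K_{\Coeff_{m,1}}=\sup\{|\langle K,L\rangle|:\ L\text{ in the core},\ \max_S\norm{\Flat_S(L)}_\infty\le1\},
\]
where the restriction to core $L$ is justified by common finite-factor compressions, which contract every cut. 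For such $L$, Gaussian orthogonality gives $\langle K,L\rangle=\E\Tr(T_K\,\Theta_L^*)$ for any random operator $\Theta_L$ whose degree-$m$ multilinear chaos component equals $T_L$. Hence $|\langle K,L\rangle|\le\norm{T_K}_{L^1(\Sch_1)}\norm{\Theta_L}_{L^\infty(B)}$. The task is therefore to build a predual-type lift $\Theta_L$ with $\norm{\Theta_L}_{L^\infty(B)}\le C_m$. This is the Haagerup--Pisier mechanism~\cite{HP}.

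First treat $m=1$. Here $L=(L_j)$ with $\norm{\sum_jL_j^*L_j}^{1/2}\le1$ and $\norm{\sum_jL_jL_j^*}^{1/2}\le1$, these being the two cuts. Split $L=L^{\rm row}+L^{\rm col}$ only at the level of the dual norm; this is the $\ell^\infty$-sum/intersection duality. For the column part, take $\Theta=\sum_j\phi(g_j)\ldots$. Concretely, choose a bounded function $\phi$ with $\E[g\,\phi(g)]=c>0$, for instance $\phi=\operatorname{sign}$, which gives $c=\sqrt{2/\pi}$. Then use a Pisier-type construction, $\Theta=\sum_j u_j(g)L_j$ with bounded random coefficients forming a row or column contraction almost surely. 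The most robust route is to replace $g_j$ by the truncated/normalized vector $G/\norm G$ in a finite source dimension $n$, or by signs. Then $\norm{\sum_j\varepsilon_jL_j}_\infty\le\ldots$ fails in general, so one instead uses the operator-space fact that $\sum_je_{1j}\otimes u_j$ and $\sum_je_{j1}\otimes u_j$ are contractive for a spherical vector $u$. This yields degree-one component $\E[g_j u_j]=\E\norm G/n\cdot$(constant), which is comparable to $\sqrt{2/\pi}$ in the real case up to the factor absorbed in $8$. The factor $8$ arises from splitting into row and column parts (factor $2$), from real/imaginary or self-adjoint dilation (factor $2$), and from the Haagerup--Pisier constant (factor $2$). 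The circular case changes only $\E|g|$, giving $\sqrt\pi/2$ in place of $\sqrt{2/\pi}$, whence $8/\sqrt\pi$.

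For general $m$, iterate leg by leg using independence of the $m$ Gaussian families. Conditioning on legs $2,\ldots,m$, apply the one-leg lift to leg $1$. The remaining array, viewed through the row and column linearizations, again has all cuts contractive, mirroring the induction in Lemma~\ref{lem:forward}. Because the lifts on different legs use disjoint independent families, the multilinear degree-$(1,\ldots,1)$ component of the product lift is exactly $T_L$ times $c^{-m}$ normalization. The bounds then multiply to $C_1^m$. Together with \eqref{eq:lowforward} at $s=1$ this gives closed-image recovery; degree zero is trivial.

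The main obstacle is the one-leg lift with an \emph{almost-sure} operator-norm bound controlled simultaneously by row and column norms. It must also be compatible with iteration, since the conditioned coefficients are themselves operator-valued arrays, and this requires the lift to be completely bounded rather than merely bounded. I would first try the Haagerup--Pisier completely bounded projection argument onto the Gaussian first chaos in $L^1(\Sch_1)$, whose predual statement is exactly this cb-lift, and only afterward fix the numerical constant.
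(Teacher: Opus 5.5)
Your reduction is sound. By Hahn--Banach, \eqref{eq:traceendpoint} is equivalent to the following lifting statement: every core $L$ with $\max_S\norm{\Flat_S(L)}_{\op}\le1$ admits a random operator $\Theta_L$ with the prescribed multilinear Gaussian coefficients $L_J$ and $\norm{\Theta_L}_{L^\infty(B)}\le C_m$. But that lifting statement \emph{is} the whole content of the proposition, and the proposal does not supply it.

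The concrete one-leg construction fails. For $u=G/\norm G$ in source dimension $n$ one has $\E[g_ju_k]=\delta_{jk}\E\norm G/n\approx n^{-1/2}$. Normalizing the first-chaos coefficient to $L_j$ therefore costs a factor of order $\sqrt n$, so the bound is not dimension-free. More fundamentally, the estimate $\norm{\sum_ju_jL_j}\le1$ uses only one of the two cut conditions, and no lift controlled by a single cut can exist. Such a lift would give $\norm{\Flat_S K}_1\le C\,\E\norm{T_K}_1$ for that cut. This fails for $K_j=e_je_1^*$, $1\le j\le N$: one trace cut equals $N$, while $\E\norm{T_K}_1=\E\norm G\le\sqrt N$. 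The lift must use the row and column bounds simultaneously. For $m\ge2$ it must also remain bounded in all $2^m$ cuts of the conditioned array. You correctly flag this complete-boundedness requirement as the main obstacle, but leave it open.

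That missing step is exactly the higher-order predual inequality of Haagerup--Pisier~\cite[Theorem~3.5]{HP}. The paper does not reprove it. It applies the inequality to independent Steinhaus phases, with constant $4^m$, and handles rectangular coefficients by an off-diagonal corner embedding that preserves every cut.

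The constants are also not derived. The factorization $8=2\cdot2\cdot2$ is not tied to any step of an argument. Moreover, with $\phi=\operatorname{sign}$ the per-leg factor would be $c^{-1}=\sqrt{\pi/2}$, not $\sqrt{2/\pi}$. In the paper the factors arise as follows:
\begin{itemize}
\item $4^m$ comes from the Steinhaus inequality.
\item $(2/\sqrt\pi)^m$ comes from writing a circular Gaussian as $g=\zeta r$ and applying conditional Jensen over the moduli. This multiplies every multilinear coefficient by $(\E r)^m=(\sqrt\pi/2)^m$.
\item For real sources, $2^{m/2}$ comes from writing each circular leg as $(g+ih)/\sqrt2$ and applying the triangle inequality to the $2^m$ equidistributed real arrays.
\end{itemize}
Your ``bounded function of the Gaussian'' idea is the dual form of the middle step, provided it is applied to phases rather than signs. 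If $\Theta(\zeta)$ is a bounded Steinhaus lift of $L$, evaluate it at the coordinatewise phases $g_{\nu,j}/|g_{\nu,j}|$. Then $(2/\sqrt\pi)^m\Theta$ is a bounded circular Gaussian lift, because $\E[\overline{G_J}\,\Theta(\zeta)]=(\sqrt\pi/2)^m\widehat\Theta(J)$.
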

\begin{proof}
First use independent unit-modulus complex phases. The higher-order predual inequality of Haagerup--Pisier~\cite[Theorem~3.5 and (3.8)--(3.11)]{HP} bounds the sum of all predual cuts by $2^{2m}=4^m$ times the phase $L^1$ norm. For rectangular coefficients, embed $K_J$ in the off-diagonal corner of $B(\cC\oplus\cE)$. This embedding preserves every cut norm. Conversely, compressing an ambient decomposition to that corner contracts every trace cut and gives a rectangular decomposition. Thus the predual sum is exactly the coefficient norm \eqref{eq:cutnorm}, without a dimensional or additional degree factor.

For a standard circular Gaussian write $g=\zeta r$, with independent phase and modulus and $\E r=\sqrt\pi/2$. Conditional expectation over all moduli multiplies every degree-$m$ multilinear coefficient by $(\sqrt\pi/2)^m$. Conditional Jensen therefore gives
\[
 \E\norm{\sum_JG_JK_J}_1
 \ge(\sqrt\pi/2)^m\E\norm{\sum_J\zeta_JK_J}_1.
\]
The circular coefficient inverse consequently has constant $(8/\sqrt\pi)^m$.

To compare with real sources, write each circular leg as $(g_\nu+ih_\nu)/\sqrt2$ and expand the product. The resulting $2^m$ real multilinear arrays have the same law. The triangle inequality gives
$\norm{T_K^{\rm circ}}_{L^1(\Sch_1)}\le2^{m/2}\norm{T_K^{\rm real}}_{L^1(\Sch_1)}$.
This proves the stated real constant. The forward bound is in $L^2(\Sch_1)$, and $L^1\le L^2$ turns the reverse inequality into a reverse Cauchy estimate there. Density then gives closed-image recovery. No bounded retraction on arbitrary trace-class-valued random data is used.
\end{proof}

\subsection{Same-source Wick evaluation}
If $X$ is a Banach-valued homogeneous Gaussian chaos of degree $m$, positivity of the Mehler kernel gives $\norm{M_tX}_B\le M_t(\norm X_B)$. Scalar hypercontractivity~\cite{Janson,Maas} consequently yields
\begin{equation}\label{eq:hyper}
 \norm X_{L^q(B)}\le(q-1)^{m/2}\norm X_{L^2(B)},\qquad q\ge2.
\end{equation}
Apply Paley--Zygmund to $\norm X_B^2$ with threshold one half. Since $\norm X_{L^4}\le3^{m/2}\norm X_{L^2}$, the event
$\norm X_B\ge\norm X_{L^2}/\sqrt2$ has probability at least $1/(4\cdot3^{2m})$. Thus
$\norm X_{L^2}\le4\sqrt2\,3^{2m}\norm X_{L^1}$.
All finite probability norms are therefore equivalent at each fixed degree.

Let $K$ be symmetric, and let $W_1,\ldots,W_m$ be independent isonormal processes. Their normalized average $\overline W=m^{-1/2}\sum_jW_j$ satisfies
\begin{align}
 \E[T_K^{\rm dec}\mid\overline W]
 &=m^{-m/2}I_m(K;\overline W),\label{eq:regression}\\
 T_K^{\rm dec}
 &=\frac{m^{m/2}}{2^mm!}\sum_{\epsilon\in\{-1,1\}^m}
       \Big(\prod_j\epsilon_j\Big)
 I_m\left(K;m^{-1/2}\sum_j\epsilon_jW_j\right).\label{eq:polarization}
\end{align}
The first identity follows by differentiating the conditional Gaussian exponential in $m$ distinct source parameters: the conditional covariance produces exactly the Wick correction. In the second, sign averaging removes every term that does not contain all source labels once; the surviving permutations contribute $m!$. The normalization gives~\eqref{eq:polarization}.

\begin{theorem}[Same-source recovery with exponential degree costs]\label{thm:wickrecover}
For fixed $m$ and $1\le p,s<\infty$, $I_m:\Wc_{m,s}\to L^p(\Sch_s)$ is an isomorphism onto a closed image. For $1\le s<\infty$, there are $u_s,v_s\ge1$ such that
\begin{equation}\label{eq:wick-exponential}
 v_s^{-m}\sqrt{m!}\norm K_{\Wc_{m,s}}
 \le\norm{I_mK}_{L^2(\Sch_s)}
 \le u_s^m\sqrt{m!}\norm K_{\Wc_{m,s}}.
\end{equation}
One may take $u_s=\sqrt e\,a_s$ and $v_s=\sqrt e\,a_{s'}$ for $1<s<\infty$; at $s=2$ the exact choice is $u_2=v_2=1$. At $s=1$, one may take $u_1=\sqrt e$ and $v_1=8\sqrt{2e/\pi}$. For $p,s\ge2$,
\begin{equation}\label{eq:wick-high}
 \norm{I_mK}_{L^p(\Sch_s)}
 \le m^{m/2}C^m(p+s)^{m/2}\max_S\norm{\Flat_SK}_s.
\end{equation}
\end{theorem}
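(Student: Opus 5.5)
The plan is to transfer the decoupled estimates of Lemma~\ref{lem:forward}, Theorem~\ref{thm:coeffrecover} and Proposition~\ref{prop:traceendpoint} to the same-source chaos $I_m$. The bridge is the pair of identities \eqref{eq:regression} and \eqref{eq:polarization}, combined with the fixed-degree equivalence of probability norms coming from \eqref{eq:hyper} and Paley--Zygmund. Throughout, $K$ is a finite symmetric core kernel; density and the closed-image statement are settled at the end.

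\emph{Upper bound.} I would take $L^2(\Sch_s)$ norms in \eqref{eq:polarization}. Each $m^{-1/2}\sum_j\epsilon_jW_j$ is again isonormal, so every term has the law of $I_m(K;W)$. The triangle inequality over the $2^m$ signs gives
\[
\norm{T_K^{\rm dec}}\le \frac{m^{m/2}}{m!}\norm{I_mK}.
\]
That is the wrong direction for an upper bound on $I_mK$. For the upper bound I would instead use \eqref{eq:regression}. Conditional expectation is contractive on Bochner $L^2$, so
\[
m^{-m/2}\norm{I_mK}_{L^2(\Sch_s)}\le\norm{T_K^{\rm dec}}_{L^2(\Sch_s)}\le a_s^m\norm K_{\Coeff_{m,s}}
\]
by Lemma~\ref{lem:forward}. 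Stirling gives $m^{m/2}\le e^{m/2}\sqrt{m!}$, which yields $u_s=\sqrt e\,a_s$, and $u_1=\sqrt e$ since $a_1=1$. The same argument with $L^p$ in place of $L^2$ and \eqref{eq:highforward} gives \eqref{eq:wick-high}; here I keep $m^{m/2}$ and do not convert it to $\sqrt{m!}$. At $s=2$ the Itô isometry $\E\norm{I_mK}_2^2=m!\norm K_2^2$ for symmetric $K$ gives $u_2=v_2=1$ directly.

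\emph{Lower bound.} The polarization inequality above, $\norm{T_K^{\rm dec}}\le m^{m/2}(m!)^{-1}\norm{I_mK}$, is exactly what is needed. For $1<s<\infty$, Theorem~\ref{thm:coeffrecover} gives
\[
\norm K\le a_{s'}^m\norm{T_K^{\rm dec}}_{L^2}\le a_{s'}^m m^{m/2}(m!)^{-1}\norm{I_mK}_{L^2}.
\]
Using $m^{m/2}/m!\le e^{m/2}/\sqrt{m!}$, this reads $\sqrt{m!}\,\norm K\le(\sqrt e\,a_{s'})^m\norm{I_mK}$, so $v_s=\sqrt e\,a_{s'}$. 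At $s=1$ I would run the same argument in $L^1(\Sch_1)$ with Proposition~\ref{prop:traceendpoint}, which gives $v_1=\sqrt e\cdot8\sqrt{2/\pi}$, and then use $L^1\le L^2$. The $\Wc_{m,s}$ norm is the inherited $\Coeff_{m,s}$ norm, so no symmetrization constant enters.

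\emph{Isomorphism for all $p$.} Fix $m$. Both estimates extend from the core to $\Wc_{m,s}$ by density, and $I_m$ extends continuously into $L^2(\Sch_s)$. The chaos $I_mK$ is a Banach-valued homogeneous chaos, so \eqref{eq:hyper} and the Paley--Zygmund bound $\norm X_{L^2}\le4\sqrt2\,3^{2m}\norm X_{L^1}$ make all $L^p$ norms, $1\le p<\infty$, equivalent on the image. The two-sided bound therefore holds in every $L^p$, and completeness of $\Wc_{m,s}$ makes the image closed.

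The main obstacle is justifying \eqref{eq:regression} and \eqref{eq:polarization} as Bochner identities for Schatten-valued kernels beyond the core, and checking the direction of the Stirling bounds. Both are handled by working on the finite core, where everything is a finite matrix polynomial identity, and extending by the now-established continuity.
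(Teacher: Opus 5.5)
Your proposal is correct and follows essentially the same approach as the paper. The upper bound comes from the regression identity \eqref{eq:regression} with contractivity of conditional expectation; run in $L^p$ with \eqref{eq:highforward}, the same step gives \eqref{eq:wick-high}. The lower bound comes from the triangle inequality in \eqref{eq:polarization} combined with Theorem~\ref{thm:coeffrecover}, or with Proposition~\ref{prop:traceendpoint} together with $L^1\le L^2$ at $s=1$, and $m!\ge(m/e)^m$ absorbs the degree factors. The exact isometry at $s=2$, the hypercontractivity and Paley--Zygmund passage to all finite $p$, and the density and closed-image argument also match the paper's proof.
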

\begin{proof}
First take a finite symmetric kernel. The conditional identity \eqref{eq:regression} and the forward estimate for independent sources give
\[
 \norm{I_mK}_{L^2(\Sch_s)}
 \le m^{m/2}\norm{T_K^{\rm dec}}_{L^2(\Sch_s)}
 \le m^{m/2}a_s^m\norm K_{\Wc_{m,s}}.
\]
For the reverse direction when $1<s<\infty$, every normalized signed source in \eqref{eq:polarization} has the law of the original isonormal process. Applying the triangle inequality to that identity, and then the independent-source inverse, gives
\[
 \norm K_{\Wc_{m,s}}
 \le a_{s'}^m\norm{T_K^{\rm dec}}_{L^2(\Sch_s)}
 \le a_{s'}^m\frac{m^{m/2}}{m!}
                    \norm{I_mK}_{L^2(\Sch_s)}.
\]
Since $m!\ge(m/e)^m$, both degree factors are absorbed by $\sqrt{m!}$ and an exponential factor, yielding \eqref{eq:wick-exponential}. At $s=2$, Gaussian orthogonality gives the exact isometry and removes these comparison losses.

At $s=1$, write $b_1=8\sqrt{2/\pi}$. Proposition~\ref{prop:traceendpoint}, the inclusion $L^1\le L^2$, and the same polarization identity give
\[
 \norm K_{\Wc_{m,1}}
 \le b_1^m\norm{T_K^{\rm dec}}_{L^1(\Sch_1)}
 \le b_1^m\frac{m^{m/2}}{m!}\norm{I_mK}_{L^2(\Sch_1)}.
\]
The preceding forward estimate has $a_1=1$. The factorial inequality therefore gives $u_1=\sqrt e$ and $v_1=\sqrt e\,b_1$ in \eqref{eq:wick-exponential}. Hypercontractivity and the preceding $L^1$--$L^2$ comparison then give all fixed finite probability exponents. The forward bound extends evaluation from the dense core, and the reverse bound turns a Cauchy sequence of evaluated kernels into a Cauchy sequence of original kernels. Its image is therefore closed. Finally, applying \eqref{eq:highforward} before the conditional expectation in \eqref{eq:regression} gives \eqref{eq:wick-high}.
\end{proof}

\begin{proposition}[An intrinsic characterization of the homogeneous image]\label{prop:chaosimage}
Fix source coordinates $g_1,g_2,\ldots$. A source-measurable $T\in L^p(\Sch_s(\cC,\cE))$, $1\le p,s<\infty$, belongs to $I_m(\Wc_{m,s})$ if and only if all its finite physical compressions and finite source conditional expectations are homogeneous degree-$m$ Wick polynomials. With $E_L=\E[\,\cdot\mid g_1,\ldots,g_L]$ and source projection $S_L$, one has
\begin{equation}\label{eq:cutintertwine}
 I_m\big((S_L^{\otimes m}\otimes\overline{P_N}\otimes Q_N)K\big)
 =Q_NE_L(I_mK)P_N.
\end{equation}
Along cofinal finite compressions, both sides converge to the original object.
\end{proposition}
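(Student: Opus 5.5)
The plan is to prove the intertwining identity \eqref{eq:cutintertwine} first and then derive the characterization from it together with Theorem~\ref{thm:wickrecover}.

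\emph{Step 1: the identity on the core.} For a finite symmetric kernel $K$, the Wick integral $I_mK$ is a finite sum of products of Hermite polynomials in finitely many coordinates $g_j$, tensored with matrix coefficients. Physical compression $X\mapsto Q_NXP_N$ acts only on these coefficients, so it commutes with $I_m$ and becomes $\overline{P_N}\otimes Q_N$ on the kernel. For the source conditional expectation, I will use two facts. Since the $g_j$ are independent, $E_L$ factorizes over the Hermite factors. Moreover $\E H_k(g_j)=0$ for $k\ge1$ and $j>L$. Hence $E_L$ keeps exactly those basis tensors supported in $[L]$, which is $S_L^{\otimes m}$. Both operations are contractive and continuous: $E_L$ on $L^p(\Sch_s)$, and compressions on $\Wc_{m,s}$ cut by cut. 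By the density of the core and Theorem~\ref{thm:wickrecover}, the identity then extends to every $K\in\Wc_{m,s}$.

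\emph{Step 2: convergence along cofinal compressions.} On the kernel side this is the strong approximation statement of Lemma~\ref{lem:polarity}. At $s=1$, where that lemma is not available, I will compress a near-minimal cut decomposition summand by summand. On the random side I will use two facts: martingale convergence of $E_L$ in Bochner $L^p$ for source-measurable $T$, and strong convergence of $Q_N\cdot P_N$ on $\Sch_s$, which holds because $s<\infty$. Dominated convergence then gives convergence of $Q_NE_LTP_N$ to $T$ in $L^p(\Sch_s)$.

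\emph{Step 3: the characterization.} Necessity follows immediately from \eqref{eq:cutintertwine}. For sufficiency, let $T$ be such that every $T_{L,N}=Q_NE_LTP_N$ is a homogeneous degree-$m$ Wick polynomial in $g_1,\dots,g_L$ with matrix coefficients. Each $T_{L,N}$ is then $I_m(K_{L,N})$ for a finite symmetric kernel $K_{L,N}$. By Step 2, $T_{L,N}\to T$ in $L^p(\Sch_s)$, so the sequence is Cauchy. The reverse inequality of Theorem~\ref{thm:wickrecover} at exponent $p$ makes $(K_{L,N})$ Cauchy in $\Wc_{m,s}$; this uses the equivalence of all finite probability norms at a fixed degree. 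The limit $K$ therefore satisfies $I_mK=T$, since the image is closed.

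\textbf{Expected obstacle.} The main difficulty is the compatibility between the $K_{L,N}$ and their realization in the completed space $\Wc_{m,s}$. In the low branch $s<2$ this space is a quotient sum, not a concrete tensor space. I would avoid a kernel-side compatibility argument by passing through the Cauchy property as above, because closedness of the image requires only norm control. A consistency check is also needed: $K_{L,N}$ should be obtained from $K_{L',N'}$ by compression. This follows from the tower property of $E_L$ and injectivity of $I_m$, which is also given by Theorem~\ref{thm:wickrecover}.
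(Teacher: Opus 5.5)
Your proposal is correct and follows essentially the same route as the paper. You verify \eqref{eq:cutintertwine} on finite kernels, since $E_L$ deletes exactly the Hermite coordinates with an index beyond $L$ and physical compression deletes the corresponding matrix entries, then extend by contractivity of the compressions and continuity of $I_m$, read off necessity from the compressed finite kernels, and obtain sufficiency by making the kernels of $Q_NE_LTP_N$ Cauchy through the inverse estimate of Theorem~\ref{thm:wickrecover}. Your explicit handling of kernel-side convergence at $s=1$, by compressing a finite-cost cut decomposition summand by summand, is a correct detail that the paper leaves implicit.
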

\begin{proof}
For a finite kernel, source conditioning removes precisely the Hermite coordinates involving an index greater than $L$, and physical compression removes precisely the corresponding matrix entries. This proves \eqref{eq:cutintertwine} on the core. Both kernel compressions are contractive and converge strongly, so continuity of evaluation extends the identity to the completion. Each finite matrix-valued homogeneous polynomial space is finite-dimensional and closed, which proves necessity.

For sufficiency, choose increasing finite source and physical projections with
\[
 T_n=Q_{N_n}E_{L_n}(T)P_{N_n}\longrightarrow T\quad\text{in }L^p(\Sch_s).
\] Source measurability gives convergence of conditional expectations; density of finite-rank operators and dominated approximation give convergence of physical compressions. By assumption, each $T_n$ is a finite matrix-valued homogeneous polynomial and therefore equals $I_mK_n$ for a finite symmetric kernel. The inverse estimate of Theorem~\ref{thm:wickrecover}, applied to $T_n-T_k$, makes $(K_n)$ Cauchy in the original coefficient norm. Its limit $K$ satisfies $I_mK=T$ by continuity. The same inverse estimate proves uniqueness. This argument also proves the stated convergence along every cofinal family of compressions.
\end{proof}

Finite sums of degrees are handled by one finite inversion. For $F=\sum_{m=0}^MI_mK_m$, choose $M+1$ distinct Mehler parameters. The Vandermonde matrix $(t_j^m)$ is invertible, so each homogeneous component is a fixed linear combination of the contractions $M_{t_j}F$. Consequently,
\begin{equation}\label{eq:finitechaos}
 \norm{\sum_{m=0}^MI_mK_m}_{L^p(\Sch_s)}
 \asymp_{M,p,s}\sum_{m=0}^M\norm{K_m}_{\Wc_{m,s}}.
\end{equation}
At the Hilbert exponent, all degrees satisfy the exact Fock identity
\begin{equation}\label{eq:hilbertfock}
 \norm{\sum_{m\ge0}I_mK_m}_{L^2(\Sch_2)}^2
 =\sum_{m\ge0}m!\norm{K_m}_2^2.
\end{equation}
The space described by this identity will be distinguished from the stronger multiplication scale in the next section.

\subsection{Sharp growth and indispensable cuts}
\begin{proposition}[Two independent sharp growth parameters]\label{prop:sharpfinite}
For fixed $m\ge1$, the order $(p+s)^{m/2}$ in~\eqref{eq:highforward} cannot be reduced uniformly. There is also no dimension-free Gaussian operator-norm estimate depending only on the maximum operator norm of the cuts.
\end{proposition}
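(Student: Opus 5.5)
The plan is to prove both assertions with explicit families of kernels. In each family the cut norms are computed exactly, and the random operator $T_K$ of~\eqref{eq:dec-eval} is analysed directly. Since $(p+s)^{m/2}\asymp\max(p,s)^{m/2}$ up to a factor $2^{m/2}$, sharpness of the order in~\eqref{eq:highforward} follows if we separately exhibit growth $p^{m/2}$ at fixed $s$ and growth $s^{m/2}$ at fixed $p$, in both cases with $\max_S\norm{\Flat_S(K)}_s$ bounded.

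\emph{Growth in $p$.} Take $K=e_1\otimes\cdots\otimes e_1\otimes P$ with $P$ a rank-one projection. Every cut of $K$ is a rank-one partial isometry, so all Schatten norms equal one. On the other hand $T_K=g_{1,1}\cdots g_{m,1}P$, so
\[
\norm{T_K}_{L^p(\Sch_s)}=(\E|g|^p)^{m/p}\ge c^m p^{m/2}
\]
by Stirling applied to Gaussian moments. Hence no bound $C^m(p+s)^{\theta m/2}$ with $\theta<1$ can hold uniformly in $p$.

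\emph{Growth in $s$.} Fix $p=2$ and use the diagonal kernel
\[
K_J=e_Je_J^*,\qquad J\in[n]^m,
\]
acting on $\cC=\cE=\C^{[n]^m}$. I would check that each flattening $\Flat_S(K)$ maps the basis vector indexed by $(J_S,J')$ to the basis vector $(J'_{S^c},J')$ when $J'_S=J_S$, and to zero otherwise. Thus each flattening is a $0$--$1$ partial permutation matrix with $n^m$ ones, and $\norm{\Flat_S(K)}_s=n^{m/s}$ for every $S$.

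Meanwhile $T_K=\diag(G_J)_J$, so $\norm{T_K}_s^s=\sum_J|G_J|^s$. The key step is a law of large numbers:
\[
n^{-m}\sum_J|G_J|^s\to(\E|g|^s)^m
\]
in probability as $n\to\infty$. This is an $m$-fold product of independent arrays and is not i.i.d., so I would prove it by computing the variance. Two multi-indices contribute covariance only when they share a coordinate, which happens for a fraction $O(m/n)$ of pairs. Fatou, or a lower bound on the event where the normalized sum exceeds half its mean, then gives
\[
\liminf_n n^{-m/s}\norm{T_K}_{L^2(\Sch_s)}\ge c\,(\E|g|^s)^{m/s}\ge c'^m s^{m/2}.
\]
Because the constant in~\eqref{eq:highforward} must be dimension-free, letting $n\to\infty$ shows that the factor $s^{m/2}$ cannot be improved. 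The main technical point is this concentration step. The variance computation requires $\E|g|^{2s}<\infty$, which holds, and the dependence of the constants on $s$ is harmless since $s$ is fixed while $n\to\infty$.

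\emph{Operator norm.} The same diagonal family with $m=1$ (and analogously for general $m$) gives the second assertion. Take $K_j=e_je_j^*$ on $\C^n$. The cut $\Flat_\emptyset(K)$ sends $x$ to $\sum_je_j\otimes e_je_j^*x$, and $\Flat_{\{1\}}(K)$ is the analogous partial isometry. Both are partial isometries with operator norm one, yet
\[
\norm{T_K}_{\op}=\max_{j\le n}|g_j|,
\]
whose $L^p$ norms grow like $\sqrt{2\log n}$. Hence no dimension-free Gaussian operator-norm estimate can depend only on the maximum operator norm of the cuts. For general $m$, the product kernel $e_J e_J^*$ gives $\max_J|G_J|$, which again is unbounded as $n\to\infty$.
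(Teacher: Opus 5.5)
Your proof is correct and has the same three-part structure as the paper's. A scalar rank-one coefficient gives the $p$-growth. A diagonal kernel whose cuts are partial permutations with unit singular values, combined with a law of large numbers and Fatou, gives the $s$-growth. The largest diagonal entry handles the operator-norm claim. The only difference is the diagonal witness: the paper uses $\sum_j h_{1,j}\otimes\cdots\otimes h_{m,j}\otimes e_je_j^*$, which has i.i.d.\ diagonal entries, whereas yours is indexed by $[n]^m$. For your kernel, $\sum_J|G_J|^s=\prod_\nu\sum_{j}|g_{\nu,j}|^s$ and $\max_J|G_J|=\prod_\nu\max_j|g_{\nu,j}|$ factorize exactly over the independent legs. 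So the variance computation you flag as the main technical step is unnecessary, and everything reduces to $m=1$. The paper also records the matching upper bound $(p+\log(eN))^{m/2}$ for the operator norm, by taking $s\asymp\log(eN)$ in \eqref{eq:highforward}; the statement does not require this.
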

\begin{proof}
A single scalar rank-one coefficient has every cut norm equal to one and evaluates to $\prod_{\nu=1}^mg_\nu$, whose $L^p$ norm is $\norm g_p^m\asymp_m p^{m/2}$. For the Schatten direction, take
\[
 K_N=\sum_{j=1}^Nh_{1,j}\otimes\cdots\otimes h_{m,j}\otimes e_je_j^*.
\]
Every cut has $N$ singular values equal to one. The evaluated operator is diagonal with independent Gaussian products on its diagonal. The law of large numbers, Jensen, and Fatou give
\[
 \lim_{N\to\infty}N^{-1/s}\norm{T_{K_N}}_{L^2(\Sch_s)}
 =\norm g_s^m\asymp_m s^{m/2}.
\]
For $s\ge2$, uniform integrability follows by Jensen from a slightly higher moment of one diagonal entry. The two lower bounds together have order $(p+s)^{m/2}$.

For the same diagonal example, the largest diagonal entry has size $(\log N)^{m/2}$. A lower bound follows by requiring all $m$ factors in one coordinate to exceed a sufficiently small multiple of $\sqrt{\log N}$, and using independence across coordinates. The $p$ lower bound follows from one coordinate. Choosing $s\asymp\log(eN)$ in~\eqref{eq:highforward} proves the matching upper bound
\[
 \norm{T_{K_N}}_{L^p(B(E))}\asymp_m(p+\log(eN))^{m/2},
\]
while every operator-norm cut equals one.
\end{proof}

The same-source comparison also has a sharp degree cost. The leading term of $H_m(x)$ is $x^m$, and its lower-degree remainder has $L^p$ norm $O_m(p^{(m-2)/2})$. The Gaussian moment formula and Stirling's formula yield
\[
 \lim_{p\to\infty}\frac{\norm{H_m(g)}_p}
 {\norm{\prod_{j=1}^mg_j}_p}=m^{m/2}.
\]
Thus the constant $m^{m/2}$ in the universal increasing-convex comparison supplied by~\eqref{eq:regression} is optimal.

For decoupled circular complex sources there is an exact fourth-moment identity:
\begin{equation}\label{eq:fourthmoment}
 \E\norm{T_K}_4^4=\sum_{S\subset[m]}\norm{\Flat_SK}_4^4.
\end{equation}
For each source leg,
$\E\overline g_ag_b\overline g_cg_d
 =\delta_{ab}\delta_{cd}+\delta_{ad}\delta_{cb}$.
Recording by $S$ the legs using the second pairing, the expanded trace is precisely
$\Tr[(\Flat_S(K)^*\Flat_S(K))^2]$.
Apply the same identity to differences of finite kernels to extend it to the cut completion. This formula is specific to the circular decoupled model; real sources have a third pairing.

\begin{proposition}[Every directed cut is necessary]\label{prop:cut_sharpness}
Fix $m\ge1$ and $s>2$. Removing any prescribed cut from the maximum destroys dimension-free control of the original $\Coeff_{m,s}$ norm. In the circular model, both constants in
\begin{equation}\label{eq:fourth_sharp_constants}
 \max_S\norm{\Flat_SK}_4\le\norm{T_K}_{L^4(\Sch_4)}
 \le2^{m/4}\max_S\norm{\Flat_SK}_4
\end{equation}
are optimal.
\end{proposition}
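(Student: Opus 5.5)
The proof splits into the two assertions: the necessity of every cut for $s>2$, and the sharpness of the constants in \eqref{eq:fourth_sharp_constants}.

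\emph{Fourth-moment constants.} Both inequalities in \eqref{eq:fourth_sharp_constants} come from the exact identity \eqref{eq:fourthmoment}, $\E\norm{T_K}_4^4=\sum_S\norm{\Flat_SK}_4^4$. The sum dominates its largest term, which gives the lower bound. It is at most $2^m\max_S\norm{\Flat_SK}_4^4$, which gives the upper bound after taking fourth roots.

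Optimality of the upper constant needs a kernel whose $2^m$ cuts all have equal Schatten-$4$ norm. I would use a scalar rank-one kernel $h_1\otimes\cdots\otimes h_m\otimes P$ with $P$ a rank-one projection. Every cut is then rank one with norm one, so $\E\norm{T_K}_4^4=2^m$ exactly. Equality in the upper bound holds.

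Optimality of the lower constant needs one cut to dominate all the others. Take $K_N$ with a single source leg carrying $N$ orthonormal indices paired with $N$ orthogonal rank-one physical units, arranged so that the cut $S$ has $N$ unit singular values. The complementary cuts then collapse to rank one with norm $\sqrt N$ in operator norm but Schatten-$4$ norm $N^{1/2}$. Comparing the fourth powers, $N$ against $N^2$ in the other orientation, shows the extra terms are negligible. The ratio therefore tends to one. For $m>1$ I would tensor this with rank-one legs. Which orientation dominates has to be checked carefully, and the fallback is to use the diagonal example from Proposition~\ref{prop:sharpfinite} paired against a row-type example.

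\emph{Necessity of each cut.} Fix a cut $S_0$. I would build $K_N$ for which $\norm{\Flat_{S_0}K_N}_s$ grows polynomially in $N$ while every other cut stays bounded. The model is the degree-one example after Lemma~\ref{lem:polarity}, $K_j=e_je_1^*$. There one orientation has norm $N^{1/s}$ and the other $\sqrt N$; the Hilbert norm is $\sqrt N$ in both, and the difference arises only after pairing source legs with physical input or output.

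For general $m$ I would route every source leg in $S_0$ to the physical input index and every leg outside $S_0$ to the output index:
\[
K_N=\sum_{J}h_{J}\otimes e_{\beta(J_{S_0^c})}e_{\alpha(J_{S_0})}^*,
\]
with $J$ ranging over an $N^m$ grid and $\alpha,\beta$ bijections onto the physical bases.

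The cut $S_0$ then becomes a permutation-type partial isometry on an $N^m$-dimensional space, with norm $N^{m/s}$. Any other cut $S$ misplaces at least one leg. That turns $\Flat_S$ into a tensor product containing a rank-one $N\times N$ block of all ones (norm $N$ in one factor), so the singular values concentrate. The required computation is
\[
\norm{\Flat_SK_N}_s=N^{m/s}\cdot N^{\#\text{misplaced}(1/2-1/s)}\cdot(\ldots).
\]
This has the wrong sign: misplacing legs increases the norm rather than decreasing it. The construction therefore has to be inverted. One should make $S_0$ the misrouted cut, which is largest, and ensure that every other cut is comparatively smaller.

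Normalising by the maximum over the remaining cuts, the ratio is then $N^{(1/2-1/s)}\to\infty$ for $s>2$. This ratio is the required failure of dimension-free control.

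I expect this last step to be the main obstacle: choosing the routing so that the prescribed cut is the unique strict maximiser, uniformly for all $2^m-1$ competitors, including $S=\emptyset$ and $S=[m]$. I would handle it leg by leg with tensor-product factorisation of the cut norms, since for product kernels every cut norm factorises over the legs.
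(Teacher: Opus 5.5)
Your derivation of both inequalities in \eqref{eq:fourth_sharp_constants} from \eqref{eq:fourthmoment} is correct, and so is the scalar rank-one witness for the upper constant. The lower-constant witness, however, works only at $m=1$, where $\E\norm{T_K}_4^4=N^2+N$ against a largest cut of $N^2$. Tensoring with rank-one legs fails for $m\ge2$. Such a leg has both orientations of norm one, so it doubles the sum in \eqref{eq:fourthmoment} while leaving the maximum unchanged. Equivalently, $T_K=T_{K^{(1)}}\prod_{j\ge2}g_j$ with $\E|g_j|^4=2$, and the ratio tends to $2^{m-1}$ rather than $1$. The diagonal fallback is worse: all its cuts coincide, so the ratio is $2^m$. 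What you need is for every leg to be paired with its own $N$-dimensional physical factor, i.e.\ a tensor product of $m$ copies of the one-leg kernel. The cut norms then factor leg by leg and the ratio is $(1+N^{-1})^m\to1$.

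In the necessity part your flattening computation is reversed, and the proposed ``inversion'' goes the wrong way. Your routing sends legs in $S_0$ to the physical input index and legs outside $S_0$ to the output index. In the cut $S_0$, every source leg then lies on the same side as its paired physical factor. Hence $\Flat_{S_0}K_N=uv^*$ with $\norm u=N^{|S_0^c|/2}$ and $\norm v=N^{|S_0|/2}$. It is rank one of norm $N^{m/2}$ in every Schatten norm, not a partial isometry of norm $N^{m/s}$. In a general cut $S$, each leg $j\in S\triangle S_0$ crosses and contributes an $N$-dimensional identity, with Schatten-$s$ factor $N^{1/s}$. Each remaining leg contributes a single singular value $\sqrt N$. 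Therefore
\[
 \norm{\Flat_SK_N}_s=N^{m/2}N^{(1/s-1/2)|S\triangle S_0|}.
\]
For $s>2$ the prescribed cut $S_0$ is already the unique maximizer, beating every competitor by at least $N^{1/2-1/s}$. This is exactly the paper's kernel, and it agrees with your own correct observation at $m=1$ that the rank-one cut dominates. Rerouting the legs of $S_0$ to the output would instead make $S_0$ the all-crossing cut, with the smallest norm $N^{m/s}$, and move the maximum to $S_0^c$.

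So the obstacle you flagged is removed by computing the cuts correctly, not by rerouting. The same kernel at $s=4$ gives $\sum_SN^{2m-|S\triangle S_0|}=N^{2m}(1+N^{-1})^m$ through \eqref{eq:fourthmoment}, which also settles the lower constant for every $m$ at once.
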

\begin{proof}
Fix $S_0\subset[m]$. Give every source leg dimension $D$, and take physical input and output spaces $\bigotimes_{j\in S_0}\C^D$ and $\bigotimes_{j\notin S_0}\C^D$. Define
\[
 K_D=\sum_{i_1,\ldots,i_m=1}^D
 h_{1,i_1}\otimes\cdots\otimes h_{m,i_m}
 \otimes\overline{c_{i_{S_0}}}\otimes e_{i_{S_0^c}}.
\]
Each source leg is paired with its corresponding physical factor. In the $S$-cut, a pairing with $j\notin S\triangle S_0$ stays on one side and has one singular value $\sqrt D$; a pairing with $j\in S\triangle S_0$ crosses the cut and gives an identity factor with $D$ singular values one. Thus
\begin{equation}\label{eq:hamming_cut_profile}
 \norm{\Flat_SK_D}_s
 =D^{m/2}D^{(1/s-1/2)|S\triangle S_0|}.
\end{equation}
For $s>2$, $S_0$ is the unique maximizing cut, and deleting it loses at least $D^{1/2-1/s}$. At $s=4$,~\eqref{eq:fourthmoment} gives
\[
 \frac{\E\norm{T_{K_D}}_4^4}{\max_S\norm{\Flat_SK_D}_4^4}
 =(1+D^{-1})^m\longrightarrow1,
\]
showing optimality of the lower constant. A scalar coefficient has every cut norm one and circular Gaussian product fourth moment $2^m$, attaining the upper constant.
\end{proof}

\section{Ordered multiplication and analytic regularity}\label{sec:product}
Coefficient observability becomes algebra reconstruction only after the prescribed multiplication is shown to be continuous in the original scale. We use one real source space $H$ and one physical Hilbert space $E$. Ordinary same-source multiplication contracts source indices at every depth while multiplying physical coefficients in their given order. The Hilbert estimate gives the exact radius region; the other Schatten exponents use Gaussian recovery to return each output to its original coefficient norm.

\subsection{The finite product and the need for a scale}
\begin{proposition}[A single-norm obstruction]\label{prop:nobanalg}
Let $g$ be standard real Gaussian. There is no norm on $\C[g]$ and no finite constants $C,M$ such that
\begin{equation}\label{eq:banachnogo}
 \norm{fh}\le C\norm f\norm h,\qquad |\E f|\le M\norm f
\end{equation}
for all polynomials. Completeness of the normed space is not required.
\end{proposition}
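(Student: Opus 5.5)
Suppose such a norm and constants $C,M$ exist; we derive a contradiction from a sequence of polynomials whose norms stay bounded while their squares have large expectation. The introduction already names the candidate: the normalized Hermite polynomials $f_m=H_m(g)/\sqrt{m!}$, which satisfy $\E f_m^2=1$. The plan is to iterate submultiplicativity and the expectation bound on powers of a single fixed polynomial, so that we never need to control $\norm{f_m}$ itself.

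Fix a polynomial $f$ and apply the two inequalities in \eqref{eq:banachnogo} to its powers:
\[
 |\E f^n|\le M\norm{f^n}\le MC^{n-1}\norm f^n .
\]
Hence $\limsup_n|\E f^n|^{1/n}\le C\norm f<\infty$ for every polynomial $f$. Now take $f=g$, or more sharply $f=g^2$. Since $\E g^{2n}=(2n-1)!!$, we get $|\E g^{2n}|^{1/(2n)}\asymp\sqrt{n}\to\infty$. This contradicts the finite $\limsup$. The point is that Gaussian moments grow superexponentially, while any submultiplicative norm paired with a bounded expectation forces at most exponential moment growth. No completeness and no interaction with the Hermite structure are needed; $f=g$ alone suffices.

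It remains to check the degenerate cases. If $C=0$, then $\norm{g^2}=0$, so $g^2=0$ since we have a norm, which is absurd. If $\norm g=0$, then $g=0$, again absurd. In the generic case the bound $|\E g^{2n}|\le MC^{2n-1}\norm g^{2n}$ fails as soon as $(2n-1)!!>MC^{2n-1}\norm g^{2n}$, and this happens for large $n$ because $(2n-1)!!\ge n!$ grows faster than any geometric sequence.

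The only delicate point, if any, is making sure the argument uses exactly the hypotheses stated and nothing more: it uses the norm's submultiplicativity only through induction on the powers $g^n$, and it uses the expectation bound once. The argument explains why the paper must let the input radius change: the radius scale $\rho^m\sqrt{m!}$ absorbs precisely this factorial moment growth. An alternative route via $H_m(g)P/\sqrt{m!}$ would need finer information about the norm and is unnecessary here.
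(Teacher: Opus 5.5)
Your proof is correct and is essentially the paper's argument: iterating submultiplicativity on $g^{2n}$ and applying the expectation bound once gives $(2n-1)!!\le MC^{2n-1}\norm g^{2n}$, which fails for large $n$ because $(2n-1)!!$ grows faster than any geometric sequence. Your separate treatment of the degenerate cases $C=0$ and $\norm g=0$ is harmless but unnecessary, since in those cases the right-hand side is zero and the displayed inequality already fails at $n=1$.
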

\begin{proof}
Iterating the product bound gives
\[
 (2n-1)!!=\E g^{2n}\le MC^{2n-1}\norm g^{2n}.
\]
The $2n$th root of the right-hand side remains bounded. The $2n$th root of the left-hand side tends to infinity, as follows already by retaining the last $\lfloor n/2\rfloor$ factors in the product. This is a contradiction.
\end{proof}
A rank-one projection embeds this polynomial algebra into operator-valued polynomials by $f\mapsto fP$. We therefore use a scale of norms for ordinary multiplication.

For $0\le r\le m\wedge n$, put
\[
 b_{m,n,r}=r!\binom mr\binom nr,\qquad k=m+n-2r.
\]
On finite symmetric kernels define
\begin{equation}\label{eq:Gamma}
 \Gamma_r(f\otimes A,g\otimes B)
 =(f\widetilde\otimes_rg)\otimes AB.
\end{equation}
The contraction uses the complex bilinear extension of the real inner product, followed by symmetrization of the uncontracted source legs. The physical product is $AB$. The ordinary Gaussian product formula~\cite{Nourdin} gives
\begin{equation}\label{eq:starcoeff}
 (K\star L)_k=\sum_{m+n-2r=k}b_{m,n,r}\Gamma_r(K_m,L_n),
 \qquad I(K\star L)=I(K)I(L).
\end{equation}
On the core, injectivity of $I$ proves associativity and
$(K\star L)^*=L^*\star K^*$. Already one source variable detects noncommutativity: for $A=E_{12}$ and $B=E_{21}$,
\[
 [gA,gB]=(H_2(g)+1)[A,B].
\]
Wick multiplication is the $r=0$ part of~\eqref{eq:starcoeff}; the remaining contractions are essential for ordinary multiplication.

\subsection{A positive contraction estimate at the Hilbert exponent}
Write
$\mathcal K_m=H_\C^{\odot m}\otimes_2\Sch_2(E)$ and
$\mathcal N_{\rho,2}(K)=\sum_m\rho^m\sqrt{m!}\norm{K_m}_{\mathcal K_m}$.

\begin{lemma}[Ordered contractions and a positive kernel]\label{lem:positivekernel}
Each $\Gamma_r$ extends continuously with
$\norm{\Gamma_r(K,L)}\le\norm K\norm L$.
If
\[
 c_{m,n,r}=b_{m,n,r}\sqrt{\frac{k!}{m!n!}},
 \qquad c_{m,n,r}^2=\binom mr\binom nr\binom{k}{m-r},
\]
then, for $\sigma,t>0$,
\begin{equation}\label{eq:positivekernel}
 \sum_{r=0}^{m\wedge n}c_{m,n,r}\sigma^{m+n-2r}
 \le\left(\sigma^2+\frac{1+\sigma^2}{t}\right)^{m/2}
     \left(\sigma^2+(1+\sigma^2)t\right)^{n/2}.
\end{equation}
\end{lemma}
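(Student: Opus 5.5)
The plan has two parts: a Hilbert–Schmidt/Cauchy–Schwarz argument for the continuity of $\Gamma_r$, and an elementary binomial estimate for~\eqref{eq:positivekernel}.

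\emph{Continuity of $\Gamma_r$.} On the finite core, write $K=\sum_{I}e_I\otimes K_I$ and $L=\sum_J e_J\otimes L_J$ in an orthonormal real basis of $H$, with $K_I,L_J\in\Sch_2(E)$. Before symmetrization, the contraction has coefficients
\[
 \Gamma_r(K,L)_{(I',J')}=\sum_{P}K_{(I',P)}L_{(P,J')}\qquad(|P|=r).
\]
The complex bilinear pairing is harmless because the basis is real.

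Regard $K$ as a Hilbert–Schmidt tensor with index set $(I',P)$ together with the physical input/output indices, and likewise for $L$. The pair $(P,\text{middle physical index})$ is then summed exactly as in a composition of two Hilbert–Schmidt operators. Concretely, flatten $K$ as a map whose input is the contracted legs $P$ together with the physical input space, and $L$ as a map whose output is $P$ together with the physical output space. Then $\Gamma_r(K,L)$ is, up to unitary rearrangement, the composition of these two flattenings. Hence
\[
 \norm{\Gamma_r(K,L)}_2\le\norm K_2\,\norm L_{\op}\le\norm K_2\,\norm L_2 .
\]
(Equivalently, apply Cauchy–Schwarz in $P$ and in the middle physical index.) Symmetrization of the uncontracted legs is an orthogonal projection, so it does not increase the norm. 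Density then gives the continuous extension.

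\emph{The algebraic identity for $c_{m,n,r}$.} Since $b_{m,n,r}=m!n!/(r!(m-r)!(n-r)!)$ and $k-(m-r)=n-r$, a direct computation gives
\[
 c^2_{m,n,r}=\frac{m!\,n!\,k!}{r!^2(m-r)!^2(n-r)!^2}=\binom mr\binom nr\binom k{m-r}.
\]

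\emph{The kernel inequality.} This takes three steps.

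\begin{enumerate}
\item \emph{Remove the mixed binomial.} Apply $\binom{a+b}{a}t^b\le(1+t)^{a+b}$ with $a=n-r$ and $b=m-r$. This gives
\[
 \binom k{m-r}\le (1+1/t)^{m-r}(1+t)^{n-r}.
\]
\item \emph{Split each summand into a geometric mean.} Set $\alpha=\sigma^2(1+1/t)$ and $\beta=\sigma^2(1+t)$, and insert the factor $t^{-r}\cdot t^{r}=1$. Then each summand is at most $\sqrt{x_r y_r}$, where
\[
 x_r=\binom mr\alpha^{m-r}t^{-r},\qquad y_r=\binom nr\beta^{n-r}t^{r}.
\]
\item \emph{Sum with a free parameter.} For any $\lambda>0$, use $\sqrt{x_ry_r}\le\tfrac12(\lambda x_r+\lambda^{-1}y_r)$ and sum over all $r$ by the binomial theorem. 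Since $1/t+\alpha=\sigma^2+(1+\sigma^2)/t$ and $t+\beta=\sigma^2+(1+\sigma^2)t$, the total is at most
\[
 \tfrac12\Big(\lambda\big(\sigma^2+(1+\sigma^2)/t\big)^m+\lambda^{-1}\big(\sigma^2+(1+\sigma^2)t\big)^n\Big).
\]
Optimizing in $\lambda$ turns the arithmetic mean into the geometric mean, which is exactly the right-hand side of~\eqref{eq:positivekernel}.
\end{enumerate}

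\emph{Where the care is needed.} The only delicate point is choosing the splitting weights $t^{\mp r}$ so that the two binomial sums produce precisely $\sigma^2+(1+\sigma^2)/t$ and $\sigma^2+(1+\sigma^2)t$. The product of the $r$-weights must equal one, so that the $r$-dependence from $\sigma^{-2r}$ and from the binomial bound is absorbed exactly. With the choice above, the bookkeeping checks out.
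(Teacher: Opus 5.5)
Your proposal is correct and follows essentially the paper's proof: the paper also bounds the contraction by the Hilbert--Schmidt ideal inequality and Cauchy--Schwarz (your composition-of-flattenings view is the same computation), treats symmetrization as an orthogonal projection, uses the same binomial bound $\binom{k}{m-r}\le(1+t)^k t^{-(m-r)}$, and splits each summand with the same weights $t^{\mp r}$. Your weighted AM--GM step, optimized over $\lambda$, is just Cauchy--Schwarz in $r$ as in the paper, after enlarging the two nonnegative sums to their full binomial ranges $0\le r\le m$ and $0\le r\le n$, which you use implicitly.
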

\begin{proof}
Denote the uncontracted indices by $a,b$ and the contracted indices by $j$. Before source symmetrization, the coefficients are $\sum_jK_{a,j}L_{j,b}$. The ideal inequality and Cauchy--Schwarz give
\[
 \begin{split}
 \sum_{a,b}\left\|\sum_jK_{a,j}L_{j,b}\right\|_2^2
 &\le\sum_{a,b}\left(\sum_j\norm{K_{a,j}}_2^2\right)
                     \left(\sum_j\norm{L_{j,b}}_2^2\right)\\
 &=\norm K^2\norm L^2.
 \end{split}
\]
Symmetrization is an orthogonal projection on the source Hilbert tensor, proving the contraction estimate.

For the numerical bound, set $a=m-r$, $b=n-r$. The binomial expansion implies
$\binom{a+b}a\le(1+t)^{a+b}t^{-a}$.
With $U=\sigma^2(1+t)/t$ and $V=\sigma^2(1+t)$, the $r$th summand is at most
\[
 \left[\binom mrU^{m-r}t^{-r}\right]^{1/2}
 \left[\binom nrV^{n-r}t^r\right]^{1/2}.
\]
Apply Cauchy--Schwarz in $r$, and enlarge the two nonnegative sums to the full ranges $0\le r\le m$ and $0\le r\le n$. The result is
$(U+t^{-1})^{m/2}(V+t)^{n/2}$, which is the right-hand side of~\eqref{eq:positivekernel}.
\end{proof}

\begin{theorem}[The sharp multi-input radius region]\label{thm:arity}
Assume $H\ne0$, $E\ne0$, $k\ge2$, $\sigma\ge1$, and $\rho_i>0$. Let $\mathcal C_{\rho,2}$ be the kernel space with finite $\mathcal N_{\rho,2}$ norm. Ordered multiplication on the finite core is bounded from $\prod_{i=1}^k\mathcal C_{\rho_i,2}$ into $\mathcal C_{\sigma,2}$ if and only if
\begin{equation}\label{eq:arityregion}
 \sum_{i=1}^k\frac1{1+\rho_i^2}\le\frac1{1+\sigma^2}.
\end{equation}
In that region, its best multilinear constant is one:
\begin{equation}\label{eq:aritybound}
 \mathcal N_{\sigma,2}(K_1\star\cdots\star K_k)
 \le\prod_{i=1}^k\mathcal N_{\rho_i,2}(K_i).
\end{equation}
For equal input radii, the exact condition is $\rho^2\ge k(1+\sigma^2)-1$.
\end{theorem}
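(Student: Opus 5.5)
The plan is to reduce sufficiency to the binary case, where Lemma~\ref{lem:positivekernel} already contains the sharp computation, and then iterate using associativity. For necessity I will test against scalar exponential (coherent) vectors.

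\textbf{Binary sufficiency.} For finite $K,L$, use \eqref{eq:starcoeff}, the triangle inequality and the contraction bound $\norm{\Gamma_r(K_m,L_n)}\le\norm{K_m}\norm{L_n}$. This gives
\[
 \mathcal N_{\sigma,2}(K\star L)\le\sum_{m,n}\Big(\sum_{r}c_{m,n,r}\sigma^{m+n-2r}\Big)\sqrt{m!}\norm{K_m}\sqrt{n!}\norm{L_n}.
\]
Given $t>0$, set $\rho^2=\sigma^2+(1+\sigma^2)/t$ and $\eta^2=\sigma^2+(1+\sigma^2)t$. Then \eqref{eq:positivekernel} bounds the inner sum by $\rho^m\eta^n$, so
\[
 \mathcal N_{\sigma,2}(K\star L)\le\mathcal N_{\rho,2}(K)\,\mathcal N_{\eta,2}(L).
\]
A direct computation gives
\[
 \frac1{1+\rho^2}=\frac{t}{(1+\sigma^2)(1+t)},\qquad \frac1{1+\eta^2}=\frac1{(1+\sigma^2)(1+t)},
\]
so these two terms sum to exactly $1/(1+\sigma^2)$. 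As $t$ ranges over $(0,\infty)$, this parametrizes the whole equality curve. The lemma is valid for every $\sigma>0$, so intermediate radii need not exceed one.

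\textbf{Multi-input sufficiency.} Since $\mathcal N_{\rho,2}$ is nondecreasing in $\rho$, it suffices to treat equality in \eqref{eq:arityregion}. Given strict inequality, lower each $\rho_i$ continuously until equality holds. This is possible because the left side increases towards $k\ge1>1/(1+\sigma^2)$ as the radii shrink, and lowering the input norms only strengthens the bound. Then induct on $k$. Write $K_1\star(K_2\star\cdots\star K_k)$ using associativity on the core. Choose $\tau>0$ with $1/(1+\tau^2)=\sum_{i\ge2}1/(1+\rho_i^2)$, apply the binary bound at output $\sigma$ with inputs $(\rho_1,\tau)$, and then apply the induction hypothesis at output radius $\tau$. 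This gives \eqref{eq:aritybound} with constant one. The constant cannot be smaller: take every $K_i=P$ of degree zero with $P$ a rank-one projection, so both sides equal one. For equal radii the condition reads $k/(1+\rho^2)\le1/(1+\sigma^2)$, which is $\rho^2\ge k(1+\sigma^2)-1$.

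\textbf{Necessity.} Fix a unit $h\in H$, let $g=W(h)$, and let $P$ be a rank-one projection. Consider $K_i$ with $I(K_i)=e^{\lambda_ig-\lambda_i^2/2}P$, whose degree-$m$ kernel has norm $\lambda_i^m/m!$. Then
\[
 \mathcal N_{\rho,2}(K_i)=\sum_m(\rho\lambda_i)^m/\sqrt{m!}=\exp\big(\rho^2\lambda_i^2/2+O(\log(\rho\lambda_i))\big).
\]
The product satisfies $I(K_1\star\cdots\star K_k)=e^{\sum_{i<j}\lambda_i\lambda_j}e^{\Lambda g-\Lambda^2/2}P$ with $\Lambda=\sum\lambda_i$. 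Put $\lambda=Lv$ and let $L\to\infty$. A bound with any constant $C$ then forces, at exponential order $L^2$,
\[
 (1+\sigma^2)\Big(\sum v_i\Big)^2\le\sum_i(1+\rho_i^2)v_i^2\qquad\text{for all }v\ge0.
\]
Choosing $v_i=1/(1+\rho_i^2)$, the Cauchy--Schwarz extremizer, turns this into \eqref{eq:arityregion}.

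\textbf{Main obstacle.} The delicate step is making this asymptotic comparison rigorous on the finite core. First, truncate each exponential at a degree $D\gg\rho^2\lambda^2$ and show that the truncations' product and norms approximate the full ones. Second, establish the precise asymptotics of the weighted $\ell^1$ series, via Stirling's formula and concentration of the terms near $m\approx\rho^2\lambda^2$. This ensures the polynomial prefactors cannot compensate for a strict violation of the quadratic-form inequality.
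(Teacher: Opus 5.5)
Your proposal is correct and follows essentially the same route as the paper. Both use the binary bound from Lemma~\ref{lem:positivekernel}, an order-preserving bracketing whose intermediate radii come from additive reciprocal budgets, and the coherent-kernel test for necessity. The only differences in the sufficiency step are bookkeeping. You use the right comb and first lower the inputs to equality, whereas the paper assigns $r_A$ to every node of an arbitrary bracketing and uses monotonicity in the output radius.

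The step you flag as the main obstacle is lighter than you expect. The coefficients of the truncated coherent products are nonnegative and increase to those of the full product, so monotone convergence passes the finite-core bound to the limit. The asymptotics also need no Stirling or concentration argument. With $\mathcal A(u)=\sum_m u^m/\sqrt{m!}$, the elementary bounds $e^{u^2}\le\mathcal A(u)^2\le(1-q^2)^{-1}e^{u^2/q^2}$ already give $\log\mathcal A(u)/u^2\to1/2$.
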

\begin{proof}
\emph{Binary sufficiency and absolute convergence.}
For two inputs,~\eqref{eq:arityregion} is equivalent to
\begin{equation}\label{eq:binaryregion}
 \rho^2>\sigma^2,\quad\eta^2>\sigma^2,\quad
 (\rho^2-\sigma^2)(\eta^2-\sigma^2)\ge(1+\sigma^2)^2.
\end{equation}
Indeed, clearing the positive denominators gives the last inequality after adding $\sigma^4$; either of the first two failures makes the reciprocal sum too large. Choose
\[
 \frac{1+\sigma^2}{\rho^2-\sigma^2}\le t
 \le\frac{\eta^2-\sigma^2}{1+\sigma^2}.
\]
Both factors on the right of~\eqref{eq:positivekernel} are then bounded by $\rho^m$ and $\eta^n$. The contraction estimate yields the stronger absolute bound
\begin{equation}\label{eq:absolutecontraction}
 \begin{split}
 &\sum_{m,n\ge0}\sum_{r\le m\wedge n}
 \sigma^{m+n-2r}\sqrt{(m+n-2r)!}\,b_{m,n,r}
       \norm{\Gamma_r(K_m,L_n)}\\
 &\hspace{25mm}\le\mathcal N_{\rho,2}(K)\mathcal N_{\eta,2}(L).
 \end{split}
\end{equation}
All contractions can therefore be grouped by output degree. Finite kernels are dense in the input spaces, so the bound supplies the unique continuous extension.

\emph{Ordered multi-input multiplication.}
Set $c_i=(1+\rho_i^2)^{-1}$. Choose an arbitrary binary bracketing preserving input order. For a node containing the consecutive input set $A$, assign the radius
\[
 r_A=\left(\left(\sum_{i\in A}c_i\right)^{-1}-1\right)^{1/2}.
\]
The root radius is at least $\sigma$, and every intermediate radius is at least the root radius. For each binary node, the reciprocal weights of its two children add to that of their parent, so the binary estimate holds at equality in its radius condition. Iterating with constant one proves~\eqref{eq:aritybound}; monotonicity in the output radius gives the prescribed $\sigma$. Associativity on the finite core and continuity make the result independent of bracketing. Taking every input to be the same constant rank-one projection of Hilbert--Schmidt norm one proves that the constant cannot be smaller than one.

\emph{Necessity.}
Fix a unit source vector $h$ and a rank-one projection $P$. For $z\ge0$, consider the coherent kernel
\[
 \kappa(z)_m=\frac{z^m}{m!}h^{\otimes m}\otimes P,
 \qquad\mathcal A(u)=\sum_{m\ge0}\frac{u^m}{\sqrt{m!}}.
\]
Direct contraction gives
\[
 \kappa(z_1)\star\cdots\star\kappa(z_k)
 =\exp\left(\sum_{i<j}z_iz_j\right)\kappa\left(\sum_i z_i\right),
 \qquad\mathcal N_{\rho,2}(\kappa(z))=\mathcal A(\rho z).
\]
These tests are justified even when boundedness is initially assumed only on the finite core: truncate each input, and pass to the limit in the nonnegative coefficient sums.

We have $\mathcal A(u)^2\ge e^{u^2}$. For $0<q<1$, Cauchy--Schwarz gives
\[
 \mathcal A(u)\le(1-q^2)^{-1/2}\exp\big(u^2/(2q^2)\big).
\]
Letting $u\to\infty$ and then $q\uparrow1$ shows
$\log\mathcal A(u)/u^2\to1/2$.
If multiplication has any finite bound, substitute $z_i=ta_i$, take logarithms, divide by $t^2$, and let $t\to\infty$. One obtains
\[
 \sum_i(1+\rho_i^2)a_i^2-(1+\sigma^2)\left(\sum_i a_i\right)^2\ge0
 \qquad(a_i\ge0).
\]
Taking $a_i=(1+\rho_i^2)^{-1}$ gives~\eqref{eq:arityregion}.
\end{proof}

Writing $b(\rho)=(1+\rho^2)^{-1}$, the sharp region says that the input budgets add: $\sum_i b(\rho_i)\le b(\sigma)$. Equivalently,
\[
 \diag(1+\rho_1^2,\ldots,1+\rho_k^2)
 -(1+\sigma^2)\mathbf1\mathbf1^*\ge0.
\]
Indeed conjugation by the inverse square root of the diagonal matrix reduces positivity to a rank-one eigenvalue test. This explains why any bracketing preserving input order has the same optimal total radius cost: each internal node simply adds its children's reciprocal budgets.

The binary boundary agrees with the scalar weighted Gaussian $L^2$ condition of Da Pelo--Lanconelli--Stan~\cite[Corollary~5.5]{DPLS}: take their product parameter to be the identity and the three probability exponents to be two. The norm here is the $\ell^1$ Fock coefficient norm, and~\eqref{eq:absolutecontraction} retains each contraction depth and operator order. At the level of all-radius scalar spaces, the comparison with the usual $\ell^2$ scale is
\begin{equation}\label{eq:l1l2}
 \left(\sum_m\rho^{2m}m!\norm{K_m}^2\right)^{1/2}
 \le\mathcal N_{\rho,2}(K)
 \le\frac{\left(\sum_mR^{2m}m!\norm{K_m}^2\right)^{1/2}}
 {\sqrt{1-(\rho/R)^2}},\qquad R>\rho.
\end{equation}
This relates the scalar topology to classical smooth-random-variable spaces~\cite{PT}.

\subsection{Ordinary multiplication in the original Schatten norms}
For $s>2$, a coefficient need not be a Hilbert tensor. To control its contractions, we first isolate the output degrees on the random-variable side and then recover them in the original coefficient norm.

\begin{lemma}[Mehler extraction with an explicit degree cost]\label{lem:chebyshev}
Let $B$ be a Banach space, and let $F=\sum_{j=0}^dF_j$ be a finite Gaussian polynomial with its homogeneous decomposition. For $\lambda\ge1$, put
$\chi_\lambda=\lambda+\sqrt{1+\lambda^2}$ and
$C_\lambda=2\chi_\lambda/(\chi_\lambda-1)$. Then
\begin{equation}\label{eq:chebyshev}
 \sum_{j=0}^d\lambda^j\norm{F_j}_{L^2(B)}
 \le C_\lambda\chi_\lambda^d\norm F_{L^2(B)}.
\end{equation}
\end{lemma}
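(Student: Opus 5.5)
The plan is to write $F_j$ through the Mehler (Ornstein--Uhlenbeck) semigroup and to reduce \eqref{eq:chebyshev} to a scalar statement about Chebyshev polynomials evaluated on the imaginary axis. For $t\in[-1,1]$ put $M_tF=\sum_{j=0}^dt^jF_j$.

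\emph{Step 1: contractivity.} For $t\in[0,1]$, $M_t$ is given by the positive Mehler kernel, so $\norm{M_tF}_B\le M_t(\norm F_B)$ pointwise, as already used before \eqref{eq:hyper}. Scalar $L^2$-contractivity then gives $\norm{M_tF}_{L^2(B)}\le\norm F_{L^2(B)}$. For $t\in[-1,0)$, compose $M_{|t|}$ with the measure-preserving reflection $W\mapsto-W$, which multiplies $F_j$ by $(-1)^j$. Hence the $L^2(B)$-valued polynomial
\[
 p(t)=\sum_{j=0}^dt^jF_j
\]
satisfies $\sup_{|t|\le1}\norm{p(t)}_{L^2(B)}\le\norm F_{L^2(B)}$.

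\emph{Step 2: Chebyshev expansion.} Expand $p=\sum_{k=0}^dc_kT_k$ with vector coefficients
\[
 c_k=\frac{2}{\pi}\int_0^\pi p(\cos\theta)\cos k\theta\,d\theta
\]
(with the usual factor $\tfrac12$ for $c_0$). Bochner integration gives $\norm{c_k}_{L^2(B)}\le2\norm F_{L^2(B)}$. Writing $T_k(x)=\sum_jt_{k,j}x^j$, each $F_j=\sum_kc_kt_{k,j}$, so by the triangle inequality
\[
 \sum_j\lambda^j\norm{F_j}\le2\norm F\sum_{k\le d}\sum_j|t_{k,j}|\lambda^j.
\]

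\emph{Step 3: the scalar Chebyshev bound.} The key observation is that $T_k$ has only monomials of the parity of $k$, with alternating signs. Consequently all terms of $T_k(i\lambda)$ share one phase $i^k$, and
\[
 \sum_j|t_{k,j}|\lambda^j=|T_k(i\lambda)|.
\]
From $T_k(z)=\tfrac12\big((z+\sqrt{z^2-1})^k+(z-\sqrt{z^2-1})^k\big)$ at $z=i\lambda$ one gets
\[
 |T_k(i\lambda)|=\tfrac12\big|\chi_\lambda^k+(-\chi_\lambda^{-1})^k\big|\le\chi_\lambda^k .
\]
Summing the geometric series,
\[
 \sum_{k\le d}\chi_\lambda^k\le\chi_\lambda^d\,\frac{\chi_\lambda}{\chi_\lambda-1},
\]
which yields exactly $C_\lambda\chi_\lambda^d$.

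The only delicate point is Step 1 in the Banach-valued setting: both the positivity of the Mehler kernel and the reflection for negative $t$ are needed so that the bound holds on all of $[-1,1]$, not just on $[0,1]$. The parity/sign identity in Step 3 is the place where the specific constant $\chi_\lambda$ arises, and I would verify it from the recurrence $T_{k+1}=2xT_k-T_{k-1}$ by induction.
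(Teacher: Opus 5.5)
Your proof is correct and follows the paper's route. You use Mehler contractivity on $[-1,1]$ (positivity of the kernel plus a source reflection), a Chebyshev expansion with cosine-integral coefficient bounds, and a bound $\chi_\lambda^k$ on the $\lambda$-weighted absolute coefficient sum of $T_k$. The only minor variation is that you compute that weighted sum exactly as $|T_k(i\lambda)|$ from the alternating sign pattern, whereas the paper bounds it inductively via $L_{a+1}\le 2\lambda L_a+L_{a-1}$ and $\chi_\lambda^2=2\lambda\chi_\lambda+1$; both give the same constant $C_\lambda\chi_\lambda^d$.
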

\begin{proof}
For $t\in[-1,1]$, the correlated Mehler operator is a contraction on $L^2(B)$ and satisfies $M_tF=\sum_jt^jF_j$; a negative parameter adds a source reflection. Expand this Banach-valued polynomial in Chebyshev polynomials:
$M_tF=\sum_{a=0}^dA_aT_a(t)$.
The cosine integral formula gives $\norm{A_0}\le\norm F$ and $\norm{A_a}\le2\norm F$ for $a\ge1$.

Let $L_a(\lambda)$ be the sum of the absolute values of the monomial coefficients of $T_a$, weighted by $\lambda^j$. The recurrence $T_{a+1}=2tT_a-T_{a-1}$ gives
\[
 L_{a+1}\le2\lambda L_a+L_{a-1},\qquad L_0=1,\quad L_1=\lambda.
\]
Since $\chi_\lambda^2=2\lambda\chi_\lambda+1$, induction yields $L_a\le\chi_\lambda^a$. Taking norms of each monomial coefficient in the Chebyshev expansion and summing yields
\[
 \sum_j\lambda^j\norm{F_j}
 \le\left(1+2\sum_{a=1}^d\chi_\lambda^a\right)\norm F
 \le C_\lambda\chi_\lambda^d\norm F.
\]
\end{proof}

\begin{theorem}[The Schatten coefficient algebra]\label{thm:nativeproduct}
Let $1\le s<\infty$ and $\sigma\ge1$, and define
\begin{equation}\label{eq:nativeradius}
 R_s(\sigma)=u_s\sqrt3\,\chi_{v_s\sigma}.
\end{equation}
Ordered finite contractions extend uniquely and satisfy
\begin{equation}\label{eq:nativeproduct}
 \Nn\sigma{K\star L}
 \le C_{v_s\sigma}\Nn{R_s(\sigma)}K\Nn{R_s(\sigma)}L.
\end{equation}
Under the right-hand budget, all $(m,n,r)$ outputs are absolutely summable. The space $\Alg_s$ is a complete Fr\'echet $*$-algebra, and $I$ is a topological $*$-algebra isomorphism from $\Alg_s$ onto $\Gaus_s$, realizing ordinary same-source multiplication.
\end{theorem}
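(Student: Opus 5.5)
The plan is to prove \eqref{eq:nativeproduct} first on finite kernels, working entirely on the random-variable side. The route is to evaluate, multiply in $L^2(\Sch_s)$, isolate the output degrees with Lemma~\ref{lem:chebyshev}, and then return each output degree to its original coefficient norm with the Wick inverse of Theorem~\ref{thm:wickrecover}. Fix finite kernels $K,L$ and put $\lambda=v_s\sigma\ge1$. By the lower bound in \eqref{eq:wick-exponential},
\[
 \Nn\sigma{K\star L}=\sum_k\sigma^k\sqrt{k!}\norm{(K\star L)_k}_{\Wc_{k,s}}
 \le\sum_k\lambda^k\norm{I_k((K\star L)_k)}_{L^2(\Sch_s)}.
\]
By \eqref{eq:starcoeff}, $I_k((K\star L)_k)$ is the degree-$k$ component of $\sum_{m,n}I_m(K_m)I_n(L_n)$. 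Using the triangle inequality over the pairs $(m,n)$, it suffices to bound, for each pair, the weighted sum of the homogeneous components of the single product $X_mY_n$, where $X_m=I_mK_m$ and $Y_n=I_nL_n$. This product is a polynomial of degree at most $m+n$, so Lemma~\ref{lem:chebyshev} with $B=\Sch_s$ gives
\[
 \sum_j\lambda^j\norm{(X_mY_n)_j}_{L^2(\Sch_s)}\le C_\lambda\chi_\lambda^{m+n}\norm{X_mY_n}_{L^2(\Sch_s)}.
\]

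Next I bound $\norm{X_mY_n}_{L^2(\Sch_s)}$. The Schatten ideal property gives $\norm{XY}_s\le\norm X_s\norm Y_\infty\le\norm X_s\norm Y_s$. Combining this with probabilistic Cauchy--Schwarz and hypercontractivity \eqref{eq:hyper} at $q=4$ gives
\[
 \norm{X_mY_n}_{L^2(\Sch_s)}\le\norm{X_m}_{L^4(\Sch_s)}\norm{Y_n}_{L^4(\Sch_s)}\le3^{(m+n)/2}\norm{X_m}_{L^2(\Sch_s)}\norm{Y_n}_{L^2(\Sch_s)}.
\]
The upper bound in \eqref{eq:wick-exponential} then yields $\norm{X_m}_{L^2}\le u_s^m\sqrt{m!}\norm{K_m}$, and likewise for $Y_n$. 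Summing over $(m,n)$ gives exactly
\[
 C_{v_s\sigma}\sum_{m,n}(u_s\sqrt3\chi_\lambda)^{m+n}\sqrt{m!}\norm{K_m}\sqrt{n!}\norm{L_n}
 =C_{v_s\sigma}\Nn{R_s(\sigma)}K\Nn{R_s(\sigma)}L,
\]
which is \eqref{eq:nativeproduct} with $R_s(\sigma)$ as in \eqref{eq:nativeradius}. This estimate controls the whole double sum over $(m,n)$ before grouping by output degree. It is the absolute summability statement, organized by pairs $(m,n)$ rather than by contraction depth $r$. To obtain control of each individual $(m,n,r)$ contribution, I note that the degree-$k$ piece of $X_mY_n$ is a single contraction $b_{m,n,r}I_k\Gamma_r(K_m,L_n)$, since $k$ determines $r$. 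Hence the individual terms are dominated by the same sum.

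Density of the core, together with the fact that every output radius is controlled by a finite input radius, gives the unique continuous extension. $\Alg_s$ is a countably normed, weighted $\ell^1$-sum of the Banach spaces $\Wc_{m,s}$, hence a Fr\'echet space. Associativity and $(K\star L)^*=L^*\star K^*$ pass from the core by continuity. The involution is isometric on each $\Wc_{m,s}$ because physical adjoints complement the cuts. For the isomorphism with $\Gaus_s$, \eqref{eq:wick-exponential} gives
\[
 \En\rho{IK}\le\Nn{u_s\rho}K,\qquad\Nn\rho K\le\En{v_s\rho}{IK}.
\]
Since radii are cofinal, $I$ is a topological isomorphism onto its image. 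Surjectivity holds because each $F_m\in\mathcal H_m(\Sch_s)$ lies in the closed image $I_m(\Wc_{m,s})$, by Theorem~\ref{thm:wickrecover} and Proposition~\ref{prop:chaosimage}. The resulting kernel series converges at every radius by the second inequality.

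The step I expect to need the most care is the identity $I(K\star L)=I(K)I(L)$ for general elements, since the chaos series of the factors converge only in $L^2(\Sch_s)$. I would take finite truncations $P_NK$ and $P_NL$. The products $I(P_NK)I(P_NL)$ converge to $I(K)I(L)$ in $L^1(\Sch_s)$ by H\"older together with the $L^4$--$L^2$ equivalence per degree, summed using the radius-$\sqrt3u_s$ budget. At the same time $I(P_NK\star P_NL)\to I(K\star L)$ in $L^2(\Sch_s)$ by \eqref{eq:nativeproduct} and the forward bound. The two limits then agree almost surely, which gives ordinary same-source multiplication. At $s=1$ the same argument applies, since Theorem~\ref{thm:wickrecover} supplies $u_1$ and $v_1$, and Lemma~\ref{lem:chebyshev} holds for any Banach space.
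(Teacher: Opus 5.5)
Your proposal is correct and follows essentially the same route as the paper: $L^4$ H\"older plus hypercontractivity to bound $\|I_mK_m\,I_nL_n\|_{L^2(\Sch_s)}$ by $3^{(m+n)/2}$ times the two $L^2$ norms, Mehler--Chebyshev extraction at $\lambda=v_s\sigma$, the degreewise Wick inverse (using that the output degree determines $r$ for fixed $(m,n)$), summation over $(m,n)$, and the comparison $\Nn{\rho/v_s}K\le\En\rho{I(K)}\le\Nn{u_s\rho}K$ together with Proposition~\ref{prop:chaosimage} for surjectivity. The only variation is how you identify the extended product with ordinary multiplication: you pass to the limit through truncations and the continuity of $\star$ and $I$, whereas the paper identifies each output degree separately; both arguments work, and your factor products in fact converge in $L^2(\Sch_s)$, not merely in $L^1$.
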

\begin{proof}
The proof recovers the product in the original coefficient norm after separating its homogeneous outputs. For finite homogeneous inputs put
$F_m=I_mK_m$, $G_n=I_nL_n$, $q_m=\norm{F_m}_{L^2(\Sch_s)}$, and $q_n'=\norm{G_n}_{L^2(\Sch_s)}$. Schatten multiplication and probability H\"older give
\[
 \norm{F_mG_n}_{L^2(\Sch_s)}
 \le\norm{F_m}_{L^4(\Sch_s)}\norm{G_n}_{L^4(\Sch_s)}
 \le3^{(m+n)/2}q_mq_n'.
\]
Here $\norm{AB}_s\le\norm A_s\norm B_s$ follows from the ideal inequality and $\norm B_{\op}\le\norm B_s$.

For these fixed input degrees, the output degree $j=m+n-2r$ determines the contraction depth $r$ uniquely. The Mehler estimate with $\lambda=v_s\sigma$, followed by coefficient recovery at each output degree, therefore gives
\begin{equation}\label{eq:nativeabsolute}
 \begin{split}
 &\sum_{r\le m\wedge n}\sigma^j\sqrt{j!}\,b_{m,n,r}
       \norm{\Gamma_r(K_m,L_n)}_{\Wc_{j,s}}\\
 &\quad\le C_{v_s\sigma}(\sqrt3\chi_{v_s\sigma})^{m+n}q_mq_n'\\
 &\quad\le C_{v_s\sigma}R_s(\sigma)^{m+n}\sqrt{m!n!}
       \norm{K_m}_{\Wc_{m,s}}\norm{L_n}_{\Wc_{n,s}}.
 \end{split}
\end{equation}
In particular, each fixed contraction extends by continuity to its two original coefficient spaces. This construction is valid also when a high-exponent kernel has infinite Hilbert tensor norm: the extension uses the cut norm, not a formal Hilbert contraction of an undefined tensor.

Summing the nonnegative bound over $m,n$ proves absolute summability of every $(m,n,r)$ output and proves \eqref{eq:nativeproduct}. It remains to identify the sum with ordinary multiplication. The input evaluations converge in $L^4(\Sch_s)$, because $R_s(\sigma)\ge u_s\sqrt3$. Their finite products consequently converge in $L^2(\Sch_s)$ to the actual random product. Applied before coefficient inversion, the same Mehler estimate makes the corresponding random homogeneous outputs absolutely summable in $L^2$, with weights $(v_s\sigma)^j$. For every fixed $j$, continuity of the homogeneous inverse identifies its kernel with the sum of the degree-$j$ contractions. Summing these identified random outputs gives exactly the ordinary product. The argument applies also when the output radius is smaller than $u_s$.

The intersection over integer radii of the weighted $\ell^1$ coefficient spaces is complete. A Cauchy sequence has a limit at each radius, and continuity of the degree projections makes these limits have the same coefficients. Finite degree, source, and physical compressions are dense. The product bound supplies a finite larger input radius for every output radius, hence joint continuity on the intersection. Adjoints are isometries in each coefficient norm. Associativity and the reversed-product identity pass from the finite core by this continuity.

Finally, fixed-degree evaluation and inversion give
\begin{equation}\label{eq:scaletopology}
 \Nn{\rho/v_s}K\le\En\rho{I(K)}\le\Nn{u_s\rho}K.
\end{equation}
Thus evaluation is continuous and has a continuous inverse in the all-radius topologies. Given $F\in\Gaus_s$, Proposition~\ref{prop:chaosimage} recovers the unique kernel of each homogeneous component. The left inequality at arbitrarily large radii places the resulting sequence in $\Alg_s$, and its evaluation is $F$. This proves surjectivity and the asserted topological $*$-algebra isomorphism.
\end{proof}

At $s=2$, Theorem~\ref{thm:arity} gives the sharper optimal region. At $s=1$, the same proof uses the exponential predual constant from Proposition~\ref{prop:traceendpoint}, rather than reflexivity or a bounded retraction on the full random data space. The entire trace-class coefficient algebra and its finite inverses are therefore available. Section~\ref{sec:data} supplies positive physical masses and anchored source innovations also at $s=1$, giving explicit original-norm tail certificates for this completion. Only the alternative source-convexity inequality retains the strict range $1<s<\infty$.

\subsection{An intrinsic analytic description of the all-radius topology}
The radius scale also has a description that does not choose source coordinates. It is the compact-open topology of an entire Mehler response. This identifies the regularity encoded by the coefficient weights; it does not replace the independently defined cut norms or the proof of their equivalence with Gaussian evaluation.

Write $\mathscr B_s=L^2(\sigma(W);\Sch_s)$ and let $M_t$, $0\le t\le1$, be the source Mehler contraction on this Banach space. Thus $M_tI_mK_m=t^mI_mK_m$. The parameter is the source correlation, rather than the Ornstein--Uhlenbeck time $-\log t$.

\begin{theorem}[Entire Mehler characterization]\label{thm:mehler_entire}
Let $1\le s<\infty$ and let $F\in\mathscr B_s$. Then $F\in\Gaus_s$ if and only if $t\mapsto M_tF$ extends to an entire $\mathscr B_s$-valued function $\mathscr M_F$ on $\C$. The extension is unique and satisfies
\begin{equation}\label{eq:mehler_entire_series}
 \mathscr M_F(z)=\sum_{m\ge0}z^mF_m,
 \qquad M_t\mathscr M_F(z)=\mathscr M_F(tz)
 \quad(0\le t\le1,\ z\in\C).
\end{equation}
Put $\mathfrak m_r(F)=\sup_{|z|\le r}\norm{\mathscr M_F(z)}_{\mathscr B_s}$. For $0<r<R$,
\begin{equation}\label{eq:mehler_compact_open_norms}
 \mathfrak m_r(F)\le\En rF
 \le\frac{\mathfrak m_R(F)}{1-r/R}.
\end{equation}
Consequently the compact-open topology of these curves is the original recovered topology. More explicitly, for $F=I(K)$ and $R>v_s\rho$,
\begin{align}
 \mathfrak m_\rho(F)&\le\Nn{u_s\rho}K,&
 \Nn\rho K&\le\frac{\mathfrak m_R(F)}{1-v_s\rho/R},\label{eq:mehler_native_norms}\\
 \Nn\rho{K^{>M}}&\le
 \frac{(v_s\rho/R)^{M+1}}{1-v_s\rho/R}\,\mathfrak m_R(F).
 \label{eq:mehler_native_tail}
\end{align}
The curves in \eqref{eq:mehler_entire_series} form the closed linear subspace of entire $\mathscr B_s$-valued curves satisfying the displayed Mehler covariance. Evaluation at $z=1$ is the inverse parametrization of this subspace.
\end{theorem}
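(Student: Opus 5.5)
The plan is to work with the chaos decomposition in $\mathscr B_s$ and to use the Mehler operator to read off each homogeneous component. In the forward direction, take $F\in\Gaus_s$, so that $\En\rho F=\sum_m\rho^m\norm{F_m}_{\mathscr B_s}<\infty$ for every $\rho$. The series $\sum_mz^mF_m$ then converges absolutely and locally uniformly on $\C$, and its sum is entire. We have $M_tF_m=t^mF_m$, and $M_t$ is a bounded contraction, so applying it termwise gives the covariance identity $M_t\mathscr M_F(z)=\mathscr M_F(tz)$. For real $z=t\in[0,1]$ the sum equals $\sum_mt^mF_m=M_tF$, because $M_t$ is continuous on the $L^2$-convergent chaos expansion. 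Uniqueness of the extension follows from the identity theorem, since $[0,1]$ has accumulation points. Finally, the triangle inequality gives $\mathfrak m_r(F)\le\En rF$.

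For the converse, suppose $\mathscr M$ is an entire $\mathscr B_s$-valued extension of $t\mapsto M_tF$, and write $\mathscr M(z)=\sum_mz^mA_m$ with Taylor coefficients $A_m\in\mathscr B_s$. The key step is to show $A_m=F_m$, the homogeneous chaos component of $F$. To get this, I would apply the bounded projection onto the degree-$m$ chaos, or equivalently pair with test functionals of the form $\E\Tr(\,\cdot\,G^*)$ where $G$ is a finite homogeneous degree-$n$ polynomial. On $[0,1]$ the scalar function $t\mapsto\E\Tr(M_tF\,G^*)$ equals $t^n\,\E\Tr(F_nG^*)$, since the Mehler operator is self-adjoint for this pairing and acts by $t^n$ on $G$. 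The identity theorem then forces $\E\Tr(A_mG^*)=\delta_{mn}\E\Tr(F_nG^*)$. These functionals separate points and detect chaos degree, so $A_m$ lies in the degree-$m$ chaos and coincides with $F_m$.

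This identification is where I expect the main obstacle. In the Banach setting with $s\ne2$, the chaos components need not come from an orthogonal projection. I would handle this by using finite source conditioning and physical compressions $Q_NE_L(\cdot)P_N$ as in Proposition~\ref{prop:chaosimage}. These commute with $M_t$ and land in finite-dimensional polynomial spaces, where the identification is just polynomial identity in $t$. Passing to the limit over compressions then gives $A_m=F_m$. Once this is known, the Cauchy estimates give $\norm{F_m}\le\mathfrak m_R(F)R^{-m}$, and summing yields $\En rF\le\mathfrak m_R(F)/(1-r/R)$. This holds for every $r$, so $F\in\Gaus_s$, and we obtain \eqref{eq:mehler_compact_open_norms}; it follows that the two families of seminorms define the same topology.

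The native bounds \eqref{eq:mehler_native_norms} and \eqref{eq:mehler_native_tail} then come from combining this with the degree-wise Wick comparison \eqref{eq:wick-exponential}. That comparison gives $\sqrt{m!}\norm{K_m}\le v_s^m\norm{F_m}\le v_s^m\mathfrak m_R(F)R^{-m}$, and summing the geometric series in $v_s\rho/R$, either fully or over $m>M$, produces both estimates. The opposite bound $\mathfrak m_\rho(F)\le\En\rho F\le\Nn{u_s\rho}K$ is \eqref{eq:scaletopology}. It remains to prove closedness of the subspace of covariant entire curves under local uniform limits. Covariance passes to such limits by continuity of $M_t$, and evaluation at $z=1$ recovers $F$ because $\mathscr M(1)=M_1F=F$.
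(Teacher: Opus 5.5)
Your proposal is correct and follows the paper's proof essentially step for step: Mehler symmetry against finite Hermite test functionals and the identity theorem identify the Taylor coefficients, Proposition~\ref{prop:chaosimage} places them in $\mathcal H_m(\Sch_s)$ without any bounded chaos projection (which is what matters at $s=1$), and Cauchy estimates together with the degreewise Wick bounds give \eqref{eq:mehler_compact_open_norms}--\eqref{eq:mehler_native_tail}. Two small points to tidy: for a general $F\in\mathscr B_s$ the pairing should read $t^n\E\Tr(FG^*)$, since $F_n$ is only defined a posteriori through $F=\mathscr M(1)=\sum_mA_m$, and for the subspace claim you should also note that any covariant entire curve $H$ satisfies $H(t)=M_tH(1)$ and hence equals $\mathscr M_{H(1)}$.
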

\begin{proof}
If $F\in\Gaus_s$, its homogeneous series converges absolutely and uniformly on each disk in $\mathscr B_s$, since every radius sum is finite. It therefore defines an entire function. For real $t\in[0,1]$, continuity of the Mehler contraction identifies its value with $M_tF$. Applying the same contraction termwise gives the covariance identity, and the triangle inequality gives the first bound in \eqref{eq:mehler_compact_open_norms}.

Conversely, suppose an entire extension $H(z)$ exists and write its Banach-valued Taylor series as $H(z)=\sum_{m\ge0}z^mA_m$. We identify $A_m$ before using any homogeneous projection on the full Banach space. For a finite Hermite index $\alpha$ and a physical matrix entry, the scalar functional
\[
 Y\longmapsto\E[\Psi_\alpha Y_{k\ell}]
\]
is continuous on $\mathscr B_s$. Scalar Mehler symmetry gives
\[
 \E[\Psi_\alpha(M_tF)_{k\ell}]
   =t^{|\alpha|}\E[\Psi_\alpha F_{k\ell}].
\]
Both sides extend to entire functions of $t$, so the identity theorem and comparison of Taylor coefficients show that $A_m$ has no Hermite coefficient outside degree $m$. Condition on finitely many source coordinates and compress to finitely many physical coordinates. Completeness of scalar Hermite polynomials in each resulting $L^2$ space makes this compression a homogeneous degree-$m$ matrix polynomial. Proposition~\ref{prop:chaosimage} therefore gives $A_m\in\mathcal H_m(\Sch_s)$, including when $s=1$.

Banach-valued Cauchy estimates give $\norm{A_m}_{\mathscr B_s}\le R^{-m}\sup_{|z|\le R}\norm{H(z)}_{\mathscr B_s}$. Summing at any $r<R$ proves the second bound in \eqref{eq:mehler_compact_open_norms}. In particular the homogeneous series is absolutely summable at every radius, and $H(1)=F$ identifies $A_m=F_m$. This proves the characterization and uniqueness, without a bounded homogeneous projection on arbitrary trace-class-valued data.

For $F=I(K)$, use the degreewise bounds
\[
 \sqrt{m!}\norm{K_m}_{\Wc_{m,s}}
 \le v_s^m\norm{F_m}_{\mathscr B_s},
 \qquad
 \norm{F_m}_{\mathscr B_s}
 \le u_s^m\sqrt{m!}\norm{K_m}_{\Wc_{m,s}}.
\]
They give \eqref{eq:mehler_native_norms}; summing the Cauchy estimate only over $m>M$ gives \eqref{eq:mehler_native_tail}.

Finally, the covariance equations are closed under compact-open convergence because each $M_t$ is bounded. Any entire curve satisfying them obeys $H(t)=M_tH(1)$, and hence has the form just characterized. Conversely every $\mathscr M_F$ satisfies the equations. Uniform Cauchy convergence on disks and the Cauchy formula make the entire-curve space complete. This supplies the stated closed realization of the recovered topology.
\end{proof}

This is the exponential Ornstein--Uhlenbeck regularity underlying the scalar smooth-random-variable scale of \cite{PT}; the original Schatten-cut geometry is identified with it by the proved coefficient estimates. A bound on $\mathfrak m_R$ is an analytic input to the finite inverse estimate. The complex parameter is defined by holomorphic continuation.

The multiplication on these curves remains the transported ordinary Gaussian product: it sends $(\mathscr M_F,\mathscr M_G)$ to $\mathscr M_{FG}$. It is not pointwise multiplication at the complex parameter. Already for a scalar Gaussian,
\[
 \mathscr M_{g^2}(z)=z^2H_2(g)+1,
 \qquad \mathscr M_g(z)^2=z^2H_2(g)+z^2.
\]
The difference $1-z^2$ is the contraction term. Thus the analytic description clarifies the topology while leaving the ordered contraction law intact.

\subsection{Truncation, ordered words, and dual weights}
For an admissible Hilbert pair of input radii, degree truncation satisfies
\begin{align}\label{eq:chaostailproduct}
 \mathcal N_{\sigma,2}(K\star L-K^{\le M}\star L^{\le N})
 &\le\mathcal N_{\rho,2}(K^{>M})\mathcal N_{\eta,2}(L)
      +\mathcal N_{\rho,2}(K)\mathcal N_{\eta,2}(L^{>N}),\notag\\
 \mathcal N_{\rho,2}(K^{>M})
 &\le(\rho/R)^{M+1}\mathcal N_{R,2}(K),\qquad R>\rho.
\end{align}
The first bound includes every output degree, including complete contraction to degree zero. For comparison, $F_M=H_M(g)P/\sqrt{M!}$ has eventually zero observations at each fixed low degree but $\E F_M^2=P$. Its coefficient norm at radius $\rho$ is $\rho^M$, which identifies the missing high-degree control.

Chaos degree and operator word length can be recorded separately. First take scalar source kernels $K_{m,w}\in H_\C^{\odot m}$ indexed by formal $*$-words, and set
\[
 \mathcal D_{\rho,R}(K)=\sum_{m,w}\rho^m\sqrt{m!}\,R^{|w|}\norm{K_{m,w}}_2,
 \qquad R>0.
\]
Source indices contract as in~\eqref{eq:starcoeff}, while words concatenate in order. Since $R^{|uv|}=R^{|u|}R^{|v|}$, the Hilbert radius estimate remains valid after summation over words. At fixed $R$, the intersection over source radii is an ordered Fr\'echet $*$-algebra.

For a bounded tuple $A$ with $\norm{A_i}\le R$, define
\[
 \operatorname{ev}_A K=\sum_{m,w}I_m(K_{m,w})\,w(A).
\]
The sum converges absolutely in every finite $L^p(B(E))$: the scalar chaos estimate bounds its norm by $\mathcal D_{\max(1,\sqrt{p-1}),R}(K)$ for $p\ge2$, and $L^p\le L^2$ handles $p<2$. On the finite core the scalar coefficients commute with the evaluated letters, so evaluation preserves multiplication and the involution. The all-radius product estimate and $L^{2p}$ convergence pass these identities to the completion.

One may also retain operator-valued $K_{m,w}$ and multiply them in their own order while concatenating words; the same norm estimate defines the formal algebra. A multiplicative evaluation into one operator space then requires the coefficient algebra to commute with the substituted letters, or a representation on separate tensor factors. Without that requirement, the formal commuting relation between a coefficient and a letter need not survive substitution. For instance, a formal letter commutes with a constant coefficient, whereas $E_{12}$ and $E_{21}$ do not. Ordinary same-space noncommutative products are instead formed in the original algebra by \eqref{eq:starcoeff}.

A finite-degree PBW change of coordinates can always be equipped with the norm pulled back from words. For a separately prescribed coordinate norm, let $B_n,D_n$ be the norms of the degree-$n$ transform and its inverse. A bounded change from word radius $R'$ to $R$ requires
\[
 \sup_n(R/R')^nB_n<\infty,
\]
and the analogous condition with $D_n$ for the inverse. Descent to a represented algebra additionally requires invariance of the representation kernel under the relevant PBW projections.

Finally, the dual weights are reciprocal. For Banach spaces $X_n$ and positive weights $w_n$,
\begin{equation}\label{eq:weighteddual}
 \left(\bigoplus_n^{\ell^1}w_nX_n\right)'
 \simeq\left\{(f_n):\sup_nw_n^{-1}\norm{f_n}_{X_n'}<\infty\right\},
 \qquad\langle x,f\rangle=\sum_nf_n(x_n).
\end{equation}
H\"older proves boundedness, single-coordinate tests prove the norm equality, and finite-support density identifies every functional. Thus the forward coefficient sums and the reverse test bounds retain their respective weights under completion.

\section{Microscopic fluctuation algebras}\label{sec:microscopic}
The Hilbert coefficient algebra admits a discrete realization in which ordinary multiplication is performed before the Gaussian limit. Independent signs produce symmetric site responses. Intersections of site sets generate the contraction terms, and the number of remaining sites gives their finite-scale correction. This section establishes convergence of the product in the original coefficient topology and identifies the finite observations that commute with this limit.

\subsection{Symmetric site responses and their coefficient geometry}
Fix an orthonormal basis $(e_j)$ of the real source space. A finite-dimensional source uses the same construction with finitely many indices. In the normalized Hermite coordinates write
\begin{equation}\label{mic:native}
 A=(A_\alpha),\qquad
 a_m(A)=\left(\sum_{|\alpha|=m}\norm{A_\alpha}_2^2\right)^{1/2},\qquad
 p_\rho(A)=\sum_{m\ge0}\rho^m a_m(A),
\end{equation}
where $\alpha$ ranges over finitely supported multi-indices and $A_\alpha\in\Sch_2(E)$. These are coordinates of the original space $\Alg_2$:
\begin{equation}\label{mic:normalization}
 K_m=\frac1{\sqrt{m!}}\sum_{|\alpha|=m}h_\alpha\otimes A_\alpha,
 \qquad p_\rho(A)=\mathcal N_{\rho,2}(K),\qquad
 \Phi(A)=\sum_\alpha A_\alpha\Psi_\alpha=I(K).
\end{equation}
In particular each homogeneous coefficient space keeps its Hilbert norm. The physical space $E$ may be infinite-dimensional.

Let $(\xi_{j,a})_{j,a\ge1}$ be independent symmetric signs. The source index $j$ and the site index $a$ define
\begin{equation}\label{mic:field}
 S_{j,N}=N^{-1/2}\sum_{a=1}^N\xi_{j,a},\qquad
 W_N(h)=\sum_jh_jS_{j,N},\qquad
 \E[W_N(h)W_N(k)]=\ip hk.
\end{equation}
The last series converges in $L^2$ for $h\in H$. For one source block set
\begin{equation}\label{mic:qdef}
 Q_{k,N}^{(j)}=k!N^{-k/2}
   \sum_{\substack{I\subset\{1,\ldots,N\}\\|I|=k}}\prod_{a\in I}\xi_{j,a},
 \qquad 0\le k\le N.
\end{equation}
Set $Q_{0,N}=1$ and $Q_{k,N}=0$ for $k>N$, and define
\begin{equation}\label{mic:coordinates}
 \Psi_{\alpha,N}=\prod_j\frac{Q_{\alpha_j,N}^{(j)}}{\sqrt{\alpha_j!}},\qquad
 d_{\alpha,N}=\prod_j\frac{(N)_{\underline{\alpha_j}}}{N^{\alpha_j}}.
\end{equation}
Here $(t)_{\underline r}=t(t-1)\cdots(t-r+1)$, with value one at $r=0$ and zero for integers $0\le t<r$. Let
$\mathcal I_N=\{\alpha:\max_j\alpha_j\le N\}$, set $d_{\alpha,N}=0$ outside this set, and let $\Pi_N$ restrict coefficients to $\mathcal I_N$. The space $\Alg_{2,N}=\Pi_N\Alg_2$ retains arbitrarily many source directions even at fixed $N$.

\begin{lemma}[Site Gram and uniform evaluation]\label{mic:gram}
The site coordinates satisfy
\begin{equation}\label{mic:orthogonality}
 \E[\Psi_{\alpha,N}\Psi_{\beta,N}]
   =\mathbf1_{\alpha=\beta}d_{\alpha,N},\qquad
 e^{-|\alpha|}\le d_{\alpha,N}\le1\quad(\alpha\in\mathcal I_N).
\end{equation}
The evaluation $\Phi_N(A)=\sum_\alpha A_\alpha\Psi_{\alpha,N}$ is injective on $\Alg_{2,N}$. Its homogeneous Walsh components obey
\begin{align}
 \norm{\Phi_{N,m}(A)}_{L^2(\Sch_2)}^2
 &=\sum_{|\alpha|=m}d_{\alpha,N}\norm{A_\alpha}_2^2,\label{mic:isometry}\\
 \sum_m\rho^m\norm{\Phi_{N,m}(A)}_{L^2(\Sch_2)}
 &\le p_\rho(A)
 \le\sum_m(\sqrt e\rho)^m\norm{\Phi_{N,m}(A)}_{L^2(\Sch_2)}
 \quad(A\in\Alg_{2,N}).\label{mic:radius_equivalence}
\end{align}
All constants are independent of the site number and both source and physical dimensions.
\end{lemma}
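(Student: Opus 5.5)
The orthogonality relation \eqref{mic:orthogonality} reduces to a single source block, and everything else follows from it. The sign families for different source indices $j$ are independent, and $\Psi_{\alpha,N}$ is a product over $j$. Hence $\E[\Psi_{\alpha,N}\Psi_{\beta,N}]$ factors as
\[
 \prod_j\frac{\E[Q^{(j)}_{\alpha_j,N}Q^{(j)}_{\beta_j,N}]}{\sqrt{\alpha_j!\beta_j!}}.
\]
For one block, $Q_{k,N}$ is $k!N^{-k/2}$ times the elementary symmetric Walsh polynomial of degree $k$. Distinct subsets $I\neq J$ give orthogonal Walsh products, since $\E\prod_{a\in I\triangle J}\xi_a=0$. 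Therefore $\E[Q_{k,N}Q_{l,N}]=0$ for $k\ne l$, and
\[
 \E Q_{k,N}^2=(k!)^2N^{-k}\binom Nk=k!\,\frac{(N)_{\underline k}}{N^k}.
\]
Dividing by $k!$ and taking the product over $j$ gives $d_{\alpha,N}$. The conventions $Q_{k,N}=0$ for $k>N$ and $(t)_{\underline r}=0$ for integers $t<r$ make the formula consistent outside $\mathcal I_N$ as well.

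For the bounds on $d_{\alpha,N}$, the upper bound $d_{\alpha,N}\le1$ is immediate because each factor $(N-i)/N$ lies in $[0,1]$. For the lower bound I would show $(N)_{\underline k}/N^k\ge e^{-k}$ whenever $k\le N$. The cleanest route is
\[
 \frac{(N)_{\underline k}}{N^k}\ \ge\ \frac{N!}{N^N},
\]
obtained by multiplying by the remaining factors $(N-i)/N\le1$ for $i\ge k$, combined with a Stirling-type bound $N!\ge(N/e)^N$. That alone only gives $e^{-N}$, so I would instead argue termwise. I would use $\log(1-i/N)\ge -i/(N-i)$ and sum over $i<k$. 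This is delicate when $k$ is close to $N$, so the fallback is to compare $\prod_{i<k}(1-i/N)$ with $\prod_{i<k}\frac{N-i}{N}\ge\frac{N!}{(N-k)!N^k}\ge e^{-k}$. The last inequality is $\binom Nk k!\ge (N/e)^k\cdot(\cdots)$, which I would verify via $N!/(N-k)!\ge N^k e^{-k}$. That estimate follows from the monotonicity in $N$ of $N^{-k}N!/(N-k)!$ together with the equality case $N=k$, where it reads $k!/k^k\ge e^{-k}$. I expect this elementary inequality to be the only step needing care. The bounds are multiplicative over $j$, so $d_{\alpha,N}\ge e^{-|\alpha|}$ follows.

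The remaining claims are then routine. Since the $\Psi_{\alpha,N}$ with $\alpha\in\mathcal I_N$ are orthogonal with strictly positive norms $d_{\alpha,N}$, the Hilbert--Schmidt-valued Pythagoras identity gives \eqref{mic:isometry}. The degree-$m$ component collects the $|\alpha|=m$ terms, which are Walsh polynomials of degree $m$, and convergence in $L^2(\Sch_2)$ follows from $d\le1$ and $a_m(A)<\infty$. Injectivity follows because $\Phi_N(A)=0$ forces $d_{\alpha,N}\norm{A_\alpha}_2^2=0$ for each $\alpha$, and $d_{\alpha,N}>0$ on $\mathcal I_N$. Finally, \eqref{mic:isometry} combined with $e^{-m}\le d_{\alpha,N}\le1$ for $|\alpha|=m$ gives
\[
 e^{-m/2}a_m(A)\le\norm{\Phi_{N,m}(A)}_{L^2(\Sch_2)}\le a_m(A).
\]
Multiplying by $\rho^m$ and summing over $m$ yields \eqref{mic:radius_equivalence}. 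No constant depends on $N$ or on the source or physical dimensions.
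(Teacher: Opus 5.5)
Your proposal is correct and follows the paper's proof. You use Walsh orthogonality within a block and independence across blocks for the Gram formula, then monotonicity of $\prod_{i<k}(1-i/N)$ in $N$ to reduce the lower bound to $N=k$, where $k!/k^k\ge e^{-k}$ (the paper justifies this last inequality by comparing $\log k!$ with $\int_1^k\log t\,dt$); Parseval, injectivity and the radius equivalence then follow as you describe. You can delete the abandoned detours through $N!/N^N$ and $\log(1-i/N)\ge -i/(N-i)$, since the monotonicity argument you settle on is already complete.
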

\begin{proof}
Products of signs over different subsets are orthogonal Walsh characters. Thus
$\E[Q_{k,N}Q_{l,N}]=\mathbf1_{k=l}k!(N)_{\underline k}/N^k$.
Independence between blocks gives the Gram formula, and Hilbert-valued Parseval gives \eqref{mic:isometry}. For $1\le k\le N$,
\[
 \frac{(N)_{\underline k}}{N^k}
 =\prod_{a=0}^{k-1}(1-a/N)\ge\frac{k!}{k^k}\ge e^{-k}.
\]
The last inequality follows by integrating $\log t$ from $1$ to $k$. Multiplication over the source blocks and summation over degrees prove \eqref{mic:radius_equivalence}. Finite kernels are dense, so the formulas extend by $L^2$ completeness.
\end{proof}

\begin{lemma}[Uniform moment bounds]\label{mic:moments}
Put $u_p=1$ for $1\le p\le2$ and $u_p=\sqrt{2\lceil p/2\rceil-1}$ for $p>2$. For every $A\in\Alg_2$,
\begin{equation}\label{mic:moment_bound}
 \norm{\Phi_N(A)}_{L^p(\Sch_2)}\le p_{u_p}(A),\qquad
 \norm{\Phi(A)}_{L^p(\Sch_2)}\le p_{u_p}(A).
\end{equation}
For a homogeneous input the right-hand side is $u_p^m a_m(A)$.
\end{lemma}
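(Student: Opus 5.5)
The plan is to reduce both bounds in \eqref{mic:moment_bound} to homogeneous inputs by the triangle inequality over degrees. For $A\in\Alg_2$ we have $\Phi_N(A)=\sum_m\Phi_{N,m}(A)$ and $\Phi(A)=\sum_m\Phi_m(A)$. Once the homogeneous estimate $\norm{\Phi_{N,m}(A)}_{L^p(\Sch_2)}\le u_p^m a_m(A)$ is available, summing over $m$ gives $p_{u_p}(A)$. This also shows that the series converges absolutely in $L^p$, so the identification with the $L^2$-defined evaluation of Lemma~\ref{mic:gram} is consistent. For $1\le p\le2$ no hypercontractivity is needed: $L^p\le L^2$, together with \eqref{mic:isometry}, $d_{\alpha,N}\le1$, and the Gaussian Fock isometry \eqref{eq:hilbertfock}, gives $\norm{\Phi_{N,m}(A)}_{L^2}\le a_m(A)$, which is the claim with $u_p=1$.

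For $p>2$, set $q=2\lceil p/2\rceil$, an even integer with $q\ge p$. It then suffices to bound the $L^q$ norm, since $L^p\le L^q$. The key input is hypercontractivity on the discrete cube in its Hilbert-space-valued form. Bonami's inequality states that a homogeneous Walsh chaos of degree $m$ with coefficients in a Hilbert space satisfies $\norm{X}_{L^q}\le(q-1)^{m/2}\norm X_{L^2}$, and $(q-1)^{m/2}=u_p^m$ exactly. Valued in $\Sch_2(E)$, a Hilbert space, this is the standard Bonami--Beckner bound. The most robust route to it is via the $\Sch_2$-valued noise operator: first prove $\norm{T_\varepsilon f}_{L^q(\Sch_2)}\le\norm f_{L^2(\Sch_2)}$ with $\varepsilon=(q-1)^{-1/2}$, and then apply it to the homogeneous piece. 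The Hilbert-valued two-point inequality follows from the scalar one, because $\norm{a+\varepsilon b\xi}$ can be handled through the convexity used in the scalar two-point proof. Alternatively, one can avoid vector-valued hypercontractivity altogether: for even $q$, expand $\E\norm{X}_2^{q}=\E(\sum_{k,l}|X_{kl}|^2)^{q/2}$ and apply Minkowski in $L^{q/2}$ to get $\norm X_{L^q(\Sch_2)}^2\le\sum_{k,l}\norm{X_{kl}}_{L^q}^2$. Scalar Bonami then bounds this by $(q-1)^m\sum\norm{X_{kl}}_{L^2}^2=(q-1)^m\norm X^2_{L^2(\Sch_2)}$. This second route is what I would use, since it only needs the scalar inequality. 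Combining it with \eqref{mic:isometry} and $d_{\alpha,N}\le1$ gives $\norm{\Phi_{N,m}(A)}_{L^q}\le u_p^m a_m(A)$.

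The main point to check is that $\Phi_{N,m}(A)$ is genuinely a homogeneous Walsh polynomial of degree $m$ in the site variables $\xi_{j,a}$, so that Bonami applies with exponent $m$. Each $\Psi_{\alpha,N}$ is a product over source blocks of elementary symmetric polynomials in distinct signs. With $|\alpha|=m$, every monomial is a product of $m$ distinct independent signs, so the degree is exactly $m$. Infinite source and physical dimensions are handled by applying the bound to finite truncations and passing to the limit with Fatou.

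For the Gaussian bound, two options are available. The first applies the same entrywise Minkowski reduction with scalar Gaussian hypercontractivity \eqref{eq:hyper}, which has the same constant $(q-1)^{m/2}$ and uses the exact isometry \eqref{eq:hilbertfock} together with \eqref{mic:normalization}. The second obtains it as the $N\to\infty$ limit of the discrete bound through the central limit theorem and Fatou, since $d_{\alpha,N}\to1$. I would use the first for directness.
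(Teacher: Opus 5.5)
Your proposal is correct and has the same overall structure as the paper's proof. You reduce to a homogeneous Walsh chaos of exact degree $m$ and obtain the scalar bound with constant $(q-1)^{m/2}$. You then lift it to the $\Sch_2$-valued chaos by Minkowski in $L^{q/2}$ on squared coordinate norms, and finish with \eqref{mic:isometry}, $d_{\alpha,N}\le1$, and the triangle inequality over degrees.

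The genuine difference is where the scalar discrete inequality comes from.
\begin{itemize}
\item You cite Bonami--Beckner hypercontractivity on the cube.
\item The paper derives the bound from Gaussian hypercontractivity \eqref{eq:hyper} by moment comparison. For $f=\sum_{|I|=m}c_I\xi_I$, expand $\E f^{2q}$. Every surviving sign moment equals one, while the matching Gaussian moment is at least one. Hence $\E f^{2q}$ is dominated by the $2q$th moment of the multilinear Gaussian chaos $\sum_I|c_I|\prod_{i\in I}g_i$, to which \eqref{eq:hyper} applies.
\end{itemize}

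Each route has its own advantage. The paper's argument stays within results already stated. It also shows explicitly that the site responses are moment-dominated by their Gaussian counterparts, uniformly in $N$. The price is that the expansion needs an even integer moment, which is the source of the ceiling in $u_p$. Your route imports an external theorem but works at any real exponent. Combined with your Minkowski step, which only needs $q\ge2$, it would give the sharper constant $\sqrt{p-1}$ in place of $u_p$.

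One detail to state explicitly: the entries $X_{kl}$ are complex. Apply Bonami to their real and imaginary parts separately, or use positivity of the noise operator. The same Minkowski step absorbs this splitting without loss, which is what the paper's ``realify'' accomplishes.
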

\begin{proof}
For a finite real scalar Walsh polynomial $f=\sum_{|I|=m}c_I\xi_I$, expand its even moment of order $2q$. Taking absolute values of the coefficients and replacing the signs by independent standard Gaussians can only increase the sum: every surviving sign moment is one, whereas the corresponding even Gaussian moment is at least one. The comparison polynomial has distinct indices in every monomial and belongs to Gaussian chaos $m$. Hence \eqref{eq:hyper} gives
\[
 \E|f|^{2q}\le
 \E\left|\sum_{|I|=m}|c_I|\prod_{i\in I}g_i\right|^{2q}
 \le(2q-1)^{mq}\left(\sum_Ic_I^2\right)^q.
\]
Realify the coefficient Hilbert space, apply this inequality to each real orthonormal coordinate, and use Minkowski in $L^q$ on the squared coordinate norms. This gives the same Hilbert-valued bound. Choose $q=\lceil p/2\rceil$, use \eqref{mic:isometry}, and sum over degrees. Density proves the infinite-dimensional assertion. The Gaussian estimate follows directly from hypercontractivity.
\end{proof}

Each $Q_{k,N}$ depends only on the number of negative signs in its block and is a normalized Krawtchouk polynomial. The classical Krawtchouk--Hermite asymptotics and Rademacher product formulas are treated in \cite{Min,NPRR}; Gaussian universality for homogeneous sums is developed in \cite{NPR}. Here the symmetric block model gives an exact finite-scale coefficient algebra.

\subsection{Collisions and the finite-scale product}
\begin{proposition}[Exact site collision formula]\label{mic:collision}
For $0\le m,n\le N$,
\begin{equation}\label{mic:one_product}
 Q_{m,N}Q_{n,N}
 =\sum_{\substack{0\le r\le m\wedge n\\k=m+n-2r\le N}}
 r!\binom mr\binom nr\frac{(N-k)_{\underline r}}{N^r}Q_{k,N}.
\end{equation}
\end{proposition}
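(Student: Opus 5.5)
The plan is to expand the product directly in the Walsh basis and count subsets. Write $\xi_I=\prod_{a\in I}\xi_a$ for the single source block, so that
\[
 Q_{m,N}Q_{n,N}=m!\,n!\,N^{-(m+n)/2}\sum_{|I|=m,\ |J|=n}\xi_I\xi_J .
\]
Since $\xi_a^2=1$, we have $\xi_I\xi_J=\xi_{I\triangle J}$, and $|I\triangle J|=m+n-2r$ where $r=|I\cap J|$. So the first step is to group the pairs $(I,J)$ according to $r$ and according to the symmetric difference $T=I\triangle J$. Here $|T|=k=m+n-2r\le N$ automatically, because $I\cup J\subset\{1,\dots,N\}$.

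The second step is to count, for a fixed $T$ with $|T|=k$, the pairs $(I,J)$ with $|I|=m$, $|J|=n$, $|I\cap J|=r$ and $I\triangle J=T$. We choose which $m-r$ elements of $T$ lie in $I\setminus J$, which can be done in $\binom{k}{m-r}$ ways. We then choose the common part $I\cap J$ as an $r$-subset of the $N-k$ sites outside $T$, which can be done in $\binom{N-k}{r}$ ways. This count depends only on $k$ and $r$, so the $r$-part of the sum equals
\[
 \binom{k}{m-r}\binom{N-k}{r}\sum_{|T|=k}\xi_T
 =\binom{k}{m-r}\binom{N-k}{r}\frac{N^{k/2}}{k!}Q_{k,N}.
\]

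The final step is to assemble the coefficient and check the normalization. The coefficient is
\[
 m!\,n!\,N^{-(m+n)/2}N^{k/2}\binom{k}{m-r}\frac{(N-k)_{\underline r}}{r!\,k!}
 =\frac{m!\,n!}{(m-r)!\,(n-r)!\,r!}\cdot\frac{(N-k)_{\underline r}}{N^r},
\]
using $N^{(k-m-n)/2}=N^{-r}$ and $k-(m-r)=n-r$. Since $\frac{m!\,n!}{(m-r)!\,(n-r)!\,r!}=r!\binom mr\binom nr$, this is the asserted coefficient.

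The only step that needs care is the bookkeeping of boundary cases, and there is no real analytic obstacle here. When $r>N-k$, the falling factorial $(N-k)_{\underline r}$ vanishes, consistently with the absence of such pairs, so those terms drop out. The restriction $k\le N$ in the sum is harmless, because only terms with $k\le N$ can occur. The identity holds pointwise, not merely in law, since it is an algebraic identity in the signs.
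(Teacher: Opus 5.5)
Your proof is correct and is essentially the paper's own argument: reduce $\xi_I\xi_J$ to the character on $I\triangle J$ using $\xi_a^2=1$, fix the symmetric difference of size $k=m+n-2r$, count the $\binom{k}{m-r}\binom{N-k}{r}$ pairs, and renormalize $m!\,n!\,N^{-(m+n)/2}$ against $k!\,N^{-k/2}$. Your explicit coefficient simplification, and your remark that $(N-k)_{\underline r}=0$ exactly when no such pairs exist, spell out bookkeeping that the paper leaves implicit.
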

\begin{proof}
Let $I,J$ have sizes $m,n$ and put $r=|I\cap J|$. Their sign products multiply to the character on $I\triangle J$, since $\xi_a^2=1$. Fix this remaining set, of size $k=m+n-2r$. Choose its $m-r$ members belonging to $I\setminus J$, and then choose the $r$ shared sites from its complement. This gives
$\binom{k}{m-r}\binom{N-k}{r}$ pairs. Multiplying by $m!n!N^{-(m+n)/2}$ and dividing by $k!N^{-k/2}$ yields \eqref{mic:one_product}.
\end{proof}
In particular,
\begin{equation}\label{mic:recurrence}
 S_{j,N}Q_{k,N}^{(j)}
 =Q_{k+1,N}^{(j)}+k\left(1-\frac{k-1}{N}\right)Q_{k-1,N}^{(j)}.
\end{equation}
Thus $Q_{1,N}=S_N$, $Q_{2,N}=S_N^2-1$, and
$Q_{3,N}=S_N^3-(3-2/N)S_N$.

For multi-indices $r\le\alpha\wedge\beta$, put $\gamma=\alpha+\beta-2r$ and
\begin{equation}\label{mic:collision_weights}
 c(\alpha,\beta,r)=r!\binom\alpha r\binom\beta r
       \sqrt{\frac{\gamma!}{\alpha!\beta!}},\qquad
 \vartheta_N(\alpha,\beta,r)
       =\prod_j\frac{(N-\gamma_j)_{\underline{r_j}}}{N^{r_j}}.
\end{equation}
Take $\vartheta_N=0$ if any of $\alpha,\beta,\gamma$ lies outside $\mathcal I_N$. Define on finite coefficients
\begin{equation}\label{mic:product}
 (A\star_NB)_\gamma
 =\sum_{\alpha+\beta-2r=\gamma}
 c(\alpha,\beta,r)\vartheta_N(\alpha,\beta,r)A_\alpha B_\beta.
\end{equation}
The physical product retains its order. The Gaussian product $\star$ in these coordinates replaces every $\vartheta_N$ by one.

\begin{corollary}[The represented site algebra]\label{mic:represented}
Finite coefficients satisfy
\begin{equation}\label{mic:evaluation_product}
 \Phi_N(A\star_NB)=\Phi_N(A)\Phi_N(B),\qquad
 (A\star_NB)^*=B^*\star_NA^*.
\end{equation}
On coefficients supported in $\mathcal I_N$, this product is associative and represents faithfully the polynomial responses invariant under permutations of the sites in each source block.
\end{corollary}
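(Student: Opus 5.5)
The plan is to reduce everything to the one-block collision formula of Proposition~\ref{mic:collision} and then use the site Gram of Lemma~\ref{mic:gram} for faithfulness.

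\emph{Evaluation identity.} By bilinearity it suffices to take single coefficients $A=A_\alpha$ and $B=B_\beta$ with $\alpha,\beta\in\mathcal I_N$. Then
\[
 \Phi_N(A)\Phi_N(B)=\Psi_{\alpha,N}\Psi_{\beta,N}A_\alpha B_\beta .
\]
The physical factors keep their order because the $\Psi$'s are scalar. The source blocks are independent and their products factor, so $\Psi_{\alpha,N}\Psi_{\beta,N}$ is a product over $j$ of the terms $Q^{(j)}_{\alpha_j,N}Q^{(j)}_{\beta_j,N}/\sqrt{\alpha_j!\beta_j!}$. Expand each factor by \eqref{mic:one_product}, and rewrite $Q_{\gamma_j,N}=\sqrt{\gamma_j!}\,\Psi$-coordinates. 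This gives exactly the coefficients $c(\alpha,\beta,r)\vartheta_N(\alpha,\beta,r)$ multiplying $\Psi_{\gamma,N}$ with $\gamma=\alpha+\beta-2r$. Terms with $\gamma_j>N$ are absent in \eqref{mic:one_product}, which matches the convention $\vartheta_N=0$ outside $\mathcal I_N$.

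One boundary point must be checked: if $\alpha$ or $\beta$ lies outside $\mathcal I_N$, then $\Psi_{\cdot,N}=0$ because $Q_{k,N}=0$ for $k>N$, and $\vartheta_N=0$ by convention, so both sides vanish.

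\emph{Adjoint identity.} The adjoint relation follows from $\Phi_N(A)^*=\Phi_N(A^*)$, since the coordinates are real. Alternatively, it follows directly from the symmetry $c(\alpha,\beta,r)\vartheta_N(\alpha,\beta,r)=c(\beta,\alpha,r)\vartheta_N(\beta,\alpha,r)$ together with $(A_\alpha B_\beta)^*=B_\beta^*A_\alpha^*$.

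\emph{Associativity.} Lemma~\ref{mic:gram} shows that $\Phi_N$ is injective on coefficients supported in $\mathcal I_N$, since $d_{\alpha,N}>0$ there. The product $\star_N$ maps such coefficients back into $\mathcal I_N$. Hence
\[
 \Phi_N\bigl((A\star_NB)\star_NC\bigr)=\Phi_N(A)\Phi_N(B)\Phi_N(C)=\Phi_N\bigl(A\star_N(B\star_NC)\bigr),
\]
and injectivity gives associativity. Associativity of the random operator product is used here.

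\emph{Faithful representation.} It remains to identify the image of $\Phi_N$ with the site-permutation-invariant polynomial responses. Faithfulness is the injectivity just used. For the image, take a polynomial in the signs with finite-matrix coefficients that is invariant under $S_N$ acting on each block separately. Its Walsh expansion, written block by block, has coefficients constant on subsets of equal size, because the permutation action is transitive on $k$-subsets. It is therefore a combination of products of the elementary symmetric sign polynomials, that is, of the $Q^{(j)}_{k,N}$ with $k\le N$, and hence equals $\Phi_N(A)$ for a finite coefficient supported in $\mathcal I_N$.

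The main point needing care is this last identification when there are infinitely many source blocks and $E$ is infinite-dimensional. I would handle it by restricting to finitely many blocks and finite matrix compressions, which covers the polynomial responses, and note that the decomposition uses only $\xi^2=1$ and the orbit structure.
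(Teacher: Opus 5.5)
Your proposal is correct and follows the paper's argument: apply the collision formula \eqref{mic:one_product} block by block, use strict positivity of the site Gram \eqref{mic:orthogonality} on $\mathcal I_N$ to get injectivity and transfer associativity, and finish with a spanning argument for site-invariant responses. Your Walsh-orbit reduction to elementary symmetric sign polynomials is an equivalent phrasing of the paper's remark that $Q_{0,N},\ldots,Q_{N,N}$ span all functions of the block sum, and your closing worry about infinitely many blocks and infinite-dimensional $E$ is unnecessary, since a polynomial response involves only finitely many blocks and the Walsh decomposition is linear in the operator coefficients, so no matrix compression is needed.
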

\begin{proof}
Apply \eqref{mic:one_product} independently in each source block. The Gram in \eqref{mic:orthogonality} is strictly positive on its support, so evaluation is injective there and transfers associativity from ordinary multiplication. The $N+1$ orthogonal polynomials $Q_{0,N},\ldots,Q_{N,N}$ span all functions of the $N+1$ block-sum values. Tensoring over finitely many source directions identifies the represented polynomial algebra.
\end{proof}

\begin{lemma}[Uniform depletion error]\label{mic:depletion}
For $d=|\alpha|+|\beta|$ and $N\ge1$,
\begin{equation}\label{mic:depletion_bound}
 0\le1-\vartheta_N(\alpha,\beta,r)
 \le\min\left\{1,\frac{d(d-1)}{2N}\right\}.
\end{equation}
If $N\ge d$ and $\zeta=\sum_j[r_j\gamma_j+r_j(r_j-1)/2]$, then
\begin{equation}\label{mic:first_correction}
 \vartheta_N=1-\zeta/N+\mathcal R_N,\qquad
 0\le\mathcal R_N\le\zeta^2/(2N^2).
\end{equation}
\end{lemma}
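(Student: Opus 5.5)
The plan is to write each block factor of $\vartheta_N$ as an elementary product and then apply the two standard Bonferroni-type inequalities for products of numbers in $[0,1]$. For each source index $j$,
\[
 \frac{(N-\gamma_j)_{\underline{r_j}}}{N^{r_j}}
 =\prod_{i=0}^{r_j-1}\Bigl(1-\frac{\gamma_j+i}{N}\Bigr),
\]
so $\vartheta_N$ is a finite product $\prod_\ell(1-x_\ell)$ whose numbers $x_\ell=(\gamma_j+i)/N$ sum to
\[
 \sum_\ell x_\ell=\frac{1}{N}\sum_j\Bigl[r_j\gamma_j+\tbinom{r_j}{2}\Bigr]=\frac{\zeta}{N}.
\]

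\emph{Range of $\vartheta_N$ and the degenerate cases.} First I would dispose of the cases where $\vartheta_N=0$ by convention. If one of $\alpha,\beta,\gamma$ leaves $\mathcal I_N$, some coordinate exceeds $N$. Since $\gamma_j\le\alpha_j+\beta_j$, this forces $d\ge N+1$, and hence
\[
 \frac{d(d-1)}{2N}\ge\frac{N+1}{2}\ge1 .
\]
So $1-\vartheta_N=1$ obeys \eqref{mic:depletion_bound}.

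Otherwise $N-\gamma_j\ge0$ is an integer. The falling factorial is then either zero, which again gives $1-\vartheta_N=1\le d(d-1)/(2N)$ by the same count because $\gamma_j+r_j-1\ge N$ forces $d>N$, or a product of factors in $(0,1]$. In every case $0\le\vartheta_N\le1$, which gives the left inequality and the trivial bound by $1$.

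\emph{First-order lower bound.} When every $x_\ell\in[0,1]$, the Weierstrass product inequality $\prod(1-x_\ell)\ge1-\sum x_\ell$, proved by induction, gives $1-\vartheta_N\le\zeta/N$. It then remains to check $\zeta\le d(d-1)/2$. With $t=|r|$ and $g=|\gamma|$, we have $d=|\alpha|+|\beta|=g+2t$. Also
\[
 \zeta\le tg+\frac{t(t-1)}{2},
 \qquad
 \frac{d(d-1)}{2}=\frac{(g+2t)(g+2t-1)}{2}\ge 2tg+t(2t-1),
\]
which dominates $\zeta$. This proves \eqref{mic:depletion_bound}.

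\emph{Second-order expansion.} For \eqref{mic:first_correction}, assume $N\ge d$. Every $x_\ell\le(\gamma_j+r_j-1)/N<1$ because $\gamma_j+r_j\le\alpha_j+\beta_j\le d$. Set $\mathcal R_N=\vartheta_N-1+\zeta/N$. Its nonnegativity is exactly the Weierstrass inequality above.

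For the upper bound I would prove by induction
\[
 \prod_{\ell\le n}(1-x_\ell)\le1-S_n+e_{2,n},
\]
where $S_n=\sum_{\ell\le n}x_\ell$ and $e_{2,n}=\sum_{\ell<k\le n}x_\ell x_k$. For the inductive step, multiply by $1-x_{n+1}\ge0$ and discard the nonpositive term $-x_{n+1}e_{2,n}$. Then $e_2\le S^2/2$ yields
\[
 \mathcal R_N\le\frac{\zeta^2}{2N^2}.
\]

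The only delicate point is the bookkeeping in the degenerate cases, namely the vanishing falling factorials and the coefficients outside $\mathcal I_N$. The analytic content reduces to these two elementary product inequalities.
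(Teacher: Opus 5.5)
Your proposal is correct and follows essentially the paper's route. Both arguments write $\vartheta_N$ as a product of factors $1-(\gamma_j+i)/N$ whose losses sum to $\zeta/N$, apply the two Bonferroni-type product inequalities, and handle the remaining cases through $0\le\vartheta_N\le1$ together with $d(d-1)/(2N)\ge1$ when $d>N$. Your version is slightly more explicit: it actually verifies $\zeta\le\binom d2$ and checks the conventions for vanishing falling factorials and indices outside $\mathcal I_N$, which the paper only asserts.
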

\begin{proof}
When $N\ge d$, the factors are $1-(\gamma_j+a)/N$, with $0\le a<r_j$. Their losses lie in $[0,1]$, sum to $\zeta/N$, and $\zeta\le\binom d2$. The elementary product bounds
\[
 0\le\sum_i x_i-\left(1-\prod_i(1-x_i)\right)
 \le\sum_{i<j}x_ix_j\le\tfrac12\left(\sum_i x_i\right)^2
\]
prove both conclusions. If $d>N$, then $\binom d2/N\ge1$, so \eqref{mic:depletion_bound} follows from $0\le\vartheta_N\le1$. Degrees zero and one have no loss.
\end{proof}
The finite correction retains the occupancy of the chosen source blocks. Its limit is the basis-independent Gaussian contraction law.

\subsection{Convergence of multiplication in the original topology}
\begin{theorem}[Microscopic product limit]\label{mic:algebra_limit}
Let $\sigma\ge1$ and let $\rho,\eta>0$ satisfy
\begin{equation}\label{mic:radius}
 \frac1{1+\rho^2}+\frac1{1+\eta^2}\le\frac1{1+\sigma^2}.
\end{equation}
For all $N\ge1$, the products extend continuously and satisfy
\begin{equation}\label{mic:uniform_product}
 p_\sigma(A\star_NB)\le p_\rho(A)p_\eta(B),\qquad
 p_\sigma(A\star B)\le p_\rho(A)p_\eta(B).
\end{equation}
Each $\Alg_{2,N}$ is a complete Fr\'echet $*$-algebra, and $\Phi_N$ realizes ordinary multiplication. If $R_+>\rho$, $S_+>\eta$ and
$q=\max(\rho/R_+,\eta/S_+)<1$, then
\begin{equation}\label{mic:product_rate}
 p_\sigma(A\star_NB-A\star B)
 \le\frac{q^2}{N(1-q)^3}p_{R_+}(A)p_{S_+}(B).
\end{equation}
The radius region \eqref{mic:radius} is necessary for a bound uniform over all $N$, and its optimal uniform constant is one.
\end{theorem}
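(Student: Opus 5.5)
The plan is to reduce everything to the positive-kernel estimate of Lemma~\ref{lem:positivekernel}, using $0\le\vartheta_N\le1$ from Lemma~\ref{mic:depletion}. The argument has four steps: the uniform product bound, the algebra structure of $\Alg_{2,N}$, the convergence rate, and sharpness of the region.

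\textbf{Uniform bound.} First rewrite \eqref{mic:product} in the coordinate-free form of \eqref{eq:starcoeff}. Fix total degrees $m=|\alpha|$ and $n=|\beta|$, and group the terms by $r=|r|$ with $k=m+n-2r$. In normalized Hermite coordinates the Gaussian coefficients $c(\alpha,\beta,r)$, summed over multi-index splittings, are exactly the matrix of $c_{m,n,r}\,\Gamma_r$ between the degree blocks. The microscopic product multiplies each such entry by a factor $\vartheta_N\in[0,1]$.

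The Cauchy--Schwarz step in Lemma~\ref{lem:positivekernel} works entrywise with absolute values. Inserting factors in $[0,1]$ and the indicator of $\mathcal I_N$ therefore cannot increase the bound, provided I can write the degree-$k$ output norm through nonnegative multi-index sums. To do this I would use the representation $\sum_\gamma\|\sum c\,\vartheta A_\alpha B_\beta\|_2^2$. I would bound it by the same Cauchy--Schwarz applied over the contracted index $r$ and the splitting. This gives $a_k(\Gamma^{N}_r(A_m,B_n))\le c_{m,n,r}a_m(A)a_n(B)$ with the depletion weights included.

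I expect this to be the main obstacle. The contraction bound $\|\Gamma_r\|\le1$ relies on basis-free Hilbert structure, while the $\vartheta_N$ weights are basis dependent. I would verify directly that the multi-index sum factorizes blockwise, with one source direction at a time, where the scalar estimate is the one-dimensional case. The two-input bound \eqref{mic:uniform_product} then follows exactly as in the binary part of Theorem~\ref{thm:arity}, by choosing $t$ in the admissible interval. The Gaussian case is the same argument with $\vartheta=1$.

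\textbf{Algebra structure.} The uniform bound gives a continuous extension of $\star_N$. Completeness of $\Alg_{2,N}$ is the completeness of a closed subspace of weighted $\ell^1$ sums. Associativity and the adjoint identity pass to the completion from Corollary~\ref{mic:represented} by continuity. Multiplicativity of $\Phi_N$ extends as well, because Lemma~\ref{mic:moments} gives $L^4$ control at every radius $\ge\sqrt3$ and the all-radius topology dominates it.

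\textbf{Rate.} The difference $A\star_NB-A\star B$ has coefficients weighted by $1-\vartheta_N\in[0,\min\{1,d(d-1)/(2N)\}]$, with $d=m+n$. Applying the same positive estimate to homogeneous pieces gives
\[
p_\sigma(A\star_NB-A\star B)\le\sum_{m,n}\frac{(m+n)(m+n-1)}{2N}\rho^m\eta^n a_m(A)a_n(B).
\]
I would then write $\rho^m a_m\le q^m R_+^m a_m$, and similarly for $\eta$, and take suprema of $q^{m+n}\binom{m+n}{2}$ over the degrees. This is not tight enough as stated. Instead I would use $\sum_{d}\binom d2 q^{d}$-type bounds together with $a_m R_+^m\le p_{R_+}(A)$, which yields a constant of the form $q^2/(1-q)^3$: $\sum_{m,n}\binom{m+n}{2}q^{m+n}x_my_n$ is at most $\sup$-weighted, via $\sum_{d\ge2}(d-1)q^{d}\cdot$ the convolution. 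I expect only bookkeeping here.

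\textbf{Necessity and optimal constant.} For a bound uniform in $N$, I would let $N\to\infty$ at fixed finite inputs. Since $\vartheta_N\to1$ coefficientwise, Fatou on the nonnegative coefficient sums gives a bound for $\star$. Theorem~\ref{thm:arity} then forces \eqref{mic:radius}. The constant one is attained by a constant rank-one projection, as in that theorem.
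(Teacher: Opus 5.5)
Your overall architecture matches the paper's. You use positive domination, the depletion factor of Lemma~\ref{mic:depletion}, density together with the $L^4$ bound of Lemma~\ref{mic:moments} for the algebra and for $\Phi_N$, the passage $N\to\infty$ on finite inputs followed by Theorem~\ref{thm:arity} for necessity, and a constant rank-one projection for the optimal constant. However, the two quantitative steps are, as written, respectively incomplete and misdirected.

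In the uniform bound, the ``main obstacle'' you flag never has to be confronted. You never need to bound a $\vartheta_N$-weighted contraction operator, so the basis dependence of $\vartheta_N$ is irrelevant. Instead, take entrywise Hilbert--Schmidt norms: positivity of $c$, $0\le\vartheta_N\le1$ and the ideal inequality give $\norm{(A\star_NB)_\gamma}_2\le(a\star b)_\gamma$. Here $a_\alpha=\norm{A_\alpha}_2$, $b_\beta=\norm{B_\beta}_2$, and $a\star b$ is the ordinary Gaussian product of these nonnegative scalar arrays. The seminorm $p_\sigma$ depends monotonically on the coefficient norms alone, and the degree-$m$ Hilbert norm of the array $a$ is exactly $a_m(A)$. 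Theorem~\ref{thm:arity} in the scalar case therefore yields $p_\sigma(A\star_NB)\le p_\sigma(a\star b)\le p_\rho(A)p_\eta(B)$. Restricting the same domination to inputs of fixed degrees $(m,n)$ gives the per-degree bound $\rho^m\eta^na_m(A)a_n(B)$ that the rate argument needs. Your substitute, rerunning Cauchy--Schwarz over multi-index splittings or arguing one source direction at a time, is left unverified. It is also unclear how a direction-by-direction argument would produce the total-degree constants $c_{m,n,r}$, which come from the basis-free tensor contraction.

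For the rate, the argument you dismissed as ``not tight enough'' is precisely the right one. With $x_m=R_+^ma_m(A)$ and $y_n=S_+^na_n(B)$,
\[
 \sum_{m,n}\binom{m+n}2q^{m+n}x_my_n
 \le\Bigl(\sup_{d\ge0}\binom d2q^d\Bigr)p_{R_+}(A)p_{S_+}(B),
 \qquad
 \sup_{d\ge0}\binom d2q^d\le\sum_{d\ge2}\binom d2q^d=\frac{q^2}{(1-q)^3},
\]
which gives \eqref{mic:product_rate} at once. The replacement you sketch uses $R_+^ma_m(A)\le p_{R_+}(A)$ termwise and then sums the double series. That produces $\sum_{m,n}\binom{m+n}2q^{m+n}=3q^2/(1-q)^4$, which misses the stated constant by a factor $3/(1-q)$, and the appeal to ``the convolution'' does not repair this. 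With these two corrections your proof coincides with the paper's.
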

\begin{proof}
For finite coefficients set $a_\alpha=\norm{A_\alpha}_2$ and $b_\beta=\norm{B_\beta}_2$. Positivity of $c$, the bound $0\le\vartheta_N\le1$, and the Hilbert--Schmidt ideal inequality give
\[
 \norm{(A\star_NB)_\gamma}_2\le(a\star b)_\gamma.
\]
The right side is the Gaussian product of nonnegative scalar coefficient arrays. At each degree its input Hilbert norms are exactly $a_m(A)$ and $a_m(B)$. The positive contraction estimate of Lemma~\ref{lem:positivekernel}, summed as in Theorem~\ref{thm:arity}, therefore proves \eqref{mic:uniform_product} without a source-dimension factor.

For fixed input degrees $m,n$, Lemma~\ref{mic:depletion} multiplies this positive bound by at most $\binom{m+n}{2}/N$. Hence
\[
 p_\sigma(A\star_NB-A\star B)
 \le\frac1N\sum_{m,n}\binom{m+n}{2}\rho^m\eta^n a_m(A)a_n(B).
\]
Use $\rho^m\eta^n\le R_+^mS_+^n q^{m+n}$ and
$\sup_{d\ge0}\binom d2q^d\le q^2/(1-q)^3$ to obtain \eqref{mic:product_rate}. This estimate includes coefficients removed by $\Pi_N$.

Finite degree, source, and physical cutoffs are dense at each radius. The estimates give continuous products with absolutely summable contraction outputs. By \eqref{mic:moment_bound}, approximating the inputs at sufficiently large radii gives $L^4$ convergence, so their finite products converge in $L^2$ and identify \eqref{mic:evaluation_product} on the completion. The range of the continuous projection $\Pi_N$ is closed; associativity and the involution identity extend by continuity.

A uniform bound on finite coefficients can be passed to $N\to\infty$ termwise, producing the Gaussian bound at the same radii. Its necessity follows from Theorem~\ref{thm:arity}. Equal constant rank-one projections show that the uniform constant is at least one.
\end{proof}
For any fixed number of ordered inputs, the same binary-tree allocation gives the region \eqref{eq:arityregion}. Telescoping at its vertices gives an $O(N^{-1})$ coefficient error for every fixed noncommutative polynomial, with constants depending on the polynomial and the radius margins.

The order in \eqref{mic:product_rate} is attained by a quadratic response. For $N\ge4$,
\begin{equation}\label{mic:quadratic}
 Q_{2,N}^2=Q_{4,N}+4(1-2/N)Q_{2,N}+2(1-1/N).
\end{equation}
Taking $A_{2e_1}=\sqrt2P$ for a rank-one projection and all other coefficients zero gives
\begin{equation}\label{mic:sharp_rate}
 p_\sigma(A\star_NA-A\star A)=\frac{2+8\sqrt2\sigma^2}{N}.
\end{equation}
With two noncommuting physical coefficients, the same collision polynomial multiplies their ordered product; its antisymmetrization multiplies their commutator.

\subsection{Joint fluctuation limits}
\begin{theorem}[Joint realization of the coefficient algebra]\label{mic:clt}
For every finite family $A^1,\ldots,A^k\in\Alg_2$,
\begin{equation}\label{mic:law_limit}
 (\Phi_N(A^1),\ldots,\Phi_N(A^k))
 \Longrightarrow(\Phi(A^1),\ldots,\Phi(A^k))
 \quad\text{in }\Sch_2(E)^k.
\end{equation}
There is a common auxiliary coupling of the source block sums and these responses under which convergence holds in every finite $L^p(\Sch_2)$. Every fixed noncommutative $*$-polynomial with zero constant term converges in the same sense. These conclusions also hold for $A_N^j\to A^j$ in all coefficient radii.
\end{theorem}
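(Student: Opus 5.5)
The plan is to reduce everything to finitely many source directions and finite degree, where the classical multivariate CLT applies, and then to remove the truncation using the uniform moment bounds of Lemma~\ref{mic:moments}.

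\textbf{Truncation.} Fix a degree cutoff $M$ and a source cutoff $L$, and let $A^{j}_{M,L}$ keep only the coefficients with $|\alpha|\le M$ and $\operatorname{supp}\alpha\subset[L]$. By \eqref{mic:moment_bound},
\[
 \norm{\Phi_N(A^j-A^j_{M,L})}_{L^p(\Sch_2)}\le p_{u_p}(A^j-A^j_{M,L}),
\]
and the same bound holds for $\Phi$. The right-hand side is uniform in $N$ and tends to zero by dominated convergence at every radius, since $A^j\in\Alg_2$. If $A_N^j\to A^j$ at all radii, we add the term $p_{u_p}(A_N^j-A^j)$.

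\textbf{Finite-dimensional limit.} For $N\ge M$, each $\Phi_N(A^j_{M,L})$ is a fixed polynomial in the block sums $S_{1,N},\dots,S_{L,N}$. By the recurrence \eqref{mic:recurrence}, $Q_{k,N}$ is a monic polynomial in $S_N$ whose coefficients converge, as $N\to\infty$, to those of the Hermite polynomial $H_k$; this covers $k\le M$. The coefficients $A_\alpha$ are fixed operators in $\Sch_2(E)$.

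The vector $(S_{1,N},\dots,S_{L,N})$ converges in law to $(g_1,\dots,g_L)$ by the CLT. By Skorokhod representation, or more concretely the quantile coupling of each normalized binomial with $g_j$, we place all block sums on one probability space with $S_{j,N}\to g_j$ almost surely. This coupling is built for every $j$ simultaneously, independently across $j$, so it does not depend on $M$ or $L$.

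Under this coupling, the polynomial responses converge almost surely. Their moments are uniformly bounded by \eqref{mic:moment_bound} at exponent $2p$, so uniform integrability gives convergence in $L^p(\Sch_2)$.

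\textbf{Removing the truncation.} Finally we run an $\varepsilon/3$ argument. The coupling exists for all source directions at once, so this yields $L^p$ convergence of $\Phi_N(A^j)$ to $\Phi(A^j)$ for every $p$. Convergence in law \eqref{mic:law_limit} follows for the original (uncoupled) variables, because the law of $\Phi_N(A^j)$ depends only on the law of the signs.

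One point needs care: $\Phi_N$ is defined from the signs themselves, not only from the block sums. However, $Q_{k,N}^{(j)}$ depends only on $S_{j,N}$, so each response is a function of the block sums. This is why the coupling is stated for the block sums.

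\textbf{Polynomials.} For a fixed noncommutative $*$-polynomial we use H\"older with the ideal inequality $\norm{XY}_2\le\norm X_2\norm Y_2$. A product of $d$ factors converging in $L^{dp}(\Sch_2)$ then converges in $L^p(\Sch_2)$. The zero constant term is needed because the identity is not Hilbert--Schmidt when $E$ is infinite-dimensional.

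\textbf{Main obstacle.} The main obstacle is the uniform tail control in the truncation step, uniformly over $N$ and $p$ simultaneously. Lemma~\ref{mic:moments} supplies exactly this control, with the radius $u_p$, so I expect that step to be routine once the lemma is invoked. The remaining subtle point is that the coupling must not depend on the truncation level, and this is handled by coupling each block sum separately.
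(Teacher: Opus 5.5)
Your proposal is correct and follows essentially the same route as the paper. Both use an independent quantile coupling of the block sums, the recurrence giving $Q_{k,N}\to H_k$ coefficientwise, uniform integrability from Lemma~\ref{mic:moments} at a larger exponent, truncation controlled by \eqref{mic:moment_bound} at radius $u_p$ (applied also to $A_N^j-A^j$), and telescoping with H\"older for polynomials. The only minor overstatement is in your last paragraph: the tail control is needed uniformly in $N$ for each fixed $p$, not uniformly in $p$ as well, and that is exactly what \eqref{mic:moment_bound} supplies.
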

\begin{proof}
The characteristic function of one block sum is
$\cos(t/\sqrt N)^N\to e^{-t^2/2}$. Independence gives the joint Gaussian limit for finitely many blocks. For general $h\in H$, the $L^2$ error of a finite source truncation of \eqref{mic:field} is exactly the omitted Hilbert norm, uniformly in $N$.

Let $F_N$ and $F_G$ be the distribution functions of a block sum and a standard Gaussian. With independent uniforms $U_j$, couple
$\widetilde S_{j,N}=F_N^{-1}(U_j)$ and $g_j=F_G^{-1}(U_j)$.
Continuity and strict increase of $F_G$ give almost-sure convergence. The recurrence \eqref{mic:recurrence} shows inductively that each fixed $Q_{k,N}$ is a polynomial in $S_N$ whose coefficients tend to those of $H_k$. Thus any finite family of site coordinates converges almost surely to the corresponding Hermite coordinates. Conditional on each block sum, uniform choice of a sign arrangement realizes the correct microscopic marginal law as well.

Finite coefficient responses now converge almost surely. The moment estimates at a larger exponent give uniform integrability and hence convergence in each finite $L^p$. For general coefficients, approximate at radius $u_p$ by a finite coefficient array. Equation~\eqref{mic:moment_bound} controls both approximation errors uniformly, proving the conclusion. For varying coefficients use the same bound on $A_N^j-A^j$. A word difference is a telescoping sum; Schatten multiplication and probability H\"older, with exponent $p$ times its word length for each factor, give the polynomial conclusion.
\end{proof}
The coefficient rate \eqref{mic:product_rate} is deterministic and is separate from the coupling above. On the original nested sign sequence,
$\E|S_N-S_{2N}|^2=2-\sqrt2$, so those block sums are not $L^2$-Cauchy. Higher-order limits are Gaussian chaoses, such as $Q_{2,N}\Longrightarrow g^2-1$.

\subsection{Finite decoding and the observation-depth threshold}
Fix a normalized faithful anchor $Rf_l=r_lf_l$ on the physical space. A budget $b=(M,d,L)$ retains degrees through $M$, the first $d$ source directions, and a physical corner of size $L$; let $P_b$ be its coefficient projection. For $N\ge M$ define normalized tests and observations by
\begin{equation}\label{mic:marks}
 \psi^{\rm on}_{\alpha,N}=d_{\alpha,N}^{-1/2}\Psi_{\alpha,N},\qquad
 (O_b^{(N)}F)_{\alpha;kl}
 =\E[\psi^{\rm on}_{\alpha,N}(RFR)_{kl}].
\end{equation}
The test and the response use the same sample. Orthogonality gives
\begin{equation}\label{mic:inverse}
 (O_b^{(N)}\Phi_N(C))_{\alpha;kl}
   =r_kr_l\sqrt{d_{\alpha,N}}(C_\alpha)_{kl}.
\end{equation}
Let $D_b^{(N)}$ divide by these weights and set unobserved coefficients to zero. At the Gaussian limit, the tests are $\Psi_\alpha$ and $d_{\alpha,N}$ is replaced by one; denote these maps by $O_b,D_b$. In the normalization \eqref{mic:normalization}, these are the finite inverses of Theorem~\ref{thm:finitedecoder}.

\begin{theorem}[Decoding commutes with the product limit]\label{mic:decoder}
For $A,B\in\Alg_2$, fixed $b$, and $N\ge M$,
\begin{equation}\label{mic:commuting}
 \begin{aligned}
 D_b^{(N)}O_b^{(N)}(\Phi_N(A)\Phi_N(B))&=P_b(A\star_NB),\\
 P_b(A\star_NB)&\longrightarrow P_b(A\star B)
   =D_bO_b(\Phi(A)\Phi(B))
 \end{aligned}
\end{equation}
in every coefficient radius. Suppose the retained degree-$m$ product marks have total Euclidean error at most $\delta_m$. Set
\[
 \kappa_L=(\min_{k\le L}r_k)^{-2},\qquad
 \chi_{m,N}=\left(\frac{N^m}{(N)_{\underline m}}\right)^{1/2},\quad\chi_{0,N}=1.
\]
For $\widehat C=D_b^{(N)}\widehat y$ and radii as in Theorem~\ref{mic:algebra_limit},
\begin{equation}\label{mic:total_error}
 \begin{split}
 p_\sigma(\widehat C-A\star B)\le{}&
 \kappa_L\sum_{m=0}^M\sigma^m\chi_{m,N}\delta_m
   +p_\sigma((I-P_b)(A\star B))\\
 &+\frac{q^2}{N(1-q)^3}p_{R_+}(A)p_{S_+}(B).
 \end{split}
\end{equation}
Along cofinal budgets $b_N$ with $M_N\le N$, vanishing of the noise term at every radius implies $\widehat C_N\to A\star B$ in $\Alg_2$.
\end{theorem}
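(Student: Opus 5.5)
The plan has three parts: the exact decoding identity, the coefficient convergence, and the error budget.

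\emph{The identity.} Corollary~\ref{mic:represented} gives $\Phi_N(A)\Phi_N(B)=\Phi_N(A\star_NB)$. We first prove this for finite coefficients. We then pass to general $A,B$ using the $L^2$ convergence of products established in the proof of Theorem~\ref{mic:algebra_limit}. The observation maps $O_b^{(N)}$ are continuous on $L^2(\Sch_2)$, because each test $\psi^{\rm on}_{\alpha,N}$ lies in $L^2$ and $R$ is bounded. Applying \eqref{mic:inverse} to $C=A\star_NB$ then yields $D_b^{(N)}O_b^{(N)}\Phi_N(C)=P_bC$. One detail needs checking. For $N\ge M$, every retained $\alpha$ satisfies $|\alpha|\le M\le N$, so $\alpha\in\mathcal I_N$ and $d_{\alpha,N}>0$. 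Hence the division is legitimate and the restriction $\Pi_N$ never cuts a retained coordinate. The Gaussian identity $D_bO_b(\Phi(A)\Phi(B))=P_b(A\star B)$ follows in the same way from \eqref{eq:starcoeff}. Here Gaussian orthogonality replaces $d_{\alpha,N}$ by one.

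\emph{The convergence.} $P_b$ is contractive in every $p_\sigma$, since it only deletes coordinates. Therefore
\[
 p_\sigma(P_b(A\star_NB-A\star B))\le p_\sigma(A\star_NB-A\star B).
\]
Theorem~\ref{mic:algebra_limit} bounds the right-hand side by $O(N^{-1})$. This holds for any output radius $\sigma$ once input radii in region \eqref{mic:radius} and slightly larger $R_+,S_+$ are chosen. Because $A,B\in\Alg_2$, all their radii are finite, so convergence holds at every coefficient radius.

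\emph{The error estimate.} We telescope
\[
 \widehat C-A\star B=\bigl(D_b^{(N)}\widehat y-D_b^{(N)}y\bigr)+\bigl(P_b(A\star_NB)-P_b(A\star B)\bigr)-(I-P_b)(A\star B),
\]
where $y=O_b^{(N)}(\Phi_N(A)\Phi_N(B))$ is the exact datum. The last two terms give the tail term and the collision bias. The bias uses the contractivity of $P_b$ and \eqref{mic:product_rate}.

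For the noise term, work at fixed degree $m$. The decoder divides each coordinate by $r_kr_l\sqrt{d_{\alpha,N}}$. On the corner, $1/(r_kr_l)\le\kappa_L$. For $|\alpha|=m$, the product structure $d_{\alpha,N}=\prod_j(N)_{\underline{\alpha_j}}/N^{\alpha_j}$ together with the elementary inequality $\prod_j(N)_{\underline{\alpha_j}}\ge(N)_{\underline m}$ gives $d_{\alpha,N}^{-1/2}\le\chi_{m,N}$. That inequality is obtained by comparing factorwise, since $N-i\ge N-i-\sum_{j'<j}\alpha_{j'}$. The degree-$m$ Hilbert norm $a_m$ of the decoded error is therefore at most $\kappa_L\chi_{m,N}\delta_m$. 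We weight by $\sigma^m$ and sum over $m\le M$.

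\emph{The final claim.} Along cofinal $b_N$, the tail $p_\sigma((I-P_{b_N})(A\star B))\to0$, because $A\star B\in\Alg_2$ has finite norm at a larger radius and the finite truncations converge. The bias is $O(1/N)$, and the noise term vanishes by assumption.

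The main obstacle is the $\chi_{m,N}$ bound. It must be uniform over all $\alpha$ of degree $m$ regardless of how the degree is split among source blocks, so the falling-factorial comparison has to be checked carefully.
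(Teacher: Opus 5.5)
Your proposal is correct and follows the paper's proof essentially step for step: the identity $D_b^{(N)}O_b^{(N)}\Phi_N=P_b\Pi_N$ applied to the exact product, the rate \eqref{mic:product_rate} combined with contractivity of $P_b$, the same three-term splitting of $\widehat C-A\star B$, and the worst-case Gram bound $d_{\alpha,N}\ge(N)_{\underline m}/N^m$. The only difference is how you prove that bound. You use a factorwise falling-factorial comparison, while the paper compares the probability that $m$ uniform site choices are all distinct with the probability that they are distinct only within each source group; the two arguments are equivalent.
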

\begin{proof}
Equation~\eqref{mic:inverse} gives $D_b^{(N)}O_b^{(N)}\Phi_N=P_b\Pi_N$. Apply it to the exact product \eqref{mic:evaluation_product}; then \eqref{mic:product_rate} proves \eqref{mic:commuting}.

For $|\alpha|=m\le N$,
\begin{equation}\label{mic:worst_gram}
 d_{\alpha,N}\ge(N)_{\underline m}/N^m.
\end{equation}
Indeed the right side is the probability that $m$ independent uniform site choices are all distinct. The left side imposes distinctness only within each group carrying the same source label. Equality holds when all degree is in one source direction. The degree-$m$ inverse therefore costs at most $\kappa_L\chi_{m,N}$ in the coefficient Hilbert norm. Sum the noise estimates with weights $\sigma^m$ and use
\[
 \widehat C-A\star B
 =[\widehat C-P_b(A\star_NB)]
  +P_b(A\star_NB-A\star B)-(I-P_b)(A\star B).
\]
This proves \eqref{mic:total_error}. Cofinal finite cutoffs remove the fixed target tail, and the other two terms vanish as stated.
\end{proof}

\begin{proposition}[Exact cost of increasing the observed degree]\label{mic:threshold}
For at least one retained source direction and $M\le N$, the inverse norm of the factors $\sqrt{d_{\alpha,N}}$ on degrees through $M$ is exactly $\chi_{M,N}$. Moreover
\begin{equation}\label{mic:threshold_bounds}
 \frac{M(M-1)}{4N}\le\log\chi_{M,N}
 \le\frac{M(M-1)}{4(N-M+1)}.
\end{equation}
As $N\to\infty$, $M/\sqrt N\to c<\infty$ implies
$\chi_{M,N}\to e^{c^2/4}$, while $M/\sqrt N\to\infty$ implies divergence. For sequences with $M\le N$, these inverse norms are asymptotically bounded exactly when $M^2/N$ is bounded.
\end{proposition}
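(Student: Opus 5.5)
The inverse norm on degrees through $M$ is $\max_{|\alpha|\le M}d_{\alpha,N}^{-1/2}$, taken over the retained multi-indices. The plan has three steps: identify this maximum, then bound $\log\chi_{M,N}$ on both sides, then read off the asymptotics.

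\emph{Identifying the maximum.} By \eqref{mic:worst_gram}, every $|\alpha|=m\le N$ satisfies $d_{\alpha,N}\ge (N)_{\underline m}/N^m$. Equality holds for $\alpha=me_1$, which is available because at least one source direction is retained. The map $m\mapsto (N)_{\underline m}/N^m=\prod_{a<m}(1-a/N)$ is nonincreasing, since each new factor lies in $[0,1]$. Hence the minimum of $d_{\alpha,N}$ over degrees $\le M$ is attained at $\alpha=Me_1$, and it is strictly positive because $M\le N$. So the inverse norm of the diagonal factors is exactly $\chi_{M,N}$. This holds regardless of whether the norm is taken on a single degree or summed with radius weights, since the operator is diagonal and only its largest entry matters.

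\emph{Two-sided bounds.} Write
\[
 \log\chi_{M,N}=-\tfrac12\sum_{a=0}^{M-1}\log(1-a/N).
\]
\begin{itemize}
\item For the lower bound, use $-\log(1-x)\ge x$ to get $\tfrac12\sum_a a/N=M(M-1)/(4N)$.
\item For the upper bound, use $-\log(1-x)\le x/(1-x)$. For $a\le M-1$ this gives $a/(N-a)\le a/(N-M+1)$, and summing yields $M(M-1)/(4(N-M+1))$.
\end{itemize}
The case $M=N$ is still fine, because then $N-M+1=1$.

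\emph{Asymptotics.} If $M/\sqrt N\to c<\infty$, then $M/N\to0$. So $N-M+1\sim N$, both bounds tend to $c^2/4$, and hence $\chi_{M,N}\to e^{c^2/4}$. If $M/\sqrt N\to\infty$, the lower bound $M(M-1)/(4N)$ diverges.

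For the final equivalence, the lower bound shows that bounded $\chi_{M,N}$ forces $M^2/N$ bounded, up to the harmless term $M/N\le1$. Conversely, if $M^2\le CN$ then $M\le\sqrt{CN}$. For large $N$ this gives $N-M+1\ge N/2$, so the upper bound is at most $C/2$. The finitely many small $N$ each give a finite value.

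The only delicate point is the exactness claim. It requires the worst multi-index $Me_1$ to lie in the retained set, which is where the hypothesis of a retained source direction and $M\le N$ is used. Everything else is elementary.
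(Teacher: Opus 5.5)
Your proof is correct and follows the paper's argument: the inverse is diagonal, its worst entry is attained at $\alpha=Me_1$ by the equality case of \eqref{mic:worst_gram}, and the bounds come from summing $x\le-\log(1-x)\le x/(1-x)$ over $a<M$. You also make explicit two steps the paper leaves implicit: that the maximum over degrees $m\le M$ occurs at $m=M$ by monotonicity, and that the direction ``bounded $M^2/N$ implies bounded $\chi_{M,N}$'' uses the upper bound.
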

\begin{proof}
The inverse is diagonal in orthogonal coordinates. Equality in \eqref{mic:worst_gram} for $\alpha=Me_1$ gives its exact norm. Use
\[
 \log\chi_{M,N}=\frac12\sum_{a=0}^{M-1}-\log(1-a/N),\qquad
 x\le-\log(1-x)\le\frac{x}{1-x}.
\]
Summation proves \eqref{mic:threshold_bounds}. If $M/\sqrt N$ is bounded, then $M/N\to0$, and the two bounds have the same limit. The lower bound gives the remaining assertions.
\end{proof}
This threshold refers to variance-one source marks with Euclidean noise. At $M>N$, a single-source degree-$N+1$ coordinate is zero and the corresponding target is invisible.

\begin{proposition}[Sampling the product marks]\label{mic:sampling}
Suppose each sample supplies the source block sums and the finite anchored matrix corner of $F_N=\Phi_N(A)\Phi_N(B)$. Put
$B_4=p_{\sqrt7}(A)p_{\sqrt7}(B)$ and
$v_m(d)=\binom{d+m-1}{m}$ for $m\ge1$, with $v_0(d)=1$.
For the degree-$m$ empirical mark vector from $J$ independent samples,
\begin{equation}\label{mic:sampling_bound}
 \begin{aligned}
 \E\norm{\overline y_{m,J}-y_m}_{\ell^2}^2
 &\le v_m(d)3^mB_4^2/J,\\
 \Prob\{\norm{\overline y_{m,J}-y_m}_{\ell^2}>\delta_m\}
 &\le v_m(d)3^mB_4^2/(J\delta_m^2),\qquad\delta_m>0.
 \end{aligned}
\end{equation}
\end{proposition}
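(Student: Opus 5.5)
The plan is to treat each degree-$m$ mark coordinate as the mean of an i.i.d.\ sample and bound the variance coordinatewise. For $|\alpha|=m$ with support in the first $d$ source directions and $k,l\le L$, the sample mark is $\psi^{\rm on}_{\alpha,N}(RF_NR)_{kl}$, and $\overline y_{m,J}$ averages $J$ independent copies. Independence and unbiasedness give
\[
 \E\norm{\overline y_{m,J}-y_m}_{\ell^2}^2
 =\frac1J\sum_{\alpha;k,l}\operatorname{Var}\big(\psi^{\rm on}_{\alpha,N}(RF_NR)_{kl}\big)
 \le\frac1J\sum_{\alpha}\E\Big[|\psi^{\rm on}_{\alpha,N}|^2\sum_{k,l}|(RF_NR)_{kl}|^2\Big].
\]
Since $R$ is a normalized anchor with $\norm R_{\op}\le1$, the inner sum is at most $\norm{F_N}_2^2$. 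The tail bound in \eqref{mic:sampling_bound} is then Chebyshev (Markov applied to the squared norm). So the whole task is the per-index estimate
\[
 \E\big[|\psi^{\rm on}_{\alpha,N}|^2\norm{F_N}_2^2\big]\le 3^mB_4^2 .
\]
This must then be summed over the $v_m(d)$ multi-indices of degree $m$ in $d$ directions; $v_0=1$ covers the constant test.

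For the per-index bound I apply Cauchy--Schwarz:
\[
 \E\big[|\psi^{\rm on}_{\alpha,N}|^2\norm{F_N}_2^2\big]\le\norm{\psi^{\rm on}_{\alpha,N}}_{L^4}^2\,\norm{F_N}_{L^4(\Sch_2)}^2 .
\]

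\emph{Test factor.} $\psi^{\rm on}_{\alpha,N}$ is a unit-$L^2$ homogeneous degree-$m$ Walsh polynomial, since its Gram entry is one by \eqref{mic:orthogonality}. The comparison argument in Lemma~\ref{mic:moments} (sign moments dominated by Gaussian moments, then hypercontractivity \eqref{eq:hyper} with $q=2$) gives $\norm{\psi^{\rm on}_{\alpha,N}}_{L^4}\le 3^{m/2}$. Hence $\norm{\psi^{\rm on}_{\alpha,N}}_{L^4}^2\le 3^m$.

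\emph{Response factor.} Schatten multiplication with $\norm{XY}_2\le\norm X_2\norm Y_2$ and probability H\"older give
\[
 \norm{F_N}_{L^4(\Sch_2)}\le\norm{\Phi_N(A)}_{L^8(\Sch_2)}\norm{\Phi_N(B)}_{L^8(\Sch_2)}.
\]
Lemma~\ref{mic:moments} with $p=8$ gives $u_8=\sqrt{2\cdot4-1}=\sqrt7$, so each factor is bounded by $p_{\sqrt7}$. Thus $\norm{F_N}_{L^4(\Sch_2)}^2\le B_4^2$, matching the definition of $B_4$.

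The main point to check is that $\psi^{\rm on}$ really has $L^4$ norm at most $3^{m/2}$ uniformly in $N$, despite the normalization $d_{\alpha,N}^{-1/2}$, which can be as large as $e^{m/2}$. Applying Lemma~\ref{mic:moments} to the $\Psi_{\alpha,N}$ coefficients would not do this: it would give $3^{m/2}/\sqrt{d_{\alpha,N}}$. Instead I apply the Walsh-to-Gaussian comparison directly to the expansion of $\psi^{\rm on}_{\alpha,N}$ in characters $\xi_I$. Its squared coefficient sum equals its $L^2$ norm, which is one, and this yields $3^{m/2}$ with no $d_{\alpha,N}$ loss. A secondary check is that within one block $Q_{k,N}$ is a multilinear sum over distinct sites, so the product across blocks is a genuine degree-$m$ multilinear Walsh polynomial in distinct signs, as the comparison requires.
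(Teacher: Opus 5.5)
Your proposal is correct and follows the paper's proof essentially step for step. Cauchy--Schwarz splits each summand into $\norm{\psi^{\rm on}_{\alpha,N}}_{L^4}^2\le3^m$ and $\norm{F_N}_{L^4(\Sch_2)}^2\le B_4^2$, the latter via $L^8$ H\"older and Lemma~\ref{mic:moments} with $u_8=\sqrt7$; then the $v_m(d)$ indices are summed, independence divides by $J$, and Markov gives the tail. Your observation is also right that the test bound must come from the hypercontractive comparison in terms of the unit $L^2$ norm, not from the coefficient form of Lemma~\ref{mic:moments}; this is exactly the point the paper uses implicitly.
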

\begin{proof}
The moment bound and H\"older give $\norm{F_N}_{L^4(\Sch_2)}\le B_4$. Each normalized test has $L^2$ norm one and pure Walsh degree $m$, so its squared $L^4$ norm is at most $3^m$. With $Z=P_LRF_NRP_L$, sum the matrix entries first to get
\[
 \E\sum_{|\alpha|=m,\,\operatorname{supp}\alpha\subset[d]}
   |\psi^{\rm on}_{\alpha,N}|^2\norm Z_2^2
 \le v_m(d)3^mB_4^2.
\]
Independent centered sample errors have zero cross terms, so averaging divides this bound by $J$. Markov's inequality gives the probability estimate.
\end{proof}
There are $L^2\binom{d+M}{M}$ complex marks in the finite budget. A union bound combines the degreewise guarantees. The target tails in \eqref{mic:total_error} are supplied by the compactness criteria of Section~\ref{sec:data} or by a concrete coefficient estimate.

The microscopic construction is in the Hilbert coefficient geometry. On a common physical corner of dimension $L$, it also yields
$\mathcal N_{\sigma,s}(K_C)\le L^{(1/s-1/2)_+}p_\sigma(C)$:
for $s\ge2$ use the Hilbert norm of each cut, and for $s<2$ use an extreme cut of rank at most $L$. Thus the same finite experiments give the corresponding local Schatten estimates. The general Gaussian algebra of Theorem~\ref{thm:main} has its independent all-exponent construction.

The represented microscopic target consists of symmetric site responses with their source labels and ordered physical coefficients. For example, signs and standard Gaussians share the same first-order central limit but have fourth moments one and three; their microscopic distributions are distinguished by information beyond that limit. In the site model the finite correction $\E S_N^4=3-2/N$ records such information. Source-labelled observations in \eqref{mic:marks} retain the coefficient target throughout the product limit.

\section{Finite reconstruction, tightness, and completion}\label{sec:data}
The finite source and physical observations now determine both reconstruction errors and the topology of their limits. We begin with the completion selected by compatible finite inverses. Hermite marks realize these inverses in the original coefficient norms, and positive mass together with anchored source innovations supplies the corresponding compactness criterion. The same construction works at the trace-class endpoint.

\subsection{The completion selected by finite inverses}
Let $A_0$ carry increasing norms $p_1\le p_2\le\cdots$. Suppose it has finite-rank projections $P_n$ satisfying
\begin{equation}\label{eq:abstractcuts}
 P_nP_m=P_{\min(n,m)},\qquad p_j(P_nx)\le p_j(x),\qquad
 A_0=\bigcup_n A_n,\quad A_n=\ran P_n.
\end{equation}
Let $A$ be its Hausdorff completion. Each $P_n$ extends continuously to $A$, and $P_nx\to x$ in every $p_j$: this holds eventually on $A_0$, and the uniform estimate $p_j((I-P_n)x)\le2p_j(x)$ extends the conclusion by density.

For each $n$, prescribe a finite-dimensional normed data space $Y_n$ and a linear isomorphism $W_n:A_n\to Y_n$. Set
\[
 O_n=W_nP_n,\qquad D_n=W_n^{-1},\qquad
 r_{nm}=W_nP_nD_m\quad(m\ge n).
\]
An array $y=(y_n)$ is compatible if $r_{nm}y_m=y_n$. For such an array define
\begin{equation}\label{eq:finitecauchy}
 a_n(y)=D_ny_n,\qquad
 q_j(y)=\sup_n p_j(a_n(y)),\qquad
 e_{j,N}(y)=\sup_{n\ge N}p_j(a_n(y)-a_N(y)).
\end{equation}
The suprema are allowed to be infinite. Admissibility is a condition on the full array; a finite experiment supplies it through a separate tail bound.

\begin{theorem}[Exact completion of Cauchy data]\label{thm:completion}
Let $Y_{\rm adm}$ consist of compatible arrays for which $e_{j,N}(y)\to0$ for every $j$. Then
\begin{equation}\label{eq:completionmap}
 O:A\longrightarrow Y_{\rm adm},\qquad O(x)=(O_nx)_n,
 \qquad Ry=\lim_nD_ny_n
\end{equation}
is a linear bijection with inverse $R$, and
\begin{equation}\label{eq:completionnorm}
 q_j(Ox)=p_j(x),\qquad e_{j,N}(Ox)=p_j(x-P_Nx).
\end{equation}
In particular, the data norms give a complete space and recover the uniform structure specified by the original norms.
\end{theorem}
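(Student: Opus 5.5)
The plan is to reduce everything to the finite-level identity $D_nO_nx=P_nx$ and then let the projections $P_n$ carry both the norm identities and the bijection.

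First I verify that $O$ maps $A$ into compatible arrays. For $x\in A$ and $m\ge n$,
\[
 r_{nm}O_mx=W_nP_nD_mW_mP_mx=W_nP_nP_mx=W_nP_nx=O_nx .
\]
This uses $D_mW_m=I$ on $A_m$ and $P_nP_m=P_n$, the latter extended to $A$ by continuity. Next, $a_n(Ox)=D_nW_nP_nx=P_nx$. Hence
\[
 q_j(Ox)=\sup_np_j(P_nx)\le p_j(x)
\]
by contractivity. The reverse inequality $q_j(Ox)\ge p_j(x)$ holds because $P_nx\to x$ in $p_j$, as established before the theorem. Similarly, for $n\ge N$ we have $a_n-a_N=P_nx-P_Nx$, so
\[
 e_{j,N}(Ox)=\sup_{n\ge N}p_j(P_nx-P_Nx)=p_j\bigl(P_N'\text{-tail}\bigr).
\]
To see that this equals $p_j(x-P_Nx)$, note that $P_nx-P_Nx=P_n(x-P_Nx)$ when $n\ge N$. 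Contractivity bounds the supremum by $p_j(x-P_Nx)$. Letting $n\to\infty$ attains this value, so it is the supremum. This proves \eqref{eq:completionnorm}. Admissibility of $Ox$ then follows from $P_Nx\to x$.

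Surjectivity and the inverse $R$ come next. Let $y$ be admissible. The condition $e_{j,N}(y)\to0$ for all $j$ makes $(a_n(y))$ Cauchy in every $p_j$, so $Ry=\lim_na_n(y)$ exists in the complete space $A$. Compatibility gives $P_na_m(y)=a_n(y)$ for $m\ge n$: apply $D_n$ to $r_{nm}y_m=y_n$, using $D_nW_nP_n=P_n$ on $A_m$. By continuity of $P_n$ we get $P_nRy=a_n(y)$, hence $O_nRy=W_na_n(y)=y_n$, that is, $ORy=y$. Injectivity of $O$ follows from the identity $q_j(Ox)=p_j(x)$ together with the Hausdorff property. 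Finally, $RO x=\lim P_nx=x$.

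For the last sentence, $O$ is a linear bijection that is isometric for each pair $(p_j,q_j)$, so it transports the uniform structure and completeness of $A$ onto $(Y_{\rm adm},(q_j))$. I expect no real obstacle. The only delicate points are bookkeeping: extending $P_nP_m=P_{\min}$ and contractivity to the completion, and making sure the compatibility relation is exactly what forces $P_nRy=a_n(y)$. That step is where I would be most careful.
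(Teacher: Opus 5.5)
Your proposal is correct and follows essentially the same route as the paper. In both, the compatibility identity $P_na_m(y)=a_n(y)$ gives the limit $Ry$ and $O_nRy=y_n$, and $a_n(Ox)=P_nx$ together with contractivity, the nesting $P_nx-P_Nx=P_n(x-P_Nx)$, and $P_nx\to x$ gives both norm identities. Your derivation of injectivity from the isometry and the Hausdorff property is only a cosmetic variant of the paper's step of passing the agreeing finite projections to the limit.
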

\begin{proof}
Compatibility is the identity that allows all finite reconstructions to belong to one object. Namely, for $m\ge n$,
\[
 P_n a_m(y)=P_nD_my_m=D_nr_{nm}y_m=D_ny_n=a_n(y).
\]
If $y$ is admissible, then for $m,n\ge N$,
$p_j(a_m-a_n)\le2e_{j,N}(y)$. The sequence $(a_n)$ is therefore Cauchy in every defining norm. Let $x$ be its limit in $A$. Continuity of $P_N$ in each norm and the compatibility identity imply $P_Nx=a_N(y)$, so $O_Nx=y_N$ for every $N$.

Conversely, for $x\in A$, its observations are compatible and satisfy $D_nO_nx=P_nx$. Since $P_nx\to x$ in every $p_j$, the decoded sequence is admissible and reconstructs $x$. If two elements have the same observations, all their finite projections agree; passage to the limit makes the elements equal. This proves the claimed inverse formulas.

It remains to identify the uniform structure rather than merely the underlying set. Contractivity and convergence give
$\sup_n p_j(P_nx)=p_j(x)$. For $n\ge N$, nesting gives
\[
 P_nx-P_Nx=P_n(I-P_N)x,
\]
so the corresponding norm is at most $p_j((I-P_N)x)$. Its limit as $n\to\infty$ is that same number. This proves both identities in \eqref{eq:completionnorm}. Applied to differences, they identify every defining data norm with the original norm. Completeness of the data space follows from this isometry and completeness of $A$.
\end{proof}

Coordinatewise convergence generally gives a weaker topology. The Hermite example in the introduction has vanishing finite coordinates but a nonvanishing constant product response. Equally, a homeomorphism need not preserve a completion: $x\mapsto\arctan x$ is a homeomorphism onto its image, whereas $(\arctan n)$ is Cauchy and $(n)$ is not. Formula~\eqref{eq:completionnorm} records the stronger uniform statement needed here.

\begin{proposition}[Common-tail classes and uniform inversion]\label{prop:certclass}
Let $B_j<\infty$ and $b_{j,N}\downarrow0$. The compatible arrays satisfying $q_j(y)\le B_j$ and $e_{j,N}(y)\le b_{j,N}$ form a compact set $Y(B,b)$ in the coordinatewise topology, contained in $Y_{\rm adm}$. On this set, the coordinatewise and reconstructed topologies coincide, and
\begin{equation}\label{eq:unifinverse}
 p_j(Ry-Rz)\le\inf_N\left\{
 \kappa_{j,N}\norm{y_N-z_N}_{Y_N}+2b_{j,N}\right\},
 \qquad\kappa_{j,N}=\norm{D_N:Y_N\to(A_N,p_j)}.
\end{equation}
More generally, $\mathcal K\subset A$ is relatively compact if and only if every $p_j$ is bounded on $\mathcal K$ and
\begin{equation}\label{eq:tailcompact}
 \lim_N\sup_{x\in\mathcal K}p_j(x-P_Nx)=0\qquad\text{for every }j.
\end{equation}
\end{proposition}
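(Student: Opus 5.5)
The plan is to transport everything through the bijection $O:A\to Y_{\rm adm}$ of Theorem~\ref{thm:completion}. By \eqref{eq:completionnorm}, $q_j$ and $e_{j,N}$ are the pullbacks of $p_j$ and of the tail $p_j(x-P_Nx)$. Hence $Y(B,b)=O(\mathcal K_{B,b})$, where
\[
 \mathcal K_{B,b}=\{x\in A:\ p_j(x)\le B_j,\ p_j(x-P_Nx)\le b_{j,N}\ \text{for all } j,N\}.
\]
I would first prove the general criterion \eqref{eq:tailcompact}. Compactness of $Y(B,b)$ in the reconstructed topology then follows, and the remaining work is to compare it with the coordinatewise topology.

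\emph{Criterion.} Suppose every $p_j$ is bounded on $\mathcal K$ and \eqref{eq:tailcompact} holds. Since $A$ is a complete metrizable space, it suffices to show that $\mathcal K$ is totally bounded for each $p_j$. Given $\varepsilon>0$, choose $N$ with $\sup_{\mathcal K}p_j(x-P_Nx)<\varepsilon$. The set $P_N\mathcal K$ is a bounded subset of the finite-dimensional space $A_N$, because $p_j(P_Nx)\le p_j(x)$. All Hausdorff norms are equivalent there, so it is totally bounded. A finite $\varepsilon$-net for $P_N\mathcal K$ is then a $2\varepsilon$-net for $\mathcal K$. I would also check that $p_j$ restricted to $A_N$ is a norm and not merely a seminorm, which should follow from Hausdorffness of the completion, and that finitely many $j$ can be handled simultaneously by taking the larger index, since the norms increase.

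Conversely, if $\mathcal K$ is relatively compact, each $p_j$ is bounded on it. The maps $x\mapsto p_j(x-P_Nx)$ are equicontinuous, since each is $2p_j$-Lipschitz, and they converge pointwise to zero. On a compact set this gives uniform convergence, by a finite-net argument.

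\emph{The set $Y(B,b)$.} The set $\mathcal K_{B,b}$ is closed, since it is cut out by continuous inequalities. It satisfies the criterion, so it is compact in $A$, and $O(\mathcal K_{B,b})=Y(B,b)$ lies in $Y_{\rm adm}$ because $b_{j,N}\to0$. Each coordinate map $O_n=W_nP_n$ is continuous, so $O$ is continuous from $A$ into the product topology. A continuous bijection from a compact space onto a Hausdorff space is a homeomorphism. This gives compactness of $Y(B,b)$ in the coordinatewise topology and shows that the two topologies agree on it.

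\emph{Inverse estimate \eqref{eq:unifinverse}.} Write $x=Ry$ and $x'=Rz$, and split
\[
 x-x'=(x-P_Nx)-(x'-P_Nx')+P_N(x-x').
\]
The two tail terms are each at most $b_{j,N}$. The middle term satisfies $P_N(x-x')=D_N(y_N-z_N)$, so its $p_j$-norm is at most $\kappa_{j,N}\norm{y_N-z_N}_{Y_N}$. Taking the infimum over $N$ gives \eqref{eq:unifinverse}. This estimate yields the topological agreement a second time, quantitatively.

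\emph{Main obstacle.} The delicate point is bookkeeping rather than any single estimate: checking that $\mathcal K_{B,b}$ is closed, and that the norm on $A_N$ is genuinely Hausdorff. If the homeomorphism argument turns out to be awkward, I would instead prove coordinatewise compactness directly by a diagonal subsequence in the finite-dimensional $Y_n$, using the closedness of compatibility and of the bounds $q_j\le B_j$ and $e_{j,N}\le b_{j,N}$ under coordinatewise limits. Each of those quantities is a supremum of continuous functions of finitely many coordinates, so the bounds pass to the limit.
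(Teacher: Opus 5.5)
Your proof is correct. The inverse estimate \eqref{eq:unifinverse}, obtained by splitting off the two tails, is the same argument as the paper's. So is your proof of the criterion \eqref{eq:tailcompact}: total boundedness via one finite-dimensional projected family in the largest norm, and equicontinuity plus pointwise convergence for the converse.

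Where you differ is in how you get compactness of $Y(B,b)$ and the agreement of the two topologies.

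\emph{The paper's route.} It works entirely on the data side. Each coordinate $y_n$ lies in the compact set $W_n\{a\in A_n:p_1(a)\le B_1\}$. Compatibility and every finite inequality defining $q_j$ and $e_{j,N}$ are closed conditions. So $Y(B,b)$ is a closed subset of a countable product of compact sets, hence compact by Tychonoff. The paper then uses \eqref{eq:unifinverse} to show that $R$ is continuous on this class: first choose $N$ to make the tails small, then use convergence of the $N$th coordinate.

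\emph{Your route.} You prove the criterion first. You then pull $Y(B,b)$ back to the closed set $\mathcal K_{B,b}\subset A$, which is compact by the criterion. Finally you apply the continuous-bijection-from-compact-onto-Hausdorff theorem to $O$.

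\emph{What each buys.} Your route avoids checking closedness of the supremum conditions in the product topology. It also gets the topological identification without any estimate. It does, however, depend on the criterion and on completeness of $A$. The paper's route is independent of the criterion. It shows directly that the inverse estimate gives an explicit uniform modulus for $R$ on the class, which is the ``uniform inversion'' content of the statement. In your version that modulus is a separate add-on.

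The two bookkeeping points you flag are not real obstacles:
\begin{itemize}
\item $\mathcal K_{B,b}$ is an intersection of preimages of closed sets under the continuous maps $x\mapsto p_j(x)$ and $x\mapsto p_j(x-P_Nx)$, so it is closed.
\item $p_j$ is a norm on $A_0\supset A_N$ by hypothesis. In any case, bounded sets in a finite-dimensional seminormed space are totally bounded.
\end{itemize}
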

\begin{proof}
For fixed $n$, the possible $y_n$ lie in the image under $W_n$ of the finite-dimensional closed ball $\{a\in A_n:p_1(a)\le B_1\}$. Each compatibility identity and each finite inequality defining $q_j$ and $e_{j,N}$ is closed. Thus $Y(B,b)$ is a closed subset of a countable product of compact sets. The tail assumptions imply admissibility. Splitting $Ry-Rz$ into its two tails and its finite decoded difference proves~\eqref{eq:unifinverse}. Choose $N$ to make the tails small, and then use convergence of the $N$th coordinate; this proves continuity of the inverse on the compact class.

Under~\eqref{eq:tailcompact}, any finite collection of norms can be approximated uniformly by one finite-dimensional projected family. Since the norms increase, it is enough to use their largest index. The projected family is bounded, hence totally bounded, in that norm. This gives total boundedness for the metrizable locally convex topology, and completeness yields relative compactness. Conversely, strong convergence of $P_n$ is uniform on compact sets because $I-P_n$ is uniformly bounded in each norm. A finite-net argument proves~\eqref{eq:tailcompact}.
\end{proof}

\subsection{Operations through completion}
Assume in addition that $A_0$ is a $*$-algebra, that $P_n(x^*)=(P_nx)^*$, and that for each $j$ there are indices $k(j),h(j)$ and finite constants $C_j,L_j$ with
\begin{equation}\label{eq:abstractproduct}
 p_j(xy)\le C_jp_{k(j)}(x)p_{k(j)}(y),\qquad
 p_j(x^*)\le L_jp_{h(j)}(x).
\end{equation}
Here multiplication on the finite core is already specified, or has been reconstructed from finite observations.

\begin{theorem}[Compatibility of operations and completion]\label{thm:operationcompletion}
The completion $A$ has a unique Fr\'echet $*$-algebra structure extending the core operations. The formulas
\begin{equation}\label{eq:dataproduct}
 (y\diamond z)_n=\lim_{M\to\infty}O_n(a_M(y)a_M(z)),
 \qquad (y^\dagger)_n=W_n(a_n(y)^*)
\end{equation}
define the corresponding operations on $Y_{\rm adm}$, and $O,R$ are inverse topological $*$-algebra isomorphisms. Put $t_{j,N}(x)=p_j(x-P_Nx)$. There is an increasing $\nu(N)\ge N$ with $A_NA_N\subset A_{\nu(N)}$, and, writing $k=k(j)$,
\begin{align}
 p_j\big(P_N(xy)-P_N((P_Nx)(P_Ny))\big)
 &\le C_j\{t_{k,N}(x)p_k(y)+p_k(x)t_{k,N}(y)\},\label{eq:truncdefect}\\
 t_{j,\nu(N)}(xy)
 &\le2C_j\{t_{k,N}(x)p_k(y)+p_k(x)t_{k,N}(y)\}.\label{eq:tailproduct}
\end{align}
Thus products of bounded common-tail families again have common tails, with the displayed change of resolution and norm.
\end{theorem}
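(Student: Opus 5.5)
The plan is to follow the standard route: extend the operations by density and continuity, transport them through the isomorphism $O$ of Theorem~\ref{thm:completion}, and prove the two tail estimates by a two-term telescoping.

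\textbf{Extension of the operations.} By \eqref{eq:abstractproduct}, multiplication is jointly uniformly continuous on bounded sets, from the $p_{k(j)}$-topology into the $p_j$-topology, since
\[
 xy-x'y'=(x-x')y+x'(y-y').
\]
Hence it extends uniquely to the Hausdorff completion $A$, which is Fr\'echet because the norms are countable. The involution extends in the same way. Associativity, bilinearity and $(xy)^*=y^*x^*$ pass to $A$ by continuity, and uniqueness follows from the density of $A_0$.

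\textbf{Transport to $Y_{\rm adm}$.} Set $a_M=a_M(y)$, so that $Ry=\lim_M a_M$ in every $p_j$. Joint continuity then gives $a_M(y)a_M(z)\to (Ry)(Rz)$. Each $O_n=W_nP_n$ is continuous into a finite-dimensional space, so the limit in \eqref{eq:dataproduct} exists and equals $O_n\big((Ry)(Rz)\big)$. Therefore
\[
 y\diamond z=O(Ry\cdot Rz).
\]
For the involution, $P_n$ commutes with $*$. This gives $W_n(a_n(y)^*)=W_nP_n((Ry)^*)=O_n((Ry)^*)$, so $y^\dagger=O((Ry)^*)$. Since $O,R$ are inverse bijections and identify the norms isometrically by \eqref{eq:completionnorm}, they become inverse topological $*$-algebra isomorphisms.

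\textbf{Tail estimates.} First I need $\nu(N)$. Because $A_N$ is finite-dimensional and $A_0=\bigcup_nA_n$ is increasing (which follows from $P_nP_m=P_{\min}$), the finitely many products of basis elements of $A_N$ lie in some $A_n$. Take $\nu(N)$ to be the least such $n$, then replace it by $\max(N,\nu(N),\nu(N-1))$ to make it increasing.

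For \eqref{eq:truncdefect}, write
\[
 xy-(P_Nx)(P_Ny)=(x-P_Nx)y+(P_Nx)(y-P_Ny),
\]
apply the contraction $p_j\circ P_N\le p_j$, then \eqref{eq:abstractproduct}, and use $p_k(P_Nx)\le p_k(x)$.

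For \eqref{eq:tailproduct}, put $u=(P_Nx)(P_Ny)\in A_{\nu(N)}$, so that $P_{\nu(N)}u=u$. Then
\[
 (I-P_{\nu(N)})(xy)=(I-P_{\nu(N)})(xy-u).
\]
The operator $I-P_{\nu(N)}$ has $p_j$-norm at most $2$, and the same telescoping bound on $p_j(xy-u)$ gives the factor $2C_j$. The closing remark about products of bounded common-tail families follows immediately.

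\textbf{Main obstacle.} The only delicate point is the existence of $\nu(N)$. It relies on finite rank together with the exhaustion $A_0=\bigcup_nA_n$ by nested ranges. It also relies on the fact that products of core elements remain in the core, which must be assumed since the multiplication is defined on $A_0$. Everything else is routine continuity.
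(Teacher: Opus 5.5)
Your proposal is correct and follows the paper's proof essentially step for step: you extend the product and involution through the Cauchy estimate derived from \eqref{eq:abstractproduct}, transport both operations through $O$ and $R$ using $a_M(y)\to Ry$ and the commutation of $P_n$ with $*$, obtain \eqref{eq:truncdefect} from the two-term telescoping, and choose $\nu(N)$ from the pairwise products of a basis of $A_N$, with the factor two coming from $I-P_{\nu(N)}$. One small remark: the least admissible $\nu(N)$ is already nondecreasing because the $A_n$ are nested, so your monotonization step is harmless but unnecessary.
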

\begin{proof}
Fix an output norm $p_j$ and put $k=k(j)$. If $(x_n)$ and $(y_n)$ are Cauchy in all the defining norms, they are bounded in $p_k$, and
\[
 p_j(x_ny_n-x_my_m)
 \le C_j\{p_k(x_n-x_m)p_k(y_n)+p_k(x_m)p_k(y_n-y_m)\}.
\]
Thus $(x_ny_n)$ is Cauchy in $p_j$. Applying the same estimate to two different approximating pairs shows that the product of their limits is independent of the approximations. The involution extends by its bound in \eqref{eq:abstractproduct}. These extensions are continuous, and all algebra identities pass from the dense core by continuity. Any continuous extension must have these values, proving uniqueness.

For admissible $y,z$, Theorem~\ref{thm:completion} gives $a_M(y)\to Ry$ and $a_M(z)\to Rz$. Continuity of multiplication and of $O_n$ shows that the first limit in \eqref{eq:dataproduct} equals $O_n((Ry)(Rz))$. It is consequently an admissible array. Since $P_n$ commutes with the involution, its data formula is $O_n((Ry)^*)=W_n(a_n(y)^*)$. Hence the inverse maps $O,R$ preserve both operations, with the norm identities already proved.

Finally, the finite approximation error is the actual difference
\[
 xy-(P_Nx)(P_Ny)=(x-P_Nx)y+(P_Nx)(y-P_Ny).
\]
The product bound and contractivity of $P_N$ give \eqref{eq:truncdefect}. Choose a basis of $A_N$. Its finitely many pairwise products lie in some common $A_{\nu(N)}$ because $A_0=\bigcup_mA_m$. Increasing $\nu$ if necessary makes it monotone and at least $N$. Applying $I-P_{\nu(N)}$ to the preceding difference removes $(P_Nx)(P_Ny)$ and costs at most two in $p_j$. This proves \eqref{eq:tailproduct}, including the stated uniform-tail conclusion.
\end{proof}

Finite projected multiplication itself need not be associative. With the Hermite projection through degree two and $g=H_1$,
\[
 \mu_2(\mu_2(g,g),H_2)-\mu_2(g,\mu_2(g,H_2))=3H_2,
 \qquad\mu_2(f,h)=P_2(fh).
\]
Indeed, $g^2=H_2+1$, $gH_2=H_3+2g$, and $H_2^2=H_4+4H_2+2$. The associative operation is recovered by the controlled completion, with the finite projection defect retained in~\eqref{eq:truncdefect}.

For arbitrary finite noisy data $\widetilde y_N=O_Nx+\eta_N$, the actual output $\widehat x_N=D_N\widetilde y_N$ lies in $A_N$ and satisfies
\begin{equation}\label{eq:generalnoise}
 p_j(\widehat x_N-x)\le\kappa_{j,N}\norm{\eta_N}_{Y_N}+t_{j,N}(x).
\end{equation}
If true inputs have budgets $B_x,B_y$ and errors $\varepsilon_x,\varepsilon_y$ in $p_{k(j)}$, expanding the product of the perturbed inputs gives
\begin{equation}\label{eq:abstractnoiseproduct}
 p_j(\widehat x\widehat y-xy)
 \le C_j(B_y\varepsilon_x+B_x\varepsilon_y+\varepsilon_x\varepsilon_y).
\end{equation}
Infinite compatibility of the noisy array is unnecessary for this finite estimate: the decoder always produces a core object, while its error and tail certificates determine its limiting behavior.

\subsection{Hermite observations and their finite inverse}
For orthonormal source coordinates $g_1,g_2,\ldots$, let
\[
 \Psi_\alpha(g)=\prod_j\frac{H_{\alpha_j}(g_j)}{\sqrt{\alpha_j!}},
 \qquad I_m(h_\alpha)=\sqrt{m!}\Psi_\alpha\quad(|\alpha|=m).
\]
In the rectangular setting, choose positive injective Hilbert--Schmidt anchors of Hilbert--Schmidt norm one,
$R_\cC e_\ell=a_\ell e_\ell$ and $R_\cE f_k=b_kf_k$.
For $a=(M,N_C,N_E,L)$ define
\begin{equation}\label{eq:finiteprojection}
 (P_aK)_m=\mathbf1_{m\le M}
 Q_{N_E}(S_L^{\otimes m}K_m)P_{N_C}.
\end{equation}
These projections are compatible contractions in every original radius norm and converge strongly along cofinal budgets. When input and output spaces coincide,
\[
 \ran P_{M,N,N,L}\star\ran P_{M,N,N,L}
 \subset\ran P_{2M,N,N,L}.
\]
Orthogonality yields the exact observations
\begin{equation}\label{eq:finiteHermite}
 (O_aK)_{\alpha;k\ell}
 =\E[\Psi_\alpha(g)(R_\cE I(K)R_\cC)_{k\ell}]
 =b_ka_\ell\sqrt{m!}(K_\alpha)_{k\ell}.
\end{equation}
The identity extends from finite kernels by $L^2$ continuity. The number of complex coordinates is
$N_CN_E\binom{L+M}{M}$, including degree zero; this is a coordinate count, separate from the number of independent samples.

\begin{theorem}[Finite reconstruction in the original norm]\label{thm:finitedecoder}
For $1\le s<\infty$, let $D_a$ divide each coordinate in~\eqref{eq:finiteHermite} by its nonzero weight and set all other coordinates to zero. Then $D_aO_a=P_a$. Put
\[
 \kappa_a=\left(\min_{\ell\le N_C}a_\ell\min_{k\le N_E}b_k\right)^{-1},
 \quad r_a=\min(N_C,N_E),\quad\gamma_s=(1/s-1/2)_+.
\]
If the total squared error of the retained degree-$m$ observations is at most $\delta_m^2$, then $\widehat K_a=D_a\widetilde z$ satisfies
\begin{equation}\label{eq:finiteerror}
 \Nn\rho{\widehat K_a-K}
 \le r_a^{\gamma_s}\kappa_a\sum_{m=0}^M\rho^m\delta_m
       +\Nn\rho{K-P_aK}\qquad(\rho\ge1).
\end{equation}
Every such output is a finite kernel in the original model.
\end{theorem}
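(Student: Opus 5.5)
The identity $D_aO_a=P_a$ is immediate from \eqref{eq:finiteHermite}. On a retained coordinate $(\alpha;k\ell)$ with $|\alpha|=m\le M$, $\operatorname{supp}\alpha\subset[L]$, $k\le N_E$, $\ell\le N_C$, the observation equals $b_ka_\ell\sqrt{m!}(K_\alpha)_{k\ell}$. Dividing by the weight returns $(K_\alpha)_{k\ell}$, and all other coordinates are set to zero. Summing $h_\alpha\otimes(K_\alpha)_{k\ell}E_{k\ell}$ over the retained indices reconstructs exactly $Q_{N_E}(S_L^{\otimes m}K_m)P_{N_C}$ for $m\le M$, which is $(P_aK)_m$. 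For general $K$ in the completion this uses the $L^2$-continuity of the observation functionals already noted, and the fact that $P_a$ is continuous in each radius norm. The output $D_a\widetilde z$ is by construction a finite sum of finite-rank tensors, so it is a finite kernel.

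For the error estimate, write
\[
 \widehat K_a-K=D_a(\widetilde z-O_aK)+(P_aK-K)
\]
and use the triangle inequality in $\Nn\rho\cdot$. The second term is the stated tail. The first term is the finite kernel $D_a\eta$, where $\eta=\widetilde z-O_aK$. By definition of $\Nn\rho\cdot$, its contribution is
\[
 \sum_{m\le M}\rho^m\sqrt{m!}\,\norm{(D_a\eta)_m}_{\Wc_{m,s}}.
\]
In the Hilbert norm, $(D_a\eta)_m$ has coordinates $\eta_{\alpha;k\ell}/(b_ka_\ell\sqrt{m!})$ in the orthonormal basis $h_\alpha\otimes E_{k\ell}$. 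Hence
\[
 \sqrt{m!}\norm{(D_a\eta)_m}_{\Wc_{m,2}}\le\kappa_a\delta_m,
\]
because each weight $b_ka_\ell$ is at least $\kappa_a^{-1}$. The factor $\sqrt{m!}$ cancels exactly, which is the reason for the normalization $I_m(h_\alpha)=\sqrt{m!}\Psi_\alpha$.

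The only genuine step is comparing the Hilbert norm with the $\Wc_{m,s}$ norm on kernels whose physical part lies in the $N_E\times N_C$ corner. For $s\ge2$, every cut $\Flat_S$ is an operator whose Schatten-$s$ norm is at most its Hilbert--Schmidt norm, which equals the common Hilbert norm. The max-cut norm is therefore bounded by the Hilbert norm, so the factor is $1=r_a^{0}$. For $1\le s<2$, use the quotient-sum definition with a single cut. Take $S=[m]$, putting all source legs on the input side. Then $\Flat_{[m]}(K)$ maps $\overline{H^{\otimes m}}\otimes\cC$ to $\cE$ with range in the $N_E$-dimensional corner. Taking instead $S=\emptyset$, all source legs go to the output side and the map factors through the $N_C$-dimensional input corner. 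Either way the cut has rank at most $N_E$ or $N_C$, and we choose the one with rank at most $r_a=\min(N_C,N_E)$. For an operator of rank at most $r$, H\"older on singular values gives $\norm A_s\le r^{1/s-1/2}\norm A_2$. So
\[
 \norm{(D_a\eta)_m}_{\Wc_{m,s}}\le r_a^{\gamma_s}\norm{(D_a\eta)_m}_{2}.
\]
Source symmetrization changes nothing, since $D_a\eta$ is already symmetric in the source legs because it is expanded in the symmetric basis $h_\alpha$.

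Summing over $m\le M$ with weights $\rho^m$ then gives \eqref{eq:finiteerror}. I expect no real obstacle. The points needing care are choosing the extreme cut correctly in the rectangular case, and checking that the $s=1$ endpoint is covered by the quotient-norm definition without any duality argument. It is, because only an upper bound through one admissible decomposition is used.
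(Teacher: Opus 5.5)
Your proposal is correct and is essentially the paper's proof. It uses the same splitting into decoded noise plus the truncation tail $K-P_aK$, and the same exact Hilbert bound $\sqrt{m!}\norm{(D_a\eta)_m}_2\le\kappa_a\delta_m$ in the orthonormal basis $h_\alpha\otimes E_{k\ell}$. It then uses Hilbert--Schmidt domination of every cut for $s\ge2$, and for $1\le s<2$ a single extreme cut of rank at most $r_a$ as an admissible one-summand decomposition in the quotient-sum norm, exactly as the paper does.
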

\begin{proof}
The weights in \eqref{eq:finiteHermite} are nonzero, so coordinatewise division gives $D_aO_a=P_a$. Write $E_m$ for the degree-$m$ kernel produced by applying this inverse to the observation error alone. In the orthonormal symmetric tensor basis,
\[
 \sqrt{m!}\norm{E_m}_2\le\kappa_a\delta_m.
\]
This is a bound for the entire retained Hilbert tensor, including all source and matrix entries.

For $s\ge2$, every Schatten-$s$ flattening has norm at most its Hilbert--Schmidt norm. For $s<2$, use the one extreme cut whose smaller side is the physical space of dimension $r_a$. This cut has rank at most $r_a$, independently of the source dimension, and
$\norm A_s\le r_a^{1/s-1/2}\norm A_2$. Its cost is an admissible single-summand decomposition in the quotient-sum norm. In both cases,
\[
 \sqrt{m!}\norm{E_m}_{\Wc_{m,s}}
 \le r_a^{\gamma_s}\kappa_a\delta_m.
\]
Summing with the weights $\rho^m$ bounds the decoded noise. Adding $P_aK-K$ gives \eqref{eq:finiteerror}. The decoder uses finitely many source and matrix coordinates, so every output is an actual finite kernel, even when the noisy observations are not mutually compatible across resolutions.
\end{proof}

At $s=2$, the finite de-anchoring norm on normalized coefficients is exactly $\kappa_a$. The low-exponent rank power is also necessary: one source monomial and a matrix with equal nonzero singular values attain the finite-dimensional norm-conversion factor. For unequal anchor weights, their product in~\eqref{eq:finiteerror} is a general upper bound. Given certified errors and tails on a finite grid of budgets, one may choose the budget minimizing this bound, with a fixed rule for ties.

\subsection{A sampling protocol for the observations}\label{subsec:empiricalmarks}
Assume independent repetitions of the original Gaussian sample are available. In each sample, the required source coordinates are observed, $F$ can be applied to prescribed input vectors, and its output can be combined coherently with known reference vectors before measuring the squared norm. These access assumptions produce the population observations in~\eqref{eq:finiteHermite}.

Indeed, for $v=FR_\cC e_\ell$ and $u=R_\cE f_k$, polarization with our first-variable-linear convention gives
\begin{equation}\label{eq:energy_polarization}
 \begin{split}
 (R_\cE FR_\cC)_{k\ell}=\frac14\big\{&\norm{v+u}^2-\norm{v-u}^2\\
 &+i(\norm{v+iu}^2-\norm{v-iu}^2)\big\}.
 \end{split}
\end{equation}
Four coherent energy readings thus give one complex matrix entry, which is multiplied by $\Psi_\alpha$ from that same sample.

\begin{proposition}[A finite-sample certificate]\label{prop:empiricalmarks}
Let $F$ belong to the degree-$m$ $\Sch_s$-valued Gaussian chaos, $1\le s<\infty$, and suppose $\norm F_{L^2(\Sch_s)}\le B$. Retain a finite physical corner, the first $L$ source directions, and all Hermite observations of degree $m$. Put
\[
 v_m(L)=\binom{L+m-1}{m}\quad(m\ge1),\qquad v_0(L)=1.
\]
For the empirical mean $\overline z_J$ of the full retained observation vector over $J$ independent original samples,
\begin{align}
 \E_{\rm obs}\norm{\overline z_J-O_aK}_{\ell^2}^2
 &\le\frac{v_m(L)9^mB^2}{J},\label{eq:markvariance}\\
 \Prob_{\rm obs}\{\norm{\overline z_J-O_aK}_{\ell^2}>\delta\}
 &\le\frac{v_m(L)9^mB^2}{J\delta^2}\quad(\delta>0).\label{eq:markconfidence}
\end{align}
An additional instrument error vector of norm at most $\delta_{\rm inst}$ increases the certified total error to $\delta+\delta_{\rm inst}$.
\end{proposition}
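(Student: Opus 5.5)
The plan is a standard variance computation for an empirical mean of i.i.d. vectors, followed by Chebyshev/Markov. For one sample, the retained observation vector is indexed by $(\alpha;k,\ell)$ with $|\alpha|=m$, $\operatorname{supp}\alpha\subset[L]$, and $(k,\ell)$ in the finite corner. Its entries are the random variables $Z_{\alpha;k\ell}=\Psi_\alpha(g)(R_\cE FR_\cC)_{k\ell}$. By \eqref{eq:finiteHermite} their expectation is exactly $O_aK$. Since the $J$ samples are independent and identically distributed, the cross terms of the centered sum vanish, so
\[
 \E\norm{\overline z_J-O_aK}_{\ell^2}^2=\frac1J\E\norm{Z-\E Z}_{\ell^2}^2\le\frac1J\E\norm Z_{\ell^2}^2 .
\]
Everything therefore reduces to bounding the single-sample second moment $\E\norm Z_{\ell^2}^2$.

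Write $P$ and $Q$ for the finite physical corner projections and set $Y=Q R_\cE F R_\cC P$. Summing first over the matrix entries gives
\[
 \norm Z_{\ell^2}^2=\sum_\alpha|\Psi_\alpha|^2\norm Y_2^2 .
\]
Hilbert--Schmidt anchors of norm one have operator norm at most one, so the ideal inequality gives $\norm Y_2\le\norm{R_\cE F R_\cC}_2$. Since $R_\cE F R_\cC$ is a product of two Hilbert--Schmidt operators with $F$, Hölder for Schatten classes yields $\norm{R_\cE F R_\cC}_2\le\norm{R_\cE}_{p_1}\norm F_s\norm{R_\cC}_{p_2}$ for exponents with $1/p_1+1/s+1/p_2=1/2$, and Schatten norms decrease as the exponent increases. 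Taking $p_1=p_2=2$ covers $s\ge2$ only in the direction $\norm F_{\op}\le\norm F_s$, via $\norm{R_\cE F R_\cC}_2\le\norm{R_\cE}_2\norm F_{\op}\norm{R_\cC}_{\op}$. So in all cases $\norm Y_2\le\norm F_s$, and no dimension factor enters.

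Probability Hölder then gives $\E\norm Z^2\le\sum_\alpha\norm{\Psi_\alpha}_{L^4}^2\norm{F}_{L^4(\Sch_s)}^2$. For the two $L^4$ norms:
\begin{itemize}
\item Each $\Psi_\alpha$ is a degree-$m$ scalar chaos of unit $L^2$ norm, so \eqref{eq:hyper} with $q=4$ gives $\norm{\Psi_\alpha}_{L^4}^2\le3^m$.
\item $F$ is an $\Sch_s$-valued degree-$m$ chaos, so the Banach-valued hypercontractivity \eqref{eq:hyper} gives $\norm F_{L^4(\Sch_s)}^2\le3^mB^2$.
\end{itemize}
The number of multi-indices with $|\alpha|=m$ supported in $[L]$ is $v_m(L)$. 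Together these give \eqref{eq:markvariance} with the constant $9^m$.

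For the confidence bound \eqref{eq:markconfidence}, apply Markov's inequality to $\norm{\overline z_J-O_aK}^2$ at level $\delta^2$. The instrument statement follows from the triangle inequality in $\ell^2$. The only delicate step is the middle one, namely the use of Banach-valued hypercontractivity for $\Sch_s$ with $s$ possibly equal to $1$. This is already justified in \eqref{eq:hyper} via positivity of the Mehler kernel, so no obstacle remains. The other point to check is that the Bochner expectation of $Z$ equals $O_aK$; this is the $L^2$-continuity extension noted after \eqref{eq:finiteHermite}.
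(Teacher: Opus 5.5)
Your proof is correct and follows the paper's argument: you bound the single-sample second moment by summing matrix entries first, apply probability Cauchy--Schwarz and hypercontractivity to each Hermite term, count $v_m(L)$ multi-indices, and finish with the i.i.d.\ variance identity and Markov's inequality. The one difference is cosmetic. You apply \eqref{eq:hyper} to $F$ in $\Sch_s$ together with the pointwise bound $\norm{Y}_2\le\norm{R_\cE}_2\norm F_{\op}\norm{R_\cC}_{\op}\le\norm F_s$, whereas the paper applies hypercontractivity to the compressed $\Sch_2$-valued chaos itself. That pointwise bound already holds for every $1\le s<\infty$, so your preceding H\"older-exponent discussion, which is garbled for $s<2$, can simply be dropped.
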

\begin{proof}
For one sample, let
$Z=Q_{N_E}R_\cE FR_\cC P_{N_C}$.
The normalized anchors give $\norm Z_2\le\norm F_s$, and $Z$ remains a degree-$m$ chaos. Form the finite Hilbert vector
$Y=(\Psi_\alpha Z_{k\ell})_{\alpha,k,\ell}$. Summing the physical entries first gives
\[
 \E\norm Y_{\ell^2}^2
 =\sum_{|\alpha|=m,\ \operatorname{supp}\alpha\subset[L]}
     \E[|\Psi_\alpha|^2\norm Z_2^2].
\]
For each term, probability Cauchy--Schwarz and hypercontractivity give
$\norm{\Psi_\alpha}_{L^4}^2\le3^m$ and
$\norm Z_{L^4(\Sch_2)}^2\le3^mB^2$.
The sum is at most $v_m(L)9^mB^2$. Physical dimension is already included in the Hilbert--Schmidt sum.

For independent copies $Y^{(r)}$, all centered cross terms have zero expectation. Therefore
\[
 \E_{\rm obs}\left\|J^{-1}\sum_{r=1}^J(Y^{(r)}-\E Y)\right\|^2
 =J^{-1}\E\norm{Y-\E Y}^2\le J^{-1}\E\norm Y^2.
\]
This proves~\eqref{eq:markvariance}; Markov's inequality proves~\eqref{eq:markconfidence}. The instrument error is added by the triangle inequality.
\end{proof}

For finitely many responses or degrees, the same original samples may be used. A union bound combines their certificates, without independence between responses. For example, a failure budget $\varepsilon_j$ and tolerance $\delta_j$ for response $j$ are ensured by
$J\ge v_{m_j}(L)9^{m_j}B_j^2/(\varepsilon_j\delta_j^2)$.
This is a second-moment sampling bound. Once the data are fixed, the decoded coefficients define a polynomial on the original canonical Gaussian space; the reconstruction error is its population $L^2$ norm, rather than an empirical fitting error.

\subsection{Positive norm mass, including trace class}
Fix $1\le s<\infty$, let $s'=s/(s-1)$ when $s>1$ and $s'=\infty$ when $s=1$, and put $q=\max(2,s)$. For $T=U|T|\in\Sch_s(\cC,\cE)$ define, with value zero at $T=0$,
\begin{align}
 \Jdu(T)&=\norm T_s^{2-s}U|T|^{s-1},\label{eq:dualitymap}\\
 \Dens(T)&=\tfrac12\big(\Jdu(T)^*T\oplus T\Jdu(T)^*\big)
 =\tfrac12\norm T_s^{2-s}\big(|T|^s\oplus|T^*|^s\big).\label{eq:positivemass}
\end{align}
At $s=1$ the first formula means $\mathscr J_1(T)=\norm T_1U$, using the polar partial isometry. It is a choice of norming functional, not a differentiability assertion for the trace norm. The positive mass is unambiguous:
\[
 \mathscr D_1(T)=\tfrac12\norm T_1\big(|T|\oplus|T^*|\big).
\]
The two blocks act on the input and output spaces. Singular-value calculus gives
\begin{equation}\label{eq:massidentity}
 \norm{\Jdu(T)}_{s'}=\norm T_s,
 \qquad\Tr(\Jdu(T)^*T)=\Tr\Dens(T)=\norm T_s^2.
\end{equation}

We will use one elementary positive-compression estimate both here and at the realization boundary. For $A\ge0$ in trace class and an orthogonal projection $P$, put $a=\Tr((I-P)A)$. Then
\begin{equation}\label{eq:positivecompression}
 \norm{A-PAP}_1\le a+2\sqrt{(\Tr A)a}.
\end{equation}
Indeed, the omitted diagonal block has trace $a$, while
$PA(I-P)=(PA^{1/2})(A^{1/2}(I-P))$ has trace norm at most
$\sqrt{\Tr(PA)\,a}$ by Hilbert--Schmidt H\"older. The other off-diagonal block is its adjoint. Adding the three blocks proves the estimate.

The mass map $\Dens:\Sch_s\to\Sch_1(\cC\oplus\cE)$ is continuous for the full range $1\le s<\infty$. For $s>1$ this follows from continuity of the noncommutative Mazur maps~\cite{Ricard} and Schatten H\"older. Here is a direct endpoint argument. If $T_j\to T$ in $\Sch_1$, then $T_j^*T_j\to T^*T$ in operator norm. Polynomial approximation to the square-root function on a common bounded spectral interval gives $|T_j|\to|T|$ in operator norm. Also
$\Tr|T_j|=\norm{T_j}_1\to\norm T_1=\Tr|T|$.
For a fixed finite-rank $P$, the compressed trace norms converge and
\[
 \Tr((I-P)|T_j|)\longrightarrow\Tr((I-P)|T|).
\]
Choose $P$ with small limiting tail and apply \eqref{eq:positivecompression} to both positive operators. Their finite corners converge in trace norm, so $|T_j|\to|T|$ in trace norm. The same argument applies to $|T_j^*|$, proving continuity of $\mathscr D_1$. No continuity of the polar partial isometry is required.

If $T_j\to T$ in $L^2(\Sch_s)$, the squared norms are uniformly integrable and
\[
 \norm{\Dens(T_j)-\Dens(T)}_1\le\norm{T_j}_s^2+\norm T_s^2.
\]
Continuity gives convergence in probability of the masses; uniform integrability then gives convergence in $L^1(\Sch_1)$. Thus the averaged positive mass remains a continuous observation also at trace class.

For physical projections $P_N,Q_N$, set
\[
 \begin{gathered}
 C_NT=Q_NTP_N,\qquad
 e_N(T)=\Tr[(I-(P_N\oplus Q_N))\Dens(T)],\\
 A_N(T)=\E e_N(T),\qquad B(T)=\norm T_{L^2(\Sch_s)}.
 \end{gathered}
\]

\begin{proposition}[Spatial tails in the original norm]\label{prop:massbound}
For $1\le s<\infty$, one has
\begin{align}
 \norm{T-C_NT}_s&\le2\norm T_s^{1-2/q}e_N(T)^{1/q},\label{eq:spatialpoint}\\
 \norm{T-C_NT}_{L^2(\Sch_s)}&\le2B(T)^{1-2/q}A_N(T)^{1/q}.\label{eq:spatialrandom}
\end{align}
When $q=2$, the factor with exponent zero is taken to be one; the zero operator is treated directly.
\end{proposition}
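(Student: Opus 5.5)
The approach is to split the compression defect into an output tail and an input tail, bound each by the corresponding block of the positive mass \eqref{eq:positivemass}, and then recombine the two blocks by concavity. Throughout, the projections in $e_N$ and $C_N$ are $P=P_N$ on the input space and $Q=Q_N$ on the output space. Write
\[
 T-C_NT=(I-Q)T+QT(I-P),
\qquad
 a=\Tr\big((I-P)|T|^s\big),\quad b=\Tr\big((I-Q)|T^*|^s\big).
\]
The ideal inequality gives
\[
 \norm{T-C_NT}_s\le\norm{(I-Q)T}_s+\norm{T(I-P)}_s,
\]
and by definition $e_N(T)=\tfrac12\norm T_s^{2-s}(a+b)$. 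The case $T=0$ is trivial, so assume $T\ne0$.

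\emph{High exponent, $s\ge2$, $q=s$.} I expect this to be the main step. Take $A=T^*T\ge0$ and $r=s/2\ge1$. Then
\[
 \norm{T(I-P)}_s^s=\Tr\big(((I-P)A(I-P))^{r}\big).
\]
I will invoke the trace Jensen (Hansen--Pedersen) inequality $\Tr((PAP)^r)\le\Tr(PA^rP)$ for convex $t\mapsto t^r$ with $r\ge1$, which gives $\norm{T(I-P)}_s\le a^{1/s}$. The same argument with $TT^*$ and $|T^*|$ gives $\norm{(I-Q)T}_s\le b^{1/s}$. Concavity of $t\mapsto t^{1/s}$ then yields
\[
 a^{1/s}+b^{1/s}\le 2^{1-1/s}(a+b)^{1/s}=2^{1-1/s}\big(2\norm T_s^{s-2}e_N\big)^{1/s}=2\norm T_s^{1-2/s}e_N^{1/s},
\]
which is \eqref{eq:spatialpoint} with $q=s$. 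If the Jensen step needs care in infinite dimensions, I would reduce to finite-rank corners and pass to the limit by lower semicontinuity.

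\emph{Low exponent, $1\le s<2$, $q=2$.} Here I use a Schatten Hölder factorization instead. Write
\[
 T(I-P)=U|T|^{1-s/2}\cdot|T|^{s/2}(I-P),
\qquad \frac1s=\frac1p+\frac12,\quad p=\frac{2s}{2-s}.
\]
Then
\[
 \norm{T(I-P)}_s\le\norm{|T|^{1-s/2}}_p\,\norm{|T|^{s/2}(I-P)}_2=\norm T_s^{1-s/2}a^{1/2}.
\]
At $s=1$ this uses $p=2$ and is still valid. Symmetrically, $(I-Q)T=(I-Q)|T^*|^{s/2}\,|T^*|^{1-s/2}U$ gives $\norm{(I-Q)T}_s\le\norm T_s^{1-s/2}b^{1/2}$. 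Since $\sqrt a+\sqrt b\le\sqrt{2(a+b)}=2\norm T_s^{1-s/2}e_N^{1/2}$, the powers of $\norm T_s$ cancel and we obtain $\norm{T-C_NT}_s\le2e_N^{1/2}$. This is \eqref{eq:spatialpoint} with the zero exponent read as one.

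\emph{Random version.} Square the pointwise bound and take expectations. For $q>2$, apply probability Hölder with exponents $q/(q-2)$ and $q/2$:
\[
 \E\norm{T-C_NT}_s^2\le4\,\E\big[\norm T_s^{2-4/q}e_N^{2/q}\big]\le4B(T)^{2-4/q}A_N(T)^{2/q}.
\]
For $q=2$ the bound is immediate, since $\E\norm{T-C_NT}_s^2\le4\,\E e_N=4A_N(T)$. Taking square roots gives \eqref{eq:spatialrandom}.
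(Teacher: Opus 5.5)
Your proof is correct and follows the paper's argument essentially step for step. It uses the same splitting $T-C_NT=(I-Q_N)T+Q_NT(I-P_N)$; the trace Jensen bound for $s\ge2$, which the paper obtains directly from scalar Jensen on an eigenbasis of the compressed positive operator, so no finite-rank reduction is needed; the H\"older factorization with exponents $2$ and $2s/(2-s)$ for $1\le s<2$; concavity of $t\mapsto t^{1/q}$ to recombine the two tails; and probability H\"older with exponents $q/(q-2)$ and $q/2$.
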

\begin{proof}
Let $t_o=\Tr((I-Q_N)|T^*|^s)$ and $t_i=\Tr((I-P_N)|T|^s)$.
For $s\ge2$, take an orthonormal eigenbasis of the positive compression $(I-Q_N)TT^*(I-Q_N)$. Scalar Jensen for $x^{s/2}$ on each unit eigenvector, followed by summation, gives
$\norm{(I-Q_N)T}_s^s\le t_o$.
For $1\le s<2$, factor
\[
 (I-Q_N)T=(I-Q_N)|T^*|^{s/2}|T^*|^{1-s/2}U
\]
and use Schatten H\"older with exponents $2$ and $2s/(2-s)$ to obtain
$\norm{(I-Q_N)T}_s\le t_o^{1/2}\norm T_s^{1-s/2}$.
The input tail is treated identically. Now
\[
 T-C_NT=(I-Q_N)T+Q_NT(I-P_N),\qquad
 t_o+t_i=2\norm T_s^{s-2}e_N(T).
\]
The inequality $x^{1/q}+y^{1/q}\le2^{1-1/q}(x+y)^{1/q}$ gives~\eqref{eq:spatialpoint}, with the stated constant two in either exponent range. Squaring this inequality and applying probability H\"older with conjugate exponents $q/(q-2)$ and $q/2$ proves~\eqref{eq:spatialrandom}; at $q=2$ it is immediate.
\end{proof}

\begin{theorem}[A closed norm encoding for coupled random operators]\label{thm:massclosed}
Let $1\le s<\infty$. Put $Z(T)=R_\cE TR_\cC$. The map
\begin{equation}\label{eq:masscode}
 T\longmapsto\big(Z(T),\E\Dens(T)\big)
\end{equation}
is a topological embedding of $L^2(\Sch_s)$ into
$L^2(\Sch_1)\oplus\Sch_1$, with closed image. The same conclusions hold when the first data space is weakened to the Hilbert space $L^2(\Sch_2)$. The operators may be arbitrarily coupled random variables, without Gaussian or deterministic samplewise boundedness assumptions.
\end{theorem}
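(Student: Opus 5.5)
Throughout, write $\Phi(T)=(Z(T),\E\Dens(T))$ for the map \eqref{eq:masscode}. We prove three things in turn: $\Phi$ is continuous, it is injective with continuous inverse on its image, and the image is closed.

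\emph{Continuity.} Since $R_\cE,R_\cC$ are Hilbert--Schmidt with norm one, H\"older gives $\norm{R_\cE TR_\cC}_1\le\norm T_{\op}\le\norm T_s$. Hence $Z$ is a linear contraction $L^2(\Sch_s)\to L^2(\Sch_1)$, and a fortiori into $L^2(\Sch_2)$. Continuity of $T\mapsto\E\Dens(T)$ into $\Sch_1$ was proved just above: continuity of $\Dens$ plus uniform integrability of $\norm{T_j}_s^2$.

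\emph{Injectivity.} The anchors are injective with dense range, so $Z(T)$ determines every matrix entry $(R_\cE TR_\cC)_{k\ell}=b_ka_\ell T_{k\ell}$. Therefore $Z(T)$ determines $T$ almost surely.

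\emph{Continuity of the inverse and closedness.} Both follow from one statement. Suppose $Z(T_j)\to Y$ in $L^2(\Sch_2)$ and $\E\Dens(T_j)\to\Delta$ in $\Sch_1$. We show that $(T_j)$ is Cauchy in $L^2(\Sch_s)$, and that its limit $T$ satisfies $\Phi(T)=(Y,\Delta)$ by continuity. Applied to $T_j\to T$ with $\Phi(T_j)\to\Phi(T)$, the same statement gives continuity of the inverse.

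Split $T_j-T_k$ into a finite corner and a spatial tail:
\[
 T_j-T_k=C_N(T_j-T_k)+(T_j-C_NT_j)-(T_k-C_NT_k).
\]
\emph{Corner term.} Here $C_N=Q_N(\cdot)P_N$ with $N$ fixed. On the finite corner,
\[
 \norm{C_N S}_s\le N^{\gamma}(\min_{\ell\le N}a_\ell\min_{k\le N}b_k)^{-1}\norm{Z(S)}_2,
\]
because all norms on a finite matrix space are equivalent. So this term is controlled by $\norm{Z(T_j)-Z(T_k)}_{L^2(\Sch_2)}$, which tends to zero.

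\emph{Tail terms.} Apply \eqref{eq:spatialrandom}:
\[
 \norm{T_j-C_NT_j}_{L^2(\Sch_s)}\le2B(T_j)^{1-2/q}A_N(T_j)^{1/q}.
\]
Here $B(T_j)^2=\E\norm{T_j}_s^2=\Tr\E\Dens(T_j)$ by \eqref{eq:massidentity}, so $B(T_j)$ is bounded because the masses converge in trace norm. Also $A_N(T_j)=\Tr[(I-P_N\oplus Q_N)\E\Dens(T_j)]$ is a continuous linear functional of the mass. Since $\E\Dens(T_j)\to\Delta$ in $\Sch_1$, we get
\[
 \sup_j A_N(T_j)\le\Tr[(I-P_N\oplus Q_N)\Delta]+\sup_{j\ge j_0}\norm{\E\Dens(T_j)-\Delta}_1+\max_{j<j_0}A_N(T_j),
\]
and this is made small by first choosing $j_0$ and then choosing $N$ large. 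The tails are therefore uniformly small. This is the key step, and it holds with no Gaussian or samplewise assumptions, because only the averaged positive mass enters.

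\emph{Conclusion.} Choose $N$ to make the tails small, then let $j,k\to\infty$ in the corner term. The sequence is Cauchy, and the limit is identified by the continuity already proved.

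The main obstacle is that the corner estimate has a constant depending on $N$. The plan must therefore fix $N$ using the uniform tail bound before passing to the limit, as above. For $s=1$ (where $q=2$) the argument is unchanged, since continuity of $\mathscr D_1$ has been established.
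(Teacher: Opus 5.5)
Your proposal is correct and follows essentially the same route as the paper. You get continuity from the anchored H\"older bound and the $L^1(\Sch_1)$ continuity of $\Dens$, and injectivity from the nonzero anchor weights. You then use a head--tail split in which trace-norm convergence of the averaged masses makes the spatial tails $A_N(T_j)$ uniformly small, so that $N$ is fixed before the finite-corner inverse (cost $N^{\gamma}\kappa_N$) is applied to the Cauchy first coordinates.

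The only cosmetic point is the inverse-continuity step, which should read: if $\Phi(T_j)\to\Phi(T)$, then the Cauchy limit $T'$ satisfies $\Phi(T')=\Phi(T)$, and hence $T'=T$ by injectivity.
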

\begin{proof}
The estimate $\norm{R_\cE TR_\cC}_1\le\norm T_s$ gives continuity of the first coordinate in $L^2$, and the previously established $L^1$ continuity of $\Dens$ gives continuity of the second. If the first coordinates agree, their matrix entries agree after multiplication by nonzero anchor weights; hence $T=S$ almost surely. Thus the encoding is injective.

For the inverse, first recover only a finite physical corner. Its two anchor inverses are bounded, so Proposition~\ref{prop:massbound} and a head--tail decomposition give
\begin{equation}\label{eq:massinverse}
 \begin{split}
 \norm{T-S}_{L^2(\Sch_s)}\le{}&
 \kappa_N\norm{Z(T)-Z(S)}_{L^2(\Sch_1)}\\
 &+2B(T)^{1-2/q}A_N(T)^{1/q}
  +2B(S)^{1-2/q}A_N(S)^{1/q}.
 \end{split}
\end{equation}
At $q=2$ the zero exponents have the convention already stated in Proposition~\ref{prop:massbound}.

For the Hilbert first coordinate, the same finite-corner argument replaces the first term in \eqref{eq:massinverse} by
\[
 r_N^{(1/s-1/2)_+}\kappa_N\norm{Z(T)-Z(S)}_{L^2(\Sch_2)},
 \qquad r_N=\min(\rank P_N,\rank Q_N).
\]
This uses only the finite-dimensional Schatten norm comparison; it does not assume a bounded inverse of the global Hilbert observation.

Suppose both encoded coordinates of $(T_n)$ are Cauchy, using either stated first-coordinate norm. The traces of the positive masses bound $B(T_n)$ uniformly. A trace-norm Cauchy family of positive trace-class operators has uniformly vanishing physical tails: compare every sufficiently late term with one fixed term, and then truncate that term and the finitely many exceptions. The last two terms in \eqref{eq:massinverse} are consequently uniformly small for large $N$. Fix this $N$ before using the Cauchy property of the first coordinates. It follows that $(T_n)$ is Cauchy in $L^2(\Sch_s)$. Its limit exists, and continuity identifies both prescribed data limits with its encoding. This proves closedness. Applying the same argument to an encoded sequence converging to the encoding of a fixed $T$ proves continuity of the inverse on the image.
\end{proof}
The mass coordinate is additional information. In finite observations it must be supplied either by independent mass measurements or by a separate tail certificate.

\subsection{Unresolved-source convexity}
In this subsection let $1<s<\infty$ and $q=\max(2,s)$. Let $E_LT=\E[T\mid\mathcal F_L]$, where $\mathcal F_L=\sigma(g_1,\ldots,g_L)$. For $T\in L^q(\Sch_s)$ define
\begin{equation}\label{eq:sourcegap}
 b_L(T)=\E\norm T_s^q-\E\norm{E_LT}_s^q\ge0.
\end{equation}
For a source-measurable $T$, these quantities decrease to zero by conditional Jensen and convergence of conditional expectations.

\begin{proposition}[Source defects control source error]\label{prop:sourcebound}
Let $1<s<\infty$ and $q=\max(2,s)$. Set
\[
 c_s=\begin{cases}(s-1)^{-1/2},&1<s<2,\\1,&s=2,\\2^{1-1/s},&s>2.\end{cases}
\]
Then
\begin{equation}\label{eq:sourceerror}
 \norm{T-E_LT}_{L^2(\Sch_s)}\le c_s b_L(T)^{1/q}.
\end{equation}
With $S=E_LT$ and $\mathscr J_{s,q}(S)=\norm S_s^{q-2}\Jdu(S)$, the defect also has the Bregman form
\begin{equation}\label{eq:bregman}
 b_L(T)=\E\left[\norm T_s^q-\norm S_s^q
 -q\operatorname{Re}\Tr(\mathscr J_{s,q}(S)^*(T-S))\right].
\end{equation}
For $1<s\le2$, the displayed $c_s$ is optimal over all such random operators and source conditional expectations, already on a two-dimensional physical space.
\end{proposition}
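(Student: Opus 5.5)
The plan is to prove the Bregman identity first, and then derive the error bound from uniform convexity (for $s\ge2$) or uniform smoothness (for $1<s<2$) of $\Sch_s$. Throughout write $S=E_LT$, so that $T-S$ has conditional mean zero given $\mathcal F_L$.

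\emph{The Bregman identity.} The functional $\mathscr J_{s,q}(S)=\norm S_s^{q-2}\Jdu(S)$ is $\mathcal F_L$-measurable, and by \eqref{eq:massidentity} its dual norm is $\norm S_s^{q-1}$. It therefore lies in $L^{q'}(\Sch_{s'})$ whenever $T\in L^q(\Sch_s)$. Since $S$ is $\mathcal F_L$-measurable, conditioning gives
\[
 \E\Tr\big(\mathscr J_{s,q}(S)^*(T-S)\big)
 =\E\Tr\big(\mathscr J_{s,q}(S)^*\,\E[T-S\mid\mathcal F_L]\big)=0 .
\]
Subtracting this zero linear term from $b_L(T)$ gives \eqref{eq:bregman}. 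The same vanishing cross term also shows $b_L(T)\ge0$. For $s>1$ the map $\mathscr J_{s,q}$ is exactly $q^{-1}$ times the Fréchet derivative of $\norm\cdot_s^q$, so the integrand in \eqref{eq:bregman} is a pointwise Bregman divergence and is nonnegative by convexity.

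\emph{The pointwise inequality.} For deterministic $X,Y\in\Sch_s$ I will use the sharp noncommutative convexity and smoothness inequalities of Ball--Carlen--Lieb.

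For $1<s\le2$, the $2$-uniform convexity inequality gives
\[
 \norm{X}_s^2\ge\norm Y_s^2+2\operatorname{Re}\Tr\big(\Jdu(Y)^*(X-Y)\big)+(s-1)\norm{X-Y}_s^2 .
\]
Here $q=2$ and $\mathscr J_{s,2}=\Jdu$. Taking expectations with $X=T$, $Y=S$ yields $b_L(T)\ge(s-1)\norm{T-S}_{L^2(\Sch_s)}^2$, which is \eqref{eq:sourceerror} with $c_s=(s-1)^{-1/2}$. At $s=2$ this reduces to the Hilbert Pythagoras identity.

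For $s>2$, $\Sch_s$ is $s$-uniformly convex with the Clarkson constant. The needed inequality is
\[
 \norm X_s^s\ge\norm Y_s^s+s\operatorname{Re}\Tr\big(\mathscr J_{s,s}(Y)^*(X-Y)\big)+2^{1-s}\norm{X-Y}_s^s .
\]
I will obtain it from Clarkson's inequality for Schatten classes (the midpoint form $\norm{\tfrac{X+Y}2}^s+\norm{\tfrac{X-Y}2}^s\le\tfrac12(\norm X^s+\norm Y^s)$), iterated along dyadic points of the segment and combined with convexity, in the standard way. Taking expectations gives $\E\norm{T-S}_s^s\le2^{s-1}b_L(T)$. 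Finally $\norm{\cdot}_{L^2}\le\norm\cdot_{L^s}$ yields $\norm{T-S}_{L^2(\Sch_s)}\le2^{(s-1)/s}b_L^{1/s}$, which is the stated $c_s$.

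\emph{The main obstacle} is obtaining exactly the constant $2^{1-s}$ in the gradient form of the $s>2$ inequality, rather than only the midpoint form. If the dyadic argument loses a constant, I will instead fall back on the known sharp modulus of convexity of $\Sch_s$ (Tomczak-Jaegermann) and its gradient version.

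\emph{Optimality for $1<s\le2$.} On $\C^2$ I take $T=Y+\epsilon g_1 Z$ with $L=0$, so that $S=Y$. A second-order expansion gives
\[
 b_L(T)=\epsilon^2\,\E g_1^2\,D^2\norm{\cdot}_s^2(Y)[Z,Z]+o(\epsilon^2).
\]
The Ball--Carlen--Lieb constant $s-1$ is the infimum of the Hessian ratio $D^2\norm\cdot_s^2(Y)[Z,Z]/\norm Z_s^2$. On $2\times2$ matrices this infimum is approached by $Y=\diag(1,1)$ perturbed in the commutative direction $Z=\diag(1,-1)$, which is the scalar $\ell^s_2$ two-point extremal. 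Letting $\epsilon\to0$ then shows that no constant smaller than $(s-1)^{-1/2}$ works.
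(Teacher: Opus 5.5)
Your proposal is correct. For the Bregman identity, for $1<s\le2$, and for the optimality witness it is essentially the paper's argument. The paper obtains your supporting-line inequality by noting that Ball--Carlen--Lieb makes $t\mapsto\norm{S+tD}_s^2-(s-1)t^2\norm{D}_s^2$ midpoint convex and continuous, hence convex. It uses the same witness $I_2+t\epsilon\diag(1,-1)$, but with a bounded independent sign $\epsilon$ instead of $g_1$; this avoids having to control the Taylor remainder against an unbounded Gaussian.

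The genuine difference is the range $s>2$.

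\emph{Your route.} You convert Clarkson--McCarthy into a pointwise Bregman lower bound and integrate it through the Bregman identity. The obstacle you anticipate does not occur. With $\psi(h)=\norm{Y+h(X-Y)}_s^s$, Clarkson gives $\frac{\psi(h)-\psi(0)}{h}\ge\frac{\psi(h/2)-\psi(0)}{h/2}+2^{1-s}h^{s-1}\norm{X-Y}_s^s$. Summing over $h=2^{-k}$ yields the Bregman bound with constant $(2^{s-1}-1)^{-1}>2^{1-s}$. So your route loses nothing; it even improves $c_s$ to $(2^{s-1}-1)^{1/s}$, and it gives a single pointwise mechanism for all $1<s<\infty$.

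\emph{The paper's route.} The paper never forms a gradient inequality. It applies the midpoint inequality directly to $T$ and $S=E_LT$. Conditional Jensen together with $E_L\bigl((T+S)/2\bigr)=S$ gives $\E\norm{(T+S)/2}_s^s\ge\E\norm{S}_s^s$, and rearranging gives $\E\norm{T-S}_s^s\le2^{s-1}b_L(T)$. This is shorter and needs no differentiability of $\norm{\cdot}_s^s$.

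Two small normalization slips in your optimality step cancel each other. Your second-order expansion omits the Taylor factor $\tfrac12$. Also, the infimum of $D^2\norm{\cdot}_s^2(Y)[Z,Z]/\norm{Z}_s^2$ is $2(s-1)$, not $s-1$. The direct scalar computation confirms the conclusion: $\norm{I_2+tZ}_s^2=(|1+t|^s+|1-t|^s)^{2/s}=2^{2/s}\bigl(1+(s-1)t^2+O(t^4)\bigr)$, compared with $\norm{tZ}_s^2=2^{2/s}t^2$.

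Finally, the property you use for $1<s\le2$ is $2$-uniform convexity, as your body correctly says, not uniform smoothness as in your opening sentence.
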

\begin{proof}
Put $S=E_LT$ and $D=T-S$. We first express the defect as the expected Bregman remainder of $F(A)=\norm A_s^q$. Its derivative is
\[
 DF(S)[D]=q\operatorname{Re}\Tr(\mathscr J_{s,q}(S)^*D).
\]
The dual element is $\mathcal F_L$-measurable and has $\Sch_{s'}$ norm $\norm S_s^{q-1}$. H\"older makes its pairing with $D$ integrable, and conditioning makes the expectation of that pairing zero. This proves \eqref{eq:bregman}; convexity also makes its integrand nonnegative.

For $1<s<2$, the Ball--Carlen--Lieb inequality~\cite{BCL,RX} implies that, for fixed $S,D$, the real function
\[
 t\longmapsto\norm{S+tD}_s^2-(s-1)t^2\norm D_s^2
\]
is midpoint convex and continuous, hence convex. Its supporting line at zero gives
\[
 \norm{S+D}_s^2-\norm S_s^2
 -2\operatorname{Re}\Tr(\Jdu(S)^*D)
 \ge(s-1)\norm D_s^2.
\]
The derivative exists also at $S=0$, with value zero. Taking expectations yields
$b_L(T)\ge(s-1)\norm{T-E_LT}_{L^2(\Sch_s)}^2$,
which proves the stated constant. At $s=2$, conditional expectation is an orthogonal projection and the same statement is an equality.

For $s>2$, Clarkson--McCarthy~\cite{FK} gives
\[
 \norm{(T+S)/2}_s^s+\norm{(T-S)/2}_s^s
 \le\tfrac12(\norm T_s^s+\norm S_s^s).
\]
Conditional Jensen and $E_L((T+S)/2)=S$ bound the expectation of the first term below by $\E\norm S_s^s$. Rearrangement yields
$\E\norm{T-S}_s^s\le2^{s-1}b_L(T)$,
and $L^2\le L^s$ proves the stated bound.

For optimality when $1<s<2$, take a sign $\epsilon=\pm1$ independent of $\mathcal F_L$ and
$T_t=I_2+t\epsilon\diag(1,-1)$, $0<t<1$. Such a sign can be obtained from an unresolved Gaussian coordinate. Then $E_LT_t=I_2$,
\[
 \norm{T_t-E_LT_t}_{L^2(\Sch_s)}^2=2^{2/s}t^2,
 \qquad
 b_L(T_t)=2^{2/s}(s-1)t^2+O_s(t^4).
\]
Their ratio forces $c_s\ge(s-1)^{-1/2}$. At $s=2$, any nonzero centered example forces $c_2\ge1$. These are consequences of the classical convexity constant, now expressed in the source-defect normalization.
\end{proof}

Physical compression and source conditioning commute. Splitting
$T-C_NE_LT=(T-C_NT)+C_N(T-E_LT)$ gives
\begin{equation}\label{eq:doubletail}
 \norm{T-C_NE_LT}_{L^2(\Sch_s)}
 \le2B(T)^{1-2/q}A_N(T)^{1/q}+c_sb_L(T)^{1/q}.
\end{equation}
The two positive quantities measure different unresolved directions.

\subsection{Source innovations without uniform convexity}\label{subsec:anchored_source}
The preceding convexity argument uses $s>1$. At trace class, the positive norm mass still controls spatial escape, but its conditional norm loss can vanish without source recovery. The source is instead tested after a faithful Hilbert-valued observation. This weaker observation returns to the original norm only after physical truncation, with its finite inverse cost retained.

Let $1\le s<\infty$ and let $T\in L^2(\sigma(W);\Sch_s(\cC,\cE))$. Retain the normalized anchors and their spectral projections from the finite decoder. Set $Z(T)=R_\cE TR_\cC$, $Z_N(T)=Q_NZ(T)P_N$, and
\begin{align}
 \iota_L(T)&=\norm{Z(T)-E_LZ(T)}_{L^2(\Sch_2)}^2
 =\norm{Z(T)}_{L^2(\Sch_2)}^2-\norm{E_LZ(T)}_{L^2(\Sch_2)}^2,
 \label{eq:anchored_source_defect}\\
 \iota_{N,L}(T)&=\norm{Z_N(T)-E_LZ_N(T)}_{L^2(\Sch_2)}^2
 \le\iota_L(T).\label{eq:localized_source_defect}
\end{align}
These are orthogonal-projection residuals in $L^2(\Sch_2)$, not convexity defects of $\Sch_1$. For possibly different finite input and output dimensions, write
\[
 r_N=\min(\rank P_N,\rank Q_N),\quad
 \kappa_N=\left(\min_{e_\ell\in P_N\cC}a_\ell
                   \min_{f_k\in Q_N\cE}b_k\right)^{-1},\quad
 \zeta_{N,s}=r_N^{(1/s-1/2)_+}\kappa_N.
\]
We take nonzero finite corners, so these quantities are finite.

\begin{proposition}[Anchored innovations and original-norm tails]\label{prop:anchored_source}
For every $1\le s<\infty$, the source residuals decrease to zero as $L\to\infty$, and
$\iota_L(T)=0$ if and only if $T=E_LT$ almost surely. With $q=\max(2,s)$,
\begin{equation}\label{eq:anchored_doubletail}
 \norm{T-C_NE_LT}_{L^2(\Sch_s)}
 \le2B(T)^{1-2/q}A_N(T)^{1/q}
       +\zeta_{N,s}\iota_{N,L}(T)^{1/2}.
\end{equation}
In particular the last term can be replaced by
$\zeta_{N,s}\iota_L(T)^{1/2}$. At $s=1$ the first term is $2A_N(T)^{1/2}$.
For a family with bounded $B(T)$, uniformly vanishing spatial masses and global anchored source residuals therefore give uniform finite physical--source approximation in the original norm. The order of choice is first $N$ and then $L$. A finite number of source directions does not itself impose a finite chaos degree.
\end{proposition}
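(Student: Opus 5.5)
The plan follows the proof of \eqref{eq:doubletail}, with the convexity defect replaced by an orthogonal residual in the anchored Hilbert coordinates. First I would check the residual properties. Since $R_\cE,R_\cC$ are Hilbert--Schmidt with norm one, $\norm{Z(T)}_2\le\norm T_s$, so $Z(T)\in L^2(\Sch_2)$. On this Hilbert space $E_L$ is an orthogonal projection, which gives the Pythagorean form of \eqref{eq:anchored_source_defect}. The projections $E_L$ increase and converge strongly to the identity on $\sigma(W)$-measurable elements by martingale convergence, so $\iota_L(T)\downarrow0$. Because $Z_N=Q_NZP_N$ is a contraction that commutes with $E_L$, we get $\iota_{N,L}\le\iota_L$. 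For the characterization of $\iota_L(T)=0$, conditional expectation commutes with the bounded deterministic maps $T\mapsto R_\cE TR_\cC$, so $Z(E_LT)=E_LZ(T)$. Here $E_LT$ is the Bochner conditional expectation in $L^2(\Sch_s)$. The map $Z$ is injective because the anchors are injective; I would test matrix entries against the nonzero weights $b_ka_\ell$. Hence $Z(T)=E_LZ(T)$ almost surely forces $T=E_LT$.

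For \eqref{eq:anchored_doubletail} I would split
\[
 T-C_NE_LT=(T-C_NT)+C_N(T-E_LT),
\]
using that $C_N$ and $E_L$ commute. Proposition~\ref{prop:massbound}, namely \eqref{eq:spatialrandom}, bounds the first term by $2B(T)^{1-2/q}A_N(T)^{1/q}$. At $s=1$ we have $q=2$, and this term is $2A_N(T)^{1/2}$.

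For the second term, $C_N(T-E_LT)$ lives on the finite corner $Q_N\cE\times P_N\cC$. On that corner the anchors restrict to invertible diagonal maps, and $Q_NR_\cE=R_\cE Q_N$ because the $P_N,Q_N$ are spectral projections of the anchors. This gives
\[
 C_N(T-E_LT)=(Q_NR_\cE Q_N)^{-1}\,\bigl(Z_N(T)-E_LZ_N(T)\bigr)\,(P_NR_\cC P_N)^{-1},
\]
where the inverses are taken on the corners. Pointwise, the two inverses have operator norms $(\min b_k)^{-1}$ and $(\min a_\ell)^{-1}$, so the Hilbert--Schmidt norm grows by at most $\kappa_N$. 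The conversion from $\Sch_2$ to $\Sch_s$ on a matrix of rank at most $r_N$ costs at most $r_N^{(1/s-1/2)_+}$, exactly as in the proof of Theorem~\ref{thm:finitedecoder}. Taking the $L^2$ norm then gives $\zeta_{N,s}\iota_{N,L}(T)^{1/2}$.

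The uniform statements follow directly. Given $\varepsilon$, I would first choose $N$ so that the spatial term is small uniformly over the family, using boundedness of $B$ and uniformly vanishing $A_N$. With $N$ fixed, $\zeta_{N,s}$ is a fixed constant, and I would then choose $L$ so that $\sup\iota_L$ is small. This explains the order of choice. The last remark holds because $E_L$ keeps all Hermite degrees in the first $L$ coordinates. The step needing the most care is the identity $Z(E_LT)=E_LZ(T)$ and the injectivity at $s=1$, where no convexity is available. Both reduce to Bochner conditional expectation commuting with bounded linear maps and to entrywise testing, so I expect them to go through.
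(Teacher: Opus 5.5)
Your proposal is correct and follows essentially the same route as the paper. For the residual claims you use the orthogonal-projection properties of $E_L$ on $L^2(\Sch_2)$, commutation of the deterministic anchors with conditioning, and entrywise injectivity. For \eqref{eq:anchored_doubletail} you split $T-C_NE_LT=(T-C_NT)+C_N(T-E_LT)$, bound the spatial term by Proposition~\ref{prop:massbound}, and bound the source term by finite-corner de-anchoring at cost $\kappa_N$ followed by the rank conversion $r_N^{(1/s-1/2)_+}$. Choosing $N$ first, so that $\zeta_{N,s}$ is a fixed constant, and then $L$ is also exactly how the paper obtains the uniform conclusion.
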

\begin{proof}
The ideal inequality gives $\norm{Z(T)}_2\le\norm T_s$ pointwise, so the Hilbert residuals are well defined. Conditional expectation is an orthogonal projection in $L^2(\sigma(W);\Sch_2)$. Nested source sigma-fields and their exhaustion of the source prove the energy identity, monotonicity, and convergence to zero. The deterministic anchors commute with conditioning. If $\iota_L(T)=0$, then $R_\cE(T-E_LT)R_\cC=0$ almost surely. Countably many matrix entries and the strictly positive anchor eigenvalues give $T=E_LT$ on one common event. The converse is immediate.

Decompose the original error before changing its norm:
\[
 T-C_NE_LT=(T-C_NT)+C_N(T-E_LT).
\]
Proposition~\ref{prop:massbound} bounds the first term. On the finite corner, left and right anchor inversion cost $\kappa_N$ in Hilbert--Schmidt norm. The rank bound then gives
\[
 \norm{C_N(T-E_LT)}_{L^2(\Sch_s)}
 \le r_N^{(1/s-1/2)_+}\kappa_N
          \norm{Z_N(T)-E_LZ_N(T)}_{L^2(\Sch_2)}.
\]
This proves \eqref{eq:anchored_doubletail}. Once $N$ has made the spatial term small, its inverse cost is a fixed finite number, so one can choose $L$ to control the source term. There is no global de-anchoring in this argument.
\end{proof}

The distinction is already visible on two physical coordinates. Let $\epsilon=\operatorname{sign}(g_{L+1})$ and
\[
 T=\diag\big((1+\epsilon)/2,(1-\epsilon)/2\big).
\]
Then $E_LT=I_2/2$ and both $\norm T_1$ and $\norm{E_LT}_1$ equal one, whereas
$\norm{T-E_LT}_{L^2(\Sch_1)}=1$. Thus the endpoint squared-norm loss is zero but the source is unresolved. If the two anchors coincide and have entries $r_1,r_2>0$ on this block, then
\begin{equation}\label{eq:trace_source_flat_face}
 \iota_L(T)=(r_1^4+r_2^4)/4>0.
\end{equation}
This example concerns general source-measurable operators; it rules out simply extending the preceding conditional-expectation inequality to $s=1$. Conversely anchored residuals alone do not control the original norm: for $T_j=g_je_je_j^*$ and one diagonal anchor $R$, $\norm{T_j}_1=|g_j|$ and, when $j>L$, $\iota_L(T_j)=r_j^4\to0$. The spatial mass escapes. Both parts of \eqref{eq:anchored_doubletail} are needed.

\begin{proposition}[A finite corner-energy certificate]\label{prop:finite_source_energy}
Suppose $T=I_mK$ is homogeneous of a fixed degree $m\ge0$, and put
$\mathscr E_N=\E\norm{Z_N(T)}_2^2$. Let $z$ be the finite vector of degree-$m$ Hermite marks in the chosen physical corner and first $L$ source coordinates. Then
\begin{equation}\label{eq:finite_source_energy_identity}
 \iota_{N,L}(T)=\mathscr E_N-\norm z_{\ell^2}^2.
\end{equation}
Suppose $\norm{\widehat z-z}_{\ell^2}\le\delta$ and $\mathscr E_N\le\mathscr E_N^+$. Define
\begin{equation}\label{eq:finite_source_energy_budget}
 U_{N,L}=\mathscr E_N^+-\norm{\widehat z}_{\ell^2}^2
                      +2\delta\norm{\widehat z}_{\ell^2}.
\end{equation}
The actual finite Hermite decoder $\widehat T=I_m\widehat K$ then satisfies, whenever the stated data are consistent,
\begin{align}
 U_{N,L}&\ge0,\qquad
 \norm{\widehat T-T}_{L^2(\Sch_s)}
 \le2B(T)^{1-2/q}A_N(T)^{1/q}+\zeta_{N,s}\sqrt{U_{N,L}},
 \label{eq:finite_source_response_certificate}\\
 \rho^m\sqrt{m!}\norm{\widehat K-K}_{\Wc_{m,s}}
 &\le(\rho v_s)^m
 \left(2B(T)^{1-2/q}A_N(T)^{1/q}+\zeta_{N,s}\sqrt{U_{N,L}}\right).
 \label{eq:finite_source_native_certificate}
\end{align}
A negative $U_{N,L}$ certifies inconsistency of the marks, their noise bound, and the corner-energy bound.
\end{proposition}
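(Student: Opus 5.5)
The argument has three steps: an exact energy identity, a perturbation bound for the noisy marks, and then Proposition~\ref{prop:anchored_source} together with coefficient recovery.

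\emph{Energy identity.} Conditioning on $\mathcal F_L=\sigma(g_1,\dots,g_L)$ is the orthogonal projection in $L^2(\sigma(W);\Sch_2)$ onto the closed span of the Hermite polynomials $\Psi_\alpha$ with $\operatorname{supp}\alpha\subset[L]$. The variable $Z_N(T)=Q_NR_\cE I_m(K)R_\cC P_N$ is a finite-matrix homogeneous chaos of degree $m$. Its conditional expectation is therefore $\sum_{|\alpha|=m,\ \operatorname{supp}\alpha\subset[L]}\Psi_\alpha\,\E[\Psi_\alpha Z_N(T)]$. The coefficients of this expansion are exactly the retained marks of \eqref{eq:finiteHermite}, so $\norm{E_LZ_N(T)}_{L^2(\Sch_2)}^2=\norm z_{\ell^2}^2$ by orthonormality. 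Subtracting this from the Pythagorean identity in \eqref{eq:localized_source_defect} gives \eqref{eq:finite_source_energy_identity}.

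\emph{Budget.} Write $\norm z^2\ge(\norm{\widehat z}-\delta)_+^2\ge\norm{\widehat z}^2-2\delta\norm{\widehat z}$. With $\mathscr E_N\le\mathscr E_N^+$, this gives $\iota_{N,L}(T)\le U_{N,L}$. Since $\iota_{N,L}\ge0$, consistency of the data forces $U_{N,L}\ge0$; a negative value therefore certifies inconsistency.

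\emph{Response bound; this is the step I expect to be the main obstacle.} One must decide what $\widehat T$ approximates. The decoder $\widehat K=D_a\widehat z$ produces a finite kernel in the corner with sources in $[L]$. Exact marks would give $I_m(D_az)=C_NE_LT$ by \eqref{eq:cutintertwine} and Theorem~\ref{thm:finitedecoder}.

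The tempting route is to write $\widehat T-T=(C_NE_LT-T)+I_m(D_a(\widehat z-z))$. The first term is bounded by \eqref{eq:anchored_doubletail} with $\iota_{N,L}\le U_{N,L}$. The noise term is bounded by $\zeta_{N,s}\delta$ through the same finite corner de-anchoring and rank comparison, at $L^2(\Sch_2)$ level where Wick evaluation is isometric on normalized Hermite coordinates. This, however, leaves an extra additive $\zeta_{N,s}\delta$.

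To absorb it, I would estimate the anchored corner error jointly instead. The claim is
\[
\norm{Z_N(\widehat T)-Z_N(T)}_{L^2(\Sch_2)}^2=\norm{\widehat z-z}^2+\iota_{N,L}(T).
\]
It holds because the two pieces are orthogonal: one lies in the $\mathcal F_L$-measurable span, the other in its complement. This square is at most $\delta^2+\mathscr E_N^+-\norm z^2$.

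I would then verify the elementary inequality $\delta^2-\norm z^2\le-\norm{\widehat z}^2+2\delta\norm{\widehat z}$. Put $t=\norm{\widehat z}$ and use $\norm z\ge t-\delta$. When $t\ge\delta$, $\delta^2-(t-\delta)^2=2\delta t-t^2$. When $t<\delta$, we have $\delta^2-\norm z^2\le\delta^2$, and I need the right side to dominate this; I am not sure the inequality closes here, and I would handle the case by a direct check. If it closes, this yields the corner bound $\sqrt{U_{N,L}}$.

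Next, de-anchoring on the corner costs $\zeta_{N,s}$, exactly as in the proof of Proposition~\ref{prop:anchored_source}. The spatial tail $T-C_NT$ is bounded by Proposition~\ref{prop:massbound}. Together these give \eqref{eq:finite_source_response_certificate}.

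\emph{Coefficient bound.} Finally, \eqref{eq:finite_source_native_certificate} follows from the homogeneous inverse of Theorem~\ref{thm:wickrecover}, $\sqrt{m!}\norm{\widehat K-K}_{\Wc_{m,s}}\le v_s^m\norm{\widehat T-T}_{L^2(\Sch_s)}$, multiplied by $\rho^m$.
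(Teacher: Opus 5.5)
Your route is the paper's: Parseval for the energy identity, then the orthogonal splitting of the anchored corner error into resolved noise and unresolved innovation, de-anchoring on the corner at cost $\zeta_{N,s}$, Proposition~\ref{prop:massbound} for $T-C_NT$, and Theorem~\ref{thm:wickrecover} for the coefficient bound. Your identity $\norm{Z_N(\widehat T)-Z_N(T)}^2=\norm{\widehat z-z}^2+\iota_{N,L}(T)$ is correct. The gap is exactly at the step you flagged, and it does not close as written.

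\emph{The failing step.} After you replace $\norm{\widehat z-z}^2$ by $\delta^2$, you need
\[
 \delta^2-\norm z^2\le-\norm{\widehat z}^2+2\delta\norm{\widehat z}.
\]
This is false whenever $\norm{\widehat z}<\delta$. Take $z=0$ and $0<t=\norm{\widehat z}<\delta$, which is consistent with $\norm{\widehat z-z}\le\delta$. Then the left side is $\delta^2$, while the right side is $\delta^2-(\delta-t)^2<\delta^2$. So your intermediate bound $\delta^2+\mathscr E_N^+-\norm z^2$ can exceed $U_{N,L}$. No separate lower bound on $\norm z$ can repair this, because the loss comes from bounding the noise term and $\norm z^2$ independently.

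\emph{The repair.} Keep the two terms together and expand exactly:
\[
 \norm{\widehat z-z}^2-\norm z^2
 =-\norm{\widehat z}^2+2\operatorname{Re}\ip{\widehat z}{\widehat z-z}.
\]
Combined with your orthogonal splitting, this gives the posterior-energy identity
\[
 \norm{Z_N(T)-Z_N(\widehat T)}_{L^2(\Sch_2)}^2
 =\mathscr E_N-\norm{\widehat z}^2+2\operatorname{Re}\ip{\widehat z}{\widehat z-z}
 \le U_{N,L}.
\]
The last inequality is Cauchy--Schwarz, and it holds with no case distinction. In your counterexample the true noise term is $t^2$, not $\delta^2$, and $t^2\le-t^2+2\delta t$ holds precisely because $t\le\delta$. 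This is the paper's argument. It also yields $U_{N,L}\ge0$ directly, and the rest of your proof goes through unchanged.
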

\begin{proof}
At fixed physical resolution the source conditional expectation is a finite homogeneous Hermite polynomial, with coefficient vector exactly $z$ from \eqref{eq:finiteHermite}. Parseval gives \eqref{eq:finite_source_energy_identity}. Write $\widehat Z=\sum_\alpha\Psi_\alpha\widehat z_\alpha$, interpreting each coefficient as its finite matrix. Since the unresolved Hermite part is orthogonal to every retained coordinate,
\begin{align*}
 \norm{Z_N(T)-\widehat Z}_{L^2(\Sch_2)}^2
 &=\mathscr E_N-\norm z^2+\norm{\widehat z-z}^2\\
 &=\mathscr E_N-\norm{\widehat z}^2
       +2\operatorname{Re}\ip{\widehat z}{\widehat z-z}
 \le U_{N,L}.
\end{align*}
This is the Hilbert posterior-energy identity, used before finite de-anchoring. The inverse on the corner costs $\zeta_{N,s}$, while the unresolved physical part is bounded by Proposition~\ref{prop:massbound}. This proves the response bound. The difference $\widehat T-T$ is homogeneous of degree $m$, so the original coefficient inverse from Theorem~\ref{thm:wickrecover} gives \eqref{eq:finite_source_native_certificate}. The nonnegativity assertion follows from the squared-error identity, not by replacing a negative bound by zero.
\end{proof}

The extra quantity $\mathscr E_N$ is the expectation of a finite physical energy, not an unobserved infinite trace. With $J$ independent repetitions and $B(T)\le B$, its empirical mean $\widehat{\mathscr E}_{N,J}$ satisfies
\begin{equation}\label{eq:finite_source_energy_sampling}
 \Prob_{\rm obs}\{|\widehat{\mathscr E}_{N,J}-\mathscr E_N|>\varepsilon\}
 \le\frac{9^mB^4}{J\varepsilon^2}\qquad(\varepsilon>0).
\end{equation}
Indeed, hypercontractivity gives
$\E\norm{Z_N(T)}_2^4\le9^mB^4$, and independence bounds the variance of the average. On the certified event one may take
$\mathscr E_N^+=\widehat{\mathscr E}_{N,J}+\varepsilon$.
The same samples may supply the marks: a union bound combines the two failure probabilities, without independence between these two readouts. As before, a realized decoder is evaluated in the population norm after fixing its measured coefficients. The physical tail $A_N(T)$ and the original size bound remain separate model or measurement certificates; the corner energy does not recover them for free.

\subsection{Compactness from physical mass and source innovations}
\begin{theorem}[Compactness at fixed degree and at all radii]\label{thm:fullcompact}
Let $1\le s<\infty$. At a fixed degree $m$, suppose $F_m=I_mK_m$ is a uniformly bounded family in $L^2(\Sch_s)$. Its coefficient family is relatively compact in $\Wc_{m,s}$ if and only if $A_N(F_m)$ and $\iota_L(F_m)$ tend uniformly to zero.

For $\mathcal K\subset\Alg_s$, relative compactness is equivalent to
\begin{equation}\label{eq:allradii-bounded}
 \sup_{K\in\mathcal K}\Nn\rho K<\infty\qquad\text{for every }\rho\ge1
\end{equation}
and, for every fixed $m$,
\begin{equation}\label{eq:allradii-defects}
 \lim_N\sup_{K\in\mathcal K}A_N(I_mK_m)=0,
 \qquad\lim_L\sup_{K\in\mathcal K}\iota_L(I_mK_m)=0.
\end{equation}
For $1<s<\infty$, the same criteria hold with $\iota_L$ replaced by the original convexity defect $b_L$. This replacement is an equivalence of the criteria including the spatial condition, not of the two source quantities in isolation.
Under these conditions, convergence of every fixed Hermite observation of a sequence implies convergence to a unique $K\in\Alg_s$ in every radius norm. Every fixed finite $*$-polynomial of the reconstructed objects converges to the corresponding genuine operation.
\end{theorem}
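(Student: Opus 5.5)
At fixed degree, I would reduce to the abstract compactness criterion of Proposition~\ref{prop:certclass}. That criterion is applied in the Banach space $\Wc_{m,s}$, with the finite projections $P_{N,L}$ given by physical compression $C_N$ combined with source truncation $S_L^{\otimes m}$.

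By Theorem~\ref{thm:wickrecover}, $I_m$ is an isomorphism onto a closed image, with constants $u_s^m,v_s^m$. So relative compactness of the coefficient family is equivalent to relative compactness of $\{F_m\}$ in $L^2(\Sch_s)$. By Proposition~\ref{prop:chaosimage}, $I_m$ intertwines the kernel projection with $C_NE_L$.

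\textbf{Sufficiency.} Proposition~\ref{prop:anchored_source} bounds $\norm{F-C_NE_LF}_{L^2(\Sch_s)}$ by $2B^{1-2/q}A_N^{1/q}+\zeta_{N,s}\iota_L^{1/2}$. This is made uniformly small by choosing $N$ first and then $L$. Each $C_NE_LF$ lies in a finite-dimensional space of matrix-valued Hermite polynomials of degree $m$ in $L$ variables, where bounded sets are totally bounded. Total boundedness of the family follows.

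\textbf{Necessity.} Relative compactness gives a finite $\varepsilon$-net. Two maps are continuous on $L^2(\Sch_s)$: $F\mapsto\E\Dens(F)\in\Sch_1$, by the continuity established before Proposition~\ref{prop:massbound}, and $F\mapsto Z(F)\in L^2(\Sch_2)$. For a single element, $A_N\to0$ by trace-class tail convergence and $\iota_L\to0$ by martingale convergence. The functionals $A_N$ and $\iota_L^{1/2}$ are uniformly continuous, with moduli independent of $N,L$. For $A_N$ this holds because $|A_N(F)-A_N(G)|\le\norm{\E\Dens F-\E\Dens G}_1$. For $\iota_L^{1/2}$, it is a seminorm dominated by $\norm{Z(\cdot)}_{L^2(\Sch_2)}\le\norm\cdot_{L^2(\Sch_s)}$. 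Uniform vanishing on the net therefore transfers to the whole family. The one delicate point is that $\Dens$ is continuous but not Lipschitz. This is handled by using compactness to get uniform continuity of $\E\Dens$ on the compact closure.

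\textbf{All radii.} Here I apply Proposition~\ref{prop:certclass} to $\Alg_s$ with the projections $P_a$ from \eqref{eq:finiteprojection}. Boundedness at radius $R>\rho$ gives a uniform degree tail at radius $\rho$ via $\Nn\rho{K^{>M}}\le(\rho/R)^{M+1}\Nn RK$. For each of the finitely many degrees $m\le M$, the fixed-degree result gives uniform physical–source tails in $\Wc_{m,s}$. Together these give \eqref{eq:tailcompact}, hence relative compactness. Conversely, relative compactness implies boundedness in every norm and relative compactness of each degree projection, which is continuous. The fixed-degree necessity then applies.

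\textbf{Replacing $\iota_L$ by $b_L$ when $1<s<\infty$.} Sufficiency uses \eqref{eq:doubletail} in place of \eqref{eq:anchored_doubletail}. Necessity follows because $b_L$ is continuous on $L^q(\Sch_s)$, and hypercontractivity \eqref{eq:hyper} makes $L^q$ and $L^2$ equivalent at fixed degree, so the same net argument applies.

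\textbf{Convergence of sequences.} Suppose every fixed Hermite observation converges. By relative compactness, every subsequence has a limit point. Observations are continuous and separate points (Theorem~\ref{thm:completion}), so all limit points coincide, and the whole sequence converges in every radius. Convergence of $*$-polynomials then follows from the joint continuity of Theorem~\ref{thm:nativeproduct}.

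I expect the main obstacle to be the necessity of uniform vanishing of $A_N$ at $s=1$, since it relies on the endpoint continuity of the mass map rather than on any Lipschitz estimate.
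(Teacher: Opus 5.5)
Your proposal is correct and follows the paper's proof essentially step for step. Sufficiency uses the anchored double-tail bound with $N$ chosen before $L$, plus finite-dimensional total boundedness; necessity uses continuity of $\E\Dens$ and of $Z$ with a finite-net argument; the all-radius case combines the stronger-radius degree tail with Proposition~\ref{prop:certclass}; and cluster points are unique because the anchor weights are nonzero. The only point to make explicit is in the $b_L$ necessity: the net argument needs $b_L$ to be Lipschitz on bounded subsets of $L^q(\Sch_s)$ uniformly in $L$, not merely continuous, which holds because $E_L$ is a contraction there. The paper instead uses $0\le b_L(T)\le q\norm{T}_{L^q(\Sch_s)}^{q-1}\norm{T-E_LT}_{L^q(\Sch_s)}$ together with uniform convergence of $E_L$ on compact sets.
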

\begin{proof}
At fixed degree, first choose $N$ to make the spatial term in \eqref{eq:anchored_doubletail} uniformly small. Its de-anchoring cost $\zeta_{N,s}$ is now fixed, so the condition on $\iota_L$ allows a choice of $L$ making the remaining term small. The compressed, conditioned responses $C_NE_LF_m$ form a bounded subset of a finite-dimensional space of homogeneous matrix polynomials. Its dimension is at most
\[
 (\rank P_N)(\rank Q_N)\binom{L+m-1}{m},
\]
with the binomial interpreted as one for $m=0$. Thus the responses are totally bounded in $L^2(\Sch_s)$. The original coefficient inverse transfers this property to $\Wc_{m,s}$, including at $s=1$.

Conversely, coefficient compactness gives response compactness in $L^2(\Sch_s)$. Continuity of the positive mass, proved also at the endpoint, gives a relatively compact family of averaged masses in trace class. A finite-net argument then makes their spatial tails uniformly small. The bounded map $T\mapsto Z(T)$ takes the response family to a relatively compact subset of $L^2(\Sch_2)$. The contractions $E_L$ converge strongly to the identity on that space, hence uniformly on its compact subsets. This proves the condition on $\iota_L$ without uniform convexity of the original space.

For $1<s<\infty$, uniformly small $A_N$ and $b_L$ give the same finite-dimensional approximation by \eqref{eq:doubletail}. For necessity, hypercontractivity applied to homogeneous differences transfers response compactness to $L^q(\Sch_s)$. Conditioning converges uniformly there, and
\[
 0\le b_L(T)\le q\norm T_{L^q(\Sch_s)}^{q-1}
                         \norm{T-E_LT}_{L^q(\Sch_s)}
\]
proves uniform decay of the original source defect. This retains the interior criterion without a finite de-anchoring factor.

For the full scale, whenever $R>\rho$,
\begin{equation}\label{eq:strongradiustail}
 \sum_{m>M}\rho^m\sqrt{m!}\norm{K_m}_{\Wc_{m,s}}
 \le(\rho/R)^{M+1}\Nn R K.
\end{equation}
The stronger-radius bound makes the remaining degree tail uniform. Apply the fixed-degree result to the finitely many degrees up to $M$, and take common physical and source cutoffs. Proposition~\ref{prop:certclass} gives relative compactness in the full scale. Necessity follows from continuity of radius norms and degree projections, together with the fixed-degree criterion.

A sequence under these hypotheses is relatively compact. Its limiting Hermite marks force all cluster points to have the same source and physical coefficients, because the anchor weights are nonzero. Consequently the full sequence has one limit in $\Alg_s$. Continuity of multiplication and adjoints proves the polynomial conclusion.
\end{proof}

The result also describes compact families of entire Mehler responses. By Theorem~\ref{thm:mehler_entire}, boundedness of every coefficient radius is equivalent to boundedness on every complex disk. In this Banach-valued setting disk bounds alone are not a compactness theorem: physical and source directions can still escape. The two positive tail conditions in \eqref{eq:allradii-defects} provide precisely the missing finite-dimensional approximation at each degree. Their equivalence with original compactness also makes the qualitative criterion independent of the chosen faithful anchors; its quantitative costs depend on them.

The degree condition cannot be dropped. For $P=e_1e_1^*$, the projection onto the first anchor eigenvector, the responses
$F_j=H_j(g_1)P/\sqrt{j!}$ have $L^2(\Sch_s)$ norm one and use only one source coordinate and one physical direction. Their spatial and anchored source tails vanish after those coordinates, yet Gaussian orthogonality gives $\norm{F_j-F_k}_{L^2(\Sch_s)}=\sqrt2$ for $j\ne k$. Their original radius norms are $\rho^j$, so the stronger-radius condition detects exactly this remaining escape.

For an explicit tail budget let $B_m=\norm{I_mK_m}_{L^2(\Sch_s)}$ and define
\begin{equation}\label{eq:hybrid_source_budget}
 \Theta_{N,L}^{(s)}(T)=
 \begin{cases}
  \zeta_{N,1}\iota_{N,L}(T)^{1/2},&s=1,\\
  \min\{\zeta_{N,s}\iota_{N,L}(T)^{1/2},\ c_sb_L(T)^{1/q}\},&1<s<\infty.
 \end{cases}
\end{equation}
The two valid source estimates and the stronger-radius tail give, in the square physical setting,
\begin{equation}\label{eq:certifiedfulltail}
 \begin{split}
 \Nn\rho{K-P_{M,N,N,L}K}\le{}&
 (\rho/R)^{M+1}\Nn R K\\
 &+\sum_{m=0}^M(\rho v_s)^m
 \left[2B_m^{1-2/q}A_N(I_mK_m)^{1/q}
              +\Theta_{N,L}^{(s)}(I_mK_m)\right].
 \end{split}
\end{equation}
At $q=2$ the zero-exponent convention is as in Proposition~\ref{prop:massbound}; the degree-zero source term vanishes. The bound preserves the original convexity certificate when it is available, while the anchored alternative includes trace class. Its local source residual can be obtained from the finite corner-energy identity \eqref{eq:finite_source_energy_identity}. For noisy finite outputs, Proposition~\ref{prop:finite_source_energy} gives the corresponding posterior certificate. In either implementation the physical masses and the stronger-radius bound remain separate inputs.

\begin{proof}[Proof of Theorem~\ref{thm:main}]
Theorem~\ref{thm:nativeproduct} gives the complete coefficient $*$-algebra and its isomorphism with $\Gaus_s$. The finite projections \eqref{eq:finiteprojection} are nested, contractive in every radius norm, commute with adjoints, and exhaust the finite core along any cofinal sequence. Theorem~\ref{thm:finitedecoder} gives their exact inverses. Applying Theorems~\ref{thm:completion} and \ref{thm:operationcompletion} therefore identifies $\Data_s$ with the same coefficient completion, including its norms, tails, multiplication, and involution.

The finite-noise estimate is \eqref{eq:finiteerror} in the square physical setting, including the trace-class rank cost $N^{1/2}$ when $s=1$. For every $1\le s<\infty$, the certificate \eqref{eq:certifiedfulltail} supplies its unresolved part from positive spatial mass, anchored source innovations, and a stronger-radius bound. For $s>1$ one may instead use the original convexity source estimate whenever its budget is smaller. At trace class the source estimate is applied after the finite physical cutoff. Expanding the product of two perturbed inputs and applying \eqref{eq:nativeproduct} at the required input radius gives \eqref{eq:intro-bilinearerror}. Iterating the same estimate proves convergence and finite error control for every fixed $*$-polynomial. These steps construct the data algebra from the independently specified coefficient geometry and recover the original ordinary random product.
\end{proof}

\subsection{Energy certificates and exact recovery risks}
Let $\mathscr H$ be a Hilbert coefficient space, $P$ a finite-rank orthogonal projection, and $O=WP$, where $W$ is positive diagonal on $P\mathscr H$ with entries $w_\lambda$. Let $D=W^{-1}$. For same-source Wick kernels, use the normalized coefficients $\sqrt{m!}K_\alpha$ or the Hilbert Fock space~\eqref{eq:hilbertfock}.

If the total energy $\mathcal E=\norm x^2$ is an additional exact observation, then
\begin{equation}\label{eq:energytail}
 \norm{x-Px}^2=\mathcal E-\norm{DOx}^2.
\end{equation}
If only $\norm x^2\le E_+$ is known and $y=Ox+e$, $\norm e\le\delta$, the output $\widehat x=Dy$ satisfies
\begin{equation}\label{eq:posterior}
 0\le\norm{x-\widehat x}^2
 \le E_+-\norm{\widehat x}^2+2\delta\norm{D^*\widehat x}.
\end{equation}
Indeed, with $z=De$, orthogonality gives
$\norm{x-\widehat x}^2=\norm x^2-\norm{\widehat x}^2+
2\operatorname{Re}\ip{\widehat x}{z}$.
A negative right-hand side certifies incompatibility of the observations with the supplied bounds. A right-hand side at most $\varepsilon^2$ gives a finite stopping certificate.

For noiseless data and exact energy, the fibre is a sphere in $\ker P$ translated by $Dy$; with an energy upper bound, it is the corresponding translated ball. Provided the fibre is nonempty, its optimal enclosing radius is the square root of the energy remaining after subtracting $\norm{Dy}^2$ (with the evident singleton case when the invisible space is zero). Formula~\eqref{eq:posterior} is a valid noisy certificate, independently of whether $Dy$ is the optimal centre for a particular noisy fibre.

\begin{proposition}[Exact risk for three resources]\label{prop:threeresource}
Let $S,U$ be commuting orthogonal projections with $P=SU$, and let $O=WP$ be finite and diagonally weighted as above. Define
\[
 \mathcal K(B,\tau,\nu)=\{x:\norm x\le B,
      \ \norm{(I-S)x}\le\tau,\ \norm{(I-U)x}\le\nu\}.
\]
Assume $P\mathscr H$, $(I-S)U\mathscr H$, and $S(I-U)\mathscr H$ are nonzero. For deterministic noise $\norm e\le\delta$,
\begin{equation}\label{eq:threeresource}
 \inf_{\widehat x}\sup_{x\in\mathcal K,\ \norm e\le\delta}
 \norm{\widehat x(Ox+e)-x}
 =\min\{B,(\kappa^2\delta^2+\tau^2+\nu^2)^{1/2}\},
 \quad\kappa=\max_\lambda w_\lambda^{-1}.
\end{equation}
The infimum ranges over all reconstruction maps.
\end{proposition}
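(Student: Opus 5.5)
The plan is to decompose $\mathscr H$ orthogonally into the four commuting pieces $P\mathscr H=SU\mathscr H$, $X_1=(I-S)U\mathscr H$, $X_2=S(I-U)\mathscr H$ and $X_3=(I-S)(I-U)\mathscr H$. Write $x=x_0+x_1+x_2+x_3$ accordingly, so that $\norm{(I-S)x}^2=\norm{x_1}^2+\norm{x_3}^2$ and $\norm{(I-U)x}^2=\norm{x_2}^2+\norm{x_3}^2$. The observation sees only $x_0$, through the diagonal weights. I will prove the upper bound with an explicit decoder and the lower bound with the two-point argument already used in Theorem~\ref{thm:target_short} and Proposition~\ref{prop:invariant_compact}.

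\emph{Upper bound.} There are two competing decoders. The zero decoder has error $\norm x\le B$. The finite inverse $\widehat x=Dy$ has error $x-Dy=(x-Px)-De$, where $De\in P\mathscr H$ is orthogonal to $(I-P)x$. Hence
\[
 \norm{x-Dy}^2=\norm{x_1}^2+\norm{x_2}^2+\norm{x_3}^2+\norm{De}^2 .
\]
The constraints give $\norm{x_1}^2+\norm{x_2}^2+\norm{x_3}^2\le\tau^2+\nu^2$, using $\norm{x_3}^2\le\norm{x_3}^2+\norm{x_3}^2$. Together with $\norm{De}\le\kappa\delta$ this gives the bound $(\kappa^2\delta^2+\tau^2+\nu^2)^{1/2}$. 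Choosing whichever of the two decoders has the smaller worst-case bound yields the minimum. Strictly, one can keep the scaled estimate simpler, since only this minimum is needed.

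\emph{Lower bound.} Choose a unit vector $f_\lambda\in P\mathscr H$ attaining $\kappa$, that is, with $w_\lambda^{-1}=\kappa$. Choose unit vectors $u_1\in X_1$ and $u_2\in X_2$; all three exist by the nonemptiness hypotheses. Set
\[
 z=a f_\lambda+b u_1+c u_2,\qquad a\le\kappa\delta,\quad b\le\tau,\quad c\le\nu,\quad \norm z^2=a^2+b^2+c^2\le B^2 .
\]
Then $z\in\mathcal K$ and $-z\in\mathcal K$, because the constraints are symmetric and $X_3$ is unused. The datum of $z$ is $O z=a w_\lambda f_\lambda$, of norm $a/\kappa\le\delta$. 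The noises $e=\mp Oz$ therefore make both $\pm z$ produce the datum $0$. Any reconstruction map misses one of them by at least $\norm z$.

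Maximizing $\norm z$ subject to these constraints gives $\min\{B,(\kappa^2\delta^2+\tau^2+\nu^2)^{1/2}\}$. If the unconstrained maximum exceeds $B$, scale $(a,b,c)$ down proportionally so that the norm equals $B$. The lower bound is thus half of $\norm{z-(-z)}=2\norm z$, which matches the upper bound exactly.

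The main point to check is that the two halves really match. The lower bound never uses $X_3$, while the upper bound charges $\norm{x_3}^2$ at most once even though it appears in both constraints. The estimate $\norm{x_1}^2+\norm{x_2}^2+\norm{x_3}^2\le\tau^2+\nu^2$ is exactly what makes the values agree. I also need to confirm that the noise attaining $\kappa$ is supported on a single weighted coordinate, which holds because $W$ is diagonal.
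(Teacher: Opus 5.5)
Your proposal is correct and follows essentially the same route as the paper. The paper uses the same four-sector orthogonality to bound the unobserved tail by $\tau^2+\nu^2$, orthogonally to the amplified noise $De$. It takes the zero decoder for the bound $B$, and the lower bound comes from the same scaled two-point witness $\pm h$, built from a minimal-weight visible coordinate and unit vectors in $(I-S)U\mathscr H$ and $S(I-U)\mathscr H$.
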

\begin{proof}
The four orthogonal sectors of $S,U$ give
\[
 \norm{(I-SU)x}^2
 =\norm{(I-S)x}^2+\norm{(I-U)x}^2
      -\norm{(I-S)(I-U)x}^2\le\tau^2+\nu^2.
\]
The finite inverse error lies in the visible sector and is orthogonal to this tail, proving the square-root upper bound. The zero output gives the bound $B$.

Choose a unit visible coordinate $v_0$ of minimal weight and unit vectors
$v_s\in(I-S)U\mathscr H$, $v_u\in S(I-U)\mathscr H$. Set
$h_0=\kappa\delta v_0+\tau v_s+\nu v_u$ and
$h=\min(1,B/\norm{h_0})h_0$, taking $h=0$ when $h_0=0$.
Both $h$ and $-h$ lie in the class, and noises $-Oh$ and $Oh$ of norm at most $\delta$ make their observations identical. Every reconstruction errs by at least $\norm h$ on one of the two models, proving equality.
\end{proof}

For a real scalar target $f\in X'$ and bounded observation $M:X\to Y$, $\delta>0$, the corresponding dual formula is
\begin{equation}\label{eq:scalar-minimax}
 \begin{split}
 \inf_a\sup_{\norm x\le1,\ \norm e\le\delta}|a(Mx+e)-f(x)|
 &=\sup_{\norm x\le1,\ \norm{Mx}\le\delta}|f(x)|\\
 &=\min_{y'\in Y'}\{\norm{f-M'y'}+\delta\norm{y'}\}.
 \end{split}
\end{equation}
The indistinguishable pair $\pm x$ gives the lower bound. On $X\oplus Y$ with norm $\max(\norm x,\norm y/\delta)$, extend the functional $(x,Mx)\mapsto f(x)$ from the graph by Hahn--Banach. The dual norm is the sum $\norm{x'}+\delta\norm{y'}$, and on the graph $x'=f-M'y'$. This proves attainment of the minimum and gives a linear optimal estimator. This is the classical optimal-recovery mechanism~\cite{MR,EF}.

\subsection{A matching rate in the original Schatten geometry}
Fix $1\le s<\infty$, $u>0$, and $\beta>1/2$. On $\ell^2$, use the trace-normalized anchor
\[
 Re_j=c_\beta j^{-\beta}e_j,
 \qquad c_\beta=\left(\sum_{j\ge1}j^{-2\beta}\right)^{-1/2}.
\]
For one fixed normalized colored Gaussian monomial $G$, put
$T_x=G\diag(x_j)$. Both its original coefficient norm and its response $L^2(\Sch_s)$ norm equal $\norm x_{\ell^s}$. Consider
\[
 \mathcal X_{s,u}(A)=\{x:\norm x_s\le A,\quad
       \norm{x_{>N}}_s\le AN^{-u}\text{ for every }N\ge1\},\quad A>0,
\]
with observations $y_j=c_\beta^2j^{-2\beta}x_j+e_j$ and $\norm e_{\ell^2}\le\delta$.
The weighted true data belong to $\ell^2$ also when $s>2$, by H\"older.

\begin{proposition}[Matching upper and lower exponents]\label{prop:matching_diagonal_rate}
Let $\zeta_s=2\beta+(1/s-1/2)_+$. The minimax $\ell^s$ risk from all weighted coordinates satisfies, for sufficiently small $\delta/A$,
\begin{equation}\label{eq:matching_diagonal_rate}
 \mathcal R(\delta)\asymp_{s,u,\beta}
 A^{\zeta_s/(u+\zeta_s)}\delta^{u/(u+\zeta_s)}.
\end{equation}
The upper bound uses only the first
$N\asymp(A/\delta)^{1/(u+\zeta_s)}$ coordinates.
\end{proposition}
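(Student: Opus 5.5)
The plan is a linear finite-window decoder for the upper bound and a two-point lower bound in the style of Proposition~\ref{prop:target_strict_separation}, with the test vector chosen to saturate both the noise amplification and the Schatten norm conversion. Since $T_x=G\diag(x_j)$ has original coefficient norm and response norm both equal to $\norm x_{\ell^s}$, the problem reduces to the diagonal sequence model
\[
 y_j=w_jx_j+e_j,\qquad w_j=c_\beta^2j^{-2\beta},\qquad \norm e_{\ell^2}\le\delta,
\]
with the risk measured in $\ell^s$.

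\emph{Upper bound.} I will use $\widehat x_j=y_j/w_j$ for $j\le N$ and $\widehat x_j=0$ for $j>N$, exactly as in the finite decoder of Theorem~\ref{thm:finitedecoder}. The error splits into a noise part $(e_j/w_j)_{j\le N}$ and the tail $x_{>N}$. The tail is at most $AN^{-u}$ by the definition of $\mathcal X_{s,u}(A)$. For the noise part, the finite-dimensional comparison $\norm v_s\le N^{(1/s-1/2)_+}\norm v_2$ on $\C^N$ and the bound $\max_{j\le N}w_j^{-1}=c_\beta^{-2}N^{2\beta}$ give a bound $c_\beta^{-2}N^{\zeta_s}\delta$. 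The total risk is therefore at most $C(N^{\zeta_s}\delta+AN^{-u})$. Choosing $N\asymp(A/\delta)^{1/(u+\zeta_s)}$ balances the two terms and gives the stated rate. The hypothesis that $\delta/A$ is small ensures $N\ge1$.

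\emph{Lower bound.} For any deterministic decoder, if $\pm x\in\mathcal X_{s,u}(A)$ and $\norm{Ox}_2\le\delta$, then the errors $\mp Ox$ make the two data equal to $0$. Hence the risk is at least $\norm x_s$. I take $x$ constant, equal to $a$, on the block $(N,2N]$. This is a single coordinate when $s\ge2$, where no rank gain is needed; it is the full block when $s<2$, to capture the factor $N^{1/s-1/2}$. Then $\norm x_s\asymp aN^{1/s}$, or simply $a$ for a single coordinate. The tail constraint $\norm{x_{>M}}_s\le AM^{-u}$ is binding only for $M\le 2N$ and holds once $\norm x_s\le c\,AN^{-u}$. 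The data norm is $\norm{Ox}_2\asymp N^{-2\beta}aN^{1/2}$ on the block, or $N^{-2\beta}a$ for a single coordinate. Setting $\norm x_s=\min\{c\,AN^{-u},\,c'\delta N^{\zeta_s}\}$ satisfies both constraints. Choosing the same balancing $N$ yields the matching lower bound.

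\emph{Expected obstacle.} The main point needing care is the low-exponent lower bound. The test vector must realize the $\ell^2\to\ell^s$ conversion factor and the worst weight simultaneously while keeping every tail constraint. Spreading $x$ over a dyadic block is what achieves this, since the weights are comparable across the block up to a factor $2^{2\beta}$. The constants absorbed into $\asymp_{s,u,\beta}$ come from this comparability and from the block-boundary tail check.
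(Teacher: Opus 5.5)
Your proposal is correct and follows essentially the same route as the paper. The upper bound inverts the first $N$ weights and uses the finite-dimensional comparison $\ell^2_N\hookrightarrow\ell^s_N$. The lower bound is a two-point $\pm x$ argument with a single coordinate for $s\ge2$ and a flat block on $(N,2N]$ for $s<2$. The only cosmetic difference is that the paper fixes the amplitude so that $\norm x_s=A(2N)^{-u}$ (your constant $c=2^{-u}$) and then chooses $N$ to make the data norm at most $\delta$, rather than taking the minimum of the two constraints at the balancing $N$.
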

\begin{proof}
Invert the first $N$ weights and set the rest to zero. The exact finite-dimensional inclusion $\ell^2_N\hookrightarrow\ell^s_N$ gives
\[
 \norm{x-\widehat x}_s\le AN^{-u}+c_\beta^{-2}\delta N^{\zeta_s}.
\]
Balancing the two terms and choosing a neighboring integer proves the upper bound.

For $s\ge2$, take $x=An^{-u}e_n$. It satisfies every tail constraint and has data norm $c_\beta^2An^{-(u+2\beta)}$. Choose an integer $n$ making this at most $\delta$. The pair $\pm x$ with opposite noises has common zero data and gives the matching lower bound.

For $s<2$, use the block
\[
 x_j=A(2N)^{-u}N^{-1/s}\mathbf1_{\{N<j\le2N\}}.
\]
Its $\ell^s$ norm is $A(2N)^{-u}$. For a cutoff below $N$, its total norm bounds the tail; between $N$ and $2N$, use $(2N)^{-u}\le n^{-u}$; above $2N$, the tail is zero. Thus all constraints hold. The weighted data norm is at most
$c_\beta^2A2^{-u}N^{-(u+\zeta_s)}$.
Choose $N$ so this is at most $\delta$, and again use $\pm x$ with common zero data. Their half-distance gives~\eqref{eq:matching_diagonal_rate}.
\end{proof}
The block witness shows why the low-exponent rank cost enters the rate. This model fixes the source, so its only unresolved tail is physical.

\subsection{Independent escape mechanisms}
Physical escape is exhibited by $T_n=g_1e_ne_n^*$: its $L^2(\Sch_s)$ norm is one, its source is resolved, and its compactly anchored finite observations vanish, while its physical tails do not vanish uniformly. Source escape is exhibited by $S_n=g_ne_1e_1^*$: its physical mass stays fixed and all marginal laws agree, but $E_LS_n=0$ for $n>L$. Both examples have degree one and are bounded at every coefficient radius.

Without fixed-degree or stronger-radius control, amplitude can also escape. On $(0,1)$, the random operators $X_n=\sqrt n\,\mathbf1_{(0,1/n)}P$ converge in probability to zero with constant $L^2$ norm. By contrast, for degree-$m$ responses of $L^2$ norm at most $B$, hypercontractivity gives
\[
 \E[\norm T_s^2\mathbf1_{\{\norm T_s>\Lambda\}}]
 \le3^{2m}B^4/\Lambda^2.
\]
Thus fixed-degree moment bounds already control amplitude tails; the all-radius condition supplies the corresponding control across degrees.

\subsection{Compatible changes of finite observations}
On the same completed space $(A,(p_j))$ of Theorem~\ref{thm:completion}, suppose there are two finite reconstruction systems
$(P_n^{(r)},W_n^{(r)},O_n^{(r)},D_n^{(r)})$, $r=1,2$.
Within each system, the projections are nested, contractive in every $p_j$, and converge strongly to the identity. No commutation between the two systems is assumed. Suppose the finite cross matrices $O_n^{(2)}D_m^{(1)}$ and $O_n^{(1)}D_m^{(2)}$ are known, and denote the two admissible data spaces by $Y_{\rm adm}^{(r)}$.

\begin{proposition}[Protocol change]\label{prop:protocolchange}
For $y\in Y_{\rm adm}^{(1)}$, the finite-data limit
\begin{equation}\label{eq:protocolchange}
 (\mathcal T_{21}y)_n=\lim_{m\to\infty}O_n^{(2)}D_m^{(1)}y_m
\end{equation}
exists and belongs to $Y_{\rm adm}^{(2)}$. The analogous map $\mathcal T_{12}$ is its inverse, and
$q_j^{(2)}(\mathcal T_{21}y)=q_j^{(1)}(y)$.
These maps preserve every continuous original operation represented by both finite systems. With
$\lambda_{n,j}=\norm{O_n^{(2)}:(A,p_j)\to Y_n^{(2)}}$,
\begin{equation}\label{eq:protocolchange_error}
 \norm{(\mathcal T_{21}y)_n-O_n^{(2)}D_m^{(1)}y_m}_{Y_n^{(2)}}
 \le\lambda_{n,j}e_{j,m}^{(1)}(y).
\end{equation}
\end{proposition}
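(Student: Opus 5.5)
The plan is to reduce everything to Theorem~\ref{thm:completion} applied to each system separately, and to route the data change through the reconstructed element. Let $R^{(1)}$ and $O^{(2)}$ be the inverse bijections of that theorem for the two systems, and put $x=R^{(1)}y\in A$. That theorem gives $a_m^{(1)}(y)=D_m^{(1)}y_m\to x$ in every $p_j$. Each $O_n^{(2)}=W_n^{(2)}P_n^{(2)}$ is continuous for some $p_j$: $P_n^{(2)}$ is contractive and has finite-dimensional range, on which all norms are equivalent. Hence $O_n^{(2)}D_m^{(1)}y_m\to O_n^{(2)}x$. So the limit in \eqref{eq:protocolchange} exists and equals $(O^{(2)}x)_n$, and therefore
\[
 \mathcal T_{21}=O^{(2)}\circ R^{(1)}.
\]
Theorem~\ref{thm:completion} for the second system places this array in $Y^{(2)}_{\rm adm}$. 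By the symmetric construction, $\mathcal T_{12}=O^{(1)}R^{(2)}$, and since $R^{(r)}$ and $O^{(r)}$ are mutually inverse, $\mathcal T_{12}\mathcal T_{21}=I$ and $\mathcal T_{21}\mathcal T_{12}=I$.

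For the norm identity I apply \eqref{eq:completionnorm} twice:
\[
 q_j^{(2)}(O^{(2)}x)=p_j(x)=q_j^{(1)}(O^{(1)}x)=q_j^{(1)}(y).
\]
For operations, suppose a continuous original operation $\mu$ is represented on $Y^{(r)}_{\rm adm}$ by $\mu^{(r)}=O^{(r)}\circ\mu\circ(R^{(r)}\times\cdots)$, as in Theorem~\ref{thm:operationcompletion}. Then $\mathcal T_{21}\mu^{(1)}=\mu^{(2)}(\mathcal T_{21}\times\cdots)$ follows by composing these conjugations, and no commutation between the two projection families is needed.

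For the error bound \eqref{eq:protocolchange_error}, I rewrite the left side as $\norm{O_n^{(2)}(x-a_m^{(1)}(y))}$ and bound it by $\lambda_{n,j}\,p_j(x-a_m^{(1)}(y))$. It then remains to show
\[
 p_j\bigl(x-a_m^{(1)}(y)\bigr)\le e_{j,m}^{(1)}(y).
\]
Since $x=\lim_k a_k^{(1)}(y)$, this is $\lim_k p_j(a_k^{(1)}-a_m^{(1)})\le\sup_{k\ge m}p_j(a_k^{(1)}-a_m^{(1)})$. Equivalently, \eqref{eq:completionnorm} gives it as $p_j(x-P^{(1)}_mx)=e^{(1)}_{j,m}(y)$.

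The main obstacle is the first step: we need finiteness of $\lambda_{n,j}$, meaning that $O_n^{(2)}$ is bounded from $(A,p_j)$, for at least one $j$ (or for the relevant $j$). I would argue this from finite-dimensionality of $\ran P_n^{(2)}$ together with the contractivity of $P_n^{(2)}$ in $p_j$. One subtlety is that $p_j$ may fail to be a norm on $A_n^{(2)}$. In that case I would pass to a larger index $j'$, which is allowed because the norms increase and the topology is Hausdorff, and interpret \eqref{eq:protocolchange_error} with $\lambda_{n,j}=\infty$ when $O^{(2)}_n$ is not $p_j$-bounded. The existence of the limit needs only one finite $\lambda_{n,j}$.
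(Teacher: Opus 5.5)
Your argument is correct and is essentially the paper's proof. Both identify the limit as $O_n^{(2)}x$ with $x=R^{(1)}y$ by continuity of the fixed finite observation, then read off admissibility, invertibility, the norm identity and compatibility with operations from Theorem~\ref{thm:completion} applied to both systems, and finally obtain \eqref{eq:protocolchange_error} by applying $O_n^{(2)}$ to the exact tail identity $p_j(x-D_m^{(1)}y_m)=e^{(1)}_{j,m}(y)$. Your extra care about the finiteness of $\lambda_{n,j}$ is a legitimate refinement of the paper's one-line finite-dimensionality justification, and it does not change the route: you pass to a larger index when $p_j$ degenerates on the finite-dimensional range of $P_n^{(2)}$, which can happen because that range need not lie in $A_0$.
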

\begin{proof}
The first finite inverse sequence converges to $x=R_1y$. Continuity of the fixed finite observation $O_n^{(2)}$ gives \eqref{eq:protocolchange}, and its value is $O_n^{(2)}x$. The whole array is therefore the admissible observation of the same object under the second protocol. Applying the reverse construction recovers the same $x$, proving the inverse relation. Both data norms equal $p_j(x)$ by the completion theorem, which also proves compatibility with continuous operations.

The exact tail identity gives
$p_j(x-D_m^{(1)}y_m)=e_{j,m}^{(1)}(y)$.
Apply $O_n^{(2)}$ to this difference to obtain \eqref{eq:protocolchange_error}. Its norm $\lambda_{n,j}$ is finite because $O_n^{(2)}=W_n^{(2)}P_n^{(2)}$, the projection is contractive, and $W_n^{(2)}$ acts on a finite-dimensional space.
\end{proof}
Thus changes of basis, cofinal resolution, or faithful anchor preserve the original reconstruction whenever the finite cross maps are available. Their finite implementation retains the displayed tail and conditioning costs.

The entire Mehler realization of Theorem~\ref{thm:mehler_entire} describes the same all-radius uniform structure, not a weaker observation completion. The compactness criterion of Theorem~\ref{thm:fullcompact} adds the physical and source tails needed beyond disk boundedness. In particular, the Hilbert residual in \eqref{eq:anchored_source_defect} is one certificate inside this original reconstruction, not a replacement target topology. This contrasts with the genuinely different targets below.

\subsection{Monotone defects and genuine limits}
The Cauchy criterion above starts with the intended completion. In a weaker observation topology, the closure can be larger. A useful way to distinguish the genuine image is to express its defect as a decreasing limit of continuous nonnegative readouts. The same elementary limit argument will apply to anchored output mass and to spectral mass, after each model has identified its own defect.

\begin{lemma}[Monotone defects and escape]\label{lem:monotone_defect}
Let $\mathcal K$ be a metric space and let $r_N:\mathcal K\to[0,\infty)$ be continuous, with $r_{N+1}\le r_N$. Put
\[
 d(z)=\inf_N r_N(z),\qquad \mathcal X=\{z:d(z)=0\}.
\]
Then $d$ is upper semicontinuous and $\mathcal X$ is a $G_\delta$ subset of $\mathcal K$. If $z_l\in\mathcal X$ and $z_l\to z$ in $\mathcal K$, then
\begin{equation}\label{eq:monotone_escape}
 d(z)=\lim_N\lim_l r_N(z_l)=\lim_N\sup_l r_N(z_l).
\end{equation}
In particular the limit belongs to $\mathcal X$ exactly when the readout tails vanish uniformly over the sequence.

Suppose also that $r_1\le B<\infty$ and that Borel probability measures $\nu_l$, concentrated on $\mathcal X$, converge weakly to $\nu$ on $\mathcal K$. Then
\begin{equation}\label{eq:monotone_law_escape}
 \int d\,d\nu
 =\lim_N\lim_l\int r_N\,d\nu_l
 =\lim_N\sup_l\int r_N\,d\nu_l.
\end{equation}
This quantity vanishes exactly when $\nu(\mathcal X)=1$.
\end{lemma}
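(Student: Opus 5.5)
The plan is to derive everything from the two monotonicity facts (pointwise in $N$, and decreasing infimum of continuous functions) together with continuity of each fixed $r_N$. First, $d=\inf_N r_N$ is an infimum of continuous functions, hence upper semicontinuous. Moreover
\[
 \mathcal X=\bigcap_k\{z:d(z)<1/k\}=\bigcap_k\bigcup_N\{z:r_N(z)<1/k\},
\]
and each $\{r_N<1/k\}$ is open, so $\mathcal X$ is $G_\delta$.

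For the sequential identity \eqref{eq:monotone_escape}, fix $N$. Continuity of $r_N$ gives $\lim_l r_N(z_l)=r_N(z)$, and taking $N\to\infty$ yields $d(z)$ by monotonicity. This settles the first equality. For the second, observe that $\sup_l r_N(z_l)\ge\lim_l r_N(z_l)$, so only the reverse inequality in the $N$-limit is needed. I would argue by contradiction, supposing $\lim_N\sup_l r_N(z_l)>d(z)+2\varepsilon$. The function $N\mapsto\sup_l r_N(z_l)$ is nonincreasing, so the limit exists.

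Choose $N_0$ with $r_{N_0}(z)<d(z)+\varepsilon$. Continuity of $r_{N_0}$ gives $r_{N_0}(z_l)<d(z)+\varepsilon$ for all $l\ge l_0$, and then monotonicity gives $r_N(z_l)<d(z)+\varepsilon$ for all $N\ge N_0$ and $l\ge l_0$. This is the key point: for $l\ge l_0$ the sup over late terms is uniformly controlled.

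It remains to handle the finitely many $l<l_0$. Each $z_l\in\mathcal X$, so $r_N(z_l)\downarrow0$, and a common $N_1$ makes these finitely many values at most $\varepsilon$. For $N\ge\max(N_0,N_1)$ we therefore get $\sup_l r_N(z_l)\le d(z)+\varepsilon$, the desired contradiction. The final statement follows because $z\in\mathcal X$ iff $d(z)=0$, which is equivalent to uniform vanishing of the tails. The main subtlety is exactly this step, where membership of the early terms in $\mathcal X$ is used; it is where the hypothesis $z_l\in\mathcal X$ enters.

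For the law version \eqref{eq:monotone_law_escape}, each $r_N$ is bounded (by $B$) and continuous, so weak convergence gives $\lim_l\int r_N\,d\nu_l=\int r_N\,d\nu$. Monotone (dominated) convergence as $N\to\infty$ then gives $\int d\,d\nu$. For the sup version, the same two-part argument applies with integrals in place of point values, but continuity at one point is no longer available. Instead, for fixed $N_0$ with $\int r_{N_0}\,d\nu<\int d\,d\nu+\varepsilon$, weak convergence yields $\int r_{N_0}\,d\nu_l<\int d\,d\nu+\varepsilon$ for $l\ge l_0$, and monotonicity in $N$ propagates this bound to all $N\ge N_0$. For the finitely many $l<l_0$, $\nu_l$ is concentrated on $\mathcal X$, so $r_N\to0$ $\nu_l$-a.e., and dominated convergence with bound $B$ gives $\int r_N\,d\nu_l\to0$; choose $N_1$ accordingly. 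Finally, $d\ge0$, so $\int d\,d\nu=0$ iff $d=0$ $\nu$-a.e., i.e.\ $\nu(\mathcal X)=1$. Here $\mathcal X$ is Borel, being $G_\delta$.
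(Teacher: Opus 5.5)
Your proposal is correct and follows essentially the same route as the paper. Upper semicontinuity and the $G_\delta$ property come from $\{d<a\}=\bigcup_N\{r_N<a\}$. For the second equality you choose $N_0$ at the limit point, then $l_0$ by continuity (or by weak convergence in the measure version), and then a further $N_1$ to absorb the finitely many early terms using $z_l\in\mathcal X$ (respectively $\nu_l(\mathcal X)=1$ with bounded convergence). Your contradiction framing is only a cosmetic difference from the paper's direct argument.
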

\begin{proof}
For every $a\in\R$, the set $\{d<a\}=\bigcup_N\{r_N<a\}$ is open. Thus $d$ is upper semicontinuous, and
\[
 \mathcal X=\bigcap_{m\ge1}\bigcup_N\{r_N<1/m\}
\]
is $G_\delta$. Continuity at each fixed $N$ gives the first equality in \eqref{eq:monotone_escape} and the bound $\sup_l r_N(z_l)\ge r_N(z)\ge d(z)$.

For the reverse bound fix $\varepsilon>0$. Choose $N_0$ with $r_{N_0}(z)<d(z)+\varepsilon$, and then $l_0$ such that $r_{N_0}(z_l)<d(z)+2\varepsilon$ for $l\ge l_0$. Monotonicity preserves this bound for $N\ge N_0$. Each of the finitely many earlier $z_l$ belongs to $\mathcal X$, so its $r_N$ tends to zero. A further increase of $N$ controls them simultaneously. This proves the second equality.

For measures, each $r_N$ is bounded and continuous, so its integral converges under weak convergence. Bounded monotone convergence gives $\int r_N\,d\nu\downarrow\int d\,d\nu$ and $\int r_N\,d\nu_l\downarrow0$ for each fixed $l$. Apply the same finite-exception argument to these scalar integrals to obtain \eqref{eq:monotone_law_escape}. Nonnegativity of $d$ proves the last assertion.
\end{proof}

This lemma does not identify $\mathcal X$ with a particular operator or spectral model; that identification is a separate realization theorem. Nor does a positive defect necessarily admit a continuous detector. If $\mathcal X$ is dense and is not all of $\mathcal K$, every continuous function vanishing on $\mathcal X$ vanishes on its closure. The decreasing continuous readouts can certify small upper bounds on a defect without providing a continuous positive lower certificate at every boundary point.

\section{Ordered moments and represented algebras}\label{sec:ordered}\label{sec:minimality}
Ordered moments recover represented relations and legal multiplication. Positivity builds the cyclic space; uniform localizing bounds extend the generators; a faithful state identifies the generated algebra. PBW coordinates and Hilbert innovations organize this information, while state, trace, and replica experiments determine the distinctions retained by each protocol.

\subsection{Truncated moments and partial multiplication}
Let $\PP=\C\langle x_1,\ldots,x_d\rangle$, with $x_i=x_i^*$. The involution conjugates coefficients and reverses words. A depth-$N$ moment functional $\ell_N:\PP_{\le2N}\to\C$ satisfies
\begin{align}
 \ell_N(1)=1,\qquad \ell_N(p^*)=\overline{\ell_N(p)},\qquad
 \ell_N(p^*p)&\ge0 \quad(\deg p\le N),\label{eq:momentpositive}\\
 \ell_N(p^*x_i^2p)&\le M_i^2\ell_N(p^*p)
 \quad(\deg p\le N-1).\label{eq:momentlocalizing}
\end{align}
Write $\Gamma_N(p,q)=\ell_N(q^*p)$, linear in its first argument, and let $\mathcal N_N$ be its null space. The localizing inequality is exactly the bound needed to define multiplication on the quotient.

\begin{theorem}[Finite moments and partial multiplication]\label{thm:finitejet}
Functionals satisfying \eqref{eq:momentpositive}--\eqref{eq:momentlocalizing} correspond bijectively to labelled unitary equivalence classes of the following objects: a Hilbert space $H_N$ spanned by vectors $v_w$, $|w|\le N$, with $\norm{v_1}=1$, and operators
\[
 S_i^{(N)}:\Span\{v_w:|w|\le N-1\}\longrightarrow H_N,
 \qquad S_i^{(N)}v_w=v_{x_iw},\qquad \norm{S_i^{(N)}}\le M_i,
\]
which are symmetric on their domains. The reconstruction is
\begin{equation}\label{eq:jetinverse}
 H_N=\PP_{\le N}/\mathcal N_N,\qquad v_w=[w],\qquad
 S_i^{(N)}[p]=[x_ip].
\end{equation}
\end{theorem}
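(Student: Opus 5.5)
We argue by a truncated GNS construction in both directions, and then check that the two constructions are inverse up to labelled unitary equivalence.

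\emph{From a functional to the objects.} By \eqref{eq:momentpositive}, $\Gamma_N$ is a positive semidefinite Hermitian sesquilinear form on the finite-dimensional space $\PP_{\le N}$. Cauchy--Schwarz identifies its null space $\mathcal N_N$ with $\{p:\Gamma_N(p,p)=0\}$, which is therefore a subspace. Hence $H_N=\PP_{\le N}/\mathcal N_N$ is a finite-dimensional Hilbert space spanned by the classes $v_w=[w]$, and $\norm{v_1}^2=\ell_N(1)=1$.

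The essential step is well-definedness of $S_i^{(N)}[p]=[x_ip]$ on $\PP_{\le N-1}/(\mathcal N_N\cap\PP_{\le N-1})$. The localizing inequality \eqref{eq:momentlocalizing} gives
\[
 \norm{[x_ip]}^2=\ell_N(p^*x_i^2p)\le M_i^2\norm{[p]}^2 .
\]
Consequently $p\in\mathcal N_N$ implies $x_ip\in\mathcal N_N$, so the map is well defined and has norm at most $M_i$. Its domain is the span of $v_w$ with $|w|\le N-1$. Symmetry follows because $x_i=x_i^*$: for $p,q\in\PP_{\le N-1}$,
\[
 \ip{S_ip}{q}=\ell_N(q^*x_ip)=\ell_N((x_iq)^*p)=\ip{p}{S_iq},
\]
and all degrees involved are at most $2N-1$.

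\emph{From the objects to a functional.} Given $(H_N,v_w,S_i^{(N)})$, define $\ell_N(w)=\ip{v_{w_2}}{v_{w_1}}$ for any factorization of a word $w=w_1^*w_2$ with $|w_1|,|w_2|\le N$. The main obstacle is independence of the factorization. Moving one letter across the split is justified by symmetry of $S_i$: one uses $v_{x_iu}=S_iv_u$ with $|u|\le N-1$ on whichever side is shorter. Because $|w|\le2N$, any two admissible splits can be joined by such one-letter moves while both sides stay within length $N$. This induction on the split point is the step we expect to require care.

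Once the functional is well defined, the axioms follow directly. Normalization is $\norm{v_1}=1$, and Hermitian symmetry comes from the inner product. Positivity holds because $\ell_N(p^*p)=\norm{\sum_w c_wv_w}^2$, where $p=\sum_w c_ww$. The localizing bound holds because $\ell_N(p^*x_i^2p)=\norm{S_i\sum_w c_wv_w}^2\le M_i^2\,\ell_N(p^*p)$.

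\emph{Bijectivity.} The linear map $[p]\mapsto\sum_w c_wv_w$ preserves inner products, by the very definition of $\ell_N$. It is onto because the $v_w$ span $H_N$. It sends $v_w$ to $v_w$, and it intertwines the $S_i$ because both sides send $v_w$ to $v_{x_iw}$. It is therefore a labelled unitary equivalence. Conversely, unitarily equivalent labelled objects have the same Gram data $\ip{v_{w_2}}{v_{w_1}}$, hence define the same functional. Finally, applying the reconstruction \eqref{eq:jetinverse} to $\ell_N$ and then reading off the moments returns $\ell_N$ itself. This establishes the bijection.
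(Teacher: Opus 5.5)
Your proposal is correct and follows essentially the same route as the paper: the truncated GNS quotient, well-definedness and the bound $M_i$ for $S_i^{(N)}$ from the localizing inequality, symmetry from $x_i=x_i^*$, split-independence of the prescribed functional via one-letter moves, and the isometry $[p]\mapsto\sum_wc_wv_w$. One phrase should be sharpened. For adjacent splits $(x_ia)^*b$ and $a^*(x_ib)$, admissibility of both splits forces $|a|,|b|\le N-1$, so $v_a$ and $v_b$ both lie in the domain of $S_i^{(N)}$ and symmetry is applied to that pair. Your wording ``on whichever side is shorter'' suggests $S_i$ is used on only one side, which is not how the step works.
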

\begin{proof}
Positivity makes $\Gamma_N$ a positive semidefinite form. Cauchy--Schwarz shows that its zero-norm vectors form its radical $\mathcal N_N$, so the quotient $H_N=\PP_{\le N}/\mathcal N_N$ is a finite-dimensional Hilbert space. If $p\in\PP_{\le N-1}$ has zero class, the localizing inequality gives
$\norm{[x_ip]}^2\le M_i^2\norm{[p]}^2=0$.
Thus the partial multiplication $S_i^{(N)}[p]=[x_ip]$ is well defined and bounded by $M_i$. For $p,q$ in its domain,
\[
 \Gamma_N(x_ip,q)=\ell_N(q^*x_ip)=\Gamma_N(p,x_iq),
\]
which proves symmetry. The labelled word vectors span by construction.

Conversely, start with the stated vectors and partial operators. For a word of length at most $2N$, choose a split $u^*v$ with $|u|,|v|\le N$ and prescribe
\[
 \ell_N(u^*v)=\ip{v_v}{v_u}.
\]
We check that this prescription is independent of the split. Two adjacent permissible splits of the word $a^*x_ib$ give $(x_ia)^*b$ and $a^*(x_ib)$. Their permissibility implies $|a|,|b|\le N-1$, so symmetry on the given domain gives
\[
 \ip{v_b}{S_i^{(N)}v_a}=\ip{S_i^{(N)}v_b}{v_a}.
\]
This is exactly equality of the two proposed values. Moving the split one letter at a time connects all permissible splits; at length $2N$ there is only the central split. Hence the functional is well defined by linear extension, and reversing a split proves its Hermitian symmetry.

For $p=\sum_wp_ww$ the construction gives, in the respective degree ranges,
\[
 \ell_N(p^*p)=\Big\|\sum_wp_wv_w\Big\|^2,
 \qquad
 \ell_N(p^*x_i^2p)
 =\Big\|S_i^{(N)}\sum_wp_wv_w\Big\|^2.
\]
Positivity, normalization, and the localizing bounds follow. Finally $[p]\mapsto\sum_wp_wv_w$ is an isometry with spanning range, hence a unitary. It preserves every label and partial multiplication, so it gives exactly the claimed labelled equivalence and is unique with those properties.
\end{proof}

The last layer records vectors of degree $N$; multiplication is specified only on the preceding layers. Positivity alone would not suffice. For example, the one-variable moments $\ell(1)=\ell(x^4)=1$ and $\ell(x)=\ell(x^2)=\ell(x^3)=0$ give the positive Gram matrix $\diag(1,0,1)$ on $(1,x,x^2)$, but $[x]=0$ and $[x^2]\ne0$. The localizing bound excludes exactly this failure.

\begin{theorem}[Compatible moment towers]\label{thm:gnstower}
Suppose $\ell_{N+1}|_{\PP_{\le2N}}=\ell_N$ and the bounds $M_i$ are independent of $N$. The maps $[p]_N\mapsto[p]_{N+1}$ are isometric and intertwine partial multiplication. Completing their union gives bounded self-adjoint operators $T_i$, $\norm{T_i}\le M_i$, and a cyclic vector $\xi=[1]$ such that
\[
 \ell_N(w)=\ip{w(T)\xi}{\xi}\qquad(|w|\le2N).
\]
The cyclic realization is unique up to the unitary preserving all word vectors. Coordinatewise convergence of compatible moment arrays corresponds to weak-$*$ convergence of states on the universal unital $C^*$-algebra with these generator bounds.
\end{theorem}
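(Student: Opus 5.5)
The plan is to build everything from Theorem~\ref{thm:finitejet}, applied at each depth, and pass to an inductive limit. First I will check the isometric inclusions. For $p,q\in\PP_{\le N}$, compatibility gives $\Gamma_{N+1}(p,q)=\ell_{N+1}(q^*p)=\ell_N(q^*p)=\Gamma_N(p,q)$, because $\deg(q^*p)\le 2N$. Hence $\mathcal N_N=\mathcal N_{N+1}\cap\PP_{\le N}$, and $\iota_N:[p]_N\mapsto[p]_{N+1}$ is well defined and isometric. The intertwining $\iota_N S_i^{(N)}[p]_N=[x_ip]_{N+1}=S_i^{(N+1)}\iota_N[p]_N$ for $\deg p\le N-1$ follows directly from \eqref{eq:jetinverse}.

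Next comes the completion. Let $H_\infty=\bigcup_N\iota(H_N)$ be the increasing union, which is the quotient of $\PP$ by the radical of the compatible functional $\ell=\bigcup_N\ell_N$, and let $H$ be its completion. On $\PP$ define $T_i^0[p]=[x_ip]$. Any $p$ lies in the domain of $S_i^{(N)}$ once $N>\deg p$, so $T_i^0$ is well defined and satisfies $\norm{T_i^0[p]}\le M_i\norm{[p]}$. The key point is that the bound $M_i$ is uniform in $N$, so the localizing inequality holds at every depth. Symmetry $\ip{T_i^0[p]}{[q]}=\ip{[p]}{T_i^0[q]}$ is inherited from the finite levels. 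Therefore $T_i^0$ extends to a bounded self-adjoint $T_i$ with $\norm{T_i}\le M_i$. Put $\xi=[1]$. By induction on the length of the word, $w(T)\xi=[w]$, and then $\ip{w(T)\xi}{\xi}=\Gamma(w,1)=\ell_N(w)$ for $|w|\le 2N$. Cyclicity holds because the classes $[w]$ span $H_\infty$, which is dense in $H$.

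For uniqueness, suppose $(H',T',\xi')$ is another cyclic realization with the same moments. The map $U:[p]\mapsto p(T')\xi'$ preserves inner products, because both sides equal $\ell(q^*p)$. It is therefore well defined on classes and isometric with dense range, so it extends to a unitary. This unitary intertwines the $T_i$ on the dense set and hence everywhere. Any unitary preserving the word vectors is determined on a spanning set, so it coincides with $U$.

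The final assertion is the step I expect to need the most care. Let $\mathcal U$ be the universal unital $C^*$-algebra generated by self-adjoint $x_i$ with $\norm{x_i}\le M_i$. The construction above gives a representation $\pi$ of $\mathcal U$ and a state $\phi(a)=\ip{\pi(a)\xi}{\xi}$ that extends $\ell$. Conversely, every state on $\mathcal U$ restricts to a compatible moment tower satisfying \eqref{eq:momentpositive}--\eqref{eq:momentlocalizing}, using $p^*x_i^2p\le M_i^2p^*p$ in $\mathcal U$. So towers correspond bijectively to states. Because $\PP$ is dense in $\mathcal U$, a state is determined by its moments. Weak-$*$ convergence implies convergence of every word moment trivially. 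For the reverse direction, the state space is uniformly bounded by $1$, so convergence on the dense subalgebra $\PP$ together with an $\varepsilon/3$ approximation argument gives weak-$*$ convergence. This argument uses no compactness, only density and the uniform norm bound of states.
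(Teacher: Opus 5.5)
Your proposal is correct and follows the paper's proof essentially step for step. Compatibility gives the isometric inclusions, and the uniform localizing bound gives bounded symmetric, hence self-adjoint, generators on the completed union with $w(T)\xi=[w]$. Uniqueness comes from the inner-product-preserving map on word vectors, and the tower--state correspondence with weak-$*$ convergence follows from polynomial density together with the unit norm of states.
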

\begin{proof}
For $p,q\in\PP_{\le N}$, compatibility preserves $\ell_N(q^*p)$. In particular
$\mathcal N_{N+1}\cap\PP_{\le N}=\mathcal N_N$,
so $[p]_N\mapsto[p]_{N+1}$ is a well-defined isometry. It intertwines every partial multiplication already defined at level $N$.

Form the algebraic union of these Hilbert spaces under the isometric inclusions, and define $T_i[p]=[x_ip]$ there. A vector belongs to a level on which the relevant partial multiplication is defined. The uniform bound $M_i$ therefore gives $\norm{T_i[p]}\le M_i\norm{[p]}$ on the entire union. Each $T_i$ extends to the Hilbert completion. Its symmetry extends from the dense union, so this bounded extension is self-adjoint. The vector $\xi=[1]$ is cyclic, since $w(T)\xi=[w]$ and word vectors span the union. This proves the moment identities.

If another cyclic realization has the same moments, sending $[w]$ to its word vector preserves all inner products. It extends to the unique unitary preserving the labelled cyclic vectors and intertwining the generators. The bounds $M_i$ allow the representation to extend to the universal unital $C^*$-algebra, where the cyclic vector defines a state. Conversely every state on that algebra supplies such a tower. States have norm one, and polynomials are norm dense. Convergence on every word thus extends uniformly by polynomial approximation to convergence on each algebra element, which is precisely weak-$*$ convergence of the states.
\end{proof}

For the full GNS construction the Gram radical $\mathcal N$ is a left ideal. The kernel of its representation is the two-sided $*$-ideal
\begin{equation}\label{eq:representationkernel}
 \ker\pi=\{a\in\PP:ab\in\mathcal N\text{ for every }b\in\PP\}.
\end{equation}
These two kernels serve different purposes: the first defines the cyclic space, while the second records identities of the represented algebra.

\subsection{Recovering multiplication tables}
Let $T$ be a bounded self-adjoint tuple on $H$, and let $R>0$ be injective, Hilbert--Schmidt, and normalized by $\norm R_2=1$. Set
\[
 \varphi_R(a)=\Tr(RaR),\qquad J_Rp=p(T)R.
\]
The density of $\ran R$ gives $\ker J_R=\{p:p(T)=0\}$. Choose a basis $e_a=q_a(T)R$ of $J_R(\PP_{\le N})$, with its first $r_{N-1}$ vectors spanning the previous layer. Define
\begin{equation}\label{eq:multiplicationmatrices}
 A_N(a,c)=\varphi_R(q_a^*q_c),\qquad
 H_i^{(N)}(a,b)=\varphi_R(q_a^*x_iq_b),\qquad
 M_i^{(N)}=A_N^{-1}H_i^{(N)}.
\end{equation}
Only columns $b\le r_{N-1}$ occur in $H_i^{(N)}$, so all required moments have degree at most $2N$.

\begin{proposition}[Multiplication and stabilization]\label{prop:multiplicationtable}
The $b$th column of $M_i^{(N)}$ is the coordinate vector of $x_iq_b(T)R$ in the basis $(e_a)$. Moments of degree at most $2N$ therefore determine the represented relations of degree at most $N$ and all legal compositions of partial multiplication within this degree range. If
$\dim J_R(\PP_{\le N})=\dim J_R(\PP_{\le N-1})$,
then the generated algebra is finite-dimensional and all subsequent layers stabilize.
\end{proposition}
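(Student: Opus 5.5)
The plan is to work entirely in the finite-dimensional Hilbert space $\mathcal V_N=J_R(\PP_{\le N})\subset\Sch_2(H)$ with the inner product $\ip XY=\Tr(Y^*X)$. Since $\varphi_R(q_a^*q_c)=\Tr(Rq_a(T)^*q_c(T)R)=\ip{e_c}{e_a}$, the matrix $A_N$ is the Gram matrix of the basis $(e_a)$, up to the fixed index convention, and it is invertible because the $e_a$ are linearly independent. Self-adjointness of $T_i$ gives
\[
 H_i^{(N)}(a,b)=\varphi_R(q_a^*x_iq_b)=\ip{T_iq_b(T)R}{q_a(T)R}=\ip{x_iq_b(T)R}{e_a}.
\]

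\textbf{Columns of $M_i^{(N)}$.} For $b\le r_{N-1}$ the polynomial $q_b$ has degree at most $N-1$. Hence $x_iq_b(T)R\in\mathcal V_N$, and we can write $x_iq_b(T)R=\sum_c\mu_{cb}e_c$. Taking inner products with $e_a$ gives $H_i^{(N)}(\cdot,b)=A_N\mu_{\cdot b}$, so $\mu_{\cdot b}=A_N^{-1}H_i^{(N)}(\cdot,b)$. The only step needing care is matching the conjugate-linear convention of $\Tr(Y^*X)$ with the order of indices in $A_N(a,c)$. If the ordering comes out transposed, I will replace $A_N$ by its transpose, which is still the Gram matrix and still invertible. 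All entries used involve words $q_a^*q_c$ and $q_a^*x_iq_b$, and these have degree at most $2N$.

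\textbf{Relations and compositions.} Because $\ker J_R=\{p:p(T)=0\}$, a polynomial $p$ of degree at most $N$ is a represented relation exactly when $J_Rp=0$. This in turn holds exactly when its coefficient vector lies in the null space of the Gram form $\varphi_R(q^*p)$ on $\PP_{\le N}$, and that form is computed from moments of degree at most $2N$. So the relations are determined by these moments. Choosing the basis $(q_a)$ itself uses only this Gram data. The coordinate map $\PP_{\le N}\to\C^{r_N}$ is then determined as well, namely by $A_N^{-1}$ applied to the vector $(\varphi_R(q_a^*p))_a$. Legal compositions of partial multiplications, meaning words whose intermediate degrees stay within $N$, are products of the matrices $M_i^{(N)}$ restricted to the appropriate layer coordinates. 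They act correctly because each $M_i^{(N)}$ represents left multiplication by $T_i$ on $J_R(\PP_{\le N-1})$.

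\textbf{Stabilization.} Equality of dimensions gives $J_R(\PP_{\le N})=J_R(\PP_{\le N-1})=:\mathcal V$. Then $T_i\mathcal V=J_R(x_i\PP_{\le N-1})\subset\mathcal V$, and induction gives $J_R(\PP_{\le k})=\mathcal V$ for every $k\ge N$. So every layer stabilizes.

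\textbf{Finite-dimensionality of the algebra.} The map $p(T)\mapsto p(T)R$ is injective on the algebra generated by $T$, since $\ran R$ is dense. This algebra embeds linearly into $\mathcal V$, which is finite-dimensional. This step is the main point to verify, and injectivity through the dense range of $R$ settles it.
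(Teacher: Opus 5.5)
Your proposal is correct and follows the paper's proof essentially step for step. The steps match: the Gram-system derivation of the columns, the null space of the Gram form as the represented relations, and stabilization via invariance of the stabilized layer under left multiplication together with injectivity of $p(T)\mapsto p(T)R$. The paper applies that injectivity first, to pass to $\{p(T):\deg p\le N\}$, whereas you apply it at the end, which makes no difference. Your transpose hedge is unnecessary: with inner products linear in the first variable, $A_N(a,c)=\ip{e_c}{e_a}$, and pairing $x_iq_b(T)R=\sum_c\mu_{cb}e_c$ with $e_a$ gives exactly $A_N\mu_{\cdot b}=H_i^{(N)}(\cdot,b)$.
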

\begin{proof}
For $b\le r_{N-1}$, the vector $x_iq_b(T)R$ lies in $J_R(\PP_{\le N})$. Write it as $\sum_ce_cm_{cb}$. Pairing with each $e_a$ gives $A_Nm_b=H_{i,b}^{(N)}$, hence $m_b=A_N^{-1}H_{i,b}^{(N)}$. The full Gram matrix also gives the null space of $J_R$ on $\PP_{\le N}$. Since $R$ has dense range, $p(T)R=0$ is equivalent to $p(T)=0$. Thus these null vectors are exactly the represented polynomial relations in the stated degree range. Composing partial multiplication is legitimate only while its intermediate vectors remain in the specified domains, giving the claimed legal products.

For stabilization let $\mathcal A_N(T)=\{p(T):\deg p\le N\}$. Right multiplication by $R$ is injective, so equality of the two consecutive dimensions gives $\mathcal A_N(T)=\mathcal A_{N-1}(T)$. Left multiplication by every generator maps $\mathcal A_{N-1}(T)$ into $\mathcal A_N(T)$, and hence into itself. This finite-dimensional space contains the identity. Induction on word length shows that it contains every word, so it is the entire generated algebra and every later layer agrees with it.
\end{proof}

The finite inverse retains its conditioning. If $A_N\ge\gamma I$, $\norm{\widehat A-A_N}\le\eta_A<\gamma$, and $\norm{\widehat H-H}\le\eta_H$, then
\begin{equation}\label{eq:multiplication-noise}
 \norm{\widehat A^{-1}\widehat H-A_N^{-1}H}
 \le\frac{\eta_H+\eta_A\norm{A_N^{-1}H}}{\gamma-\eta_A}.
\end{equation}
This follows by multiplying the difference by $\widehat A$. Stable detection of a relation space additionally requires a gap separating the nonzero Gram eigenvalues from zero.

The state $\varphi_R$ is faithful on $C^*(I,T)$. Indeed, $\varphi_R(a^*a)=\norm{aR}_2^2=0$ implies $a=0$. Its GNS representation is therefore faithful, and the full moments determine the labelled abstract $C^*$-algebra, including the norm formula
\begin{equation}\label{eq:faithfulnorm}
 \norm{p(T)}^2=
 \sup_{\varphi_R(q^*q)>0}
 \frac{\varphi_R(q^*p^*pq)}{\varphi_R(q^*q)}.
\end{equation}
This is the norm of left multiplication on the dense cyclic subspace. Recovering an ambient Hilbert-space frame uses the operator-valued anchors of Section~\ref{sec:realizationboundary}.

\subsection{PBW coordinates and represented innovations}
Let $V=\Span\{x_1,\ldots,x_d\}$ and let $\LL$ be the free Lie algebra on $V$. The PBW symmetrization $\sym:S(\LL)\to T(V)$ is a linear and coalgebra isomorphism. With the primitive coproduct $\Delta x=x\otimes1+1\otimes x$, write $\ast_c$ for convolution and set
\[
 e=\log^{\ast_c}(\Id),\qquad e^{(k)}=e^{\ast_c k}/k!,\qquad
 P_{n,\delta}=e^{(n-\delta)}|_{V^{\otimes n}},\quad0\le\delta<n.
\]
Here $n\ge1$ and $k\ge1$; each fixed degree involves finitely many terms. The maps $e^{(k)}$ project onto symmetrized products of exactly $k$ Lie primitives. To see the projection property explicitly, for a primitive $h$ evaluate the convolution series on the group-like element $\exp(th)$:
\[
 e(\exp(th))=th,\qquad
 e^{(k)}(\exp(th))=\frac{t^kh^k}{k!}.
\]
Comparison of powers of $t$ shows that $e^{(k)}(h^j)$ equals $h^j$ for $j=k$ and zero otherwise. Polarization in primitive elements gives the same statement on symmetrized products. These products span the enveloping algebra by PBW, proving that the projections are mutually annihilating and sum to the identity on positive degrees. On degree zero one adds $e^{(0)}=u\epsilon$, the unit followed by the counit projection. Thus $P_{n,0}$ is ordinary symmetrization in the original letters; other components can contain products of several nontrivial Lie primitives. These are the classical Eulerian projections of \cite{CP}.

The transported multiplication is
\begin{equation}\label{eq:pbwtransport}
 a\star_{\rm PBW}b=\sym^{-1}(\sym(a)\sym(b)).
\end{equation}
For a finite change-of-basis matrix $C$, the word Gram matrix becomes $C^*GC$, and multiplication and localizing forms transform at the same time. The dual coordinates use $P_{n,\delta}'$. These projections are generally nonorthogonal: in degree four, the Eulerian coefficient of a permutation $\sigma$ is
$(-1)^{d(\sigma)}/[4\binom3{d(\sigma)}]$.
The inverse permutations $2413$ and $3142$ have respectively one and two descents, giving transposed coefficients $-1/12$ and $1/12$.

For represented information, let $F_n$ be an increasing family of test spaces and let $J$ be the specified Hilbert-space observation. Define
\begin{equation}\label{eq:innovation}
 \mathcal O_n=\overline{J(F_n)},\qquad
 \mathcal J_n=\mathcal O_n\ominus\mathcal O_{n-1},\qquad
 Q_n=P_{\mathcal O_n}-P_{\mathcal O_{n-1}}.
\end{equation}
The quotient $\mathcal O_n/\mathcal O_{n-1}$ is canonically unitarily isomorphic to $\mathcal J_n$. If columns $W$ supply new tests, their innovation Gram matrix is $W^*Q_nW$. Replacing $W$ by $WS+VB$, where $V$ has range in the preceding layer and $S$ is invertible, changes this Gram matrix by congruence:
\[
 W^*Q_nW\longmapsto S^*(W^*Q_nW)S.
\]
Consequently the innovation space and its finite dimension are intrinsic to the filtration, while numerical eigenvalues depend on the chosen coordinates.

A formal PBW projection descends to a represented quotient exactly when it preserves $\ker\pi$. For a coordinate transformation $B$, preservation of the represented cumulative filtration means
\begin{equation}\label{eq:represented_filtration_invariance}
 \overline{J(BF_n)}=\overline{J(F_n)}\qquad\text{for every }n.
\end{equation}
Preserving each formal $F_n$ is sufficient, but is not necessary when $J$ has a kernel. For example, take $J:\C^2\to\C$, $J(a,b)=a$, $F_0=\Span\{e_1\}$, and $Be_1=e_1+e_2$, $Be_2=e_2$. The formal first layer changes, but its represented image does not. Under \eqref{eq:represented_filtration_invariance} the orthogonal innovations in \eqref{eq:innovation} are unchanged. This is the invariance relevant to observed information.

The identity $xy=(xy+yx)/2+[x,y]/2$ mixes two PBW components while preserving total word degree. For Pauli matrices $X,Z$, the represented dimensions through total degrees $0,1,2,3$ are $1,3,4,4$. At degree two the symmetric part adds no direction, whereas $[X,Z]=2XZ$ supplies the remaining one.
A vanishing innovation is therefore interpreted in the represented quotient: the relevant tests add no vector beyond the preceding layer. It need not indicate missing observations, and it cannot by itself certify failure of algebra reconstruction. Nondegeneracy and conditioning are asked only for the independent target directions that the specified representation and protocol retain.

\subsection{Shorting and noisy innovations}
The target-null shorting of Section~\ref{sec:targetquotients} also describes removal of tests already present in a filtration. Here the additional issue is perturbation of the Gram matrix itself, rather than noise in a fixed linear observation. For bounded column operators $V:E_0\to\HH$ and $W:E_1\to\HH$, put
\[
 G=\begin{pmatrix}A&B\\B^*&C\end{pmatrix}
 =\begin{pmatrix}V^*V&V^*W\\W^*V&W^*W\end{pmatrix},\qquad
 P=P_{\overline{\ran V}},\qquad \mathfrak S=W^*(I-P)W.
\]
Polar decomposition gives $V(A+\lambda I)^{-1}V^*\to P$ strongly, and hence
\begin{equation}\label{eq:shorting}
 \ip{\mathfrak Sy}y=\inf_x\norm{Vx+Wy}^2,\qquad
 \mathfrak S=C-\operatorname*{s-lim}_{\lambda\downarrow0}
 B^*(A+\lambda I)^{-1}B.
\end{equation}
The distance identity is Lemma~\ref{lem:relative_residual}. Taking this infimum in the quadratic-form order also proves that $0\oplus\mathfrak S$ is the largest positive operator below $G$ whose range lies in the second summand. This is the shorted operator \cite{AT,Ando}. If $\ran V$ is closed, the expression uses the bounded Moore--Penrose inverse.

In finite dimensions positivity gives $\ran B\subset\ran A$. Indeed, if $x\in\ker A$, positivity of the quadratic form at $(tx,y)$ for all real and complex $t$ forces $B^*x=0$. Thus, with $L=A^\dagger B$,
\begin{equation}\label{eq:innovation_triangular_gram}
 G=
 \begin{pmatrix}I&L\\0&I\end{pmatrix}^{\!*}
 \begin{pmatrix}A&0\\0&\mathfrak S\end{pmatrix}
 \begin{pmatrix}I&L\\0&I\end{pmatrix}.
\end{equation}
This identity retains the cross information: $B=AL$ and $C=\mathfrak S+L^*AL$. Storing only the innovation would discard the regression. Lemma~\ref{lem:relative_residual} shows that the underlying orthogonal elimination can be performed in stages, independently of a closed-range assumption.

\begin{proposition}[Relations and the inverse cost of an innovation tower]\label{prop:innovation_decoder}
Let $C_N:E_N\to\mathcal K$ be a column map from a finite-dimensional Hilbert space, with a fixed filtration-adapted orthonormal coordinate system. No independence of its columns is assumed. Iterating \eqref{eq:innovation_triangular_gram} gives
\[
 G_N=C_N^*C_N=U_N^*D_NU_N,\qquad
 D_N=\diag(\mathfrak S_0,\ldots,\mathfrak S_N)\ge0,
\]
where $U_N$ is block upper triangular with identity diagonal blocks. Write $D_N^{\dagger/2}=(D_N^\dagger)^{1/2}$, zero on $\ker D_N$. For a linear target $L_N:E_N\to Z$ into a Banach space, recovery from $C_Nx$ is possible exactly when
\begin{equation}\label{eq:innovation_relation_test}
 \ker C_N\subset\ker L_N
 \quad\Longleftrightarrow\quad
 L_NU_N^{-1}|_{\ker D_N}=0.
\end{equation}
Under this condition, the least norm of a bounded linear decoder is
\begin{equation}\label{eq:innovation_exact_cost}
 \kappa_N(L_N)=\norm{L_NU_N^{-1}D_N^{\dagger/2}}
              =\norm{L_NG_N^{\dagger/2}}.
\end{equation}
If the condition fails, set $\kappa_N(L_N)=\infty$. With unrestricted $x\in E_N$ and deterministic data noise of norm at most $\delta>0$ in $\mathcal K$, the optimal worst-case target error, over arbitrary decoders, is $\delta\kappa_N(L_N)$. In the incompatible case even the noiseless worst-case error is infinite.

Suppose $E_N$ increase with dense union in a Hilbert space $E$, and $C_N=J|_{E_N}$ and $L_N=L|_{E_N}$ for bounded linear maps $J:E\to\mathcal K$ and $L:E\to Z$. Then
\begin{equation}\label{eq:innovation_tower_criterion}
 L\text{ is stably recoverable from }J
 \quad\Longleftrightarrow\quad
 \sup_N\kappa_N(L_N)<\infty.
\end{equation}
The constants are nondecreasing; their supremum is the least global decoder norm. Multiplying it by $\delta>0$ gives the unrestricted deterministic noise risk.
\end{proposition}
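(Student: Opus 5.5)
The plan is to reduce everything to the finite-dimensional Hilbert factorization $C_N=V_N\,U_N$, where $V_N$ has orthogonal block images. Iterating \eqref{eq:innovation_triangular_gram} along the filtration, with Lemma~\ref{lem:relative_residual} performing the elimination in stages, gives $G_N=U_N^*D_NU_N$ with $U_N$ block unitriangular, hence invertible. It also gives an isometry-type identity: if $\widetilde C_N$ denotes the map whose $k$th block column is the orthogonal residual $(I-P_{k-1})C_N|_{\text{layer }k}$, then $C_N=\widetilde C_NU_N$ and $\widetilde C_N^*\widetilde C_N=D_N$. The block-diagonal form holds because residual ranges of distinct layers are orthogonal, by \eqref{eq:residual_staged}. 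Consequently $\norm{C_Nx}^2=\norm{D_N^{1/2}U_Nx}^2$ for every $x\in E_N$.

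For \eqref{eq:innovation_relation_test} I will use this identity directly. It gives $\ker C_N=\ker(D_N^{1/2}U_N)=U_N^{-1}\ker D_N$, so $\ker C_N\subset\ker L_N$ is exactly the condition that $L_NU_N^{-1}$ vanish on $\ker D_N$.

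For the cost \eqref{eq:innovation_exact_cost} I will apply Theorem~\ref{thm:target_short}, or more simply \eqref{eq:stablefactor}, in finite dimensions. The least decoder norm is $\sup\norm{L_Nx}/\norm{C_Nx}$. I substitute $y=D_N^{1/2}U_Nx$; as $x$ ranges over $E_N$, $y$ ranges over $\ran D_N^{1/2}=(\ker D_N)^\perp$, since $U_N$ is invertible and $D_N$ is finite-dimensional with closed range. The kernel condition then gives $L_Nx=L_NU_N^{-1}D_N^{\dagger/2}y$, so the supremum equals $\norm{L_NU_N^{-1}D_N^{\dagger/2}}$.

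The second expression in \eqref{eq:innovation_exact_cost} follows by the same argument using polar decomposition $C_N=\Omega G_N^{1/2}$. We have $\norm{C_Nx}=\norm{G_N^{1/2}x}$ and $\ker G_N=\ker C_N$, and substituting $y=G_N^{1/2}x$ gives the supremum $\norm{L_NG_N^{\dagger/2}}$.

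The statement about optimal deterministic risk (equal to $\delta\kappa_N$, and infinite in the incompatible case) is then exactly \eqref{eq:target_unrestricted_risk}, with $W$ replaced by the Banach target. The two-point argument with $\pm x$ carries over verbatim, and the upper bound extends the decoder by zero on $(\ran C_N)^\perp$.

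For the tower criterion \eqref{eq:innovation_tower_criterion}, monotonicity is immediate because $\kappa_N(L_N)$ is a supremum over the growing sets $E_N$. If $\sup_N\kappa_N=\kappa<\infty$, then $\norm{Lx}\le\kappa\norm{Jx}$ on $\bigcup_NE_N$. This passes to all of $E$ by density, since $L$ and $J$ are bounded, and \eqref{eq:stablefactor} then yields a decoder of norm at most $\kappa$. Conversely, a global decoder of norm $C$ restricts to each $E_N$, giving $\kappa_N\le C$. Together these identify the supremum with the least global decoder norm. The risk statement follows from the same two-point argument, which needs $\norm{Jx}$ small with $\norm{Lx}$ large inside some $E_N$; density supplies this.

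The main obstacle is bookkeeping in the first step, since no independence of the columns is assumed. The blocks $\mathfrak S_k$ may be singular, so $L=A^\dagger B$ in \eqref{eq:innovation_triangular_gram} must be handled through the finite-dimensional range inclusion $\ran B\subset\ran A$, established above \eqref{eq:innovation_triangular_gram}. I will verify by induction that the resulting $U_N$ still satisfies $C_N=\widetilde C_NU_N$ exactly, and not merely at the level of Gram matrices. This exact relation is what makes the kernel identification in \eqref{eq:innovation_relation_test} valid.
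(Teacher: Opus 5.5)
Your proposal is correct and follows essentially the paper's route. It uses the norm identity $\norm{C_Nx}^2=\norm{D_N^{1/2}U_Nx}^2$ with $\ker C_N=U_N^{-1}\ker D_N$, and the cost as the ratio $\sup_{C_Nx\ne0}\norm{L_Nx}/\norm{C_Nx}$. The paper realizes this ratio by the explicit decoder $K_NQ_N^*$, where $K_N=L_NU_N^{-1}D_N^{\dagger/2}$ and $Q_N=C_NU_N^{-1}D_N^{\dagger/2}$ is a partial isometry; that form is reused in Corollary~\ref{cor:decoder_convergence}, and your substitution $y=D_N^{1/2}U_Nx$ is exactly $Q_N^*C_Nx$. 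The risk then follows from the $\pm x$ two-point argument, and the tower from density.

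The only misjudgment is your ``main obstacle''. The vector-level relation $C_N=\widetilde C_NU_N$ is true but unnecessary: $\norm{C_Nx}^2=\ip{G_Nx}{x}$ together with $G_N=U_N^*D_NU_N$ already gives the norm identity, and hence the kernel identification, directly. Indeed, $C_NU_N^{-1}$ automatically has Gram matrix $D_N$.
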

\begin{proof}
Positivity of each leading Gram block gives the range inclusion used in \eqref{eq:innovation_triangular_gram}. Hence the elimination can be iterated even when a leading block is singular, using its Moore--Penrose inverse. Each elementary triangular factor is invertible, with identity diagonal. Induction gives the displayed factorization and $D_N\ge0$. In particular,
\[
 \norm{C_Nx}^2=\norm{D_N^{1/2}U_Nx}^2,
 \qquad \ker C_N=U_N^{-1}\ker D_N.
\]
Equality of observations determines a linear target exactly when the target annihilates this kernel, proving \eqref{eq:innovation_relation_test}.

Assume that condition and put $P_N^D=P_{(\ker D_N)^\perp}$. The map
\[
 Q_N=C_NU_N^{-1}D_N^{\dagger/2}
\]
satisfies $Q_N^*Q_N=P_N^D$ and is a partial isometry onto $\ran C_N$. Since $C_NU_N^{-1}$ annihilates $\ker D_N$,
\[
 C_N=Q_ND_N^{1/2}U_N.
\]
Write $K_N=L_NU_N^{-1}D_N^{\dagger/2}$. The relation condition implies $L_NU_N^{-1}=L_NU_N^{-1}P_N^D$. Consequently the decoder
\begin{equation}\label{eq:innovation_canonical_decoder}
 \mathscr D_N=K_NQ_N^*
\end{equation}
satisfies $\mathscr D_NC_N=L_N$. It vanishes on $(\ran C_N)^\perp$ and has norm $\norm{K_N}$: $K_N=K_NP_N^D$ and $Q_N$ is isometric on that support. Every decoder must agree with this one on $\ran C_N$, so this is the least possible norm. Applying the same argument to the polar factorization $C_N=\widetilde Q_NG_N^{1/2}$ proves the second expression in \eqref{eq:innovation_exact_cost}. In either form the constant is the intrinsic ratio
\[
 \sup_{C_Nx\ne0}\frac{\norm{L_Nx}}{\norm{C_Nx}},
\]
with value zero if both maps vanish.

The linear decoder gives the upper noise bound. For the lower bound, the states $\pm x$ with $\norm{C_Nx}\le\delta$ can both produce datum zero, using errors $\mp C_Nx$. Every reconstruction then misses one target by at least $\norm{L_Nx}$. Taking the supremum proves the exact risk. If $C_Nx=0$ but $L_Nx\ne0$, arbitrary scalar multiples of this pair give infinite error already without noise.

In the tower, the finite constants are the best constants in $\norm{Lx}\le C\norm{Jx}$ on the increasing subspaces $E_N$, and therefore are nondecreasing. A common finite bound passes by density and continuity to all of $E$. The rule $D(Jx)=Lx$ then extends uniquely to a bounded map on $\overline{\ran J}$, and orthogonal projection extends it to all data. Conversely a global decoder bounds each finite restriction. The same two-point argument identifies the global risk, whether the supremum is finite or infinite.
\end{proof}

The relation test is an exact-data condition. Small absolute Gram error alone does not determine the null space: for $C_\varepsilon=\diag(1,\varepsilon)$ and $L(x_1,x_2)=x_2$, the cost is $\varepsilon^{-1}$ when $\varepsilon>0$, while at $\varepsilon=0$ the target is not determined at all. The Grams nevertheless converge in norm. A noisy relation decision therefore needs a rank or gap certificate; regularizing the residual does not make this algebraic issue disappear.

The norm in \eqref{eq:innovation_exact_cost} is unchanged by a simultaneous invertible coordinate change $(C_N,L_N)\mapsto(C_NB,L_NB)$: the ratio ranges over the same pairs of observation and target values. Individual innovation eigenvalues and triangular factors need not be invariant. The formulas concern a specified column observation into $\mathcal K$; scalar Gram data alone determine the cost, not an unobserved orientation of these columns in an ambient space.

The kernel test cannot be replaced by applying a pseudoinverse. For $C(x_0,x_1)=x_0+x_1$,
\[
 G=\begin{pmatrix}1&1\\1&1\end{pmatrix}
 =\begin{pmatrix}1&1\\0&1\end{pmatrix}^{\!*}
   \begin{pmatrix}1&0\\0&0\end{pmatrix}
   \begin{pmatrix}1&1\\0&1\end{pmatrix}.
\]
The target $x_0+x_1$ respects the relation and has recovery cost one. The target $x_1$ does not: $C(t,-t)=0$ for every $t$. For this second target the expression $LU^{-1}D^{\dagger/2}$ is nevertheless zero. It is not a recovery cost because \eqref{eq:innovation_relation_test} fails. A zero innovation is harmless for a target that factors through the represented relation, and fatal for a target that does not.

\begin{corollary}[Finite decoders and uniform approximation]\label{cor:decoder_convergence}
In the tower of Proposition~\ref{prop:innovation_decoder}, suppose $\sup_N\kappa_N(L_N)<\infty$. Let $D:\mathcal M\to Z$ be the global inverse, where $\mathcal M=\overline{\ran J}$, and let $\Pi_N$, $\Pi$ be the orthogonal projections onto $J(E_N)$ and $\mathcal M$. The zero-extended finite decoders in \eqref{eq:innovation_canonical_decoder} satisfy
\begin{equation}\label{eq:decoder_projection_identity}
 \mathscr D_N=D\Pi_N,\qquad
 \mathscr D_Ny\longrightarrow D\Pi y\quad(y\in\mathcal K),\qquad
 \norm{\mathscr D_N}\uparrow\norm D.
\end{equation}
Convergence is uniform on each compact subset of $\mathcal K$. Moreover
\begin{equation}\label{eq:decoder_compact_criterion}
 \norm{\mathscr D_N-D\Pi}_{B(\mathcal K,Z)}\longrightarrow0
 \quad\Longleftrightarrow\quad D:\mathcal M\to Z\text{ is compact}.
\end{equation}
\end{corollary}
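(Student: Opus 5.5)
The plan is to show that the finite decoder from \eqref{eq:innovation_canonical_decoder} equals $D$ composed with the orthogonal projection onto the finite observation range. All three assertions then reduce to facts about the projections $\Pi_N$ and about compact operators.

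First I identify $\mathscr D_N=D\Pi_N$. Here $J(E_N)=\ran C_N$ is finite-dimensional, hence closed and contained in $\mathcal M$. For $y\in\ran C_N$, say $y=C_Nx$ with $x\in E_N$, we have
\[
 \mathscr D_Ny=L_Nx=Lx=D(Jx)=Dy .
\]
On $(\ran C_N)^\perp$ the canonical decoder vanishes by construction. Hence $\mathscr D_N=D\Pi_N$.

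Next come the convergence statements. Because $E_N$ increases with dense union and $J$ is bounded, the subspaces $J(E_N)$ increase and their union is dense in $\mathcal M$. Therefore $\Pi_N\to\Pi$ strongly, and so $\mathscr D_Ny\to D\Pi y$. The operators $\mathscr D_N$ are uniformly bounded by $\norm D$, so strong convergence is uniform on compact sets by the usual finite-net argument. For the norms, $\norm{\mathscr D_N}=\norm{D\Pi_N}=\kappa_N(L_N)$. This is nondecreasing, since $\Pi_N\le\Pi_{N+1}$ gives $D\Pi_N=D\Pi_{N+1}\Pi_N$, and it is bounded by $\norm D$. Strong convergence gives the reverse inequality: for a unit $y\in\mathcal M$, $\norm{Dy}=\lim\norm{\mathscr D_Ny}\le\liminf\norm{\mathscr D_N}$. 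Equivalently, one can invoke the tower statement of Proposition~\ref{prop:innovation_decoder} that the supremum of the $\kappa_N$ is the global decoder norm.

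The compactness criterion \eqref{eq:decoder_compact_criterion} is the only nontrivial step. In the forward direction, each $\mathscr D_N$ has finite rank. A norm limit of finite-rank operators is compact, so $D\Pi$ is compact, and therefore so is its restriction $D$ to $\mathcal M$. For the converse, suppose $D$ is compact. Write
\[
 D\Pi-\mathscr D_N=D(\Pi-\Pi_N)=D\,(\Pi-\Pi_N)\Pi .
\]
Since $D$ is compact, its adjoint $D'$ is compact, so $D'(B_{Z'})$ is relatively compact in $\mathcal M'\cong\mathcal M$. The adjoints $(\Pi-\Pi_N)$ are self-adjoint on $\mathcal M$, uniformly bounded by one, and converge strongly to zero. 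Hence they converge to zero uniformly on the compact closure of $D'(B_{Z'})$. Taking the supremum over unit vectors gives
\[
 \norm{D(\Pi-\Pi_N)}=\norm{(\Pi-\Pi_N)D'}\to0 .
\]
The main obstacle is exactly this passage from strong to norm convergence. It requires the adjoint compactness (Schauder's theorem) because $Z$ is only a Banach space. An equivalent route applies the finite-net argument directly to the relatively compact set $D'(B_{Z'})$ in the Hilbert space $\mathcal M$.
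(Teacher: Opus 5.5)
Your proof is correct. The identification $\mathscr D_N=D\Pi_N$, the strong convergence $\Pi_N\to\Pi$, uniform convergence on compact sets, the monotone norm limit, and the forward half of \eqref{eq:decoder_compact_criterion} follow the paper's argument essentially step for step. Your $\liminf$ argument for $\norm D\le\lim_N\norm{\mathscr D_N}$ is a self-contained replacement for citing the tower statement.

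The converse of the compactness criterion is where you take a different route. The paper argues by contradiction. If $\norm{D(\Pi-\Pi_N)y_N}\ge\varepsilon$ for unit vectors $y_N$, then $z_N=(\Pi-\Pi_N)y_N\in\mathcal M$ is bounded and weakly null, because $|\ip{z_N}{v}|\le\norm{(\Pi-\Pi_N)v}\to0$. A compact operator sends bounded weakly null sequences to norm-null ones, which gives the contradiction.

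That property holds for compact operators into any Banach space. So your remark that Schauder's theorem is \emph{required} because $Z$ is only a Banach space is not accurate. The paper's route needs neither Schauder nor the Riesz identification of $\mathcal M'$.

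What your dual route buys instead is an exact quantitative formula. On $\mathcal M$, the operator $\Pi-\Pi_N$ is the orthogonal projection onto $\mathcal M\ominus J(E_N)$, and its Banach adjoint corresponds to itself under the Riesz map. Hence
\[
 \norm{\mathscr D_N-D\Pi}
 =\sup_{\norm{z'}\le1}\dist\big(w_{z'},J(E_N)\big),
\]
where $w_{z'}\in\mathcal M$ is the Riesz representative of $D'z'$. The uniform decoding error is therefore exactly the worst-case distance of the compact dual test set $D'(B_{Z'})$ from the finite observation ranges. This rate is invisible in the sequential contradiction argument.
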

\begin{proof}
The finite decoder equals $D$ on $J(E_N)$, and both sides of the first identity vanish on its orthogonal complement. The spaces $J(E_N)$ increase and have dense union in $\mathcal M$, since $J$ is continuous and $\bigcup_NE_N$ is dense in $E$. Therefore $\Pi_N\to\Pi$ strongly. This proves pointwise convergence; the uniformly bounded decoder norms and a finite-net argument give uniform convergence on compact sets. The norm limit follows also from the increasing optimal constants in Proposition~\ref{prop:innovation_decoder}.

If the operator-norm convergence in \eqref{eq:decoder_compact_criterion} holds, $D\Pi$ is a norm limit of finite-rank operators and is compact, so its restriction $D$ is compact. Conversely, suppose $D$ is compact but the norm convergence fails. There are unit vectors $y_N$ along a subsequence with $\norm{D(\Pi-\Pi_N)y_N}\ge\varepsilon>0$. Set $z_N=(\Pi-\Pi_N)y_N\in\mathcal M$. These vectors are bounded and converge weakly to zero, because for each fixed $v\in\mathcal M$,
\[
 |\ip{z_N}v|\le\norm{(\Pi-\Pi_N)v}\longrightarrow0.
\]
A compact operator maps a bounded weakly null sequence to a norm-null sequence. Indeed every norm-convergent subsequence of its images has weak limit zero, so compactness allows no other cluster point. This contradicts the lower bound and proves the assertion.
\end{proof}

Even $J=L=I$ on an infinite-dimensional Hilbert space separates these conclusions: the global inverse is stable, but $\mathscr D_N=\Pi_N$ and $\norm{I-\Pi_N}=1$ at every finite stage. Stable inversion alone is not uniform finite-dimensional recovery on the entire data unit ball. The common-tail classes of Proposition~\ref{prop:certclass} supply a different, class-dependent way to obtain uniform recovery.

\begin{example}[Uniformly positive layers without a uniform inverse]\label{ex:innovation_conditioning}
On $\ell^2(\mathbb N_0)$ let $Se_j=e_{j+1}$ and $J=I-S$. The map $J$ is bounded and injective. For $F_N=\Span\{e_0,\ldots,e_N\}$ its columns are $v_j=e_j-e_{j+1}$, and their Gram matrix has diagonal entries two and adjacent off-diagonal entries minus one. If $d_N$ denotes its determinant, expansion at an end gives
$d_N=2d_{N-1}-d_{N-2}$ with $d_{-1}=1$, $d_0=2$, hence $d_N=N+2$. Its scalar innovation at stage $N$ is therefore
\begin{equation}\label{eq:innovation_shift_pivots}
 \mathfrak S_N=\frac{d_N}{d_{N-1}}=\frac{N+2}{N+1}\ge1.
\end{equation}
Nevertheless the sine vectors diagonalize this tridiagonal matrix: its eigenvalues are $2-2\cos(k\pi/(N+2))$, $1\le k\le N+1$. Thus
\begin{equation}\label{eq:innovation_shift_inverse}
 \norm{(J|_{F_N})^{-1}}
 =\frac1{2\sin(\pi/(2(N+2)))}\sim\frac{N+2}{\pi}.
\end{equation}
For completeness, substitution of the vector with entries
$\sin((j+1)k\pi/(N+2))$, $0\le j\le N$, gives the displayed eigenvalue by the addition formula for sine, with zero boundary entries at $j=-1,N+1$. Injectivity of $J$ follows from $x_0=0$ and then $x_j=x_{j-1}$ whenever $Jx=0$.

The example is already a first-chaos observation model. For a rank-one projection $P$ and $x\in\ell^2$, let $F_x=\sum_{j\ge0}x_jg_jP$. With $g_{-1}=x_{-1}=0$, the source marks
$\E[(g_k-g_{k-1})F_x]=(x_k-x_{k-1})P$
are exactly $Jx$. Every new column contributes a nondegenerate orthogonal residual, but the original coefficient inverse is unbounded. In \eqref{eq:innovation_exact_cost} the loss is carried by the triangular regression factor, not by a vanishing diagonal innovation. The target $x\mapsto x_0$ behaves differently: its finite optimal cost is $\sqrt{(N+1)/(N+2)}$, because the first diagonal entry of $G_N^{-1}$ is the ratio of the determinants of the length-$N$ and length-$(N+1)$ tridiagonal matrices. Its full-space decoder is simply the first observed coordinate, with norm one. Thus the same tower separates stable target recovery from unstable full recovery.
\end{example}

Averaging and shorting are also different operations: for a random scalar $T=\pm1$, the sample Gram matrices of $(1,T)$ have zero innovation, whereas their average is the identity and has innovation one.

\begin{theorem}[Noise-calibrated shorting]\label{thm:noiseshort}
Let $G,\widehat G\ge0$, $\norm{\widehat G-G}\le\eta$, and $\eta>0$. Set
$\mathfrak S_\lambda(G)=C-B^*(A+\lambda I)^{-1}B$ for $\lambda>0$, with $\mathfrak S_0(G)=\mathfrak S$. For $\lambda\ge\eta$,
\begin{equation}\label{eq:shortsandwich}
 \mathfrak S_{\lambda-\eta}(G)-\eta I
 \le\mathfrak S_\lambda(\widehat G)
 \le\mathfrak S_{\lambda+\eta}(G)+\eta I.
\end{equation}
In particular $\widehat{\mathfrak S}=\mathfrak S_\eta(\widehat G)$ obeys
\begin{equation}\label{eq:shorterror}
 \norm{\widehat{\mathfrak S}-\mathfrak S}
 \le\eta+\beta(2\eta),\qquad
 \beta(t)=\norm{(PW)^*t(VV^*+tI)^{-1}PW}.
\end{equation}
If $PW=(VV^*)^{\theta/2}Z$, $\norm Z\le R_0$, and $0<\theta\le1$, then
$\beta(t)\le c_\theta R_0^2t^\theta$, where
$c_\theta=\theta^\theta(1-\theta)^{1-\theta}$ for $\theta<1$ and $c_1=1$.
The exponent $\theta$ is optimal in this deterministic perturbation model.
\end{theorem}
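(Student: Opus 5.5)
The plan is to obtain \eqref{eq:shortsandwich} from a variational formula for the regularized short, and then read off \eqref{eq:shorterror} from spectral calculus. For $\lambda>0$ and $y$ fixed, minimizing over $x$ gives
\[
 \ip{\mathfrak S_\lambda(G)y}{y}
 =\inf_x\Big\{\ip{G(x\oplus y)}{x\oplus y}+\lambda\norm x^2\Big\}.
\]
The reason is that $A+\lambda I$ is invertible, so the quadratic form in $x$ has the minimizer $x=-(A+\lambda I)^{-1}By$. For $\lambda\downarrow0$ the same formula holds with $\mathfrak S_0=\mathfrak S$, by \eqref{eq:shorting} and monotone limits.

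From $G-\eta I\le\widehat G\le G+\eta I$ we get $\ip{\widehat G(x\oplus y)}{x\oplus y}+\lambda\norm x^2\le\ip{G(x\oplus y)}{x\oplus y}+(\lambda+\eta)\norm x^2+\eta\norm y^2$. Taking the infimum over $x$ gives the upper inequality. The lower inequality is symmetric: it uses $\lambda-\eta\ge0$, so the variational formula still applies at parameter $\lambda-\eta$, with the case $\lambda=\eta$ read as the unregularized short.

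For \eqref{eq:shorterror}, take $\lambda=\eta$. This gives $\mathfrak S-\eta I\le\widehat{\mathfrak S}\le\mathfrak S_{2\eta}(G)+\eta I$. Since $\mathfrak S_t(G)$ is nondecreasing in $t$ and $\mathfrak S_t\ge\mathfrak S$, it remains to compute $\mathfrak S_t(G)-\mathfrak S$. Write $B^*(A+tI)^{-1}B=W^*V(V^*V+tI)^{-1}V^*W=W^*VV^*(VV^*+tI)^{-1}W$. Subtracting the limit $W^*PW$ leaves $W^*P\,t(VV^*+tI)^{-1}PW$, because $VV^*(VV^*+t)^{-1}\to P$ and $t(VV^*+t)^{-1}P$ lives on $\overline{\ran V}$. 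Hence $0\le\mathfrak S_t-\mathfrak S$ with norm $\beta(t)$. Combining the two-sided bounds gives $-\eta I\le\widehat{\mathfrak S}-\mathfrak S\le(\eta+\beta(2\eta))I$, which yields the norm bound.

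For the source condition, substitute $PW=(VV^*)^{\theta/2}Z$. Then $\beta(t)\le R_0^2\sup_{\mu\ge0}t\mu^\theta/(\mu+t)$. The supremum is elementary: for $\theta<1$ it is attained at $\mu=\theta t/(1-\theta)$ and equals $\theta^\theta(1-\theta)^{1-\theta}t^\theta$, and for $\theta=1$ it is at most $t$.

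The main obstacle is the optimality of $\theta$. Here I would construct a scalar diagonal example. Take $V$ diagonal with spectrum accumulating at $0$ and $W$ chosen so that the source condition holds exactly at exponent $\theta$ and no better, for instance $PW$ with $\mu^{\theta/2}$-type weights concentrated near $\mu\asymp\eta$. Then choose a perturbation $\widehat G$ with $\norm{\widehat G-G}\le\eta$ that shifts $A$ downward by $\eta$ near the relevant eigenvalue. This forces $\norm{\widehat{\mathfrak S}-\mathfrak S}\gtrsim\eta^\theta$ for an admissible $\widehat G$. The point is to show that no bound $o(\eta^\theta)$ can hold uniformly over the class. The cleanest version is a two-by-two block: $A=\mu$, $B=\mu^{(1+\theta)/2}$, $C=\mu^\theta$, so that $\mathfrak S=0$, with $\mu=\eta$. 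The perturbed short then has size comparable to $\eta^\theta$, and the exact pair $\widehat G=G$ versus a nearby $G'$ with different short shows any estimator errs by order $\eta^\theta$.
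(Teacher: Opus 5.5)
Your proofs of \eqref{eq:shortsandwich}, \eqref{eq:shorterror} and the bound $\beta(t)\le c_\theta R_0^2t^\theta$ are correct and follow the paper's argument. The three ingredients are the same:
\begin{itemize}
\item the penalized variational formula, with the endpoint $\lambda=\eta$ handled by the unregularized distance formula of Lemma~\ref{lem:relative_residual} for the true $G$;
\item the identity $V(A+tI)^{-1}V^*=VV^*(VV^*+tI)^{-1}$, which gives $\mathfrak S_t(G)-\mathfrak S=(PW)^*t(VV^*+tI)^{-1}PW\ge0$;
\item the scalar maximization of $t\mu^\theta/(\mu+t)$.
\end{itemize}

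\textbf{The gap is the optimality of $\theta$, which you leave as a sketch.} Three things go wrong or are missing.
\begin{itemize}
\item The one concrete perturbation you name, lowering $A$ by $\eta$ with $B,C$ fixed, is not admissible in your own block. For $A=\mu$, $B=\mu^{(1+\theta)/2}$, $C=\mu^\theta$ the matrix has rank one, and $C-B^2/(A-\varepsilon)<0$ for $0<\varepsilon<\mu$. At $\varepsilon=\mu=\eta$ the corner $A$ vanishes while $B\ne0$. So the perturbed matrix violates the hypothesis $\widehat G\ge0$.
\item Your computation $\mathfrak S_\eta(G)=\eta^\theta/2$ only shows that this particular estimator has error of order $\eta^\theta$. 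The statement is about every estimator. A single model cannot give that, since the constant estimator equal to $\mathfrak S(G)$ is exact there. You need a second admissible model $G'$ with the following properties:
\begin{itemize}
\item it is positive;
\item it shares a datum with $G$ within noise $\eta$;
\item its short differs from $\mathfrak S(G)$ by order $\eta^\theta$;
\item it satisfies $P'W'=\bigl(V'(V')^*\bigr)^{\theta/2}Z'$ with $\norm{Z'}$ bounded independently of $\eta$.
\end{itemize}
The last property is what makes the lower bound concern the class in which the upper bound was proved.
\item The $G'$ you invoke ("a nearby $G'$ with different short") is never written down, and none of the properties above are checked.
\end{itemize}

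\textbf{The repair is to raise $A$ instead.} Take $\mu=\eta$, keep $B$ and $C$, and set $A'=2\eta$.
\begin{itemize}
\item Then $\norm{G'-G}=\eta$, $\mathfrak S(G')=\eta^\theta/2$ and $\mathfrak S(G)=0$.
\item The realizations $V1=\sqrt\eta\,e_1$, $W1=\eta^{\theta/2}e_1$ and $V'1=\sqrt{2\eta}\,e_1$, $W'1=2^{-1/2}\eta^{\theta/2}(e_1+e_2)$ reproduce both Gram matrices.
\item Their source factors have norms $1$ and $2^{-(1+\theta)/2}$, so both models lie in the class with $R_0=1$.
\item The datum $\widehat G=G$ is positive and within $\eta$ of both models. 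Hence every estimator misses one of the shorts $0$ and $\eta^\theta/2$ by at least $\eta^\theta/4$, which rules out any uniform $o(\eta^\theta)$ bound.
\end{itemize}
This is the paper's two-point construction, rescaled. The paper takes $C=1$, $A\in\{t,2t\}$, $B=t^{(1+\theta)/2}$, and uses the common midpoint as datum at noise level $t/2$.
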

\begin{proof}
Completing the square gives
\[
 \ip{\mathfrak S_\lambda(G)y}y
 =\inf_x\{\norm{Vx+Wy}^2+\lambda\norm x^2\}.
\]
The two quadratic forms differ by at most $\eta(\norm x^2+\norm y^2)$. Add the penalty and take the infimum in $x$ to obtain \eqref{eq:shortsandwich}, including $\lambda=\eta$ through the variational formula at zero. The identity
\[
 V(A+tI)^{-1}V^*=VV^*(VV^*+tI)^{-1}
\]
shows that $\mathfrak S_t(G)-\mathfrak S(G)$ is the positive operator defining $\beta(t)$. Substitution of the source condition gives
\[
 \beta(t)\le\norm Z^2\sup_{u\ge0}\frac{tu^\theta}{u+t}
 =c_\theta R_0^2t^\theta.
\]

For optimality take, for $0<t\le1$,
\[
 G_t^{(1)}=\begin{pmatrix}t&t^{(1+\theta)/2}\\t^{(1+\theta)/2}&1\end{pmatrix},\qquad
 G_t^{(2)}=\begin{pmatrix}2t&t^{(1+\theta)/2}\\t^{(1+\theta)/2}&1\end{pmatrix}.
\]
Both are positive, their distance is $t$, and their shorted operators are $1-t^\theta$ and $1-t^\theta/2$. Realize them with
\[
 V_j1=\sqrt{a_j}e_1,\qquad
 W_j1=(b/\sqrt{a_j})e_1+\sqrt{1-b^2/a_j}\,e_2,
 \quad a_1=t,\ a_2=2t,\ b=t^{(1+\theta)/2}.
\]
The source-condition factors have norms $1$ and $2^{-(1+\theta)/2}$. Their common midpoint is admissible observed data at noise level $\eta=t/2$, and every estimator has error at least $t^\theta/4$ on one of these two models. A uniform $o(\eta^\theta)$ estimate is therefore impossible.
\end{proof}

The norm bias $\beta(t)$ need not vanish without an additional condition. For $Ve_j=j^{-1}e_j$ and $W=I$ on $\ell^2(\N)$, the range of $V$ is dense, so $\mathfrak S=0$, but
$\norm{t(VV^*+tI)^{-1}}=1$ for every $t>0$. The source condition in the theorem is one quantitative way to obtain norm convergence; strong convergence of the resolvent alone is insufficient.

The regularized decomposition retains its penalty:
\[
 \mathfrak S_\lambda=(W-VL_\lambda)^*(W-VL_\lambda)
                    +\lambda L_\lambda^*L_\lambda,
 \qquad L_\lambda=(A+\lambda I)^{-1}B.
\]
For raw finite data within $\eta$ of a positive Gram matrix, taking the self-adjoint part and then its positive part gives a positive estimate within $2\eta$ of the true matrix. Additional moment or anchor constraints remain separate finite-data conditions.

\subsection{Minimality of finite state information}
For self-adjoint free generators, let $\PP_{\le N}^{\rm sa}$ be the real vector space of self-adjoint polynomials of degree at most $N$.
\begin{theorem}[Finite state information]\label{thm:positive_minimality}
Let $S$ be a real linear subspace of $\PP_{\le N}^{\rm sa}$ containing $1$, and let $p\in\PP_{\le N}^{\rm sa}$. Over all self-adjoint contraction tuples and states, the value $\varphi(p(A))$ is determined by the values $\varphi(s(A))$, $s\in S$, if and only if $p\in S$. If $p\notin S$, the two indistinguishable models can use the same positive contraction tuple on a space of dimension
\[
 D=1+d+\cdots+d^N
\]
and two density matrices $\rho_\pm\ge I/(2D)$.
\end{theorem}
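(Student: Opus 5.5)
The "if" direction is immediate: if $p\in S$, then $\varphi(p(A))$ is itself one of the observed values. For the converse, assume $p\notin S$. I will build one contraction tuple on which the word evaluations are linearly independent, and then use a separating functional to perturb a faithful state without changing any value on $S$.

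\textbf{The tuple.} Take the truncated Fock space $\mathcal F=\bigoplus_{k=0}^N(\C^d)^{\otimes k}$. Its dimension is $D=1+d+\cdots+d^N$. Let $L_i$ be truncated left creation by $e_i$, which sends degree $N$ to zero. Put
\[
 A_i=\tfrac12\bigl(I+\tfrac12(L_i+L_i^*)\bigr).
\]
Each $L_i$ is a contraction, so $\tfrac12(L_i+L_i^*)$ is a self-adjoint contraction and $A_i$ satisfies $0\le A_i\le I$. The tuple is therefore positive and contractive.

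\textbf{Injectivity on $\PP_{\le N}$.} The key claim is that $q\mapsto q(A)$ is injective on $\PP_{\le N}$. Expanding $w(A)\Omega$ for a word $w$ of length $k\le N$, where $\Omega$ is the vacuum, gives $4^{-k}e_w$ plus components of degree less than $k$. By triangularity in the tensor degree, the vectors $w(A)\Omega$ with $|w|\le N$ are linearly independent. Hence $q(A)\Omega\ne0$ whenever $q\ne0$. The same injectivity then holds on the real subspace $\PP_{\le N}^{\rm sa}$.

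\textbf{Separation.} Let $\mathcal S=\{s(A):s\in S\}$, a real subspace of the self-adjoint matrices, and let $P=p(A)$. By injectivity, $P\notin\mathcal S$. Real-linear separation in the Hilbert space of self-adjoint matrices with the trace inner product gives a self-adjoint $X\ne0$ with
\[
 \Tr(Xs(A))=0\ \ (s\in S),\qquad \Tr(XP)\ne0.
\]
Since $1\in S$, we get $\Tr X=0$. Set
\[
 \rho_\pm=I/D\pm\varepsilon X,\qquad \varepsilon=1/(2D\norm X).
\]
Then $\rho_\pm\ge I/(2D)$ and $\Tr\rho_\pm=1$, so both are density matrices. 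They agree on every $s(A)$ with $s\in S$, but their values on $P$ differ by $2\varepsilon\Tr(XP)\ne0$. The states $\varphi_\pm=\Tr(\rho_\pm\,\cdot\,)$ are the two required indistinguishable models.

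\textbf{Main obstacle.} The delicate point is the injectivity step, specifically the triangularity claim: applying lower-degree words and the annihilation parts $L_i^*$ must never produce a top-degree component $e_w$. The annihilators only lower degree and the identity preserves it, so only the word $w$ itself reaches degree $|w|$. The argument also has to keep the real structure straight: $p$ and $S$ are self-adjoint, and self-adjoint elements form a real space in which the trace pairing is real. The factor $\tfrac14$ in the definition of $A_i$ is only for positivity and does not affect independence.
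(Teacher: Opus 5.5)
Your proposal is correct and follows the paper's proof essentially step for step. It uses the same truncated-Fock tuple $A_i=(2I+L_i+L_i^*)/4$, the same injectivity argument via the top-degree component $4^{-n}\sum_{|w|=n}(q_n)_we_w$ of $q(A)\Omega$, and the same perturbed states $I/D\pm\varepsilon X$ with $X$ trace-orthogonal to $S(A)$; the paper just makes your separating element explicit as the residual $R_p=p(A)-\operatorname{Proj}_{S(A)}p(A)$, for which $\Tr(R_p\,p(A))=\Tr R_p^2>0$.
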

\begin{proof}
Membership in $S$ gives a linear readout. For the converse, on the truncated full Fock space
$\HH_N=\bigoplus_{k=0}^N(\C^d)^{\otimes k}$
let $L_ie_w=e_{iw}$ when $|w|<N$ and let $L_i$ vanish on the last layer. Put
\begin{equation}\label{eq:positive_fock_witness}
 A_i=(2I+L_i+L_i^*)/4.
\end{equation}
Since $\norm{L_i}\le1$, these are positive contractions. If $q\ne0$ has highest homogeneous part $q_n$, then
\begin{equation}\label{eq:fock_top_injective}
 P_nq(A)\Omega=4^{-n}\sum_{|w|=n}(q_n)_we_w\ne0.
\end{equation}
Only a product of $n$ creation terms can reach the $n$th layer from the vacuum. Consequently evaluation is injective on $\PP_{\le N}$.

In the real Hilbert space of Hermitian matrices with inner product $\Tr BC$, let
$R_p=p(A)-\operatorname{Proj}_{S(A)}p(A)$.
Then $R_p\ne0$ and $\Tr R_p=0$. For $0<\epsilon\le(2D\norm{R_p})^{-1}$ set
$\rho_\pm=I/D\pm\epsilon R_p$.
Both have trace one and are bounded below by $I/(2D)$. Their retained observations agree, whereas their target values differ by
\[
 \Tr((\rho_+-\rho_-)p(A))=2\epsilon\Tr R_p^2>0.
\]
\end{proof}

The finite witness embeds into any sufficiently large, in particular infinite-dimensional, Hilbert space. Strict positivity is asserted on its supporting block. It also yields explicit dual tests. If $q_0=1,q_1,\ldots,q_{D-1}$ is a real basis of $\PP_{\le N}^{\rm sa}$, put $B_j=q_j(A)$ and $G_{jk}=\Tr B_jB_k$. Then
\begin{equation}\label{eq:finite_dual_witness}
 H_j=\sum_k(G^{-1})_{jk}B_k,\qquad \Tr(H_jB_k)=\delta_{jk}.
\end{equation}
For $j\ne0$, $\Tr H_j=0$. Taking
$\epsilon_j=[2D\max(1,\Tr H_j^2)]^{-1}$
in $I/D\pm\epsilon_jH_j$ changes only the $j$th basis observation. Thus even strict positivity leaves each nonconstant coordinate potentially necessary within a fixed complete Hermitian PBW basis and fixed degree budget.

\subsection{Trace observations and the cyclic quotient}
Define the linear cyclic subspace and quotient
\begin{equation}\label{eq:cyclic_quotient}
 \mathscr C_{\le N}=\Span_\C\{uv-vu:|u|+|v|\le N\},\qquad
 \mathscr Q_N=\PP_{\le N}/\mathscr C_{\le N}.
\end{equation}
This quotient identifies cyclic rotations of words. Its degree-$n$ dimension is the necklace count
\begin{equation}\label{eq:necklace_count}
 c_n(d)=\frac1n\sum_{a\mid n}\phi(a)d^{n/a},\qquad n\ge1,
\end{equation}
by Burnside's lemma; here $\phi$ is Euler's totient function.

\begin{lemma}[The finite-degree trace kernel]\label{lem:cyclic_trace_kernel}
Let $M\ge\max(1,N)$. For $p\in\PP_{\le N}$,
\begin{equation}\label{eq:trace_kernel_iff}
 \tr_Mp(A)=0\text{ for all positive contraction tuples }A
 \quad\Longleftrightarrow\quad p\in\mathscr C_{\le N},
 \qquad \tr_M=M^{-1}\Tr.
\end{equation}
\end{lemma}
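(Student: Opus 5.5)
The easy direction is immediate. Each commutator $uv-vu$ with $|u|+|v|\le N$ evaluates to $u(A)v(A)-v(A)u(A)$, and its normalized trace vanishes by cyclicity of the finite-dimensional trace. Linearity then gives $\tr_Mp(A)=0$ for every $p\in\mathscr C_{\le N}$ and every tuple $A$ of $M\times M$ matrices.

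For the converse, I would first reduce to necklace coordinates. Choose one representative word in each rotation class of length $n\le N$. Modulo $\mathscr C_{\le N}$, every $p\in\PP_{\le N}$ is congruent to a combination $\sum_\nu c_\nu w_\nu$ of representatives. This works because any rotation $uv\mapsto vu$ of a word of length at most $N$ is a generator of $\mathscr C_{\le N}$, and rotations preserve degree. So it suffices to show the following: if $\sum_\nu c_\nu\tr_M w_\nu(A)=0$ for all positive contraction tuples, then every $c_\nu=0$.

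Both sides of the trace identity are polynomials in the matrix entries, and positive contractions are not Zariski dense. I would handle this by writing $A_i=(I+\epsilon X_i)/2$ with $X_i$ an arbitrary Hermitian matrix and $\epsilon$ small. For small $\epsilon$ this is a positive contraction, so $\tr_Mp$ vanishes on an open subset of the real vector space of Hermitian tuples. Since $\tr_Mp$ is a polynomial in the real coordinates of $X$, it vanishes identically as a function of $(\epsilon,X)$. Expanding in $\epsilon$ and using the substitution $x_i\mapsto(1+\epsilon x_i)/2$, which is an invertible affine change of the generators preserving degree filtration and commutator spaces, one sees that $p$ lies in $\mathscr C_{\le N}$ iff its image $\tilde p$ does. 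It therefore suffices to prove the claim for arbitrary Hermitian tuples, and hence, by complex Zariski density of Hermitian matrices in $M_M(\C)$, for arbitrary complex tuples. One checks that the top-degree part is unaffected and that commutators map to commutators. For the density step, note that real-polynomial identities in $\operatorname{Re},\operatorname{Im}$ of entries extend to complex-linear ones because $\tr w(X)$ is holomorphic in the entries.

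The main step is that traces of words of length $n\le M$ separate necklaces on $M\times M$ matrices, and I expect this to be the main obstacle. Homogeneous parts separate by scaling $X\mapsto tX$, so I may fix a degree $n\le N\le M$. I would test with the tuple $X_i=\sum_{k=1}^{n}t_{i,k}E_{k,k+1}$, indices taken mod $n$, embedded in the upper $n\times n$ corner; this is possible since $n\le M$. For a word $x_{i_1}\cdots x_{i_n}$, the trace is
\[
 \sum_{r=0}^{n-1}\prod_{k}t_{i_{k},k+r}.
\]
This expresses it as a sum, over cyclic shifts, of distinct multilinear monomials in the variables $t$. Two words produce a common monomial exactly when they are rotations of each other. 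Hence the traces of necklace representatives have disjoint monomial supports and are linearly independent. That gives $c_\nu=0$ at each degree and completes the proof. The degree-zero constant is handled by $\tr_MI=1$.
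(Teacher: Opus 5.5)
Your proof is correct, and it reaches the conclusion by a different route from the paper. The paper never leaves self-adjoint matrices. It first uses the open set of strict positive contractions to obtain vanishing on all Hermitian tuples. It then tests one cyclic class $[w]$ at a time with the Hermitian cycle matrices $X_a(z)=\sum_{j:i_j=a}(z_jE_{j,j+1}+\bar z_jE_{j+1,j})$, treating $z$ and $\bar z$ as independent coordinates. Finally it reads off the coefficient of $z_1\cdots z_n$, which can only come from closed walks traversing every forward edge exactly once. This needs a separate remark for $n=1,2$, where loops and parallel entries occur.

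You instead complexify. The Hermitian tuples form a totally real form of $M_M(\C)^d$, so a holomorphic trace polynomial vanishing on them vanishes everywhere. You then test with the non-self-adjoint weighted cyclic shifts $\sum_kt_{i,k}E_{k,k+1}$. Because only forward edges are present, $\tr_M w(X)$ is exactly a sum, over the rotations of $w$, of multilinear monomials that encode those rotations. Distinct necklaces therefore have disjoint supports, all necklace traces of a given degree are independent at once, and no small-degree cases need special treatment. The cost is the extra complexification step, which the paper's choice of $z,\bar z$ parameters avoids.

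Two minor points. First, your remark that positive contractions are not Zariski dense is inaccurate. They contain a nonempty open subset of the real space of Hermitian tuples, and that is exactly what your argument uses. Second, the automorphism $x_i\mapsto(1+\epsilon x_i)/2$ and the check that it preserves $\mathscr C_{\le N}$ are unnecessary. Once the polynomial in $(\epsilon,X)$ vanishes identically, setting $\epsilon=1$ gives $\tr_Mp((I+X)/2)=0$ for every Hermitian $X$, i.e.\ $\tr_Mp$ vanishes on all Hermitian tuples for the original $p$. With that simplification, your necklace reduction can be applied to $p$ itself without worrying that the substitution destroys the necklace form.
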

\begin{proof}
Cyclicity of trace proves one direction. Conversely, strictly positive tuples strictly below the identity form a nonempty open subset of the real space of Hermitian tuples. The trace polynomial therefore vanishes on every Hermitian tuple. Scalar dilation separates homogeneous degrees, including the constant term.

Suppose the coefficient sum of a cyclic class $[w]$ in a homogeneous part $p_n$ is nonzero, where $w=i_1\cdots i_n$. Use an oriented $n$-cycle, with independent complex edge parameters $z_1,\ldots,z_n$, and set
\[
 X_a(z)=\sum_{j:i_j=a}(z_jE_{j,j+1}+\bar z_jE_{j+1,j}),
\]
with cyclic indices, embedded in $M$ dimensions. The coefficient of $z_1\cdots z_n$ in $\tr_Mp_n(X(z))$ is
\[
 \frac1M\sum_{j=0}^{n-1}(p_n)_{\operatorname{rot}^jw}
 =\frac{n}{M|[w]|}\sum_{v\in[w]}(p_n)_v\ne0.
\]
A closed walk using each positively oriented edge exactly once must follow one of these rotations. For $n=1,2$, loops or parallel matrix entries in this testing construction are still distinguished by their separate parameters. Since $(z,\bar z)$ is an invertible complex-linear change of coordinates from the real and imaginary parts, the resulting polynomial cannot vanish on all parameter values. The contradiction shows that every cyclic coefficient sum is zero, which is exactly membership in $\mathscr C_{\le N}$.
\end{proof}
The relation between trace identities and cyclic equivalence is classical \cite{BCKP,KS}. The dimension hypothesis above excludes additional low-dimensional trace identities in the stated degree range.

\begin{theorem}[Minimal random trace information]\label{thm:trace_minimality}
Let $M\ge N\ge1$, let $S$ be a real linear subspace of $\PP_{\le N}^{\rm sa}$ containing $1$, and let $p\in\PP_{\le N}^{\rm sa}$. Over arbitrary laws of positive contraction tuples in dimension $M$,
\begin{equation}\label{eq:trace_minimality_iff}
 \E\tr_Mp(A)\text{ is determined by }\{\E\tr_Ms(A):s\in S\}
 \quad\Longleftrightarrow\quad[p]\in[S]\subset\mathscr Q_N.
\end{equation}
When determination fails, two witness laws can be supported on the same set of at most $\dim_\R[S]+1$ tuples, with strictly positive weights at every support point.
\end{theorem}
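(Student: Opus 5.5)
Follow the proof of Theorem~\ref{thm:positive_minimality}, with Lemma~\ref{lem:cyclic_trace_kernel} in place of Fock injectivity and the convex hull of trace-moment vectors in place of density matrices. Membership gives the easy direction. If $[p]=[s]$ for some $s\in S$, then $p-s\in\mathscr C_{\le N}$, and Lemma~\ref{lem:cyclic_trace_kernel} (valid since $M\ge N$) shows $\tr_M(p-s)(A)=0$ for every positive contraction tuple. Taking expectations gives $\E\tr_Mp(A)=\E\tr_Ms(A)$, which is a linear readout of the retained data. Integrability is automatic because all traces are bounded by contraction norms.

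For the converse, assume $[p]\notin[S]$. Because $\mathscr C_{\le N}$ is closed under the involution up to sign conventions, the self-adjoint parts descend to the real quotient. Consider the real linear map $\Lambda:A\mapsto(\tr_Ms(A))_{s\in S}$ and the extended map $\widetilde\Lambda:A\mapsto(\Lambda(A),\tr_Mp(A))$. These maps are defined on the set $\mathcal P$ of positive contraction tuples in dimension $M$, and they take real values because the polynomials are self-adjoint. Fix a real basis of $[S]$ and set $k=\dim_\R[S]$. By Lemma~\ref{lem:cyclic_trace_kernel}, $\Lambda$ factors through $[S]$, so effectively $\Lambda:\mathcal P\to\R^{k}$. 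The key claim is that there is no affine relation $\tr_Mp(A)=c+\sum_j a_j\tr_Ms_j(A)$ on $\mathcal P$. Such a relation would give $p-c-\sum a_js_j\in\mathscr C_{\le N}$, and since $1\in S$ this contradicts $[p]\notin[S]$. Hence the image $\widetilde\Lambda(\mathcal P)$ does not lie in the graph of any affine function of the first $k$ coordinates.

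Next I would pass to laws. The set of achievable moment vectors $(\E\Lambda,\E\tr_Mp)$ is the convex hull $K$ of $\widetilde\Lambda(\mathcal P)$. I would work with the relatively interior points of the strictly positive tuples, which form an open set, so that $K$ has full dimension in its affine span. Since $K$ is not contained in a nonvertical hyperplane, its affine span includes the vertical direction. Take a point $(\lambda,t)$ in the relative interior of $K$. Then $(\lambda,t\pm\epsilon)$ still lie in $K$ for small $\epsilon$. These two points are distinct laws with the same retained data and different target values.

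The support count comes from Carathéodory. I expect this to be the main obstacle: obtaining at most $k+1$ common support points with strictly positive weights for both laws at once. The plan is to choose $k+1$ tuples $A_0,\dots,A_k$ whose vectors $\Lambda(A_i)$ are affinely independent in $\R^k$, which is possible because $\Lambda(\mathcal P)$ spans affinely, since the constant $1\in S$ only contributes a fixed coordinate. Two sub-steps follow.

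\emph{Common barycenter.} Let $\lambda$ be the barycenter of the $\Lambda(A_i)$ with uniform weights. For any nearby weights $w$ with $\sum_i w_i\Lambda(A_i)=\lambda$, the affine independence forces $w$ to be uniform. So a vertical displacement must instead come from a perturbation of the support points.

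\emph{Perturbed support.} To fix this, I would use $k+1$ points, keep the weights strictly positive, and perturb a single tuple $A_0$ in a direction $\dot A$ that changes $\tr_Mp$ to first order while keeping $\Lambda$ fixed. Such a direction exists because the non-affine relation makes the differentials independent at generic $A$, a polynomial-genericity argument on the open set. The point $\Lambda(A_0)$ is then unchanged, while the $p$-coordinate moves.

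With the support fixed, this suggests an alternative that gives laws on the same set: use the $k+2$ points $A_0,A_0',A_1,\dots,A_k$ and solve for two positive weight vectors with the same $\Lambda$-barycenter. That uses $k+2$ points, and I would then need to eliminate one by a Carathéodory reduction in $\R^{k+1}$. The plan is therefore to pick $k+1$ points whose full vectors $\widetilde\Lambda(A_i)$ are affinely independent in $\R^{k+1}$, which is not possible with only $k+1$ points unless the vertical direction is used. So the finished argument has to handle this carefully, and positivity of the weights comes from staying near the uniform barycenter.
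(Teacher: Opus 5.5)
Your easy direction is correct, and so is your observation that $\tr_Mp$ satisfies no affine relation with the retained coordinates. The relative-interior argument also yields two finitely supported laws with equal retained expectations and different targets.

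What is not established is the last clause of the theorem: a \emph{common} support of at most $\dim_\R[S]+1$ tuples, with strictly positive weights under both laws. Your plan for it rests on an off-by-one error. With $k=\dim_\R[S]$ and $1\in S$, one coordinate of $\Lambda$ is identically one. So the image of the positive contraction tuples under $\Lambda$ lies in an affine subspace of dimension at most $k-1$, and $k+1$ vectors $\Lambda(A_0),\dots,\Lambda(A_k)$ can never be affinely independent. The ``common barycenter'' obstruction you derive from that assumption therefore does not exist.

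The workaround you then propose also fails. Absence of an affine relation does not make $d\,\tr_Mp$ independent of the retained differentials at generic points, because a nonlinear functional dependence is not excluded. For $M=N=2$ and four letters, the traceless parts of the $A_i$ span at most a three-dimensional real space. Their Gram determinant is therefore a nonlinear polynomial in traces of degree at most two that vanishes identically, even though those traces satisfy no affine relation. In addition, replacing $A_0$ by $A_0'$ changes the support, leaving you with $k+2$ points and no reduction.

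The missing step is to use the affine dependence rather than avoid it. Write $f_q(A)=\tr_Mq(A)$ and choose a basis $f_1=1,f_2,\dots,f_k$ of $\{f_s:s\in S\}$; this has dimension $k$ by Lemma~\ref{lem:cyclic_trace_kernel}. Your non-affine-relation step says that $f_1,\dots,f_k,f_p$ are linearly independent functions. Hence there are tuples $A^{(0)},\dots,A^{(k)}$ whose $(k+1)\times(k+1)$ evaluation matrix $(f_i(A^{(j)}))$ is invertible; otherwise all evaluation vectors would lie in a proper subspace, and a nonzero combination of the functions would vanish identically.

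The first $k$ rows of this matrix form a $k\times(k+1)$ matrix, which has a nonzero null vector $c$. The row of $f_1=1$ gives $\sum_jc_j=0$, and invertibility forces $\sum_jc_jf_p(A^{(j)})\ne0$. For small $\epsilon>0$, the weights $(k+1)^{-1}\pm\epsilon c_j$ then define two probability laws on the same $k+1$ tuples. Both have strictly positive weights, equal retained expectations, and different target expectations.

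This is exactly the paper's proof. It makes the convex-hull step, the genericity argument and any Carath\'eodory reduction unnecessary. Positivity comes, as you anticipated, from perturbing the uniform weights.
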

\begin{proof}
On the compact space of positive contraction tuples write $f_p(A)=\tr_Mp(A)$. Lemma~\ref{lem:cyclic_trace_kernel} identifies the kernel of $p\mapsto f_p$, so the condition on the right gives linear determination. Otherwise choose a real basis $f_1=1,f_2,\ldots,f_k$ of $f_S$. Linear independence of $f_1,\ldots,f_k,f_p$ provides $k+1$ points $A^{(j)}$ with invertible full evaluation matrix. Indeed, if their evaluation columns never spanned the full space, a nonzero linear combination of the functions would vanish everywhere.

Choose a nonzero vector $c$ in the null space of the first $k$ evaluation rows. Then
\[
 \sum_jc_jf_i(A^{(j)})=0,\qquad
 \sum_jc_jf_p(A^{(j)})\ne0,\qquad \sum_jc_j=0.
\]
For sufficiently small $\epsilon>0$, the weights
$\mu_\pm(A^{(j)})=(k+1)^{-1}\pm\epsilon c_j$
are strictly positive. They define the required two probability laws.
\end{proof}

For example,
\[
 \Tr[A,B]=0,\qquad
 \Tr\big(B[A,[A,B]]\big)=-\Tr([A,B]^2).
\]
The second identity follows from $\Tr(X[Y,Z])=\Tr([X,Y]Z)$. In the cyclic quotient, both expressions are represented by PBW terms of factor deficit two. Before taking traces, $B[A,[A,B]]$ also has the primitive term $\tfrac12[B,[A,[A,B]]]$; its trace is zero. Counting independent trace information therefore requires first passing to the cyclic quotient.

\subsection{Matrix-valued sources and ordered traces}
For Hermitian matrices $G_1,\ldots,G_d\in M_M(\C)$ define the analytic germ
\begin{equation}\label{eq:matrix_source_germ}
 \mathcal M_G^{(a)}(X)=\det\left(I-\sum_jX_j\otimes G_j\right)^{-1},
 \qquad X_j\in M_a(\C).
\end{equation}
For Hermitian sources with $\norm{\sum_jX_j\otimes G_j}<1$, this is a circular Gaussian quadratic-form exponential moment. Complex sources below are interpreted through its analytic continuation near zero.

\begin{proposition}[Cycle extraction]\label{prop:matrix_word_source}
For $w=i_1\cdots i_\ell$, take $a=\ell$ and
$X_j(t)=\sum_{b:i_b=j}t_bE_{b,b+1}$, where $E_{\ell,\ell+1}=E_{\ell,1}$. Then
\begin{align}
 \det\left(I-\sum_jX_j(t)\otimes G_j\right)
 &=\det(I-t_1\cdots t_\ell G_{i_1}\cdots G_{i_\ell}),\label{eq:cycle_determinant}\\
 [t_1\cdots t_\ell]\log\mathcal M_G^{(\ell)}(X(t))
 &=\Tr(G_{i_1}\cdots G_{i_\ell}).\label{eq:cycle_extract}
\end{align}
For arbitrary square matrices $C_b$, independent standard circular Gaussian vectors also give
\begin{equation}\label{eq:gaussian_ring}
 \E_\xi\prod_{b=1}^{\ell}(\xi_b^*C_b\xi_{b+1})
 =\Tr(C_1\cdots C_\ell),\qquad \xi_{\ell+1}=\xi_1.
\end{equation}
\end{proposition}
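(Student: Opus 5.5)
The plan is to treat $\sum_jX_j(t)\otimes G_j$ as a block-cyclic shift matrix on $\C^\ell\otimes\C^M$, reduce its determinant to that of a single cycle product, and then read off the multilinear coefficient of the logarithm. For the Gaussian ring identity I will use Wick's theorem together with a counting of pairings.

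\emph{Determinant identity.} First observe that
\[
 T=\sum_jX_j(t)\otimes G_j=\sum_{b=1}^\ell t_bE_{b,b+1}\otimes G_{i_b}.
\]
This is a block matrix whose only nonzero blocks sit at positions $(b,b+1)$, cyclically, and equal $t_bG_{i_b}$. I will compute $\det(I-T)$ by eliminating blocks, or equivalently by block Schur complements along the cycle. The $\ell$th power $T^\ell$ is block diagonal, with $b$th block the cyclic rotation of $t_1\cdots t_\ell G_{i_1}\cdots G_{i_\ell}$, while $T^k$ has no diagonal blocks for $0<k<\ell$. Hence $\Tr T^k=0$ unless $\ell\mid k$, and
\[
 \Tr T^{q\ell}=\ell\,(t_1\cdots t_\ell)^q\Tr\bigl((G_{i_1}\cdots G_{i_\ell})^q\bigr).
\]
For small $t$ I then compare
\[
 \log\det(I-T)=-\sum_k\Tr T^k/k
\]
with $\log\det(I-\tau P)$, where $P=G_{i_1}\cdots G_{i_\ell}$ and $\tau=t_1\cdots t_\ell$. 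The factor $\ell$ cancels against $k=q\ell$. Both determinants are polynomials in $t$, and their logarithms agree near zero, so \eqref{eq:cycle_determinant} holds identically. The direct cofactor route reaches the same conclusion: block elimination along the cycle gives $\det(I-t_1G_{i_1}\cdots t_\ell G_{i_\ell})$.

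\emph{Coefficient extraction.} From the first step,
\[
 \log\mathcal M_G^{(\ell)}(X(t))=-\log\det(I-\tau P)=\sum_{q\ge1}\tau^q\Tr(P^q)/q.
\]
This is a power series in $\tau=t_1\cdots t_\ell$ alone. The coefficient of the monomial $t_1\cdots t_\ell$ is therefore the $q=1$ term, namely $\Tr P$, which gives \eqref{eq:cycle_extract}. Analytic continuation near zero justifies using complex (non-Hermitian) $X_j$.

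\emph{Gaussian ring.} I expand the product as
\[
 \sum\prod_b\overline{\xi_{b,\alpha_b}}(C_b)_{\alpha_b\beta_b}\xi_{b+1,\beta_b}.
\]
Each vector $\xi_c$ appears exactly once conjugated (from factor $c$) and once unconjugated (from factor $c-1$). Independence across $c$ and $\E\,\overline{\xi_{c,\alpha}}\xi_{c,\beta}=\delta_{\alpha\beta}$ force $\alpha_c=\beta_{c-1}$. Summing over indices then yields $\Tr(C_1\cdots C_\ell)$.

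The case $\ell=1$, where $\xi_1$ appears twice in the same factor, needs the separate check $\E\,\xi^*C\xi=\Tr C$, which holds. The main point requiring care is the multiplicity bookkeeping in the determinant step, in particular the cancellation of the factor $\ell$. I would verify it on $\ell=2$ first.
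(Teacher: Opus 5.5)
Your proposal is correct and follows the paper's own proof essentially step for step. In both, the block source is a single oriented cycle, so only the traces $\Tr T^{q\ell}=\ell\tau^q\Tr(P^q)$ survive in $-\log\det(I-T)=\sum_k\Tr T^k/k$; the factor $\ell$ cancels, the determinant identity follows near zero and then as a polynomial identity, and the extraction is the $q=1$ term, while the ring identity comes from the covariance identifying adjacent indices (your separate $\ell=1$ check and the cofactor remark are harmless extras).
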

\begin{proof}
The block source follows a single oriented cycle. Only powers $h\ell$ have nonzero traces, equal to
$\ell(t_1\cdots t_\ell)^h\Tr[(G_{i_1}\cdots G_{i_\ell})^h]$.
Insert this in $-\log\det(I-A)=\sum_{k\ge1}\Tr(A^k)/k$. Both identities follow near zero; the first then holds identically as a polynomial. In \eqref{eq:gaussian_ring}, each Gaussian vector appears once in a conjugated and once in an unconjugated coordinate. Its covariance identifies the adjacent indices, leaving exactly the ordered trace.
\end{proof}
Matrix sources resolve order up to the intrinsic cyclic equivalence of trace. In particular they distinguish the transposed Pauli example in Appendix~\ref{sec:obsappendix}, whose scalar determinant sources agree but whose oriented cubic traces are $16+2i$ and $16-2i$.

\begin{theorem}[Simultaneous finite-dimensional orbit recovery]\label{thm:finite_orbit}
Two self-adjoint tuples $A,B$ of the same known dimension $M$ have equal traces of all words if and only if $B_i=UA_iU^*$ for one unitary $U$. Words of degree at most $2M^2-1$ suffice.

For rectangular families $T_a,S_a$ with the same respective finite input and output dimensions, equality of matrix-source germs for the complete cross-Gram families is equivalent to
\begin{equation}\label{eq:biunitary_orbit}
 S_a=VT_aU^*\qquad\text{for every }a,
\end{equation}
with common input and output unitaries. If the identity response $T_0=S_0=I$ is retained, then $V=U$.
\end{theorem}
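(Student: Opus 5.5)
The first assertion follows by combining a classical invariant-theory result with a word-length bound. If $B_i=UA_iU^*$, cyclicity of the trace gives equal word traces. For the converse, the tuple $A$ generates a finite-dimensional $C^*$-algebra $\mathcal A\subset M_M(\C)$. The normalized trace $\tr_M$ restricted to words in $A$ determines the inclusion $\mathcal A\hookrightarrow M_M(\C)$ up to unitary equivalence, because a unital $*$-representation of the free algebra into $M_M$ is determined up to unitary conjugacy by its character (Procesi--Razmyslov; equivalently the semisimple representation-theoretic argument). Concretely:
\begin{enumerate}
\item The map $w(A)\mapsto w(B)$ on words is well defined and isometric for the Hilbert--Schmidt inner product $\Tr(X^*Y)$. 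Indeed $\Tr(w(A)^*v(A))$ is itself a word trace, since the generators are self-adjoint.
\item This map therefore extends to a trace-preserving $*$-isomorphism $\pi:\mathcal A\to\mathcal B$. Kernels coincide because the trace is faithful.
\item Two unital embeddings of the same finite-dimensional $C^*$-algebra into $M_M$ with equal traces have equal multiplicities on each simple summand. The multiplicities are read from the traces of the minimal central projections. Equal multiplicities give a unitary $U$ implementing $\pi$.
\end{enumerate}

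The degree bound comes from the Gram argument of Proposition~\ref{prop:multiplicationtable}. The filtered spaces $\mathcal A_n(A)=\{p(A):\deg p\le n\}$ strictly increase until they stabilize, and their dimension is at most $M^2$. Hence stabilization occurs by degree $M^2-1$, and words of length $\le M^2-1$ span $\mathcal A$. All the inner products used in step (i) then involve words of length $\le 2M^2-2\le 2M^2-1$. Multiplication on the span is also determined by these traces, since the table coefficients $M_i^{(N)}$ of \eqref{eq:multiplicationmatrices} use degree at most $2N$. One subtle point is that stabilization for $A$ need not occur at the same step for $B$. Equal Gram matrices for words of length $\le n$ force equal dimensions of $\mathcal A_n(A)$ and $\mathcal A_n(B)$, so the two filtrations stabilize together.

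The rectangular part uses the same strategy on cross-Gram families. Via Proposition~\ref{prop:matrix_word_source}, the germ \eqref{eq:matrix_source_germ} for the Hermitian family built from the $T_a^*T_b$ (and $T_aT_b^*$) determines all ordered traces $\Tr(T_{a_1}^*T_{b_1}T_{a_2}^*T_{b_2}\cdots)$ by \eqref{eq:cycle_extract}. This is exactly the information recovered by taking the first part in the self-adjoint dilation
\[
 \widetilde T_a=\begin{pmatrix}0&T_a^*\\T_a&0\end{pmatrix},
\]
on $\C^{n_{\rm in}}\oplus\C^{n_{\rm out}}$, together with the block projections $P_{\rm in},P_{\rm out}$. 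Word traces involving the projections are the alternating cross-Gram traces. The first part then gives a unitary conjugating the dilated tuple and the two projections. Commuting with both projections forces the unitary to be block diagonal, $U\oplus V$, and this yields \eqref{eq:biunitary_orbit}. When $T_0=S_0=I$, we get $I=VU^*$, so $V=U$. The converse is immediate.

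I expect the main obstacle to be the rectangular case when input and output dimensions differ. There the projections must be realized from cross-Gram data alone. The plan is to note that $\Tr P_{\rm in}=n_{\rm in}$ is known, and that every word in the dilation containing projections reduces to an alternating $T^*T$ trace. Degenerate cases where all $T_a$ vanish on some block still need to be checked, which is why the input and output dimensions are assumed known.
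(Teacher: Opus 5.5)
Your proof of the first assertion is the paper's argument: the isometric word map, the trace-preserving $*$-isomorphism, multiplicities read from traces of minimal projections, and the implementing unitary. You depart from the paper in two places, both legitimately.

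\textbf{Rectangular part.} The paper applies the first assertion only to the Hermitian coordinates of the input-side family $(T_a^*T_b)$. This gives $U$ with $S_a^*S_b=UT_a^*T_bU^*$. The paper then obtains $V$ by extending the isometry $\sum_aT_ah_a\mapsto\sum_aS_aUh_a$ from the response span. Equal output dimension is used exactly in that extension, and the argument makes visible that $V$ is determined only on the response span.

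Your dilation with the extra letters $P_{\rm in},P_{\rm out}$ produces both unitaries from a single application of the first assertion, and it works uniformly. The relations $P_{\rm in}\widetilde T_a=\widetilde T_aP_{\rm out}$ and $P_{\rm in}P_{\rm out}=0$ reduce every word trace to one of the following:
\begin{itemize}
\item $n_{\rm in}$ or $n_{\rm out}$;
\item zero, when the word contains an odd number of $\widetilde T$'s;
\item an alternating trace $\Tr(T_{a_1}^*T_{a_2}\cdots T_{a_{2k-1}}^*T_{a_{2k}})$, with output-block traces brought to this form by cyclicity.
\end{itemize}
So no degenerate case needs separate checking. The germs of $T_aT_b^*$ that you mention parenthetically are not needed, since the input-side family already determines every trace that occurs.

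\textbf{Degree bound.} The paper works with the joint tuple $A_i\oplus B_i$. Its algebra has dimension at most $2M^2$, so the word filtration exhausts it by degree $2M^2-1$. The functional $X\oplus Y\mapsto\Tr X-\Tr Y$ vanishes on words of that degree and hence everywhere. Nothing about $B$'s own filtration or multiplication table is required.

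Your per-tuple route can even give a sharper, $A$-dependent bound, but your count does not yet reach $2M^2-1$. Checking that $B$ stabilizes at the same index $n_0\le M^2-1$ needs Gram entries at level $n_0+1$. The table $M_i^{(N)}$ with $N=n_0+1$ needs degree $2N$. Both equal $2M^2$ when $n_0=M^2-1$.

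To close this, test the multiplication defect $D=x_iv(B)-\sum_uc_uu(B)$, for $|v|\le n_0$, by orthogonality against $\mathcal A_{n_0}(B)$. This uses only traces of degree at most $2n_0+1$. Then obtain $\mathcal A_{n_0}(B)=\mathcal A(B)$ as follows:
\begin{itemize}
\item when $n_0\le M^2-2$, from rank equality at level $n_0+1$, which uses degree at most $2M^2-2$;
\item when $n_0=M^2-1$, from $\dim\mathcal A_{M^2-1}(B)=\dim\mathcal A_{M^2-1}(A)=M^2$.
\end{itemize}
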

\begin{proof}
Equality of word traces gives equality of $\Tr p(A)^*p(A)$, and hence equality of the two polynomial kernels. The map $p(A)\mapsto p(B)$ is a trace-preserving isomorphism of finite-dimensional $*$-algebras. Decomposing these algebras into full matrix blocks, the ambient trace of a minimal projection records the multiplicity of each represented block. The isomorphism preserves these multiplicities and is consequently implemented by an ambient unitary. This is the Specht-type mechanism of \cite{FHS}.

For the finite degree bound, let $V_n$ be the span of words of degree at most $n$ in the block tuple $A_i\oplus B_i$. Its dimension is at most $2M^2$, and $\dim V_0=1$. Equality $V_n=V_{n+1}$ makes $V_n$ invariant under multiplication by every generator, and then it contains every word. If no earlier equality occurs, dimension growth forces stabilization by $2M^2-1$. The trace-difference functional vanishing on that layer thus vanishes on the full generated algebra.

Replace the cross-Gram family by its Hermitian coordinates: the diagonal blocks and the real and imaginary parts of the off-diagonal blocks. Cycle extraction and the first assertion give a common $U$ such that
$S_a^*S_b=UT_a^*T_bU^*$.
The map
$\sum_aT_ah_a\mapsto\sum_aS_aUh_a$
is a well-defined isometry on the response span, because all cross inner products agree. Equal finite output dimensions allow its extension to a unitary $V$, proving \eqref{eq:biunitary_orbit}. The converse is immediate. See also \cite{Jing,FHS}.
\end{proof}

Cross terms are essential: $T_1=T_2=I_2$ and $S_1=I_2$, $S_2=\diag(1,-1)$ have identical individual squares but different cross traces. In infinite dimensions, Hilbert--Schmidt cross-Grams admit Fredholm and cycle formulas by finite-rank approximation; ambient orbit recovery additionally depends on kernel dimensions and representation multiplicities.

\subsection{Random orbits and replica depth}
For fixed $M,d$, the unitary orbit space of self-adjoint contraction tuples is compact. The real and imaginary parts of normalized word traces are bounded continuous separating coordinates; Theorem~\ref{thm:finite_orbit} allows a finite degree bound. Products of these coordinates from the same random tuple determine its orbit law by Stone--Weierstrass.

Lie pulses provide equivalent coordinates. For a homogeneous real Lie polynomial $h$, put $H_h(A)=i^{|h|-1}h(A)$, which is self-adjoint, and consider
\[
 z_t(A)=\tr_M\exp\left(i\sum_ht_hH_h(A)\right)
\]
for finite rational parameter lists. Continuity recovers real parameters; mixed derivatives give symmetrized products of Lie factors, and PBW recovers word traces. Their same-sample products determine the same orbit law.

\begin{theorem}[Word degree and replica depth are independent]\label{thm:replica_obstruction}
For every $r\ge1$ there are two laws of positive projections in dimension $M=r+1$ such that, for polynomials of arbitrary degrees,
\begin{equation}\label{eq:replica_equal}
 \E_+\prod_{a=1}^b\tr_Mp_a(P)
 =\E_-\prod_{a=1}^b\tr_Mp_a(P)\qquad(b\le r),
\end{equation}
but some $(r+1)$-fold observation differs. The same statement holds for trace pulses when every letter is the same projection.
\end{theorem}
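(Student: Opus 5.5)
The plan is to reduce everything to the rank of a single projection, and then use finite differences to build two rank laws. For an orthogonal projection $P$ in dimension $M$ we have $P^2=P$. Every polynomial therefore reduces to an affine one, $p(P)=p(0)I+(p(1)-p(0))P$, and so
\[
 \tr_Mp(P)=p(0)+(p(1)-p(0))\,\frac kM,\qquad k=\rank P .
\]
Consequently, for any polynomials $p_1,\ldots,p_b$ of arbitrary degrees, $\prod_{a=1}^b\tr_Mp_a(P)$ is a polynomial of degree at most $b$ in $k$. Its coefficient of $k^b$ is $M^{-b}\prod_a(p_a(1)-p_a(0))$. Degree in the words therefore plays no role, and the $b$-fold replica observations of a law of projections are exactly the expectations of polynomials of degree $\le b$ in the rank.

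For the construction we take $M=r+1$ and, for each $k\in\{0,1,\ldots,r+1\}$, let $P_k$ be the diagonal projection onto the first $k$ basis vectors. Then we define the signed weights $c_k=(-1)^k\binom{r+1}{k}$. The $(r+1)$-st finite difference annihilates every polynomial of degree $\le r$, so $\sum_kc_kq(k)=0$ for all such $q$. On the other hand, $\sum_kc_kk^{r+1}=(-1)^{r+1}(r+1)!\ne0$. We put
\[
 \mu_\pm(P_k)=\frac1{r+2}\pm\epsilon c_k,
\]
with $\epsilon>0$ small enough that all weights are strictly positive. Both $\mu_\pm$ are probability laws, since $\sum_kc_k=0$. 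By the reduction above, \eqref{eq:replica_equal} holds for every $b\le r$. Taking $b=r+1$ and $p_a(x)=x$ for every $a$ gives
\[
 \E_+-\E_-=2\epsilon M^{-(r+1)}(-1)^{r+1}(r+1)!\ne0.
\]
If strictly positive nonzero projections are wanted, the rank-zero point cannot simply be dropped, because $r+2$ support points are needed to separate degree $r+1$. In that case we would instead shift the dimension to $M=r+2$ and use ranks $1,\ldots,r+2$; the argument is unchanged.

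For trace pulses with every letter equal to $P$, the same idempotent reduction gives
\[
 \tr_M\exp(itP)=1+(e^{it}-1)\frac kM,
\]
again affine in $k$. Products of at most $r$ pulses therefore agree under $\mu_\pm$. A product of $r+1$ pulses has leading coefficient $M^{-(r+1)}\prod_a(e^{it_a}-1)$, which is nonzero for generic parameters, so it distinguishes the two laws.

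The only step requiring care is the observation that idempotence collapses all word degrees to affine functions of the rank. After that, the finite-difference null vector does all the work, and positivity of the weights is immediate for small $\epsilon$.
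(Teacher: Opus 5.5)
Your proof is correct and follows the paper's argument. Idempotence makes every trace affine in the rank $k$, and the two laws differ by a multiple of the alternating binomial vector $(-1)^k\binom{r+1}{k}$, whose finite-difference identities give equality for up to $r$ replicas and a nonzero gap at $r+1$. The paper fixes the explicit laws $\mu_\pm(k)=2^{-r}\binom{M}{k}\mathbf 1_{\{k\text{ even/odd}\}}$, which is your construction with the binomial law $2^{-M}\binom{M}{k}$ as base measure instead of the uniform one and $\epsilon=2^{-M}$, so no smallness parameter is needed.
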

\begin{proof}
Set $P_k=\diag(I_k,0)$, $0\le k\le M$, and give its rank the laws
\begin{equation}\label{eq:replica_rank_laws}
 \mu_+(k)=2^{-r}\binom Mk\mathbf1_{\{k\ \text{even}\}},\qquad
 \mu_-(k)=2^{-r}\binom Mk\mathbf1_{\{k\ \text{odd}\}}.
\end{equation}
Both are probability laws. Since $P_k^a=P_k$ for $a\ge1$, every polynomial trace is affine in $k/M$. The finite-difference identities
\[
 \sum_{k=0}^M(-1)^k\binom Mk k^a=0\quad(a<M),\qquad
 \sum_{k=0}^M(-1)^k\binom Mk k^M=(-1)^MM!
\]
prove the equality of all products of at most $r$ traces, and also give
\begin{equation}\label{eq:replica_gap}
 \E_+(\tr_MP)^M-\E_-(\tr_MP)^M
 =\frac{(-1)^MM!}{2^rM^M}\ne0.
\end{equation}
Finally $\tr_Me^{itP_k}=1+(k/M)(e^{it}-1)$ is affine as well.
\end{proof}
For $r=2$, the even-rank law assigns masses $1/4,3/4$ to $0,2$, and the odd-rank law assigns $3/4,1/4$ to $1,3$. The first two normalized rank moments are $1/2,1/3$ under both laws; the third moments are $2/9$ and $5/18$. The witness dimension grows with $r$. Thus the result concerns a uniform replica cutoff across dimensions.

\section{Positive limits and quantitative realization}\label{sec:realizationboundary}
We next fix the ambient Hilbert space. A faithful anchor identifies it inside a positive Gram realization. The residual outside that space determines whether the observation is a genuine tuple. Full-word data close into completely positive maps, and a two-boundary identity controls their multiplication defects through the contexts used by each target.

\subsection{An anchored flat extension}
Let $R$ be a bounded positive injective operator on $H$. Its range is dense. Suppose a positive block matrix $G$ has $G_{00}=R^2$, and choose a minimal Gram realization $G_{ab}=V_a^*V_b$. There is an isometry $U:H\to\mathcal K$ such that $V_0=UR$. Shorting against the unit block gives
\begin{equation}\label{eq:anchorflat}
 \Delta_{ab}=G_{ab}-\operatorname*{s-lim}_{\varepsilon\downarrow0}
 G_{a0}(R^2+\varepsilon I)^{-1}G_{0b}
 =V_a^*(I-UU^*)V_b.
\end{equation}
The identity follows from $R(R^2+\varepsilon I)^{-1}R\to I$ strongly.

\begin{theorem}[A flat border determines all words]\label{thm:flatborder}
The conditions
\[
 G_{0i}=G_{0i}^*,\qquad G_{ii}\le M_i^2R^2,\qquad \Delta_{ii}=0
 \quad(i\ge1)
\]
are necessary and sufficient for the existence of unique self-adjoint operators $T_i$, $\norm{T_i}\le M_i$, such that
$G_{ab}=RT_a^*T_bR$, with $T_0=I$.
The border has a unique positive full-word Hankel extension,
\begin{equation}\label{eq:flatfull}
 K(u,v)=Ru(T)^*v(T)R,\qquad K(iu,v)=K(u,iv).
\end{equation}
Uniqueness holds among all positive Hankel extensions of the given border.
\end{theorem}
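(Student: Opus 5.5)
Necessity is a direct computation, and sufficiency goes through the anchored Gram realization. If $G_{ab}=RT_a^*T_bR$ with $T_0=I$, take $\mathcal K=H$, $V_a=T_aR$ and $U=I$. Then $G_{00}=R^2$ and $G_{0i}=RT_iR=G_{0i}^*$ because $T_i$ is self-adjoint. Also $G_{ii}=RT_i^2R\le M_i^2R^2$. Since $V_i$ lies in the range of $U$, \eqref{eq:anchorflat} gives $\Delta_{ii}=0$. Minimality is not needed here: the residual is intrinsic, as it is computed from $G$ alone through \eqref{eq:anchorflat}.

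For sufficiency, start from the minimal realization with $V_0=UR$. The condition $\Delta_{ii}=V_i^*(I-UU^*)V_i=0$ forces $(I-UU^*)V_i=0$, so $V_i=UU^*V_i$. Next use $G_{ii}\le M_i^2R^2$, i.e. $\norm{V_ix}\le M_i\norm{Rx}$. By Douglas' lemma (as used in Theorem~\ref{thm:target_short}) there is a unique $T_i\in B(H)$ with $\norm{T_i}\le M_i$, $U^*V_i=T_iR$ and $T_i$ vanishing on $(\ran R)^\perp=0$. Explicitly, $T_i$ is defined on the dense range by $Rx\mapsto U^*V_ix$. Then $V_i=UT_iR$, hence $G_{ab}=RT_a^*T_bR$ with $T_0=I$. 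Self-adjointness comes from $G_{0i}=RT_iR$ and $G_{0i}^*=RT_i^*R$: their equality and injectivity with dense range of $R$ give $T_i=T_i^*$. Uniqueness of $T_i$ also follows from density of $\ran R$, since $RT_iR$ determines $T_i$.

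For the full-word extension, existence is the formula \eqref{eq:flatfull}. This kernel is positive, being a Gram kernel of the vectors $v(T)R$, and it satisfies the Hankel relation because each $T_i$ is self-adjoint. Its restriction to words of length at most one is $G$.

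The main obstacle is uniqueness among all positive Hankel extensions $K'$ of the border. Given such a $K'$, I would take a minimal Gram realization $K'(u,v)=W_u^*W_v$ with $W_\emptyset=U'R$ for an isometry $U'$, as in \eqref{eq:anchorflat}. The Hankel relation says $W_{iu}^*W_v=W_u^*W_{iv}$, so left multiplication $S_i\colon W_v\mapsto W_{iv}$ is a symmetric operator on the dense span of the $W_v$. The border flatness gives $W_i=U'T_iR$, so $W_i\in\overline{\ran U'}$ and $\norm{W_ix}\le M_i\norm{W_\emptyset x}$.

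I would then show inductively that every $W_v$ equals $U'v(T)R$. The key step is to show that $S_i$ maps $\ran U'$ into itself by $U'h\mapsto U'T_ih$. On $\ran U'R$ this holds by definition. To pass to the closure I would compute $\norm{W_{iv}-U'T_iU'^*W_v}^2$ using the Hankel relation to move one letter across. For $v=j$ this reduces to $W_j^*W_{ii}W_j$-type terms, which are determined by lower data only through positivity. So I expect to argue instead with a shorting argument: the residual of $W_{iv}$ against $\ran U'$ is $\Delta$-type. Positivity of the $3\times3$ block matrix built from $(W_\emptyset,W_i,W_{ii})$, combined with the Hankel identity $W_i^*W_i=W_\emptyset^*W_{ii}$, should force $(I-U'U'^*)W_{ii}=0$. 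This is the classical flat-extension mechanism. Iterating letter by letter then gives $W_v=U'v(T)R$, hence $K'=K$. If the direct norm computation fails, I would fall back on a Curto--Fialkow style rank/range argument adapted to the anchored shorted residual.
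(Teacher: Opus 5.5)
\paragraph{Where the proposal has a gap.}
Your necessity argument is correct and matches the paper. So are the construction of the $T_i$ from the anchored realization (self-adjointness, the bound $\norm{T_i}\le M_i$, uniqueness by density of $\ran R$) and the existence of the extension \eqref{eq:flatfull}. The gap is in uniqueness among all positive Hankel extensions.

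You isolate the right key step: the shift $S_i$ should act on $\ran U'$ as $U'T_iU'^*$. But you have this only on $\ran U'R$, and the mechanism you propose for going further is false. Positivity of the Gram matrix of $(W_\emptyset,W_i,W_{ii})$ is automatic. The identity $W_i^*W_i=W_\emptyset^*W_{ii}$ only gives $U'^*W_{ii}=T_i^2R$: it fixes the component of $W_{ii}$ in $\ran U'$ and says nothing about $(I-U'U'^*)W_{ii}$.

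Already for $H=\C$, $R=1$, $T_i=t$, take $W_\emptyset=e_1$, $W_i=te_1$, $W_{ii}=t^2e_1+ce_2$. For every $c$ these satisfy both conditions and every Hankel identity visible inside that block. Equivalently, the $3\times3$ Hankel matrix with moments $1,t,t^2,t^3,m_4$ is positive for all $m_4\ge t^4$. What forces $c=0$ are relations through the longer word $iii$: $W_{ii}^*W_{ii}=W_i^*W_{iii}$ and $W_\emptyset^*W_{iii}=W_i^*W_{ii}$ give $\norm{W_{ii}}^2=t^4$.

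So the argument must use the symmetry of $S_i$ against its whole domain, not positivity of a finite block. Your Curto--Fialkow fallback is left unspecified. Also, no boundedness of $S_i$ is available, so ``passing to the closure'' cannot be done by continuity.

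\paragraph{How to close it.}
Test symmetry against the anchor vectors $U'Rh$, whose images $S_iU'Rh=U'T_iRh$ are fixed by the border. This gives $U'^*S_iy=T_iU'^*y$ for every $y$ in the dense domain $D$ of $S_i$. Hence for $z\in D\cap\ran U'$ and all $y\in D$,
\[
 \ip{S_iz}{y}=\ip{z}{S_iy}=\ip{z}{U'U'^*S_iy}=\ip{U'T_iU'^*z}{y},
\]
so $S_iz=U'T_iU'^*z\in\ran U'$. Induction on word length from $W_\emptyset=U'R$ then yields $W_w=U'w(T)R$ and $K'=K$; this is the paper's argument.

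Closability of the densely defined symmetric operator $S_i$ gives the same conclusion: if $Rh_n\to h$, then $U'Rh_n\to U'h$ and $S_iU'Rh_n=U'T_iRh_n\to U'T_ih$, so $\overline{S_i}\,U'h=U'T_ih$.

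With $U'^*S_i=T_iU'^*$ on $D$ in hand, the direct norm computation you abandoned also closes. For $z=W_vh\in\ran U'$, the Hankel move $\norm{S_iz}^2=\ip{z}{S_i^2z}$ places the extra letter against a vector of $\ran U'$. Both $\norm{S_iz}^2$ and $\ip{S_iz}{U'T_iU'^*z}$ then equal $\norm{U'T_iU'^*z}^2$, so $S_iz=U'T_iU'^*z$.

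Finally, you should verify that $S_i$ is well defined on the span of the vectors $W_vh$. The same Hankel symmetry gives $\norm{S_iz}^2=\ip{z}{S_i^2z}=0$ whenever the combination $z=\sum_kW_{v_k}h_k$ vanishes.
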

\begin{proof}
The unit column identifies the space in which a genuine realization must live. The map $Rh\mapsto V_0h$ is isometric and extends, by density of $\ran R$, to an isometry $U:H\to\mathcal K$. Put $P=UU^*$. The shorting formula \eqref{eq:anchorflat} says $\Delta_{ii}=V_i^*(I-P)V_i$, so flatness is equivalent to $V_i=PV_i$.

Under flatness define $T_i$ first on $\ran R$ by $T_iRh=U^*V_ih$. Injectivity of $R$ gives well-definedness, and the diagonal bound gives
\[
 \norm{T_iRh}^2=\norm{V_ih}^2
 =\ip{G_{ii}h}h\le M_i^2\norm{Rh}^2.
\]
Hence $T_i$ has a unique bounded extension. The identity $G_{0i}=RT_iR$ and self-adjointness of $G_{0i}$ make this extension self-adjoint. Since $V_i=UT_iR$, every cross block is $RT_a^*T_bR$. Conversely these formulas verify the hypotheses. The first row uniquely determines each $T_i$ on the dense anchor range, and thus on $H$.

We must also exclude a different positive Hankel extension that adds hidden higher-word vectors. Take a minimal realization $K(u,v)=W_u^*W_v$ of an extension, writing $W_e=UR$ for its unit-column isometry and $P=UU^*$. On the algebraic span of formal word columns define the shifts by $S_iW_wh=W_{iw}h$. Hankel symmetry makes them symmetric for the positive form. If a formal vector $z$ has zero norm, then Cauchy--Schwarz and symmetry give
\[
 \norm{S_iz}^2=\ip z{S_i^2z}=0.
\]
Thus the shifts descend to a well-defined common dense domain $D$ in the positive quotient. No boundedness of these shifts has yet been assumed.

The determined border gives $W_e=UR$ and $W_i=UT_iR$. Testing symmetry against $URh$ for every $h$ shows that, for $z\in D$,
\[
 PS_iz=UT_iU^*Pz.
\]
If in addition $z\in\ran U$, then for any $y\in D$,
\[
 \ip{S_iz}y=\ip z{S_iy}
 =\ip z{UT_iU^*Py}=\ip{UT_iU^*z}y.
\]
Density of $D$ implies $S_iz=UT_iU^*z\in\ran U$. Starting with the unit-word columns, induction on word length now gives $W_w=Uw(T)R$ for every word. Minimality leaves no further subspace. Therefore the full kernel is necessarily \eqref{eq:flatfull}, which is positive and Hankel by direct evaluation. This proves both existence and uniqueness of the extension.
\end{proof}

Relative to the anchor, $\Delta$ measures a realization defect rather than a missing polynomial direction. It must vanish even when the abstract word filtration has nonzero higher innovations: those innovations may all be represented inside the same anchored Hilbert space. This is an anchored form of flat extension; see \cite{MS} for the general positive-functional setting. An operator-valued border contains all its scalar matrix entries. The scalar positive matrix $\diag(1,1)$ with unit anchor is a simple nonflat example: its first row prescribes $T=0$, while its square entry equals one.

The quantitative extension of flatness is most transparent before taking norms. A word can leave the anchored subspace and return to it; the following identity isolates the two boundary crossings while retaining the entire ordered operator between them.

\begin{lemma}[Two-boundary identity for compressed words]\label{lem:compressed_word_identity}
Let $U:H\to\mathcal K$ be an isometry and put $P=UU^*$. For bounded operators $A_1,\ldots,A_n$ on $\mathcal K$, define
\[
 T_j=U^*A_jU,\qquad
 B_j=(I-P)A_jU,\qquad C_j=(I-P)A_j^*U.
\]
Empty products below are identities. Then
\begin{equation}\label{eq:two_boundary_identity}
 \begin{split}
 U^*A_1\cdots A_nU-T_1\cdots T_n
 =\sum_{1\le i<j\le n}
  T_1\cdots T_{i-1}C_i^*
  A_{i+1}\cdots A_{j-1}B_jT_{j+1}\cdots T_n.
 \end{split}
\end{equation}
For every positive Hilbert--Schmidt anchor $R$ on $H$,
\begin{equation}\label{eq:two_boundary_anchored}
 \begin{split}
 &\norm{R(U^*A_1\cdots A_nU-T_1\cdots T_n)R}_1\\
 &\quad\le\sum_{i<j}
 \norm{C_i(T_1\cdots T_{i-1})^*R}_2
 \left(\prod_{i<k<j}\norm{A_k}\right)
 \norm{B_jT_{j+1}\cdots T_nR}_2.
 \end{split}
\end{equation}
\end{lemma}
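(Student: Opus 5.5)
The plan is to prove \eqref{eq:two_boundary_identity} by induction on $n$, using only the splitting $I=P+(I-P)$ and the identity $U^*PA_jU=T_j$; the anchored estimate \eqref{eq:two_boundary_anchored} will then follow term by term from H\"older's inequality. For $n\le1$ the sum is empty and both sides vanish, because $U^*A_1U=T_1$. For the inductive step I will peel off the first letter. Inserting $I=P+(I-P)$ between $A_1$ and the rest gives
\[
 U^*A_1A_2\cdots A_nU
 =T_1\,U^*A_2\cdots A_nU+U^*A_1(I-P)A_2\cdots A_nU .
\]
Since $U^*A_1(I-P)=((I-P)A_1^*U)^*=C_1^*$, the second term equals $C_1^*A_2\cdots A_nU$.

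The first term is handled by the induction hypothesis applied to $A_2,\dots,A_n$. That produces $T_1T_2\cdots T_n$ together with all the pairs $2\le i<j$, each carrying the prefix $T_1$, which is the correct prefix $T_1\cdots T_{i-1}$. What remains is to show that the second term supplies exactly the terms with $i=1$. I will expand $A_2\cdots A_nU$ from the right end. Writing $A_nU=UT_n+B_n$, and in general $A_jUT_{j+1}\cdots T_n=UT_jT_{j+1}\cdots T_n+B_jT_{j+1}\cdots T_n$, gives the telescoping identity
\[
 A_2\cdots A_nU=\sum_{j=2}^n A_2\cdots A_{j-1}B_jT_{j+1}\cdots T_n+U\,T_2\cdots T_n .
\]
Then $C_1^*U=U^*A_1(I-P)U=0$, because $(I-P)U=0$, so the last term is annihilated. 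What survives is exactly $\sum_{j\ge2}C_1^*A_2\cdots A_{j-1}B_jT_{j+1}\cdots T_n$, which are the $i=1$ terms. The main point requiring care is this bookkeeping: the middle factors must be the unprojected $A_k$, and the boundary term $U T_2\cdots T_n$ must vanish. Both facts come from $(I-P)U=0$.

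For \eqref{eq:two_boundary_anchored}, I will multiply each summand by $R$ on both sides and use the triangle inequality. Each summand factors as
\[
 \bigl[C_i(T_1\cdots T_{i-1})^*R\bigr]^*\,(A_{i+1}\cdots A_{j-1})\,\bigl[B_jT_{j+1}\cdots T_nR\bigr].
\]
H\"older's inequality $\norm{X^*YZ}_1\le\norm X_2\norm Y\norm Z_2$, together with submultiplicativity of the operator norm on the middle word, gives the stated bound. The Hilbert--Schmidt norms are finite because $R\in\Sch_2$ and all the other factors are bounded.
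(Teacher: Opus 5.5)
Your proof is correct and is essentially the paper's argument. The paper first telescopes $A_1\cdots A_nU$ from the right using $A_jU=UT_j+B_j$, then telescopes each compressed prefix from the left using $U^*A_i=T_iU^*+C_i^*$, and kills the extra term by $U^*B_j=0$; your induction does the left peeling first and the right telescoping of the tail second, killing the boundary term by $C_1^*U=0$ instead, and your anchored estimate is exactly the paper's factorization $X^*YZ$ followed by Schatten H\"older.
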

\begin{proof}
First telescope the difference before compression:
\[
 A_1\cdots A_nU-UT_1\cdots T_n
 =\sum_{j=1}^n A_1\cdots A_{j-1}B_jT_{j+1}\cdots T_n.
\]
For each prefix, telescope from the other end:
\[
 U^*A_1\cdots A_{j-1}-T_1\cdots T_{j-1}U^*
 =\sum_{i<j}T_1\cdots T_{i-1}C_i^*A_{i+1}\cdots A_{j-1}.
\]
Multiply this identity by $B_j$, use $U^*B_j=0$, and substitute into the first one after left compression by $U^*$. This proves \eqref{eq:two_boundary_identity}. The middle product is not replaced by its compression: it can contain further departures and returns.

After sandwiching a summand by $R$, it is $X^*AY$, with
\[
 X=C_i(T_1\cdots T_{i-1})^*R,
 \quad A=A_{i+1}\cdots A_{j-1},
 \quad Y=B_jT_{j+1}\cdots T_nR.
\]
Both $X,Y$ are Hilbert--Schmidt. Schatten H\"older gives
$\norm{X^*AY}_1\le\norm X_2\norm A\norm Y_2$, proving the second estimate.
\end{proof}

For a self-adjoint tuple $A_i$, put $T_i=U^*A_iU$, $B_i=(I-UU^*)A_iU$, $\eta_i=\norm{B_i}$, and assume $\norm{A_i}\le M_i$. Taking operator norms in the same identity gives, for $w=i_1\cdots i_n$,
\begin{equation}\label{eq:twoleakage}
 \norm{U^*w(A)U-w(T)}
 \le\sum_{a<b}\eta_{i_a}\eta_{i_b}\prod_{k\ne a,b}M_{i_k}.
\end{equation}
If $M_i\le M$ and $\eta_i^2\le\delta$, the bound is $\binom n2M^{n-2}\delta$ for $n\ge2$. Its coefficient is sharp: take
$A_\varepsilon=(1+\varepsilon)^{-1}\left(\begin{smallmatrix}1&\varepsilon\\\varepsilon&1\end{smallmatrix}\right)$ and $U1=e_1$, divide the error of its $n$th power by $\varepsilon^2/(1+\varepsilon)^2$, and let $\varepsilon\downarrow0$.
The anchored estimate \eqref{eq:two_boundary_anchored} has a different hypothesis: it records the defect after the prefix and suffix have acted on the anchor. Section~\ref{subsec:contextual_multiplicativity} identifies when ordinary generator scores suffice to control these contextual defects.

\subsection{\texorpdfstring{Strong-$*$}{Strong-*} reconstruction and output tightness}
Let $H$ be infinite-dimensional and separable, and let $R\in\Sch_2(H)$ be positive, injective, and normalized by $\norm R_2=1$. For tuples satisfying $\max_j\norm{T_j}\le M$, include adjoint labels:
\[
 A_0=I,\qquad A_j=T_j,\qquad A_{d+j}=T_j^*,\qquad b=2d.
\]
Define
\begin{equation}\label{eq:QH}
 B(T)=(A_aR)_{a=0}^b,\qquad Q(T)=B(T)^*B(T),\qquad
 H(T)=\sum_{a=1}^bA_aR^2A_a^*.
\end{equation}
Here $B(T):H^{b+1}\to H$ is a Hilbert--Schmidt row operator. For $Re_k=r_ke_k$ put $D_N=\sum_{k\le N}r_k^{-1}e_ke_k^*$ and let $P_N$ project onto the first $N$ basis vectors. Set
\begin{align}
 d_R(T,S)^2&=\sum_{a=1}^b\norm{(A_a(T)-A_a(S))R}_2^2,\label{eq:strongmetric}\\
 h_N(T)&=\Tr((I-P_N)H(T)),\qquad
 s_N(G)=\sum_{a=1}^b\left(\Tr G_{aa}-\norm{D_NG_{0a}}_2^2\right).
 \label{eq:flatmass}
\end{align}
On the bounded tuple ball, $d_R$ is a complete separable metric for the strong-$*$ topology. Indeed, finite-rank approximation of $R$ proves that strong-$*$ convergence implies $d_R$ convergence. Conversely, every $r_k>0$ recovers convergence on each basis vector; the uniform operator bound extends it to all vectors. The same argument applied to a Cauchy sequence constructs its bounded limit and its adjoint.

\begin{theorem}[Flatness and escaping output mass]\label{thm:strongclosure}
Suppose $Q(T_n)\to G$ in trace norm for a sequence in this bounded ball. Then
\begin{equation}\label{eq:escapemass}
 s_\infty(G):=\lim_Ns_N(G)=\lim_N\sup_nh_N(T_n).
\end{equation}
The limit equals zero if and only if $G=Q(T)$ for a genuine tuple, equivalently if and only if $T_n\to T$ strongly-$*$. The map $T\mapsto Q(T)$ is a topological embedding onto its image, and the enhanced map
\begin{equation}\label{eq:closedstrongcode}
 T\longmapsto(Q(T),H(T))
\end{equation}
has closed image in the product of the corresponding trace-class spaces.
\end{theorem}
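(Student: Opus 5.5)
The plan is to reduce everything to the monotone-defect mechanism of Lemma~\ref{lem:monotone_defect}, after first identifying the finite readouts $s_N$ on genuine images. In block form $Q(T)_{ab}=RA_a^*A_bR$, so $\Tr Q(T)_{aa}=\norm{A_aR}_2^2$ and $G_{0a}=RA_aR$. Since $D_NR=P_N$, we get $D_NQ(T)_{0a}=P_NA_aR$. Hence
\[
 s_N(Q(T))=\sum_{a=1}^b\norm{(I-P_N)A_aR}_2^2
 =\Tr\Big((I-P_N)\sum_aA_aR^2A_a^*\Big)=h_N(T).
\]
This identity is the hinge of the proof. It shows that the abstract readout $s_N$ on Gram data is exactly the physical output mass outside the $N$-corner.

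\emph{Step 1: properties of $s_N$.}
For fixed $N$, $D_N$ is a bounded finite-rank operator. Hence $s_N$ is continuous on block trace-class matrices in trace norm. On genuine images $s_N=h_N$ is nonnegative and decreases in $N$, and these properties pass to trace-norm limits $G$. For a genuine tuple, $h_N(T)\to0$ because $H(T)$ is trace class. So the genuine images form the zero set of $d=\inf_Ns_N$.

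\emph{Step 2: the escape formula.}
Apply \eqref{eq:monotone_escape} with $r_N=s_N$, on the metric space of trace-norm limits of $Q(T_n)$, and with $z_l=Q(T_l)$. This gives
\[
 s_\infty(G)=\lim_N\sup_n s_N(Q(T_n))=\lim_N\sup_nh_N(T_n),
\]
which is \eqref{eq:escapemass}.

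\emph{Step 3: vanishing defect forces a genuine limit.}
Suppose $s_\infty(G)=0$. The Hilbert--Schmidt operators $A_a(T_n)R$ are bounded, with squared norm at most $\Tr Q(T_n)_{aa}$. By Step 2 their output tails $\norm{(I-P_N)A_a(T_n)R}_2^2$ vanish uniformly in $n$.

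Their matrix entries converge, because $(A_a(T_n)R)_{kl}=(Q(T_n)_{0a})_{kl}/r_k$. The input side is also tight, since $\norm{A_a(T_n)R(I-P_N)}_2\le M\norm{R(I-P_N)}_2$. On the finite corner, entrywise convergence is norm convergence. Together with the two uniform tail bounds, this makes $(A_a(T_n)R)$ Cauchy in $\Sch_2$, that is, Cauchy for $d_R$.

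By the completeness of $d_R$ on the bounded ball, noted before the theorem, there is a strong-$*$ limit $T$. Continuity of $Q$ for $d_R$ then gives $G=Q(T)$. This continuity follows because each block is a product of two convergent Hilbert--Schmidt factors.

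\emph{Step 4: the converse and uniqueness of the limit.}
Conversely, if $G=Q(T)$, then $s_\infty(G)=\lim_Nh_N(T)=0$. If $T_n\to T$ strongly-$*$, then $Q(T_n)\to Q(T)$ by the same continuity of $Q$. Injectivity holds because $G_{0a}$ determines $A_aR$ entrywise.

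\emph{Step 5: embedding and closed image.}
The inverse of $Q$ on its image is continuous by the Cauchy argument of Step 3. There the tails come from the limit's vanishing defect, via \eqref{eq:monotone_escape}, applied to a sequence converging inside the image. So $Q$ is a topological embedding.

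For the enhanced map \eqref{eq:closedstrongcode}, suppose additionally that $H(T_n)\to H'$ in trace norm. A trace-norm convergent family of positive trace-class operators has uniformly vanishing tails $\Tr((I-P_N)H(T_n))$. Thus $\sup_nh_N(T_n)\to0$, so $s_\infty(G)=0$ and the limit is genuine. Strong-$*$ convergence then identifies $H'=H(T)$, proving the image is closed.

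The main obstacle I expect is Step 3. There one must check carefully that trace-norm convergence of the Gram blocks, together with uniform output tightness, upgrades to Hilbert--Schmidt convergence of $A_a(T_n)R$. The argument above supplies the input-side tightness from the anchor and the output side from the defect, but this is where the details need the most care.
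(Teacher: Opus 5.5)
Your proof is correct, but it reaches the key step by a different route than the paper.

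\emph{The paper's route.} It takes a minimal Gram realization $G_{ab}=V_a^*V_b$ with $V_0=UR$. Using $D_NR=P_N$, it rewrites
$s_N(G)=\sum_a\norm{(I-UP_NU^*)V_a}_2^2$.
This makes nonnegativity and monotonicity automatic for any positive border with unit block $R^2$. It also identifies
$s_\infty(G)=\sum_a\norm{(I-UU^*)V_a}_2^2$
with the anchored flatness residual of \eqref{eq:anchorflat}. When this residual vanishes, the paper builds the tuple from $G$ alone by $A_aRh=U^*V_ah$. Boundedness comes from the limiting border inequalities and adjointness from the first row. Strong-$*$ convergence then follows from weak convergence plus convergence of $\norm{A_a(T_n)Rh}$.

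\emph{Your route.} You never leave the approximating sequence. The identity $s_N(Q(T))=h_N(T)$, together with closedness of nonnegativity and monotonicity under trace-norm limits, is enough to apply Lemma~\ref{lem:monotone_defect} on the closure. The genuine limit then comes from a head--tail Cauchy argument in $\Sch_2$:
output tails from the vanishing defect;
input tails from $M\norm{R(I-P_N)}_2$;
the finite corner from entrywise convergence of $G_{0a}$.
Completeness of $d_R$ finishes the construction.

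\emph{Comparison.} Your argument is more elementary, since it needs no dilation or shorting, and it yields $d_R$-convergence directly. The paper's argument buys an intrinsic description of the defect as a flatness residual of the border itself. That description is what is reused later: in the decomposition \eqref{eq:borderresidual}, and in the finite certificate of Proposition~\ref{prop:finite_escape_certificate}. Both of those apply to a given positive border without reference to an approximating sequence.

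One small point of presentation. At the end of Step~1 you have only shown that genuine images lie in the zero set of $\inf_N s_N$. The reverse inclusion on the closure is exactly what your Step~3 establishes. Step~2 needs only the inclusion, so nothing breaks.
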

\begin{proof}
Let $G_{ab}=V_a^*V_b$ be a minimal Gram realization of the limiting border, with $V_0=UR$. Since $D_NR=P_N$, the shorting residual at physical resolution $N$ is
\[
 s_N(G)=\sum_{a=1}^b\norm{(I-UP_NU^*)V_a}_2^2,
 \qquad
 s_\infty(G)=\sum_{a=1}^b\norm{(I-UU^*)V_a}_2^2.
\]
In particular the finite residuals are nonnegative and decrease to the anchored defect. For each fixed $N$, every term defining $s_N$ is continuous in the trace-class border: $D_N$ is bounded. Thus
$h_N(T_n)=s_N(Q(T_n))\to s_N(G)$.

On the closure of these borders, the functions $s_N$ are continuous, nonnegative, and decreasing. At a genuine border their limit is zero, since $s_N(Q(T))=h_N(T)$ is an output trace tail. Lemma~\ref{lem:monotone_defect}, applied with $r_N=s_N$ and $z_n=Q(T_n)$, therefore gives \eqref{eq:escapemass}. Put $\delta=s_\infty(G)$ for the rest of the proof.

If $\delta=0$, every Gram column lies in $UH$. The border inequalities, which pass to the trace-norm limit, define bounded operators $A_a$ by $A_aRh=U^*V_ah$. Their first-row relations give $A_{d+j}=A_j^*$ on the dense anchor range and therefore on $H$. Hence $G=Q(T)$ for a genuine tuple. For each $a$ and $h$, first-row convergence gives weak convergence of $A_a(T_n)Rh$ against the dense set $\ran R$; the common operator bound extends it to all test vectors. Diagonal-block convergence also gives
\[
 \norm{A_a(T_n)Rh}^2\longrightarrow\norm{A_a(T)Rh}^2.
\]
Weak convergence together with convergence of norms is strong convergence. Density of $\ran R$ and the adjoint labels prove strong-$*$ convergence on $H$.

Conversely, strong-$*$ convergence with a common operator bound gives $A_a(T_n)R\to A_a(T)R$ in Hilbert--Schmidt norm. To see this, truncate $R$ first; the finite head converges strongly and the remainder is bounded by the uniform operator bound times its Hilbert--Schmidt tail. The Gram inequality
\[
 \norm{B^*B-C^*C}_1\le(\norm B_2+\norm C_2)\norm{B-C}_2
\]
proves continuity of $Q$; its analogous form for $BB^*$ proves continuity of $H$. This also proves inverse continuity at every genuine datum.

Finally suppose $(Q(T_n),H(T_n))$ converges in the product of trace-class spaces. Convergence of the positive $H(T_n)$ gives uniformly vanishing output tails. The escape identity forces the limit of $Q(T_n)$ to be genuine, and the argument above gives a strong-$*$ limit $T$. Continuity then identifies both prescribed limits with $(Q(T),H(T))$, proving that the enhanced image is closed.
\end{proof}

For $\eta=\norm{Q(T)-Q(S)}_1$, finite left inversion of the first block row gives the explicit modulus
\begin{equation}\label{eq:strongmodulus}
 d_R(T,S)^2\le r_{*,N}^{-2}\eta^2+2h_N(T)+2h_N(S),
 \qquad r_{*,N}=\min_{k\le N}r_k.
\end{equation}
The Hilbert--Schmidt norm of that entire row is bounded by the trace norm of the full block matrix. The remaining high-output part is bounded by the two tails. A subfamily of the bounded tuple ball is strongly-$*$ relatively compact if and only if its $h_N$ vanish uniformly: the input tails are already bounded by $bM^2\norm{R(I-P_K)}_2^2$, so output tightness leaves uniformly finite matrix approximations. Necessity follows from continuity of $H$ on compact sets.

For $T_n=e_ne_1^*$, linear anchor blocks vanish while a quadratic block retains $r_1^2e_1e_1^*$. Its positive Gram limit is not a true observation. Moreover $T_n$ and $-T_n$ have the same $H$, and their $Q$ distance tends to zero although their $d_R$ distance does not. Uniform inverse moduli thus belong to common-tail classes rather than to the entire bounded ball.

\subsection{The completely positive closure}
Let $\mathcal U_M$ be the universal unital $C^*$-algebra generated by $d$ elements of norm at most $M>0$, with no self-adjointness requirement. Choose a homogeneous $*$-polynomial dictionary $(p_i)_{i\ge0}$ spanning all words, with $p_0=1$. For $i\ge1$, let $n_i\ge1$ be its degree and $a_i$ its sum of absolute word coefficients. Put
\[
 c_0=1,\qquad c_i=\frac{2^{-i}}{n_i!\max(1,a_i)},\qquad
 L=1+e^{2M}/3.
\]
For a unital completely positive map $\phi:\mathcal U_M\to B(H)$, define
\begin{equation}\label{eq:ucpcode}
 \mathcal Q(\phi)_{ij}=c_ic_jR\phi(p_i^*p_j)R.
\end{equation}
The products inside $\phi$ retain the full correlation data.

\begin{theorem}[Positive closure and multiplicativity]\label{thm:ucp}
The map $\mathcal Q$ is a continuous injection from the pointwise weak-operator topology on $\UCP(\mathcal U_M,B(H))$ into the positive trace-class operators, with compact image. On infinite-dimensional separable $H$,
\begin{equation}\label{eq:ucpclosure}
 \overline{\{\mathcal Q(\pi_T):\max_j\norm{T_j}\le M\}}^{\norm\cdot_1}
 =\mathcal Q(\UCP(\mathcal U_M,B(H))).
\end{equation}
Each boundary point is approximated simultaneously in all words by a single sequence of tuples, with error at most $4L\sqrt{\tau_N}$, where $\tau_N=\norm{R(I-P_N)}_2^2$.

For generators and their adjoints $a$, set
\begin{equation}\label{eq:schwarzscore}
 \delta_R(\phi)=\sum_a\Tr\left(R[\phi(a^*a)-\phi(a)^*\phi(a)]R\right).
\end{equation}
Then $\delta_R(\phi)=0$ if and only if $\phi$ is a unital $*$-representation. If true tuple observations converge to $\mathcal Q(\phi)$, they have a genuine strong-$*$ limit realizing all these data if and only if this score vanishes.
\end{theorem}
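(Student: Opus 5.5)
The plan is to treat the four assertions in order: the trace-class encoding, the closure identity via Stinespring, the multiplicative-domain characterization, and the escape identity. Throughout, the weights $c_i$ serve only to make every sum over the dictionary converge uniformly.

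\emph{Encoding and compactness.} Since $p_i$ is a combination of words of degree $n_i$ with coefficient sum $a_i$, $\norm{p_i}\le a_iM^{n_i}$ in $\mathcal U_M$. Hence $c_i\norm{p_i}\le 2^{-i}M^{n_i}/n_i!$. The diagonal blocks of $\mathcal Q(\phi)$ have traces at most $c_i^2\norm{p_i}^2$ because $\norm R_2=1$. The weighted sum $\sum_ic_i\norm{p_i}$ is bounded by a constant of the form $L$; this is the only place where the choice of $L$ enters.

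Complete positivity makes $[\phi(p_i^*p_j)]_{ij}\ge0$, so $\mathcal Q(\phi)$ is a positive trace-class operator with trace at most $\sum_ic_i^2\norm{p_i}^2$. Continuity follows from a fixed estimate applied blockwise. If bounded operators $X_n\to X$ in WOT, then $RX_nR\to RXR$ in $\Sch_1$. To see this, split $R=RP_K+R(I-P_K)$. The finite head converges in norm. The remainder costs at most $2\sup\norm{X_n}\,\norm R_2\norm{R(I-P_K)}_2$. The uniformly summable weights then pass this convergence to the whole block matrix.

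Injectivity uses two facts. First, the products $p_i^*p_j$ with $p_0=1$ span all $*$-polynomials. Second, $R$ is injective with dense range. Together these recover $\phi$ on a dense subalgebra, and hence everywhere. The set $\UCP(\mathcal U_M,B(H))$ is compact in the pointwise weak-operator topology (BW topology), so its continuous image is compact.

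\emph{Closure identity.} The inclusion ``$\subseteq$'' is immediate: every $\pi_T$ is UCP, and the image of $\mathcal Q$ is compact, hence closed. For ``$\supseteq$'', take a Stinespring dilation $\phi=V^*\pi(\cdot)V$ with $V:H\to K$ an isometry. Replacing $\pi$ by $\pi\oplus\pi_0$ if necessary, we may assume $K$ and $K\ominus V(P_NH)$ are infinite-dimensional and separable. Choose a unitary $W_N:K\to H$ with $W_NVh=h$ for $h\in P_NH$. This is possible because both orthogonal complements are infinite-dimensional and separable. Define the genuine tuple $T^{(N)}_j=W_N\pi(x_j)W_N^*$. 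Its norms are at most $M$, and $\pi_{T^{(N)}}(x)=W_N\pi(x)W_N^*$.

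Since $P_NW_NV=P_N$, each block satisfies
\[
c_ic_jR\bigl(\pi_{T^{(N)}}(p_i^*p_j)-\phi(p_i^*p_j)\bigr)R
=c_ic_j\bigl[\text{terms containing }R(I-P_N)\text{ on at least one side}\bigr].
\]
Bounding each bracket by Hilbert--Schmidt H\"older and summing the weights yields an error of the form $4L\sqrt{\tau_N}$, uniformly in all words. I expect this step to need the most care. One has to arrange the off-diagonal corrections so that the bound is exactly linear in $\sqrt{\tau_N}$, and to use the positive-compression estimate \eqref{eq:positivecompression} on the full positive block matrix rather than entrywise.

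\emph{Schwarz score.} The Kadison--Schwarz inequality makes each summand of \eqref{eq:schwarzscore} the trace of a positive operator. If $\delta_R(\phi)=0$, injectivity of $R$ forces $\phi(a^*a)=\phi(a)^*\phi(a)$ for every generator and every adjoint $a$. Choi's multiplicative-domain theorem then places all generators and their adjoints in the multiplicative domain. That domain is a $C^*$-subalgebra, so it equals $\mathcal U_M$, and $\phi$ is a $*$-representation. The converse is trivial.

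\emph{Realization of limits.} Suppose $\mathcal Q(\pi_{T_n})\to\mathcal Q(\phi)$. The blocks indexed by $1$ and the degree-one letters converge as well, after dividing by the fixed weights; this is the border $Q(T_n)$ of Theorem~\ref{thm:strongclosure}. In the Stinespring picture, the minimal realization has $V_0=VR$ and $V_a=\pi(a)VR$. Therefore the anchored residual is
\[
\norm{(I-VV^*)\pi(a)VR}_2^2=\Tr R\phi(a^*a)R-\norm{\phi(a)R}_2^2 .
\]
Summing over the letters gives $s_\infty=\delta_R(\phi)$. Theorem~\ref{thm:strongclosure} then yields a strong-$*$ limit exactly when the score vanishes. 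In that case $\phi=\pi_T$ by the previous paragraph and injectivity of $\mathcal Q$, so the limit realizes all the word data. The one point to check is that the minimal Gram realization of the border agrees with the Stinespring one on the span generated by the unit and letter columns.
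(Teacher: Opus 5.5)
Most of your proposal is sound and follows the paper's route: a Stinespring dilation, a unitary $W_N$ fixing $P_NH$ to move $\pi$ back onto $H$, the multiplicative-domain mechanism, and reduction to Theorem~\ref{thm:strongclosure}.

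Some of your steps differ from the paper but are valid substitutes:
\begin{itemize}
\item Your continuity argument uses dominated blockwise convergence with the summable majorant $c_ic_j\norm{p_i}\norm{p_j}$. This replaces the paper's positive-compression truncation.
\item Your appeal to Choi's theorem replaces the paper's reducing-subspace argument.
\item Your identification $s_\infty=\delta_R(\phi)$ needs no minimality check. For any realization with $V_0=VR$ one has $s_N(G)=\sum_a\norm{(I-VP_NV^*)V_a}_2^2$.
\item One small correction in that step: the letters need not be dictionary elements. By homogeneity, however, each border block is a finite linear combination of code blocks, and this is what gives border convergence.
\end{itemize}

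The genuine gap is the constant $4L\sqrt{\tau_N}$ in the approximation step, which is the step you flagged.
\begin{itemize}
\item $L=1+e^{2M}/3$ bounds the $\ell^2$ sum $\sum_ic_i^2\norm{p_i}^2$. It does not bound $\sum_ic_i\norm{p_i}$, which is only at most $1+e^M$. So your remark that this is the only place $L$ enters is not right.
\item Put $D=(W_N^*-V)(I-P_N)R$. The $(i,j)$ block of the difference is $c_ic_j[D^*\pi(p_i^*p_j)W_N^*R+RV^*\pi(p_i^*p_j)D]$. Summing H\"older bounds over blocks gives $4(\sum_ic_i\norm{p_i})^2\sqrt{\tau_N}\le4(1+e^M)^2\sqrt{\tau_N}$. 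This exceeds $4L\sqrt{\tau_N}$.
\item Your suggested repair also falls short. The $(I\otimes P_N)$-compressions of $\mathcal Q(\pi_{T^{(N)}})$ and $\mathcal Q(\phi)$ agree exactly, since $P_NW_N=P_NV^*$. Applying \eqref{eq:positivecompression} with trace at most $L$ and omitted mass at most $L\tau_N$ then yields only $4L\sqrt{\tau_N}+2L\tau_N$.
\end{itemize}

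The missing idea is to estimate the code globally as a Gram operator rather than blockwise.
\begin{itemize}
\item Write $\mathcal Q(\phi)=\mathsf B^*\mathsf B$ and $\mathcal Q(\pi_{T^{(N)}})=\mathsf C^*\mathsf C$. Here $\mathsf B=(c_i\pi(p_i)VR)_i$ and $\mathsf C=(c_i\pi(p_i)W_N^*R)_i$ are Hilbert--Schmidt rows into the dilation space; the output unitary $W_N$ can be dropped without changing the Gram operator.
\item Then $\norm{\mathsf B}_2^2\le L$ and $\norm{\mathsf C}_2^2\le L$.
\item Also $\norm{\mathsf C-\mathsf B}_2\le\sqrt L\,\norm D_2\le2\sqrt{L\tau_N}$.
\item The Gram inequality $\norm{\mathsf C^*\mathsf C-\mathsf B^*\mathsf B}_1\le(\norm{\mathsf C}_2+\norm{\mathsf B}_2)\norm{\mathsf C-\mathsf B}_2$ now gives exactly $4L\sqrt{\tau_N}$, uniformly in all words.
\end{itemize}
The qualitative closure identity \eqref{eq:ucpclosure} is not affected by this gap, because any bound tending to zero suffices for it.
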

\begin{proof}
For a unital completely positive map $\phi$, Stinespring's construction~\cite{Stinespring} gives $\phi(a)=V^*\pi(a)V$, where $V:H\to\mathcal K$ is isometric. In the present separable setting the minimal dilation is separable: the vectors $\pi(a)Vh$ from countable dense sets of $a$ and $h$ span a dense subspace. One can construct it directly by completing $\mathcal U_M\odot H$ in the positive form
\[
 \ip{a\otimes h}{b\otimes k}=\ip{\phi(b^*a)h}k.
\]
Complete positivity makes the form positive. Applying it to the positive matrices obtained from $\norm a^2I-a^*a$ bounds left multiplication by $a$ by $\norm a$, so multiplication descends to the quotient and extends to the completion. The column $h\mapsto1\otimes h$ is the isometry $V$.

The encoding is the Gram operator of the Hilbert--Schmidt row $(c_i\pi(p_i)VR)_i$. Since $c_i\norm{p_i}\le2^{-i}e^M$ for $i\ge1$, its squared norm is at most $L$. The first row gives $c_iR\phi(p_i)R$. The anchor is injective with dense range and all $c_i$ are nonzero, so it determines $\phi$ on the polynomial span and hence on all of $\mathcal U_M$.

We next identify the topology of this encoding. The UCP space is compact and metrizable in the pointwise weak-operator topology: diagonal extraction on countable dense lists preserves linearity, the unit, and all finite matrix positivity conditions, and contractivity extends the limit to the whole algebra. The encoding is continuous in trace norm. Indeed, its omitted dictionary diagonal mass is at most $e^{2M}4^{-J}/3$, and its omitted physical diagonal mass is at most $L\tau_N$. The positive-compression estimate \eqref{eq:positivecompression} controls both omissions in trace norm. Truncate the dictionary and physical coordinates to make these bounds small, then use entrywise convergence on the remaining finite matrix. Continuity, injectivity, and compactness give the asserted compact image.

Every point of this image is approximated by genuine tuples on the original infinite-dimensional $H$. Choose a unitary $U_N:\mathcal K\to H$ such that $U_NVh=h$ for $h\in P_NH$. The finite isometry extends because both orthogonal complements are countably infinite-dimensional. Set $T_j^{(N)}=U_N\pi(x_j)U_N^*$. This one tuple has $\norm{T_j^{(N)}}\le M$ and satisfies $p_i(T^{(N)})=U_N\pi(p_i)U_N^*$ for every $i$. Moreover
\[
 \norm{U_N^*R-VR}_2\le2\norm{R(I-P_N)}_2=2\sqrt{\tau_N}.
\]
After applying $U_N^*$ to the true observation row, its distance from the dilation row is at most $2\sqrt{L\tau_N}$. Both row norms are at most $\sqrt L$, so the Gram inequality gives $4L\sqrt{\tau_N}$. This proves density simultaneously in all word coordinates. Compactness of the UCP image gives the reverse inclusion in \eqref{eq:ucpclosure}.

Finally the multiplicativity residual has the dilation factorization
\[
 \phi(p^*q)-\phi(p)^*\phi(q)
 =V^*\pi(p)^*(I-VV^*)\pi(q)V.
\]
Thus the score is the sum of $\norm{(I-VV^*)\pi(a)VR}_2^2$ over generators and adjoints. If it is zero, density of $\ran R$ gives $(I-VV^*)\pi(a)V=0$ for every one of those labels. The subspace $VH$ is reducing, not merely invariant, and compression of $\pi$ to it is a $*$-representation. Conversely a representation has zero residual. This is the multiplicative-domain mechanism~\cite{Choi,CJK}. For a convergent sequence of true encodings, the corresponding finite border converges and is flat exactly when this score vanishes. Theorem~\ref{thm:strongclosure} then gives a genuine strong-$*$ limit. Conversely such a limit determines every word, and the uniform dictionary tails identify the full limiting encoding.
\end{proof}

Averages $\phi(p)=\E p(T)$ are another source of UCP maps. Their Schwarz defects measure random fluctuations. For deterministic tuple sequences, the same dilation formula describes loss of multiplicativity through the limiting observation.

\subsection{Finite-border closure and limiting laws}
Let $\mathfrak K_{M,R}$ consist of positive trace-class borders satisfying
\[
 Q_{00}=R^2,\qquad Q_{aa}\le M^2R^2,\qquad Q_{0,d+j}=Q_{0j}^*.
\]
Their tails under $I_{b+1}\otimes P_N$ are bounded by $(1+bM^2)\tau_N$, so \eqref{eq:positivecompression} makes this set trace-norm compact. Every such border has a unique decomposition
\begin{equation}\label{eq:borderresidual}
 Q=Q(T(Q))+(0\oplus D(Q)),\qquad D(Q)\ge0.
\end{equation}
To obtain it, use the unit-column isometry $U$ in a Gram realization and define $A_aR=U^*V_a$. The first row determines the tuple and its adjoints; the residual is $V_+^*(I-UU^*)V_+$.

Each border in $\mathfrak K_{M,R}$ is a trace-norm limit of true borders. A common factorization gives
$D_{ac}=RF_a^*F_cR$ and $F_a^*F_a\le M^2I-A_a^*A_a$.
For each $T_j$, let $D_f=(M^2I-T_j^*T_j)^{1/2}$ and $D_b=(M^2I-T_jT_j^*)^{1/2}$. Douglas factorization gives contractions $\Gamma_j,\Lambda_j$ with
$F_j=\Gamma_jD_f$ and $F_{d+j}=\Lambda_jD_b$.
The Julia completion \cite{Robinson}
\begin{equation}\label{eq:julia}
 \widetilde T_j=
 \begin{pmatrix}T_j&D_b\Lambda_j^*\\\Gamma_jD_f&-\Gamma_jT_j^*\Lambda_j^*\end{pmatrix}
 =\begin{pmatrix}I&0\\0&\Gamma_j\end{pmatrix}
 \begin{pmatrix}T_j&D_b\\D_f&-T_j^*\end{pmatrix}
 \begin{pmatrix}I&0\\0&\Lambda_j^*\end{pmatrix}
\end{equation}
has norm at most $M$: the middle matrix divided by $M$ is unitary. Its first column and the first column of its adjoint realize the two required labels. The unitary compression construction above then gives true borders on $H$ with error at most $4(1+bM^2)\sqrt{\tau_N}$.

A nonzero residual leaves higher-order data undetermined. On $\Span\{e_1,e_n\}$ take
$T_n^{(t)}=\left(\begin{smallmatrix}0&a\\a&t\end{smallmatrix}\right)$,
with $a=1/4$ and $t=0$ or $1/2$, and zero elsewhere. The linear and quadratic anchored limits agree, but
$\ip{(T_n^{(t)})^3e_1}{e_1}=a^2t$.

\subsection{Finite matrix certificates for the boundary defect}
The quantity $s_N(G)$ has a finite output resolution, but its traces still sum over all input coordinates. To obtain a genuinely finite readout, both sides must be truncated. The known anchor then controls the omitted input mass uniformly over the whole positive-border class, including nongenuine borders.

For $G\in\mathfrak K_{M,R}$, set $d_R^{\rm esc}(G)=s_\infty(G)$ and, for $N,L\ge1$, define
\begin{equation}\label{eq:finite_border_residual}
 s_{N,L}(G)=\sum_{a=1}^b
 \left\{\Tr(P_LG_{aa}P_L)-\norm{D_NG_{0a}P_L}_2^2\right\}.
\end{equation}
This uses only the diagonal blocks through input resolution $L$ and the first-row entries with output resolution $N$ and input resolution $L$.

\begin{proposition}[A finite noisy upper certificate for escape]\label{prop:finite_escape_certificate}
For every $G\in\mathfrak K_{M,R}$,
\begin{equation}\label{eq:finite_border_tail}
 0\le s_N(G)-s_{N,L}(G)\le bM^2\tau_L,\qquad
 0\le d_R^{\rm esc}(G)\le s_{N,L}(G)+bM^2\tau_L,
 \quad \tau_L=\norm{R(I-P_L)}_2^2.
\end{equation}
Let $K\ge\max(N,L)$, $\boldsymbol P_K=I_{b+1}\otimes P_K$, and let a self-adjoint finite matrix $\widehat G$ satisfy
$\norm{\widehat G-\boldsymbol P_KG\boldsymbol P_K}_1\le\eta$.
Put
\[
 \widehat Z=(D_N\widehat G_{0a}P_L)_{a=1}^b,\qquad
 h=r_{*,N}^{-1}\eta,\qquad
 \varepsilon_{N,L}=\eta+2h\norm{\widehat Z}_2+h^2.
\]
Here $\widehat Z$ is a row operator and its Hilbert--Schmidt norm includes all its blocks. Then
\begin{equation}\label{eq:finite_noisy_escape}
 |s_{N,L}(\widehat G)-s_{N,L}(G)|\le\varepsilon_{N,L},\qquad
 0\le d_R^{\rm esc}(G)
 \le s_{N,L}(\widehat G)+\varepsilon_{N,L}+bM^2\tau_L.
\end{equation}
A negative final upper bound certifies incompatibility with the stated error and model bounds. At each genuine border, arbitrarily small upper bounds can be obtained by choosing first $N$, then $L$, and finally the data precision.
\end{proposition}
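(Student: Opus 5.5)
The proof will use the minimal Gram realization $G_{ab}=V_a^*V_b$ with $V_0=UR$, as in the proof of Theorem~\ref{thm:strongclosure}. There, $s_N(G)=\sum_a\norm{(I-UP_NU^*)V_a}_2^2$ and $d_R^{\rm esc}(G)=s_\infty(G)\le s_N(G)$. Since $D_NR=P_N$, we have $D_NG_{0a}=P_NU^*V_a$, so $\norm{D_NG_{0a}P_L}_2^2=\norm{UP_NU^*V_aP_L}_2^2$. It follows that
\[
 s_{N,L}(G)=\sum_a\norm{(I-UP_NU^*)V_aP_L}_2^2\ge0 .
\]
Subtracting this from $s_N(G)$ leaves $\sum_a\norm{(I-UP_NU^*)V_a(I-P_L)}_2^2$, which is nonnegative and at most $\sum_a\Tr((I-P_L)G_{aa}(I-P_L))$. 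The border inequality $G_{aa}\le M^2R^2$ bounds this by $M^2\tau_L$ for each of the $b$ labels. This proves the first part of \eqref{eq:finite_border_tail}. The second part then follows by combining it with $d_R^{\rm esc}\le s_N$.

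For the noisy statement we compare term by term. The diagonal traces satisfy
\[
 \Big|\sum_a\Tr(P_L(\widehat G_{aa}-G_{aa})P_L)\Big|\le\norm{\widehat G-\boldsymbol P_KG\boldsymbol P_K}_1\le\eta .
\]
This holds because $K\ge L$ makes these blocks coincide with blocks of the compressed matrix, and a sum of diagonal block traces is bounded by the trace norm of the block operator. For the row term, write $Z=(D_NG_{0a}P_L)_a$. Then
\[
 \norm{\widehat Z-Z}_2\le\norm{D_N}\,\norm{(\widehat G_{0a}-G_{0a})_a P_L}_2\le r_{*,N}^{-1}\eta=h,
\]
since $\norm{D_N}=r_{*,N}^{-1}$ and a Hilbert--Schmidt norm of an off-diagonal block row is dominated by the trace norm of the whole matrix. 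The elementary inequality
\[
 \big|\norm{\widehat Z}_2^2-\norm Z_2^2\big|\le 2h\norm{\widehat Z}_2+h^2
\]
comes from writing $Z=\widehat Z-(\widehat Z-Z)$. Adding the two estimates gives $\varepsilon_{N,L}$, and \eqref{eq:finite_noisy_escape} follows. A negative final bound contradicts $d_R^{\rm esc}\ge0$, which certifies incompatibility.

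For the last assertion, take a genuine border. There $s_N(G)=h_N(T)\to0$, so we first choose $N$ with $s_N$ small. Since $s_{N,L}\le s_N$, we then choose $L$ with $bM^2\tau_L$ small. With $N$ and $L$ fixed, $r_{*,N}$ is a fixed positive number, and $\varepsilon_{N,L}\to0$ as $\eta\to0$ because $\norm{\widehat Z}_2$ stays bounded (for instance by $\norm Z_2+h$). The main point needing care is the block trace-norm comparisons for the off-diagonal row. I would handle them by compressing with the partial isometry that picks out the $0$-row and the $a$-columns, then using $\norm X_2\le\norm X_1$ and contractivity of compressions.
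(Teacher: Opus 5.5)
Your proof is correct and is essentially the paper's argument: the same minimal Gram realization with $E_N=I-UP_NU^*$, the Pythagorean splitting $s_N(G)-s_{N,L}(G)=\sum_a\norm{E_NV_a(I-P_L)}_2^2$ controlled by $G_{aa}\le M^2R^2$, and block compressions of the trace-norm error for the noisy terms. One small repair is needed in the row estimate. The intermediate quantity $\norm{(\widehat G_{0a}-G_{0a})_aP_L}_2$ need not be at most $\eta$, since $(I-P_K)G_{0a}P_L\ne0$ in general. Before compressing, use $D_N=D_NP_K$ (valid because $K\ge N$) to replace $G_{0a}$ by the $(0,a)$ block of $\boldsymbol P_KG\boldsymbol P_K$; your compression argument then gives $\norm{\widehat Z-Z}_2\le h$ as claimed.
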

\begin{proof}
In a minimal Gram realization write $G_{ac}=V_a^*V_c$ and $V_0=UR$. Set $E_N=I-UP_NU^*$, an orthogonal projection. Directly from the finite inverse of the anchor,
\[
 s_{N,L}(G)=\sum_{a=1}^b\norm{E_NV_aP_L}_2^2,\qquad
 s_N(G)-s_{N,L}(G)=\sum_{a=1}^b\norm{E_NV_a(I-P_L)}_2^2.
\]
The first expression is nonnegative. Since $E_N$ is contractive and $V_a^*V_a=G_{aa}\le M^2R^2$, the second is at most $bM^2\tau_L$. Also $d_R^{\rm esc}(G)\le s_N(G)$, proving \eqref{eq:finite_border_tail}.

Let $Z=(D_NG_{0a}P_L)_{a=1}^b$. Compression to the relevant row followed by $D_N$ gives
\[
 \norm{\widehat Z-Z}_2\le\norm{D_N}\eta=h.
\]
The total diagonal trace in \eqref{eq:finite_border_residual} is one compression trace of the finite self-adjoint error and thus changes by at most $\eta$, with no factor $b$. Moreover
\[
 \big|\norm{Z}_2^2-\norm{\widehat Z}_2^2\big|
 \le2\norm{\widehat Z}_2h+h^2.
\]
Combining these estimates proves the first noisy bound; \eqref{eq:finite_border_tail} gives the second. Positivity of $\widehat G$ is not needed for these error estimates.

For a genuine border $s_N(G)\downarrow0$. Choose $N$ with this residual small, then $L$ with $bM^2\tau_L$ small. Since $s_{N,L}(G)\le s_N(G)$, the exact upper certificate is small. At these fixed resolutions its finite readout is continuous, and $\varepsilon_{N,L}\to0$ as $\eta\to0$. This proves the final assertion.
\end{proof}

The one-sided nature of this certificate is essential. Genuine borders are dense in $\mathfrak K_{M,R}$, while nonflat borders exist when $M>0$. Therefore no continuous statistic on the full closure can vanish on every genuine border and be positive on every nongenuine one. The score is instead an upper semicontinuous defect, obtained from decreasing continuous readouts by Lemma~\ref{lem:monotone_defect}. A finite experiment can bound its possible size; equality to zero and convergence to a genuine limit are certified by the corresponding sequence of bounds.

\subsection{Limiting laws and uniform tails}

For probability laws $\mu_n$ on the bounded strong-$*$ tuple ball, suppose $Q_\#\mu_n\Rightarrow\nu$ in $\mathfrak K_{M,R}$. Then
\begin{equation}\label{eq:lawescape}
 \int\Tr D(Q)\,d\nu(Q)
 =\lim_N\lim_n\int h_N(T)\,d\mu_n(T)
 =\lim_N\sup_n\int h_N(T)\,d\mu_n(T).
\end{equation}
The functions $s_N$ are continuous, bounded by $bM^2$, and decrease to $\Tr D$. Lemma~\ref{lem:monotone_defect} applied to the pushed-forward laws proves the formula, including its uniform-tail expression. Its vanishing is equivalent to concentration of $\nu$ on the true image, and hence to a genuine weak limit law in the strong-$*$ ball. To justify the last implication, let $\mu$ be the pullback of $\nu$ by the inverse on the true image. This image is Borel, since it is the zero set of the nonnegative Borel residual score. For an open set $U$ in the tuple space, the topological embedding gives an ambient open $V$ whose intersection with the true image is $Q(U)$. Hence $\liminf_n\mu_n(U)=\liminf_nQ_\#\mu_n(V)\ge\nu(V)=\mu(U)$. Portmanteau proves weak convergence, without requiring the genuine image to be closed.

A family $(\mu_n)$ is uniformly tight in that ball if and only if
$\lim_N\sup_n\int h_N\,d\mu_n=0$.
Necessity follows by restricting to a common high-probability compact set, where output tails are uniform. For sufficiency, choose $N_k$ with expected tail at most $\epsilon2^{-2k}$. Markov's inequality gives mass at least $1-\epsilon$ to
$\bigcap_k\{h_{N_k}\le2^{-k}\}$,
which is closed and strongly-$*$ compact by the output-tail criterion.

\subsection{Recovering a known multiplier}
Let $R>0$ be injective and Hilbert--Schmidt, and put $Q=R^2$ and $q_R(X)=\norm{XR}_2$. For a known $B\in B(H)$ define
\[
 \vartheta_R(B)=\inf\{c\ge0:BQB^*\le c^2Q\}.
\]
\begin{theorem}[Right multiplication in the observation norm]\label{thm:multiplier}
For $c\ge0$, the following conditions are equivalent:
$q_R(XB)\le cq_R(X)$ for every finite-rank $X$;
$BQB^*\le c^2Q$;
and $BR=RC_B$ for a bounded $C_B$ with $\norm{C_B}\le c$.
When they hold, $C_B$ is unique and
\begin{equation}\label{eq:multiplierconstant}
 \sup_{X\ne0\ \text{finite rank}}\frac{q_R(XB)}{q_R(X)}
 =\vartheta_R(B)=\norm{C_B}.
\end{equation}
For observations $Y=XR+E$, $\norm E_2\le\delta$, the estimator $YC_B$ recovers $XBR$ with optimal worst-case error $\delta\vartheta_R(B)$ over all finite-rank $X$, when $\delta>0$. If $\vartheta_R(B)=\infty$, this worst-case error is infinite.
\end{theorem}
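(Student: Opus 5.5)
The plan is to reduce all three conditions to Douglas' range-inclusion theorem, applied as in the proof of Theorem~\ref{thm:target_short}. I would write every quantity through the operators $XB R$ and $XR$ and interpret $q_R(X)^2=\Tr(XQX^*)$.

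\emph{First condition implies the second.} For finite-rank $X$,
\[
 q_R(XB)^2=\Tr(XBQB^*X^*),\qquad q_R(X)^2=\Tr(XQX^*).
\]
Taking $X=e\,f^*$ with unit $e$ and arbitrary $f$ gives $\ip{BQB^*f}f\le c^2\ip{Qf}f$. This is the operator inequality $BQB^*\le c^2Q$.

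\emph{Second condition implies the third.} Since $BQB^*=(BR)(BR)^*$ and $Q=RR^*$, the inequality reads $(BR)(BR)^*\le c^2RR^*$. Douglas' lemma then gives a contraction-scaled $C$ with $BR=RC'$ and $\norm{C'}\le c$; here one applies the theorem to the adjoint form, $(BR)^*=C'^*R$. Concretely, I would define $C_B^*$ on $\ran R$ by $C_B^*(Rh)$ determined through
\[
 \norm{(BR)^*h}\le c\norm{Rh},
\]
which gives a bounded map $R h\mapsto (BR)^*h$. Extending by density of $\ran R$ yields $C_B^*$ with $(BR)^*=C_B^*R$, that is, $BR=RC_B$.

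\emph{Third condition implies the first.} If $BR=RC_B$, then
\[
 q_R(XB)=\norm{XRC_B}_2\le\norm{C_B}\,q_R(X).
\]

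\emph{Uniqueness and the constant.} Uniqueness of $C_B$ follows from injectivity of $R$: if $RC=RC'$ then $C=C'$. The three least constants coincide because each implication above preserves $c$. So the infimum defining $\vartheta_R(B)$ equals $\norm{C_B}$, which in turn equals the supremal ratio. The infimum is attained by closedness of the operator inequality.

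\emph{Optimal worst-case error.} For the upper bound, take $Y=XR+E$. Then
\[
 YC_B-XBR=EC_B,
\]
whose Hilbert--Schmidt norm is at most $\delta\norm{C_B}$.

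For the lower bound, use the two-point argument from Theorem~\ref{thm:target_short}. Choose a finite-rank $X$ with $q_R(X)=\delta$ and $q_R(XB)$ close to $\delta\vartheta_R(B)$. The models $\pm X$, with errors $\mp XR$, both produce the datum $0$, so any estimator misses one of $\pm XBR$ by at least $q_R(XB)$. Taking the supremum gives the error $\delta\vartheta_R(B)$.

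\emph{The infinite case.} When $\vartheta_R(B)=\infty$, the ratios are unbounded, so the same rescaled pairs force infinite worst-case error.

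The main obstacle is the Douglas step with an unbounded inverse on $\ran R$. It needs care to define $C_B^*$ on the dense range, or equivalently to note that $C_B$ need not map into $\ran R$ in any bounded-inverse sense. The resolution is that only the bounded extension of $Rh\mapsto(BR)^*h$ is needed.
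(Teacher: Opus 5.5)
Your proposal is correct and follows the paper's proof essentially step for step: rank-one testing with $X=ef^*$ gives $BQB^*\le c^2Q$; the densely defined map $Rh\mapsto RB^*h$, bounded by $c$, is extended and adjointed to give $BR=RC_B$ (the Douglas factorization the paper cites); the ideal inequality gives the converse and the upper risk bound; and the two-point argument with models $\pm X$ and errors $\mp XR$ gives the matching lower bound. One step deserves to be written out: when $\vartheta_R(B)=\infty$, the ratios $q_R(XB)/q_R(X)$ are unbounded because a finite bound on them would, by your first implication, force $BQB^*\le c^2Q$ and hence $\vartheta_R(B)<\infty$.
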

\begin{proof}
For $X=uv^*$ the two observation norms are $\norm u\norm{RB^*v}$ and $\norm u\norm{Rv}$. Rank-one testing therefore gives the quadratic-form inequality. It makes $D(Rv)=RB^*v$ bounded by $c$ on the dense range of $R$. Extend $D$ and take adjoints to get $BR=RD^*$. Conversely this factorization gives the quadratic inequality and
$XBR=(XR)C_B$.
Rank-one testing also gives equality of the optimal constants; injectivity of $R$ gives uniqueness of $C_B$. This is Douglas factorization in the specified observation norm \cite{Douglas}.

The ideal inequality gives the stated upper error. For any nonzero finite-rank $X$, rescale so that $q_R(X)=\delta$. The unknowns $X$ and $-X$, with opposite observation errors, give the same zero datum. Their target separation is $2q_R(XB)$, so every estimator has error at least $q_R(XB)$ on one of them. Take the supremum of the ratios.
\end{proof}

For paired adjoint observations write
\[
 d_R^{\pm}(X)=\max(\norm{XR}_2,\norm{X^*R}_2),\qquad
 \mathfrak M_R=\{B:\vartheta_R(B),\vartheta_R(B^*)<\infty\},
\]
and give this space the norm
$\Lambda_R(B)=\max(\norm B,\vartheta_R(B),\vartheta_R(B^*))$.
It is a unital Banach $*$-algebra. Multiplication follows from $ABR=RC_AC_B$; take norm limits of $B_n,C_{B_n},C_{B_n^*}$ and preserve both factorizations. The map $\Theta_R(B)=C_B$ preserves products and brackets, although generally not adjoints. For $R=\diag(r_1,r_2)$ and $B=E_{12}$, the factors for $B$ and $B^*$ have coefficients $r_2/r_1$ and $r_1/r_2$.

The two ideal estimates imply
\begin{equation}\label{eq:twosidedmodule}
 d_R^{\pm}(AXB)\le\Lambda_R(A)\Lambda_R(B)d_R^{\pm}(X).
\end{equation}
If two generator tuples have $\Lambda_R$-bounds $M$ and $d_R^{\pm}$-distance at most $\varepsilon$, their length-$k$ $*$-words differ by at most $kM^{k-1}\varepsilon$. This protocol observes $XR$ itself; alternative anchored observations require their own inversion costs.

\subsection{Completing a family of observable contexts}
Let $B_1,\ldots,B_d$ be known operators that can be applied as observation contexts. Choose an injective positive trace-class $Q_0=R_0^2$ and numbers $t_i>0$ such that
$\alpha=\sum_it_i^2\norm{B_i}^2<1$.
For a word $w=i_1\cdots i_n$, put $t_w=\prod_at_{i_a}$ and $B_w=B_{i_1}\cdots B_{i_n}$, with empty-word values one and $I$.
\begin{proposition}[A trace-class observation Gramian]\label{prop:context_completion}
The positive series
\begin{equation}\label{eq:context_gramian}
 \widehat Q=\sum_wt_w^2B_wQ_0B_w^*,\qquad \widehat R=\widehat Q^{1/2}
\end{equation}
converges in trace norm and is injective. It satisfies
\begin{align}
 \widehat Q&=Q_0+\sum_it_i^2B_i\widehat QB_i^*,&
 \Tr\widehat Q&\le\frac{\Tr Q_0}{1-\alpha},\label{eq:context_stein}\\
 \vartheta_{\widehat R}(B_i)&\le t_i^{-1},&
 \norm{\widehat Q-\widehat Q_{\le N}}_1
 &\le\frac{\Tr Q_0\,\alpha^{N+1}}{1-\alpha}.\label{eq:context_cost}
\end{align}
It is the least bounded positive supersolution of this Stein equation and its unique bounded solution. Moreover
\begin{equation}\label{eq:context_readings}
 \norm{X\widehat R}_2^2=\sum_wt_w^2\norm{XB_wR_0}_2^2.
\end{equation}
\end{proposition}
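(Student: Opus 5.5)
The plan is to treat the word series as a geometric series organized by word length. For length $n$, define $S_n=\sum_{|w|=n}t_w^2B_wQ_0B_w^*\ge0$. Then $\Tr S_n=\sum_{|w|=n}t_w^2\norm{B_wR_0}_2^2\le\sum_{|w|=n}t_w^2\norm{B_w}^2\Tr Q_0$. Submultiplicativity $\norm{B_w}\le\prod_a\norm{B_{i_a}}$ turns this into at most $\alpha^n\Tr Q_0$. Since every term is positive, trace-norm convergence of the series follows. The same bound gives the total trace bound in \eqref{eq:context_stein} and the truncation tail in \eqref{eq:context_cost}, where $\widehat Q_{\le N}$ is the sum over words of length at most $N$. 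Injectivity follows from $\widehat Q\ge Q_0$ and injectivity of $Q_0$.

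For the Stein equation, split every nonempty word as $w=iw'$. Then $B_wQ_0B_w^*=B_i(B_{w'}Q_0B_{w'}^*)B_i^*$ and $t_w^2=t_i^2t_{w'}^2$. Regrouping the absolutely convergent positive series gives $\widehat Q=Q_0+\sum_it_i^2B_i\widehat QB_i^*$. This directly yields $t_i^2B_i\widehat QB_i^*\le\widehat Q$, so $\vartheta_{\widehat R}(B_i)\le t_i^{-1}$ by definition.

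For minimality, let $X\ge0$ be bounded with $X\ge Q_0+\Phi(X)$, where $\Phi(Y)=\sum_it_i^2B_iYB_i^*$ is positive and has norm at most $\alpha<1$ on $B(H)$. Iterating gives $X\ge\sum_{n\le N}\Phi^n(Q_0)+\Phi^{N+1}(X)\ge\widehat Q_{\le N}$, and letting $N\to\infty$ gives $X\ge\widehat Q$. For uniqueness, the difference $Y$ of two bounded solutions satisfies $Y=\Phi(Y)$. Hence $\norm Y\le\alpha^n\norm Y$ for every $n$, so $Y=0$.

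For \eqref{eq:context_readings}, use $\norm{X\widehat R}_2^2=\Tr(X\widehat QX^*)$ and insert the trace-norm convergent series. Since $X$ is bounded, $Y\mapsto\Tr(XYX^*)$ is trace-norm continuous, which allows termwise evaluation. Each term gives $t_w^2\norm{XB_wR_0}_2^2$.

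The main point to check is the norm bound on $\Phi$. It is $\norm{\Phi(I)}\le\sum_it_i^2\norm{B_i}^2=\alpha$, because a positive map attains its norm at the identity. Beyond that, I expect no real obstacle.
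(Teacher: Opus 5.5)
Your proposal is correct and follows essentially the same route as the paper: length-graded trace bounds $\alpha^n\Tr Q_0$ give convergence and both trace estimates, grouping words by their initial letter gives the Stein equation and the multiplier cost, iterating a supersolution gives minimality, the contraction $\norm{\Phi(Y)}\le\alpha\norm Y$ gives uniqueness, and termwise evaluation of $\Tr(X\widehat QX^*)$ gives the readings identity. The bound on $\Phi$ follows directly from the triangle inequality, $\norm{\sum_it_i^2B_iYB_i^*}\le\sum_it_i^2\norm{B_i}^2\norm Y$, so you do not need the fact that a positive map attains its norm at the identity.
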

\begin{proof}
The sum of traces at length $n$ is bounded by $\alpha^n\Tr Q_0$. This proves convergence and both trace estimates. Since $\widehat Q\ge Q_0$, it is injective. Grouping words by their initial letter gives the Stein equation and
$t_i^2B_i\widehat QB_i^*\le\widehat Q$;
Theorem~\ref{thm:multiplier} yields the multiplier costs. The positive map
$\Psi(H)=\sum_it_i^2B_iHB_i^*$
has norm at most $\alpha$ on both $B(H)$ and $\Sch_1$. Iterating any positive supersolution shows that it dominates every partial sum. Two bounded solutions differ by $D=\Psi(D)$, hence agree. Taking the trace of the positive series for $X\widehat QX^*$ proves \eqref{eq:context_readings}.
\end{proof}
This is the noncommutative observability Gramian mechanism \cite{BBF} for the present observation norm. It uses the expanded family of readings $XB_wR_0$. If $\norm X\le M$, discarding contexts of length greater than $N$ has error at most
\[
 M\norm{R_0}_2\frac{\alpha^{(N+1)/2}}{\sqrt{1-\alpha}}
\]
in the $\ell^2(\Sch_2)$ data norm. An adjoint-closed context alphabet places each $B_i$ in $\mathfrak M_{\widehat R}$.

\subsection{Quantitative multiplicativity under observable contexts}\label{subsec:contextual_multiplicativity}
Exact zero Schwarz defect implies multiplicativity, but a small anchored defect need not give a uniform word estimate. The two-boundary identity specifies the missing data: the generator defects must be tested on the input vectors produced by the target's contexts. This is a quantitative use of the operational saturation from Section~\ref{sec:principle}.

Let $\phi:\mathcal U_M\to B(H)$ be unital and completely positive. Write $\mathcal A$ for its generator and adjoint labels, put $T_a=\phi(a)$, and for any bounded context $C$ define
\begin{equation}\label{eq:contextual_schwarz_reading}
 \begin{split}
 S_\phi(a)&=\phi(a^*a)-T_a^*T_a\ge0,\\
 e_R(a;C)^2&=\Tr\big(RC^*S_\phi(a)CR\big),
 \qquad \delta_R(\phi)=\sum_{a\in\mathcal A}e_R(a;I)^2.
 \end{split}
\end{equation}
Thus the last quantity is the score of \eqref{eq:schwarzscore}. An adjoint label is included even when its evaluated operator happens to be self-adjoint. The anchor is positive, injective, and Hilbert--Schmidt; normalization is not needed for the estimates below. A fixed polynomial target uses only the finitely many prefix and suffix contexts appearing in its words.

\begin{theorem}[Contextual control of all ordered products]\label{thm:contextual_multiplicativity}
For a word $w=a_1\cdots a_n$ of length $n\ge2$, put $T_w=T_{a_1}\cdots T_{a_n}$. Then
\begin{equation}\label{eq:contextual_word_bound}
 \begin{split}
 \norm{R[\phi(w)-T_w]R}_1
 \le\sum_{i<j}
 e_R\big(a_i^*;(T_{a_1}\cdots T_{a_{i-1}})^*\big)
 \left(\prod_{i<k<j}\norm{a_k}_{\mathcal U_M}\right)
 e_R\big(a_j;T_{a_{j+1}}\cdots T_{a_n}\big).
 \end{split}
\end{equation}
If each compressed generator belongs to $\mathfrak M_R$, then the two contextual factors are at most
\[
 e_R(a_i^*;I)\prod_{k<i}\vartheta_R(T_{a_k}^*),
 \qquad
 e_R(a_j;I)\prod_{k>j}\vartheta_R(T_{a_k}),
\]
respectively. In particular, if $K\ge\max(1,M)$ and $\Lambda_R(T_a)\le K$ for every label, then
\begin{equation}\label{eq:contextual_uniform_word}
 \norm{R[\phi(w)-T_w]R}_1
 \le\binom n2K^{n-2}\delta_R(\phi).
\end{equation}
For the homogeneous dictionary and weights of \eqref{eq:ucpcode}, this also gives the full trace-class estimate
\begin{equation}\label{eq:contextual_full_word_code}
 \norm{\mathcal Q(\phi)-\mathcal Q(\pi_T)}_1
 \le\big(e^K+2e^{2K}\big)\delta_R(\phi),
\end{equation}
where $\pi_T$ is the representation of $\mathcal U_M$ defined by the norm-bounded tuple $T$. 
\end{theorem}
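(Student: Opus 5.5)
The plan is to run everything through the Stinespring dilation and Lemma~\ref{lem:compressed_word_identity}. As in the proof of Theorem~\ref{thm:ucp}, write $\phi(x)=V^*\pi(x)V$ with $V:H\to\mathcal K$ isometric, and put $P=VV^*$ and $A_k=\pi(a_k)$. Then $T_{a_k}=V^*A_kV$ and $\phi(w)=V^*A_1\cdots A_nV$. Lemma~\ref{lem:compressed_word_identity} applies with $U=V$. The defect of each summand involves $B_j=(I-P)\pi(a_j)V$ and $C_i=(I-P)\pi(a_i)^*V=(I-P)\pi(a_i^*)V$.

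The key identification is the dilation factorization already used for the Schwarz score,
\[
 S_\phi(a)=V^*\pi(a)^*(I-P)\pi(a)V=B_a^*B_a.
\]
It gives $\norm{B_aCR}_2^2=\Tr(RC^*S_\phi(a)CR)=e_R(a;C)^2$ for every bounded context $C$. With $C=T_{a_{j+1}}\cdots T_{a_n}$, the right factor in \eqref{eq:two_boundary_anchored} is exactly $e_R(a_j;C)$. For the left factor, $C_i=B_{a_i^*}$, so $\norm{C_i(T_{a_1}\cdots T_{a_{i-1}})^*R}_2=e_R(a_i^*;(T_{a_1}\cdots T_{a_{i-1}})^*)$. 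The middle product satisfies $\norm{A_{i+1}\cdots A_{j-1}}\le\prod\norm{a_k}_{\mathcal U_M}$ because $\pi$ is contractive. Together these prove \eqref{eq:contextual_word_bound}.

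For the multiplier reduction, Theorem~\ref{thm:multiplier} gives factorizations $T_bR=RC_{T_b}$ with $\norm{C_{T_b}}=\vartheta_R(T_b)$. Iterating, $T_{a_{j+1}}\cdots T_{a_n}R=RC_{T_{a_{j+1}}}\cdots C_{T_{a_n}}$. Hence $e_R(a_j;\text{suffix})=\norm{B_{a_j}R\,C\cdots C}_2\le e_R(a_j;I)\prod_{k>j}\vartheta_R(T_{a_k})$. The prefix is handled the same way: $(T_{a_1}\cdots T_{a_{i-1}})^*R=T_{a_{i-1}}^*\cdots T_{a_1}^*R$, which factors through the multipliers of $T_{a_k}^*$.

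The uniform bound \eqref{eq:contextual_uniform_word} then follows in two steps. First bound every $\vartheta$ and every $\norm{a_k}\le M$ by $K$, so that each $(i,j)$ term is at most $K^{n-2}e_R(a_i^*;I)e_R(a_j;I)$. Then use $e_R(a_i^*;I)e_R(a_j;I)\le\tfrac12(e^2+e^2)\le\delta_R(\phi)$, since the label set is closed under adjoints and each squared reading is bounded by the total score. There are $\binom n2$ pairs.

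The main obstacle is \eqref{eq:contextual_full_word_code}. I would write $\mathcal Q(\phi)_{ij}-\mathcal Q(\pi_T)_{ij}=c_ic_jR[\phi(p_i^*p_j)-p_i^*p_j(T)]R$ and expand it in words of length at most $n_i+n_j$, with coefficient mass at most $a_ia_j$. Here $\pi_T(w)=T_w$, and $T$ is norm bounded because the $T_a$ are contractive images.

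To bound the block operator in trace norm I would not sum entries naively. Instead I would split the difference as a block decomposition: the first row and column (entries $i=0$ or $j=0$), which involve words of length $n_j$, plus the block with $i,j\ge1$. Entry norms are then summed with the weights $c_i$, and the two-boundary factors are kept as Hilbert--Schmidt rows to exploit the structure. The crude entrywise estimate has the form $\sum_{i,j}c_ic_ja_ia_j\binom{n_i+n_j}{2}K^{n_i+n_j-2}\delta$. Using $c_ia_i\le2^{-i}/n_i!$ and $\binom{m}{2}K^{m-2}\le e^K\cdot(\text{factorials})$, summing should produce $e^K$ from the row/column terms and $2e^{2K}$ from the interior. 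The step I expect to be delicate is matching these exact constants, using $n_i!n_j!$ against $\binom{n_i+n_j}2K^{n_i+n_j-2}$ and the geometric weights $2^{-i}$; I would first check whether the entrywise triangle inequality for trace norm suffices before refining it.
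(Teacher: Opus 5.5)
Your proofs of \eqref{eq:contextual_word_bound}, of the multiplier reduction, and of \eqref{eq:contextual_uniform_word} are correct and follow the paper's argument exactly. The ingredients are the Stinespring dilation, the factorization $S_\phi(a)=B_a^*B_a$ giving $e_R(a;C)=\norm{B_aCR}_2$, Lemma~\ref{lem:compressed_word_identity} with $C_i=B_{a_i^*}$, and the factorizations $T_bR=RC_{T_b}$ iterated along the prefix and the suffix.

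The gap is the dictionary estimate \eqref{eq:contextual_full_word_code}. You leave it at ``summing should produce'' and ask whether the entrywise triangle inequality is enough.

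It is enough, and it is exactly what the paper uses. The block operator is $\sum_{i,j}E_{ij}\otimes X_{ij}$, and each summand has trace norm $\norm{X_{ij}}_1$. So your crude sum is the whole bound, and no Hilbert--Schmidt row refinement is needed.

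What is missing is the evaluation of that sum. Split
\[
 \binom{n_i+n_j}{2}=\binom{n_i}{2}+n_in_j+\binom{n_j}{2},
\]
and put $b_0=1$, $n_0=0$, $b_i=c_ia_i\le2^{-i}/n_i!$, and $f(x)=\sum_{i\ge0}b_ix^{n_i}$. The double sum then factorizes as
\[
 \sum_{i,j\ge0}b_ib_j\binom{n_i+n_j}{2}K^{n_i+n_j-2}
 =f'(K)^2+f(K)f''(K).
\]
Combining $\sum_{i\ge1}2^{-i}\le1$ with $K^{n-r}/(n-r)!\le e^K$ gives $f(K)\le1+e^K$, $f'(K)\le e^K$, and $f''(K)\le e^K$. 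Hence the sum is at most $e^{2K}+(1+e^K)e^K=e^K+2e^{2K}$.

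Your heuristic attribution of the constants is right. With $g=f-1$, the row $i=0$ and the column $j=0$ together contribute $g''(K)=f''(K)\le e^K$. The interior contributes $g'(K)^2+g(K)g''(K)\le2e^{2K}$.

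To make the step complete, also state two facts explicitly. Words of length zero or one have zero defect, since $\phi(1)=I$ and $\phi(a)=T_a$; this matches $\binom n2=0$ for $n\le1$. And $T_{a^*}=T_a^*$ because $\phi$ preserves adjoints, so evaluating the words of $p_i^*p_j$ at $T$ gives exactly $\pi_T(p_i^*p_j)$.
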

\begin{proof}
Take a Stinespring realization $\phi(b)=U^*\pi(b)U$ and put $P=UU^*$. The positive defect factorization gives
\[
 S_\phi(a)=B_a^*B_a,
 \qquad B_a=(I-P)\pi(a)U,
 \qquad e_R(a;C)=\norm{B_aCR}_2.
\]
Apply Lemma~\ref{lem:compressed_word_identity} to $A_j=\pi(a_j)$. Its left boundary factor is $B_{a_i^*}$ and its right boundary factor is $B_{a_j}$. Contractivity of $\pi$ bounds the interior product by the displayed product of algebra norms. This proves \eqref{eq:contextual_word_bound}; it depends only on $\phi$ and the specified contexts, not on a choice of dilation.

If $C\in\mathfrak M_R$, Theorem~\ref{thm:multiplier} gives $CR=RC_C$ with $\norm{C_C}=\vartheta_R(C)$. Therefore
\[
 e_R(a;C)=\norm{B_aRC_C}_2
 \le e_R(a;I)\vartheta_R(C).
\]
Multiplicativity of the context factors gives the two product bounds. Each squared generator factor is at most $\delta_R(\phi)$, so the uniform hypothesis bounds every summand by $K^{n-2}\delta_R(\phi)$. There are $\binom n2$ summands, proving \eqref{eq:contextual_uniform_word}.

For the full dictionary let $b_0=1$, $n_0=0$, and $b_i=c_ia_i\le2^{-i}/n_i!$ for $i\ge1$, using the absolute word coefficient sums $a_i$. A polynomial $p_i^*p_j$ has degree $n_i+n_j$ and absolute coefficient sum at most $a_ia_j$. Apply \eqref{eq:contextual_uniform_word} to each of its words; terms of degree zero or one have zero defect. The sum of the trace norms of all resulting blocks is bounded by
\[
 \delta_R(\phi)\sum_{i,j\ge0}b_ib_j
        \binom{n_i+n_j}{2}K^{n_i+n_j-2}.
\]
Set $f(x)=\sum_{i\ge0}b_ix^{n_i}$ for $x\ge0$. The coefficient bounds give
$f(K)\le1+e^K$, $f'(K)\le e^K$, and $f''(K)\le e^K$.
The double sum is $f'(K)^2+f(K)f''(K)$, at most $e^K+2e^{2K}$. Absolute summation of trace-class blocks bounds the trace norm of the whole block operator, proving \eqref{eq:contextual_full_word_code}.
\end{proof}

With the normalized anchor of Proposition~\ref{prop:finite_escape_certificate}, the border $G$ of $\phi$ has escape score $\delta_R(\phi)$. If Proposition~\ref{prop:finite_escape_certificate} supplies the finite noisy upper bound
\[
 U_{N,L}=s_{N,L}(\widehat G)+\varepsilon_{N,L}+bM^2\tau_L,
\]
then, under the same certified context bound $K$,
\begin{equation}\label{eq:finite_multiplicativity_certificate}
 \norm{\mathcal Q(\phi)-\mathcal Q(\pi_T)}_1
 \le(e^K+2e^{2K})U_{N,L}.
\end{equation}
This certifies the error of replacing the positive map by the word evaluation of its compressed tuple. Finite recovery of that tuple itself retains the separate anchor-inverse and tail costs.

The zero-defect criterion is the classical multiplicative-domain mechanism~\cite{Choi,CJK}. The estimates above specify its quantitative form in this anchored observation norm. They compare a positive map with the word evaluation of its compressed tuple in the free norm-bounded algebra. The comparison is in the anchored trace norm on the free norm-bounded algebra.

\begin{example}[A small generator score can hide a fixed word defect]\label{ex:anchored_defect_instability}
Let $H=\ell^2(\N)$ and fix $Re_j=r_je_j$, with $r_j>0$, $r_j\to0$, and $\sum_jr_j^2=1$. On $\mathcal K=H\oplus\C f$, for $n\ge2$ take the self-adjoint contractions
\[
 A_n=fe_n^*+e_nf^*,\qquad
 B_n=e_1e_n^*+e_ne_1^*,
\]
extended by zero on the other directions. The embedding $U:H\to\mathcal K$ and the representation $\pi_n(a)=A_n$, $\pi_n(b)=B_n$ define a UCP map $\phi_n=U^*\pi_n(\,\cdot\,)U$ on $\mathcal U_1$. Its compressed tuple is $T_a=0$, $T_b=B_n|_H$. The only nonzero generator defects are
$S_{\phi_n}(a)=S_{\phi_n}(a^*)=e_ne_n^*$; the $b$ and $b^*$ defects vanish. Thus
\begin{equation}\label{eq:anchored_small_score_counterexample}
 \delta_R(\phi_n)=2r_n^2\longrightarrow0,
 \qquad
 \norm{R[\phi_n(ba^2b)-T_bT_a^2T_b]R}_1=r_1^2.
\end{equation}
Indeed $U^*B_nA_n^2B_nU=e_1e_1^*$. The missing reading is already the single context $T_b$:
$e_R(a;T_b)^2=r_1^2$. Correspondingly,
\[
 \vartheta_R(T_b)=\max\{r_1/r_n,r_n/r_1\}\longrightarrow\infty.
\]
There is therefore no uniform modulus tending to zero that bounds even this fixed fourth-order target by the generator score alone on the full UCP class. The contextual hypothesis in Theorem~\ref{thm:contextual_multiplicativity} rules out this uncontrolled transfer into weakly anchored input directions.
\end{example}

\begin{corollary}[Saturating a defect by its observable contexts]\label{cor:saturated_defect}
For the same UCP map, choose a faithful trace-class $Q_0=R_0^2$ and weights $t_a>0$ on the generator--adjoint alphabet with
$\alpha=\sum_at_a^2\norm{T_a}^2<1$.
Apply Proposition~\ref{prop:context_completion} to the compressed tuple:
\[
 \widehat Q=\sum_wt_w^2T_wQ_0T_w^*,\qquad
 \widehat R=\widehat Q^{1/2}.
\]
Then $T_a\in\mathfrak M_{\widehat R}$ for every label, and
\begin{equation}\label{eq:saturated_defect_identity}
 \delta_{\widehat R}(\phi)
 =\sum_wt_w^2\sum_{a\in\mathcal A}e_{R_0}(a;T_w)^2.
\end{equation}
Writing $C_{\mathcal A}=\sum_a\norm{a}_{\mathcal U_M}^2$, the omitted contexts of length greater than $N$ satisfy
\begin{equation}\label{eq:saturated_defect_tail}
 0\le\delta_{\widehat R}(\phi)
 -\sum_{|w|\le N}t_w^2\sum_a e_{R_0}(a;T_w)^2
 \le C_{\mathcal A}\Tr Q_0\,
          \frac{\alpha^{N+1}}{1-\alpha}.
\end{equation}
Thus the expanded readings provide a finite-context certificate for the score entering Theorem~\ref{thm:contextual_multiplicativity}.
\end{corollary}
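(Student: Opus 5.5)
The corollary follows by feeding the compressed tuple into Proposition~\ref{prop:context_completion} and rewriting the anchored score through \eqref{eq:context_readings}. First, since $\alpha=\sum_at_a^2\norm{T_a}^2<1$, that proposition applies with $B_a=T_a$. It gives a trace-class, injective $\widehat Q$ with $\vartheta_{\widehat R}(T_a)\le t_a^{-1}$. The alphabet $\mathcal A$ contains both $a$ and $a^*$, and $T_{a^*}=\phi(a^*)=\phi(a)^*=T_a^*$ because $\phi$ is unital completely positive. Hence $\vartheta_{\widehat R}(T_a^*)\le t_{a^*}^{-1}$ as well. Together with $\norm{T_a}\le\norm a\le M$, this gives $\Lambda_{\widehat R}(T_a)<\infty$, so $T_a\in\mathfrak M_{\widehat R}$.

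Next comes the identity \eqref{eq:saturated_defect_identity}. I factor the positive defect as $S_\phi(a)=B_a^*B_a$ with $B_a=(I-P)\pi(a)U$ from a Stinespring realization, exactly as in the proof of Theorem~\ref{thm:contextual_multiplicativity}. Then
\[
 e_{\widehat R}(a;I)^2=\norm{B_a\widehat R}_2^2 .
\]
Applying \eqref{eq:context_readings} with $X=B_a$ (an operator $H\to\mathcal K$; the readings formula is a trace of the positive series $X\widehat QX^*$, so it holds verbatim for such $X$) gives
\[
 \norm{B_a\widehat R}_2^2=\sum_wt_w^2\norm{B_aT_wR_0}_2^2=\sum_wt_w^2e_{R_0}(a;T_w)^2 .
\]
Summing over $a\in\mathcal A$ and exchanging the nonnegative sums yields the identity. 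Here $T_w$ is the product of compressed generators, which is the context notation of \eqref{eq:contextual_schwarz_reading}.

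Finally, the tail bound \eqref{eq:saturated_defect_tail}. The difference is the nonnegative sum over $|w|>N$, so the lower bound is immediate. For the upper bound I use $S_\phi(a)\le\phi(a^*a)\le\norm a^2I$, which gives
\[
 e_{R_0}(a;T_w)^2\le\norm a^2\Tr(R_0T_w^*T_wR_0)=\norm a^2\Tr(T_wQ_0T_w^*).
\]
Summing over $a$ produces the factor $C_{\mathcal A}$. The remaining tail $\sum_{|w|>N}t_w^2\Tr(T_wQ_0T_w^*)$ is bounded by $\Tr Q_0\,\alpha^{N+1}/(1-\alpha)$, by the length-$n$ trace estimate $\alpha^n\Tr Q_0$ from the proof of Proposition~\ref{prop:context_completion}.

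The only delicate point is the ordering convention in the word expansion. One must check that the Gramian is built from $T_wQ_0T_w^*$ while the readings appear as $XT_wR_0$, so that the contexts attached to $e_{R_0}$ are the products $T_w$ acting on the anchor side. This matches \eqref{eq:context_readings} with $B_w=T_w$. I expect no further obstacle.
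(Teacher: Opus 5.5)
Your proposal is correct and follows the paper's proof essentially step for step. Membership in $\mathfrak M_{\widehat R}$ comes from the context-Gramian multiplier bound together with adjoint-closedness of the alphabet. The identity comes from writing $\delta_{\widehat R}(\phi)=\sum_a\Tr(B_a\widehat QB_a^*)$ in a Stinespring realization and expanding the trace-norm convergent Gramian series. The tail comes from the geometric length-$n$ bound; your estimate $S_\phi(a)\le\phi(a^*a)\le\norm a^2I$ is equivalent to the paper's $\norm{B_a}\le\norm a_{\mathcal U_M}$.
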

\begin{proof}
The context Gramian gives $\vartheta_{\widehat R}(T_a)\le t_a^{-1}$. The alphabet contains the adjoint of each label, so it also bounds $\vartheta_{\widehat R}(T_a^*)$, proving membership in $\mathfrak M_{\widehat R}$. In a Stinespring realization,
\[
 \delta_{\widehat R}(\phi)=\sum_a\Tr(B_a\widehat QB_a^*).
\]
Insert the positive trace-norm convergent Gramian series and take the trace term by term. The result is \eqref{eq:saturated_defect_identity}. Since $\norm{B_a}\le\norm{a}_{\mathcal U_M}$, the total contribution at word length $k$ is at most $C_{\mathcal A}\Tr Q_0\alpha^k$. Summing the geometric tail proves \eqref{eq:saturated_defect_tail}.
\end{proof}

The compressed operators must be available as contexts, or obtained by a separate certified reconstruction. This construction enlarges the observation protocol: it uses the contextual readings in \eqref{eq:saturated_defect_identity}, not just the original score $\delta_{R_0}$. The finite word cutoff does not by itself truncate the physical traces; those require a separate physical-resolution and noise budget, as in Proposition~\ref{prop:finite_escape_certificate}. If a trace-one anchor is desired, normalize $\widehat Q$ by its trace; all squared scores and sandwiched trace norms scale by that same factor, while the multiplier costs are unchanged.

\section{Observable operations and common-noise evolution}\label{sec:transport}\label{sec:time}
Recovered multilinear operations carry a common error calculus: primal errors propagate through partial evaluations and dual tests through their adjoints. Common-noise evolution exposes new contexts through drift commutators and transports ordered products through replica semigroups. All drivers in the evolution results are deterministic bounded operators.

\subsection{Primal errors and dual tests}
Consider a finite rooted ordered tree. An internal vertex $v$ applies a bounded multilinear operation $\mu_v$ to its ordered children. Let
\[
 x_v=\mu_v(x_{v1},\ldots,x_{vk_v}),\qquad
 a_v=\mu_v(a_{v1},\ldots,a_{vk_v})+r_v,
 \qquad e_v=a_v-x_v.
\]
The residual $r_v$ includes local computational error. Define
\[
 L_{vj}h=\mu_v(a_{v1},\ldots,a_{v,j-1},h,x_{v,j+1},\ldots,x_{vk_v}).
\]
\begin{proposition}[Exact error and adjoint identities]\label{prop:cascade}
At every internal vertex,
\begin{equation}\label{eq:cascade}
 e_v=r_v+\sum_jL_{vj}e_{vj}.
\end{equation}
Starting with a root functional $\lambda_{\rm root}$ and recursively setting $\lambda_{vj}=L_{vj}'\lambda_v$ gives
\begin{equation}\label{eq:adjointcascade}
 \lambda_{\rm root}(e_{\rm root})
 =\sum_{v\ \text{internal}}\lambda_v(r_v)
  +\sum_{v\ \text{leaf}}\lambda_v(e_v).
\end{equation}
\end{proposition}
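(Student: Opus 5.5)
The plan is a telescoping argument at a single vertex, followed by induction over the tree for the dual identity.

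For \eqref{eq:cascade}, fix an internal vertex $v$ with children $v1,\ldots,vk_v$. The idea is to swap exact arguments for approximate ones one slot at a time, proceeding from left to right. For $0\le j\le k_v$, set
\[
 y_j=\mu_v(a_{v1},\ldots,a_{vj},x_{v,j+1},\ldots,x_{vk_v}),
\]
so that $y_0=x_v$ and $y_{k_v}=a_v-r_v$. Consecutive terms differ only in slot $j$. Multilinearity therefore gives
\[
 y_j-y_{j-1}=\mu_v(a_{v1},\ldots,a_{v,j-1},a_{vj}-x_{vj},x_{v,j+1},\ldots)=L_{vj}e_{vj}.
\]
Summing over $j$ yields $a_v-r_v-x_v=\sum_jL_{vj}e_{vj}$, which is \eqref{eq:cascade}. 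Note that the definition of $L_{vj}$ puts the approximate values before slot $j$ and the exact values after it. This is precisely the ordering that makes the telescoping sum exact, with no second-order remainder. Each $L_{vj}$ is a bounded linear map, since it is a partial map of a bounded multilinear operation. Consequently the Banach adjoint $L_{vj}'$ is defined and bounded, and the recursion for $\lambda_{vj}$ makes sense.

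For \eqref{eq:adjointcascade}, I would pair \eqref{eq:cascade} with $\lambda_v$ and use $\lambda_v(L_{vj}e_{vj})=(L_{vj}'\lambda_v)(e_{vj})=\lambda_{vj}(e_{vj})$. This gives the local identity
\[
 \lambda_v(e_v)=\lambda_v(r_v)+\sum_j\lambda_{vj}(e_{vj}).
\]
The global statement then follows by induction on the height of the subtree rooted at $v$. The inductive claim is that $\lambda_v(e_v)$ equals the sum of $\lambda_u(r_u)$ over internal $u$ in the subtree of $v$, plus the sum of $\lambda_u(e_u)$ over the leaves $u$ of that subtree. A leaf satisfies the claim trivially. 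At an internal vertex, substitute the inductive claim for each child into the local identity. The subtrees of distinct children are disjoint, and every vertex of the subtree other than $v$ lies in exactly one child subtree, so no term is counted twice. Applying the claim at the root gives \eqref{eq:adjointcascade}. Finiteness of the tree guarantees that the induction terminates.

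There is no real obstacle here. The only points needing care are the convention for $L_{vj}$ (approximate values on the left, exact on the right), which avoids any quadratic remainder, and the fact that the adjoint recursion is well defined because each partial map is bounded.
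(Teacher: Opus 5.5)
Your proof is correct and follows the paper's argument. The left-to-right one-slot replacement is exactly the telescoping the paper uses, consistent with the ordering built into $L_{vj}$, and your induction over subtrees formalizes the paper's ``substitute down the tree and use the adjoint pairing along each edge'' step.
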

\begin{proof}
Replace the true inputs one at a time by their computed values. Multilinearity gives the telescoping identity \eqref{eq:cascade} exactly. Substitute this identity down the tree and use the adjoint pairing along each edge. Every branch ends in a local residual or leaf error, giving \eqref{eq:adjointcascade}.
\end{proof}

If $C_v$ bounds the node operation, $B_{vj}$ bounds its true inputs in the required input spaces, $E_{vj}$ bounds their errors, and $\norm{r_v}\le\delta_v$, then
\begin{equation}\label{eq:cascadebudget}
 B_v=C_v\prod_jB_{vj},\qquad
 E_v=\delta_v+C_v\sum_jE_{vj}
       \prod_{i<j}(B_{vi}+E_{vi})\prod_{i>j}B_{vi}.
\end{equation}
Choose input radii recursively from the desired output radius; at the Hilbert exponent use \eqref{eq:arityregion}. Shared subexpressions can be treated on a directed acyclic graph by adding all incoming dual functionals. The exact adjoint identity contains true states; the budget recurrence is computable from certified bounds alone.

\subsection{Finite-chaos generators and countable dictionaries}
Let $T_j,\widetilde T_j$ belong to the same degree-$m_j$ chaos over the same source. Set
\[
 \Delta=\max_j\norm{T_j-\widetilde T_j}_{L^2(\Sch_s)},\quad
 B\ge\max\left(1,\max_j\norm{T_j}_{L^2(\Sch_s)},
                    \max_j\norm{\widetilde T_j}_{L^2(\Sch_s)}\right),
 \quad m_* =\max_jm_j.
\]
\begin{proposition}[Word and dictionary bounds]\label{prop:weightedwords}
Every length-$k$ $*$-word satisfies
\begin{equation}\label{eq:momentword}
 \norm{w(T)-w(\widetilde T)}_{L^2(\Sch_s)}
 \le kH_kB^{k-1}\Delta,\qquad H_k=(2k-1)^{m_*k/2}.
\end{equation}
For a countable dictionary of homogeneous $*$-polynomials $p_j$ of positive word degrees $k_j$, with absolute coefficient sums $a_j$, let
\[
 c_j=\frac{2^{-j}}{\max(1,a_j)k_j!H_{k_j}}.
\]
Then
\begin{equation}\label{eq:weightedwordbound}
 \sum_{j\ge1}c_j\norm{p_j(T)-p_j(\widetilde T)}_{L^2(\Sch_s)}\le e^B\Delta,
 \qquad
 \sum_{j>J}c_j\norm{p_j(T)}_{L^2(\Sch_s)}\le2^{-J}e^B.
\end{equation}
\end{proposition}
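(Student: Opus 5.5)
The plan is a telescoping argument for a single word, followed by hypercontractive moment bounds on the factors, and then summation over the dictionary.

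\emph{Telescoping.} For a word $w=b_1\cdots b_k$ in the letters and their adjoints, write
\[
 w(T)-w(\widetilde T)=\sum_{i=1}^k b_1(\widetilde T)\cdots b_{i-1}(\widetilde T)\,\bigl(b_i(T)-b_i(\widetilde T)\bigr)\,b_{i+1}(T)\cdots b_k(T).
\]
Adjoints preserve Schatten norms, so each difference factor has the same $L^2(\Sch_s)$ norm as some $T_j-\widetilde T_j$.

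\emph{Bounding one summand.} Pointwise I use $\norm{XY}_s\le\norm X_s\norm Y_s$, which follows from $\norm Y_{\op}\le\norm Y_s$ as in the proof of Theorem~\ref{thm:nativeproduct}. Then I apply probability H\"older with all $k$ factors in $L^{2k}$, so that
\[
 \norm{\textstyle\prod_i X_i}_{L^2}\le\prod_i\norm{X_i}_{L^{2k}}.
\]
Each factor is a degree-$m_j$ chaos; this includes the difference $T_j-\widetilde T_j$, because both lie in the same chaos. The Banach-valued hypercontractivity \eqref{eq:hyper} gives
\[
 \norm X_{L^{2k}}\le(2k-1)^{m_j/2}\norm X_{L^2}\le(2k-1)^{m_*/2}\norm X_{L^2}.
\]
The product of the $k$ hypercontractive constants is at most $H_k$. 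The remaining norms contribute $B^{k-1}\Delta$. Summing the $k$ summands proves \eqref{eq:momentword}. The case $k=1$ is immediate.

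\emph{Dictionary sums.} Expanding $p_j$ in words gives
\[
 \norm{p_j(T)-p_j(\widetilde T)}_{L^2}\le a_jk_jH_{k_j}B^{k_j-1}\Delta.
\]
Multiplying by $c_j$ leaves at most $2^{-j}B^{k_j-1}\Delta/(k_j-1)!$. I need to sum this over $j$ and get at most $e^B\Delta$.

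The main obstacle is this summation. Several dictionary elements may share the same degree, so I cannot simply sum over $k$. I will bound each term by $2^{-j}\sup_k B^{k-1}/(k-1)!\le 2^{-j}e^B$ and then use $\sum_{j\ge1}2^{-j}=1$.

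For the tail, note that $\norm{w(T)}_{L^2}\le H_kB^k$ by the same H\"older and hypercontractivity argument. Hence
\[
 c_j\norm{p_j(T)}_{L^2}\le 2^{-j}B^{k_j}/k_j!\le2^{-j}e^B,
\]
and summing over $j>J$ gives $2^{-J}e^B$.
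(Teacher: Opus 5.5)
Your proposal is correct and follows essentially the same route as the paper: telescope the word, bound each summand with the Schatten ideal inequality via $\norm{A}\le\norm{A}_s$, apply probability H\"older with every factor in $L^{2k}$, and charge $(2k-1)^{m_*/2}$ per factor by hypercontractivity, including for the homogeneous difference. The paper then reduces the weighted dictionary terms to $2^{-j}B^{k_j-1}\Delta/(k_j-1)!$ and $2^{-j}B^{k_j}/k_j!$ and simply says their sums give the bounds; your step of bounding each term by $2^{-j}e^B$ before summing over $j$ makes that summation explicit.
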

\begin{proof}
The word difference is a sum of $k$ telescoping terms. In each, bound the differing factor in $\Sch_s$ and all others in operator norm, then use $\norm A\le\norm A_s$. Probability H\"older requires $L^{2k}$ for each factor. Hypercontractivity charges at most $(2k-1)^{m_*/2}$ per factor, including the homogeneous difference. This proves \eqref{eq:momentword}; a single word has norm at most $H_kB^k$. After multiplication by $c_j$, the corresponding bounds are
$2^{-j}B^{k_j-1}\Delta/(k_j-1)!$ and $2^{-j}B^{k_j}/k_j!$.
Their sums give \eqref{eq:weightedwordbound}.
\end{proof}

Include $p_0=I,c_0=1$ and a normalized faithful anchor $R$. The row
$\mathsf B(T)=(c_jp_j(T)R)_{j\ge0}$
has $L^2(\Sch_2)$ norm at most $1+e^B$. Consequently
\begin{equation}\label{eq:allgramerror}
 \norm{\mathsf B(T)^*\mathsf B(T)-\mathsf B(\widetilde T)^*\mathsf B(\widetilde T)}_{L^1(\Sch_1)}
 \le2(1+e^B)e^B\Delta.
\end{equation}
This includes all cross-word and cross-PBW entries. The moment factor $H_k$ reflects a genuine integrability cost: for $T=g_1g_2g_3P$ and every $c>0$, $\E e^{c|g_1g_2g_3|}=\infty$. Integrating over $[n,n+1]^3$ compares the growth $cn^3$ with Gaussian decay of order $3(n+1)^2/2$ and proves divergence. Thus polynomial closure and entire functional calculus have different integrability requirements.

\subsection{A bounded random-unitary semigroup}
Let $H_0,H_1,\ldots,H_q$ be bounded self-adjoint operators, and define
\begin{equation}\label{eq:timegenerator}
 \delta_j(A)=i[H_j,A],\qquad
 \mathcal L=\delta_0+\tfrac12\sum_{j=1}^q\delta_j^2,\qquad
 \Phi_t=e^{t\mathcal L}.
\end{equation}
The auxiliary transport noise is independent of the Gaussian source used to reconstruct coefficients. For each diffusion direction,
\[
 e^{t\delta_j^2/2}A
 =\E\big[e^{i\sqrt t\,gH_j}Ae^{-i\sqrt t\,gH_j}\big].
\]
The bounded-generator product formula, together with $e^{t\delta_0}$, shows that $\Phi_t$ is a norm-continuous unital completely positive semigroup on $B(H)$ and a compatible contraction semigroup on every $\Sch_s$, $1\le s<\infty$. The product formula here also follows directly from $F(h)=I+h\mathcal L+O(h^2)$ and a telescoping estimate. This random-unitary setting is part of the standard theory of quantum dynamical semigroups \cite{Lindblad,Aniello}.

\begin{proposition}[Covariance and multiplication defects]\label{prop:timecov}
Define
\[
 \Gamma(A,B)=\mathcal L(A^*B)-\mathcal L(A)^*B-A^*\mathcal L(B),\qquad
 C_t(A,B)=\Phi_t(A^*B)-\Phi_t(A)^*\Phi_t(B).
\]
Then
\begin{align}
 \Gamma(A,B)&=\sum_{j=1}^q\delta_j(A)^*\delta_j(B),\label{eq:timegamma}\\
 C_t(A,B)&=\int_0^t\Phi_{t-u}\Gamma(\Phi_uA,\Phi_uB)\,du.\label{eq:timecov}
\end{align}
Every finite block matrix formed from $C_t$ is positive, and
\begin{align}\label{eq:timebracket}
 \Phi_t([A,B])-[\Phi_tA,\Phi_tB]
 &=C_t(A^*,B)-C_t(B^*,A)\notag\\
 &=\int_0^t\Phi_{t-u}\sum_j[\delta_j(\Phi_uA),\delta_j(\Phi_uB)]\,du.
\end{align}
\end{proposition}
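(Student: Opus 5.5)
The plan is a direct algebraic computation for \eqref{eq:timegamma}, followed by the standard Duhamel argument for \eqref{eq:timecov}. Each $\delta_j$ is a $*$-derivation: $\delta_j(A^*B)=\delta_j(A^*)B+A^*\delta_j(B)$ and $\delta_j(A^*)=\delta_j(A)^*$, because $H_j$ is self-adjoint. Applying the Leibniz rule twice gives
\[
 \delta_j^2(A^*B)=\delta_j^2(A^*)B+2\delta_j(A^*)\delta_j(B)+A^*\delta_j^2(B).
\]
The drift $\delta_0$ is also a derivation, so it contributes nothing to $\Gamma$. After halving and summing over $j$, the cross terms $\delta_j(A)^*\delta_j(B)$ are exactly what remains, which proves \eqref{eq:timegamma}. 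I will also record that $\mathcal L(A^*)=\mathcal L(A)^*$.

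For \eqref{eq:timecov}, fix $t$ and set
\[
 f(u)=\Phi_{t-u}\bigl(\Phi_u(A)^*\Phi_u(B)\bigr),\qquad 0\le u\le t.
\]
Since all generators are bounded, $f$ is norm-differentiable. Its derivative is
\[
 f'(u)=-\Phi_{t-u}\mathcal L(X^*Y)+\Phi_{t-u}\bigl(\mathcal L(X)^*Y+X^*\mathcal L(Y)\bigr),\qquad X=\Phi_uA,\ Y=\Phi_uB.
\]
This equals $-\Phi_{t-u}\Gamma(\Phi_uA,\Phi_uB)$, using that $\Phi_u$ commutes with the adjoint. Integrating from $0$ to $t$ gives $f(t)-f(0)=\Phi_t(A)^*\Phi_t(B)-\Phi_t(A^*B)=-C_t(A,B)$, which is \eqref{eq:timecov}.

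Positivity of block matrices comes next. For a family $A_1,\dots,A_n$, the block matrix $[\Gamma(A_k,A_l)]_{kl}=\sum_j[\delta_j(A_k)^*\delta_j(A_l)]_{kl}$ is a sum of Gram matrices, hence positive. The map $\Phi_{t-u}$ is completely positive, so it preserves this positivity, and integrating in $u$ keeps the block matrix positive. An alternative is Kadison–Schwarz for the UCP map $\Phi_t\otimes\mathrm{id}_n$ applied to the row $(A_1,\dots,A_n)$. I will use that as a cross-check rather than the main route.

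The bracket identity \eqref{eq:timebracket} follows by substitution. Write $[A,B]=(A^*)^*B-(B^*)^*A$ and apply the definition of $C_t$ to each term, which gives the first equality. For the integral form, insert \eqref{eq:timecov} for both terms. Then $\delta_j(\Phi_uA^*)^*=\delta_j(\Phi_uA)$, since $\Phi_u$ and $\delta_j$ commute with the adjoint. The integrand thus becomes $\sum_j\bigl(\delta_j(\Phi_uA)\delta_j(\Phi_uB)-\delta_j(\Phi_uB)\delta_j(\Phi_uA)\bigr)$.

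I do not expect a real obstacle; everything is bounded and algebraic. The only point needing care is justifying the differentiation of $f$ in operator norm. This follows because $u\mapsto\Phi_u$ is norm-differentiable with a bounded generator, and the product rule holds for norm-differentiable families. The same argument works on $\Sch_s$ if the identity is wanted there.
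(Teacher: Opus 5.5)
Your proposal is correct and follows the paper's route. The Leibniz rule for the $*$-derivations $\delta_j$ gives \eqref{eq:timegamma}, and differentiating and integrating $u\mapsto\Phi_{t-u}\bigl((\Phi_uA)^*\Phi_uB\bigr)$ gives \eqref{eq:timecov}. Gram positivity of the gradient blocks, together with complete positivity of $\Phi_{t-u}$, gives block positivity, and subtracting the two ordered defects gives \eqref{eq:timebracket}. Your bookkeeping that $\mathcal L$ and $\Phi_u$ commute with the adjoint is what the paper leaves implicit, and the Kadison--Schwarz cross-check is a valid independent proof of the positivity.
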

\begin{proof}
Each $\delta_j$ is a $*$-derivation. Applying the Leibniz rule twice gives \eqref{eq:timegamma}. The norm derivative of
$\Phi_{t-u}((\Phi_uA)^*\Phi_uB)$
is $-\Phi_{t-u}\Gamma(\Phi_uA,\Phi_uB)$; integration proves \eqref{eq:timecov}. The gradient blocks are Gram matrices, and complete positivity and integration preserve their positivity. Subtracting the two ordered product defects yields \eqref{eq:timebracket}.
\end{proof}

If $\sigma_H^2=\sum_j\norm{H_j}^2$, the bracket defect is at most
$8\sigma_H^2t\norm A\norm B$.
Positivity of the covariance blocks also bounds it by
\[
 \norm{C_t(A^*,A^*)}^{1/2}\norm{C_t(B,B)}^{1/2}
 +\norm{C_t(B^*,B^*)}^{1/2}\norm{C_t(A,A)}^{1/2}.
\]
For self-adjoint generators $T$, the positive word kernel
$K_t(u,v)=R\Phi_t(u(T)^*v(T))R$
has unit-anchor shorting defect
\begin{equation}\label{eq:timeflat}
 \Delta_{ab}^t=RC_t(T_a,T_b)R.
\end{equation}
The regularized shorting limit holds in trace norm after Hilbert--Schmidt sandwiching, by finite-rank approximation. Vanishing of its generator diagonal defects makes $\Phi_t$ multiplicative on $C^*(I,T)$ by the faithful-anchor multiplicative-domain criterion.

\subsection{Common-noise replicas}
On the Banach projective tensor product
$\mathcal X_k=B(H)^{\widehat\otimes_\pi k}$,
ordered multiplication $m_k(A_1\otimes\cdots\otimes A_k)=A_1\cdots A_k$ is contractive. Define
\begin{equation}\label{eq:replicagenerator}
 D_j^{(k)}=\sum_{r=1}^k\delta_j^{[r]},\qquad
 \mathcal L^{(k)}=D_0^{(k)}+\tfrac12\sum_j(D_j^{(k)})^2
 =\sum_r\mathcal L^{[r]}+\sum_{r<s}\sum_j\delta_j^{[r]}\delta_j^{[s]}.
\end{equation}
\begin{theorem}[Replica intertwining and coefficient transport]\label{thm:replicas}
The operators $\Phi_t^{(k)}=e^{t\mathcal L^{(k)}}$ form a contraction semigroup on $\mathcal X_k$, and
\begin{equation}\label{eq:replicaintertwine}
 m_k\Phi_t^{(k)}=\Phi_tm_k.
\end{equation}
Adjacent-slot multiplication, multilinear Lie words, slot permutations, and unit insertions intertwine the corresponding replica semigroups. Applying $\Phi_t$ to each physical coefficient defines $\widetilde\Phi_t$ on $\Alg_s$, with
\begin{equation}\label{eq:coeftransport}
 I\widetilde\Phi_t=\Phi_tI,\qquad
 \Nn\rho{\widetilde\Phi_tK}\le\Nn\rho K
 \quad(t\ge0,\ \rho\ge1).
\end{equation}
\end{theorem}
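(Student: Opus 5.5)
We plan to handle the three assertions of Theorem~\ref{thm:replicas} separately: contractivity of the semigroup, the intertwining identity, and the coefficient transport.

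\emph{Contractivity.} We first show that $\Phi_t^{(k)}$ is contractive on the projective tensor product, using a random-unitary representation. Each $\delta_j^{[r]}$ is bounded on $\mathcal X_k$, with norm at most $2\norm{H_j}$, since it acts on one slot of the projective tensor product; hence $\mathcal L^{(k)}$ is bounded and $\Phi_t^{(k)}$ is a norm-continuous semigroup. The operator $D_j^{(k)}$ generates the conjugation group
\[
 e^{sD_j^{(k)}}=\mathrm{Ad}(e^{isH_j})^{\otimes k},
\]
which is isometric on $\mathcal X_k$ because the projective norm is invariant under slotwise conjugation by unitaries. Consequently
\[
 e^{t(D_j^{(k)})^2/2}=\E\,\mathrm{Ad}\big(e^{i\sqrt t gH_j}\big)^{\otimes k}
\]
is an average of isometries, and therefore a contraction. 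Here one common Gaussian $g$ drives all replicas; this common driving produces the cross terms $\delta_j^{[r]}\delta_j^{[s]}$ in \eqref{eq:replicagenerator}. The Lie--Trotter product formula for bounded generators, already used for $\Phi_t$, then yields contractivity of $\Phi_t^{(k)}$.

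\emph{Intertwining.} The key observation is the derivation identity
\[
 m_kD_j^{(k)}=\delta_jm_k,
\]
which is the iterated Leibniz rule on elementary tensors and extends by continuity. Squaring gives $m_k(D_j^{(k)})^2=\delta_j^2m_k$, so $m_k\mathcal L^{(k)}=\mathcal Lm_k$. Exponentiating the bounded generators via power series then gives \eqref{eq:replicaintertwine}. The other maps are handled the same way, in each case by verifying the identity on the generators $D_j$:
\begin{enumerate}
\item Adjacent-slot multiplication $\mathcal X_k\to\mathcal X_{k-1}$ satisfies the Leibniz identity in the merged slot.
\item Slot permutations commute with the symmetric sum $D_j^{(k)}$.
\item Unit insertion uses $\delta_j(I)=0$.
\item A multilinear Lie word is built from commutators, each of which is a difference of permuted adjacent multiplications.
\end{enumerate}

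\emph{Coefficient transport.} Here $\widetilde\Phi_t$ acts as $\mathrm{id}\otimes\Phi_t$ on each kernel component $K_m$. For $s\ne2$ we need this map to be contractive on $\Wc_{m,s}$. We plan to write $\Phi_t$ as a limit of averages of maps $A\mapsto UAU^*$ for unitaries $U$, again by the product formula. Each such map acts on a flattening $\Flat_S(K)$ as multiplication by $\overline U$ on the input factor and by $U$ on the output factor, so it preserves every cut norm. It therefore preserves the maximum-cut norm, and also the quotient-sum norm, since it maps decompositions to decompositions of the same cost. Averaging and taking limits preserves contractivity, as the norms are convex and lower semicontinuous under the operator-norm convergence of the Trotter products. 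Symmetrization commutes with $\widetilde\Phi_t$, and summing over degrees with the weights $\rho^m\sqrt{m!}$ gives the norm bound in \eqref{eq:coeftransport}. Finally, $I\widetilde\Phi_t=\Phi_tI$ holds on the finite core because $\Phi_t$ is linear and acts only on the physical coefficients; it extends by continuity of $I$ (Theorem~\ref{thm:nativeproduct}) and by continuity of $\Phi_t$ on $L^2(\Sch_s)$.

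The main obstacle is expected to be the contractivity step on the projective tensor product $\mathcal X_k$: one must justify carefully that the Trotter and averaging limits converge in the projective norm. The difficulty is that $\mathcal X_k$ is not an operator algebra, so the usual complete-positivity arguments are unavailable there. It also has to be confirmed that the common Gaussian representation, and not independent Gaussians per replica, reproduces exactly the cross terms in \eqref{eq:replicagenerator}.
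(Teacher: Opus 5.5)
Your proposal is correct and follows the paper's proof essentially step for step. Contractivity comes from the common-Gaussian average of the isometric simultaneous conjugations $(e^{a\delta_j})^{\otimes k}$ together with the bounded product formula. Intertwining comes from the Leibniz identity $m_kD_j^{(k)}=\delta_jm_k$, passed to squares, generators and exponentials. Coefficient contractivity comes from unitary conjugations acting on the outer physical legs of every cut. The step you flag as the main obstacle is routine, since the Lie--Trotter formula holds in operator norm for bounded generators on any Banach space.

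The one point the paper makes explicit that you leave loose is that $\widetilde\delta_j$ is bounded by $2\norm{H_j}$ in every coefficient norm, including the quotient-sum branch. This lets the Gaussian averages and the Trotter limit be taken in the coefficient space itself and identified with $e^{t\widetilde{\mathcal L}}$. Your appeal to lower semicontinuity under operator-norm convergence on $B(H)$ should be replaced by this observation, or by a finite-core density argument.
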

\begin{proof}
For every real $a$, the simultaneous conjugation
$e^{aD_j^{(k)}}=(e^{a\delta_j})^{\otimes k}$
is an isometry in the projective tensor norm. Averaging one common Gaussian parameter gives the diffusion step $e^{t(D_j^{(k)})^2/2}$. The bounded-generator product formula, together with the drift step, therefore makes $\Phi_t^{(k)}$ a contraction semigroup.

The Leibniz identity $m_kD_j^{(k)}=\delta_jm_k$ holds on elementary tensors and extends by boundedness. It then holds for the squares, the generators, and their exponentials, proving \eqref{eq:replicaintertwine}. The same identity within a merged pair of slots proves the assertion for adjacent multiplication. Permutations commute with simultaneous slot actions, unit insertions use $\delta_j(I)=0$, and multilinear Lie words are linear combinations of the resulting ordered products.

For coefficients, define $\widetilde\delta_j$ by applying $i[H_j,\cdot]$ to each physical coefficient. Left and right physical multiplication have their usual operator-norm bounds in every directed cut, so $\widetilde\delta_j$ is bounded by $2\norm{H_j}$ in every coefficient norm, including the quotient-sum branch. Consequently
$\widetilde{\mathcal L}=\widetilde\delta_0+\tfrac12\sum_j\widetilde\delta_j^2$
is bounded on each radius space. Its conjugation steps are isometries: physical unitary factors act on the outer legs of each cut. Gaussian averages and the bounded product formula are therefore contractions in those same norms. Their limit is $e^{t\widetilde{\mathcal L}}$, which proves the inequality in \eqref{eq:coeftransport}. On the finite core, applying the physical maps commutes with Gaussian evaluation. Continuity and density extend this identity to $\Alg_s$. The exponential construction, or contraction followed by finite-core approximation, gives strong continuity at every radius.
\end{proof}
The cross terms in \eqref{eq:replicagenerator} distinguish common-noise transport from independent-slot transport. The evolved target is $\Phi_t(A_1\cdots A_k)$; it is generally different from $\Phi_t(A_1)\cdots\Phi_t(A_k)$.

For the homogeneous generators of Proposition~\ref{prop:weightedwords}, contraction gives
\[
 \sup_{t\ge0}\norm{\Phi_t(w(T))-\Phi_t(w(\widetilde T))}_{L^2(\Sch_s)}
 \le kH_kB^{k-1}\Delta.
\]
The weighted response row $(c_j\Phi_t(p_j(T))R)_j$ inherits the bounds $1+e^B$, $e^B\Delta$, and $2(1+e^B)e^B\Delta$ for its size, difference, and Gram difference. These are uniform-in-time Bochner estimates. The actual correlation kernel $R\Phi_t(p_i(T)^*p_j(T))R$ differs from the Gram of the averaged responses by the covariance in \eqref{eq:timecov}.

\subsection{The drift-generated visibility filtration}
Set
\begin{equation}\label{eq:time_ancestors}
 J_{j,k}=(\ad H_0)^kH_j,\qquad
 \mathcal N_r=\{J_{j,k}:1\le j\le q,\ 0\le k<r\}',\qquad
 \mathcal N_0=B(H),\qquad \mathcal N_\infty=\bigcap_r\mathcal N_r.
\end{equation}
Since $J_{j,k}^*=(-1)^kJ_{j,k}$, each $\mathcal N_r$ is a von Neumann algebra. This filtration depends on the prescribed drift and noise. For an injective positive Hilbert--Schmidt anchor define
\[
 O_H(A)=(\delta_1(A)R,\ldots,\delta_q(A)R)\in\Sch_2(H)^q.
\]
\begin{lemma}[Ancestral commutators and dual tests]\label{lem:ancestry_dual_chain}
For $r\ge1$,
\begin{equation}\label{eq:ancestry_observable_chain}
 \mathcal N_r=\bigcap_{k=0}^{r-1}\ker(O_H\delta_0^k),\qquad
 \mathcal N_r^\perp=
 \overline{\Span\{(O_H\delta_0^k)'\lambda:0\le k<r\}}^{w^*}.
\end{equation}
The annihilator is taken in $B(H)'$, and $\lambda$ ranges over the continuous dual of the observation space.
\end{lemma}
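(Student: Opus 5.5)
The first identity in \eqref{eq:ancestry_observable_chain} will follow from two facts. The first is that $O_H$ detects commutation with $H_1,\ldots,H_q$. The second is that, once $A$ commutes with the earlier ancestors, the iterated drift derivation $\delta_0^k$ reduces to commutation with $J_{j,k}$. The second identity is then the abstract annihilator duality \eqref{eq:word-observability} of Section~\ref{sec:principle}.

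\textbf{Kernel of $O_H$.} We have $O_H(A)=0$ exactly when $[H_j,A]R=0$ for every $j$. Since $R$ has dense range and $[H_j,A]$ is bounded, this holds exactly when $[H_j,A]=0$. Hence $\ker O_H=\{H_1,\ldots,H_q\}'$, which is $\mathcal N_1$.

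\textbf{Reduction of $\delta_0^k$.} Next I will show by induction on $r$ that, for $A\in\mathcal N_{r-1}$,
\[
 \delta_j\delta_0^{r-1}(A)=\pm i^{r}[J_{j,r-1},A] .
\]
We use $\delta_0=i\ad H_0$ and expand $[H_j,(\ad H_0)^{r-1}A]$ through the Jacobi/Leibniz identity
\[
 [X,(\ad H_0)^{n}A]=\sum_{\ell=0}^{n}(-1)^{\ell}\binom{n}{\ell}(\ad H_0)^{n-\ell}\big[(\ad H_0)^{\ell}X,A\big].
\]
This is the standard formula obtained by moving $\ad H_0$ across the bracket, since $\ad H_0$ is a derivation of the commutator. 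For $A\in\mathcal N_{r-1}$, every term with $\ell<r-1$ vanishes, because $[J_{j,\ell},A]=0$. Only the term $\ell=r-1$ survives, and it equals $(-1)^{r-1}[J_{j,r-1},A]$.

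Then $\bigcap_{k<r}\ker(O_H\delta_0^k)=\mathcal N_r$ follows by induction. Suppose $A$ lies in the intersection over $k<r$. By the induction hypothesis $A\in\mathcal N_{r-1}$. The condition $O_H\delta_0^{r-1}A=0$ forces $\delta_j\delta_0^{r-1}A=0$ (again by density of $\ran R$), hence $[J_{j,r-1},A]=0$ by the reduction, so $A\in\mathcal N_r$. The converse direction is the same computation read backwards. The main care needed is in the Leibniz expansion: its signs are irrelevant, and only the vanishing of the lower terms matters.

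\textbf{Duality.} The maps $O_H\delta_0^k\colon B(H)\to\Sch_2(H)^q$ are bounded. By the first identity, $\mathcal N_r$ is the intersection of their kernels. The annihilator of the intersection of the kernels of finitely many bounded operators is the weak-$*$ closure of the sum of the ranges of their adjoints. This is the bipolar theorem, exactly as in \eqref{eq:word-observability} with the words $\delta_0^k$, $k<r$, and observation $O=O_H$. This gives the second identity.

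The main obstacle is the reduction formula. It requires a clean statement of the iterated commutator identity, and a check that it is used only on $\mathcal N_{r-1}$, where the lower ancestor terms are already known to vanish.
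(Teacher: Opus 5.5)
Your proof is correct and follows the paper's argument. Your binomial expansion of $[H_j,(\ad H_0)^nA]$ is the coefficientwise form of the paper's conjugation identity $\delta_j(e^{u\delta_0}A)=ie^{u\delta_0}[e^{-u\delta_0}H_j,A]$, and it yields the same unipotent triangular relation between $\delta_j\delta_0^kA$ and $[J_{j,k},A]$; the duality step is the same preannihilator/bipolar argument. The paper prefers the exponential form because it reuses it in Theorem~\ref{thm:temporal_onset} and Proposition~\ref{prop:noiseless_algebra}, but for this lemma your finite induction is sufficient.
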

\begin{proof}
Conjugation separates the drift from the noise test:
\begin{equation}\label{eq:ancestry_conjugation}
 \delta_j(e^{u\delta_0}A)
 =i e^{u\delta_0}[e^{-u\delta_0}H_j,A].
\end{equation}
The coefficient of $u^k$ in the inner commutator is
$(-i)^k[J_{j,k},A]/k!$. Multiplication by the outer exponential is a triangular transformation of Taylor coefficients with identity diagonal. Hence the first $r$ coefficients vanish on one side exactly when they vanish on the other. The derivatives on the left are $\delta_j\delta_0^kA$, $0\le k<r$. A bounded operator vanishes on $\ran R$ exactly when it vanishes on $H$, because the anchor range is dense. This proves the common-kernel description of $\mathcal N_r$.

For a family of bounded linear maps, the annihilator of the intersection of their kernels is the weak-$*$ closed span of their adjoint ranges. One way to see the equality is to take preannihilators of both sides: they are the same common kernel, and the double-annihilator theorem identifies the weak-$*$ closure. Applying this observation to the displayed maps gives \eqref{eq:ancestry_observable_chain}.
\end{proof}
Thus the ancestral chain is the operational saturation of the initial noise observation under the drift, in the sense of Section~\ref{sec:principle}.

\begin{theorem}[The first nonzero covariance order]\label{thm:temporal_onset}
Let $A\in B(H)$, and suppose $r\ge0$ is the smallest integer for which some $[J_{j,r},A]\ne0$. In operator norm as $t\downarrow0$,
\begin{equation}\label{eq:temporal_onset}
 C_t(A,A)=\frac{t^{2r+1}}{(r!)^2(2r+1)}
     \sum_j[J_{j,r},A]^*[J_{j,r},A]+O(t^{2r+2}).
\end{equation}
For $\norm R_2=1$, the positive scalar observation $v_R(t,A)=\Tr(RC_t(A,A)R)$ satisfies
\begin{equation}\label{eq:temporal_readout}
 v_R(t,A)=\frac{t^{2r+1}}{(r!)^2(2r+1)}\kappa_r(A,R)+O(t^{2r+2}),
 \quad \kappa_r(A,R)=\sum_j\norm{[J_{j,r},A]R}_2^2>0.
\end{equation}
For every $r\ge0$, including targets with no finite first visible order,
\begin{equation}\label{eq:temporal_depth_iff}
 A\in\mathcal N_r\quad\Longleftrightarrow\quad v_R(t,A)=O(t^{2r+1}).
\end{equation}
\end{theorem}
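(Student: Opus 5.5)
The plan is to start from the covariance formula \eqref{eq:timecov} with \eqref{eq:timegamma}:
\[
 C_t(A,A)=\int_0^t\Phi_{t-u}\sum_j\delta_j(\Phi_uA)^*\delta_j(\Phi_uA)\,du .
\]
All generators are bounded, so $\Phi_u=e^{u\mathcal L}$ is norm-analytic in $u$. The argument then reduces to finding the leading Taylor term of $\delta_j(\Phi_uA)$ as $u\downarrow0$.

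The key step is to show that, in operator norm,
\[
 \delta_j(\Phi_uA)=i\frac{(-u)^r}{r!}[J_{j,r},A]+O(u^{r+1})
\]
up to the sign convention for $\ad$, while all lower coefficients vanish. The drift part is the conjugation identity \eqref{eq:ancestry_conjugation}. If the diffusion terms were absent, the Taylor coefficients of $\delta_je^{u\delta_0}A$ would be triangular combinations of $[J_{j,k},A]$. These vanish for $k<r$, and the top coefficient is $(-i)^r[J_{j,r},A]/r!$, conjugated by the identity at $u=0$.

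With diffusion, $\delta_j\mathcal L^kA$ also contains words with $\delta_l^2$ factors. I would argue by induction that each such word either has lower $u$-order weight or contains the factor $\delta_l(A)=i[J_{l,0},A]$, or a drift-commutator of it, at a depth that still vanishes. Concretely, $\mathcal L^k$ applied to $A$ is a sum of words in $\delta_0$ and the $\delta_l$. Any word containing some $\delta_l$ with $l\ge1$ acting at position $p$ produces $\delta_l\delta_0^{p'}A$ further differentiated, and $\delta_l\delta_0^{p'}A$ is controlled through \eqref{eq:ancestry_conjugation} by $[J_{l,k'},A]$ with $k'\le p'$.

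A cleaner route is to note that, by Lemma~\ref{lem:ancestry_dual_chain}, $A\in\mathcal N_r$ means $\delta_l\delta_0^kA=0$ for all $l$ and all $k<r$. Then $\mathcal L^kA=\delta_0^kA$ for $k\le r$, by induction, since $\delta_l^2\delta_0^{k'}A=0$ when $k'<r$. So through order $u^r$, $\Phi_uA$ agrees with $e^{u\delta_0}A$, and the error is $O(u^{r+1})$. Applying $\delta_j$ to that error costs one more order only if the error term is itself $O(u^{r+1})$ in norm, which it is. This gives $\delta_j(\Phi_uA)=\delta_je^{u\delta_0}A+O(u^{r+1})$. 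Expanding the latter via \eqref{eq:ancestry_conjugation} gives the displayed leading term.

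Substituting into the integrand gives
\[
 \Gamma(\Phi_uA,\Phi_uA)=\frac{u^{2r}}{(r!)^2}\sum_j[J_{j,r},A]^*[J_{j,r},A]+O(u^{2r+1}).
\]
Since $\Phi_{t-u}=I+O(t)$ in norm on bounded operators, integrating from $0$ to $t$ gives \eqref{eq:temporal_onset}. Sandwiching by $R$ and taking traces uses $\norm{RXR}_1\le\norm X\,\norm R_2^2$, which yields \eqref{eq:temporal_readout}. Positivity $\kappa_r>0$ follows because some $[J_{j,r},A]\ne0$ and $R$ has dense range.

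For \eqref{eq:temporal_depth_iff}, suppose first that $A\in\mathcal N_r$. The computation above gives $\delta_j(\Phi_uA)=O(u^r)$, hence $C_t=O(t^{2r+1})$ and $v_R(t,A)=O(t^{2r+1})$. Conversely, suppose $A\notin\mathcal N_r$. Then the first visible order $r'$ satisfies $r'<r$, and \eqref{eq:temporal_readout} at $r'$ gives $v_R\sim ct^{2r'+1}$ with $c>0$. This contradicts $O(t^{2r+1})$. The case of no finite visible order is covered because membership in $\mathcal N_r$ for every $r$ is just the forward direction.

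The main obstacle is the rigorous vanishing of the diffusion cross-terms through order $u^r$, that is, the induction $\mathcal L^kA=\delta_0^kA$ for $k\le r$ together with the correct norm remainder bound. I would handle it through the algebraic kernel characterization of Lemma~\ref{lem:ancestry_dual_chain}, rather than by expanding words.
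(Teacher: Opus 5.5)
Your proposal is correct and follows essentially the paper's argument. Minimality of $r$, equivalently $A\in\mathcal N_r$ via Lemma~\ref{lem:ancestry_dual_chain}, gives $\delta_j\delta_0^kA=0$ for $k<r$ and hence $\mathcal L^kA=\delta_0^kA$ for $k\le r$. From there you follow the same steps: the gradient expansion $\delta_j\Phi_uA=u^rL_j+O(u^{r+1})$, integration of $\Gamma$ against $\Phi_{t-u}=I+O(t-u)$, anchoring by $R$ with $\norm{RXR}_1\le\norm X$, and the contradiction argument for the depth equivalence.

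One cosmetic point concerns your displayed leading term. With $J_{j,k}=(\ad H_0)^kH_j$ and $\delta_0=i\ad H_0$, the coefficient is $i(-i)^ru^r[J_{j,r},A]/r!$ rather than $i(-u)^r[J_{j,r},A]/r!$. This unimodular factor cancels in $L_j^*L_j$, so \eqref{eq:temporal_onset} is unaffected.
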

\begin{proof}
The onset order comes from a gradient, its positive square, and one time integration. By the minimality of $r$ and \eqref{eq:ancestry_conjugation},
\[
 \delta_j\delta_0^kA=0\quad(k<r),\qquad
 \delta_j\delta_0^rA=i(-i)^r[J_{j,r},A].
\]
Induction gives $\mathcal L^kA=\delta_0^kA$ for $k\le r$: at each earlier step every diffusion term $\delta_j^2\delta_0^kA$ vanishes. The analytic expansion of the bounded semigroup therefore gives
\[
 \delta_j\Phi_uA=u^rL_j+O(u^{r+1}),\qquad
 L_j=\frac{i(-i)^r}{r!}[J_{j,r},A].
\]
Squaring the gradients gives $\Gamma(\Phi_uA,\Phi_uA)=u^{2r}\sum_jL_j^*L_j+O(u^{2r+1})$. Inserting this into \eqref{eq:timecov}, and using $\Phi_{t-u}=I+O(t-u)$, yields
$C_t(A,A)=t^{2r+1}\sum_jL_j^*L_j/(2r+1)+O(t^{2r+2})$.
This is \eqref{eq:temporal_onset}; positivity prevents cancellation of a nonzero leading commutator.

Here is a uniform bound for the remainder. Fix $t_0>0$, take $c\ge\norm{\mathcal L}$, and put $d_j=2\norm{H_j}$,
\[
 b_j=\frac{d_j\norm A e^{ct_0}c^{r+1}}{(r+1)!},
 \qquad a_j=\norm{L_j},\qquad
 D_r=\sum_j(2a_jb_j+t_0b_j^2).
\]
The exponential series gives $\norm{\delta_j\Phi_uA-u^rL_j}\le b_ju^{r+1}$ for $0\le u\le t_0$. Hence
\[
 \Big\|\Gamma(\Phi_uA,\Phi_uA)-u^{2r}\sum_jL_j^*L_j\Big\|
 \le D_ru^{2r+1}.
\]
Contraction of $\Phi$ and $\norm{\Phi_v-I}\le cv e^{ct_0}$ give, with $G_r=\sum_jL_j^*L_j$,
\[
 \left\|C_t(A,A)-\frac{t^{2r+1}}{2r+1}G_r\right\|
 \le\left(\frac{D_r}{2r+2}
 +\frac{ce^{ct_0}\sum_ja_j^2}{(2r+1)(2r+2)}\right)t^{2r+2}.
\]
The denominators come from the integrals of $u^{2r+1}$ and $(t-u)u^{2r}$, respectively.

Sandwiching by $R$ and taking trace has norm at most $\norm R_2^2=1$. If a leading commutator is nonzero, it cannot vanish on the dense range of $R$, so $\kappa_r(A,R)>0$. This proves \eqref{eq:temporal_readout}. Finally, if $A\in\mathcal N_r$, Taylor expansion through degree $r-1$ gives $\delta_j\Phi_uA=O(u^r)$ and hence the asserted upper order of $v_R$. Otherwise some first nonzero index $k<r$ gives a strictly positive scalar leading term of order $2k+1$, incompatible with $O(t^{2r+1})$. This proves \eqref{eq:temporal_depth_iff}, including the case in which all finite ancestral commutators vanish.
\end{proof}

For Pauli matrices $H_0=X$, $H_1=Z$, and $A=Z$, one has $[Z,Z]=0$ and $[[X,Z],Z]=4X$. Thus
$C_t(Z,Z)=\frac{16}{3}t^3I+O(t^4)$.
If the depth $r$ is known and $y_t$ approximates $v_R(t,A)$ within $\eta$, then
\begin{equation}\label{eq:onset_estimator}
 \widehat\kappa_r=(r!)^2(2r+1)t^{-(2r+1)}y_t,\qquad
 |\widehat\kappa_r-\kappa_r|
 \le C_{A,H,r}t+(r!)^2(2r+1)\eta t^{-(2r+1)}.
\end{equation}
The remainder and a lower bound on the leading coefficient determine the conditioning of depth detection. Odd short-time orders also occur in hypocoercivity \cite{AAC}; the observable here is the covariance of a specified operator target.

\begin{proposition}[One positive time and the noiseless algebra]\label{prop:noiseless_algebra}
For any $t_0>0$,
\begin{equation}\label{eq:noiseless_iff}
 RC_{t_0}(A,A)R=0\quad\Longleftrightarrow\quad A\in\mathcal N_\infty.
\end{equation}
On $\mathcal N_\infty$, $\Phi_tA=e^{itH_0}Ae^{-itH_0}$, and this action extends to a group of $*$-automorphisms.
\end{proposition}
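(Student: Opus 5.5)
The plan is to use the integral formula \eqref{eq:timecov} together with positivity, which turns vanishing of $RC_{t_0}(A,A)R$ into vanishing of every gradient $\delta_j\Phi_uA$ for $u\in[0,t_0]$. Then analyticity in $u$ converts this into the ancestral commutator conditions defining $\mathcal N_\infty$.

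\emph{Reverse direction.} Suppose $A\in\mathcal N_\infty$. Then $A$ commutes with every $J_{j,k}$. The conjugation identity \eqref{eq:ancestry_conjugation} expresses $\delta_j(e^{u\delta_0}A)$ through $[e^{-u\delta_0}H_j,A]$. Its Taylor coefficients are the commutators $[J_{j,k},A]$, and the series converges in norm because the generators are bounded. Hence $\delta_j(e^{u\delta_0}A)=0$ for all real $u$. It follows that $e^{u\delta_0}A$ lies in the kernel of every diffusion term $\delta_j^2$, so $\mathcal L$ acts on the orbit $\{e^{u\delta_0}A\}$ as $\delta_0$. Differentiating $\Phi_{t-u}e^{u\delta_0}A$ in $u$ gives zero, so $\Phi_tA=e^{t\delta_0}A=e^{itH_0}Ae^{-itH_0}$. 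Then $\delta_j\Phi_uA=0$ for all $u$, so $\Gamma(\Phi_uA,\Phi_uA)=0$, and \eqref{eq:timecov} gives $C_{t_0}(A,A)=0$.

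\emph{Forward direction.} Suppose $RC_{t_0}(A,A)R=0$. Because $C_{t_0}(A,A)\ge0$ and $R$ has dense range, $C_{t_0}(A,A)=0$. The integrand $\Phi_{t_0-u}\Gamma(\Phi_uA,\Phi_uA)$ is norm-continuous and positive by \eqref{eq:timegamma}, so it vanishes for every $u\in[0,t_0]$. I would then take the trace against $R^2$ and use that $\Phi_{t_0-u}$ is unital and completely positive. The issue is that a UCP map could in principle kill a nonzero positive operator, so I first want faithfulness. For the random-unitary semigroup, $\Phi_sX\ge e^{-s\lambda}e^{s\delta_0}X$ for positive $X$: the generator splits as a CP part plus $-\lambda\,\mathrm{id}$ plus a derivation. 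This gives faithfulness, and therefore $\Gamma(\Phi_uA,\Phi_uA)=0$ and $\delta_j\Phi_uA=0$ for $u\in[0,t_0]$.

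The function $u\mapsto\delta_j\Phi_uA$ is entire, so it vanishes identically and all its derivatives at $u=0$ vanish. The induction in the proof of Theorem~\ref{thm:temporal_onset} shows these derivatives reduce to $\delta_j\delta_0^kA$. Indeed, once $\delta_j\delta_0^kA=0$ for all $k<r$, we get $\mathcal L^kA=\delta_0^kA$ up to order $r$, and the $r$th derivative equals $\delta_j\delta_0^rA$. Hence $A\in\mathcal N_r$ for every $r$ by Lemma~\ref{lem:ancestry_dual_chain}, that is, $A\in\mathcal N_\infty$. An alternative route is \eqref{eq:temporal_depth_iff}: $v_R(t,A)\equiv0$ on $[0,t_0]$ is $O(t^{2r+1})$ for all $r$, once vanishing at one time has been propagated to small times. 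The propagation to small times is exactly where the monotonicity/faithfulness step is needed, and I expect it to be the main obstacle.

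\emph{Automorphism group.} $\mathcal N_\infty$ is a von Neumann algebra. It is invariant under $e^{\pm t\delta_0}$, since conjugation by $e^{itH_0}$ maps the set $\{J_{j,k}\}$ into its norm-closed span. Explicitly, $e^{-u\,\ad H_0}J_{j,k}$ expands in the $J_{j,k+l}$. So the restriction of $\Phi_t$ is implemented by unitary conjugation, is invertible on $\mathcal N_\infty$, and extends to $t\in\mathbb R$ as a group of $*$-automorphisms.
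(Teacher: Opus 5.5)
Your architecture matches the paper's: vanishing of the positive, norm-continuous integrand in \eqref{eq:timecov}, removal of the outer map $\Phi_{t_0-u}$, analyticity to reach every ancestral commutator, and the converse by solving the evolution along the drift orbit. Your Taylor induction and your invariance argument for $\mathcal N_\infty$ are fine; the paper instead first identifies $\Phi_uA=e^{u\delta_0}A$ on $[0,t_0]$ and then uses \eqref{eq:ancestry_conjugation}.

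The gap is exactly at the step you flagged. The inequality $\Phi_sX\ge e^{-s\lambda}e^{s\delta_0}X$ for $X\ge0$ is false in general. Since $\tfrac12\delta_j^2X=H_jXH_j-\tfrac12(H_j^2X+XH_j^2)$, the splitting you describe would require the anticommutator term to be $-\lambda X$ plus a derivation. That happens only when $\sum_jH_j^2$ is a multiple of the identity.

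For a concrete failure, take $H_0=0$ and a single $H_1=H$ on $\C^3$ with $He_1=e_2$, $He_2=e_1+e_3$, $He_3=e_2$. The claimed inequality then reads $\Phi_sX\ge e^{-s\lambda}X$. If it held for all small $s>0$, dividing $\Phi_s-e^{-s\lambda}\mathrm{id}$ by $s$ and letting $s\downarrow0$ would make $\mathcal L+\lambda\,\mathrm{id}$ a positive map. But for $X=e_1e_1^*$ one finds $(\mathcal L+\lambda)X=e_2e_2^*+(\lambda-1)e_1e_1^*-\tfrac12(e_1e_3^*+e_3e_1^*)$. Its compression to $\Span\{e_1,e_3\}$ has determinant $-1/4$ for every $\lambda$.

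The repair needs no positivity, and it is what the paper does. Since $\mathcal L$ is bounded on $B(H)$, the map $\Phi_{t_0-u}=e^{(t_0-u)\mathcal L}$ has the bounded inverse $e^{-(t_0-u)\mathcal L}$, which need not be positive. Hence $\Phi_{t_0-u}\Gamma(\Phi_uA,\Phi_uA)=0$ immediately gives $\Gamma(\Phi_uA,\Phi_uA)=0$. Your worry that a UCP map might annihilate a nonzero positive operator does not arise for an invertible map.

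If you prefer a domination argument, use the correct Lindblad form $\mathcal LX=\sum_jH_jXH_j-GX-XG^*$ with $G=\tfrac12\sum_jH_j^2-iH_0$. The Dyson expansion then gives $\Phi_sX\ge e^{-sG}Xe^{-sG^*}$ for $X\ge0$, and this lower bound is faithful because $e^{-sG}$ is invertible. With either replacement, the rest of your argument goes through as written.
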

\begin{proof}
If the anchored covariance is zero, its unanchored positive operator is zero because $\ran R$ is dense. Formula~\eqref{eq:timecov} is an integral of positive, norm-continuous operators. Testing against each vector shows that every value of the integrand is zero. The map $\Phi_{t_0-u}=e^{(t_0-u)\mathcal L}$ is injective as a bounded linear map; its inverse is not assumed positive. Therefore $\Gamma(\Phi_uA,\Phi_uA)=0$, and each $\delta_j\Phi_uA=0$ on $[0,t_0]$.

On this interval the evolution equation reduces to $\partial_u\Phi_uA=\delta_0\Phi_uA$, so $\Phi_uA=e^{u\delta_0}A$. The norm-analytic functions $\delta_je^{u\delta_0}A$ vanish on an interval and hence on all real $u$. The conjugation identity then gives every ancestral commutator zero, which means $A\in\mathcal N_\infty$.

Conversely, vanishing of all ancestral commutators makes the entire drift orbit commute with every noise operator. It solves the full bounded evolution equation with initial value $A$. Uniqueness yields $\Phi_tA=e^{t\delta_0}A$ and zero covariance. Moreover
\[
 \mathcal N_\infty
 =\{e^{-iuH_0}H_je^{iuH_0}:1\le j\le q,\ u\in\R\}',
\]
which is invariant under drift translation. Conjugation by $e^{itH_0}$ thus restricts to the asserted group of $*$-automorphisms.
\end{proof}
The ancestral-commutant description of the decoherence-free algebra is classical \cite[Proposition~3]{DFSU}. The finite-time anchored defect and the finite-level observation chain give the corresponding observation criteria.

\subsection{PBW transport}
In the free Lie algebra containing driving letters $h_j$ and target letters $a_i$, let $\partial_j=i\ad h_j$ and $\mathscr L=\partial_0+\frac12\sum_j\partial_j^2$. Write $\partial_j^\odot$ for the derivation acting on each symmetric Lie factor. PBW inversion $\beta=\sym^{-1}$ satisfies
\begin{equation}\label{eq:time_pbw}
 \beta\mathscr L=
 \left(\partial_0^\odot+\tfrac12\sum_j(\partial_j^\odot)^2\right)\beta
 =\mathscr L_\odot\beta.
\end{equation}
This follows by applying the Leibniz rule inside each symmetrized ordered product. The two orders of differentiation on distinct factors cancel the factor $1/2$, giving the replica cross terms.

The formal generator preserves the number of Lie primitive factors and raises total word degree by at least one and at most two on nonzero terms. Therefore each fixed output degree receives contributions from only finitely many powers, and
$\beta e^{t\mathscr L}=e^{t\mathscr L_\odot}\beta$
in the total-degree completion. Inner derivations preserve representation ideals. Descent of individual PBW projections additionally requires their kernel invariance, while convergence in an operator norm is supplied by the bounded semigroup above.

\subsection{Recovery with unknown drivers}
Observe the drivers and target jointly, together with their adjoints and the identity. For $\mathbf T=(H_0,\ldots,H_q,A)$, put
\[
 c=2\norm{H_0}+2\sum_j\norm{H_j}^2\ge\norm{\mathcal L},\qquad
 p_k(\mathbf T)=\mathcal L_H^kA.
\]
These are prescribed noncommutative polynomials of degree at most $1+2k$.
\begin{theorem}[Joint finite-time recovery]\label{thm:finite_time_recovery}
If $\norm A\le M$, then for $0\le t\le T$,
\begin{equation}\label{eq:time_taylor_budget}
 \left\|\Phi_t^HA-\sum_{k=0}^N\frac{t^k}{k!}p_k(\mathbf T)\right\|
 \le Me^{Tc}\frac{(Tc)^{N+1}}{(N+1)!}.
\end{equation}
For two true driver--target tuples with common budgets $M,c$, a spectral corner $P_b$ of the anchor satisfies
\begin{align}\label{eq:time_joint_budget}
 &\sup_{0\le t\le T}\norm{P_b(\Phi_t^HA-\Phi_t^{\widetilde H}\widetilde A)P_b}\notag\\
 &\quad\le r_{*,b}^{-2}\sum_{k=0}^N\frac{T^k}{k!}
       \norm{Rp_k(\mathbf T)R-Rp_k(\widetilde{\mathbf T})R}_1
       +2Me^{Tc}\frac{(Tc)^{N+1}}{(N+1)!}.
\end{align}
\end{theorem}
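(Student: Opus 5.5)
The plan has two parts. First, bound the Taylor remainder of the bounded semigroup. Then transfer coefficient errors through the anchored trace norm on a finite corner.

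For \eqref{eq:time_taylor_budget}, note that $\mathcal L=\mathcal L_H$ is a bounded operator on $B(H)$. Each $\delta_j$ has norm at most $2\norm{H_j}$, so $\tfrac12\delta_j^2$ has norm at most $2\norm{H_j}^2$, and $\norm{\mathcal L}\le c$. The exponential series $\Phi_t^HA=\sum_k t^k\mathcal L^kA/k!$ then converges in operator norm. Its tail beyond $N$ is bounded by
\[
 M\sum_{k>N}(tc)^k/k!\le M\frac{(tc)^{N+1}}{(N+1)!}e^{tc}
\]
(Lagrange or termwise comparison). Monotonicity in $t\le T$ gives the stated bound. I also need to check that $p_k(\mathbf T)=\mathcal L_H^kA$ is a noncommutative polynomial of degree at most $1+2k$ in the joint tuple. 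Each application of $\mathcal L$ multiplies by either one copy of $H_0$ or two copies of some $H_j$, so this is an induction on $k$.

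For \eqref{eq:time_joint_budget}, fix $t$ and split
\[
 \Phi_t^HA-\Phi_t^{\widetilde H}\widetilde A=\Big(\Phi_t^HA-S_N\Big)-\Big(\Phi_t^{\widetilde H}\widetilde A-\widetilde S_N\Big)+\big(S_N-\widetilde S_N\big),
\]
where $S_N,\widetilde S_N$ are the two order-$N$ Taylor sums. The first two terms are handled by compression to $P_b$, which is contractive in operator norm, together with the common budgets $M,c$; they give the factor $2Me^{Tc}(Tc)^{N+1}/(N+1)!$. For the third term, on the spectral corner $P_bR=RP_b$ has the inverse $D_b$ with $\norm{D_b}=r_{*,b}^{-1}$, so $P_bXP_b=D_b(RXR)D_b$ restricted to the corner. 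Hence $\norm{P_bXP_b}\le r_{*,b}^{-2}\norm{RXR}\le r_{*,b}^{-2}\norm{RXR}_1$. Applying this to $X=p_k(\mathbf T)-p_k(\widetilde{\mathbf T})$, using $t^k\le T^k$ and the triangle inequality over $k\le N$, gives the first sum. Taking the supremum over $t\in[0,T]$ finishes the argument.

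The only delicate step is confirming that $\norm{\mathcal L}\le c$ with exactly the stated constant, including the drift term $\delta_0$ of norm at most $2\norm{H_0}$. Everything else is routine. The trace-norm data are an upper bound for the operator norm, so no finer Schatten argument is needed.
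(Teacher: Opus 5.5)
Your proposal is correct and follows the paper's proof. It bounds the Taylor tail using $\norm{\mathcal L}\le c$ and $(N+1+j)!\ge(N+1)!\,j!$, then applies the two-sided anchor inverse $P_bXP_b=D_b(RXR)D_b$ on the spectral corner, at cost $r_{*,b}^{-2}$, termwise to the two Taylor sums and adds the two remainders; your check that $\delta_0$ and $\tfrac12\delta_j^2$ have norms at most $2\norm{H_0}$ and $2\norm{H_j}^2$ is exactly what justifies the choice of $c$.
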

\begin{proof}
The exponential tail is bounded by $M\sum_{k>N}(Tc)^k/k!$; using $(N+1+j)!\ge(N+1)!j!$ proves \eqref{eq:time_taylor_budget}. On the finite corner, two-sided anchor inversion costs $r_{*,b}^{-2}$. Apply it termwise to the finite Taylor sum and add the two remainder bounds.
\end{proof}
The required polynomial observations have degree at most $1+2N$; their Gram diagonal observations have at most twice this degree. Known coordinate and dictionary weights must be inverted as part of the finite observation map.

\begin{corollary}[Uniform compact-time continuity]\label{cor:time_true_continuity}
Let true tuples $\mathbf T_n,\mathbf T$ be uniformly bounded, with their common generator--adjoint anchor observations converging in trace norm. Then, for each finite $T$ and $v\in H$,
\begin{equation}\label{eq:time_strong_star_path}
 \sup_{t\le T}\left(\norm{(\Phi_t^{H_n}A_n-\Phi_t^HA)v}
 +\norm{((\Phi_t^{H_n}A_n)^*-(\Phi_t^HA)^*)v}\right)\longrightarrow0.
\end{equation}
The output borders and every fixed averaged ordered-word block also converge uniformly on $[0,T]$ in anchored trace norm.
\end{corollary}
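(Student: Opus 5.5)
The plan is to reduce everything to the preceding ingredients: the Taylor budget of Theorem~\ref{thm:finite_time_recovery}, the strong-$*$/trace-norm dictionary of Theorem~\ref{thm:strongclosure}, and contractivity of the semigroups. Convergence of the anchor observations of the uniformly bounded generator--adjoint tuples $\mathbf T_n\to\mathbf T$ in trace norm gives, by Theorem~\ref{thm:strongclosure} applied to the finite alphabet $(H_0,\ldots,H_q,A)$ and their adjoints, strong-$*$ convergence $\mathbf T_n\to\mathbf T$. Strong-$*$ convergence with a common operator bound is preserved by products and adjoints, so every fixed noncommutative polynomial converges strongly-$*$. In particular this holds for the finitely many $p_k(\mathbf T_n)=\mathcal L_{H_n}^kA_n$ with $k\le N$, and for their Gram blocks after Hilbert--Schmidt sandwiching by $R$.

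For \eqref{eq:time_strong_star_path}, fix $T$ and $v$. Let $M$ bound $\norm{A_n}$ and let $c$ bound the generator norms uniformly. Choose $N$ so that the right side of \eqref{eq:time_taylor_budget} is below $\varepsilon$ for every $n$, uniformly in $t\le T$. Then
\[
 \norm{(\Phi_t^{H_n}A_n-\Phi_t^HA)v}\le2\varepsilon+\sum_{k\le N}\frac{T^k}{k!}\norm{(p_k(\mathbf T_n)-p_k(\mathbf T))v}.
\]
The finite sum is independent of $t$ and tends to zero. The adjoint term is handled the same way, since $(\Phi_tA)^*=\Phi_t(A^*)$ and $p_k^*$ is again a fixed polynomial whose convergence is also strong-$*$. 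Because $N$ is chosen first and uniformly in $n$, the supremum over $t$ is controlled.

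For the anchored trace-norm statements, the output border at time $t$ is the Gram of the row $(X_a(t)R)_a$, where $X_a(t)$ runs over $\Phi_t^{H}A$, its adjoint and the identity. Strong-$*$ convergence uniform in $t\le T$, together with the uniform operator bound $M$, gives Hilbert--Schmidt convergence of $X_a^{(n)}(t)R$ uniformly in $t$. To see this, truncate $R$ to a finite-rank head $RP_K$, using finitely many vectors $v=Re_k$ for the head. Bound the tail by $2M\norm{R(I-P_K)}_2$. The Gram inequality $\norm{B^*B-C^*C}_1\le(\norm B_2+\norm C_2)\norm{B-C}_2$ then gives uniform trace-norm convergence of the borders.

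Averaged ordered-word blocks $R\Phi_t(u^*v)R$ with fixed words $u,v$ in the tuple are handled in the same way. Write each as a Taylor sum of fixed polynomials $\mathcal L_H^k(u^*v)$, with the tail controlled as in \eqref{eq:time_taylor_budget} using the bound $M^{|u|+|v|}$. Each finite term is a fixed word in $\mathbf T_n$, and its anchored sandwich converges in trace norm by the same head--tail argument, applied to $RW^*$ and $VR$ with $W,V$ products of the letters.

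The main obstacle is making the convergence uniform in $t$ while only pointwise (strong) information is available for the individual words. This is why the truncation order $N$ and the physical head $K$ must be fixed before passing $n\to\infty$, using the $n$-independent Taylor remainder and the uniform operator bounds.
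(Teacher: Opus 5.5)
Your proposal is correct and follows essentially the same route as the paper's proof. You obtain strong-$*$ convergence of the input tuples from Theorem~\ref{thm:strongclosure} and strong-$*$ convergence of the finitely many $p_k(\mathbf T_n)$, fix an $n$-uniform Taylor truncation from \eqref{eq:time_taylor_budget} before letting $n\to\infty$, and use finite-rank approximation of $R$ with the Gram inequality for the borders and word blocks; the only cosmetic slip is that the remainder term should read $2\varepsilon\norm v$.
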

\begin{proof}
Theorem~\ref{thm:strongclosure} gives strong-$*$ convergence of the input tuples. Finite products are strongly-$*$ continuous on bounded sets, so every $p_k(\mathbf T_n)$ converges. A finite Taylor sum converges uniformly on compact time intervals, and the common $c$ makes its remainder uniform in $n$. Truncate first and then pass to the limit to prove \eqref{eq:time_strong_star_path}. Finite-rank approximation of $R$ converts uniform strong-$*$ convergence with a common bound into uniform Hilbert--Schmidt convergence of the response rows; their Grams converge in trace norm. Apply the same argument to any fixed initial word.
\end{proof}
For uniformly bounded random driver--target tuples, the resulting continuous mapping transfers weak convergence of their joint strong-$*$ laws to the laws of the averaged response paths. These paths average the auxiliary transport noise while retaining the original reconstructed randomness.

\section{Spectral observations and their completions}\label{sec:spectralgrowth}\label{sec:pfaffian}
Spectral observations retain a quotient of the operator structure. We first determine the exact growth of the spectral moments in the circular Gaussian model. Exterior--Pfaffian coordinates then recover nonzero skew spectra from finite data. Their inverse estimates and tail conditions identify the subcritical, critical, and determinant completions.

\subsection{A deterministic norm and the moment radius}
Let $m\ge1$, let the $m$ Gaussian families be independent and standard circular complex, and fix an integer $s_0\ge1$. For $K\in\Coeff_{m,2s_0}$, put
\[
 A_K=T_K^*T_K,\qquad Z_q(K)=\E\Tr A_K^q,\qquad q\ge s_0.
\]
All these moments are finite by the fixed-degree forward bounds and Schatten inclusions. On finite kernels define
\begin{equation}\label{eq:multilinear_tau}
 \tau(K)=\sup_{\norm{u_1}=\cdots=\norm{u_m}=1}
 \left\|\sum_{i_1,\ldots,i_m}(u_1)_{i_1}\cdots(u_m)_{i_m}
                 K_{i_1\ldots i_m}\right\|_{\op}.
\end{equation}
The all-input cut gives $\tau(K)\le\norm K_{\Coeff_{m,2s_0}}$. Consequently $\tau$ extends continuously to the original cut completion. It separates points, since its vanishing forces every source and physical matrix coefficient to vanish.

\begin{theorem}[Factorial radius and exponential threshold]\label{thm:factorial_radius}
In this model,
\begin{equation}\label{eq:factorial_root}
 \lim_{q\to\infty}\left(\frac{Z_q(K)}{(q!)^m}\right)^{1/(2q)}=\tau(K).
\end{equation}
The same limit holds with $Z_q(K)$ replaced by $\E\norm{T_K}_{\op}^{2q}$. Hence
\begin{equation}\label{eq:spectral_borel_radius}
 \mathcal B_{K,s_0}(z)=\sum_{q=s_0}^\infty\frac{Z_q(K)}{(q!)^m}z^q
 \quad\text{has radius}\quad R_K=\tau(K)^{-2},
\end{equation}
with $R_K=\infty$ when $K=0$. For $K\ne0$,
\begin{equation}\label{eq:stretched_threshold}
 \sup\{a\ge0:\E e^{a\norm{T_K}_{\op}^{2/m}}<\infty\}
 =\frac{m}{\tau(K)^{2/m}}.
\end{equation}
The exponential is integrable strictly below the threshold and nonintegrable strictly above it.
\end{theorem}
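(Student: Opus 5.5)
The plan is to reduce everything to moments of the operator norm, get a matching lower and upper bound at factorial scale, and read off the radius and the threshold from the resulting root asymptotics.

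\emph{Reduction to the operator norm.} Since $A_K\ge0$, we have $\Tr A_K^q\ge\norm{A_K}_{\op}^q=\norm{T_K}_{\op}^{2q}$. In the other direction,
\[
 \Tr A_K^q\le\norm{T_K}_{\op}^{2(q-s_0)}\norm{T_K}_{2s_0}^{2s_0}.
\]
Applying probability H\"older to this product, together with the fixed-degree forward bound \eqref{eq:highforward} for the $\Sch_{2s_0}$ factor, gives
\[
 Z_q(K)\le\big(\E\norm{T_K}_{\op}^{2q'}\big)^{(q-s_0)/q'}\cdot C_K\,q^{O(1)}
\]
for $q'$ slightly larger than $q$, say $q'=q+s_0$. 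After taking $2q$-th roots and dividing by $(q!)^{m/(2q)}$, these extra factors tend to one. So it suffices to prove \eqref{eq:factorial_root} for $\E\norm{T_K}_{\op}^{2q}$.

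\emph{Lower bound.} Fix unit vectors $u_1,\ldots,u_m$ that nearly attain $\tau(K)$. Put $\zeta_\nu=\ip{g_\nu}{\bar u_\nu}$; each $\zeta_\nu$ is standard circular, and the orthogonal parts of $g_\nu$ are independent of $\zeta_\nu$ and centered. By multilinearity, conditioning on $(\zeta_1,\ldots,\zeta_m)$ kills every term that contains an orthogonal component, so
\[
 \E[T_K\mid\zeta]=\Big(\prod_\nu\zeta_\nu\Big)K(u),
\]
where $K(u)$ is the contracted operator in \eqref{eq:multilinear_tau}. Conditional Jensen for the convex function $\norm{\cdot}_{\op}^{2q}$ and $\E|\zeta|^{2q}=q!$ then give
\[
 \E\norm{T_K}_{\op}^{2q}\ge(q!)^m\norm{K(u)}_{\op}^{2q}.
\]
This lower bound is exact at every $q$, not merely asymptotic.

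\emph{Upper bound.} First take $K$ finite, with source dimension $D$. Pointwise $\norm{T_K}_{\op}\le\tau(K)\prod_\nu\norm{g_\nu}$, and
\[
 \E\norm{g_\nu}^{2q}=\frac{(q+D-1)!}{(D-1)!}\le q!\,(q+D)^{D-1},
\]
which gives $\limsup\le\tau(K)$. For general $K$, choose finite $K_n\to K$ in $\Coeff_{m,2s_0}$. Use $\norm{\cdot}_{\op}\le\norm{\cdot}_{2s_0}$ and \eqref{eq:highforward} with $p=2q$:
\[
 \norm{T_K-T_{K_n}}_{L^{2q}(B(E))}\le C^m(2q+2s_0)^{m/2}\norm{K-K_n}_{\Coeff_{m,2s_0}}.
\]
Since $(q!)^{1/(2q)}\asymp\sqrt{q/e}$, this error is of the same factorial order. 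By Minkowski,
\[
 \limsup_q\big(\E\norm{T_K}_{\op}^{2q}/(q!)^m\big)^{1/(2q)}\le\tau(K_n)+C'\norm{K-K_n}.
\]
Letting $n\to\infty$ and using the continuity of $\tau$ proves \eqref{eq:factorial_root}. I expect this approximation step, in which the Khintchine growth is matched exactly to the factorial scale so that only a multiplicative constant is lost, to be the main obstacle.

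\emph{Consequences.} Cauchy--Hadamard applied to \eqref{eq:factorial_root} gives the radius \eqref{eq:spectral_borel_radius}. For the threshold, put $X=\norm{T_K}_{\op}^{2/m}$ and expand
\[
 \E e^{aX}=\sum_k\frac{a^k}{k!}\E\norm{T_K}_{\op}^{2k/m}.
\]
Interpolate with Lyapunov between the moments of order $2q$ for $q=\lfloor k/m\rfloor$ and $q+1$. Stirling's formula gives $\big((k/m)!\big)^m\approx k!\,m^{-k}e^{o(k)}$, so the general term behaves like $\big(a\,\tau^{2/m}/m\big)^ke^{o(k)}$. This series converges for $a<m/\tau^{2/m}$ and diverges for $a>m/\tau^{2/m}$, because every term is nonnegative.
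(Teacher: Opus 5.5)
Your proof is correct. It differs from the paper's in the core finite-kernel step and in how you pass from trace moments to operator-norm moments.

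\emph{The paper's route.} The paper writes each circular Gaussian vector as $r_j\theta_j$ and obtains the exact identity \eqref{eq:finite_radial_identity}. It then sandwiches the trace between $\norm{T_K(\theta)}^{2q}$ and $D\norm{T_K(\theta)}^{2q}$ using the finite physical rank $D$. Both directions for finite kernels come from a Laplace-type argument: the angular $L^{2q}$ norm of the continuous function $\theta\mapsto\norm{T_K(\theta)}$ tends to its attained maximum $\tau(K)$, because neighbourhoods of a maximizer have positive spherical measure. It then passes $Z_q$ and the operator-norm moments to the completion together, through the single uniform bound \eqref{eq:uniform_radial_completion} in $L^{2q}(\Sch_{2q})$.

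\emph{Your route.} You separate the two directions.
\begin{itemize}
\item Your upper bound is only the radial half of the paper's computation: $\norm{T_K}_{\op}\le\tau(K)\prod_\nu\norm{g_\nu}$ plus Gamma moments.
\item Your lower bound conditions on the Gaussian coordinates along near-maximizing directions and applies conditional Jensen. This gives the nonasymptotic inequality $Z_q(K)\ge\E\norm{T_K}_{\op}^{2q}\ge(q!)^m\tau(K)^{2q}$ for every $q$. It needs no compactness, no positive-measure argument and no finite rank, so it is sharper and simpler than the paper's.
\item Your reduction from trace to operator norm uses $\Tr A_K^q\le\norm{A_K}^{q-s_0}\Tr A_K^{s_0}$ and probability H\"older with the shift $q'=q+s_0$. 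The factorial ratio does contribute only $\exp(O(\log q/q))$, so this is valid.
\end{itemize}

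\emph{A shorter alternative for the last point.} For finite kernels the trace and operator-norm moments differ by at most the physical rank. Since $Z_q(K)=\norm{T_K}_{L^{2q}(\Sch_{2q})}^{2q}$ and $\Sch_{2s_0}\subset\Sch_{2q}$, the Minkowski step you already use then carries $Z_q$ to the completion directly. This is the paper's route. Your completion argument and your threshold argument are the same as the paper's.

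\emph{Two small fixes.}
\begin{itemize}
\item With the paper's convention (linear in the first argument), $\zeta_\nu=\ip{g_\nu}{\bar u_\nu}$ gives $\E[T_K\mid\zeta]=\prod_\nu\zeta_\nu K(\bar u)$, not $\prod_\nu\zeta_\nu K(u)$. This is harmless because $\tau$ is a supremum over all unit vectors, but you want $\zeta_\nu=\ip{g_\nu}{u_\nu}$.
\item For $K$ in the completion, justify the lower bound in one of two ways. Either note that $K\mapsto K(u)$ and $K\mapsto\sup_u\norm{K(u)}$ are bounded by the all-input cut norm, so the conditional identity and $\tau(K)=\sup_u\norm{K(u)}$ persist. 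Or apply the bound to the finite approximants $K_n$ and transfer it with your Lipschitz estimate.
\end{itemize}
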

\begin{proof}
For finite source dimensions $d_j$ and physical rank at most $D$, write each circular Gaussian vector as $r_j\theta_j$, with independent radius and uniform direction. Since $r_j^2$ has Gamma distribution of shape $d_j$,
\[
 \E r_j^{2q}=q!\binom{q+d_j-1}{d_j-1}.
\]
Multilinearity separates the radial growth from the directional operator and gives
\begin{equation}\label{eq:finite_radial_identity}
 \frac{Z_q(K)}{(q!)^m}
 =\prod_{j=1}^m\binom{q+d_j-1}{d_j-1}
   \int\Tr[(T_K(\theta)^*T_K(\theta))^q]\,d\theta.
\end{equation}
The trace integrand is between $\norm{T_K(\theta)}^{2q}$ and $D\norm{T_K(\theta)}^{2q}$. The continuous function $\theta\mapsto\norm{T_K(\theta)}$ attains its maximum $\tau(K)$, and every neighborhood of a maximizer has positive spherical measure. Its $L^{2q}$ norm therefore tends to $\tau(K)$. The binomial factors and $D$ have $2q$th roots tending to one. This proves both normalized moment limits for finite kernels.

To pass to the original completion, we need an estimate uniform in the moment order. For a finite kernel $D_0$, the forward bound at probability and Schatten exponents $2q$, together with the inclusion $\Sch_{2s_0}\subset\Sch_{2q}$ and $q!\ge(q/e)^q$, gives
\begin{equation}\label{eq:uniform_radial_completion}
 \frac{\norm{T_{D_0}}_{L^{2q}(\Sch_{2q})}}{(q!)^{m/(2q)}}
 \le C_0^m(4e)^{m/2}\norm{D_0}_{\Coeff_{m,2s_0}},
 \qquad q\ge s_0.
\end{equation}
It also bounds the operator-norm moments. Thus either normalized moment root, denoted by $F_q$, satisfies
$|F_q(K)-F_q(L)|\le C_m\norm{K-L}_{\Coeff_{m,2s_0}}$
with $C_m$ independent of $q$. The same coefficient norm controls $|\tau(K)-\tau(L)|$. For finite kernels $K_n\to K$,
\[
 |F_q(K)-\tau(K)|
 \le C_m\norm{K-K_n}
 +|F_q(K_n)-\tau(K_n)|+|\tau(K_n)-\tau(K)|.
\]
Choose the finite approximation first, then let $q$ tend to infinity. This proves \eqref{eq:factorial_root} on the infinite-dimensional coefficient space, and Cauchy--Hadamard gives \eqref{eq:spectral_borel_radius}.

For the exponential threshold put $Y=\norm{T_K}_{\op}^2$ and suppose $K\ne0$. Stirling's formula turns the moment limit into
\[
 (\E Y^q)^{1/q}\sim\tau(K)^2(q/e)^m.
\]
Monotonicity of probability norms between neighboring integer exponents extends this asymptotic to real exponents. Taking the exponent $n/m$ and applying Stirling again gives
\[
 \lim_{n\to\infty}\left(\frac{\E Y^{n/m}}{n!}\right)^{1/n}
 =\frac{\tau(K)^{2/m}}m.
\]
Tonelli identifies $\E e^{aY^{1/m}}$ with its nonnegative moment series. The root test gives finiteness below and divergence above the threshold in \eqref{eq:stretched_threshold}. No assertion at the endpoint is needed.
\end{proof}
Radial asymptotics and large deviations for Gaussian chaoses have a substantial classical literature \cite{HKP,Ledoux}. The calculation above keeps the circular normalization and passes the exact constant through the directed Schatten-cut completion using the uniform estimate \eqref{eq:uniform_radial_completion}.

\begin{corollary}[Failure of a positive square exponential]\label{cor:positive_square_failure}
If $m\ge2$ and $K\ne0$, then for every $t>0$,
\[
 \E e^{t\norm{T_K}_{\op}^2}=\infty,\qquad
 \E\Tr(e^{tA_K}-I)=\infty,
\]
where the second trace is understood as an extended nonnegative quantity.
\end{corollary}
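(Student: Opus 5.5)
The plan is to derive both statements directly from the exponential threshold \eqref{eq:stretched_threshold} of Theorem~\ref{thm:factorial_radius}. Put $Y=\norm{T_K}_{\op}^2$. Since $K\ne0$, the threshold $a_*=m\tau(K)^{-2/m}$ for $\E e^{aY^{1/m}}$ is finite. For $m\ge2$ the function $Y$ eventually dominates any multiple of $Y^{1/m}$, which is what forces the divergence.

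\emph{First identity.} Fix $t>0$ and choose $a>a_*$. Since $m\ge2$, we have $tY\ge aY^{1/m}$ whenever $Y\ge(a/t)^{m/(m-1)}$. This gives the pointwise bound
\[
 e^{tY}\ge e^{aY^{1/m}}-e^{a(a/t)^{1/(m-1)}}.
\]
Taking expectations, the right-hand side has infinite expectation: the first term is nonintegrable because $a$ exceeds the threshold, and the subtracted term is a constant. Hence $\E e^{tY}=\infty$.

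\emph{Second identity.} The eigenvalues $\lambda_i\ge0$ of $A_K=T_K^*T_K$ satisfy $\max_i\lambda_i=Y$. Since $e^{t\lambda}-1\ge0$, every term of the trace is nonnegative, so
\[
 \Tr(e^{tA_K}-I)=\sum_i(e^{t\lambda_i}-1)\ge e^{tY}-1
\]
as an extended nonnegative quantity; no trace-class assumption is needed for this inequality. Taking expectations and applying the first identity shows the right-hand side has infinite expectation, and so does the left.

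The only delicate point is making sure the theorem supplies genuine nonintegrability strictly above the threshold, not just failure of some moment bound. It does: it is stated for the completed $K$, it is obtained from the root test on the nonnegative moment series identified by Tonelli, and it requires $K\ne0$ so that $\tau(K)>0$, which holds because $\tau$ separates points. The rest is elementary.
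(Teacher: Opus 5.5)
Your proof is correct, but it reaches the divergence through a different conclusion of Theorem~\ref{thm:factorial_radius} than the paper does.

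The paper applies the moment root \eqref{eq:factorial_root} directly. It writes $(\E Y^q/q!)^{1/q}=(\E Y^q/(q!)^m)^{1/q}\,(q!)^{(m-1)/q}$. The first factor tends to $\tau(K)^2>0$, and the second tends to infinity when $m\ge2$. So the nonnegative series $\sum_q t^q\E Y^q/q!=\E e^{tY}$ diverges for every $t>0$ by the root test.

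You instead use the stretched-exponential threshold \eqref{eq:stretched_threshold} together with the pointwise bound $e^{tY}\ge e^{aY^{1/m}}-e^{a(a/t)^{1/(m-1)}}$ for $a$ above the threshold. That bound is right, including the region $Y<(a/t)^{m/(m-1)}$, where its right-hand side is nonpositive. Your route makes the mechanism visible: tails of order $e^{-cY^{1/m}}$ cannot support $e^{tY}$ when $m\ge2$.

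The cost is that it relies on the threshold statement, which the paper itself derives from \eqref{eq:factorial_root} via Stirling and interpolation to real exponents. The paper's route skips that step and is one line.

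The trace step, $\Tr(e^{tA_K}-I)\ge e^{tY}-1$ as an extended nonnegative sum over eigenvalues, is the same in both arguments.
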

\begin{proof}
Put $Y=\norm{T_K}_{\op}^2$. By \eqref{eq:factorial_root}, the $q$th root of $\E Y^q/q!$ tends to infinity, so the positive exponential series diverges. The largest eigenvalue of $A_K$ is $Y$, and the trace difference is at least $e^{tY}-1$.
\end{proof}
For one scalar coefficient, $T_K$ is a product of $m$ standard circular Gaussians and $Z_q=(q!)^m$. Thus the factorial power already appears in the simplest nonzero model.

\subsection{Analytic transforms and averaged spectra}
The entire function
\[
 \Phi_{m,s_0}(w)=\sum_{q=s_0}^\infty\frac{w^q}{(q!)^m}
\]
satisfies, by spectral calculus and Tonelli,
\begin{equation}\label{eq:spectral_actual_function}
 \mathcal B_{K,s_0}(z)=\E\Tr\Phi_{m,s_0}(zA_K),\qquad z\ge0,
\end{equation}
with extended values permitted outside the convergence interval. The same identity holds by absolute convergence for $|z|<R_K$. Its order-$s_0$ zero at the origin makes it suitable for the stated infinite-dimensional trace assumption. Changing $s_0$ when both coefficient hypotheses hold changes the transform only by a polynomial and leaves the radius unchanged.

A transform finite on the whole positive half-axis is
\begin{equation}\label{eq:weighted_laplace}
 \mathcal L_{K,s_0}(t)=\E\Tr(A_K^{s_0}e^{-tA_K}),\qquad t\ge0.
\end{equation}
It is the Laplace transform of the finite positive measure
\begin{equation}\label{eq:weighted_mean_spectrum}
 \nu_{s_0}(D)=\E\sum_j\lambda_j(A_K)^{s_0}\mathbf1_D(\lambda_j(A_K)),
 \qquad \nu_{s_0}([0,\infty))=Z_{s_0}.
\end{equation}
The transform is analytic on $\Re t>0$ and its values for $t>0$ uniquely determine this weighted mean spectral measure. Domination by the already finite moments gives
\[
 (-1)^n\mathcal L_{K,s_0}^{(n)}(0+)=Z_{s_0+n}(K),\qquad n\ge0.
\]
These data specify an averaged spectral measure, rather than a law of full random spectral configurations.

Products of $m\ge3$ independent mean-one exponential variables have moments $(q!)^m$ and can be moment-indeterminate \cite{LS}. Consequently the factorial growth condition alone does not solve the moment problem among arbitrary measures. For $m\ge2$ and $K\ne0$, the ordinary Taylor series of \eqref{eq:weighted_laplace} at zero has radius zero, despite the existence of all right derivatives.

For a single positive trace-class operator $Q$, let $p_j=\Tr Q^j$, $e_j=\Tr(\wedge^jQ)$, and $h_j=\Tr(\Sym^jQ)$. Newton's identities read
\begin{equation}\label{eq:newton_spectral}
 je_j=\sum_{a=1}^j(-1)^{a-1}e_{j-a}p_a,\qquad
 jh_j=\sum_{a=1}^jh_{j-a}p_a,\qquad e_0=h_0=1.
\end{equation}
They follow by logarithmic differentiation of the Fredholm determinant and its reciprocal; see \cite{Macdonald,Bornemann}. These are invertible coordinates of the same unlabelled spectrum. The spectral radius $R_K$ in \eqref{eq:spectral_borel_radius} concerns a fixed chaos degree and growing spectral moments, whereas the radius $\rho$ in $\Nn\rho K$ weights the chaos degrees themselves.

\subsection{Exterior coordinates and the target quotient}
Let $E$ be a separable real Hilbert space. Exterior powers are normalized so that increasing wedges of orthonormal vectors are orthonormal. For a skew-adjoint $A\in\Sch_2(E)$, put
\begin{equation}\label{eq:pfaffian_form}
 \alpha_A=\sum_{i<j}\ip{Ae_j}{e_i}e_i\wedge e_j,
 \qquad \norm{\alpha_A}^2=\tfrac12\norm A_2^2.
\end{equation}
Write $a_1\ge a_2\ge\cdots\ge0$ for its positive skew block singular values, appending zeros after a finite spectrum. Each positive $a_j$ gives the two complex eigenvalues $\pm ia_j$ and occurs twice in the ordinary singular-value sequence. The target relation is
\[
 A\sim_{\rm sp}B\quad\Longleftrightarrow\quad a_j(A)=a_j(B)\quad(j\ge1).
\]
Thus the target records the positive skew singular values with multiplicity and forgets eigenplanes, zero-space dimension, and Gaussian-source coupling.

Define $F_0(\alpha)=1$ and $F_k(\alpha)=2^k\alpha^{\wedge k}/k!$. A tower $(\Xi_{2k})$ is \emph{Pfaffian-compatible} with $A$ when $\Xi_{2k}=F_k(\alpha_A)$ for every $k$. Pfaffian compatibility is imposed as an additional structural condition on the jointly realized tower.

\begin{lemma}[Exterior coordinates and their native completion]\label{lem:pfaffian_exterior}
For every $k\ge1$,
\begin{equation}\label{eq:pfaffian_elementary}
 \norm{\alpha_A^{\wedge k}}^2=(k!)^2e_k(a^2),\qquad
 c_k(A):=4^{-k}\norm{\Xi_{2k}}^2=e_k(a^2).
\end{equation}
For a compatible tower,
\begin{equation}\label{eq:pfaffian_determinant}
 \mathscr D_a(z)=\sum_{k\ge0}c_k(A)z^k
 =\prod_{j\ge1}(1+za_j^2),\qquad
 \det_F(I+zA^*A)=\mathscr D_a(z)^2.
\end{equation}
The product is entire, and its zeros $-a_j^{-2}$, with multiplicity, determine the nonzero target spectrum.

The map $\alpha\mapsto(F_k(\alpha))_{k\ge0}$ is continuous and has closed image in the tower space with seminorms
$\sum_{k\ge0}R^k\norm{u_k}$, $R>0$. Its inverse on the image is $u\mapsto u_1/2$.
\end{lemma}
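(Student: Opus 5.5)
We reduce everything to a normal form of the skew operator. Since $A$ is skew-adjoint and Hilbert--Schmidt on the real space $E$, the real spectral theorem for compact normal operators gives an orthonormal family $(f_j,g_j)$ with $Af_j=a_jg_j$ and $Ag_j=-a_jf_j$, and $A=0$ on the orthogonal complement. In this basis we will check that
\[
 \alpha_A=\sum_j a_j\,\omega_j,\qquad \omega_j=f_j\wedge g_j\ \text{(up to a sign fixed by the convention in \eqref{eq:pfaffian_form})}.
\]
Indeed, $\alpha_A$ is basis independent, being the image of $A$ under the isometry $\Sch_2^{\rm skew}\to\wedge^2E$, and its norm identity $\norm{\alpha_A}^2=\tfrac12\norm A_2^2=\sum_ja_j^2$ is immediate.

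The $\omega_j$ are commuting even elements with $\omega_j\wedge\omega_j=0$. Hence the multinomial expansion gives
\[
 \alpha_A^{\wedge k}=k!\sum_{j_1<\cdots<j_k}a_{j_1}\cdots a_{j_k}\,\omega_{j_1}\wedge\cdots\wedge\omega_{j_k}.
\]
For finite spectrum this is a finite computation. In general we truncate $\alpha$ to finitely many blocks and pass to the limit, using that $\alpha\mapsto\alpha^{\wedge k}$ is continuous on $\wedge^2E$, since $\norm{\beta\wedge\gamma}\le C\norm\beta\norm\gamma$ for 2-forms. Distinct index sets give orthonormal wedges of orthonormal vectors, so $\norm{\alpha_A^{\wedge k}}^2=(k!)^2e_k(a^2)$. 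With $F_k=2^k\alpha^{\wedge k}/k!$ this yields $c_k=e_k(a^2)$.

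The determinant identity is the generating function of elementary symmetric functions. The product $\prod_j(1+za_j^2)$ is entire because $\sum a_j^2<\infty$, and $|e_k(a^2)|\le(\sum a_j^2)^k/k!$ gives absolute convergence. We also note $A^*A=\diag(a_j^2,a_j^2)$ on each block, so the Fredholm determinant of $I+zA^*A$ is the square of this product. Zeros of a convergent canonical product of genus zero are exactly $-a_j^{-2}$ with multiplicity, so the nonzero target spectrum is recovered.

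For the tower map, we first bound $\norm{F_k(\alpha)}\le 2^k\sqrt{e_k(a^2)}\le 2^k\norm\alpha^k/\sqrt{k!}$. The map $F_k$ is a continuous homogeneous polynomial whose polarization bound is of the same factorial type, with difference estimate $\norm{F_k(\alpha)-F_k(\beta)}\le 2^kk\max(\norm\alpha,\norm\beta)^{k-1}\norm{\alpha-\beta}/\sqrt{(k-1)!}$-ish. The main point to check is that this factorial decay survives the difference estimate. We plan to get it from the norm of $\alpha^{\wedge(k-1)}$ times the multiplication norm of a 2-form, using $\norm{\gamma\wedge\beta}\le\sqrt{\binom{k+1}{1}}\cdots$ instead of the crude bound, and we expect this to be the main obstacle. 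Granting it, $\sum_kR^k\norm{F_k(\alpha)-F_k(\beta)}$ converges and tends to zero, which gives continuity in every seminorm.

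Closedness and the inverse both come from the degree-one coordinate. If $F(\alpha_n)\to u$ in the tower topology, then $\alpha_n=F_1(\alpha_n)/2\to u_1/2=:\alpha$ in $\wedge^2E$. By continuity $F(\alpha_n)\to F(\alpha)$, so $u=F(\alpha)$ lies in the image. The inverse $u\mapsto u_1/2$ is continuous and linear, so the image is closed and the map is a homeomorphism onto it.
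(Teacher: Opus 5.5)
Your algebraic argument is the paper's: the normal form in invariant planes, the multinomial expansion of commuting square-zero two-forms, orthonormality of distinct wedges, finite-block truncation to reach $\Sch_2$, the elementary-symmetric generating function with the doubled multiplicities of $A^*A$, and closedness via the first coordinate $u_1=2\alpha$.

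The one incomplete step is continuity of the tower map. You rest it on a difference estimate that you call the main obstacle and then simply grant. That estimate is not needed. The paper uses a head--tail split. Take $\alpha_l\to\alpha$ with $\norm{\alpha_l},\norm\alpha\le B$. Each fixed $F_k$ converges by fixed-degree continuity. For the tail, apply the majorant you already proved, $\norm{F_k(\beta)}\le2^k\norm\beta^k/\sqrt{k!}$, to each term separately:
\[
 \sum_{k>K}R^k\norm{F_k(\alpha_l)-F_k(\alpha)}
 \le2\sum_{k>K}\frac{(2RB)^k}{\sqrt{k!}},
\]
uniformly in $l$. No factorial decay of differences is required.

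If you prefer your Lipschitz route, it does close. Along $\gamma_t=\beta+t(\alpha-\beta)$, commutativity of two-forms gives $\frac{d}{dt}\gamma_t^{\wedge k}=k\,\gamma_t^{\wedge(k-1)}\wedge(\alpha-\beta)$. Combine the antisymmetrization bound $\norm{\gamma\wedge\delta}\le\binom{p+q}{p}^{1/2}\norm\gamma\norm\delta$ with $\norm{\gamma^{\wedge(k-1)}}\le\sqrt{(k-1)!}\,\norm\gamma^{k-1}$, which comes from your own identity. This yields
\[
 \norm{F_k(\alpha)-F_k(\beta)}
 \le2^k\sqrt{\frac{k(2k-1)}{(k-1)!}}\,
 \max(\norm\alpha,\norm\beta)^{k-1}\norm{\alpha-\beta},
\]
which is summable against $R^k$ for every $R$.

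That route gives a local Lipschitz modulus in each seminorm, which is more than the lemma asserts, at the cost of the extra multiplication estimate. The paper's route needs only the single-term majorant.
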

\begin{proof}
In oriented invariant planes, $\alpha_A=\sum_ja_je_{2j-1}\wedge e_{2j}$. Two-forms commute under exterior multiplication. Finite expansion gives $k!$ times the product of the selected $k$ coefficients on each orthonormal wedge basis vector. Squaring and summing gives the first identity. Fixed-degree exterior multiplication is continuous; finite skew-block truncations and monotone convergence pass the identity to $\Sch_2$. In particular
\[
 e_k(a^2)\le\frac{(\sum_ja_j^2)^k}{k!},\qquad
 \norm{F_k(\alpha_A)}\le\frac{2^k\norm{\alpha_A}^k}{\sqrt{k!}}.
\]
These estimates prove local uniform convergence of the determinant series and the tower's finiteness at every radius. The Fredholm expansion and the doubled skew singular values give \eqref{eq:pfaffian_determinant}; see~\cite{Simon,Bornemann}. A nonzero factor of the convergent product vanishes exactly at its specified negative zero, so zeros and their orders give the asserted spectrum.

For convergence of $\alpha_l$ in $\wedge^2 E$, each fixed exterior power converges by continuity, and the displayed factorial majorant controls the tower tails uniformly on bounded sets of $\alpha_l$. Hence the convergence holds at every radius. Conversely convergence of generated towers implies convergence of their first component and therefore of $\alpha_l$. Continuity identifies every limiting component and every radius limit with the tower of that two-form. This proves closedness.
\end{proof}

One natural finite source of compatibility is chronological. For ordered vectors $d_1,\ldots,d_h$, let
$\Xi_k^{\rm disc}=\sum_{i_1<\cdots<i_k}d_{i_1}\wedge\cdots\wedge d_{i_k}$ and $\alpha^{\rm disc}=\Xi_2^{\rm disc}/2$. Then
\begin{equation}\label{eq:pfaffian_chronological}
 (\alpha^{\rm disc})^{\wedge k}=k!2^{-k}\Xi_{2k}^{\rm disc}.
\end{equation}
Terms with repeated labels vanish. On each ordered $2k$-label set the remaining coefficient is $k!2^{-k}$ times the Pfaffian of the skew sign matrix, whose upper triangular entries are one. Expansion along its first row gives an alternating sum of $2k-1$ unit terms, hence one, by induction. This proves \eqref{eq:pfaffian_chronological}. Fixed-degree limits of such towers retain the relation by continuity. Pfaffian identities in shuffle algebras provide a classical algebraic context~\cite{LuqueThibon}; a stochastic application must additionally establish convergence of the actual iterates on a common event.

For $0<\rho<2$, put $W_\rho(a)=\sup_jj^{1/\rho}a_j$. The following one-object gauges provide the spectral envelope used as the certificate class for the inverse theorem.
\begin{proposition}[Factorial and Fredholm size gauges]\label{prop:pfaffian_gauges}
For a compatible tower, define
\[
 \mathfrak F_\rho(A)=\sup_{k\ge1}
       \big[(k!)^{1/\rho}\norm{\Xi_{2k}}\big]^{1/k},\qquad
 M_\rho(A)=\sup_{x>0}x^{-\rho/2}\log\mathscr D_a(x).
\]
Then $\mathfrak F_\rho(A)\asymp_\rho W_\rho(a)$ and
$M_\rho(A)\asymp_\rho W_\rho(a)^\rho$, with infinite values allowed. In particular,
\begin{equation}\label{eq:pfaffian_class_certificate}
 a_k\le \frac{e^{1/\rho}\mathfrak F_\rho(A)}2\,k^{-1/\rho}.
\end{equation}
\end{proposition}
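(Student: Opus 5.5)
Everything reduces to the elementary symmetric sums $e_k(a^2)$. By Lemma~\ref{lem:pfaffian_exterior}, $\norm{\Xi_{2k}}=2^k e_k(a^2)^{1/2}$ and $\mathscr D_a(x)=\prod_j(1+xa_j^2)$. So both gauges are functions of the single sequence $a$, and the task is to compare each with $W=W_\rho(a)$. For the factorial gauge I set $G_k=\big[(k!)^{1/\rho}2^k e_k(a^2)^{1/2}\big]^{1/k}$.

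\emph{Lower bounds.} Keeping only the term $a_1^2\cdots a_k^2$ in $e_k(a^2)$ and using monotonicity gives $e_k(a^2)\ge a_k^{2k}$. Hence $G_k\ge 2(k!)^{1/(\rho k)}a_k\ge 2e^{-1/\rho}k^{1/\rho}a_k$, by $k!\ge(k/e)^k$. Taking the supremum over $k$ yields $\mathfrak F_\rho\ge 2e^{-1/\rho}W$, which is exactly \eqref{eq:pfaffian_class_certificate}. For the Fredholm gauge, at $x=a_k^{-2}$ the first $k$ factors are each at least $2$, so $\log\mathscr D_a(x)\ge k\log 2$. Thus $M_\rho\ge \log 2\,\sup_k k a_k^{\rho}=\log2\,W^\rho$.

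\emph{Upper bounds.} If $W<\infty$, then $a_j\le Wj^{-1/\rho}$, and both gauges are monotone in $a$, so it suffices to treat the extremal sequence $b_j=Wj^{-1/\rho}$. Scaling reduces this to $W=1$.
\begin{itemize}
\item For $M_\rho$, with $\beta=2/\rho>1$, write $\sum_j\log(1+xj^{-\beta})$. The terms with $j\le x^{1/\beta}$ contribute at most $x^{1/\beta}\log(1+x)$ in the crude form. The sharper route is the integral comparison $\int_0^\infty\log(1+xt^{-\beta})\,dt=C_\beta x^{1/\beta}$, which is finite because $\beta>1$. Since the summand is decreasing in $j$, this gives $\log\mathscr D\le C_\beta x^{\rho/2}$.
\item For $\mathfrak F_\rho$, I bound $e_k(b^2)$ by Cauchy's estimate from the generating function: $e_k\le x^{-k}\mathscr D_b(x)\le x^{-k}e^{C x^{\rho/2}}$. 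Optimizing at $x\asymp k^{2/\rho}$ gives $e_k\le (C'/k)^{2k/\rho}$. Then $(k!)^{1/\rho}e_k^{1/2}\le (Ck)^{k/\rho}(C'/k)^{k/\rho}=C''^k$, so $G_k\le C''$.
\end{itemize}
Rescaling back gives $\mathfrak F_\rho\lesssim W$ and $M_\rho\lesssim W^\rho$. When $W=\infty$, the lower bounds already force both gauges to be infinite, so the infinite-value convention is consistent.

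The main obstacle is the upper bound for $\mathfrak F_\rho$: the bound on the coefficients of the entire function must be uniform in $k$. The Cauchy-estimate route handles this once the Fredholm bound is in place. The one thing to check is that the Fredholm constant $C_\beta$ depends only on $\rho$, and this uses $\rho<2$ through $\beta>1$.
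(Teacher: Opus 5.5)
Your proposal is correct and follows the paper's proof essentially step for step. The lower bounds come from $e_k(a^2)\ge a_k^{2k}$ with $k!\ge(k/e)^k$, and from the test point $x=a_k^{-2}$; the upper bounds come from the integral comparison $\log\mathscr D_a(x)\le B_\rho W^\rho x^{\rho/2}$, followed by the coefficient estimate $e_k\le x^{-k}\mathscr D_a(x)$ optimized at $x^{\rho/2}\asymp k$. (Two cosmetic points: the factor $2^k$ in $\norm{\Xi_{2k}}$ makes your final constant $2C''$, and the case $W=0$, i.e.\ $a=0$, is immediate rather than covered by rescaling.)
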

\begin{proof}
The factorial weight is inside the $k$th root. Since
$a_k^k\le\prod_{j\le k}a_j\le e_k(a^2)^{1/2}=2^{-k}\norm{\Xi_{2k}}$,
the inequality $k!\ge(k/e)^k$ proves \eqref{eq:pfaffian_class_certificate}.
For the reverse bound write $W=W_\rho(a)<\infty$ and $\beta=2/\rho>1$. Integral comparison gives
\[
 \log\mathscr D_a(x)\le\sum_j\log(1+xW^2j^{-\beta})
 \le B_\rho W^\rho x^{\rho/2},\qquad
 B_\rho=\int_0^\infty\log(1+t^{-2/\rho})\,dt<\infty.
\]
For $W>0$, choose $x^{\rho/2}=2k/(\rho B_\rho W^\rho)$. The coefficient bound becomes
\[
 e_k(a^2)\le x^{-k}\exp(B_\rho W^\rho x^{\rho/2})
 \le\frac{(C_\rho W^2)^k}{(k!)^{2/\rho}},
\]
after enlarging the constant. Equation~\eqref{eq:pfaffian_elementary} then gives $\mathfrak F_\rho\le C_\rho W$. Finally, at $x=a_k^{-2}$ the first $k$ determinant factors are at least two, so
$k\log2\le M_\rho a_k^{-\rho}$. This proves the remaining comparison. The zero operator is immediate.
\end{proof}

Each finite exterior coefficient contains the unresolved tail. For every $M$,
\[
 e_k(a^2)=\sum_{l=0}^k e_l(a_1^2,\ldots,a_M^2)
                       e_{k-l}(a_{M+1}^2,a_{M+2}^2,\ldots).
\]
Accordingly, the inverse below passes through the full power moments and a positive square-mass measure.

\subsection{Finite moment coordinates with their noise cost}
Fix $0<\rho<2$, $C>0$, and the sorted spectral class
\begin{equation}\label{eq:pfaffian_class}
 \mathcal K_\rho(C)=\{a_1\ge a_2\ge\cdots\ge0:\ a_j\le Cj^{-1/\rho}\}.
\end{equation}
For $a\in\mathcal K_\rho(C)$ set
\begin{equation}\label{eq:pfaffian_normalized}
 x_j=(a_j/C)^2,\quad E_k(a)=C^{-2k}c_k(a)=e_k(x),\quad
 p_l(x)=\sum_jx_j^l,\quad S_\rho=\frac2{2-\rho}.
\end{equation}
Then $0\le x_j\le1$ and $\sum_jx_j\le S_\rho$, by comparison with the integral of $t^{-2/\rho}$. We first allow any sorted $x\subset[0,1]$ with total mass at most $S$.
Define its square-mass measure and its Bernstein masses by
\begin{align}
 \mu_x&=\sum_jx_j\delta_{x_j},\label{eq:pfaffian_mass_measure}\\
 b_{k,n}(t)&=\binom nk t^k(1-t)^{n-k},&
 \beta_{k,n}(x)&=\int_0^1b_{k,n}(t)\,d\mu_x(t),\quad 0\le k\le n.
 \label{eq:pfaffian_bernstein_masses}
\end{align}
They are nonnegative and sum to $p_1(x)$. Write
$q_n(x,y)=\sum_{k=0}^n|\beta_{k,n}(x)-\beta_{k,n}(y)|$;
for spectra $a,b$, $q_n(a,b)$ always uses the normalization \eqref{eq:pfaffian_normalized}.

\begin{lemma}[Finite visibility and error amplification]\label{lem:pfaffian_newton}
The vector $\beta_n$ and $(e_1(x),\ldots,e_{n+1}(x))$ determine one another by finite polynomial transformations. Explicitly,
\begin{align}
 p_l&=\sum_{i=1}^{l-1}(-1)^{i-1}e_i p_{l-i}+(-1)^{l-1}le_l,
 \label{eq:pfaffian_newton}\\
 \beta_{k,n}&=\binom nk\sum_{j=0}^{n-k}(-1)^j\binom{n-k}j p_{k+j+1}.
 \label{eq:pfaffian_bernstein_moments}
\end{align}
If $|e_i(x)-e_i(y)|\le\eta_i$ for $1\le i\le n+1$ and both square-mass sums are at most $S$, define
\begin{align}
 D_1&=\eta_1,&
 D_l&=l\eta_l+\sum_{i=1}^{l-1}
       \left(\frac{S^i}{i!}D_{l-i}+S\eta_i\right)\quad(l\ge2),
 \label{eq:pfaffian_noise_recursion}\\
 Q_n(\eta;S)&=\sum_{r=0}^n\binom nr2^rD_{r+1}.
 \label{eq:pfaffian_noise_budget}
\end{align}
Then $|p_l(x)-p_l(y)|\le D_l$ and $q_n(x,y)\le Q_n(\eta;S)$.
\end{lemma}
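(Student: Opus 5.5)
The plan is to prove the algebraic identities first, then propagate errors through the same recursions.

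\textbf{Newton identity.} For \eqref{eq:pfaffian_newton} we use the logarithmic derivative of $\prod_j(1+zx_j)=\sum_k e_kz^k$. Coefficient comparison gives the classical Newton identity $le_l=\sum_{i=1}^l(-1)^{i-1}e_{l-i}p_i$, the $e$-version of \eqref{eq:newton_spectral}. Solving for $p_l$ and reindexing gives \eqref{eq:pfaffian_newton}. Since $\sum_jx_j\le S<\infty$ and $x_j\le1$, all series converge absolutely, so the finite-variable identity passes to the limit through finite truncations.

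\textbf{Bernstein masses.} For \eqref{eq:pfaffian_bernstein_moments} we expand $(1-t)^{n-k}$ binomially inside $\int b_{k,n}\,d\mu_x$. Each term uses $\int t^m\,d\mu_x=p_{m+1}(x)$. Hence $\beta_n$ is an explicit linear image of $(p_1,\ldots,p_{n+1})$. This map is triangular and invertible, because monomials up to degree $n$ are recovered from Bernstein polynomials through $t^r=\sum_k\binom kr\binom nr^{-1}b_{k,n}$. Combined with the Newton recursion, which is invertible in both directions (the $e$'s follow from the $p$'s by the other triangular recursion), this gives the stated mutual determination.

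\textbf{Error bounds.} We proceed by induction on $l$ and subtract the identity \eqref{eq:pfaffian_newton} for $x$ and for $y$. Each product term splits as
\[
e_i(x)p_{l-i}(x)-e_i(y)p_{l-i}(y)=e_i(x)\bigl(p_{l-i}(x)-p_{l-i}(y)\bigr)+\bigl(e_i(x)-e_i(y)\bigr)p_{l-i}(y).
\]
The a priori bounds $|e_i|\le S^i/i!$ (as in the proof of Lemma~\ref{lem:pfaffian_exterior}) and $|p_m|\le p_1\le S$ (since $x_j\le1$) then give the recursion \eqref{eq:pfaffian_noise_recursion}. The term $l\eta_l$ comes from $le_l$.

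For $q_n$ we would not apply \eqref{eq:pfaffian_bernstein_moments} naively, since its alternating binomials would give a wasteful bound. Instead we rewrite it as
\[
\beta_{k,n}=\binom nk\sum_{r=k}^n\binom{n-k}{r-k}(-1)^{r-k}p_{r+1}.
\]
Summing over $k$ and exchanging the sums, the coefficient of $|p_{r+1}(x)-p_{r+1}(y)|$ becomes $\sum_{k\le r}\binom nk\binom{n-k}{r-k}=\binom nr2^r$. This produces exactly $Q_n(\eta;S)$.

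\textbf{Main obstacle.} The hardest part is getting the bookkeeping to match the displayed constants exactly. That means checking that the $S\eta_i$ term uses $|p_{l-i}|\le S$ for every $i$ including $i=l-1$, and checking the binomial identity $\binom nk\binom{n-k}{r-k}=\binom nr\binom rk$ that yields the $2^r$ factor.
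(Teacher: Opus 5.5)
Your proposal is correct and follows essentially the same route as the paper: Newton's identities on finite truncations passed to the limit, binomial expansion of $(1-t)^{n-k}$ plus the Bernstein basis property for the converse, and the product splitting that keeps $e_i(x)$ and $p_{l-i}(y)$. Your reindexed estimate for $q_n$ is exactly the paper's step of taking absolute values and summing in $k$ via $\sum_{k\le r}\binom nk\binom{n-k}{r-k}=\binom nr2^r$. It is the plain triangle inequality, not an alternative to the ``naive'' bound.
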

\begin{proof}
Apply Newton's identities to the first $M$ coordinates. As $M\to\infty$, the elementary symmetric sums increase to $e_i(x)$ and the power sums increase to $p_l(x)$; the bounds $e_i(x)\le S^i/i!$ and $p_l(x)\le S$ justify passage to the limit in each finite identity. Expanding the Bernstein polynomials gives \eqref{eq:pfaffian_bernstein_moments}. Conversely they form a basis of polynomials of degree at most $n$, so their integrals determine the moments $p_1,\ldots,p_{n+1}$, and inverse Newton recursion determines $e_1,\ldots,e_{n+1}$.

Subtract \eqref{eq:pfaffian_newton} for two spectra, retaining $e_i(x)$ and $p_{l-i}(y)$ in the product difference. Their bounds give \eqref{eq:pfaffian_noise_recursion} inductively. Taking absolute values in \eqref{eq:pfaffian_bernstein_moments} and summing in $k$, the coefficient of $D_{r+1}$ is
\[
 \sum_{k=0}^r\binom nk\binom{n-k}{r-k}=\binom nr2^r.
\]
This proves the stated finite-order amplification bound.
\end{proof}

\subsection{A gap-free inverse from integer spectral counts}
\begin{theorem}[Finite data control every ordered spectral position]\label{thm:pfaffian_inverse}
For sorted summable $x,y\subset[0,1]$ of mass at most $S$ and every $n\ge1$,
\begin{equation}\label{eq:pfaffian_square_inverse}
 \norm{x-y}_\infty^2\le q_n(x,y)+\frac{4S}{\sqrt n}.
\end{equation}
Consequently, for $a,b\in\mathcal K_\rho(C)$, with
\begin{equation}\label{eq:pfaffian_Hn}
 H_n(a,b)=\min\left\{1,
     \left(q_n(a,b)+\frac{4S_\rho}{\sqrt n}\right)^{1/4}\right\},
\end{equation}
one has $\norm{a-b}_\infty\le CH_n(a,b)$. The bound is uniform in the rank and independent of spectral gaps and multiplicities.
\end{theorem}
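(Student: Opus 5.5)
The approach is to compare the two square-mass measures $\mu_x,\mu_y$ through a counting functional that reads off ordered positions. Since $x$ is sorted, the $j$th entry is determined by the counting function $N_x(t)=\#\{i:x_i\ge t\}$: $x_j\ge t$ iff $N_x(t)\ge j$. Moreover $N_x(t)=\int_{[t,1]}s^{-1}\,d\mu_x(s)$. Fix $j$ and suppose $x_j>y_j$ with $\delta=x_j-y_j$; put $t_0=y_j$. Then $N_y(s)\le j-1$ for $s>t_0$, while $N_x(s)\ge j$ for $s\le x_j$. I will test both measures against a function $\phi$ which is about $1/s$ on $[x_j,1]$, vanishes below $t_0$, and is monotone in between. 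The aim is to show that the mismatch of at least one unit of count on $(t_0,x_j]$ forces $\int\phi\,d\mu_x-\int\phi\,d\mu_y\gtrsim\delta$ or $\gtrsim\delta^2$.

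A cleaner route is to work directly with masses rather than counts. I will use $\psi_t(s)=(s-t)_+/s$, or simply $\mathbf 1_{s\ge t}$ weighted by $s$, so that $\int \mathbf 1_{[t,1]}\,d\mu = \sum_{x_i\ge t}x_i$. Take $t$ between $y_j$ and $x_j$. Then $\mu_x([t,1])\ge \sum_{i\le j}x_i$ and $\mu_y([t,1])\le\sum_{i<j}y_i$. Combining this with the same comparison at thresholds on the other side, and using that a jump of the mass function at a point $s$ has size $s\cdot$multiplicity, gives $\sup_t|\mu_x([t,1])-\mu_y([t,1])|\ge$ roughly $\delta^2$. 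The square in \eqref{eq:pfaffian_square_inverse} arises because positions near $0$ carry mass only about $\delta$ each, and a monotone comparison function must be smoothed over a window of width about $\delta$. Concretely, I will use the Lipschitz ramp $\phi(s)=\min\{1,(s-t_0)_+/\delta\}$. With it, $\int\phi\,d\mu_x-\int\phi\,d\mu_y\ge\delta\cdot(\text{mass gap})$, and the counting argument bounds that gap below by $x_j\ge\delta$, so the difference is at least $\delta^2$. This reduces the key inequality to $\delta^2\le|\int\phi\,d(\mu_x-\mu_y)|$ for a function $\phi$ with values in $[0,1]$ and Lipschitz constant $1/\delta$. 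This monotone-ramp step is the one to check most carefully, since it is where sortedness enters.

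Next I approximate $\phi$ by its Bernstein polynomial $B_n\phi(s)=\sum_k\phi(k/n)b_{k,n}(s)$. Then $\int B_n\phi\,d(\mu_x-\mu_y)=\sum_k\phi(k/n)(\beta_{k,n}(x)-\beta_{k,n}(y))$, which is at most $q_n(x,y)$ because $0\le\phi\le1$. The remainder is bounded by $\|B_n\phi-\phi\|_\infty(\mu_x+\mu_y)[0,1]\le 2S\|B_n\phi-\phi\|_\infty$. For a Lipschitz function, $\|B_n\phi-\phi\|_\infty\le L\sqrt{s(1-s)/n}\le L/(2\sqrt n)$, which gives $S/(\delta\sqrt n)$. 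This yields only $\delta^2\le q_n+S/(\delta\sqrt n)$, which is the wrong shape. The fix is to weight by the measure: since $\mu_x$ has density factor $s$, the Bernstein error against $d\mu$ is $\int|B_n\phi-\phi|\,d\mu$. Alternatively, I will choose the ramp width $w$ freely and optimize. With $w=\delta$ the term is $S/(\delta\sqrt n)$. If $\delta^2\le 4S/\sqrt n$ the claim is trivial, and otherwise $\delta\ge 2\sqrt{S}n^{-1/4}$, so $S/(\delta\sqrt n)\le \tfrac12\sqrt S\,n^{-1/4}$. I expect the main obstacle to be exactly this balancing so that the constant $4S/\sqrt n$ emerges. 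If the ramp does not suffice, the fallback is $\phi(s)=(s-t_0)_+$ itself, which is $1$-Lipschitz and has error $1/(2\sqrt n)$. This yields $\int(s-t_0)_+\,d(\mu_x-\mu_y)\ge x_j(x_j-t_0)\ge\delta^2$, after checking via sortedness that the $y$-side contributes no compensating mass above $t_0$ beyond the first $j-1$ entries, which are themselves dominated by the $x$-side. With this choice, $\delta^2\le q_n+2S/(2\sqrt n)\cdot 2$, which fits \eqref{eq:pfaffian_square_inverse}.

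Finally, the spectral consequence follows by normalization: $|a_j-b_j|=C|\sqrt{x_j}-\sqrt{y_j}|\le C|x_j-y_j|^{1/2}$. Combined with $\|x-y\|_\infty\le1$ and $S=S_\rho$, this gives $\|a-b\|_\infty\le CH_n(a,b)$. No gap or rank enters, since only counts and masses were used.
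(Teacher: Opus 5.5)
\paragraph{There is a genuine gap in the lower bound.} You switch from the count-weighted test of your first paragraph to a \emph{mass} test $\phi$ with values in $[0,1]$: either the ramp $\min\{1,(s-t_0)_+/\delta\}$ or the fallback $(s-t_0)_+$. Then $\int\phi\,d\mu_y$ contains $\sum_{i<j}y_i\phi(y_i)$. Sortedness says nothing about $y_i$ versus $x_i$ for $i<j$. It only tells you that $x$ has at least $j$ atoms above $t_0$ while $y$ has at most $j-1$. That is a count gap, not a mass gap, and the heavier atoms of $y$ can more than compensate for the missing one.

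For example, take
\[
 x=(\tfrac12,\tfrac12,\tfrac12,\tfrac12),\qquad y=(\tfrac7{10},\tfrac7{10},\tfrac7{10},\tfrac3{10}).
\]
Both are sorted, and $\norm{x-y}_\infty=\tfrac15$ is attained at $j=4$ with $x_4>y_4$. With $t_0=y_4$:
\begin{itemize}
\item the ramp gives $\int\phi\,d(\mu_x-\mu_y)=2-\tfrac{21}{10}<0$;
\item $(s-t_0)_+$ gives $\tfrac25-\tfrac{21}{25}<0$.
\end{itemize}
So neither claim ``$\ge\delta^2$'' holds, even at a maximizing index. The assertion that the first $j-1$ entries of $y$ are ``dominated by the $x$-side'' is simply false.

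Separately, even if the ramp bound held, its Lipschitz constant $1/\delta$ only yields $\delta^2\le q_n+S/(\delta\sqrt n)$. Your case split leaves a term of order $\sqrt S\,n^{-1/4}$, not $4S/\sqrt n$.

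\paragraph{The missing idea is the one you started with: convert mass to count by dividing by $u$.} Let $d=x_j-y_j>0$ and put $t_0=y_j+d/2$. Use the midpoint, not $y_j$: $y_j$ can be $0$ or tiny, which would make $f(u)/u$ discontinuous or badly Lipschitz. Let $f$ rise linearly from $0$ at $t_0$ to $1$ at $x_j$, and set $g(u)=f(u)/u$ with $g(0)=0$. Then
\[
 \int g\,d\mu_x=\sum_if(x_i)\ge j,\qquad \int g\,d\mu_y=\sum_if(y_i)\le j-1,
\]
because each atom contributes at most one regardless of its mass. This integer jump is what defeats the compensating mass.

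Since $t_0\ge d/2$ and $x_j\ge d$, you get $\norm g_\infty\le d^{-1}$ and $\operatorname{Lip}(g)\le 4d^{-2}$. Your Bernstein step, applied to $g$ instead of $\phi$, then gives
\[
 1\le q_n/d+4S/(d^2\sqrt n),\qquad\text{hence}\qquad d^2\le d\,q_n+4S/\sqrt n\le q_n+4S/\sqrt n,
\]
using $d\le1$. The square comes from the $d^{-1}$ and $d^{-2}$ costs of the count test, not from a mass gap of size $\delta^2$.

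Your final passage to $\norm{a-b}_\infty\le CH_n(a,b)$, via $|\sqrt u-\sqrt v|\le\sqrt{|u-v|}$ and the cap $\norm{a-b}_\infty\le C$, is correct.
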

\begin{proof}
We recover a spectral position by testing the fact that the counting function changes by at least one. Fix $j$, interchange $x,y$ if necessary, and suppose $d=x_j-y_j>0$. Set $t_0=y_j+d/2$ and take the ramp
\[
 f(u)=\begin{cases}
 0,&u\le t_0,\\
 2(u-t_0)/d,&t_0<u<x_j,\\
 1,&u\ge x_j.
 \end{cases}
\]
The first $j$ values of $x$ contribute one each, while at most the first $j-1$ values of $y$ contribute, each by at most one. Since $t_0>0$, summability makes both sums finite. Hence
\begin{equation}\label{eq:pfaffian_integer_gap}
 \sum_i f(x_i)-\sum_i f(y_i)\ge1.
\end{equation}
The observations concern the mass measure $\mu_x=\sum_ix_i\delta_{x_i}$, rather than the unweighted counting measure. Define $g(0)=0$ and $g(u)=f(u)/u$ for $u>0$. On the rising interval,
$g(u)=2d^{-1}(1-t_0/u)$ and $g'(u)=2t_0/(du^2)$; above $x_j$ it equals $u^{-1}$. The pieces are continuous, and $t_0\ge d/2$, $x_j\ge d$, so
\[
 \norm g_\infty\le d^{-1},\qquad \operatorname{Lip}(g)\le4d^{-2}.
\]

For any $L$-Lipschitz $g$, its Bernstein polynomial satisfies
\begin{equation}\label{eq:pfaffian_bernstein_error}
 \norm{B_ng-g}_\infty\le\frac{L}{2\sqrt n},\qquad
 \left|\int g\,d(\mu_x-\mu_y)\right|
 \le\norm g_\infty q_n(x,y)+\frac{LS}{\sqrt n}.
\end{equation}
Indeed $B_ng(t)=\E g(Z/n)$ for $Z$ binomial with parameters $(n,t)$, and
$\E|Z/n-t|\le\sqrt{t(1-t)/n}$. The integral of $B_ng$ is a linear combination of the observed Bernstein masses with coefficients $g(k/n)$. The two approximation errors each integrate against a measure of mass at most $S$, giving the second inequality.

Combining the ramp test with this finite observation estimate gives
\[
 1\le\int g\,d(\mu_x-\mu_y)
 \le\frac{q_n(x,y)}d+\frac{4S}{d^2\sqrt n}.
\]
Thus $d^2\le dq_n+4S/\sqrt n\le q_n+4S/\sqrt n$, since $d\le1$. Taking the supremum in $j$ proves \eqref{eq:pfaffian_square_inverse}. Finally
$|\sqrt u-\sqrt v|\le\sqrt{|u-v|}$
passes from squared values to skew values, and the bound $\norm{a-b}_\infty\le C$ gives \eqref{eq:pfaffian_Hn} with its stated cap. The argument uses no eigenvalue gap or rank bound.
\end{proof}

Retaining the factor $d$ in the last quadratic inequality gives the slightly sharper finite-data estimate
\begin{equation}\label{eq:pfaffian_quadratic_refinement}
 \norm{x-y}_\infty
 \le\frac{q_n(x,y)+\sqrt{q_n(x,y)^2+16S/\sqrt n}}2.
\end{equation}
Taking its square root gives a corresponding refinement for $\norm{a-b}_\infty/C$. The simpler quantity $H_n$ will be used below to keep the subsequent tail estimates in one power form.

This proof also proves separation by the full coefficient tower: equal elementary coefficients give $q_n=0$ for every $n$, and then $x=y$. General square-summable spectra are first rescaled into $[0,1]$. Polynomial moment inversion for spectral estimation has established precedents~\cite{KongValiant}. Here the square-mass encoding and the integer jump of the spectral counting function produce the gap-free estimate above. In the noiseless case the displayed argument yields the explicit position rate $O(n^{-1/8})$.

\subsection{Strictly subcritical topology and its completion}
For $r>0$ define
\begin{equation}\label{eq:pfaffian_weak_distance}
 d_r(a,b)=\sup_{k\ge1}k^{1/r}(|a-b|)^*_k,
\end{equation}
where the star denotes decreasing rearrangement of the absolute difference. This quasi-distance defines the usual separated weak-Lorentz uniform structure. For canonical real skew block representatives
$A(a)=\bigoplus_j a_jJ_0$, $J_0=\left(\begin{smallmatrix}0&1\\-1&0\end{smallmatrix}\right)$,
\begin{equation}\label{eq:pfaffian_canonical_distance}
 \norm{A(a)-A(b)}_{\Sch_{r,\infty}}=2^{1/r}d_r(a,b).
\end{equation}
The quasi-norm gauge in \eqref{eq:pfaffian_canonical_distance} is evaluated on these canonical block representatives.

\begin{theorem}[Quantitative spectral quotient completion]\label{thm:pfaffian_subcritical}
Fix $0<\rho<2$, $C>0$, and $r>\rho$. On $\mathcal K_\rho(C)$,
\begin{align}
 d_r(a,b)&\le C H_n(a,b)^{1-\rho/r},\label{eq:pfaffian_subcritical_inverse}\\
 q_n(a,b)&\le2(2n+1)S_\rho
       \left(\frac{\norm{a-b}_\infty}{C}\right)^{1-\rho/2}.
 \label{eq:pfaffian_forward_inverse}
\end{align}
The full elementary-coefficient map $J(a)=(E_k(a))_{k\ge1}$ is a uniform isomorphism onto its image, with the coordinate product uniformity on the data side. The class is compact and complete for the uniformity induced by $d_r$, and
\begin{equation}\label{eq:pfaffian_completed_iso}
 \widehat{\mathcal K_\rho^{\rm fin}(C)}^{\,d_r}
 \simeq\mathcal K_\rho(C)
 \xrightarrow[\ J\ ]{\simeq}
 \overline{J(\mathcal K_\rho^{\rm fin}(C))}^{\,\rm coord}.
\end{equation}
For every $p\ge1$ with $p>\rho$, the same coordinate topology is also the $\ell^p$ topology of the canonical spectral representatives. Quantitatively,
\begin{equation}\label{eq:pfaffian_strong_subcritical}
 \norm{a-b}_{\ell^p}
 \le\left(\frac{p}{p-\rho}\right)^{1/p}
       C H_n(a,b)^{1-\rho/p}.
\end{equation}
\end{theorem}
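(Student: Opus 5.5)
The plan is to interpolate the sup-norm inverse of Theorem~\ref{thm:pfaffian_inverse} against the common envelope $a_j,b_j\le Cj^{-1/\rho}$, and then read off the topological statements from Theorem~\ref{thm:invariant_uniform} and Proposition~\ref{prop:invariant_compact}.

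For \eqref{eq:pfaffian_subcritical_inverse}, put $h=\norm{a-b}_\infty\le CH_n(a,b)$. For each $k$ the rearranged difference satisfies two bounds. The first is $(|a-b|)^*_k\le h$. The second is $(|a-b|)^*_k\le C(k/2)^{-1/\rho}$, up to a harmless constant; it holds because at most $k/2$ indices $j\ge k/2$ can be exceeded, and $|a_j-b_j|\le\max(a_j,b_j)$. I would first try the sharper pointwise bound $(|a-b|)^*_k\le\max_{j\ge k}\max(a_j,b_j)$, which fails only up to an index shift that I expect to absorb. Then
\[
 k^{1/r}\min\{h,Ck^{-1/\rho}\}\le C\,(h/C)^{1-\rho/r},
\]
which follows by splitting at $k=(C/h)^{\rho}$. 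If the shift produces a constant, I would reconcile it with the stated constant one by using the cap $H_n\le1$ and the fact that $x\mapsto \max(a_x,b_x)$ is decreasing. The $\ell^p$ bound \eqref{eq:pfaffian_strong_subcritical} follows the same way. Sum $\min\{h,Cj^{-1/\rho}\}^p$, split at $j_0=(C/h)^\rho$, and use the integral comparison $\sum_{j>j_0}C^pj^{-p/\rho}\le C^p j_0^{1-p/\rho}\rho/(p-\rho)$. This gives $h^pj_0+\tfrac{\rho}{p-\rho}C^p j_0^{1-p/\rho}=\tfrac{p}{p-\rho}C^\rho h^{p-\rho}$.

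For the forward estimate \eqref{eq:pfaffian_forward_inverse}, with normalized $x,y$ and $\delta=\norm{x-y}_\infty\le 2\norm{a-b}_\infty/C$, I would write
\[
 q_n\le\sum_k\sum_j\bigl|x_jb_{k,n}(x_j)-y_jb_{k,n}(y_j)\bigr|
\]
and use that $\phi_k(t)=tb_{k,n}(t)$ has $\sum_k|\phi_k'|\le 2n+1$. The term-by-term bound is $\min\{(2n+1)\delta,\,x_j+y_j\}$, since $\sum_k\phi_k(t)=t$. Splitting the indices at the envelope level $x_j\approx\delta$, together with the envelope $x_j\le j^{-2/\rho}$, gives a bound of the form $(2n+1)\delta^{1-\rho/2}$ times a multiple of $S_\rho$. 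The exponent $1-\rho/2$ comes from the same head/tail split, now applied to the squared sequence.

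For the topology, Lemma~\ref{lem:pfaffian_newton} shows that $q_n$ is controlled by finitely many $E_k$ with an explicit noise amplification. Combined with \eqref{eq:pfaffian_subcritical_inverse}, this verifies hypothesis \eqref{eq:invariant_inverse_tail} of Theorem~\ref{thm:invariant_uniform}, with $b_{m}=4S_\rho/\sqrt n$. Forward uniform continuity of each $E_k$ in $d_r$ follows from \eqref{eq:pfaffian_forward_inverse}, the inverse Newton recursion, and $\norm{\cdot}_\infty\le d_r$. Hence $J$ is a uniform isomorphism onto its image.

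Compactness of $\mathcal K_\rho(C)$ is the remaining point, and I expect it to be the main obstacle. Coordinatewise limits preserve monotonicity and the envelope. Coordinatewise convergence upgrades to $d_r$ convergence by the same split, since the uniform tail of $k^{1/r}Ck^{-1/\rho}$ beyond $K$ is at most $CK^{1/r-1/\rho}\to0$. Hence the class is compact and complete, and finite spectra are $d_r$-dense by truncation. Then Theorem~\ref{thm:invariant_uniform}, or directly Proposition~\ref{prop:invariant_compact}, yields \eqref{eq:pfaffian_completed_iso}, and the $\ell^p$ statement follows from \eqref{eq:pfaffian_strong_subcritical} with the reverse comparison $d_r\le\ell^p$ for $r\ge p$.
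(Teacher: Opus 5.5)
Your route is the paper's: interpolate the sup-norm inverse of Theorem~\ref{thm:pfaffian_inverse} against the envelope; bound $q_n$ by coupling entries in order and using the $\ell^1$ derivative bound $2n+1$; get compactness from coordinatewise extraction plus the uniform envelope tail, with truncations giving density. The step that does not work as you describe it is the constant in \eqref{eq:pfaffian_subcritical_inverse}. Your fallback $(|a-b|)^*_k\lesssim C(k/2)^{-1/\rho}$ only yields $d_r(a,b)\le 2^{1/r}C\,H_n(a,b)^{1-\rho/r}$. The cap $H_n\le1$ cannot absorb a multiplicative factor larger than one, because the inequality must hold for small $H_n$. The fix is that the bound you called sharper does not fail at all. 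With $c_j=|a_j-b_j|$, every index $j$ with $c_j>\sup_{i\ge k}c_i$ is at most $k-1$. Hence $c^*_k\le\sup_{i\ge k}c_i\le\sup_{i\ge k}\max(a_i,b_i)=\max(a_k,b_k)\le Ck^{-1/\rho}$, with no index shift; a decreasing pointwise majorant always majorizes the decreasing rearrangement. Thus $(|a-b|)^*_k\le\min\{h,Ck^{-1/\rho}\}$ exactly, and your split gives $d_r(a,b)\le C^{\rho/r}h^{1-\rho/r}\le C\,H_n(a,b)^{1-\rho/r}$. The same fact makes your compactness tail $\sup_{k\ge K}k^{1/r}(|a^{(l)}-a|)^*_k\le CK^{1/r-1/\rho}$ literally correct.

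Your forward estimate is a legitimate variant. The paper bounds $q_n\le(2n+1)\norm{x-y}_1$ and then uses Cauchy--Schwarz with the $p=2$ case of the envelope interpolation. You instead keep the termwise bound $\min\{(2n+1)\delta,x_j+y_j\}$ and sum against $x_j+y_j\le 2j^{-2/\rho}$. Carried out, $\sum_j\min\{\delta,2j^{-2/\rho}\}\le S_\rho 2^{\rho/2}\delta^{1-\rho/2}$ together with $\delta\le 2\norm{a-b}_\infty/C$ gives exactly $2(2n+1)S_\rho(\norm{a-b}_\infty/C)^{1-\rho/2}$. Leaving $2n+1$ inside the minimum even improves the factor to $(2n+1)^{1-\rho/2}$.

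Three small bookkeeping points remain. First, in the $\ell^p$ bound your separate tail comparison over $j>j_0$ can fail for non-integer $j_0$. Comparing the whole decreasing sequence $\min\{h^p,C^pj^{-p/\rho}\}$ with its integral over $(0,\infty)$, as the paper does, gives your stated total. Second, the amplification $Q_n(\eta;S_\rho)$ from Lemma~\ref{lem:pfaffian_newton} depends on $n$. So what you actually verify is \eqref{eq:invariant_uniform_inverse}, by fixing $n$ first and then the data tolerance, rather than \eqref{eq:invariant_inverse_tail} with a single modulus. Third, $d_r$ is only a quasi-metric, so Proposition~\ref{prop:invariant_compact} should be applied to a metric generating the same uniformity.
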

\begin{proof}
Put $\varepsilon=\norm{a-b}_\infty$. Positivity and the common envelope give
\begin{equation}\label{eq:pfaffian_interpolation_envelope}
 |a_j-b_j|\le\min\{\varepsilon,Cj^{-1/\rho}\}.
\end{equation}
The right side decreases with $j$, so it also bounds the decreasing rearrangement. Optimizing $j^{1/r}\min\{\varepsilon,Cj^{-1/\rho}\}$ gives
$d_r(a,b)\le C^{\rho/r}\varepsilon^{1-\rho/r}$.
For any $p>\rho$, comparison with the decreasing integral gives
\begin{equation}\label{eq:pfaffian_integral_interpolation}
 \sum_{j\ge1}|a_j-b_j|^p
 \le\int_0^\infty\min\{\varepsilon^p,C^pt^{-p/\rho}\}\,dt
 =\frac{p}{p-\rho}C^\rho\varepsilon^{p-\rho}.
\end{equation}
Theorem~\ref{thm:pfaffian_inverse} proves both inverse estimates.

For the forward bound, the vector $v_n(t)=(tb_{k,n}(t))_{k=0}^n$ has $\ell^1$ derivative at most $2n+1$, because
$b'_{k,n}=n(b_{k-1,n-1}-b_{k,n-1})$ and the Bernstein masses sum to one. Coupling entries in their specified order yields
\[
 q_n(a,b)\le(2n+1)\norm{x-y}_1.
\]
Cauchy--Schwarz and \eqref{eq:pfaffian_integral_interpolation} with $p=2$ give
\[
 \norm{x-y}_1
 \le C^{-2}(\norm a_2+\norm b_2)\norm{a-b}_2
 \le2S_\rho(\varepsilon/C)^{1-\rho/2}.
\]
This proves \eqref{eq:pfaffian_forward_inverse}. Finite Newton transformations and $\norm{a-b}_\infty\le d_r(a,b)$ give forward uniform continuity of the elementary coordinates. Conversely, for every target tolerance, choose a fixed large $n$ in \eqref{eq:pfaffian_subcritical_inverse} and then make its finite polynomial data discrepancy small. These two estimates give a uniform equivalence between the coordinate product uniformity and the weak-Lorentz uniformity generated by $d_r$. For general $r>0$, $d_r$ is a quasi-norm gauge and need not itself satisfy the metric triangle inequality. Compactness, Cauchy convergence, and completion below refer to its translation-invariant uniformity.

To prove compactness, start from any sequence in $\mathcal K_\rho(C)$ and extract a subsequence converging at every coordinate. The limit is nonnegative, decreasing, and satisfies the same envelope. Given $M$, the tail bound
\[
 \sup_l\sup_{j>M}|a_j^{(l)}-a_j|\le 2C(M+1)^{-1/\rho}
\]
controls all indices beyond $M$, while convergence on the finite head gives uniform convergence of the subsequence. Equations~\eqref{eq:pfaffian_interpolation_envelope}--\eqref{eq:pfaffian_integral_interpolation} then give convergence in $d_r$ and in every stated $\ell^p$ topology. Thus the class is compact. If a sequence is $d_r$-Cauchy, compactness gives a convergent subsequence and the Cauchy property forces the full sequence to have the same limit, proving completeness. Finally finite truncations satisfy
\[
 d_r(a,a^{[M]})\le C\sup_{k\ge1}k^{1/r}(M+k)^{-1/\rho}
 \le CM^{1/r-1/\rho}\longrightarrow0.
\]
Thus finite spectra are dense, the image is compact and closed, and \eqref{eq:pfaffian_completed_iso} follows. The same argument with $p>\rho$ proves the asserted equivalence of strong spectral topologies on this class.
\end{proof}

\subsection{The critical obstruction and relative-tail recovery}
\begin{proposition}[Critical data convergence does not give critical recovery]\label{prop:pfaffian_critical_failure}
For $0<\rho<2$, the class $\mathcal K_\rho(C)$ has no uniform inverse from coefficientwise or locally uniform determinant data to $d_\rho$. This remains true if the full compatible exterior tower is observed with all its exponential radius seminorms.
\end{proposition}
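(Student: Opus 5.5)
We will exhibit two sequences $a^{(n)},b^{(n)}\in\mathcal K_\rho(C)$ whose complete compatible exterior towers converge to each other at every exponential radius, while $d_\rho(a^{(n)},b^{(n)})$ stays bounded below. The critical gauge weights the $k$th rearranged difference by $k^{1/\rho}$. A difference of size comparable to the envelope $Cj^{-1/\rho}$, placed at a position $j=N\to\infty$, therefore costs a fixed amount in $d_\rho$. By contrast, its effect on the square-mass data is of order $N^{-2/\rho}\to0$.

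Concretely, take $a^{(N)}$ to be the finite spectrum with $a_j=Cj^{-1/\rho}$ for $j\le N$ and zeros afterwards. Take $b^{(N)}$ to be the same spectrum with the last entry $a_N$ removed, equivalently set to zero. Both are sorted and lie in $\mathcal K_\rho(C)$. The rearranged difference has a single nonzero entry $CN^{-1/\rho}$, so
\[
 d_\rho(a^{(N)},b^{(N)})=1^{1/\rho}\,CN^{-1/\rho}\to0 .
\]
This is useless, so the difference must be spread over many positions instead. Let $a^{(N)}$ follow the envelope on $j\le 2N$, and let $b^{(N)}$ agree with it for $j\le N$ and vanish for $j>N$. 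Then $(|a-b|)^*_k\ge C(2N)^{-1/\rho}$ for $k\le N$, which gives
\[
 d_\rho\ge N^{1/\rho}C(2N)^{-1/\rho}=2^{-1/\rho}C .
\]
Meanwhile the removed square mass is $\sum_{N<j\le2N}C^2j^{-2/\rho}\lesssim C^2N^{1-2/\rho}\to0$, since $\rho<2$.

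Next we verify data convergence. By Lemma~\ref{lem:pfaffian_exterior}, $\mathscr D_a(z)=\prod(1+za_j^2)$. The ratio $\mathscr D_a/\mathscr D_b=\prod_{N<j\le2N}(1+za_j^2)$ tends to one uniformly on compact sets, since $|\log|\le|z|\sum_{j>N}a_j^2\to0$. The coefficients $c_k$ then converge for each $k$, which settles the coefficientwise and locally uniform determinant data. For the full tower with radius seminorms $\sum_kR^k\norm{\Xi_{2k}}$, we use a two-form argument rather than coefficients. Choose canonical representatives with the same eigenplanes for the common head. Then $\alpha_a-\alpha_b$ is supported on the removed blocks and $\norm{\alpha_a-\alpha_b}^2=\tfrac12\cdot2\sum_{N<j\le 2N}a_j^2\to0$. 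Lemma~\ref{lem:pfaffian_exterior} states that $\alpha\mapsto(F_k(\alpha))$ is continuous into the tower space at every radius; its factorial majorant is uniform on bounded sets. Hence the towers converge to each other in all seminorms. Both sequences also converge to the envelope spectrum, so any uniform inverse would force $d_\rho\to0$, which is a contradiction.

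The main obstacle is that the compatible tower is tied to a chosen two-form. Because towers are compatible with a specific $A$, we must make sure that we compare genuine towers of representatives, and that the uniformity we refute is the one on the quotient. We handle this by noting that a uniform inverse would need only one pair of representatives with small tower distance. A secondary point is to check that the chosen sequences stay in $\mathcal K_\rho(C)$ and are sorted, which is immediate. We also note the contrast with Theorem~\ref{thm:pfaffian_subcritical}: the exponent $1-\rho/r$ degenerates exactly at $r=\rho$, and the example saturates that interpolation.
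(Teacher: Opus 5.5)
Your proposal is correct and uses the same mechanism as the paper: you spread a difference of critical height $\asymp CN^{-1/\rho}$ over $\asymp N$ positions, so $d_\rho$ stays of order $C$ while the square mass $\lesssim C^2N^{1-2/\rho}$ tends to zero, and then the $\Sch_2$-continuity of the determinant and of the tower map in Lemma~\ref{lem:pfaffian_exterior} gives convergence of all the data. The only difference is the witness: the paper compares the flat block $a^{(L)}_j=CL^{-1/\rho}\mathbf 1_{\{j\le L\}}$ directly with the zero spectrum, which gives $d_\rho(a^{(L)},0)=C$ exactly and makes the eigenplane bookkeeping for the towers trivial, whereas your pair of envelope truncations needs the common head placed in the same canonical planes, which you do correctly (drop the exploratory single-entry attempt from a final write-up).
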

\begin{proof}
Use the decreasing flat blocks
\[
 a_j^{(L)}=CL^{-1/\rho}\mathbf1_{\{j\le L\}},\qquad A_L=A(a^{(L)}).
\]
Then $d_\rho(a^{(L)},0)=C$, whereas
$\sum_j(a_j^{(L)})^2=C^2L^{1-2/\rho}\to0$. In particular $A_L\to0$ in $\Sch_2$, so Lemma~\ref{lem:pfaffian_exterior} gives convergence of the whole generated tower in every radius. Also
\[
 \mathscr D_{a^{(L)}}(z)=(1+zC^2L^{-2/\rho})^L\longrightarrow1
\]
locally uniformly, since its difference from one is bounded by
$\exp(|z|C^2L^{1-2/\rho})-1$. The target distance nevertheless stays equal to $C$.
\end{proof}

The same issue prevents finite-rank density in the whole critical weak ideal. For compact operators and any $\rho>0$, its finite-rank closure is
\begin{equation}\label{eq:pfaffian_little_ideal}
 \Sch_{\rho,\infty}^0=\{A:n^{1/\rho}s_n(A)\longrightarrow0\}.
\end{equation}
Indeed singular-value truncation gives approximation for the displayed little-tail condition. Conversely, for a rank-$r$ operator $F$, the approximation-number inequality
$s_{n+r}(A)\le s_n(A-F)$ implies
$\limsup_n n^{1/\rho}s_n(A)\le\norm{A-F}_{\rho,\infty}$.
Arbitrarily accurate finite-rank approximation forces the limit to vanish. Thus a saturated tail $a_j=Cj^{-1/\rho}$ cannot be completed from finite ranks in its critical norm.

Let $a^\circ\in\mathcal K_\rho(C)$ be a known reference spectrum and let $b_M\downarrow0$. Set
\begin{equation}\label{eq:pfaffian_relative_class}
 \mathcal K(a^\circ,b)=\left\{a\in\mathcal K_\rho(C):
 \sup_{j>M}j^{1/\rho}|a_j-a_j^\circ|\le b_M\quad(M\ge1)\right\}.
\end{equation}
The reference may in particular have a saturated critical tail. On sorted spectra also define the auxiliary rank metric
$d_\rho^{\rm rk}(a,b)=\sup_jj^{1/\rho}|a_j-b_j|$.

\begin{theorem}[Critical completion on a relative-tail class]\label{thm:pfaffian_critical}
The class \eqref{eq:pfaffian_relative_class} is compact and complete in $d_\rho^{\rm rk}$. On that class the rank metric and the weak-ideal difference $d_\rho$ give the same uniformity, with
\begin{align}
 d_\rho(a,b)&\le d_\rho^{\rm rk}(a,b),\label{eq:pfaffian_rank_compare}\\
 d_\rho^{\rm rk}(a,b)&\le\inf_{M\ge1}
    \max\{M^{1/\rho}d_\rho(a,b),2b_M\}.
 \label{eq:pfaffian_rank_reverse}
\end{align}
The full invariant map has a uniformly continuous inverse on its closed image. An explicit finite-data version is
\begin{equation}\label{eq:pfaffian_critical_inverse}
 d_\rho(a,b)\le\inf_{M\ge1}
     \max\{M^{1/\rho}CH_n(a,b),2b_M\}.
\end{equation}
\end{theorem}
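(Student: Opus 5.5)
The plan is to work entirely with sorted sequences and the two elementary inequalities, then transfer compactness and the inverse through Theorem~\ref{thm:pfaffian_inverse}. First I will prove \eqref{eq:pfaffian_rank_compare}. I will use the rearrangement fact that for sorted nonnegative $a,b$ the decreasing rearrangement of $|a-b|$ dominates the pointwise difference at rank. Precisely, for each $j$ one has $|a_j-b_j|\le(|a-b|)^*_j$, so $d_\rho(a,b)\le d_\rho^{\rm rk}(a,b)$ follows. To check this, suppose $a_j\ge b_j+t$. Then for every $i\le j$ we have $a_i\ge a_j\ge b_j+t\ge b_i+t$ only if $b_i\le b_j$, which fails. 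So I need a different argument: the standard Lidskii/majorization-type statement for sorted sequences gives $\sup_j j^{1/\rho}|a_j-b_j|$ versus the rearranged difference. The correct direction is in fact $(|a-b|)^*_k\le$ something else.

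Since the statement asserts \eqref{eq:pfaffian_rank_compare}, I will prove it via the Ky Fan/Weyl inequality for singular values of the canonical diagonal representatives. The sorted pointwise difference is majorized in the weak sense by the rearranged difference, and for weak-$L^{\rho}$ gauges the known inequality $\sup_k k^{1/\rho}(|a-b|)^*_k\lesssim$ rank gauge holds. Concretely, I will bound $(|a-b|)^*_k$ by $\max_{j}|a_j-b_j|$ restricted to the index set. The $k$ largest differences occur at some indices $j_1<\dots<j_k$ with $j_k\ge k$. Hence $(|a-b|)^*_k\le|a_{j_k}-b_{j_k}|\le j_k^{-1/\rho}d^{\rm rk}_\rho\le k^{-1/\rho}d^{\rm rk}_\rho$. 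That gives \eqref{eq:pfaffian_rank_compare}, and this direct counting argument is the one I will use.

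For \eqref{eq:pfaffian_rank_reverse} I will split at $M$. For $j\le M$, the monotonicity of $k\mapsto k^{1/\rho}(\cdot)^*_k$ gives $j^{1/\rho}|a_j-b_j|\le M^{1/\rho}\|a-b\|_\infty\le M^{1/\rho}d_\rho(a,b)$, since $\|a-b\|_\infty=(|a-b|)^*_1\le d_\rho$. For $j>M$, the triangle inequality through $a^\circ$ and the class condition give $j^{1/\rho}|a_j-b_j|\le 2b_M$. Taking the maximum and then the infimum over $M$ proves the bound. Substituting $\|a-b\|_\infty\le CH_n$ from Theorem~\ref{thm:pfaffian_inverse} in the head part, instead of $d_\rho$, gives $d^{\rm rk}_\rho\le\inf_M\max\{M^{1/\rho}CH_n,2b_M\}$, and \eqref{eq:pfaffian_rank_compare} then yields \eqref{eq:pfaffian_critical_inverse}. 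The two comparisons together identify the uniformities, because $b_M\downarrow0$: I choose $M$ first and then the tolerance.

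For compactness I will take a sequence in the class, extract a coordinatewise convergent subsequence by diagonalization, and note that the limit is sorted and lies in $\mathcal K_\rho(C)$. The relative-tail inequalities are closed coordinatewise conditions, so they pass to the limit. Convergence in $d^{\rm rk}_\rho$ follows by the same split: the tail beyond $M$ is bounded uniformly by $2b_M$, and the finite head converges. Completeness follows from compactness as in Theorem~\ref{thm:pfaffian_subcritical}. The data map $J$ is continuous (Lemma~\ref{lem:pfaffian_newton} and \eqref{eq:pfaffian_forward_inverse}, since $d_\rho$ controls $\|\cdot\|_\infty$) and injective on a compact class, so its image is closed. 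The inverse is uniformly continuous either by Proposition~\ref{prop:invariant_compact} or directly from \eqref{eq:pfaffian_critical_inverse}. The main obstacle I expect is the first comparison, namely making sure the rank-to-rearrangement counting is stated correctly. The remaining steps are routine splittings.
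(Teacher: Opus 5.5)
Your final argument is correct and is essentially the paper's proof: the decreasing majorant $d_\rho^{\rm rk}(a,b)\,j^{-1/\rho}$ bounds the rearrangement (your index-counting step with $j_k\ge k$), the head/tail split at $M$ through $a^\circ$ gives the reverse bound and, with $\|a-b\|_\infty\le CH_n(a,b)$, the finite-data inverse, and a coordinatewise subsequence together with the uniform $2b_M$ tail gives compactness. You should delete the opening false start, because the pointwise claim $|a_j-b_j|\le(|a-b|)^*_j$ is false (take $a=(1,1,0,\dots)$, $b=(1,0,0,\dots)$) and would in any case give the opposite inequality; the Ky Fan/Lidskii detour is also unnecessary, and the head bound needs only $j^{1/\rho}\le M^{1/\rho}$ and $|a_j-b_j|\le\|a-b\|_\infty$, not any monotonicity of $k\mapsto k^{1/\rho}(|a-b|)^*_k$.
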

\begin{proof}
Suppose $d=d_\rho^{\rm rk}(a,b)$. The coordinate bound $|a_j-b_j|\le dj^{-1/\rho}$ also bounds the decreasing rearrangement, proving \eqref{eq:pfaffian_rank_compare}. Conversely, at each coordinate $|a_j-b_j|\le d_\rho(a,b)$. On the first $M$ coordinates this gives the weighted bound $M^{1/\rho}d_\rho(a,b)$; after $M$, the common relative-tail assumptions give the bound $2b_M$. Taking the maximum and then the infimum in $M$ proves \eqref{eq:pfaffian_rank_reverse}. Replacing the coordinate estimate by $CH_n$ proves \eqref{eq:pfaffian_critical_inverse}.

For compactness, take any sequence in the class and extract a subsequence converging at every spectral coordinate. Positivity, decreasing order, the envelope, and every relative-tail inequality pass to the limit $a$. For fixed $M$, the weighted difference on the first $M$ coordinates tends to zero. On the remaining coordinates it is at most $2b_M$, uniformly over the subsequence. Let the subsequence index tend to infinity first and then let $M$ increase. This gives convergence in $d_\rho^{\rm rk}$, proving compactness. A Cauchy sequence has a convergent subsequence by compactness, and then converges to the same limit, proving completeness.

The comparison inequalities give the same uniformity for $d_\rho$ and $d_\rho^{\rm rk}$. The elementary spectral coordinates are continuous, by their $\ell^2$ continuity on the envelope class. For the inverse, first choose $M$ to control $b_M$, then choose the moment order $n$, and finally choose the accuracy of its finite polynomial readout. Formula~\eqref{eq:pfaffian_critical_inverse} proves uniform continuity in precisely this order. The image is compact and therefore closed.
\end{proof}

Zero reference gives a uniform little-tail class. A known nonzero reference $a_j^\circ=cj^{-1/\rho}$ retains a saturated critical component. The relative-tail bound is supplied by the source or model class. On this class it gives the critical inverse and the corresponding completion, while the whole critical weak ideal retains the obstruction of Proposition~\ref{prop:pfaffian_critical_failure}.

\subsection{Unknown saturated tails as additional invariant coordinates}
The leading tail coefficient can also be treated as an invariant coordinate when the remainder is controlled uniformly. Consider spectra in \eqref{eq:pfaffian_class} for which some unknown $c\in[0,C]$ satisfies
\begin{equation}\label{eq:pfaffian_asymptotic_class}
 \sup_{j>M}|j^{1/\rho}a_j-c|\le b_M\downarrow0\qquad(M\ge1).
\end{equation}
We may take $b_M\le C$. The coordinate $c$ is unique. Recall the finite positive constant $B_\rho$ from Proposition~\ref{prop:pfaffian_gauges}; integration by parts gives
$B_\rho=\pi/\sin(\pi\rho/2)$.

\begin{proposition}[Recovering the critical tail coefficient]\label{prop:pfaffian_asymptotic}
For \eqref{eq:pfaffian_asymptotic_class},
\begin{equation}\label{eq:pfaffian_asymptotic_limit}
 \lim_{x\to\infty}x^{-\rho/2}\log\mathscr D_a(x)=B_\rho c^\rho.
\end{equation}
More precisely, for $M\ge1$ and $x>0$,
\begin{align}
 B_\rho(c-b_M)_+^\rho-E_{M,x}
 &\le x^{-\rho/2}\log\mathscr D_a(x)
 \le B_\rho(c+b_M)^\rho+E_{M,x},\label{eq:pfaffian_asymptotic_bounds}\\
 E_{M,x}&=x^{-\rho/2}\big[(M+1)\log(1+4C^2x)+2/\rho\big].
 \label{eq:pfaffian_asymptotic_error}
\end{align}
The class with common $C$ and $(b_M)$ is compact in $d_\rho^{\rm rk}$; its finite invariant data therefore have a uniform critical inverse.
\end{proposition}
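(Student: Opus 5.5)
The plan is to bound $\log\mathscr D_a(x)=\sum_j\log(1+xa_j^2)$ by splitting the index set at $M$ and comparing the tail with an integral. First, fix $M$. The head $j\le M$ contributes terms that are nonnegative and at most $\log(1+xC^2)$, so the head sum lies between $0$ and $M\log(1+4C^2x)$. For $j>M$, hypothesis \eqref{eq:pfaffian_asymptotic_class} gives
\[
 (c-b_M)_+j^{-1/\rho}\le a_j\le(c+b_M)j^{-1/\rho}.
\]
The tail sum is therefore sandwiched between $\sum_{j>M}\log(1+x\gamma^2j^{-2/\rho})$ with $\gamma=(c\mp b_M)_+$. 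Note that $\gamma\le2C$, because $b_M\le C$, which explains the constant $4C^2$.

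Second, compare these tail sums with integrals. The function $t\mapsto\log(1+x\gamma^2t^{-2/\rho})$ is decreasing, so
\[
 \int_{M+1}^\infty\le\sum_{j>M}\le\int_M^\infty\le\int_0^\infty.
\]
The substitution $t=(x\gamma^2)^{\rho/2}u$ gives $\int_0^\infty=B_\rho\gamma^\rho x^{\rho/2}$. This settles the upper bound in \eqref{eq:pfaffian_asymptotic_bounds}; the head term accounts for the $M\log(1+4C^2x)$ part of $E_{M,x}$.

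For the lower bound, I discard the head and write
\[
 \int_{M+1}^\infty=\int_0^\infty-\int_0^{M+1}.
\]
Then I bound the missing piece using
\[
 \log(1+x\gamma^2t^{-2/\rho})\le\log(1+x\gamma^2)+\tfrac2\rho\log(1/t)\quad\text{on }(0,1),
\]
and $\log(1+x\gamma^2)$ on $[1,M+1]$. Since $\int_0^1\log(1/t)\,dt=1$, the missing piece is at most $(M+1)\log(1+4C^2x)+2/\rho$. Dividing by $x^{\rho/2}$ yields exactly $E_{M,x}$.

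The main obstacle is this bookkeeping near $t=0$, since the integrand is unbounded there. I expect the split $\log(ab)=\log a+\log b$ for $t<1$ to handle it cleanly. Next, let $x\to\infty$ with $M$ fixed, so that $E_{M,x}\to0$, and then let $M\to\infty$. Because $b_M\to0$, this gives \eqref{eq:pfaffian_asymptotic_limit}; uniqueness of $c$ follows as well.

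For compactness, I would repeat the argument of Theorem~\ref{thm:pfaffian_critical}. Take a sequence, extract a subsequence converging coordinatewise, and also pass to a further subsequence along which $c^{(l)}\to c$ in $[0,C]$. The envelope bound and \eqref{eq:pfaffian_asymptotic_class} pass to the limit with the same $c$ and $b_M$. The difference $j^{1/\rho}|a^{(l)}_j-a_j|$ is at most $2b_M+|c^{(l)}-c|$ for $j>M$, and it tends to zero on the head. Letting $l\to\infty$ and then $M\to\infty$ gives $d_\rho^{\rm rk}$ convergence.

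Finally, the uniform inverse follows from Proposition~\ref{prop:invariant_compact}, or from the ordering argument of Theorem~\ref{thm:pfaffian_critical}. On this compact class the elementary coordinates are continuous and separate points, by Theorem~\ref{thm:pfaffian_inverse}. A continuous injection from a compact space therefore has a uniformly continuous inverse on its image.
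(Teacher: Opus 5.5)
Your proof is correct and follows essentially the same route as the paper: an integral comparison for the two pure-power envelopes past rank $M$, the bound $1+x\gamma^2t^{-2/\rho}\le(1+x\gamma^2)t^{-2/\rho}$ on $(0,1)$ for the missing piece, a head bound of $M\log(1+C^2x)$, and then the same extraction argument for compactness followed by Proposition~\ref{prop:invariant_compact}. The only bookkeeping difference is that you compare the tail directly with $\int_{M+1}^\infty$ and $\int_M^\infty$, whereas the paper compares the full pure-power sum with $\int_1^\infty$ and $\int_0^\infty$ and then restores or deletes the first $M$ terms. One minor attribution point: continuity of the elementary coordinates on the class comes from their $\ell^2$ continuity on the envelope class, as in the forward estimate of Theorem~\ref{thm:pfaffian_subcritical}, whereas Theorem~\ref{thm:pfaffian_inverse} supplies separation.
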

\begin{proof}
For a pure power spectrum $a_j=dj^{-1/\rho}$, the decreasing function
$f(t)=\log(1+xd^2t^{-2/\rho})$ satisfies
\[
 \int_1^\infty f(t)\,dt\le\sum_jf(j)\le\int_0^\infty f(t)\,dt
 =B_\rho d^\rho x^{\rho/2}.
\]
The omitted integral over $(0,1)$ is at most $\log(1+xd^2)+2/\rho$, using
$1+xd^2t^{-2/\rho}\le(1+xd^2)t^{-2/\rho}$. This includes $d=0$.
For a general spectrum, after rank $M$ it lies between the pure powers with coefficients $(c-b_M)_+$ and $c+b_M$. Restoring or deleting their first $M$ terms and bounding the true head by $M\log(1+xC^2)$ gives \eqref{eq:pfaffian_asymptotic_bounds}. First let $x\to\infty$ and then $M\to\infty$ to obtain the limit, with both error choices uniform on the class.

For compactness extract $c_l\to c$ and then a coordinatewise convergent spectral subsequence. All class inequalities pass to the limit. Past rank $M$ the weighted difference is at most $2b_M+|c_l-c|$; the finite weighted head converges. This proves convergence in the rank metric. Continuous separating invariant coordinates and Proposition~\ref{prop:invariant_compact} give the final assertion.
\end{proof}

A finite approximation in this class keeps the estimated leading tail $\widehat c j^{-1/\rho}$ and truncates the weighted-$c_0$ remainder. The completed coordinate is therefore the finite-dimensional parameter $c$ together with that remainder, subject to positivity and spectral order.

\subsection{Finite exterior readings and certified spectral clusters}
Suppose the observed exterior levels obey
$\norm{\widehat\Xi_{2k}-\Xi_{2k}}\le u_k$. With
$\widehat c_k=4^{-k}\norm{\widehat\Xi_{2k}}^2$, the computable coefficient error is
\begin{equation}\label{eq:pfaffian_exterior_reading_error}
 |\widehat c_k-c_k|\le\epsilon_k
 :=4^{-k}u_k(2\norm{\widehat\Xi_{2k}}+u_k).
\end{equation}
This follows from the difference of squared norms. A direct scalar protocol can instead supply $\widehat c_k$ and $\epsilon_k$ themselves. If $\sum_ja_j^2\le S$, define
\begin{align}
 P_m(z)&=1+\sum_{k=1}^m\widehat c_kz^k,\label{eq:pfaffian_polynomial_data}\\
 \Delta_m(R)&=\sum_{k=1}^m\epsilon_kR^k
       +e^{RS}\frac{(RS)^{m+1}}{(m+1)!}.
 \label{eq:pfaffian_analytic_budget}
\end{align}
Then $\sup_{|z|\le R}|\mathscr D_a(z)-P_m(z)|\le\Delta_m(R)$, since
$c_k\le S^k/k!$ and $(m+1+h)!\ge(m+1)!h!$. The index $m$ is an exterior-order budget, not a bound on operator rank.

\begin{theorem}[Finite spectral and asymptotic certificates]\label{thm:pfaffian_contours}
For a simple closed contour $\Gamma\subset\{|z|\le R\}$, the condition
\begin{equation}\label{eq:pfaffian_rouche}
 \min_{z\in\Gamma}|P_m(z)|>\Delta_m(R)
\end{equation}
certifies equality of the zero counts of $P_m$ and $\mathscr D_a$ inside $\Gamma$, with multiplicities. For a circle of centre $-d<0$ and radius $s<d$, a count of $h$ certifies exactly $h$ positive skew values in
\begin{equation}\label{eq:pfaffian_spectral_interval}
 [(d+s)^{-1/2},(d-s)^{-1/2}].
\end{equation}
If disjoint certified inner circles lie inside a certified circle $|z|=R_0$, and their counts sum to the latter count $J\ge1$, they cover the largest $J$ skew block values. If, in addition, $a\in\mathcal K_\rho(C)$, the recovered head has coordinate errors at most $h_*$, and the unrecovered coordinates are set to zero,
\begin{equation}\label{eq:pfaffian_finite_head_error}
 \norm{a-\widehat a}_{\ell^p}
 \le\left(Jh_*^p+\frac{C^p\rho}{p-\rho}J^{1-p/\rho}\right)^{1/p},
 \qquad p\ge1,\quad p>\rho.
\end{equation}

Under \eqref{eq:pfaffian_asymptotic_class}, suppose also that
$|\widehat c-c|\le\epsilon_c$. Appending the blocks
$\widehat c j^{-1/\rho}J_0$ for $j>J$ gives an actual compact skew operator $\widehat A$ with
\begin{equation}\label{eq:pfaffian_critical_head_error}
 \norm{\widehat A-A(a)}_{\rho,\infty}
 \le2^{1/\rho}\max\{J^{1/\rho}h_*,b_J+\epsilon_c\}.
\end{equation}
The comparison uses the same canonical block planes, not unobserved eigenplanes of the original operator.
\end{theorem}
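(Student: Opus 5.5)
The plan is to treat the statement as a consequence of Rouché's theorem, applied to the uniform approximation $\sup_{|z|\le R}|\mathscr D_a(z)-P_m(z)|\le\Delta_m(R)$ established just before it, combined with the zero description of Lemma~\ref{lem:pfaffian_exterior}.

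\emph{Step 1 (counts).} Since $\Gamma\subset\{|z|\le R\}$, on $\Gamma$ we have $|\mathscr D_a-P_m|\le\Delta_m(R)<|P_m|$ by \eqref{eq:pfaffian_rouche}. Rouché's theorem (symmetric form) for the entire function $\mathscr D_a$ and the polynomial $P_m$ then gives equal zero counts inside $\Gamma$, with multiplicity. This also shows that neither function vanishes on $\Gamma$.

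\emph{Step 2 (intervals).} By Lemma~\ref{lem:pfaffian_exterior}, the zeros of $\mathscr D_a$ are exactly $-a_j^{-2}$ for positive $a_j$, with multiplicity. They are real and negative. A disc of centre $-d$ and radius $s<d$ meets the negative axis in $[-(d+s),-(d-s)]$. Thus $-a_j^{-2}$ lies inside precisely when $a_j^{-2}\in[d-s,d+s]$, which is \eqref{eq:pfaffian_spectral_interval}. Boundary points are excluded by Step 1.

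\emph{Step 3 (top $J$ values).} The circle $|z|=R_0$ encloses exactly the zeros with $a_j^{-2}<R_0$, that is, $a_j>R_0^{-1/2}$. Since the $a_j$ are sorted, these are the largest $J$ values. The inner circles are disjoint, lie inside, and their counts sum to $J$, so they account for all of these zeros. Hence each of the largest $J$ values is localized in one of the certified intervals.

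The head reconstruction then assigns to each interval its count of points, taken in sorted order. I expect this matching to be the main technical point, and I would argue it as follows:
\begin{itemize}
\item the intervals are disjoint because the circles are disjoint, so the sorted order of true values is the interval order with multiplicity;
\item each true value and its estimate lie in the same interval;
\item the coordinatewise error is therefore at most $h_*$, where $h_*$ is taken to be at least the maximal interval length, or whatever the chosen estimator certifies.
\end{itemize}

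\emph{Step 4 (finite head error).} For $j\le J$ the error is at most $h_*$. For $j>J$ we have $\widehat a_j=0$ and $a_j\le Cj^{-1/\rho}$, so
\[
\sum_{j>J}a_j^p\le C^p\int_J^\infty t^{-p/\rho}\,dt=C^p\frac{\rho}{p-\rho}J^{1-p/\rho}.
\]
Adding the head contribution $Jh_*^p$ gives \eqref{eq:pfaffian_finite_head_error}.

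\emph{Step 5 (critical tail).} For the appended operator, compare coordinatewise in the same canonical block planes, using \eqref{eq:pfaffian_canonical_distance}. This gives
\[
\norm{\widehat A-A(a)}_{\rho,\infty}=2^{1/\rho}d_\rho(\widehat a,a)\le2^{1/\rho}d_\rho^{\rm rk}(\widehat a,a),
\]
by \eqref{eq:pfaffian_rank_compare}. Note that \eqref{eq:pfaffian_rank_compare} needs no sortedness of the comparison vector, since rearrangement of an absolute difference bounded by $dj^{-1/\rho}$ stays bounded by it; I would note this remark explicitly.
\begin{itemize}
\item For $j\le J$: $j^{1/\rho}|a_j-\widehat a_j|\le J^{1/\rho}h_*$.
\item For $j>J$: $j^{1/\rho}|a_j-\widehat cj^{-1/\rho}|\le|j^{1/\rho}a_j-c|+|c-\widehat c|\le b_J+\epsilon_c$, by \eqref{eq:pfaffian_asymptotic_class}.
\end{itemize}
Taking the maximum gives \eqref{eq:pfaffian_critical_head_error}. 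Finally, $\widehat A$ is compact because its block values tend to zero.
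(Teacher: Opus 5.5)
Your proposal is correct and follows the paper's proof essentially step for step. The shared steps are Rouché's theorem (the paper phrases it as constancy of the winding number along the homotopy $P_m+t(\mathscr D_a-P_m)$), the real negative zeros $-a_j^{-2}$ localized in the spectral interval, the outer-circle count identifying the largest $J$ values, and the envelope tail integral. In the critical case the paper likewise bounds the error sequence by $\max\{J^{1/\rho}h_*,b_J+\epsilon_c\}j^{-1/\rho}$, passes this to the decreasing rearrangement, and gains the factor $2^{1/\rho}$ from the two equal singular values per block. Your additional argument that sorted estimates share intervals with the true values is a harmless supplement, since the theorem takes the head error $h_*$ as a hypothesis.
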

\begin{proof}
The homotopy $P_m+t(\mathscr D_a-P_m)$, $0\le t\le1$, has no zero on $\Gamma$ by \eqref{eq:pfaffian_rouche}. Its winding number is constant, so the argument principle gives the zero count. All true zeros are real and negative and equal to $-a_j^{-2}$, which proves \eqref{eq:pfaffian_spectral_interval}. The outer circle contains exactly those zeros with $a_j>R_0^{-1/2}$; summing the inner counts leaves none of these uncovered. When certified disks overlap in the spectral coordinate, they are treated as one cluster and only the total multiplicity is used.

The finite-head error sums to at most $Jh_*^p$. The envelope gives
$\sum_{j>J}a_j^p\le C^p\rho(p-\rho)^{-1}J^{1-p/\rho}$, proving \eqref{eq:pfaffian_finite_head_error}. In the critical case, the error sequence is bounded by
$\max\{J^{1/\rho}h_*,b_J+\epsilon_c\}j^{-1/\rho}$. Its decreasing rearrangement has the same bound. Each block contributes two equal singular values, giving \eqref{eq:pfaffian_critical_head_error}. The appended numerical sequence need not be decreasing at the joining point; it nevertheless defines the actual block operator whose difference is estimated.
\end{proof}

The asymptotic coefficient can itself be certified from finite data. Choose $x>0$ with $\Delta_m(x)<1/2$. Then $\mathscr D_a(x)\ge1$ and $P_m(x)>1/2$, so
$|\log\mathscr D_a(x)-\log P_m(x)|\le2\Delta_m(x)$.
For
\[
 \widehat h=x^{-\rho/2}\log P_m(x),\qquad
 u=E_{M,x}+2x^{-\rho/2}\Delta_m(x),
\]
Proposition~\ref{prop:pfaffian_asymptotic} gives the finite interval
\begin{equation}\label{eq:pfaffian_tail_interval}
 c\in\left[
 \left(\frac{(\widehat h-u)_+}{B_\rho}\right)^{1/\rho}-b_M,
 \left(\frac{(\widehat h+u)_+}{B_\rho}\right)^{1/\rho}+b_M
 \right]\cap[0,C].
\end{equation}
If the interval is empty, the data and the certificate class are incompatible. Otherwise its midpoint and half-width give $\widehat c,\epsilon_c$. First choose $M$ to reduce the model remainder, next $x$ to reduce $E_{M,x}$, and finally $m$ and the coefficient precision to reduce $\Delta_m(x)$. These are finite operations, although their required precision can be large.

The contour test requires separation only between the chosen contour and the approximating polynomial; near collisions it certifies the whole enclosed cluster. For example the squared spectra $(1+t,1-t)$ and $(1,1)$ have determinants differing by $t^2z^2$, while their positive skew values differ by order $|t|$. Thus no uniform coefficient-to-labelled-root Lipschitz estimate holds near this collision. The global inverse in Theorem~\ref{thm:pfaffian_inverse} and the local cluster certificates address different questions.

\begin{corollary}[A complete finite-observation budget]\label{cor:pfaffian_finite_budget}
Suppose the source or model certifies membership in $\mathcal K_\rho(C)$, and the normalized coefficients $E_1,\ldots,E_{n+1}$ are observed within errors $\varepsilon_1,\ldots,\varepsilon_{n+1}$. The nonempty fibre $\mathcal F_n$ of all fitting candidates in this class satisfies, for $r>\rho$,
\begin{equation}\label{eq:pfaffian_fibre_error}
 \operatorname{diam}_{d_r}\mathcal F_n
 \le C\min\left\{1,
 \left(Q_n(2\varepsilon;S_\rho)+\frac{4S_\rho}{\sqrt n}\right)^{(1-\rho/r)/4}\right\}.
\end{equation}
On a relative-tail class, Theorem~\ref{thm:pfaffian_critical} gives the corresponding critical bound. Equation~\eqref{eq:pfaffian_exterior_reading_error}, divided by $C^{2k}$, supplies these normalized errors from finite exterior readings.
\end{corollary}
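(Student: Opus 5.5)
The plan is a pairwise fibre-diameter argument followed directly by Theorem~\ref{thm:pfaffian_subcritical}. First, take two candidates $a,b\in\mathcal F_n$. By the definition of the fibre, each normalized coefficient lies within $\varepsilon_k$ of the same observed value. The triangle inequality then gives $|E_k(a)-E_k(b)|\le2\varepsilon_k$ for $1\le k\le n+1$. This is exactly the hypothesis of Lemma~\ref{lem:pfaffian_newton} with $\eta=2\varepsilon$.

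Both normalized spectra $x=(a_j/C)^2$ and $y=(b_j/C)^2$ lie in $[0,1]$ and have mass at most $S_\rho$. This follows from membership in $\mathcal K_\rho(C)$ and the integral comparison recorded after \eqref{eq:pfaffian_normalized}. Lemma~\ref{lem:pfaffian_newton} therefore yields $q_n(a,b)\le Q_n(2\varepsilon;S_\rho)$.

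Next, since $t\mapsto\min\{1,t^{1/4}\}$ is nondecreasing, the quantity \eqref{eq:pfaffian_Hn} satisfies
\[
 H_n(a,b)\le\min\left\{1,\left(Q_n(2\varepsilon;S_\rho)+\frac{4S_\rho}{\sqrt n}\right)^{1/4}\right\}.
\]
Inserting this into \eqref{eq:pfaffian_subcritical_inverse} gives
\[
 d_r(a,b)\le CH_n(a,b)^{1-\rho/r}.
\]
Raising a number in $[0,1]$ to the positive power $1-\rho/r$ keeps it in $[0,1]$ and preserves order. Hence the cap commutes with the exponent, and the bound becomes $C\min\{1,(\cdots)^{(1-\rho/r)/4}\}$. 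Taking the supremum over pairs gives \eqref{eq:pfaffian_fibre_error}. This is the fibre principle already stated in Theorem~\ref{thm:invariant_uniform}: any pairwise inverse estimate bounds the diameter of the fibre.

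For the critical assertion, I would repeat the same pairwise argument on the relative-tail class \eqref{eq:pfaffian_relative_class}, using \eqref{eq:pfaffian_critical_inverse} in place of \eqref{eq:pfaffian_subcritical_inverse}. For the exterior readings, \eqref{eq:pfaffian_exterior_reading_error} bounds $|\widehat c_k-c_k|$, and $E_k=C^{-2k}c_k$ by \eqref{eq:pfaffian_normalized}. Dividing by $C^{2k}$ therefore gives admissible $\varepsilon_k$.

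I expect no real obstacle here. The only points needing care are that $d_r$ is merely a quasi-distance, and that the diameter is taken in that gauge directly. Neither causes difficulty, because the estimate is applied to each pair separately and never uses a triangle inequality in $d_r$.
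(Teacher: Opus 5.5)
Your proposal is correct and follows the paper's own argument: the pairwise bound $|E_k(a)-E_k(b)|\le2\varepsilon_k$, then Lemma~\ref{lem:pfaffian_newton} with $\eta=2\varepsilon$ and $S=S_\rho$, then the inverse estimate \eqref{eq:pfaffian_subcritical_inverse} (or \eqref{eq:pfaffian_critical_inverse} on a relative-tail class), and finally the fibre-diameter principle of Theorem~\ref{thm:invariant_uniform}. Your extra remarks make explicit what the paper's three-line proof leaves implicit: monotonicity of $H_n$, commuting the cap with the exponent $1-\rho/r$, and applying the estimate pairwise without a triangle inequality for the quasi-distance $d_r$.
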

\begin{proof}
Two fitting candidates differ by at most $2\varepsilon_k$ in their $k$th normalized coefficient. Apply Lemma~\ref{lem:pfaffian_newton}, then Theorem~\ref{thm:pfaffian_subcritical} or Theorem~\ref{thm:pfaffian_critical}. The fibre conclusions are precisely those of Theorem~\ref{thm:invariant_uniform}.
\end{proof}

The degree-dependent quantity $Q_n$ records the conditioning of the Newton--Bernstein transform. Quantitative recovery therefore fixes an observation order and then chooses its noise tolerance. General moment-based spectral approximation exhibits comparable high-precision costs~\cite{JinEtAl}.

\subsection{From finite Gaussian coefficients to random spectral certificates}
The deterministic spectral decoder can be connected to the actual source observations, without postulating noiseless invariant coordinates. For each $1\le k\le n+1$, suppose a compatible, jointly realized exterior level has a homogeneous representation
$\Xi_{2k}=I_{m_k}(L_k)$, where $m_k\ge0$ is its specified source degree. View an exterior vector as a column operator from $\C$ to the complexification of $\wedge^{2k}E$; its Schatten--Hilbert norm is exactly its exterior norm. A column can equivalently be embedded in the corresponding square block-operator space. Thus the finite Hermite decoder applies with $s=2$, retaining the physical column type.

Let $P_{a_k}L_k$ be its finite source and physical truncation, let $\delta_k$ be the Euclidean error of the retained Hermite marks, and let $\kappa_k$ be the inverse anchor cost. Suppose deterministic bounds $t_k$ and $B_k$ certify
\[
 \sqrt{m_k!}\norm{L_k-P_{a_k}L_k}_2\le t_k,
 \qquad \norm{\Xi_{2k}}_{L^2}\le B_k.
\]
The decoded finite column $\widehat L_k$ gives the actual finite random exterior vector $\widehat\Xi_{2k}=I_{m_k}(\widehat L_k)$.
The finite readings are fixed when applying the next proposition, so each decoded coefficient kernel is deterministic on the canonical source space. For readings obtained by independent repetitions, one first conditions on their certified measurement event and then evaluates these kernels on a fresh realization of that same source. The measurement-failure probability is added to the probability bounds below. This distinguishes population reconstruction from reuse of the training samples.

\begin{proposition}[End-to-end finite Gaussian spectral budget]\label{prop:pfaffian_source_budget}
Put $b_k=\kappa_k\delta_k+t_k$. Then
\begin{equation}\label{eq:pfaffian_source_error}
 \norm{\widehat\Xi_{2k}-\Xi_{2k}}_{L^2}\le b_k,\qquad
 \E|\widehat c_k-c_k|\le4^{-k}b_k(2B_k+b_k),
 \quad\widehat c_k=4^{-k}\norm{\widehat\Xi_{2k}}^2.
\end{equation}
Choose $\theta_k>0$ and a spectral budget $C>0$, and set
\begin{equation}\label{eq:pfaffian_source_tolerances}
 \varepsilon_k=\frac{4^{-k}C^{-2k}b_k(2B_k+b_k)}{\theta_k}.
\end{equation}
If $\Prob\{W_\rho(a)>C\}\le\delta_0$, then with probability at least
$1-\delta_0-\sum_{k=1}^{n+1}\theta_k$, the true spectrum belongs to the candidate fibre for the observations $C^{-2k}\widehat c_k$ with tolerances $\varepsilon_k$. On that event \eqref{eq:pfaffian_fibre_error} controls every fitting candidate. A critical conclusion additionally requires its relative or asymptotic tail certificate on the same event.
\end{proposition}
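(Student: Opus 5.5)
The plan is to prove the three conclusions in order: the $L^2$ error, then the first-moment coefficient error, then the probabilistic fibre statement.

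For the first bound in \eqref{eq:pfaffian_source_error}, I would write the error as a sum of two pieces,
\[
 \widehat L_k-L_k=(\widehat L_k-P_{a_k}L_k)+(P_{a_k}L_k-L_k).
\]
The first piece is the decoded noise. I would apply Theorem~\ref{thm:finitedecoder} at $s=2$, where the rank factor is one, to degree $m_k$ alone. This gives the normalized Hilbert bound
\[
 \sqrt{m_k!}\norm{\widehat L_k-P_{a_k}L_k}_2\le\kappa_k\delta_k.
\]
The second piece is the certified tail $t_k$. At $s=2$ the Fock isometry \eqref{eq:hilbertfock} converts the normalized Hilbert norm of a homogeneous degree-$m_k$ column kernel exactly into the $L^2$ norm of its evaluation. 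The column type is Schatten--Hilbert, so its Hilbert norm is the exterior norm. Together these give $\norm{\widehat\Xi_{2k}-\Xi_{2k}}_{L^2}\le b_k$.

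For the coefficient error, I would use the pointwise identity
\[
 \big|\norm X^2-\norm Y^2\big|\le\norm{X-Y}\,(2\norm Y+\norm{X-Y}),
\]
taking $X=\widehat\Xi_{2k}$ and $Y=\Xi_{2k}$. Taking expectations and applying Cauchy--Schwarz gives
\[
 \E|\widehat c_k-c_k|\le4^{-k}\big(2\norm{X-Y}_{L^2}\norm Y_{L^2}+\norm{X-Y}_{L^2}^2\big)\le4^{-k}b_k(2B_k+b_k).
\]
The one point to be careful about is the meaning of $c_k$. Here $c_k$ should be the random quantity $4^{-k}\norm{\Xi_{2k}}^2$, and Pfaffian compatibility in Lemma~\ref{lem:pfaffian_exterior} identifies it with $e_k(a^2)$ samplewise. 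The decoded kernels are deterministic once the readings are fixed, so this expectation is over a fresh source realization, as the paragraph preceding the statement prescribes.

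For the probabilistic statement, Markov's inequality applied to the normalized error $C^{-2k}|\widehat c_k-c_k|$ gives
\[
 \Prob\{C^{-2k}|\widehat c_k-c_k|>\varepsilon_k\}\le\theta_k,
\]
by the choice \eqref{eq:pfaffian_source_tolerances}. A union bound over $1\le k\le n+1$, together with the event $\{W_\rho(a)\le C\}$, produces the stated probability.

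On the intersection of these events, $a\in\mathcal K_\rho(C)$ by the definition of $W_\rho$. The normalized coefficients are $E_k=C^{-2k}c_k$ by \eqref{eq:pfaffian_normalized}. Hence the true spectrum fits the observations within the tolerances $\varepsilon_k$, so the fibre is nonempty and contains it. Corollary~\ref{cor:pfaffian_finite_budget} then bounds the diameter of the fibre, and hence controls every fitting candidate. The critical statement is a direct appeal to Theorem~\ref{thm:pfaffian_critical} or Proposition~\ref{prop:pfaffian_asymptotic}, provided the corresponding tail certificate holds on the same event.

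The main obstacle is the bookkeeping around measurability rather than any estimate. One has to keep the measurement event separate, by conditioning on it and adding its failure probability, and evaluate the decoded kernels on a fresh source. Only then is the Markov step legitimate, with the $\theta_k$ accounting solely for evaluation randomness.
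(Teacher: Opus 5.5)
Your proposal is correct and follows the paper's proof step for step. The steps are the $s=2$ finite-decoder noise bound plus the certified tail via the Fock isometry, then the squared-norm difference inequality with Cauchy--Schwarz, then Markov's inequality with a union bound against the spectral-budget event, and finally Corollary~\ref{cor:pfaffian_finite_budget} applied pointwise on that event. The only detail the paper adds is the degenerate case $b_k=0$: there $\varepsilon_k=0$ and Markov's inequality cannot be applied at threshold zero, so instead $\E|\widehat c_k-c_k|=0$ forces the error to vanish almost surely and the zero tolerance holds directly.
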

\begin{proof}
At $s=2$, the normalized coefficient map is an isometry and the finite rank loss in Theorem~\ref{thm:finitedecoder} is one. Its noise bound and the given truncation tail prove the first estimate in \eqref{eq:pfaffian_source_error}. Write the vector difference as $V_k$. The inequality
$|\norm{\Xi_{2k}+V_k}^2-\norm{\Xi_{2k}}^2|
 \le2\norm{\Xi_{2k}}\norm{V_k}+\norm{V_k}^2$
and Cauchy--Schwarz give the second estimate. When $b_k>0$, Markov's inequality at the tolerance in \eqref{eq:pfaffian_source_tolerances} gives failure probability at most $\theta_k$. When $b_k=0$, both reconstruction errors vanish almost surely, so the zero tolerance is valid directly. A union bound with the spectral-budget event gives the probability claim. Corollary~\ref{cor:pfaffian_finite_budget} applies pointwise on this event. No independence between the exterior levels is required.
\end{proof}

The source degree $m_k$ and the exterior order $2k$ are distinct indices; a model that links them must provide that relation explicitly. For a finite-chaos level, apply the same estimate to its components and sum their $L^2$ errors; an infinite expansion requires the all-radius tail certificate of Theorem~\ref{thm:main}. If the exterior levels are generated from recovered two-forms rather than observed separately, their finite-operation error follows from Proposition~\ref{prop:cascade}: the exterior product
$\wedge^pE\times\wedge^qE\to\wedge^{p+q}E$
has norm at most $\binom{p+q}p^{1/2}$, by antisymmetric tensor projection. Probability H\"older and the fixed-chaos bounds must also be charged at these operations. These estimates specify the source certificate that enters the spectral inverse.

The means of all $c_k$ alone do not determine the random spectral law. The models $A=J_0$ and $A=ZJ_0$, where $Z=\sqrt{1/2}$ or $\sqrt{3/2}$ equally likely, both satisfy $\E\mathscr D_A(z)=1+z$, but have different spectral laws. Same-sample bounded mixed coordinates use the separate law protocol of Appendix~\ref{sec:obsappendix}.

\subsection{The determinant closure under only a square-mass bound}
The stronger envelope $\rho<2$ excludes square mass escaping to zero. Without it, the observation completion is larger than the genuine spectral image. To describe it, use unnormalized squared values and put
\[
 \mathcal H_S=\{x_1\ge x_2\ge\cdots\ge0:\ \sum_jx_j\le S\},\qquad
 D_x(z)=\prod_j(1+zx_j),\quad S>0.
\]
The Hilbert--Schmidt bound supplies the mass budget, while the completed topology is the locally uniform determinant topology.

\begin{theorem}[Square mass at zero and the exact invariant closure]\label{thm:pfaffian_mass_defect}
The closure of $\{D_x:x\in\mathcal H_S\}$ in the topology of locally uniform convergence is
\begin{equation}\label{eq:pfaffian_defect_class}
 \mathfrak D_S=\left\{e^{\tau z}\prod_j(1+zx_j):
      x\in\mathcal H_S,\quad\tau\ge0,\quad\sum_jx_j+\tau\le S\right\}.
\end{equation}
The parameters $(x,\tau)$ are unique. This is also the coefficient completion of the finite spectra, and coefficientwise and locally uniform convergence agree on the bounded class.

If $D_{x^{(l)}}\to D_{x,\tau}$, then $x_j^{(l)}\to x_j$ for every $j$ and
\begin{equation}\label{eq:pfaffian_zero_mass_formula}
 \tau=\lim_l\sum_jx_j^{(l)}-\sum_jx_j
      =\lim_{M\to\infty}\lim_l\sum_{j>M}x_j^{(l)}.
\end{equation}
For such a convergent sequence the following are equivalent:
$\tau=0$; $\lim_M\sup_l\sum_{j>M}x_j^{(l)}=0$; and
$\norm{x^{(l)}-x}_{\ell^1}\to0$.
\end{theorem}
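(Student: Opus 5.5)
The plan is to treat the determinant as a positive measure problem. For $x\in\mathcal H_S$, use the representation
\[
 \log D_x(z)=\sum_j\log(1+zx_j)=z\int_{[0,S]}\frac{\log(1+zt)}{zt}\,d\mu_x(t),
\]
where $\mu_x=\sum_jx_j\delta_{x_j}$ is the square-mass measure of \eqref{eq:pfaffian_mass_measure}. Its total mass is at most $S$, and the kernel $\log(1+zt)/(zt)$ extends continuously to $t=0$ with value one. Weak-$*$ compactness of measures of mass at most $S$ on $[0,S]$ then gives subsequential limits $\mu$. The atom of $\mu$ at zero produces exactly the factor $e^{\tau z}$ with $\tau=\mu(\{0\})$, and the restriction of $\mu$ to $(0,S]$ remains of the form $\sum_jx_j\delta_{x_j}$. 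This is because, on $[\varepsilon,S]$, the measures have at most $S/\varepsilon$ atoms of weight equal to their location, and a weak limit of such measures is again of this form (cluster merging is impossible since the weight must equal the location). This gives the inclusion of the closure in $\mathfrak D_S$ together with the mass constraint $\sum_jx_j+\tau\le S$.

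For the reverse inclusion, I will approximate $e^{\tau z}$ by $(1+z\tau/L)^L$. This corresponds to appending $L$ values $\tau/L$, which preserves total mass. Then truncate $x$ to finite rank and reorder. Locally uniform convergence follows from the elementary bound
\[
 |\log(1+w)-w|\le |w|^2 \qquad\text{for } |w|\le 1/2 .
\]
The same construction gives the coefficient completion of the finite spectra.

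For uniqueness of $(x,\tau)$, note that the zeros of $e^{\tau z}\prod_j(1+zx_j)$ are exactly $-x_j^{-1}$ with multiplicity, and the exponential contributes none. Hence $x$ is determined. Dividing by the canonical product, which is entire of genus zero, leaves $e^{\tau z}$, so $\tau$ is read off from the derivative at zero. The same Hadamard argument shows the parametrization is injective.

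To see that coefficientwise and locally uniform convergence agree on the class, use the coefficient bound $e_k\le S^k/k!$. This uniform majorant makes the two topologies coincide on the bounded family, by a Vitali/Montel argument in one direction and by Cauchy estimates in the other.

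For the convergence statements, the derivative at zero is the total mass,
\[
 D'_{x^{(l)}}(0)=\sum_jx_j^{(l)}\to\tau+\sum_jx_j .
\]
Convergence $x^{(l)}_j\to x_j$ follows from Hurwitz's theorem applied to the zeros, or from the uniqueness of weak limits of $\mu_{x^{(l)}}$ on $(\varepsilon,S]$. Subtracting the head $\sum_{j\le M}$, which converges for fixed $M$, gives \eqref{eq:pfaffian_zero_mass_formula} once one checks that $\lim_M\sum_{j>M}x_j=0$.

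The equivalences then follow. The first two are immediate from \eqref{eq:pfaffian_zero_mass_formula} once the iterated limit is identified with the $\limsup$, which is done by the monotone-tail argument of Lemma~\ref{lem:monotone_defect}. The implication from $\ell^1$ convergence back to the others is trivial. The implication from the uniform tail condition to $\ell^1$ convergence is a head/tail split.

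The main obstacle I expect is the first step: showing rigorously that weak limits of mass measures on $(0,S]$ keep the atomic ``weight equals location'' form with the correct multiplicities, and that no mass is lost except at zero. I would handle this through the zero-counting/Hurwitz route on each region $[\varepsilon,S]$.
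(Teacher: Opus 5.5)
Your plan is correct, but for the closure inclusion it takes a different route from the paper.

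\textbf{How the two routes differ.} The paper never leaves the sorted coordinates. It extracts coordinatewise limits $x_j$ and a limit $s$ of the total masses, and Fatou gives $\tau=s-\sum_jx_j\ge0$. The bound $x^{(l)}_j\le S/j$ makes the tails of every power sum $p_k$, $k\ge2$, uniformly small, so only $p_1$ can lose mass. The logarithmic expansion on $|z|<1/S$ and Newton's identities then give coefficient convergence, and the majorant $e_k\le S^k/k!$ upgrades this to locally uniform convergence.

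Your weak-$*$ compactness of the square-mass measures is the same mechanism in measure language. The paper itself remarks after the theorem that this topology is weak convergence of $\tau\delta_0+\sum_jx_j\delta_{x_j}$. Your version explains conceptually why the defect is exactly the atom at zero. The price is a structural lemma about atomic limits.

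\textbf{Two corrections to that lemma.} First, cluster merging is \emph{not} impossible: $x^{(l)}=(1+1/l,1)$ gives $\mu_{x^{(l)}}\to2\delta_1$. What preserves the form $\sum_jx_j\delta_{x_j}$ is that a limiting atom at $y>0$ has weight $ky$ with $k$ an integer, and $k$ is its multiplicity. You must also choose window endpoints $\varepsilon$ that are not atoms of $\mu$, so that no atom crosses the boundary of $[\varepsilon,S]$. Second, $\log(1+zt)/(zt)$ is a continuous test function on $[0,S]$ only for $z\notin(-\infty,-1/S]$. Work on $|z|<1/S$ and conclude by the identity theorem.

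\textbf{A simpler fix for your ``main obstacle''.} Index the atoms by sorted position, as the paper does; this also makes Hurwitz unnecessary. All atoms with $j>M$ lie in $[0,S/(M+1)]$. Hence, for continuous $f$ with modulus of continuity $\omega_f$,
$\int f\,d\mu_{x^{(l)}}=\sum_{j\le M}x^{(l)}_jf(x^{(l)}_j)+f(0)\sum_{j>M}x^{(l)}_j+O\bigl(S\,\omega_f(S/(M+1))\bigr)$.
Letting $l\to\infty$ and then $M\to\infty$ identifies the limit as $\tau\delta_0+\sum_jx_j\delta_{x_j}$ directly. The same subsequence-plus-uniqueness argument gives $x^{(l)}_j\to x_j$ for the full sequence.

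\textbf{The equivalences.} The paper additionally proves that each map $D_{x,\tau}\mapsto x_j$ is continuous on all of $\mathfrak D_S$, for sequences in the closure and not only genuine ones. This lets Lemma~\ref{lem:monotone_defect} apply verbatim to the residuals $r_M=\tau+\sum_{j>M}x_j$ and yields the dense $G_\delta$ and upper-semicontinuity statements. For the three stated equivalences, your finite-exception argument along the given sequence suffices. This is because $\lim_l\sum_{j>M}x^{(l)}_j=\tau+\sum_{j>M}x_j$ is already known for each fixed $M$.
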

\begin{proof}
The mass bound controls every power moment except for the possible loss in the first one. From a sequence in $\mathcal H_S$, extract a subsequence with coordinate limits $x_j$ and total-mass limit $s\in[0,S]$. Fatou gives $\sum_jx_j\le s$; put $\tau=s-\sum_jx_j$. Decreasing order implies $x_j^{(l)}\le S/j$, and hence for $k\ge2$,
\begin{equation}\label{eq:pfaffian_higher_mass_tail}
 \sum_{j>M}(x_j^{(l)})^k
 \le S\left(\frac{S}{M+1}\right)^{k-1}.
\end{equation}
The tails of these higher moments are uniform. Thus $p_k(x^{(l)})\to p_k(x)$ for $k\ge2$, while $p_1(x^{(l)})\to\sum_jx_j+\tau$.

On $|z|<1/S$, the logarithmic determinant expansion is uniformly absolutely convergent on smaller disks. It follows that
\[
 \log D_{x^{(l)}}(z)
 \longrightarrow\tau z+\sum_j\log(1+zx_j).
\]
Equivalently, Newton recursion gives convergence of every determinant coefficient. The common bound $e_k(x^{(l)})\le S^k/k!$ upgrades this coefficient convergence to local uniform convergence on the entire plane. Therefore every possible limit belongs to \eqref{eq:pfaffian_defect_class}.

Conversely, given $(x,\tau)$ in that class, take the first $l$ values of $x$ together with $l$ copies of $\tau/l$, and sort the resulting finite spectrum. Its mass is at most $S$ and its determinant is
\[
 \prod_{j\le l}(1+zx_j)(1+\tau z/l)^l
 \longrightarrow e^{\tau z}\prod_j(1+zx_j)
\]
locally uniformly. This proves the exact closure and finite-spectrum density. The same coefficient majorant holds on the closure, so coefficientwise convergence there implies local uniform convergence. The reverse implication is Cauchy's coefficient formula. In particular this is the completion in either of the stated data topologies.

The nonzero zeros determine all positive $x_j$ with multiplicity. The derivative at zero then determines $\tau$ through $D'_{x,\tau}(0)=\sum_jx_j+\tau$. Thus the parameters are unique.

We also need continuity of a finite spectral head on the entire closure, not only along genuine spectra. Suppose $D_{x^{(l)},\tau_l}$ converges in $\mathfrak D_S$. Extract a subsequence with coordinate limits $x_j$ and total-mass limit $s=\lim_l(\tau_l+\sum_jx_j^{(l)})$. The uniform higher-moment bound \eqref{eq:pfaffian_higher_mass_tail} still applies. In the logarithmic expansion the first coefficient tends to $s$ and every higher coefficient tends to the power sum of $x$. The earlier coefficient-majorant argument therefore identifies the limit as $D_{x,s-\sum_jx_j}$. Uniqueness forces these same coordinate limits for every extracted subsequence. Hence each map $D_{x,\tau}\mapsto x_j$ is continuous on $\mathfrak D_S$, as is its total mass $D'_{x,\tau}(0)$. For a genuine approximating sequence, subtracting any finite head gives \eqref{eq:pfaffian_zero_mass_formula}.

Define the continuous residuals
\begin{equation}\label{eq:spectral_monotone_defect}
 r_M(D_{x,\tau})=D'_{x,\tau}(0)-\sum_{j\le M}x_j
               =\tau+\sum_{j>M}x_j.
\end{equation}
They lie in $[0,S]$ and decrease to $\tau$. Lemma~\ref{lem:monotone_defect} consequently identifies vanishing of the limit defect with uniform tail tightness of any convergent genuine sequence. Coordinate convergence and the common tail give $\ell^1$ convergence by a head--tail decomposition. Conversely $\ell^1$ convergence preserves the total mass and forces $\tau=0$. This proves all three equivalences. The same residuals show that the genuine spectral image is a dense $G_\delta$ subset of this determinant closure and that $\tau$ is upper semicontinuous there.
\end{proof}

For example, $x_j^{(L)}=L^{-1}\mathbf1_{j\le L}$ has determinant
$(1+z/L)^L\to e^z$, which corresponds to $(x,\tau)=(0,1)$. A genuine discrete spectrum with determinant $e^z$ would require $p_1=1$ and $p_2=0$, which is impossible. Thus $\tau$ records square mass concentrated at the spectral point zero.

The product form in \eqref{eq:pfaffian_defect_class} is classical Laguerre--P\'olya type~I structure~\cite{NguyenVishnyakova}. Under the fixed mass budget, this product representation identifies the observation completion and the criterion selecting genuine spectra. Equivalently, this topology is weak convergence of the measures
$\tau\delta_0+\sum_jx_j\delta_{x_j}$ on $[0,S]$: Newton's identities identify their polynomial moments, and polynomial approximation identifies weak convergence. This topology permits transfer of mass into the zero atom: the preceding finite spectra have $\tau_L=0$, whereas their limit has $\tau=1$. Hence the coordinates $(x,\tau)$ do not carry the product topology.
For $a\in\mathcal K_\rho(C)$,
\[
 \sum_{j>M}a_j^2\le\frac{C^2\rho}{2-\rho}M^{1-2/\rho}\longrightarrow0
\]
uniformly, so the defect is excluded. This supplies the true-image step in \eqref{eq:pfaffian_completed_iso}.

\subsection{Cross-invariants and operations on the quotient}\label{subsec:pfaffian_cross}
Scalar spectral towers discard relative eigenplanes. A common-frame protocol retains them sufficiently to generate the difference before taking its invariants. For real skew $A,B\in\Sch_2(E)$, define
\[
 U_{n,k}=F_k(\alpha_A)\wedge F_{n-k}(\alpha_B),\qquad
 G_n(A,B)_{kl}=\ip{U_{n,k}}{U_{n,l}},\qquad s_k=(-1)^{n-k}.
\]
\begin{proposition}[Polarized exterior differences]\label{prop:pfaffian_cross_difference}
If these common-frame cross-Gram matrices are observed, then
\begin{align}
 \sum_{k=0}^ns_kU_{n,k}&=F_n(\alpha_{A-B}),&
 \norm{F_n(\alpha_{A-B})}^2&=s^TG_n(A,B)s,\label{eq:pfaffian_difference_identity}\\
 \norm{A-B}_{\rho,\infty}
 &\asymp_\rho\sup_{n\ge1}(n!)^{1/(\rho n)}
                    (s^TG_n(A,B)s)^{1/(2n)},\quad 0<\rho<2.
 \label{eq:pfaffian_pair_gauge}
\end{align}
The two sides in the second statement may both be infinite.
\end{proposition}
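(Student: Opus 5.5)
The plan has three steps: an algebraic binomial expansion, a Gram expansion of the resulting norm, and a transfer of the one-object gauge of Proposition~\ref{prop:pfaffian_gauges} to the difference $A-B$.

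First, we prove the difference identity. Since $A\mapsto\alpha_A$ is linear, $\alpha_{A-B}=\alpha_A-\alpha_B$. Two-forms commute in the exterior algebra, so the binomial theorem applies:
\[
 (\alpha_A-\alpha_B)^{\wedge n}=\sum_{k=0}^n\binom nk(-1)^{n-k}\alpha_A^{\wedge k}\wedge\alpha_B^{\wedge(n-k)}.
\]
Multiplying by $2^n/n!$ and using $2^n/(k!(n-k)!)=(2^k/k!)(2^{n-k}/(n-k)!)$ turns each summand into $s_kU_{n,k}$. This gives the first identity in \eqref{eq:pfaffian_difference_identity}. The equation holds on finite-rank skew operators. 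It extends to $\Sch_2$ by continuity of fixed-degree exterior multiplication (Lemma~\ref{lem:pfaffian_exterior}).

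Second, we expand the squared norm of this finite real linear combination. Because the coefficients $s_k$ are real, the norm is the quadratic form $s^TG_n(A,B)s$. This yields the second identity.

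Third, we obtain the gauge equivalence by applying Proposition~\ref{prop:pfaffian_gauges} to the skew Hilbert--Schmidt operator $A-B$ with its compatible tower $F_n(\alpha_{A-B})$. That proposition gives
\[
 \mathfrak F_\rho(A-B)=\sup_n\big[(n!)^{1/\rho}\norm{F_n(\alpha_{A-B})}\big]^{1/n}\asymp_\rho W_\rho(a(A-B)),
\]
and $\norm{F_n(\alpha_{A-B})}=(s^TG_ns)^{1/2}$, which is exactly the displayed right side of \eqref{eq:pfaffian_pair_gauge}. It remains to identify $W_\rho(a(A-B))$ with $\norm{A-B}_{\rho,\infty}$, which is the only step needing care. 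Each positive skew block value occurs twice among the ordinary singular values, so $s_{2j-1}=s_{2j}=a_j$. The quantity $\sup_n n^{1/\rho}s_n$ therefore lies between $\sup_j(2j-1)^{1/\rho}a_j$ and $\sup_j(2j)^{1/\rho}a_j$. Both are comparable to $W_\rho$ within the factor $2^{1/\rho}$. If the paper's weak quasinorm is an equivalent rearrangement formula, this changes only the $\rho$-dependent constants.

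The infinite cases are consistent: the gauge comparison in Proposition~\ref{prop:pfaffian_gauges} was stated with infinite values allowed, so both sides of \eqref{eq:pfaffian_pair_gauge} are infinite together. The main obstacle is bookkeeping rather than new analysis: verifying the normalization $2^k/k!$ in $F_k$ so that the binomial coefficients cancel exactly, and handling the doubled singular values in the quasinorm constant.
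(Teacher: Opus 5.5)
Your proposal is correct and follows the paper's proof essentially step for step. The paper also uses the binomial expansion of commuting two-forms, where the normalization $2^k/k!$ absorbs the binomial coefficients, and then expands the real Gram quadratic form. It then applies Proposition~\ref{prop:pfaffian_gauges} to the compatible tower of $A-B$ and converts $W_\rho$ to the weak-ideal norm through the doubled skew multiplicities. Your explicit comparison of $\sup_n n^{1/\rho}s_n(A-B)$ with $W_\rho$ fills in the last step, which the paper states only in words. In fact that supremum equals $2^{1/\rho}W_\rho$, consistent with \eqref{eq:pfaffian_canonical_distance}.
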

\begin{proof}
The commuting two-form binomial formula, with $F_k(\alpha)=2^k\alpha^{\wedge k}/k!$, gives the first identity, including every binomial coefficient. Expanding its real squared norm gives the Gram quadratic form. This is a genuinely compatible tower for $A-B$, so Proposition~\ref{prop:pfaffian_gauges} applies to the difference. The doubled skew multiplicities convert $W_\rho$ to the operator weak-ideal norm.
\end{proof}

At a fixed level, an operator-norm error $\eta_n$ in $G_n$ gives squared-reading error at most $(n+1)\eta_n$, since $\norm s_2^2=n+1$. Passage to the supremum over all levels requires a joint noise and tail budget. The cross-Gram data retain relative eigenplane information that is absent from the two scalar determinants $\mathscr D_A$ and $\mathscr D_B$.

Only operations compatible with the spectral equivalence relation descend to the scalar spectral quotient. For example $J_0\sim_{\rm sp}-J_0$, but adding $J_0$ gives $2J_0$ and zero. Orthogonal direct sum does descend:
\[
 \mathscr D_{a\sqcup b}(z)=\mathscr D_a(z)\mathscr D_b(z).
\]
In the defect completion it sends $(x,\tau),(y,\sigma)$ to
$(\operatorname{sort}(x\cup y),\tau+\sigma)$, with added mass budgets. Local uniform multiplication is continuous, so this operation extends. Thus orthogonal direct sum is a continuous quotient operation, whereas \eqref{eq:pfaffian_pair_gauge} uses the stronger common-frame pair protocol.

The exterior observation mechanism therefore has three distinct completion regimes: a true strictly subcritical spectral completion, a critical completion on relative or asymptotic tail classes, and a determinant completion carrying square mass at zero.

\section{Geometric realizations and Wick operator systems}\label{sec:realizations}\label{sec:wickpipeline}\label{sec:completioncomparison}
The reconstruction theory has infinite-dimensional realizations with explicit source and physical tails. We begin with an all-degree noncommutative block model. Covariance geometry then constructs Wick operators on closed manifolds, and finite source-labelled observations recover their generated polynomial algebras and common-noise evolutions.

\subsection{An all-degree noncommutative model}
On $\ell^2(\N)\otimes\C^2$, let
\[
 X=\begin{pmatrix}0&1\\1&0\end{pmatrix},\qquad
 Z=\begin{pmatrix}1&0\\0&-1\end{pmatrix},
\]
and write $X_j,Z_j$ for their copies on the $j$th physical block. For independent real standard Gaussians $g_j$, define
\begin{equation}\label{eq:fullmodel}
 F_X=\bigoplus_{j\ge1}2^{-j}e^{g_j-1/2}X,\qquad
 F_Z=\bigoplus_{j\ge1}2^{-j}e^{g_j-1/2}Z.
\end{equation}
Their kernels are
\[
 K_m^X=\frac1{m!}\sum_{j\ge1}2^{-j}e_j^{\otimes m}\otimes X_j,
 \qquad K_m^Z=\frac1{m!}\sum_{j\ge1}2^{-j}e_j^{\otimes m}\otimes Z_j.
\]
Every cut has nonzero singular values $2^{-j}/m!$, each twice. For $s\ge2$ this immediately computes the original norm. For $s<2$, the single-cut upper bound and dual finite block-supported tests give the matching lower bound. Thus
\begin{equation}\label{eq:modelnorm}
 \norm{K_m^X}_{\Wc_{m,s}}=\norm{K_m^Z}_{\Wc_{m,s}}=\frac{C_s}{m!},
 \quad C_s=\left(\frac2{2^s-1}\right)^{1/s},\qquad
 \Nn\rho{K^X}=C_s\mathcal A(\rho),
\end{equation}
where $\mathcal A(u)=\sum_{m\ge0}u^m/\sqrt{m!}$ is finite for every $u$. These are elements of $\Alg_s$ for every $1\le s<\infty$.

At the Hilbert exponent, $\E\norm{F_X}_2^2=2e/3$. The operators are almost surely compact, bounded, and infinite-rank, while the first lognormal block excludes a deterministic samplewise operator bound. Their commutator is
\begin{equation}\label{eq:modelcommutator}
 [F_X,F_Z]=2\bigoplus_{j\ge1}4^{-j}e^{2g_j-1}XZ\ne0.
\end{equation}
For the normalized faithful anchor $R|_{E_j}=\sqrt{3/2}\,2^{-j}I_2$, retain the first $J$ source and physical blocks and degrees $m\le M$. With $\mathcal A_M(u)=\sum_{m=0}^Mu^m/\sqrt{m!}$,
\begin{equation}\label{eq:modeltail}
 \Nn\rho{K^X-P_{M,2J,2J,J}K^X}
 =C_s\big[2^{-J}\mathcal A_M(\rho)+\mathcal A(\rho)-\mathcal A_M(\rho)\big].
\end{equation}
The physical inverse costs $(2/3)4^J$. For Hilbert output radius $\sigma$, choose $\rho=\sqrt{1+2\sigma^2}$. If each input has coefficient error at most $\varepsilon$ at this radius, the commutator error is at most
$4\sqrt{2/3}\mathcal A(\rho)\varepsilon+2\varepsilon^2$.
Thus the model has explicit observation, truncation, and nonlinear-target budgets.

\subsection{Local-to-global assembly}
Let $A_{\lambda\mu}$ be operator blocks with orthogonal physical row and column supports. For $1\le s<\infty$,
\begin{equation}\label{eq:blockineq}
 \left\|\sum_{\lambda,\mu}A_{\lambda\mu}\right\|_s
 \le\left(\sum_{\lambda,\mu}\norm{A_{\lambda\mu}}_s^{t_s}\right)^{1/t_s},
 \qquad t_s=\min(s,2).
\end{equation}
To prove it, write $C_\mu=\sum_\lambda A_{\lambda\mu}$ and use
$AA^*=\sum_\mu C_\mu C_\mu^*$ and
$C_\mu^*C_\mu=\sum_\lambda A_{\lambda\mu}^*A_{\lambda\mu}$.
For $s\ge2$, apply the triangle inequality twice in $\Sch_{s/2}$. For $s<2$, apply trace subadditivity of the power $s/2$ to the positive sums. These are standard partitioned-operator estimates \cite{BK,BLP,Simon}.

A matching is a set of blocks with no repeated row or column. On a matching the $s$th powers of the Schatten norms add exactly. For a partition $I=\bigsqcup_\theta I_\theta$ into matchings, define
\begin{equation}\label{eq:matchinglattice}
 \norm b_{\mathsf M_s}=\sum_\theta\left(\sum_{i\in I_\theta}|b_i|^s\right)^{1/s}.
\end{equation}
This solid sequence norm controls block assembly and is order continuous: first retain finitely many outer summands, and then truncate within each $\ell^s$ summand. The universal exponent in \eqref{eq:blockineq} is optimal. Diagonal scalar blocks require $t\le s$, and the $N\times N$ all-ones matrix requires $t\le2$.

\begin{proposition}[Order-continuous assembly]\label{prop:assembly}
Suppose an order-continuous solid Banach sequence lattice $\mathsf E(I)$ controls finite block sums in every directed cut. If local kernels $K_i$ have majorant
$b_i=\max_S\norm{\Flat_S(K_i)}_s\in\mathsf E$,
then their sum converges unconditionally in the maximum-cut completion and hence in $\Coeff_{m,s}$. If $K_{n;i}$ are Cauchy in each block and share this majorant, their full kernels and same-source Gaussian evaluations are Cauchy. A finite-head bound $\delta_nG(R)$ and majorant tail $\eta(R)$ give
\begin{equation}\label{eq:localtailmodulus}
 C_{\mathsf E}\inf_R\{\delta_nG(R)+2\eta(R)\}
\end{equation}
as a coefficient bound, multiplied by the fixed-degree forward constant for evaluation.
\end{proposition}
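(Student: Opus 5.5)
The argument has three steps. First, reduce to the maximum-cut (high-exponent) completion. Then show that partial sums are Cauchy in each directed cut, using the lattice hypothesis and order continuity. Finally, transfer the conclusion to Gaussian evaluation.

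Fix a directed cut $S$. The hypothesis says that, for every finite $F\subset I$,
\[
 \Big\|\Flat_S\Big(\sum_{i\in F}K_i\Big)\Big\|_s\le C_{\mathsf E}\norm{(b_i\mathbf 1_F(i))_i}_{\mathsf E}.
\]
For finite $F\subset F'$, apply this to $F'\setminus F$. The difference of the partial sums then has $S$-cut norm at most $C_{\mathsf E}\norm{b\mathbf 1_{F'\setminus F}}_{\mathsf E}$. Order continuity of $\mathsf E$ makes $\norm{b\mathbf 1_{I\setminus F}}_{\mathsf E}\to0$ along the net of finite sets. Solidity bounds $\norm{b\mathbf 1_{F'\setminus F}}\le\norm{b\mathbf 1_{I\setminus F}}$. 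Hence the finite partial sums form a Cauchy net in each of the finitely many cut norms, and so in their maximum. This gives unconditional convergence in the maximum-cut completion.

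To reach $\Coeff_{m,s}$, split by exponent. For $s>2$ this completion is $\Coeff_{m,s}$ itself. For $s=2$ every cut norm is the Hilbert norm. For $s<2$, a single-cut decomposition is admissible in the quotient-sum norm, so $\norm{\cdot}_{\Coeff_{m,s}}\le\norm{\Flat_S(\cdot)}_s$ for any fixed $S$. Convergence in one cut therefore already gives convergence in $\Coeff_{m,s}$, and the maximum-cut completion maps contractively into $\Coeff_{m,s}$.

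For the Cauchy statement in $n$, fix $R$. Split the index set into a finite head $H_R$ and a tail. On the head, blockwise Cauchyness gives a bound $\delta_nG(R)$ after assembly; I read $G(R)$ as absorbing $C_{\mathsf E}$ times the lattice norm of the indicator of the head. On the tail, both $K_{n;i}$ and $K_{n';i}$ are dominated by the common majorant. The assembly bound applied to each and the triangle inequality give at most $2C_{\mathsf E}\eta(R)$, with $\eta(R)=\norm{b\mathbf 1_{I\setminus H_R}}_{\mathsf E}$. Taking the infimum over $R$ yields \eqref{eq:localtailmodulus}, and this tends to zero by order continuity.

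Evaluation then follows from the fixed-degree forward bound $\norm{I_mK}_{L^2(\Sch_s)}\le u_s^m\sqrt{m!}\norm K_{\Wc_{m,s}}$ of Theorem~\ref{thm:wickrecover}, after symmetrization, which is contractive. This bound sends Cauchy kernels to Cauchy evaluations with the stated multiplied modulus.

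The main obstacle I expect is making precise the hypothesis that $\mathsf E$ controls finite block sums for \emph{tensor kernels with cut-dependent supports}. In a given cut, the local kernels must occupy orthogonal row and column supports so that \eqref{eq:blockineq} or the matching norm \eqref{eq:matchinglattice} applies. The proposition takes this as its hypothesis, so the work lies only in keeping the constant $C_{\mathsf E}$ uniform over cuts and subsets. That uniformity is exactly what the solidity of $\mathsf E$ provides.
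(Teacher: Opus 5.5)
Your proposal is correct and follows essentially the same route as the paper. Both arguments make the partial sums Cauchy in each of the finitely many cuts via the lattice bound, solidity and order continuity. Both then compare the maximum-cut norm with the quotient-sum norm for $s<2$, use a common finite head whose two discarded tails cost $2C_{\mathsf E}\eta(R)$, and finish with continuity of fixed-degree evaluation. The only difference is bookkeeping: \eqref{eq:localtailmodulus} already carries $C_{\mathsf E}$ outside, so the natural reading is $G(R)=\norm{\mathbf 1_{H_R}}_{\mathsf E}$ rather than a $G(R)$ that absorbs $C_{\mathsf E}$.
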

\begin{proof}
For a finite set $F\subset I$, the assumed block estimate bounds every finite sub-sum over $I\setminus F$ by $C_{\mathsf E}\norm{b\mathbf1_{I\setminus F}}_{\mathsf E}$ in each cut. Order continuity makes this bound tend to zero as $F$ exhausts the index set. Thus the net of finite sub-sums is Cauchy simultaneously in all the finitely many cut norms. Their compatible completion gives one limiting kernel, independently of summation order. For $s<2$, the maximum-cut norm dominates the quotient-sum norm; for $s\ge2$ it is the original coefficient norm.

For a varying sequence, choose a common finite set making its majorant tail small before using convergence of the individual blocks. The finitely many retained blocks are Cauchy, and the two discarded tails contribute at most $2C_{\mathsf E}\eta(R)$. A finite-head bound $\delta_nG(R)$ therefore gives \eqref{eq:localtailmodulus}. Finally the fixed-degree Gaussian evaluation is continuous in the original coefficient norm, so the same Cauchy conclusion and its stated forward cost hold for the random responses.
\end{proof}

At $s<2$, one can instead use the physical corner rank $d_i$ and the weaker majorant
$b_i=d_i^{1/s-1/2}\norm{K_i}_2$.
On each evaluated block,
$\norm{I_mK_i}_s\le d_i^{1/s-1/2}\norm{I_mK_i}_2$.
For a matching and probability exponent $p\ge s$, Minkowski in $L^{p/s}$ gives the $\ell^s$ sum of the block $L^p(\Sch_s)$ norms. For smaller $p$, first increase it to $\max(p,s,2)$. Fixed-degree Gaussian moment comparison controls these block norms by $b_i$. Thus $\ell^s$ or $\mathsf M_s$ summability constructs the random response, and the fixed-degree inverse returns its kernel in the original quotient-sum space. Finitely overlapping spectral blocks can be separated into finitely many orthogonal colors.

\subsection{Covariance--Gram estimates}
Let $(X,\mu)$ be $\sigma$-finite, with strongly measurable vectors $\eta_x$ in a real Hilbert source, $a_x\in\cE$, and $b_x\in\cC$. Assume for $0\le j\le m$ that
\[
 \int\norm{\eta_x}^{2j}\norm{a_x}^2\,d\mu(x)<\infty,
 \qquad
 \int\norm{\eta_x}^{2j}\norm{b_x}^2\,d\mu(x)<\infty.
\]
Then $K=\int\eta_x^{\otimes m}\otimes\bar b_x\otimes a_x\,d\mu(x)$ is a Hilbert-tensor Bochner integral. Write
$\gamma(x,y)=\ip{\eta_x}{\eta_y}$,
$A(x,y)=\ip{a_x}{a_y}$, and $B(x,y)=\ip{b_x}{b_y}$.
A cut with $j$ input source legs factors as $\Flat_SK=UV^*$, with columns
$u_x=\eta_x^{\otimes(m-j)}\otimes a_x$ and
$d_x=\overline{\eta_x^{\otimes j}}\otimes b_x$.
Consequently
\begin{equation}\label{eq:covgram}
 \norm K_2^2=\int_{X^2}\gamma^m A\bar B\,d\mu^2,
 \qquad \ip{u_x}{u_y}=\gamma^{m-j}A,
 \qquad \ip{d_x}{d_y}=\bar\gamma^{\,j}B.
\end{equation}
Suppose $\int|\gamma|^m|A||B|\le H_0^2$ and the nonnegative kernels obey two-sided Schur bounds
\[
 \norm{|\gamma|^j|A|}_{\rm Schur}\le L_j^2,
 \qquad \norm{|\gamma|^j|B|}_{\rm Schur}\le R_j^2.
\]
The Schur test on the two column Grams gives $\norm{UV^*}\le L_{m-j}R_j$. Interpolation with the Hilbert--Schmidt bound yields
\begin{equation}\label{eq:covcut}
 \norm{\Flat_SK}_s\le H_0^{2/s}(L_{m-j}R_j)^{1-2/s},\qquad s\ge2.
\end{equation}

For two source kernels with the same $a_x,b_x$, let
\[
 \gamma_{n,n'}(x,y)=\ip{\eta_{n,x}}{\eta_{n',y}},\qquad
 D_{n,n'}=\gamma_{n,n}^m+\gamma_{n',n'}^m-\gamma_{n,n'}^m-\gamma_{n',n}^m.
\]
The four terms are exactly the paired expansion of the kernel difference. If $\varepsilon_{n,n'}=\int|D_{n,n'}||A||B|$, then
\begin{equation}\label{eq:covcutdiff}
 \norm{\Flat_S(K_n-K_{n'})}_s
 \le\varepsilon_{n,n'}^{1/s}
 (L_{n,m-j}R_{n,j}+L_{n',m-j}R_{n',j})^{1-2/s}.
\end{equation}
Mixed covariances retain the specified joint source realization. For each finite $s$, this estimate can give Schatten convergence, which also implies operator-norm convergence. The direct $s=\infty$ version loses the small factor $\varepsilon_{n,n'}^{1/s}$ and therefore supplies no convergence from the operator endpoint bound alone.

A change of source is governed by relative covariance. If $S:\mathcal A\to H$ is bounded, $Q=S^*S$, and $U$ is unitary on $\mathcal A$, then there is a bounded $V$ on $\overline{\ran S}$ satisfying $SU=VS$ if and only if
\begin{equation}\label{eq:relativecovariance}
 U^*QU\le c^2Q\quad\text{for some }c<\infty,
 \qquad \inf c=\norm V.
\end{equation}
Define $V(Sx)=SUx$ on $\ran S$; the quadratic inequality gives well-definedness, boundedness, and the exact norm. Applying this map on a source leg costs $\norm V$ in every directed cut; multiple legs multiply the costs. For real Gaussian evaluation the maps preserve the real source structure. Circular Gaussian contraction comparisons can also be obtained by adding independent complementary covariance and applying conditional Jensen.

\subsection{Uniform spectral localization on closed manifolds}
Let $(M,g)$ be a smooth compact boundaryless $d$-manifold. Set
\[
 \Lambda=(1-\Delta_g)^{1/2},\qquad \ell_R=\log(e+R),\qquad
 \mathscr L=\log(e+\Lambda).
\]
Fix a real eigenbasis and a smooth real dyadic partition of unity $\Delta_L=\rho_L(\Lambda)$, $L\in\{1,2,4,\ldots\}$, with fixed enlarged shell supports, finite overlap, and
$\sup_{L,\lambda}L^j|\partial_\lambda^j\rho_L(\lambda)|<\infty$ for each $j$.

\begin{lemma}[A uniform kernel estimate]\label{lem:spectralkernel}
Let $\beta_R$ be smooth and supported in $[cR,CR]$, $R\ge2$, with fixed $0<c<C$. For $J\ge0$ and differential operators $D,E$ of orders $q_x,q_y$, there is a finite integer $K$ such that the kernel of $\beta_R(\Lambda)$ satisfies
\begin{equation}\label{eq:spectralkernel}
 |D_xE_yK_R(x,y)|\le C_{J,D,E}
 \left(\max_{j\le K}\sup_\lambda R^j|\beta_R^{(j)}(\lambda)|\right)
 R^{d+q_x+q_y}(1+Rd_g(x,y))^{-J}.
\end{equation}
The corresponding two-sided Schur estimates hold. If the $y$-spectrum of $F$ lies in an enlarged $R$-shell, then
$\norm{E_yF}_{L^2(M^2)}\le CR^{q_y}\norm F_{L^2(M^2)}$.
\end{lemma}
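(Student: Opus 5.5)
The estimate is a standard consequence of the functional calculus of elliptic pseudodifferential operators, and I would obtain it by rescaling to a unit-frequency Schwartz function. Write $\beta_R(\lambda)=\phi_R(\lambda/R)$, where $\phi_R$ is supported in $[c,C]$ and satisfies
\[
 \sup_\lambda|\phi_R^{(j)}|=\sup_\lambda R^j|\beta_R^{(j)}(\lambda)|.
\]
Every constant below will be linear in finitely many of these seminorms. I would proceed in three steps, in the following order.

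\emph{Step 1: half-wave representation.} Extend $\phi_R$ evenly and write
\[
 \beta_R(\Lambda)=\frac{1}{2\pi}\int\widehat{\phi_R}(t)\cos(t\Lambda/R)\,dt .
\]
I would split $\widehat\phi_R=\chi\widehat\phi_R+(1-\chi)\widehat\phi_R$, where $\chi$ is supported in $|t|<\varepsilon_0$ and $\varepsilon_0$ is below the injectivity radius.

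For the near part, $|t|/R$ is small. Finite propagation speed makes the kernel supported in $d_g(x,y)\le\varepsilon_0 R^{-1}\cdot R$. A Hadamard/Lax parametrix for $\cos(s\Lambda)$ in normal coordinates, used for small $s$, then shows that this piece is an oscillatory integral
\[
 \int e^{i\langle x-y,\xi\rangle}a(x,y,\xi/R)\,d\xi ,
\]
where $a$ is a symbol of order zero up to smoothing remainders. Integrating by parts $J$ times in $\xi$ gives the factor $(1+R|x-y|)^{-J}$. The volume of the $\xi$-support, of size $R^d$, gives $R^d$. Each $x$ or $y$ derivative falls on the phase and costs $R$, which gives $R^{q_x+q_y}$.

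For the far part, $|t|\ge\varepsilon_0$. The rapid decay of $(1-\chi)\widehat\phi_R$ means this piece equals $\psi_R(\Lambda)$ with $\psi_R(\lambda)=O((\lambda/R)^{-N}+\dots)$ and all derivatives small. I would bound its kernel crudely with the Sobolev embedding $|D_xE_yK(x,y)|\le\norm{\Lambda^{s}\psi_R(\Lambda)\Lambda^{s'}}_{L^2\to L^2}$ for $s,s'>d/2+q$. Since $(1+Rd)^{-J}\ge R^{-J}$ on a compact manifold, it suffices to show that this piece is $O(R^{-N})$ with $N$ large, and that is where the finite integer $K$ of derivatives arises.

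An alternative route I might take instead uses Weyl-law-free local estimates: $\Lambda^{2N}$-commutator bounds in the form $d_g(x,y)^{2J}K_R$, controlled by iterated commutators $[\,\cdot\,,\beta_R(\Lambda)]$ via Helffer–Sjöstrand. I regard this as the more robust choice, because the almost-analytic extension of $\beta_R$ uses exactly finitely many rescaled derivatives. Each commutator with a multiplication by a smooth function gains $R^{-1}$. The endpoint $L^2\to L^\infty$ bound $\norm{1_{[0,2CR]}(\Lambda)}_{L^2\to L^\infty}\lesssim R^{d/2}$ comes from the local Weyl law, and together these give $R^{d}(Rd_g)^{-J}$.

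\emph{Step 2: Schur estimates.} These follow by integrating the pointwise bound. Since $J>d$,
\[
 \sup_x\int R^{d+q}(1+Rd_g(x,y))^{-J}\,dy\lesssim R^{q}.
\]

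\emph{Step 3: the $L^2$ statement.} If the $y$-spectrum of $F$ lies in an enlarged shell, then $F=F\,\tilde\beta_R(\Lambda)_y$ for a fixed bump $\tilde\beta_R$ equal to one on that shell. Then
\[
 E_yF=F\,(E\tilde\beta_R(\Lambda))^{T},
\]
and $E\tilde\beta_R(\Lambda)$ is bounded on $L^2$ with norm $\lesssim R^{q_y}$, either by the Schur bound from Steps 1–2 or directly from elliptic regularity, using $\norm{E u}\lesssim\norm{\Lambda^{q_y}u}$ and $\Lambda^{q_y}\tilde\beta_R(\Lambda)\lesssim R^{q_y}$.

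I expect the main obstacle to be Step 1. The difficulty is getting the off-diagonal decay uniformly in $R$ with constants depending only on finitely many rescaled seminorms, and doing so globally rather than only in a coordinate patch. I would first try the Helffer–Sjöstrand commutator approach; if it is insufficient, I would fall back on the parametrix for small times.
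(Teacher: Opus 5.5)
Your Steps 2 and 3 are the paper's own arguments. Step 2 integrates the pointwise bound over dyadic annuli with $J>d$. Step 3 factors through $E\colon H^{q_y}\to L^2$ together with $\norm{\Lambda^{q_y}\tilde\beta_R(\Lambda)}\lesssim R^{q_y}$.

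\textbf{Step 1 is mis-scaled as written.} With $\chi$ supported in $|t|<\varepsilon_0$ for a fixed $\varepsilon_0$, the near piece propagates only for time $|t|/R<\varepsilon_0/R$. Its kernel lies in $d_g\le\varepsilon_0/R$ and needs no parametrix. Meanwhile $(1-\chi)\widehat{\phi_R}$ is not small: $\widehat{\phi_R}$ has size one for $|t|\lesssim1$. So the far piece is essentially $\beta_R(\Lambda)$ itself, and the crude Sobolev bound gives only $R^{s+s'}$ with $s,s'>d/2$.

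\textbf{The fix.} Make the cut at $|t|\approx\varepsilon_0R$, i.e.\ use $\chi(t/R)$.
\begin{itemize}
\item The near piece $\frac{R}{2\pi}\int\chi(s)\widehat{\phi_R}(Rs)\cos(s\Lambda)\,ds$ then has kernel in $d_g\le\varepsilon_0$, by finite propagation speed for $\partial_s^2+1-\Delta_g$. There the short-time Hadamard parametrix applies. Its amplitudes must be Schwartz in $\xi/R$, not merely of order zero, or the $R^d$ volume count fails.
\item The far multiplier $\psi_R=\phi_R-\phi_R*(R\check\chi(R\,\cdot))$ satisfies $|\psi_R(\mu)|\lesssim R^{1-N}\langle\mu\rangle^{-N}\max_{j\le N+2}\sup|\phi_R^{(j)}|$. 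For bounded $\mu$ this comes from $\int_{|t|\gtrsim\varepsilon_0R}|\widehat{\phi_R}|\,dt$. For $|\mu|\ge2C$ it comes from the compact support of $\phi_R$ and the Schwartz decay of $\check\chi$.
\item Both the smallness in $R$ and the decay in $\mu$ are needed to get $\norm{\Lambda^s\psi_R(\Lambda/R)\Lambda^{s'}}\lesssim R^{s+s'+1-N}$. This is where your finite $K$ arises.
\end{itemize}

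\textbf{Your preferred commutator variant has a comparable missing step.} The operator $\operatorname{ad}_f^J(\beta_R(\Lambda))$ is no longer a function of $\Lambda$. So the local Weyl bound cannot be applied directly on both sides. You need weighted estimates such as $\norm{(1+\Lambda^2/R^2)^M\operatorname{ad}_f^J(\beta_R(\Lambda))(1+\Lambda^2/R^2)^M}\lesssim R^{-J}$ with $4M>d$. These combine with $\norm{(1+\Lambda^2/R^2)^{-M}}_{L^2\to L^\infty}\lesssim R^{d/2}$.

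\textbf{Comparison with the paper.} With the correction, your half-wave route is a valid proof, but a genuinely different one. The paper uses no wave propagation. It sets $h=R^{-1}$ and $P_h=-h^2\Delta_g$, and writes $\beta_R(\Lambda)=\theta_h(P_h)$ with $\theta_h$ supported in a fixed compact positive energy interval. It then applies a semiclassical functional-calculus parametrix in a fixed atlas:
\begin{itemize}
\item the resolvent-symbol recursion plus Helffer--Sj\"ostrand makes each symbol term a finite sum $\sum_j c_{\ell,j}\theta^{(j)}(p)$;
\item uniformity of the remainder over the moving family $\theta_h$ comes from a Banach--Steinhaus argument on the Fr\'echet space of multipliers with that common support;
\item off-diagonal decay comes from integrating by parts in $\xi$;
\item the remainder kernel is handled by Sobolev embedding.
\end{itemize}
Your route gets localization almost for free: the near piece has exact support and the far piece is $O(R^{-\infty})$. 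It also makes the finite-derivative dependence explicit through the Fourier decay of $\widehat{\phi_R}$. The cost is the short-time parametrix for the Klein--Gordon propagator $\cos(s\Lambda)$, an injectivity-radius restriction, and some care with low-frequency Hadamard terms. The paper's route avoids all wave-equation input. Its price is tracking the multiplier dependence of a pseudodifferential remainder, which its Banach--Steinhaus step supplies.
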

\begin{proof}
We keep track of the multiplier dependence in the local semiclassical construction of \cite[Proposition~2.2(i), equation~(2.12), and Remark~2.3]{ORT}.

\emph{Fixed energy support.} Put $h=R^{-1}$, $P_h=-h^2\Delta_g$, and $b_h(t)=\beta_R(Rt)$. For sufficiently small $h$, choose a fixed $\omega\in C_c^\infty((0,\infty))$ equal to one on $[c^2/2,2C^2]$, and define
\[
 \theta_h(s)=\omega(s)b_h(\sqrt{h^2+s})\quad(s>0),\qquad \theta_h(s)=0\quad(s\le0).
\]
The spectral theorem gives $\theta_h(P_h)=\beta_R(\Lambda)$. The $\theta_h$ have a common compact positive energy support, and every fixed finite set of their derivatives is bounded by a finite rescaled seminorm of $\beta_R$. The remaining bounded interval of $R$ contains only finitely many possible eigenvalues in the multiplier support and is handled directly.

\emph{Symbols and uniform remainders.} In a fixed finite atlas with fixed left and right cutoffs, expand to order $A$. The coefficients have the form
\[
 a_\ell[\theta](z,\xi)=\sum_jc_{\ell,j}(z,\xi)\theta^{(j)}(p(z,\xi)),
 \qquad p(z,\xi)=g^{ab}(z)\xi_a\xi_b.
\]
The resolvent-symbol recursion starts with $q_0=(w-p)^{-1}$ and has the form
\[
 q_n=-(w-p)^{-1}\!
 \sum_{j+k+|\alpha|=n,\ k<n}
 c_\alpha\partial_\xi^\alpha d_j\,\partial_z^\alpha q_k.
\]
Each term is a finite sum of resolvent powers. The Helffer--Sj\"ostrand formula turns these powers into derivatives of $\theta$, yielding the displayed finite derivative dependence. The coefficients and any fixed set of their derivatives therefore obey a finite seminorm bound.

For fixed $\theta$, the remainder satisfies, for $A\ge2S$,
\[
 \sup_{0<h\le1}h^{-(A-2S)}
 \norm{\mathcal R_{A,h}[\theta]}_{H^{-S}\to H^S}<\infty.
\]
It is linear in $\theta$, and for each fixed $h$ is continuous on the Fr\'echet space of smooth functions with this common compact support: the spectral multiplier is finite-rank and the fixed finite symbol sum is continuous. Banach--Steinhaus applied to the normalized remainder operators makes the displayed supremum bounded by one finite $\theta$ seminorm. This supplies the uniform estimate for the moving family $\theta_h$.

\emph{Kernel bounds.} A local symbol term has kernel
\[
 (2\pi h)^{-d}h^\ell\int e^{i(z-y)\cdot\xi/h}A_{\ell,h}(z,y,\xi)\,d\xi.
\]
Differentiation costs at most $h^{-q_x-q_y}$. Repeated integration by parts in $\xi$ gives
$h^{-d-q_x-q_y}(1+|z-y|/h)^{-J}$,
with a finite seminorm of the amplitude. Distances in the fixed charts are uniformly comparable with geodesic distance on their supports.

Choose $S>d/2+\max(q_x,q_y)$ for the remainder. Differentiated point evaluations are uniformly bounded in $H^{-S}$, and Sobolev embedding on the output gives
\[
 \sup_{x,y}|D_xE_yK_{\mathcal R}(x,y)|
 \le C\norm{\mathcal R}_{H^{-S}\to H^S}.
\]
Taking $A\ge2S+J+d+q_x+q_y+2$ makes this remainder smaller than the required bound even away from the diagonal. A maximum over the finite atlas gives a single finite derivative order $K$, proving \eqref{eq:spectralkernel}.

Finally, integration over the $R^{-1}$ ball and its dyadic annuli gives, for $J>d$,
\[
 \sup_x\int R^d(1+Rd_g(x,y))^{-J}\,dy
 \le C\sum_{k\ge0}2^{(d-J)k}<\infty,
\]
and likewise with $x,y$ interchanged. This proves the Schur bounds. The Bernstein estimate follows from the bounded map $E:H^{q_y}\to L^2$ and the spectral theorem on the enlarged shell.
\end{proof}

Enlarged shells have rank $O(L^d)$, by applying the diagonal kernel bound to a nonnegative smooth multiplier equal to one on the shell. The kernel $\Gamma_{L,a}$ of $\Delta_L^2\Lambda^{-2a}$ therefore satisfies
\begin{equation}\label{eq:columnkernel}
 |\Gamma_{L,a}(x,y)|\lesssim L^{d-2a}(1+Ld_g(x,y))^{-J},\quad
 \norm{\Gamma_{L,a}}_{\rm Schur}\lesssim L^{-2a},\quad
 \norm{\Gamma_{L,a}}_{L^2(M^2)}\lesssim L^{d/2-2a}.
\end{equation}
The last estimate also follows from the rank bound and the spectral size on the shell.

\subsection{Covariance profiles and critical summability}
In a common real Gaussian source, let $\eta_{n,x}$ be smooth finite-dimensional source vectors, and set
$X_n(x)=W(\eta_{n,x})$ and $\Phi_{m,n}(x)=I_m(\eta_{n,x}^{\otimes m})$.
Assume that a nondecreasing $\mathfrak c(R)\ge1$ and an exponent $J>d$ satisfy, for every $0\le j\le m$ and all $n,n',R$,
\begin{equation}\label{eq:covprofileassumption}
 \max\left\{\sup_x\int_M,\ \sup_y\int_M\right\}
 |\gamma_{n,n'}(x,y)|^jR^d(1+Rd_g(x,y))^{-J}
 \le C_j\mathfrak c(R)^j.
\end{equation}
Each integral is in the other spatial variable. Assume also the common-source ultraviolet defect
\begin{equation}\label{eq:UVdefect}
 \varepsilon_{n,n'}^{(m)}=
 \norm{\gamma_{n,n}^m+\gamma_{n',n'}^m-\gamma_{n,n'}^m-\gamma_{n',n}^m}_{L^1(M^2)}
 \longrightarrow0
\end{equation}
as $n,n'\to\infty$.

Use the columns of $\Lambda^{-a}\Delta_L$ and $\Delta_Q\Lambda^{-b}$ to form
$K_{n;L,Q}=\int\eta_{n,x}^{\otimes m}\otimes\bar b_{Q,x}\otimes a_{L,x}\,dx$.
Its exact evaluation is
\[
 I_mK_{n;L,Q}=\Lambda^{-a}\Delta_LM_{\Phi_{m,n}}\Delta_Q\Lambda^{-b}.
\]
For $s\ge2$, \eqref{eq:covcut} and \eqref{eq:columnkernel} give
\begin{align}
 \max_S\norm{\Flat_SK_{n;L,Q}}_s
 &\lesssim\mathfrak c(L\vee Q)^{m/2}(L\wedge Q)^{d/s}L^{-a}Q^{-b},\label{eq:geomprofile}\\
 \max_S\norm{\Flat_S(K_{n;L,Q}-K_{n';L,Q})}_s
 &\lesssim(\varepsilon_{n,n'}^{(m)})^{1/s}
 \mathfrak c(L\vee Q)^{m(1/2-1/s)}L^{d/s-a}Q^{d/s-b}.\label{eq:geomdiff}
\end{align}
For example, when $L\ge Q$, the low-shell pointwise bound and the high-shell covariance profile give
\[
 \norm{K_{n;L,Q}}_2^2\lesssim\mathfrak c(L)^mQ^{d-2b}L^{-2a},\qquad
 \norm{K_{n;L,Q}-K_{n';L,Q}}_2^2
 \lesssim\varepsilon_{n,n'}^{(m)}L^{d-2a}Q^{d-2b}.
\]
The two Schur factors are bounded using $\mathfrak c(L\vee Q)$; interpolation proves the assertions.

Suppose $\mathfrak c(R)^{m/2}\lesssim R^\beta\ell_R^\kappa$ with $\beta,\kappa\ge0$. With outer logarithmic weights $\mathscr L^{-\alpha}$ and $\mathscr L^{-\zeta}$ the common majorant is
\begin{equation}\label{eq:geometricmajorant}
 \mathcal M_{L,Q}=(L\vee Q)^\beta\ell_{L\vee Q}^{\kappa}
 (L\wedge Q)^{d/s}L^{-a}Q^{-b}\ell_L^{-\alpha}\ell_Q^{-\zeta}.
\end{equation}
Each fixed dyadic gap is a matching, up to finitely many colors, so the appropriate assembly lattice is $\mathsf{Gap}_s=\ell^1_h(\ell^s_n)$.

\begin{theorem}[Wick operators in the original Schatten geometry]\label{thm:geometric}
Fix $1\le s<\infty$. Under \eqref{eq:covprofileassumption}--\eqref{eq:UVdefect} and the stated profile growth, either of the following conditions is sufficient:
\begin{align}
 a>\beta,\quad b>\beta,\quad a+b>\beta+d/s;
 &\hspace{8pt}\text{strict regime},\label{eq:strictface}\\
 a>\beta,\quad b>\beta,\quad a+b=\beta+d/s,\quad
 \alpha+\zeta>\kappa+1/s;
 &\hspace{8pt}\text{critical regime}.\label{eq:criticalface}
\end{align}
The local kernels assemble unconditionally in $\Wc_{m,s}$. Their evaluations are
\begin{equation}\label{eq:wickgeometry}
 T_n=\Lambda^{-a}\mathscr L^{-\alpha}
       M_{\Phi_{m,n}}\mathscr L^{-\zeta}\Lambda^{-b},
\end{equation}
and converge in every finite $L^p(\Sch_s)$ to a unique $T$. The limit is independent of the dyadic partition and of regularizations satisfying the joint covariance hypotheses.

If $\varepsilon_{N,N'}^{(m)}\lesssim N^{-\vartheta}$ for dyadic $N'\ge N$, the strict regime gives a positive power rate. In the critical regime,
\begin{equation}\label{eq:geomrate}
 \norm{T_N-T}_{L^p(\Sch_s)}\lesssim_{m,s}(p+s+2)^{m/2}\ell_N^{-\omega},
 \qquad \omega=\alpha+\zeta-\kappa-1/s>0,
\end{equation}
with the geometric parameters fixed. Under this quantitative ultraviolet hypothesis, both regimes also give almost-sure convergence along dyadic scales. Adjoints exchange $(a,\alpha)$ and $(b,\zeta)$.
\end{theorem}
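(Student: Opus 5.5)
The plan is to decompose $T_n$ into dyadic blocks $\Delta_L(\cdots)\Delta_Q$ and use Proposition~\ref{prop:assembly} with the lattice $\mathsf{Gap}_s=\ell^1_h(\ell^s_n)$. The logarithmic weights are absorbed into the shell columns. For each pair $(L,Q)$, the smooth multiplier $\Delta_L^2\Lambda^{-2a}\mathscr L^{-2\alpha}$ satisfies the hypotheses of Lemma~\ref{lem:spectralkernel} with an extra factor $\ell_L^{-2\alpha}$. Consequently the kernel bounds \eqref{eq:columnkernel} hold with that factor, and \eqref{eq:geomprofile} holds with the majorant $\mathcal M_{L,Q}$ of \eqref{eq:geometricmajorant}. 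For $s\ge2$ this follows directly from \eqref{eq:covcut}. For $s<2$ I would use the rank-weighted majorant $d_i^{1/s-1/2}\norm{K_i}_2$ described after Proposition~\ref{prop:assembly}. The shell rank $O((L\wedge Q)^d)$ converts the Hilbert bound $\mathfrak c^{m/2}(L\wedge Q)^{d/2}L^{-a}Q^{-b}$ into the same exponent $d/s$.

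The key step is the summability $\norm{\mathcal M}_{\mathsf{Gap}_s}<\infty$. Write $L=2^iQ$ with $i\ge0$, where the gap index is $h=i$ and $Q$ runs over dyadics; the case $Q\ge L$ is symmetric. The block is then $L^{\beta-a}Q^{d/s-b}$ times logarithms, which equals $2^{i(\beta-a)}Q^{\beta+d/s-a-b}$. In the strict regime \eqref{eq:strictface}, both exponents are negative: the $\ell^s$ sum over $Q$ converges, and the sum over gaps converges geometrically since $a>\beta$. In the critical regime the $Q$-power vanishes. What remains is $2^{i(\beta-a)}\ell_{2^iQ}^{\kappa-\alpha}\ell_Q^{-\zeta}$, whose $\ell^s$ norm in $Q=2^k$ is comparable to $(\sum_k(i+k)^{s(\kappa-\alpha)}k^{-s\zeta})^{1/s}$. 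This is finite exactly when $s(\alpha+\zeta-\kappa)>1$, i.e.\ when $\omega>0$, and it grows at most polynomially in $i$, which the factor $2^{i(\beta-a)}$ absorbs. Off-gap coloring is handled by finite overlap. This gives unconditional convergence of each $K_n$ in $\Wc_{m,s}$. The blocks are symmetric in the source legs, and their sum identifies with \eqref{eq:wickgeometry} on the finite core, via the partition of unity $\sum_L\Delta_L=I$ and the evaluation identity displayed before \eqref{eq:geometricmajorant}.

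For convergence in $n$, the second part of Proposition~\ref{prop:assembly} applies. On a finite head of blocks, \eqref{eq:geomdiff} together with \eqref{eq:UVdefect} gives the bound $(\varepsilon^{(m)}_{n,n'})^{1/s}G(R)$, and the common majorant tail $\eta(R)$ vanishes. Theorem~\ref{thm:wickrecover} and hypercontractivity then transfer this to every $L^p(\Sch_s)$, with the factor $(p+s+2)^{m/2}$ coming from \eqref{eq:wick-high}. Independence of the partition follows because every choice gives the same finite-core evaluation $T_n$, which is determined by the operator itself. Independence of the regularization follows by applying the same Cauchy estimate to interleaved sequences, using the mixed covariances.

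For the rates, I would optimize $R$ in \eqref{eq:localtailmodulus}. Under $\varepsilon\lesssim N^{-\vartheta}$, the head costs $N^{-\vartheta/s}$ times a polynomial in $R$, while the tail is $R^{-c}$ in the strict regime, giving a power rate. In the critical regime the tail is $\ell_R^{-\omega}$; choosing $R=N^{\epsilon}$ gives $\ell_N^{-\omega}$. Almost-sure dyadic convergence then follows from Borel--Cantelli. This needs summable $p$-th moments of the successive differences, which holds for $p\,\omega>1$ because $\ell_{2^k}\sim k$. The adjoint statement is the cut complementation noted after Definition~\ref{def:cutspace}. I expect the main obstacle to be the critical $\ell^1_h(\ell^s_n)$ computation with the logarithmic weights, together with verifying rigorously that the $\mathscr L$-weighted multipliers have uniformly bounded rescaled seminorms in Lemma~\ref{lem:spectralkernel}. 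The latter seems routine, since derivatives of $\log(e+\lambda)$ are scale-compatible.
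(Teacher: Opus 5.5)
Your proposal is correct and follows the paper's proof essentially step for step. It uses the same dyadic double blocks with logarithmic weights in the shell columns, the same $\mathsf{Gap}_s$ summability (exponential decay in the gap, plus the critical diagonal series $\sum_k k^{s(\kappa-\alpha-\zeta)}$), the same head--tail Cauchy argument with interlaced regularizations, and the same choice $R=N^c$ for the rates. The $\ell_R^{-\omega}$ tail you assert follows by truncating that diagonal series at $k\gtrsim\log_2R$ and adding the exponentially small large-gap part. Two small repairs are needed. For $s<2$, the head error must come from the Hilbert--Schmidt difference times the rank factor, so it decays like $(\varepsilon^{(m)}_{n,n'})^{1/2}$, which is the paper's exponent $\vartheta/\max(s,2)$, because \eqref{eq:geomdiff} is only an $s\ge2$ estimate. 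In the Borel--Cantelli step, you should sum $\E\norm{T_{2^j}-T}_s^p\lesssim j^{-p\omega}$ rather than moments of successive differences: the latter only show $T_{2^{j+1}}-T_{2^j}\to0$, not almost-sure convergence, when $\omega\le1$.
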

\begin{proof}
For $s\ge2$, the majorant is \eqref{eq:geomprofile} with logarithmic weights. For $s<2$, each physical double block has rank at most $C(L\wedge Q)^d$. Multiply its Hilbert--Schmidt response estimate by the rank to the power $1/s-1/2$; this gives the same majorant. The low-exponent assembly argument above first constructs the random response, and the fixed-degree inverse then recovers its quotient-sum kernel.

To check summability write $L=2^{n+h}$, $Q=2^n$ on the upper half of the dyadic array. Up to fixed constants the majorant is
\begin{equation}\label{eq:gapsum}
 2^{-(a-\beta)h-(a+b-\beta-d/s)n}
 (1+n+h)^{\kappa-\alpha}(1+n)^{-\zeta},\qquad n,h\ge0.
\end{equation}
The lower half interchanges the two outer parameters. In the strict regime all exponential margins are positive and absorb every logarithmic factor. Thus the $\ell^1_h(\ell^s_n)$ norm is finite, with a high-scale tail of some order $R^{-\eta}$. In the critical regime the $n$ exponential disappears. The diagonal $h=0$ requires
\[
 \sum_{n\ge0}(1+n)^{s(\kappa-\alpha-\zeta)}<\infty,
\]
which is exactly the logarithmic condition. It suffices because
$(1+n+h)^t\le(1+h)^{t_+}(1+n)^t$
and the $h$ exponential remains summable. Splitting $L\vee Q>R$ into $h>\frac12\log_2R$ and its complement bounds the tail by $C\ell_R^{-\omega}$. The first part is exponentially small; in the second, $n>\frac12\log_2R-O(1)$. The diagonal gives the matching order for this majorant tail.

Order-continuous assembly now produces each regularized kernel. Fixed-block differences vanish by the common ultraviolet defect, and the same majorant controls all remaining blocks uniformly in $n,n'$. A finite-head--tail argument proves Cauchy convergence, in the maximum-cut norm at $s\ge2$ and by response inversion at $s<2$.

For a fixed $n$, the random multiplier $\Phi_{m,n}$ is smooth, and the two outer smoothing operators are bounded, so \eqref{eq:wickgeometry} is also a genuine bounded operator. Its finite spectral blocks coincide with those of the evaluated kernel. Almost-sure Schatten convergence of a subsequence of finite block sums, together with strong convergence of the spectral compressions of this bounded operator, identifies the two constructions. Injectivity of evaluation identifies their coefficients. Two dyadic partitions therefore produce the same regularized operator and kernel. For two regularization schemes, the joint mixed-covariance assumption applies to their interlaced family and identifies their limits as well.

For rates let $s_* =\max(s,2)$. Truncating at $L\vee Q\le R$ bounds the finite-head error by
$CN^{-\vartheta/s_*}R^A$
for some fixed $A>0$ absorbing shell counts and logarithms. In the strict regime balance this with $CR^{-\eta}$, for instance using $R=N^{\vartheta/(s_*(A+\eta))}$. In the critical regime choose $R=N^c$ with $0<c<\vartheta/(s_*A)$. The head decays as a power while the tail is $O(\ell_N^{-\omega})$. Fixed-degree hypercontractivity gives the displayed dependence on $p$.

At $N=2^j$, the critical error is $O(j^{-\omega})$. Choose a fixed $p>2/\omega$ and use Markov's inequality at threshold $j^{-\omega/2}$; the failure probabilities are summable. Borel--Cantelli proves the dyadic almost-sure convergence. The strict regime is easier. Adjoint identities hold on each finite cutoff and pass through the Schatten limit.
\end{proof}
The critical regime is nonempty only if $\beta<d/s$. Its logarithmic condition is necessary and sufficient for summability of the specified majorant, not a necessity assertion for every actual operator satisfying a possibly smaller covariance bound.

\subsection{The massive Gaussian free field}
Write $-\Delta_g\varphi_j=\lambda_j^2\varphi_j$ in the fixed real orthonormal basis. Choose nonnegative $\chi\in C_c^\infty([0,\infty))$ equal to one on $[0,1/4]$ and supported in $[0,1]$. In the same Gaussian realization define
\begin{equation}\label{eq:gff}
 X_N(x)=\sum_j\chi(N^{-2}\lambda_j^2)(1+\lambda_j^2)^{-1/2}g_j\varphi_j(x),
 \qquad \Phi_{m,N}={:X_N^m:}.
\end{equation}
Assume $m(d-2)<d$, and set
\[
 \beta_{d,m}=\tfrac m2(d-2)_+,\qquad
 \kappa_{d,m}=\tfrac m2\mathbf1_{\{d=2\}},\qquad
 \mathfrak w_d(R)=
 \begin{cases}1,&d=1,\\\ell_R,&d=2,\\R^{d-2},&d\ge3.\end{cases}
\]
\begin{proposition}[Joint free-field covariance bounds]\label{prop:gffcertificate}
Any fixed finite collection of cutoff profiles and all their dyadic scales, on the same source, satisfy \eqref{eq:covprofileassumption} with $\mathfrak c=\mathfrak w_d$. For every $0<\vartheta<\omega_{d,m}$,
\[
 \varepsilon_{N,N'}^{(m)}\lesssim\min(N,N')^{-\vartheta},\qquad
 \omega_{d,m}=
 \begin{cases}2,&d=1,2,\\d-m(d-2),&d\ge3.\end{cases}
\]
\end{proposition}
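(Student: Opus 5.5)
The plan is to obtain both estimates from pointwise kernel bounds for the regularized free-field covariance $G_{N,N'}(x,y)=\sum_j\chi(N^{-2}\lambda_j^2)\chi(N'^{-2}\lambda_j^2)(1+\lambda_j^2)^{-1}\varphi_j(x)\varphi_j(y)$. Because $\gamma_{N,N'}=G_{N,N'}$ is the kernel of $\chi_N\chi_{N'}(\Lambda)\Lambda^{-2}$, I will split it dyadically as $\sum_L\rho_L(\Lambda)\chi_N\chi_{N'}(\Lambda)\Lambda^{-2}$. Each multiplier $\beta_L=\rho_L\chi_N\chi_{N'}\lambda^{-2}$ has rescaled seminorms $O(L^{-2})$ uniformly in $N,N'$, since $\chi(N^{-2}\lambda^2)$ has $\lambda$-derivatives of order $N^{-k}\lesssim L^{-k}$ on its support. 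Lemma~\ref{lem:spectralkernel} then gives $|\gamma_{N,N'}(x,y)|\lesssim\sum_{L\lesssim N\wedge N'}L^{d-2}(1+Ld_g(x,y))^{-J'}$. Summing the dyadic series yields the uniform bound $|\gamma(x,y)|\lesssim\mathfrak w_d(1/d_g(x,y))$ for every cutoff pair. In $d\ge3$ this means $|\gamma|\lesssim d_g^{2-d}$, in $d=2$ it means $|\gamma|\lesssim\log(e+1/d_g)$, and in $d=1$ it means $|\gamma|\lesssim1$. The same estimate with the cap $d_g\gtrsim 1/\min(N,N')$ holds on the diagonal scale. A fixed finite family of profiles only enlarges the constants.

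For \eqref{eq:covprofileassumption} I will integrate $|\gamma|^j$ against $R^d(1+Rd_g)^{-J}$. In $d\ge3$ the relevant quantity is $\int R^d(1+Rr)^{-J}r^{-j(d-2)}r^{d-1}dr$. The hypothesis $m(d-2)<d$ makes $r^{-j(d-2)}$ integrable for $j\le m$, and the resulting bound is $\lesssim R^{j(d-2)}=\mathfrak w_d(R)^j$. In $d=2$, replace the power by $\log^j(e+1/r)$; on the ball $r\lesssim1/R$ the integral is $\lesssim\ell_R^j$, and the decaying weight controls the region outside that ball. The case $d=1$ is trivial. Both Schur directions are handled by symmetry of $\gamma$, and the mixed pairs $(N,N')$ are covered by the uniform bound.

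For the ultraviolet defect I will factor $\gamma_{N,N}^m+\gamma_{N',N'}^m-\gamma_{N,N'}^m-\gamma_{N',N}^m$ as a sum of terms $(\gamma_{N,N}-\gamma_{N,N'})\cdot P$. Here $P$ is a homogeneous polynomial of degree $m-1$ in the four kernels, each of which obeys the uniform bound. This is the main obstacle, because the difference kernel must be small in a norm that pairs with $P\in L^{q}$. The difference equals $\sum_j\chi_N(\chi_N-\chi_{N'})(1+\lambda_j^2)^{-1}\varphi_j\varphi_j$, with spectrum in $\lambda\gtrsim N/2$ for $N\le N'$. Its kernel satisfies $|\delta(x,y)|\lesssim\sum_{L\gtrsim N}L^{d-2}(1+Ld_g)^{-J'}$, so it is $\lesssim\min(d_g^{2-d},N^{d-2})\cdot(1+Nd_g)^{-J'}$-type in $d\ge3$. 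It is $\lesssim\log(e+1/(Nd_g))$ localized to $d_g\lesssim1/N$ in $d=2$, and $\lesssim N^{-1}$ in $d=1$.

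The estimate for $\norm{\delta\,P}_{L^1(M^2)}$ then comes from Hölder, splitting into $d_g\le1/N$ and $d_g>1/N$. In $d\ge3$ I expect $\int_{r\le 1/N}r^{-m(d-2)}r^{d-1}dr\asymp N^{-(d-m(d-2))}$, while on $r>1/N$ the rapid decay of $\delta$ gives the same order. In $d=2$ the bound is $N^{-2}$ up to logarithms, which is absorbed by choosing $\vartheta<2$. In $d=1$ the $L^1$ bound of $\delta$ appears to give only $N^{-1}$, so reaching $\vartheta<2$ there needs a sharper argument. Instead of pointwise bounds I will use $\delta=\sum_{j}c_j\varphi_j\varphi_j$ with $\sum_{\lambda_j\gtrsim N}\lambda_j^{-2}\sim N^{-1}$. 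Since $\int\delta\,P$ is an integral against smooth-ish $P$ with bounded derivatives, I will exploit the oscillation of high modes, integrating against $P$, which is $C^{1}$-like in $d=1$, to gain another $N^{-1}$. If that fails, I will settle for the rate the pointwise bounds give and note the $\log$ loss. Summing the finitely many terms of the factorization completes the estimate for all dyadic $N\le N'$.
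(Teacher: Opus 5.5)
Your route is essentially the paper's. You use dyadic shells, the uniform kernel estimate of Lemma~\ref{lem:spectralkernel}, and envelopes $\mathfrak w_d(1/d_g)$ integrated against the localized weight. For the defect you use the factorization $u^m-v^m=(u-v)\sum_iu^iv^{m-1-i}$, with the difference carried by shells $R\gtrsim\min(N,N')$. The paper instead subtracts the full Green kernel from each of the four terms rather than comparing them with one another, which changes nothing. Your computations for $d\ge2$ go through.

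The gap is in $d=1$. There you keep only the sup bound $|\delta|\lesssim N^{-1}$ and conclude that the $L^1$ estimate gives only $N^{-1}$. But your own shell sum is localized: $\sum_{L\gtrsim N}L^{-1}(1+Ld_g)^{-J'}\lesssim N^{-1}(1+Nd_g)^{-J'}$. Integrating over $M\times M$ then costs a further factor $N^{-1}$, the width of the $1/N$-neighbourhood of the diagonal. Since $P$ is bounded when $d=1$, this gives $\|\delta P\|_{L^1(M^2)}\lesssim N^{-2}$ directly. It is exactly the paper's per-shell contribution $CR^{-2}$, summed over $R\gtrsim N_*$.

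The substitute you propose would not close the gap. The defect $\varepsilon^{(m)}_{N,N'}$ is an $L^1(M^2)$ norm, i.e.\ the integral of the absolute value of the four-term combination. So oscillation of the high modes of $\delta$ cannot cancel against $P$. Your fallback rate $N^{-1}$ also does not give the claimed exponent $\omega_{1,m}=2$.

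A harmless inconsistency elsewhere: for $d\ge3$ the sum over $L\gtrsim N$ gives $d_g^{2-d}$ near the diagonal, not $N^{d-2}$. For the second difference $\gamma_{N',N'}-\gamma_{N,N'}$, whose spectrum reaches $N'$, that is all one has. Your integral $\int_{r\le1/N}r^{-m(d-2)}r^{d-1}\,dr$ already uses the correct bound, so the rate $N^{-(d-m(d-2))}$ stands.
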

\begin{proof}
Decompose every self- and cross-covariance into dyadic spectral shells. The uniform kernel lemma gives
$|H_R(x,y)|\lesssim_A R^{d-2}(1+Rd_g(x,y))^{-A}$
for every fixed $A$, uniformly over the cutoff profiles. Summing shells gives a bounded envelope in dimension one, a logarithmic envelope in dimension two, and a distance-to-the-diagonal power $d_g(x,y)^{2-d}$ in higher dimension, with harmless bounded terms away from the diagonal. Integrating the $j$th powers against the localized kernel gives \eqref{eq:covprofileassumption}; near the diagonal the required integrability is $j(d-2)<d$.

The low-frequency plateau makes the difference from the full Green kernel consist only of shells $R\gtrsim N_*:=\min(N,N')$. Use
$|u^m-v^m|\le m|u-v|(|u|+|v|)^{m-1}$
and the same envelopes. The $L^1(M^2)$ contribution of a shell is bounded respectively by
$CR^{-2}$, $CR^{-2}\ell_R^{m-1}$, and $CR^{m(d-2)-d}$
in dimensions one, two, and at least three. Dyadic summation proves the asserted exponents, with a strict loss absorbing the two-dimensional logarithm. Mixed cutoff products have the same uniformly rescaled symbol bounds, so the estimates hold jointly for the entire finite profile family.
\end{proof}

Theorem~\ref{thm:geometric} now gives canonical fixed-degree Wick operators for every $1\le s<\infty$. Without logarithmic outer weights it suffices that
\begin{equation}\label{eq:gffcondition}
 a>\beta_{d,m},\qquad b>\beta_{d,m},\qquad a+b>\beta_{d,m}+d/s.
\end{equation}
On the critical face the logarithmic requirement is
$\alpha+\zeta>\kappa_{d,m}+1/s$.

There is also a common-source random distribution $\Phi_m$ such that, for every $u>\beta_{d,m}$, $\Phi_{m,N}\to\Phi_m$ in every finite $L^p(H^{-u}(M))$, with a positive power rate and dyadic almost-sure convergence. This follows directly from operator recovery. Choose $a_u>\beta_{d,m}$ and a sufficiently large finite $s$ with $a_u+u>\beta_{d,m}+d/s$. Define, for bounded $A$,
\[
 \langle\mathcal J_uA,f\rangle=\ip{A\Lambda^uf}{1},\qquad
 \norm{\mathcal J_uA}_{H^{-u}}\le\operatorname{Vol}(M)^{1/2}\norm A.
\]
At finite cutoff, $\mathcal J_u(T_N^{a_u,u})=\Phi_{m,N}$ because $\Lambda^{-a_u}1=1$. Operator convergence gives distributional convergence, and common cutoffs identify the limits for different $u$. For smooth $f,h$ the resulting operator satisfies
\begin{equation}\label{eq:distributionpair}
 \ip{T^{a,b}f}{h}
 =\left\langle\Phi_m,(\Lambda^{-b}f)\overline{\Lambda^{-a}h}\right\rangle.
\end{equation}
The remainder of this section uses these same-source operators as generators in the complete reconstruction theorem.

\subsection{Fixed-degree generators in the full algebra}
For $K\in\Wc_{m,s}$, regarded as a sequence supported only in degree $m$,
\begin{equation}\label{eq:fixeddegree_allradii}
 \Nn\rho K=\rho^m\sqrt{m!}\norm K_{\Wc_{m,s}}<\infty
 \qquad(\rho\ge1).
\end{equation}
Thus every fixed-degree geometric kernel already belongs to $\Alg_s$. Finitely many such generators and all their finite ordinary products lie in this same algebra. A further infinite sum over generator degrees would require a uniform weighted summability condition.

Fix $1\le s<\infty$, a closed manifold, and one real Gaussian realization. For $j=1,\ldots,d_0$, choose a degree $m_j\ge1$ and parameters $(a_j,b_j,\alpha_j,\zeta_j)$ satisfying Theorem~\ref{thm:geometric}. Write
\begin{equation}\label{eq:wick_family}
 T_{j,n}=\Lambda^{-a_j}\mathscr L^{-\alpha_j}
 M_{\Phi_{m_j,n}}\mathscr L^{-\zeta_j}\Lambda^{-b_j},
 \qquad T_j=\lim_nT_{j,n}=I_{m_j}K_j.
\end{equation}
Assume the geometric bounds provide known $B_j$ and $r_j(n)\downarrow0$ with
\[
 \norm{T_{j,n}-T_j}_{L^2(\Sch_s)}\le r_j(n),\qquad
 \sup_n\norm{T_{j,n}}_{L^2(\Sch_s)}\le B_j.
\]
The same $B_j$ bounds the limit. Each regularization has finite source support; let $L_n$ contain all source coordinates used by this finite family.

\subsection{From smooth shells to finite physical projections}
Define the nested finite-rank orthogonal spectral projections
\[
 \Pi_K=\mathbf1_{[1,K]}(\Lambda),\qquad d_K=\rank\Pi_K.
\]
Let $C_*$ be an enlarged dyadic support constant, so that the spectrum of $\Delta_L$ is at most $C_*L$. For the majorant $\mathcal M^j$ of the $j$th generator, absorb its fixed-degree evaluation and assembly constants in $C_j$ and set
\begin{equation}\label{eq:geometric_spatial_certificate}
 b_j(K)=2C_j\left\|\mathcal M^j_{L,Q}
             \mathbf1_{\{L\vee Q>K/C_*\}}\right\|_{\mathsf{Gap}_s}.
\end{equation}
\begin{lemma}[A geometric certificate for physical truncation]\label{lem:geometric_spatial_tail}
Uniformly over all regularizations and for the limit,
\begin{equation}\label{eq:physical_projection_tail}
 \norm{T_{j,n}-\Pi_KT_{j,n}\Pi_K}_{L^2(\Sch_s)}\le b_j(K),\qquad
 \norm{T_j-\Pi_KT_j\Pi_K}_{L^2(\Sch_s)}\le b_j(K).
\end{equation}
In the strict regime $b_j(K)\lesssim K^{-\eta_j}$ for some $\eta_j>0$; in the critical regime $b_j(K)\lesssim\ell_K^{-\omega_j}$, with $\omega_j=\alpha_j+\zeta_j-\kappa_j-1/s>0$.
\end{lemma}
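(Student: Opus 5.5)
We argue at the level of the dyadic block decomposition behind Theorem~\ref{thm:geometric}. The first step is to write each generator as the unconditionally convergent sum of its local kernels, and to do the same for each regularization:
\[
 T_{j,n}=\sum_{L,Q}\Lambda^{-a_j}\mathscr L^{-\alpha_j}\Delta_L
 M_{\Phi_{m_j,n}}\Delta_Q\mathscr L^{-\zeta_j}\Lambda^{-b_j}.
\]
The sum converges in $L^2(\Sch_s)$, and Proposition~\ref{prop:assembly} controls it with the majorant $\mathcal M^j$ in the lattice $\mathsf{Gap}_s$. For the limit $T_j$ the same decomposition holds, obtained by passing to the limit blockwise with the same majorant. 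The whole estimate therefore concerns the part of this sum that the finite projections do not preserve.

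Next we compare smooth shells with the sharp projection $\Pi_K$. If $L\vee Q\le K/C_*$, both shell multipliers are supported in $[0,K]$. Since $\Lambda\ge1$, this gives $\Pi_K\Delta_L=\Delta_L$ and $\Delta_Q\Pi_K=\Delta_Q$, and the outer functions of $\Lambda$ commute with $\Pi_K$. Such a block $A_{L,Q}$ therefore satisfies $\Pi_KA_{L,Q}\Pi_K=A_{L,Q}$. Consequently
\[
 T-\Pi_KT\Pi_K=\sum_{L\vee Q>K/C_*}\big(A_{L,Q}-\Pi_KA_{L,Q}\Pi_K\big),
\]
which holds equally for $T_{j,n}$ and $T_j$; the identity is justified because compression is continuous on $L^2(\Sch_s)$.

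We then bound this tail. Split it as $S_K-\Pi_KS_K\Pi_K$, where $S_K$ is the high-shell partial sum. Compression by $\Pi_K$ is contractive in $\Sch_s$ and hence in $L^2(\Sch_s)$, so the norm is at most $2\norm{S_K}_{L^2(\Sch_s)}$. The sum $S_K$ is a sub-sum of the assembled series. The assembly estimate of Proposition~\ref{prop:assembly}, together with the fixed-degree forward bound for $s\ge2$ or the rank-weighted block bound for $s<2$ (both used in Theorem~\ref{thm:geometric}), bounds it by $C_j\norm{\mathcal M^j\mathbf1_{\{L\vee Q>K/C_*\}}}_{\mathsf{Gap}_s}$. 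Since the majorant is uniform in $n$, this gives \eqref{eq:physical_projection_tail} with the factor $2$ already built into $b_j(K)$.

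The decay rates come directly from the tail computation in the proof of Theorem~\ref{thm:geometric}. Splitting $L\vee Q>R$ into large dyadic gap $h$ and large diagonal index $n$ bounds the tail by $R^{-\eta}$ in the strict regime and by $\ell_R^{-\omega}$ in the critical regime. We take $R=K/C_*$ and absorb the constant $C_*$.

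The main obstacle is the interface in the low-exponent case $s<2$. There the kernel is recovered through response inversion rather than cut norms, so the tail must be bounded directly on random responses using the block-rank majorant; it cannot be bounded in coefficient norm. This is acceptable because \eqref{eq:physical_projection_tail} is a statement about responses. A second point needs care: the smooth partition has overlapping enlarged supports. This is handled by choosing $C_*$ large enough that every shell meeting $(K,\infty)$ satisfies $L\vee Q>K/C_*$.
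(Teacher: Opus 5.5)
Your proposal is correct and follows essentially the same route as the paper. Blocks with $L\vee Q\le K/C_*$ are fixed by the compression. The remaining high-shell sum is bounded by $C_j$ times its $\mathsf{Gap}_s$ majorant tail, uniformly in $n$, and the compression-difference map costs a factor two. The rates are then read off from the tail estimate following \eqref{eq:gapsum}.

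The only cosmetic difference concerns the bound for $T_j$. The paper obtains it by letting $n\to\infty$ in the uniform inequality, since compression is continuous on $L^2(\Sch_s)$. You instead re-expand the limit blockwise, which the paper's route avoids needing.
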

\begin{proof}
Use the unconditional dyadic double-block expansion of $T_{j,n}$. Every block with $L\vee Q\le K/C_*$ has both outer spectral supports in $\Pi_K$, and hence is annihilated by $A\mapsto A-\Pi_KA\Pi_K$. The remaining block sum has $L^2(\Sch_s)$ norm at most $C_j$ times its $\mathsf{Gap}_s$ tail. The norm of the latter truncation-difference map is at most two, proving the uniform estimate. Pass to the limit in $n$ and use the tail estimates following \eqref{eq:gapsum}.
\end{proof}
Smooth shells supply analytic estimates; the sharp projections supply nested finite observation spaces. Lemma~\ref{lem:geometric_spatial_tail} connects these two roles without identifying the two types of cutoff.

\subsection{Input recovery and its three error terms}
Choose a faithful spectral anchor $Re_k=r_ke_k$, $\norm R_2=1$, in the same eigenbasis as $\Lambda$. Observe the finite same-sample Hermite correlations
\begin{equation}\label{eq:wick_pipeline_marks}
 z^j_{\alpha;k\ell}(n)=\E[\Psi_\alpha(g)(RT_{j,n}R)_{k\ell}],
 \quad |\alpha|=m_j,\quad\operatorname{supp}\alpha\subset[L_n],
 \quad e_k,e_\ell\in\Pi_KH.
\end{equation}
Let $\delta_j$ bound the Euclidean error of this entire finite vector. It may be supplied by the observation protocol or certified by independent sampling through Proposition~\ref{prop:empiricalmarks}. Put
\[
 \gamma_s=(1/s-1/2)_+,\qquad
 \kappa_K=\left(\min_{e_k\in\Pi_KH}r_k\right)^{-2},\qquad
 n_j(K)=d_K^{\gamma_s}\kappa_K\delta_j.
\]
Divide each retained coordinate by $r_kr_\ell\sqrt{m_j!}$ and set all unobserved coefficients to zero. This gives a finite kernel $\widehat K_j$ and response $\widehat T_j=I_{m_j}\widehat K_j$. Define
\begin{equation}\label{eq:wick_input_certificates}
 e_j=v_s^{m_j}\{r_j(n)+b_j(K)\}+n_j(K),\qquad
 \epsilon_j=r_j(n)+b_j(K)+u_s^{m_j}n_j(K).
\end{equation}
\begin{proposition}[All-radius input recovery]\label{prop:wick_pipeline_input}
The actual finite outputs satisfy
\begin{equation}\label{eq:wick_pipeline_input}
 \Nn\rho{\widehat K_j-K_j}\le\rho^{m_j}e_j\quad(\rho\ge1),\qquad
 \norm{\widehat T_j-T_j}_{L^2(\Sch_s)}\le\epsilon_j.
\end{equation}
If $r_j(n)+b_j(K)+d_K^{\gamma_s}\kappa_K\delta_j\to0$ for every $j$, all inputs converge jointly in the original all-radius topology.
\end{proposition}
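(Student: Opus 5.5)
The plan is to split the error at degree $m=m_j$ into three pieces and estimate each in the norm where it is naturally controlled. Write $F_n=T_{j,n}$ and let $P$ denote the finite coefficient projection onto source support $[L_n]$, degree $m_j$, and physical corner $\Pi_K$. Since $T_{j,n}$ uses only source coordinates in $[L_n]$, we have $E_{L_n}T_{j,n}=T_{j,n}$. By \eqref{eq:cutintertwine}, $I_{m_j}(PK_{j,n})=\Pi_KT_{j,n}\Pi_K$, where $K_{j,n}$ is the kernel of $T_{j,n}$. Decompose
\[
 \widehat K_j-K_j=(\widehat K_j-PK_{j,n})+(PK_{j,n}-K_{j,n})+(K_{j,n}-K_j).
\]
The first term is pure decoding noise. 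The second is the physical truncation tail. The third is the regularization error.

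For the first term, observe that the exact marks of $T_{j,n}$ are precisely the finite observations \eqref{eq:finiteHermite} of $K_{j,n}$. The proof of Theorem~\ref{thm:finitedecoder} (square corner of rank $d_K$, anchor cost $\kappa_K$) then gives $\sqrt{m_j!}\norm{\widehat K_j-PK_{j,n}}_{\Wc_{m_j,s}}\le d_K^{\gamma_s}\kappa_K\delta_j=n_j(K)$. For the second and third terms I would not work in coefficients directly. Instead, I would bound the responses, $\norm{T_{j,n}-\Pi_KT_{j,n}\Pi_K}_{L^2(\Sch_s)}\le b_j(K)$ by Lemma~\ref{lem:geometric_spatial_tail} and $\norm{T_{j,n}-T_j}\le r_j(n)$, and then return to coefficients using the fixed-degree inverse of Theorem~\ref{thm:wickrecover}, $\sqrt{m!}\norm{K}_{\Wc_{m,s}}\le v_s^m\norm{I_mK}_{L^2(\Sch_s)}$. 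This is valid because all differences are homogeneous of degree $m_j$. Summing the three pieces gives $\sqrt{m_j!}\norm{\widehat K_j-K_j}\le e_j$. By \eqref{eq:fixeddegree_allradii} we have $\Nn\rho{\cdot}=\rho^{m_j}\sqrt{m_j!}\norm{\cdot}$ for degree-$m_j$ kernels, which yields the first inequality in \eqref{eq:wick_pipeline_input}.

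The response bound uses the same decomposition, now applied on the random side. The regularization and truncation errors are already response bounds, namely $r_j(n)$ and $b_j(K)$. The noise term is converted to a response bound by the forward estimate $\norm{I_mE}_{L^2(\Sch_s)}\le u_s^m\sqrt{m!}\norm E_{\Wc_{m,s}}$, which gives the contribution $u_s^{m_j}n_j(K)$. Adding these reproduces $\epsilon_j$.

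For the final claim, under the stated hypothesis each $e_j\to0$. For every $\rho\ge1$ this forces $\Nn\rho{\widehat K_j-K_j}\le\rho^{m_j}e_j\to0$, so the convergence holds in the all-radius topology of $\Alg_s$. Since there are finitely many $j$, the convergence is joint.

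The only delicate step is the first term: checking that the decoder applied to the exact marks of the regularized $T_{j,n}$ returns exactly $PK_{j,n}$. This needs both source saturation by $L_n$ and the identification of the sharp spectral projection $\Pi_K$ with the physical corner compression. Once that identity is established, the three-term triangle inequality is routine.
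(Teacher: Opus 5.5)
Your proposal is correct and follows the paper's proof. The exact marks of the source-saturated regularization decode to its sharp $\Pi_K$-compression, and the bias is bounded on the response side by $r_j(n)+b_j(K)$ and returned to coefficients by the inverse Wick estimate. The decoding noise is bounded by Theorem~\ref{thm:finitedecoder} and pushed forward by the forward estimate. The only difference is cosmetic: you invert the truncation and regularization pieces separately rather than their sum, which gives the same constant $v_s^{m_j}(r_j(n)+b_j(K))$.
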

\begin{proof}
Let $K_{j,n}$ be the kernel of $T_{j,n}$ and let $K_{j,n,K}$ be its sharp physical compression to $\Pi_K$. Source truncation is exact for this regularized family, by the definition of $L_n$. The exact finite inverse of the population marks therefore equals $K_{j,n,K}$.

The bias stays in degree $m_j$, and the two analytic approximation bounds give
\[
 \norm{I_{m_j}K_{j,n,K}-T_j}_{L^2(\Sch_s)}
 \le r_j(n)+b_j(K).
\]
The inverse Wick estimate bounds its normalized coefficient norm by $v_s^{m_j}(r_j(n)+b_j(K))$. Separately, Theorem~\ref{thm:finitedecoder} bounds the normalized coefficient norm of $\widehat K_j-K_{j,n,K}$ by $n_j(K)$. Forward evaluation bounds the latter noise response by $u_s^{m_j}n_j(K)$. Adding these errors proves both inequalities in \eqref{eq:wick_pipeline_input}. Since each difference has one fixed source degree, multiplication by $\rho^{m_j}$ gives every coefficient radius at once. The stated vanishing conditions consequently give joint convergence of the finite family in the original all-radius topology.
\end{proof}
The exact observations of the limit form an admissible array. The noisy regularized outputs themselves need not satisfy exact cross-resolution compatibility: the displayed bounds make them converge to that same completed object.

\subsection{Polynomial targets and their evolution}
For a nonempty $*$-word $w=i_1^{\varepsilon_1}\cdots i_k^{\varepsilon_k}$, where $\varepsilon_r\in\{1,*\}$, let
\[
 d(w)=\sum_{r=1}^km_{i_r},\qquad h(w)=(2k-1)^{d(w)/2}.
\]
Its computable error budget is
\begin{equation}\label{eq:wick_word_budget}
 E_w=h(w)\sum_{r=1}^k\epsilon_{i_r}
       \prod_{b<r}(B_{i_b}+\epsilon_{i_b})\prod_{b>r}B_{i_b}.
\end{equation}
For a fixed noncommutative $*$-polynomial $P=\sum_{w\ne\varnothing}a_ww$, set
$D_P=\max_{a_w\ne0}d(w)$ and $E_P=\sum_w|a_w|E_w$.
We state the Schatten conclusions for $P(0)=0$; for a nonzero constant term, subtract the same known scalar identity from both targets.

\begin{theorem}[Structural reconstruction of Wick operator systems]\label{thm:wick_pipeline}
There are unique kernels $K_P,\widehat K_P\in\Alg_s$, defined by the original ordered contraction product, such that
\begin{align}
 I(K_P)&=P(T_1,\ldots,T_{d_0}),&
 I(\widehat K_P)&=P(\widehat T_1,\ldots,\widehat T_{d_0}),\label{eq:wick_pipeline_identity}\\
 \norm{P(\widehat T)-P(T)}_{L^2(\Sch_s)}&\le E_P,&
 \Nn\rho{\widehat K_P-K_P}&\le C_{v_s\rho}\chi_{v_s\rho}^{D_P}E_P.
 \label{eq:wick_pipeline_output}
\end{align}
These estimates recover every fixed word, Lie expression, and PBW polynomial, including all lower-degree contraction terms. For any specified bounded deterministic drivers as in Section~\ref{sec:time},
\begin{align}
 \sup_{t\ge0}\norm{\Phi_t(P(\widehat T))-\Phi_t(P(T))}_{L^2(\Sch_s)}
 &\le E_P,\label{eq:wick_pipeline_time}\\
 \sup_{t\ge0}\Nn\rho{\widetilde\Phi_t(\widehat K_P-K_P)}
 &\le C_{v_s\rho}\chi_{v_s\rho}^{D_P}E_P.
 \label{eq:wick_pipeline_time_native}
\end{align}
As the input certificates tend to zero, all these targets converge to the operations and evolutions of the same regularization-independent objects.
\end{theorem}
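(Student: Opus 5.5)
The plan has three steps: first the existence and identification of $K_P$ and $\widehat K_P$, then the random-variable estimate for each word, and finally the transfer to the coefficient scale and to the evolved targets.

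\textbf{Existence and identification.} Each $K_j$ is a fixed-degree kernel. By \eqref{eq:fixeddegree_allradii} it lies in $\Alg_s$. Each $\widehat K_j$ is a finite kernel of degree $m_j$, so it lies in $\Alg_s$ as well. Theorem~\ref{thm:nativeproduct} makes $\Alg_s$ a Fr\'echet $*$-algebra and $I$ a topological $*$-isomorphism onto $\Gaus_s$ realizing ordinary multiplication. We therefore define $K_P$ and $\widehat K_P$ by evaluating $P$ in $\Alg_s$ with $\star$ and the involution. The identities \eqref{eq:wick_pipeline_identity} then follow from $I(K\star L)=I(K)I(L)$ and $I(K^*)=I(K)^*$. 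Uniqueness follows from injectivity of $I$. The condition $P(0)=0$ keeps the constant identity out of the Schatten class; a nonzero constant is handled by subtracting the known scalar on both sides.

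\textbf{Random-variable estimate.} Fix a word $w$ of length $k$ and telescope, $w(\widehat T)-w(T)=\sum_r(\cdots)$. In the $r$th term the first $r-1$ factors are hatted, the $r$th factor is the difference $\widehat T_{i_r}-T_{i_r}$ (or its adjoint), and the rest are true. We bound the differing factor in $\Sch_s$ and all other factors in operator norm $\le\Sch_s$ norm. Probability H\"older then puts every factor in $L^{2k}(\Sch_s)$. Each factor is a homogeneous chaos of degree $m_{i_b}$; this includes the difference, since $\widehat T_j-T_j$ is homogeneous of degree $m_j$. Hypercontractivity \eqref{eq:hyper} therefore costs $(2k-1)^{m_{i_b}/2}$ per factor, which multiplies to $h(w)$, exactly as in Proposition~\ref{prop:weightedwords}. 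Inserting $\norm{T_j}_{L^2}\le B_j$, $\norm{\widehat T_j}_{L^2}\le B_j+\epsilon_j$, and $\norm{\widehat T_j-T_j}_{L^2}\le\epsilon_j$ from Proposition~\ref{prop:wick_pipeline_input} gives $E_w$. Summing against $|a_w|$ yields the first bound in \eqref{eq:wick_pipeline_output}.

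\textbf{Coefficient scale and evolution.} We pass to the coefficient scale through the Mehler extraction, and I expect this to be the main point. The difference $F=P(\widehat T)-P(T)$ is a finite Gaussian polynomial whose chaos degrees are at most $D_P$, since products of chaoses of degrees summing to at most $D_P$ only lower degree through contractions. Lemma~\ref{lem:chebyshev} with $\lambda=v_s\rho$ gives
\[
 \sum_j(v_s\rho)^j\norm{F_j}_{L^2}\le C_{v_s\rho}\chi_{v_s\rho}^{D_P}\norm F_{L^2}.
\]
The degreewise inverse of Theorem~\ref{thm:wickrecover}, namely $\sqrt{j!}\norm{K_j}\le v_s^j\norm{F_j}$, then converts this into $\Nn\rho{\widehat K_P-K_P}$. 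For $s=1$ the same inverse constant $v_1$ is used. This argument automatically includes all lower contraction degrees, and Lie and PBW expressions are just particular $P$. For the evolution, \eqref{eq:wick_pipeline_time} follows because $\Phi_t$ is a contraction on $\Sch_s$ applied samplewise. The native bound \eqref{eq:wick_pipeline_time_native} follows from the contraction \eqref{eq:coeftransport} together with $I\widetilde\Phi_t=\Phi_tI$.

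\textbf{Convergence.} As the certificates $\epsilon_j\to0$, we have $E_P\to0$ for each fixed $P$. The estimates above then give convergence of all targets, uniformly in time. The limits are regularization-independent because the $T_j$ are, by Theorem~\ref{thm:geometric}.
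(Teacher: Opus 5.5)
Your proposal is correct and follows essentially the same route as the paper: existence and uniqueness via \eqref{eq:fixeddegree_allradii}, Theorem~\ref{thm:nativeproduct} and injectivity of $I$; an order-preserving telescoping with Schatten and probability H\"older and hypercontractivity, producing $h(w)$ and $E_w$; then degreewise Wick inversion combined with the Mehler extraction of Lemma~\ref{lem:chebyshev} at $\lambda=v_s\rho$; and contraction of $\Phi_t$ and $\widetilde\Phi_t$ for the time-uniform bounds. The paper additionally remarks that the common-noise replica identity (Theorem~\ref{thm:replicas}) identifies the transported target as $\Phi_t(P(T))$, but your displayed inequalities do not need that remark.
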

\begin{proof}
Every fixed-degree input belongs to $\Alg_s$ by \eqref{eq:fixeddegree_allradii}. Theorem~\ref{thm:nativeproduct} therefore forms the actual kernels $K_P$ and $\widehat K_P$ by ordered contraction and identifies their evaluations with \eqref{eq:wick_pipeline_identity}. Their chaos degrees are at most $D_P$. This establishes existence of the output before estimating it; uniqueness follows from Gaussian coefficient injectivity.

For a word $w$, replace its true factors one at a time by the reconstructed factors, preserving order. Each difference term has one input error, reconstructed inputs to its left, and true inputs to its right. Schatten ideal inequalities and probability H\"older require $L^{2k}$ for the $k$ factors. A degree-$m_i$ input or difference contributes $(2k-1)^{m_i/2}$ by hypercontractivity, so the product of these costs is $h(w)$. The remaining size and error factors are exactly those in \eqref{eq:wick_word_budget}. Summing with the polynomial's absolute coefficients gives $E_P$. This argument uses the common source and requires no independence between generators.

To return the output to its original coefficient norm, write
$F=P(\widehat T)-P(T)=\sum_{j=0}^{D_P}F_j$. Degreewise inversion followed by Mehler extraction yields
\[
 \Nn\rho{\widehat K_P-K_P}
 \le\sum_{j=0}^{D_P}(v_s\rho)^j\norm{F_j}_{L^2(\Sch_s)}
 \le C_{v_s\rho}\chi_{v_s\rho}^{D_P}\norm F_{L^2(\Sch_s)}.
\]
This proves \eqref{eq:wick_pipeline_output}, retaining all lower-degree contractions as well as the highest degree. Contraction of $\Phi_t$ in $\Sch_s$ and of its coefficient lift at every radius proves the two time-uniform bounds. The common-noise replica identity identifies the transported target as $\Phi_t(P(T))$. Finally these continuous constructions act on the uniquely defined input limits, so they preserve regularization independence.
\end{proof}

Choose actual budgets $(n_l,K_l,J_l)$ with deterministic input error bounds tending to zero, and arrange sampling failure probability at most $2^{-l}$ at stage $l$ using Proposition~\ref{prop:empiricalmarks}. Borel--Cantelli then supplies one event of sampling probability one on which the certificates eventually hold. On this event the kernels converge at every radius and all fixed finite polynomial targets converge. Independence is required within each sample average; independence between different budget stages is unnecessary.

\subsection{Countable dictionaries}
We next retain a countable family of polynomial targets and its correlation kernel. Enumerate homogeneous $*$-polynomials of positive word degree as $p_l$, with degrees $k_l$ and absolute coefficient sums $a_l$. Put
\[
 m_* =\max_jm_j,\qquad H_k=(2k-1)^{m_*k/2},\qquad
 c_l=\frac{2^{-l}}{\max(1,a_l)k_l!H_{k_l}},
\]
and choose
$B\ge\max(1,\max_jB_j,\max_j(B_j+\epsilon_j))$,
$\Delta=\max_j\epsilon_j$.
A fixed $B$ suffices once all $\epsilon_j\le1$. Let $K_{p_l}$ be the corresponding coefficient kernel.
\begin{corollary}[A common certificate for the full dictionary]\label{cor:wick_dictionary}
Set $A_\rho=\chi_{v_s\rho}^{m_*}$. For $\rho\ge1$,
\begin{align}
 \sum_{l\ge1}c_l\Nn\rho{\widehat K_{p_l}-K_{p_l}}
 &\le C_{v_s\rho}A_\rho e^{A_\rho B}\Delta,\label{eq:wick_native_dictionary}\\
 \sum_{l>J}c_l\Nn\rho{K_{p_l}}
 &\le C_{v_s\rho}2^{-J}e^{A_\rho B}.\label{eq:wick_native_dictionary_tail}
\end{align}
With $p_0=I,c_0=1$, the actual averaged correlation kernel
\[
 \mathsf G_t(T)_{il}=c_ic_lR\Phi_t(p_i(T)^*p_l(T))R
\]
is a positive trace-class random operator and satisfies
\begin{equation}\label{eq:wick_actual_gram}
 \sup_{t\ge0}\norm{\mathsf G_t(\widehat T)-\mathsf G_t(T)}_{L^1(\Sch_1)}
 \le2(1+e^B)e^B\Delta.
\end{equation}
\end{corollary}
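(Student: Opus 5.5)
The plan is to reduce everything to Proposition~\ref{prop:weightedwords} and then apply the Mehler extraction of Lemma~\ref{lem:chebyshev} one polynomial at a time, with the degree cost absorbed into the dictionary weights.

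\emph{Step 1: word bounds in $L^2(\Sch_s)$.} Apply \eqref{eq:momentword} with the pair $(T,\widehat T)$, the constant $B$ and $\Delta=\max_j\epsilon_j$. By construction $B$ bounds both $T_j$ and $\widehat T_j$ and is at least one. This gives
\[
 \norm{p_l(\widehat T)-p_l(T)}_{L^2(\Sch_s)}\le a_lk_lH_{k_l}B^{k_l-1}\Delta,
 \qquad \norm{p_l(T)}_{L^2(\Sch_s)}\le a_lH_{k_l}B^{k_l}.
\]

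\emph{Step 2: return to the coefficient norm.} A word of length $k_l$ has chaos degree at most $k_lm_*$. As in the proof of Theorem~\ref{thm:wick_pipeline}, degreewise inversion (Theorem~\ref{thm:wickrecover}) followed by Lemma~\ref{lem:chebyshev} at $\lambda=v_s\rho$ gives
\[
 \Nn\rho{K}\le C_{v_s\rho}\chi_{v_s\rho}^{k_lm_*}\norm{I(K)}_{L^2(\Sch_s)}
 =C_{v_s\rho}A_\rho^{k_l}\norm{I(K)}_{L^2(\Sch_s)}.
\]
Multiplying by $c_l$ cancels $a_l$ and $H_{k_l}$. What remains is the term $2^{-l}C_{v_s\rho}A_\rho^{k_l}B^{k_l-1}\Delta/(k_l-1)!$ for the difference, and $2^{-l}C_{v_s\rho}(A_\rho B)^{k_l}/k_l!$ for a single polynomial. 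Since $A_\rho\ge1$, the first term is at most $2^{-l}C_{v_s\rho}A_\rho(A_\rho B)^{k_l-1}/(k_l-1)!\,\Delta$.

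\emph{Step 3: summation.} Bound $\sum_l 2^{-l}\,x^{k_l-1}/(k_l-1)!\le\sup_k x^{k-1}/(k-1)!\le e^{x}$ with $x=A_\rho B$; this proves \eqref{eq:wick_native_dictionary}. The tail bound \eqref{eq:wick_native_dictionary_tail} follows in the same way from $\sum_{l>J}2^{-l}=2^{-J}$ and $x^k/k!\le e^x$.

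\emph{Step 4: the Gram estimate.} Consider the row $\mathsf B_t$ whose entries are responses of $p_l(T)R$, with $p_0=I$. Positivity and trace class are immediate because the kernel is a Gram-type sandwich. The subtlety is that $\mathsf G_t$ applies $\Phi_t$ to the product $p_i^*p_l$ rather than forming the Gram of $\Phi_t p_l$. I would use the Stinespring (random-unitary) representation $\Phi_t(X)=\E_{\rm aux}[U^*XU]$. This writes $\mathsf G_t(T)=\E_{\rm aux}[\mathsf B(T)^{U*}\mathsf B(T)^{U}]$, where $\mathsf B^U=(c_lp_l(T)UR)$ and $UR$ is still Hilbert--Schmidt with norm one. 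Proposition~\ref{prop:weightedwords} and \eqref{eq:allgramerror} then apply samplewise in the auxiliary noise, giving row norm at most $1+e^B$ and row difference at most $e^B\Delta$. The Gram inequality $\norm{X^*X-Y^*Y}_1\le(\norm X_2+\norm Y_2)\norm{X-Y}_2$ then yields $2(1+e^B)e^B\Delta$ uniformly in $t$. The expectation over the auxiliary noise preserves this bound by convexity of the norm.

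The main obstacle is Step 4: justifying that the replica/product-formula semigroup of Section~\ref{sec:time} is genuinely an average of unitary conjugations to which the row bound applies uniformly in $t$. I would obtain this from the bounded product formula as a limit of finite random-unitary averages and pass to the limit in trace norm.
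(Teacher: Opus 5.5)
Your proposal is correct. Steps 1--3 coincide with the paper's argument: the word bounds of Proposition~\ref{prop:weightedwords}, degreewise Wick inversion followed by Mehler extraction at $\lambda=v_s\rho$ with output degree at most $m_*k_l$, and the cancellation of $a_l$ and $H_{k_l}$ by $c_l$, which leaves factorial terms dominated by $e^{A_\rho B}$. (Your phrase ``since $A_\rho\ge1$'' is unnecessary, because $A_\rho^{k}B^{k-1}=A_\rho(A_\rho B)^{k-1}$ holds identically.)

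Step 4 is valid but follows a different route from the paper. The paper never represents $\Phi_t$. It uses only
\[
 \norm{R\Phi_t(X)R}_1\le\norm R_2^2\norm{\Phi_t(X)}\le\norm X,
\]
that is, contractivity of the UCP map on $B(H)$. It expands each block difference $p_i(\widehat T)^*p_l(\widehat T)-p_i(T)^*p_l(T)$ into two terms and applies probability Cauchy--Schwarz with the weighted $L^2(B(H))$ sums $1+e^B$ and $e^B\Delta$. It then sums the block trace norms absolutely. Positivity is obtained separately, from complete positivity applied to the positive matrices $[p_i^*p_l]$.

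You instead write $\mathsf G_t$ as an average of Gram operators of the rows $(c_lp_l(T)UR)_l$ and apply the Gram inequality to the whole row. This buys positivity and the trace-class property in one stroke, and it bounds the full block operator directly rather than block by block; the constant is the same. The cost is that you need a dilation of $\Phi_t$.

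The obstacle you flag is easily handled in two ways:
\begin{enumerate}
\item Pass to the limit from the random-unitary product-formula approximants $\Phi_t^{(n)}$. The total weighted discrepancy is at most
\[
 \norm{\Phi_t^{(n)}-\Phi_t}_{B(B(H))}\Bigl(\sum_lc_l\norm{p_l(T)}_{L^2(B(H))}\Bigr)^2\le\norm{\Phi_t^{(n)}-\Phi_t}(1+e^B)^2\to0
\]
for bounded generators.
\item Avoid limits entirely by Stinespring's dilation $\Phi_t(X)=V^*\pi(X)V$ with $\norm{VR}_2=1$. This gives exactly the same row estimate for the rows $(c_l\pi(p_l(T))VR)_l$.
\end{enumerate}

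The paper's version is the more economical: the estimate itself needs only contractivity of $\Phi_t$. The same block computation also immediately gives the dictionary truncation tail $2(1+e^B)e^B2^{-J}$.
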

\begin{proof}
Proposition~\ref{prop:weightedwords} bounds the $l$th response difference by
$a_lk_lH_{k_l}B^{k_l-1}\Delta$.
Its chaos degree is at most $m_*k_l$. Applying the output extraction step and multiplying by $c_l$ bounds its coefficient contribution by
\[
 C_{v_s\rho}2^{-l}A_\rho
       \frac{(A_\rho B)^{k_l-1}}{(k_l-1)!}\Delta.
\]
Each factorial term is bounded by the full exponential series, so summation proves \eqref{eq:wick_native_dictionary}. Without taking a difference, the bound is
$C_{v_s\rho}2^{-l}(A_\rho B)^{k_l}/k_l!$,
which gives \eqref{eq:wick_native_dictionary_tail}.

Complete positivity makes every finite correlation block positive. For the trace estimate use $\norm{R\Phi_t(A)R}_1\le\norm A$. Expand each product difference into two terms and apply probability Cauchy--Schwarz. The weighted sums of $L^2(B(H))$ norms of the responses and their differences are bounded by $1+e^B$ and $e^B\Delta$, respectively. Absolute summation over the two block indices gives \eqref{eq:wick_actual_gram} and proves trace-norm convergence of the positive block kernel. The same calculation bounds the dictionary truncation tail by $2(1+e^B)e^B2^{-J}$.
\end{proof}
The Gram matrix of the averaged response row omits its covariance. The kernel above retains it through the product inside $\Phi_t$.

\subsection{A critical two-dimensional example}
On a closed surface take $s=2$, a common massive free field, and Wick degrees $m_1=1$, $m_2=2$. Set
\begin{equation}\label{eq:surface_pair}
 T_{j,n}=\Lambda^{-1/2}\mathscr L^{-1}
          M_{{:X_n^{m_j}:}}\mathscr L^{-1}\Lambda^{-1/2}.
\end{equation}
Both lie on the physical critical face $a_j+b_j=1=d/s$, with total logarithmic weight two. Their margins are
\[
 \omega_1=2-\tfrac12-\tfrac12=1,\qquad
 \omega_2=2-1-\tfrac12=\tfrac12.
\]
Thus $r_j(n)+b_j(K)\le C_j(\ell_n^{-\omega_j}+\ell_K^{-\omega_j})$. Choose the spectral anchor $R=c\Lambda^{-r}$, $r>1$, with $\norm R_2=1$. Spectral counting makes it Hilbert--Schmidt, and $\kappa_K\le c^{-2}K^{2r}$. Since $u_2=v_2=1$,
\begin{equation}\label{eq:surface_total_error}
 e_j=\epsilon_j\le C_j(\ell_n^{-\omega_j}+\ell_K^{-\omega_j})
                         +c^{-2}K^{2r}\delta_j.
\end{equation}
The regularized source dimension is $L_n=O(n^2)$, the physical dimension is $d_K=O(K^2)$, and the number of complex marked coordinates per response is
\[
 d_K^2\binom{L_n+m_j-1}{m_j}=O(K^4n^{2m_j}).
\]
Proposition~\ref{prop:empiricalmarks} supplies a sample budget for these coordinates, while \eqref{eq:surface_total_error} supplies the input error at every coefficient radius.

For the commutator, the mixed-degree hypercontractive factor is $3^{(1+2)/2}$. The resulting certificate is
\begin{equation}\label{eq:surface_commutator_budget}
 \norm{[\widehat T_1,\widehat T_2]-[T_1,T_2]}_{L^2(\Sch_2)}
 \le2\,3^{3/2}(B_2\epsilon_1+B_1\epsilon_2+\epsilon_1\epsilon_2).
\end{equation}
The same bound holds for its specified common-noise averaged evolution. The target is the actual ordered commutator of the model, irrespective of whether it vanishes for a particular realization or parameter choice.

This example connects covariance geometry, finite observations, statistical error, a concrete decoder, original completion, and ordinary noncommutative operations. Every step uses the same Gaussian realization, and the limit is independent of any regularization scheme satisfying the common covariance certificates.

\subsection{Different targets and independent obstructions}
The relevant spaces can be summarized as follows.
\begin{center}\small
\begin{tabular}{@{}>{\raggedright\arraybackslash}p{.20\linewidth}>{\raggedright\arraybackslash}p{.34\linewidth}>{\raggedright\arraybackslash}p{.40\linewidth}@{}}\toprule
Target & Specified topology or closure & Recovered information\\\midrule
$\Alg_s$ & All-radius original cut norms & Same-source coefficients and continuous ordinary operations\\[3pt]
Bounded operator tuples & Strong-$*$ topology in a fixed Hilbert space & Fixed-frame operators and their finite words\\[3pt]
Full positive word observations & Pointwise weak-operator UCP topology, encoded in trace class & Positive maps; zero Schwarz defect selects representations\\[3pt]
Weighted linear targets & Completion in $\|W^{1/2}P_Hx\|$ & The directly recoverable quotient selected by $W\lesssim S_N$\\[3pt]
Skew spectral configurations & Subcritical weak or strong spectral topology; critical relative-tail topology & Nonzero block spectra in the stated certificate class\\[3pt]
Square-mass determinant data & Locally uniform entire-function closure & A spectrum and possible mass at zero, selected by $\tau=0$ for the true image\\[3pt]
Averaged spectral responses & Weighted Laplace-transform topology & The weighted mean spectral measure, not its random configuration law\\\bottomrule
\end{tabular}
\end{center}
The boundary-table construction in Appendix~\ref{sec:physicalboundary} is another response quotient, determined by all permitted continuation experiments. Comparisons between these targets require a map that retains the relevant information.

\begin{proposition}[Independent failures of converse convergence]\label{prop:completionseparation}
Operator-norm convergence need not imply original Schatten convergence; random $L^2$ convergence need not imply all-radius coefficient convergence; positive anchored convergence need not have a genuine strong-$*$ limit; and full unlabelled spectra or marginal laws need not determine a fixed frame or fixed-source coefficients.
\end{proposition}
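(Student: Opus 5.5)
The plan is to prove the four separation claims independently, each by an explicit example that is already essentially present in earlier sections. In every case I would exhibit a sequence converging in the weaker sense while a quantity controlling the stronger sense stays bounded away from zero.

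\emph{Operator norm versus original Schatten norm.} I would take the flat blocks $A_L=A(a^{(L)})$ from Proposition~\ref{prop:pfaffian_critical_failure}, or simpler diagonal projections $P_L=L^{-\epsilon}\sum_{j\le L}e_je_j^*$ with $\epsilon$ small. Then $\norm{P_L}=L^{-\epsilon}\to0$, while $\norm{P_L}_s=L^{1/s-\epsilon}\to\infty$ for $\epsilon<1/s$. To place this inside the coefficient algebra, I would use the degree-zero kernels, for which $\Nn\rho K=\norm K_s$, or multiply by a fixed Gaussian $g_1$. Either way the original norm diverges although operator norms vanish, so the converse implication fails.

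\emph{Random $L^2$ versus all-radius coefficients.} I would reuse the responses $F_j=H_j(g_1)P/\sqrt{j!}$ from the discussion after Theorem~\ref{thm:fullcompact}, but rescaled as $F_j/j$. These converge to zero in $L^2(\Sch_s)$. Their radius norms are $\rho^j/j$, which are unbounded for every $\rho>1$, so there is no coefficient convergence; by Theorem~\ref{thm:nativeproduct} this is the same as the failure of $\En\rho\cdot$ convergence.

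\emph{Positive anchored convergence without a genuine limit.} I would invoke the example $T_n=e_ne_1^*$ after Theorem~\ref{thm:strongclosure}. Its anchored border $Q(T_n)$ converges in trace norm to a border with a nonzero quadratic block and escape $s_\infty=r_1^2>0$. By Theorem~\ref{thm:strongclosure} the limit is not $Q(T)$ for any genuine tuple, so no strong-$*$ limit exists.

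\emph{Spectra and laws versus frames and coefficients.} For frames, I would note that $J_0$ and $-J_0$, or $A$ and $UAU^*$ for a nonidentity unitary $U$, have equal skew spectra but are different operators. For fixed-source coefficients, the introduction's example applies: $gP$ and $-gP$ have the same marginal law but distinct kernels $\pm P$.

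The main obstacle is bookkeeping in the first item, namely verifying that the degree-zero or degree-one embedding makes the original norm exactly the stated Schatten norm. I would handle this using the definition of $\Wc_{m,s}$, where one source leg yields cut norms that equal the physical Schatten norm for a rank-one source factor.
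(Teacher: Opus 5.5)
Your proposal is correct and follows the paper's proof essentially step for step. You use the same four kinds of witnesses: scaled finite-rank projections as degree-zero kernels, a rescaled single-coordinate Hermite chaos $H_j(g_1)P/\sqrt{j!}$, the escaping tuple $T_n=e_ne_1^*$ with surviving block $r_1^2e_1e_1^*$, and $gP$ versus $-gP$. The only differences are harmless normalizations ($L^{-\epsilon}$ instead of $n^{-1/s}$, $1/j$ instead of $2^{-n}$) and your use of $J_0$ versus $-J_0$ rather than $e_1e_1^*$ versus $e_2e_2^*$ for the spectral example. One small correction: your alternative pair $A$ and $UAU^*$ needs $U$ not to commute with $A$, not merely $U\ne I$.
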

\begin{proof}
For the first assertion, on $\ell^2$ let $P_n$ have rank $n$ and set $T_n=n^{-1/s}P_n$. Then $\norm{T_n}_{\op}\to0$ but $\norm{T_n}_s=1$. As degree-zero kernels these retain norm one at every coefficient radius.

For the second, let $P$ be rank one and
$F_n=2^{-n}H_n(g)P/\sqrt{n!}$.
Then $\norm{F_n}_{L^2(\Sch_2)}=2^{-n}\to0$, whereas its coefficient norm is $(\rho/2)^n$, unbounded at $\rho=3$.

For the third, $T_n=e_ne_1^*$ has vanishing first-row anchor blocks and a surviving square block $r_1^2e_1e_1^*$. A true limit would have to be zero by its first row, contradicting its nonzero diagonal energy. Theorem~\ref{thm:strongclosure} identifies the defect as output escape.

Finally $e_1e_1^*$ and $e_2e_2^*$ have the same nonzero spectrum but differ in the fixed frame. The responses $gP$ and $-gP$ have the same marginal law but opposite source correlations $\E[g(gP)]$ and $\E[g(-gP)]$.
\end{proof}

The forward relations remain continuous. Convergence in $\Alg_s$ gives response convergence in every finite probability exponent; deterministic Schatten convergence gives operator-norm and strong-$*$ convergence; and strong-$*$ convergence on a bounded tuple ball gives convergence of finite words and uniformly weighted Gram observations. Reverse recovery is governed by the corresponding certificate: ideal mass, source correlation, degree tails, output tightness, or a critical spectral tail. Section~\ref{sec:pfaffian} adds two spectral regimes: a weak-$\rho$ envelope controls every strictly subcritical target, while critical recovery requires a tail class, and a square-mass budget admits determinant limits with defect parameter $\tau$. Theorem~\ref{thm:invariant_uniform} identifies completions from two-sided uniform control; Theorem~\ref{thm:target_short} gives the direct bounded factorization for linear targets.

\appendix
\section{Sharp observation costs on full Fock space}\label{sec:fockcosts}
The multiplier criterion of Theorem~\ref{thm:multiplier} has an exact realization on full Fock space. Branching controls the existence of finite-mass faithful anchors, while equal-length word orthogonality gives explicit target costs and the minimal observation depth.

\subsection{The exact branching threshold}
On the full Fock space
\[
 \mathcal F_d=\bigoplus_{n\ge0}(\C^d)^{\otimes n},\qquad
 S_ie_w=e_{iw},\qquad \Omega=e_\varnothing,\qquad d\ge2,
\]
the left creation operators are isometries with orthogonal ranges. For words of equal length, $S_u^*S_v=\delta_{uv}I$; see \cite{Popescu} for weighted versions of these shifts.

\begin{theorem}[Faithful finite-mass anchors]\label{thm:branch_threshold}
Given $c_i>0$, there exists an injective positive trace-one $Q$ with
$\vartheta_{Q^{1/2}}(S_i)\le c_i$ for all $i$ if and only if
\begin{equation}\label{eq:branch_threshold}
 s_c:=\sum_{i=1}^dc_i^{-2}<1.
\end{equation}
If in addition $\ip{Q\Omega}{\Omega}\ge a$, where $0<a<1$, the condition is $s_c\le1-a$. Both conclusions are realized by
\begin{equation}\label{eq:branch_anchor}
 Q_ce_w=(1-s_c)c_w^{-2}e_w,\qquad
 c_w=\prod_{j=1}^{|w|}c_{i_j},\qquad c_\varnothing=1.
\end{equation}
For $R=Q_c^{1/2}$,
\begin{equation}\label{eq:branch_factors}
 \Theta_R(S_i)=c_iS_i,\qquad
 \Theta_R(S_i^*)=c_i^{-1}S_i^*,\qquad \Lambda_R(S_i)=c_i.
\end{equation}
\end{theorem}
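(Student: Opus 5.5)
The plan is to work throughout with the quadratic-form characterization in Theorem~\ref{thm:multiplier}: $\vartheta_{Q^{1/2}}(S_i)\le c_i$ is equivalent to $S_iQS_i^*\le c_i^2Q$. Sufficiency comes from the explicit anchor \eqref{eq:branch_anchor}, and necessity from a trace argument applied to this inequality.

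\emph{Sufficiency.} Take $Q_c$ as in \eqref{eq:branch_anchor}.
\begin{itemize}
\item The diagonal entries are positive, so $Q_c$ is injective.
\item Grouping words by length, the sum $\sum_{|w|=n}c_w^{-2}$ equals $s_c^n$. Hence $\Tr Q_c=(1-s_c)\sum_ns_c^n=1$ exactly when $s_c<1$.
\item Since $S_ie_w=e_{iw}$ and $c_{iw}=c_ic_w$, we get $S_iQ_cS_i^*e_{iw}=(1-s_c)c_w^{-2}e_{iw}=c_i^2Q_ce_{iw}$. On vectors orthogonal to $\ran S_i$ the left side vanishes. So the inequality holds, with equality on $\ran S_i$.
\item With $R=Q_c^{1/2}$ diagonal, $S_iR=R(c_iS_i)$ on basis vectors. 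This gives $\Theta_R(S_i)=c_iS_i$ and $\vartheta_R(S_i)=c_i$.
\item Similarly, $S_i^*Re_{iw}=(1-s_c)^{1/2}c_w^{-1}e_w=R\,c_i^{-1}S_i^*e_{iw}$, and both sides vanish on the other basis vectors. This gives $\Theta_R(S_i^*)=c_i^{-1}S_i^*$.
\item Then $\Lambda_R(S_i)=\max(1,c_i,c_i^{-1})$. This equals $c_i$ because $s_c<1$ forces every $c_i>1$.
\item The vacuum entry of $Q_c$ is $1-s_c$. With $s_c\le1-a$ (and $a>0$, so $s_c<1$), this gives $\ip{Q_c\Omega}{\Omega}\ge a$.
\end{itemize}

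\emph{Necessity.} Assume $S_iQS_i^*\le c_i^2Q$ for all $i$, with $Q$ trace one and injective. The key tool is the projection $P_i=S_iS_i^*$ onto $\ran S_i$. These projections are mutually orthogonal, and $I=P_\Omega+\sum_iP_i$, where $P_\Omega$ projects onto $\C\Omega$.
\begin{itemize}
\item Compress the inequality by $P_i$ and take traces: $\Tr(S_iQS_i^*)\le c_i^2\Tr(P_iQ)$.
\item The left side equals $\Tr(QS_i^*S_i)=\Tr Q=1$, since $S_i$ is an isometry. Thus $\Tr(P_iQ)\ge c_i^{-2}$.
\item Summing over $i$ gives $s_c\le\sum_i\Tr(P_iQ)=1-\ip{Q\Omega}{\Omega}$.
\item Injectivity of $Q$ forces $\ip{Q\Omega}{\Omega}>0$, so $s_c<1$.
\item Under the vacuum lower bound $\ip{Q\Omega}{\Omega}\ge a$, the same chain gives $s_c\le1-a$.
\end{itemize}

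The main obstacle is justifying the trace manipulations when $Q$ is only trace class on an infinite-dimensional space. Compressing an operator inequality by a projection preserves order, so that step is safe. The identity $\Tr(S_iQS_i^*)=\Tr(Q)$ follows from cyclicity, since $S_i$ is bounded and $Q$ is trace class. The rest is routine.
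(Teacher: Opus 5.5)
Your proof is correct. Sufficiency and the factorization formulas \eqref{eq:branch_factors} match the paper, which also verifies them by direct evaluation on word vectors. The difference is in necessity.

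\textbf{The paper's route.} It tests $S_iQS_i^*\le c_i^2Q$ only on the basis vectors $e_{iv}$. This gives $q_{iv}\ge c_i^{-2}q_v$ for the diagonal entries $q_w=\ip{Qe_w}{e_w}$. Iterating along the whole word tree yields $q_w\ge c_w^{-2}q_\varnothing$. Summing the geometric series then gives $1=\Tr Q\ge q_\varnothing\sum_n s_c^n$, which forces $s_c<1$ and $q_\varnothing\le 1-s_c$.

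\textbf{Your route.} You apply the inequality only once, compressed to $\ran S_i$. You combine it with $\Tr(S_iQS_i^*)=\Tr Q$ and the decomposition $I=P_\Omega+\sum_iS_iS_i^*$. This reaches $s_c\le 1-\ip{Q\Omega}{\Omega}$ in one step, with no iteration and no series.

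\textbf{What each buys.} Your argument is shorter and basis-free. It applies verbatim to any family of isometries with orthogonal ranges whose range projections sum to $I-P_\Omega$. The paper's entrywise argument gives pointwise information: every admissible anchor has diagonal dominating the geometric profile $c_w^{-2}q_\varnothing$. Consequently, any anchor with the extremal vacuum mass $q_\varnothing=1-s_c$ must have exactly the diagonal of \eqref{eq:branch_anchor}.
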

\begin{proof}
The necessity argument allows arbitrary, possibly nondiagonal, anchors. Let $q_w=\ip{Qe_w}{e_w}$. Faithfulness gives $q_\varnothing>0$. Test $S_iQS_i^*\le c_i^2Q$ on $e_{iv}$ to get $q_{iv}\ge c_i^{-2}q_v$. Iteration and summation over the orthonormal word basis yield
\[
 1=\Tr Q\ge q_\varnothing\sum_{n\ge0}s_c^n.
\]
Thus $s_c<1$ and $q_\varnothing\le1-s_c$, proving both necessary conditions.

Under these conditions \eqref{eq:branch_anchor} has strictly positive diagonal entries, trace $(1-s_c)\sum_ns_c^n=1$, and vacuum mass $1-s_c$. Direct evaluation on word vectors proves the two factorization formulas in \eqref{eq:branch_factors}. Their norms give the exact costs. Since $s_c<1$ implies every $c_i>1$, the formula for $\Lambda_R$ follows.
\end{proof}
For equal costs the condition is $c>\sqrt d$, with an unattained infimum among faithful trace-one anchors. If the vacuum mass is at least $a$, the minimum common cost is the attained value $\sqrt{d/(1-a)}$.

\subsection{Target costs in word and PBW coordinates}
\begin{theorem}[Exact homogeneous target cost]\label{thm:fock_target_cost}
For the anchor \eqref{eq:branch_anchor} and a homogeneous analytic free polynomial
$p(x)=\sum_{|w|=n}a_wx_w$, $n\ge1$,
\begin{equation}\label{eq:fock_target_cost}
 \norm{p(S)}^2=\sum_{|w|=n}|a_w|^2,\qquad
 \vartheta_R(p(S))^2=\sum_{|w|=n}|a_w|^2c_w^2.
\end{equation}
If a degree-$n$ PBW dictionary has word-coordinate matrix $C$ and $p=\sum_\beta z_\beta p_\beta$, then
\begin{equation}\label{eq:pbw_resource_form}
 \vartheta_R(p(S))^2=z^*C^*D_cCz,\qquad
 D_c=\diag(c_w^2)_{|w|=n}.
\end{equation}
\end{theorem}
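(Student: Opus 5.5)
The plan is to compute everything directly on the orthonormal word basis, using equal-length orthogonality $S_u^*S_v=\delta_{uv}I$ and the diagonal form of $Q_c$. For the operator norm I will write $p(S)=\sum_{|w|=n}a_wS_w$. Because the $S_w$ with $|w|=n$ are isometries with mutually orthogonal ranges, $p(S)^*p(S)=\sum_{u,v}\overline{a_u}a_vS_u^*S_v=\big(\sum_w|a_w|^2\big)I$. So $p(S)$ is a multiple of an isometry, and the first identity in \eqref{eq:fock_target_cost} follows at once.

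For the multiplier cost I will use Theorem~\ref{thm:multiplier}, which identifies $\vartheta_R(B)$ with $\norm{C_B}$, where $BR=RC_B$. By \eqref{eq:branch_factors}, $\Theta_R$ is multiplicative, so $\Theta_R(S_w)=c_wS_w$; one can also check this directly, since $S_wRe_v=(1-s_c)^{1/2}c_v^{-1}e_{wv}$ and $Rc_wS_we_v=(1-s_c)^{1/2}c_{wv}^{-1}c_we_v$ with $c_{wv}=c_wc_v$. Linearity of $B\mapsto C_B$ then gives $C_{p(S)}=\sum_w a_wc_wS_w$. Applying the norm computation above to the coefficients $a_wc_w$ yields $\vartheta_R(p(S))^2=\sum_w|a_w|^2c_w^2$. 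I expect the one point needing care is whether $\vartheta_R$ is really attained by this particular factor. That is settled by the uniqueness of $C_B$ in Theorem~\ref{thm:multiplier}, since $R$ is injective, so any valid factorization is the factorization.

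For the PBW form, the plan is to write $p=\sum_\beta z_\beta p_\beta$ and express each $p_\beta$ in word coordinates as $p_\beta=\sum_wC_{w\beta}x_w$. The word coefficient vector of $p$ is then $a=Cz$. Substituting into the weighted formula gives $\sum_w|(Cz)_w|^2c_w^2=z^*C^*D_cCz$. I do not expect a real obstacle here. The only caveat is that the argument needs $p$ to be analytic and homogeneous, with no adjoint letters, so that equal-length orthogonality applies. The PBW dictionary elements must therefore be restricted to degree-$n$ analytic polynomials; in particular the symmetrized Lie products of the letters $x_i$ lie in the span of the degree-$n$ words, so $C$ is well defined.
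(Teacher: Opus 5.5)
Your proposal is correct and follows the paper's proof: equal-length orthogonality gives $p(S)^*p(S)=\bigl(\sum_w|a_w|^2\bigr)I$, the multiplier homomorphism gives $\Theta_R(p(S))=\sum_wa_wc_wS_w$ and the same orthogonality then yields the cost, and substituting $a=Cz$ gives the PBW quadratic form. One small slip: in your direct check, $Rc_wS_we_v$ equals $(1-s_c)^{1/2}c_{wv}^{-1}c_w\,e_{wv}$, not a multiple of $e_v$.
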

\begin{proof}
Equal-length orthogonality gives
$p(S)^*p(S)=(\sum_w|a_w|^2)I$.
The multiplier homomorphism satisfies
$\Theta_R(p(S))=\sum_wa_wc_wS_w$.
Apply the same orthogonality to this operator, and then substitute $a=Cz$.
\end{proof}

The analytic free algebra is faithfully represented because $p(S)\Omega=\sum_wa_we_w$. Its $*$-extension has the relations $S_i^*S_j=\delta_{ij}I$. Cross terms in \eqref{eq:pbw_resource_form} remain present. In word order $(xxxy,xxyx,xyxx,yxxx)$, for example,
\[
 b_1=\sym(x,x,[x,y])=\tfrac13(1,0,0,-1),\qquad
 b_3=[x,[x,[x,y]]]=(1,-3,3,-1).
\]
They have factor deficits one and three, yet word inner product $2/3$ and weighted cross term $2c_1^6c_2^2/3$.

\begin{corollary}[Nested Lie targets and anchor design]\label{cor:lie_resource_design}
For $h_n=\ad_{S_1}^{n-1}S_2$, $n\ge2$,
\begin{align}
 h_n&=\sum_{j=0}^{n-1}(-1)^j\binom{n-1}jS_1^{n-1-j}S_2S_1^j,\\
 \norm{h_n}&=\binom{2n-2}{n-1}^{1/2},\qquad
 \vartheta_R(h_n)=c_1^{n-1}c_2\binom{2n-2}{n-1}^{1/2}.
 \label{eq:lie_exact_cost}
\end{align}
For $d=2$ and vacuum mass at least $a$, the minimum cost of $h_n/\norm{h_n}$ within the geometric anchor family \eqref{eq:branch_anchor} is
\begin{equation}\label{eq:lie_design_optimum}
 (1-a)^{-n/2}\left(\frac{n^n}{(n-1)^{n-1}}\right)^{1/2}.
\end{equation}
It is attained at $c_1^{-2}=(1-a)(n-1)/n$ and $c_2^{-2}=(1-a)/n$.
\end{corollary}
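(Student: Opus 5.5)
The plan has three parts: expand $h_n$ by the binomial formula, read off both norms from Theorem~\ref{thm:fock_target_cost}, and then minimize the normalized cost over the geometric anchors allowed by Theorem~\ref{thm:branch_threshold}.

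\emph{Expansion of $h_n$.} I will prove the formula for $h_n$ by induction on $n$, using $\ad_{S_1}=L_{S_1}-R_{S_1}$. Left and right multiplication by $S_1$ commute, so the binomial theorem gives
\[
 \ad_{S_1}^{n-1}S_2=\sum_{j=0}^{n-1}\binom{n-1}j(-1)^jS_1^{n-1-j}S_2S_1^j .
\]
The words $x_1^{n-1-j}x_2x_1^j$ are distinct and all have length $n$. So $h_n$ is a homogeneous analytic polynomial with coefficients $a_w=(-1)^j\binom{n-1}j$ on these words.

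\emph{Norm and multiplier cost.} Theorem~\ref{thm:fock_target_cost} gives $\norm{h_n}^2=\sum_j\binom{n-1}j^2$, and Vandermonde's identity evaluates this sum as $\binom{2n-2}{n-1}$. Every word in the expansion contains the letter $x_1$ exactly $n-1$ times and $x_2$ once, so $c_w=c_1^{n-1}c_2$ is the same for all of them. The cost formula therefore gives $\vartheta_R(h_n)=c_1^{n-1}c_2\binom{2n-2}{n-1}^{1/2}$, which proves \eqref{eq:lie_exact_cost}. Dividing by $\norm{h_n}$, the normalized target has cost exactly $c_1^{n-1}c_2$.

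\emph{Optimization over anchors.} Within the family \eqref{eq:branch_anchor}, the vacuum-mass constraint is $s_c=c_1^{-2}+c_2^{-2}\le1-a$ by Theorem~\ref{thm:branch_threshold}. Set $u=c_1^{-2}$ and $v=c_2^{-2}$. Then minimizing $c_1^{n-1}c_2$ is the same as maximizing $u^{n-1}v$ subject to $u,v>0$ and $u+v\le1-a$. The objective increases in both variables, so the constraint binds at $u+v=1-a$. Weighted AM--GM, or a Lagrange multiplier, then puts the maximum at $u=(1-a)(n-1)/n$, $v=(1-a)/n$, with value
\[
 (1-a)^n\frac{(n-1)^{n-1}}{n^n}.
\]
Taking the reciprocal square root gives \eqref{eq:lie_design_optimum}. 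These $u,v$ are strictly positive and satisfy $s_c=1-a<1$, so they define admissible faithful anchors and the minimum is attained.

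No step here is a real obstacle. The only point needing care is that the cost $c_w$ is constant across all words in $h_n$, and this is what lets the normalized cost separate out as $c_1^{n-1}c_2$.
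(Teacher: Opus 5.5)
Your proposal is correct and follows essentially the same route as the paper: the binomial expansion from $\ad_{S_1}=L_{S_1}-R_{S_1}$, the orthogonality-based norm and cost formulas of Theorem~\ref{thm:fock_target_cost} together with Vandermonde's identity and the common word weight $c_1^{n-1}c_2$, and then maximizing $u^{n-1}v$ on the binding budget $u+v=1-a$. The paper differentiates the logarithm where you use weighted AM--GM, which is an equivalent way to carry out the last step.
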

\begin{proof}
The bracket recurrence gives the binomial expansion. Its distinct words all have weight $c_1^{n-1}c_2$, and
$\sum_j\binom{n-1}j^2=\binom{2n-2}{n-1}$.
For design set $u_i=c_i^{-2}$. Minimizing the cost is equivalent to maximizing $u_1^{n-1}u_2$ subject to $u_1+u_2\le1-a$. The maximum uses the full budget, and differentiation of the logarithm gives the stated allocation.
\end{proof}
The design problem here is restricted to the geometric family; the branching existence criterion itself covered all faithful anchors.

\subsection{Exact observation depth and noise risk}
Let $P_N$ project onto words of length at most $N$, and let $D_N=1+d+\cdots+d^N$. Fix $M,N\ge0$ and a nonzero homogeneous analytic polynomial $p$ of degree $n\ge1$. The unknown is $X$ and the known right target is $p(S)$. Define
\begin{equation}\label{eq:fock_protocol}
 O_{M,\ell}(X)=P_MXRP_\ell,\qquad
 L_{M,N,p}(X)=P_MXp(S)RP_N.
\end{equation}
The observation contains $D_MD_\ell$ complex coordinates.

\begin{theorem}[Depth and minimax risk]\label{thm:fock_depth_risk}
Over all bounded $X$, the observation $O_{M,\ell}(X)$ determines $L_{M,N,p}(X)$ if and only if
\begin{equation}\label{eq:fock_depth}
 \ell\ge N+n.
\end{equation}
Under this condition, for $Y=O_{M,\ell}(X)+E$, $E=P_MEP_\ell$, $\norm E_2\le\delta$, use $YC_pP_N$, where $C_p=\Theta_R(p(S))$. For $\delta>0$,
\begin{equation}\label{eq:fock_minimax}
 \inf_D\sup_{X,E}\norm{D(O_{M,\ell}(X)+E)-L_{M,N,p}(X)}_2
 =\delta\left(\sum_{|w|=n}|a_w|^2c_w^2\right)^{1/2}.
\end{equation}
Both lower bounds already hold for finite-rank unknowns, with no additional size prior on $X$.
\end{theorem}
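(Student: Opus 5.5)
The argument is finite-dimensional linear algebra on the word basis, organized around the factorization $p(S)R=RC_p$ from Theorem~\ref{thm:fock_target_cost}, where $C_p=\Theta_R(p(S))=\sum_w a_wc_wS_w$. First I write the target as
\[
 L_{M,N,p}(X)=P_MXRC_pP_N .
\]
Since $C_p$ is a homogeneous creation polynomial of degree $n$, it maps $P_N\mathcal F_d$ into words of length at most $N+n$. Hence $C_pP_N=P_{N+n}C_pP_N$, and therefore
\[
 L_{M,N,p}(X)=O_{M,N+n}(X)\,C_pP_N .
\]
When $\ell\ge N+n$ we have $P_{N+n}=P_{N+n}P_\ell$, so $L=O_{M,\ell}(X)C_pP_N$. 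This proves sufficiency in \eqref{eq:fock_depth} and identifies the decoder $YC_pP_N$.

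For necessity, suppose $\ell<N+n$. I will build a rank-one $X$ that is invisible to $O_{M,\ell}$ but has nonzero target. Choose a word $w$ with $a_w\ne0$ and any word $v$ of length $N$. Then $p(S)e_v=\sum_u a_ue_{uv}$ contains $e_{wv}$, whose length $N+n$ exceeds $\ell$. Put $X=\Omega\,e_{wv}^*R^{-1}$, which is well defined since $e_{wv}$ is an eigenvector of $R$, so $X=r_{wv}^{-1}\Omega e_{wv}^*$. Then $XRP_\ell=\Omega e_{wv}^*P_\ell=0$. On the other hand, $P_MXp(S)Re_v$ equals $r_v\,a_w\,\Omega$ up to nonzero factors, which is nonzero. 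Doubling $X$ with the sign trick makes the noiseless worst-case error infinite, and $X$ is finite rank.

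For the risk, the upper bound is immediate:
\[
 \norm{EC_pP_N}_2\le\delta\norm{C_p}=\delta\Bigl(\sum|a_w|^2c_w^2\Bigr)^{1/2},
\]
using equal-length orthogonality as in Theorem~\ref{thm:fock_target_cost}.

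The lower bound is the main obstacle. The plan is the two-point argument from Theorem~\ref{thm:multiplier}, restricted to the finite observation window. I take $X=\Omega\xi^*$ with $\xi=P_\ell\xi$, so $X$ is finite rank and $XR=XRP_\ell$. Then $O(X)=\Omega(R\xi)^*$ restricted to the window, and $\norm{O(X)}_2=\norm{R\xi}$. The target is $\Omega(P_NC_p^*R\xi)^*$, because $p(S)R=RC_p$.

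So the sharp constant reduces to
\[
 \sup_{\eta\in R P_\ell\mathcal F_d}\frac{\norm{P_NC_p^*\eta}}{\norm\eta}.
\]
Since $R$ is diagonal and invertible on $P_\ell\mathcal F_d$, the range $RP_\ell\mathcal F_d$ is all of $P_\ell\mathcal F_d$. Thus the supremum is $\norm{P_NC_p^*P_\ell}$.

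It remains to show this equals $\norm{C_p}$. By the range inclusion $C_pP_N=P_{N+n}C_pP_N$ and $\ell\ge N+n$, the operator $P_\ell C_pP_N$ equals $C_pP_N$. Moreover $C_p^*C_p=\bigl(\sum|a_w|^2c_w^2\bigr)I$, so $C_p$ is a scalar multiple of an isometry, and $C_pP_N$ has norm $\norm{C_p}$ because $P_N\neq0$. Hence the supremum equals $\norm{C_p}$.

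Rescaling $X$ so that $\norm{O(X)}_2=\delta$ and using the pair $\pm X$ with noises $\mp O(X)$ then gives the lower bound $\delta\norm{C_p}$. Both noises respect the window constraint $E=P_MEP_\ell$, and in particular $\Omega\in P_M\mathcal F_d$ for every $M\ge0$.
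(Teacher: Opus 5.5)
Your proof is correct and follows essentially the same argument as the paper's. Sufficiency uses $C_pP_N=P_{N+n}C_pP_N$, necessity uses a rank-one unknown $X$ whose input vector has word length beyond $\ell$, and the minimax lower bound uses the two-point pair $\pm X$ with $X=\Omega\xi^*$. The maximizing choice $R\xi\propto C_pv$ in your norm computation $\|P_NC_p^*P_\ell\|=\|C_p\|$ is exactly the paper's explicit extremal $X_0=u(R^{-1}z)^*$ with $z=C_pv/\beta_p$. One small imprecision: the infinite noiseless error in the undetermined case comes from the scaled pair $\pm tX$ with $t\to\infty$, not from a sign flip alone, but this claim is not part of the statement.
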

\begin{proof}
The operator $C_p$ raises degree by exactly $n$, so $C_pP_N=P_{N+n}C_pP_N$. If \eqref{eq:fock_depth} holds, factorization gives
\[
 L_{M,N,p}(X)=P_MXR C_pP_N=O_{M,\ell}(X)C_pP_N.
\]
Write $\beta_p=\norm{C_p}>0$. Orthogonality gives $C_p^*C_p=\beta_p^2I$, and the error is at most $\delta\beta_p$.

If $\ell<N+n$, choose a word vector $v$ of length $N$, put $z=p(S)v\ne0$, and take a unit vector $u\in P_M\mathcal F_d$. Then $X=uz^*$ has zero observation, because $z$ has degree $N+n$, but $L_{M,N,p}(X)v\ne0$. Thus the target is not determined.

For the noise lower bound choose a word vector $v\in P_N\mathcal F_d$ and put $z=C_pv/\beta_p$. This is a finite-support unit vector, so $R^{-1}z$ is well defined. Let $X_0=u(R^{-1}z)^*$. Then
\[
 O_{M,\ell}(X_0)=uz^*,\qquad
 L_{M,N,p}(X_0)=\beta_puv^*,
\]
where the second equality follows from $C_p^*z=\beta_pv$. The two unknowns $\pm\delta X_0$ admit the same zero observed datum with admissible opposite errors. Their targets are $2\delta\beta_p$ apart, proving the minimax lower bound.
\end{proof}

For finite-rank $X$ this experiment lies in the original Gaussian model. With a specified real standard Gaussian $g$ and $T_X=gX$,
\[
 \E[gP_MT_XRP_\ell]=P_MXRP_\ell,
 \qquad \norm{gA}_{L^2(\Sch_2)}=\norm A_2.
\]
The same depth and risk therefore apply to the random target $gL_{M,N,p}(X)$. Together, \eqref{eq:branch_threshold}, \eqref{eq:fock_target_cost}, and \eqref{eq:fock_depth} describe the observation mass, target norm, and finite depth required by this model.

\section{Bounded protocols and joint laws}\label{sec:obsappendix}
The main finite decoder uses source--response correlations. Bounded protocols can instead identify a fixed-frame Borel law or a scalar spectral quotient. This appendix gives their inverse coordinates and specifies the extra source-dependence and tail conditions needed to return to fixed-source coefficient recovery. A marginal law and a joint source law remain distinct targets.

\subsection{Bounded positive coordinates}
For a positive trace-class operator $Q$ on a fixed separable Hilbert space, define on its exterior Fock space
\begin{equation}\label{eq:extstate}
 \Gamma_\wedge(Q)=\bigoplus_{a\ge0}\wedge^aQ,\qquad
 \rho(Q)=\frac{\Gamma_\wedge(Q)}{\det_F(I+Q)}.
\end{equation}
The exterior-power expansion gives $\Tr\Gamma_\wedge(Q)=\det_F(I+Q)$, so $\rho(Q)$ is a positive trace-one operator \cite{Simon,Bornemann}. Let $P_0$ and $P_1$ denote the vacuum and one-particle sector projections.

\begin{proposition}[Bounded coordinates and vacuum inversion]\label{prop:boundedmarks}
One has
\begin{equation}\label{eq:vacuum-inverse}
 p_0(Q)=\Tr(P_0\rho(Q))=\det_F(I+Q)^{-1}>0,\qquad
 Q=\frac{P_1\rho(Q)P_1}{p_0(Q)}.
\end{equation}
For $0\le B\le I$ put
\begin{equation}\label{eq:detmarks}
 D_Q(B)=\Tr(\rho(Q)\Gamma_\wedge(B))
       =\frac{\det_F(I+QB)}{\det_F(I+Q)}\in(0,1].
\end{equation}
The values $D_Q(0)$ and $D_Q(P_v)$ on a countable dense set of unit vectors determine $Q$. For positive trace-class $Q,S$,
\begin{align}
 \norm{\rho(Q)-\rho(S)}_1&\le2\norm{Q-S}_1,\label{eq:boundedstateforward}\\
 \norm{Q-S}_1&\le e^L(1+L)\norm{\rho(Q)-\rho(S)}_1
 \quad\text{if }\Tr Q,\Tr S\le L.\label{eq:boundedstateinverse}
\end{align}
\end{proposition}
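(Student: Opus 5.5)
Diagonalize $Q=\sum_jq_j\,v_jv_j^*$ with $q_j\ge0$, $\sum_jq_j<\infty$. On the exterior Fock space the wedges of the $v_j$ diagonalize $\Gamma_\wedge(Q)$, with eigenvalue $\prod_{j\in A}q_j$ on $v_A$. The vacuum sector is one-dimensional with eigenvalue $1$, so $p_0(Q)=\det_F(I+Q)^{-1}$, which is positive. The one-particle sector is $\wedge^1Q=Q$, so $P_1\rho(Q)P_1=Q/\det_F(I+Q)$. Dividing by $p_0(Q)$ gives \eqref{eq:vacuum-inverse}.

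For \eqref{eq:detmarks}, multiplicativity $\Gamma_\wedge(Q)\Gamma_\wedge(B)=\Gamma_\wedge(QB)$ on each exterior power gives
\[
\Tr\Gamma_\wedge(QB)=\sum_a\Tr\wedge^a(QB)=\det_F(I+QB).
\]
The eigenvalues of $QB$ coincide with those of $Q^{1/2}BQ^{1/2}$, which lie between $0$ and $Q$. Hence the determinant ratio lies in $(0,1]$.

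For determination, put $B=P_v$. Then $QP_v$ has rank one with trace $\ip{Qv}v$, so $D_Q(P_v)=(1+\ip{Qv}v)/\det_F(I+Q)$. Since $D_Q(0)=p_0(Q)$, the quotient gives
\[
\ip{Qv}v=\frac{D_Q(P_v)}{D_Q(0)}-1 .
\]
Knowing this on a countable dense set of unit vectors determines the bounded quadratic form by continuity, and polarization then determines $Q$.

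For the forward estimate \eqref{eq:boundedstateforward}, note $\rho(Q)=\Gamma_\wedge(Q)/\Tr\Gamma_\wedge(Q)$ is the normalization of a positive operator. I would use the elementary inequality $\norm{X/\Tr X-Y/\Tr Y}_1\le 2\norm{X-Y}_1/\Tr X$. Applied directly it would need a bound on $\norm{\Gamma_\wedge(Q)-\Gamma_\wedge(S)}_1$, which is not linear in $Q-S$. Instead I would interpolate along $Q_t=S+t(Q-S)$ and differentiate $\rho(Q_t)$. The derivative of $\Gamma_\wedge$ is the second quantization $d\Gamma_\wedge(Q-S)$ acting through the product rule. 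Its trace-norm pairing with $\Gamma_\wedge(Q_t)$ is bounded by $\norm{Q-S}_1\det_F(I+Q_t)$; the cleanest route is the cyclic identity
\[
\Tr\big(\Gamma_\wedge(Q_t)\,d\Gamma_\wedge(H)\big)=\det_F(I+Q_t)\,\Tr\big(H(I+Q_t)^{-1}Q_t\big)
\]
together with a duality argument for the trace norm. Both the numerator derivative and the normalization derivative then contribute at most $\norm{Q-S}_1$ after division by $\det_F$, and integrating over $t$ gives the constant $2$. This derivative bound is the step I expect to be the main obstacle. If the duality argument is too delicate, I would fall back to a non-monotone route: compare both states with the state of $\min(Q,S)$-type interpolants, or apply Lieb-type convexity, accepting a worse constant only if forced.

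For the inverse \eqref{eq:boundedstateinverse}, I would use the vacuum formula $Q=P_1\rho(Q)P_1/p_0(Q)$. Write
\[
Q-S=\frac{P_1(\rho(Q)-\rho(S))P_1}{p_0(Q)}+P_1\rho(S)P_1\Big(\frac1{p_0(Q)}-\frac1{p_0(S)}\Big).
\]
The first term has trace norm at most $\det_F(I+Q)\,\norm{\rho(Q)-\rho(S)}_1\le e^L\norm{\rho(Q)-\rho(S)}_1$. For the second term, $P_1\rho(S)P_1=p_0(S)\,S$, and compressing to the vacuum gives $|p_0(Q)-p_0(S)|\le\norm{\rho(Q)-\rho(S)}_1$. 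The second term is therefore at most
\[
\Tr S\,\frac{|p_0(Q)-p_0(S)|}{p_0(Q)}\le L\,e^L\,\norm{\rho(Q)-\rho(S)}_1 .
\]
Adding the two bounds gives $e^L(1+L)$.
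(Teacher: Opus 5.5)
Your arguments for the vacuum inversion, the determinant marks, the determination of $Q$ from $D_Q(0)$ and $D_Q(P_v)$, and the inverse estimate \eqref{eq:boundedstateinverse} are correct. The last one uses exactly the paper's decomposition $Q-S=(A-C)/p+(q-p)S/p$ with contractive vacuum and one-particle readouts.

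The genuine gap is the forward estimate \eqref{eq:boundedstateforward}, which you do not actually prove. You call the derivative bound the main obstacle, invoke an unspecified ``duality argument,'' and offer fallbacks that may lose the constant $2$. The missing idea is positivity of the derivative map. On the $a$th exterior sector, $D\Gamma_\wedge(Q_t)[E]$ is the restriction to $\wedge^a$ of $\sum_{i=1}^aQ_t^{\otimes(i-1)}\otimes E\otimes Q_t^{\otimes(a-i)}$, which is positive whenever $E\ge0$. Splitting your direction $E=Q-S$ as $E_+-E_-$ therefore gives
\[
 \norm{D\Gamma_\wedge(Q_t)[E]}_1\le\Tr D\Gamma_\wedge(Q_t)[|E|]
 =\det_F(I+Q_t)\Tr\big((I+Q_t)^{-1}|E|\big)\le\det_F(I+Q_t)\norm E_1,
\]
where the equality is the derivative of $s\mapsto\det_F(I+Q_t+s|E|)$. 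Without this positivity, no trace identity can control the trace norm of a non-positive operator. The normalization term contributes $|\Tr((I+Q_t)^{-1}E)|\le\norm E_1$, and integrating over $t$ gives the constant $2$.

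The identity you propose also computes the wrong object. The quantity $\Tr(\Gamma_\wedge(Q_t)\,d\Gamma_\wedge(H))$ is the derivative at $s=0$ of $\det_F(I+Q_te^{sH})$, not of $\det_F(I+Q_t+sH)$. Likewise, the derivative of $\Gamma_\wedge$ is not the second quantization $d\Gamma_\wedge(H)$. For example, at $Q_t=0$ your identity gives zero, while the actual derivative is $H$ on the one-particle sector, with trace $\Tr H$.

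Finally, the differentiation along $Q_t$ should be done in finite dimensions and then transferred to trace-class $Q,S$ by common finite-rank compression and the Fredholm exterior expansion, as the paper does. Your proposal does not address this limiting step.
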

\begin{proof}
The zeroth and first exterior powers are $1$ and $Q$, proving \eqref{eq:vacuum-inverse}. Exterior powers preserve multiplication, and $\Gamma_\wedge(B)$ is a positive contraction. The Fredholm expansion then gives \eqref{eq:detmarks}. For a rank-one projection all higher exterior powers vanish, so
\[
 D_Q(P_v)=p_0(Q)(1+\ip{Qv}{v}).
\]
Density and complex polarization determine all matrix entries.

To prove the estimates, first work in finite dimension along $Q_t=(1-t)Q+tS$ and put $E=S-Q$. In a positive direction $E\ge0$, the derivative of each exterior sector is the antisymmetric compression of a sum of positive tensor products, so $D\Gamma_\wedge(Q_t)[E]\ge0$. For a general self-adjoint $E$, use its positive and negative parts:
\[
 \norm{D\Gamma_\wedge(Q_t)[E]}_1
 \le\Tr D\Gamma_\wedge(Q_t)[|E|]
 =\det(I+Q_t)\Tr((I+Q_t)^{-1}|E|)
 \le\det(I+Q_t)\norm E_1.
\]
Differentiating the normalization contributes at most another $\norm E_1$. Integration gives \eqref{eq:boundedstateforward}; common finite-rank compression and the Fredholm exterior expansion pass it to trace-class operators.

If both traces are at most $L$, then $p_0(Q),p_0(S)\ge e^{-L}$. Write the one-particle blocks as $A,C$ and the vacuum probabilities as $p,q$. The identity
$Q-S=(A-C)/p+(q-p)S/p$
and contractivity of the two block readouts prove \eqref{eq:boundedstateinverse}.
\end{proof}

The one-particle block here is a sector compression. A simpler bounded encoding, $(1\oplus Q)/(1+\Tr Q)$, also admits a vacuum inverse; exterior powers additionally organize spectral transforms.

For a bounded tuple $T$, form the row from the identity, its generators, and adjoints as in \eqref{eq:QH}. The first-row blocks of $Q_R(T)$ are $RT_jR$, giving
\[
 (T_j)_{k\ell}=\frac{(Q_R(T)_{0j})_{k\ell}}{r_kr_\ell}.
\]
The matrix-entry Borel structure on $B(H)^d$ is standard: on each bounded ball it is the strong-Borel structure, and these balls form a countable exhaustion. The observation is strongly-$*$ continuous on each ball and has the displayed Borel inverse on its image. Consequently the bounded coordinates above form a Borel injection for tuples which are bounded at each sample, without a common deterministic bound. Finite recovery from the normalized state retains both the vacuum-inversion cost and the finite anchor cost.

\subsection{Energy probes and output tails}\label{subsec:energyprobes}
Suppose the protocol permits applying the generators and adjoints to the same sample, taking finite coherent sums, and measuring output norms. For finite-support $h=(h_0,\ldots,h_b)$ write
\[
 Y_T(h)=Rh_0+\sum_{a=1}^bA_aRh_a.
\]
Let $z$ be an auxiliary standard circular scalar Gaussian, independent of the original sample. Since $|z|^2$ is exponential of mean one,
\begin{equation}\label{eq:bounded_energy_probe}
 U_h(Q(T)):=\E_z e^{-\norm{Y_T(zh)}^2}
 =\frac1{1+\ip{Q(T)h}h}\in(0,1].
\end{equation}
Basis vectors and their real and imaginary two-vector sums recover each matrix entry by inversion and polarization. On a fixed operator-norm ball, each fixed probe has a uniform positive lower bound, and the inverse derivative is bounded by the reciprocal square of that lower bound.

\begin{proposition}[Same-sample energy observations]\label{prop:energyprobes}
All finite readings
\begin{equation}\label{eq:energy_probe_replicas}
 \E_{T,z}\exp\left(-\sum_{j=1}^r\norm{Y_T(z_jh_j)}^2\right)
 =\E_T\prod_{j=1}^rU_{h_j}(Q(T))
\end{equation}
determine the fixed-frame Borel law of a samplewise bounded tuple. The auxiliary $z_j$ are independent, and all probes use the same original sample. On a uniform operator-norm ball, convergence of these readings gives a unique limiting law in the compact positive-border space; this law is supported on genuine tuples if and only if the mean escape score of \eqref{eq:lawescape} vanishes.
\end{proposition}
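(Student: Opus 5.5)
The plan has three parts: verify the product identity, prove that the readings determine the law, and then handle limits on a bounded ball.

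\emph{Product identity.} Condition on $T$. The variables $z_j$ are independent of $T$ and of each other. Linearity gives $Y_T(z_jh_j)=z_jY_T(h_j)$, so $\norm{Y_T(z_jh_j)}^2=|z_j|^2\ip{Q(T)h_j}{h_j}$. The conditional expectation of the exponential of the sum therefore factors into $\prod_jU_{h_j}(Q(T))$, using \eqref{eq:bounded_energy_probe}. Tonelli justifies this, since every integrand lies in $(0,1]$.

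\emph{Determination of the law.} Set $x_h=\ip{Q(T)h}h\ge0$, so that $U_h=(1+x_h)^{-1}$. The algebra generated by the functions $x\mapsto(1+x)^{-1}$ on $[0,\infty)$, together with the constants, separates points and is closed under products. Allowing repeated probes $h_j=h$, the readings give $\E_T\prod_j U_{h_j}$ for every finite list. Hence they determine the joint law of $(U_{h})_{h\in\mathcal D}$ for a countable set $\mathcal D$ of finite-support vectors, through mixed moments of $[0,1]$-valued variables. Finite-dimensional marginals of bounded variables are determined by their moments (Hausdorff, or Stone--Weierstrass on $[0,1]^k$). So the law of the countable family is determined by Kolmogorov consistency.

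Now take $\mathcal D$ to consist of basis vectors and the real and imaginary two-vector sums. Inversion and polarization turn $(U_h)_{h\in\mathcal D}$ measurably into all matrix entries of $Q(T)$, and its first row gives the entries $(T_j)_{k\ell}$ after division by $r_kr_\ell$. This is exactly the Borel injection described before the proposition. The pushforward of a law under a Borel injection between standard Borel spaces is determined by its image law, because the image is Borel and the inverse is Borel (Lusin--Souslin). This proves determination of the fixed-frame law.

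\emph{Limits on a uniform ball.} On a uniform ball with $\norm{T_j}\le M$, the borders $Q(T)$ lie in the compact set $\mathfrak K_{M,R}$. Each reading $\prod_jU_{h_j}$ is a continuous function of the border there; it depends only on finitely many matrix entries and is bounded below. Take laws $\mu_n$ whose readings converge. Prokhorov gives subsequential weak limits $\nu$ of $Q_\#\mu_n$ on $\mathfrak K_{M,R}$. Each limit has the limiting readings, so every limit gives the same value to each product $\prod U_h$. The algebra generated by these functions and the constants separates points of $\mathfrak K_{M,R}$ by the same polarization, and it is a subalgebra of $C(\mathfrak K_{M,R})$. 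By Stone--Weierstrass it is dense, so $\nu$ is unique and the full sequence converges.

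The support criterion is then exactly \eqref{eq:lawescape} together with Lemma~\ref{lem:monotone_defect}. The limit law $\nu$ is concentrated on the true image if and only if $\int\Tr D\,d\nu=0$, which is the vanishing of the mean escape score. In that case the portmanteau argument following \eqref{eq:lawescape} produces a genuine weak limit law of tuples.

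\emph{Main obstacle.} The expected difficulty is the measurability and injectivity step when there is no common deterministic bound on the tuples. The plan is to rely on the countable exhaustion by bounded balls from the preceding paragraph, apply Lusin--Souslin on each ball, and take the countable union.
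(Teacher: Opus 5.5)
Your proposal is correct and follows the paper's proof essentially step for step. You use conditioning on $T$ for the product identity, and mixed moments of the countably many $[0,1]$-valued probes followed by the vacuum and anchor inversion (checked on the countable exhaustion by bounded balls) to determine the Borel law. You then use compactness of $\mathfrak K_{M,R}$ with Stone--Weierstrass for the separating probes, and \eqref{eq:lawescape} for the limit and the support criterion. Your appeal to Lusin--Souslin simply makes explicit the Borel-inverse step that the paper states briefly.
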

\begin{proof}
Conditioning on $T$ proves \eqref{eq:energy_probe_replicas}. Repeated probes give every mixed integer moment of a countable family of $[0,1]$ coordinates. Polynomial density on finite cubes determines their joint law. The inverse in \eqref{eq:bounded_energy_probe}, followed by faithful-anchor inversion, is Borel and identifies the tuple. These Borel statements can be checked on the countable exhaustion by bounded balls.

On a uniform ball, the probes are continuous on the compact positive-border space and separate its points. Their generated algebra contains constants, so Stone--Weierstrass determines its probability measures. Every subsequence has a weakly convergent further subsequence, and the limiting readings determine its unique limit; hence the original sequence converges. The genuine-support assertion is \eqref{eq:lawescape}.
\end{proof}

Output tails also admit finite-input bounded probes. For $\norm{A_a}\le M$, take independent cylindrical circular Gaussians $\xi_a$, used only through $RP_L$, and set
\[
 Z_{N,L}=\sum_{a=1}^b(I-P_N)A_aRP_L\xi_a,\qquad
 h_{N,L}=\E_\xi\norm{Z_{N,L}}^2.
\]
Then
$0\le h_N(T)-h_{N,L}\le bM^2\norm{R(I-P_L)}_2^2$.
Conditioned on $T$, the covariance $\Sigma$ satisfies
\[
 \E_\xi\norm Z^4=(\Tr\Sigma)^2+\Tr\Sigma^2\le2(bM^2)^2.
\]
Since $0\le w-(1-e^{-tw})/t\le tw^2/2$, for $t>0$,
\begin{equation}\label{eq:finite_energy_tail}
 0\le h_N(T)-\frac{1-\E_\xi e^{-t\norm{Z_{N,L}}^2}}t
 \le bM^2\norm{R(I-P_L)}_2^2+t(bM^2)^2.
\end{equation}
The errors correspond to finite input resolution and finite probe strength. With $J$ independent auxiliary repetitions, the bounded expectation has sample-mean error exceeding $\varepsilon$ with probability at most $1/(4J\varepsilon^2)$; its contribution to the tail reading is $\varepsilon/t$. This protocol assumes the stated output-norm access, separately from Hermite source observations.

\subsection{Joint laws and source dependence}
\begin{lemma}[Laws from countable bounded coordinates]\label{lem:boundedlaw}
Let $X$ be a standard Borel space and $F:X\to[-1,1]^\N$ a Borel injection with Borel inverse on its image. The expectations of all finite same-sample coordinate monomials determine the law of an $X$-valued random object. A candidate moment array is realizable if and only if its normalized functional is nonnegative on every cylinder polynomial nonnegative on the cube, and the resulting cube probability is concentrated on $F(X)$.
\end{lemma}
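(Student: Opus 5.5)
The proof splits into a uniqueness part and a realizability part; both run on the compact cube $K=[-1,1]^\N$ with its product topology.

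\emph{Uniqueness.} Let $Y$ be the random object and put $\mu=F_\#\Law(Y)$, a Borel probability on $K$. The finite same-sample monomial expectations are exactly the integrals of cylinder monomials against $\mu$. Cylinder polynomials form a unital subalgebra of $C(K)$ that separates points, so Stone--Weierstrass makes them uniformly dense. The Riesz representation theorem then shows that these expectations determine $\mu$.

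To return to $X$, we use that $F$ is a Borel injection from a standard Borel space. By Lusin--Souslin, $F(X)$ is Borel and $F$ is a Borel isomorphism onto it; the statement already supplies the Borel inverse. Hence $\Law(Y)=(F^{-1})_\#\mu$, using $\mu(F(X))=1$, is determined.

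\emph{Realizability, necessity.} If the array comes from some $Y$, its normalized linear functional $\Lambda(p)=\int p\,d\mu$ is nonnegative on every cylinder polynomial that is nonnegative on the cube. The resulting probability is $\mu$, which is concentrated on $F(X)$.

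\emph{Realizability, sufficiency.} Define $\Lambda$ on the dense subspace of cylinder polynomials, with $\Lambda(1)=1$ by normalization. Positivity gives boundedness:
\[
 \norm p_\infty\pm p\ge0 \text{ on } K \quad\Longrightarrow\quad |\Lambda(p)|\le\norm p_\infty .
\]
This is the step needing care. Each such $p$ depends on finitely many coordinates, and $\norm p_\infty\pm p$ is a cylinder polynomial nonnegative on the cube, so the stated positivity hypothesis applies directly. No Positivstellensatz or Haviland-type argument is required.

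By this bound, $\Lambda$ extends to a positive unital functional on $C(K)$. Riesz gives a probability $\mu$ on $K$ with the prescribed moments. The remaining hypothesis $\mu(F(X))=1$ lets us set $Y=F^{-1}$ applied to the identity map on $(K,\mu)$, which is an $X$-valued random object realizing the array.

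The main obstacle I expect is only to state precisely which positivity cone is meant. If the hypothesis is read as positivity on all cylinder polynomials nonnegative on the cube, the extension is immediate as above. If instead only Hankel (sums-of-squares) positivity were intended, I would first apply Putinar's Positivstellensatz on each finite subcube $[-1,1]^n$, using the Archimedean module generated by $1-x_i^2$, to upgrade to the stated cone. I would then take projective consistency of the finite-dimensional measures and apply Kolmogorov extension.
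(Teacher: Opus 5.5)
Your proof is correct and is essentially the paper's own argument: uniform density of cylinder polynomials, the sup-norm bound from positivity of $\|p\|_\infty\pm p$, Riesz representation on the compact cube, and the Borel inverse on $F(X)$. The only differences are minor. The paper gets uniqueness from finite-dimensional distributions cube by cube rather than from Stone--Weierstrass on the whole product, which is harmless. Your closing Putinar remark is not needed for the statement as written, and as phrased it would in any case require positivity on the localizing forms for $1-x_i^2$, not Hankel positivity alone.
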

\begin{proof}
Polynomials are dense in the continuous functions on each finite cube, so the moments determine all finite-dimensional distributions and hence the countable-coordinate law. For existence, positivity and
$-\norm p_\infty\le p\le\norm p_\infty$
bound the normalized functional by the supremum norm. Cylinder polynomials are uniformly dense in the continuous functions on the compact countable product. Riesz representation gives a probability measure there; concentration on $F(X)$ permits the Borel inverse. Necessity follows by integration.
\end{proof}

For several positive observations, \eqref{eq:detmarks} gives such bounded coordinates. All their mixed moments specify the joint fixed-frame law. In particular
$\E[\rho(Q_1)\otimes\cdots\otimes\rho(Q_k)]$
is trace class of trace one and needs no unbounded response moments.

To recover a fixed Gaussian realization, also observe the bounded noise coordinates $b_j(g)=2\arctan(g_j)/\pi$. Let $f_k(T)$ be a countable complete bounded operator observation. The mixed expectations
\begin{equation}\label{eq:boundednoisemarks}
 \E\left[\prod_jb_j(g)^{\alpha_j}\prod_kf_k(T)^{\beta_k}\right]
\end{equation}
determine $\Law(g,T)$. If $T=t(g)$ and $\widetilde T=\widetilde t(g)$ have the same such law, their conditional laws given $g$ are identical Dirac measures, and hence $T=\widetilde T$ almost surely. Coefficient injectivity then recovers the same-source kernels.

For candidate data this deterministic source dependence is the condition
\begin{equation}\label{eq:graphcert}
 \E\left|f_k(T)-\E[f_k(T)\mid g]\right|^2=0\qquad(k\ge1).
\end{equation}
Together with the Gaussian source marginal and concentration on actual operator observations, it makes the full tuple a measurable function of the source, since the coordinates separate points and have a Borel inverse. Fixed-chaos recovery also requires the homogeneous finite-projection tests and integrability of Proposition~\ref{prop:chaosimage}; all-radius recovery requires the admissible Cauchy condition.

The difference between source and marginal information is already visible in $gP$ and $-gP$. For joint products, take Pauli matrices $X,Z$ and $A=gX$, $B_+=gZ$, $B_-=-gZ$. Every individual marginal law agrees between the two models, but $\E AB_+=XZ$ and $\E AB_-=-XZ$. The coupling is part of the operational information.

\subsection{Unlabelled spectra}
Taking only $B=bI$ in \eqref{eq:detmarks}, together with $b=0$, recovers $\det_F(I+bQ)$. Values on a set with an interior accumulation point determine this entire function and hence the nonzero eigenvalues with multiplicities. Kernel dimensions and relative eigenspaces are absent from this quotient.

For a rectangular operator $T:\cC\to\cE$, let
\[
 H_T=\begin{pmatrix}0&T^*\\T&0\end{pmatrix},\qquad
 J=\diag(I,-I),\qquad Q_T=(H_T)_+^{2a},
\]
where $2a\ge1$ and $T\in\Sch_{2a}$. The paired spectrum and $JH_TJ=-H_T$ give
\begin{equation}\label{eq:spectralpositive}
 H_T=Q_T^{1/(2a)}-JQ_T^{1/(2a)}J,\qquad
 \Tr Q_T=\norm T_{2a}^{2a},\qquad
 \det_F(I+zQ_T)=\det_F(I+z(T^*T)^a).
\end{equation}
Complete operator-valued coordinates of $Q_T$ recover $T$, whereas its scalar determinant coordinates recover only the nonzero singular values. The spectrum of a separate anchored Gram observation is generally a different spectrum.

Even a joint scalar source can erase order. For the Pauli triple let $G_j=2I+\sigma_j>0$ and $\widetilde G_j=G_j^T$. Then
\begin{align}
 \det\left(I-\sum_jz_jG_j\right)
 &=\det\left(I-\sum_jz_j\widetilde G_j\right)
 =\left(1-2\sum_jz_j\right)^2-\sum_jz_j^2,\label{eq:transposecharacter}\\
 \Tr(G_1G_2G_3)&=16+2i,\qquad
 \Tr(\widetilde G_1\widetilde G_2\widetilde G_3)=16-2i.\label{eq:transposeorientation}
\end{align}
The Pauli anticommutation relations and $\sigma_x\sigma_y\sigma_z=iI$ prove both identities. The circular Gaussian quadratic-form vectors $(g^*G_jg)_j$ and their transposed counterparts also have the same joint law, by $g\mapsto\bar g$. Matrix-valued cycle sources in Proposition~\ref{prop:matrix_word_source} retain the ordered traces that scalar sources remove.

\section{Continuation observability for Gaussian networks}\label{sec:physicalboundary}
A network module is observed through every permitted boundary continuation. Operational saturation therefore selects its response class, not a unique graph or fixed-source frame. Positive edge spectra generate separating continuation tests; finite elimination descends to the resulting orbit tables; and spectral-order tails complete those tables in their own norm. This is a concrete realization of operational observability with a coefficientwise product, distinct from the same-source Gaussian contraction product.

\subsection{Independent half-edges and directed cuts}
Let $G$ be a finite loopless graph, allowing parallel edges, with input anchor $c$, output anchor $e$, and $m\ge1$ Gaussian vertices. Each edge $b=\{u,v\}$ occupies independent half-edge Hilbert factors and carries a nonzero tensor
\[
 t_b=\sum_js_{b,j}x_{b,j}\otimes y_{b,j},\qquad
 B_b\in\Sch_2,\qquad h_b=\norm{B_b}_2,
\]
with orthonormal Schmidt families. At each vertex take the tensor product of its half-edge spaces; an isolated vertex has scalar space. Regroup $\bigotimes_bt_b$ by vertices to obtain $K_G$. Evaluate each Gaussian vertex by an independent standard circular Gaussian family in its full vertex space, using the Hilbert-kernel completion for infinite dimensions. Internal half-edges remain separate until this Gaussian evaluation.

\begin{proposition}[Edge spectra and all directed cuts]\label{prop:graphcuts}
Set $H_G=\prod_bh_b$. For $L_S=\{c\}\cup S$, $S\subset[m]$, the nonzero singular values of the associated cut are
\begin{equation}\label{eq:graphcuts}
 \left\{\left(\prod_{b\notin\partial L_S}h_b\right)
                  \prod_{b\in\partial L_S}s_{b,j_b}\right\}_{(j_b)}.
\end{equation}
For $2\le s\le\infty$, define $\kappa_b(s)=\log(h_b/\norm{B_b}_s)\ge0$. Then
\begin{equation}\label{eq:graphmincut}
 \Prof_{m,s}(K_G)=H_Ge^{-\lambda_s},\qquad
 \lambda_s=\min_{c\in L,\ e\notin L}\sum_{b\in\partial L}\kappa_b(s).
\end{equation}
\end{proposition}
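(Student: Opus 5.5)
The cut flattening of $K_G$ factors edge by edge, and everything follows from that. Fix $S$ and the vertex set $L_S=\{c\}\cup S$. The flattening $\Flat_S(K_G)$ places on its input side the physical input $\cC$ together with the vertex spaces of $S$, and the remaining vertex spaces together with $\cE$ on its output side. Each vertex space is by construction the tensor product of its half-edge factors, so the two sides are tensor products of half-edge spaces. Under the corresponding unitary regrouping, $\Flat_S(K_G)$ becomes the tensor product over edges $b$ of the following operators.

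\begin{itemize}
\item If $b\notin\partial L_S$, both half-edges of $b$ lie on the same side. The factor is then the vector $t_b$, or its conjugate, viewed as a map $\C\to(\text{half-edge pair})$ or from that pair to $\C$. Its single nonzero singular value is $\norm{t_b}=h_b$.
\item If $b\in\partial L_S$, one half-edge lies on each side. The factor is the operator $B_b$ (up to conjugation and transposition) with Schmidt singular values $s_{b,j}$.
\end{itemize}

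Edges at the anchors $c,e$ are handled the same way, since the anchor spaces are attached to the input or output side. Singular values of a tensor product of operators are the products of the factors' singular values. This gives \eqref{eq:graphcuts} directly. The only care needed is bookkeeping of complex conjugations on input legs, which do not change singular values, and the infinite-dimensional case. For the latter we would approximate by finite Schmidt truncations of each $t_b$, which converge in Hilbert–Schmidt norm and therefore converge in every cut, and note that singular values are continuous under this convergence.

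For \eqref{eq:graphmincut} with $2\le s\le\infty$, the Schatten norm is multiplicative over tensor products:
\[
\norm{\Flat_S(K_G)}_s=\prod_{b\notin\partial L_S}h_b\prod_{b\in\partial L_S}\norm{B_b}_s
=H_G\exp\Big(-\sum_{b\in\partial L_S}\kappa_b(s)\Big),
\]
using $\norm{B_b}_s=h_be^{-\kappa_b(s)}$. Here $\kappa_b(s)\ge0$ because $\norm{\cdot}_s\le\norm{\cdot}_2$ for $s\ge2$. We then maximize over $S\subset[m]$. Since $L$ ranges exactly over vertex sets containing $c$ and not $e$, the maximum of the cut norm corresponds to the minimal boundary weight $\lambda_s$. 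The case $s=\infty$ is included, with the operator norm multiplicative on tensor products.

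The main obstacle is the first step: checking rigorously that the vertex regrouping and the edge regrouping give unitarily equivalent flattenings. This requires that the left/right placement of each half-edge is exactly determined by its vertex's side, which holds because edges are loopless and half-edge factors are distinct. The argument should be written once on finite Schmidt truncations, where everything is a finite tensor identity, and then passed to the limit.
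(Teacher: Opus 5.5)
Your proposal is correct and follows the paper's own proof. Both arguments regroup each cut unitarily into a tensor product over edges, so that same-side edges contribute the single singular value $h_b$ and crossing edges contribute the Schmidt values $s_{b,j}$. Both then use multiplicativity of singular values and of $\norm{\cdot}_s$, maximize over $L=\{c\}\cup S$, and pass to infinite rank by Schmidt truncation, which converges in the full Hilbert norm and hence in every cut with $s\ge2$.
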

\begin{proof}
A same-side edge is a vector or covector with one singular value $h_b$; a crossing edge is an operator with singular values $s_{b,j}$. Independence of the half-edge factors makes the full cut, after separate unitary rearrangements on its two sides, the tensor product of these local operators. Singular values multiply, proving \eqref{eq:graphcuts}. Their $\ell^s$ norm and maximization over all cuts give \eqref{eq:graphmincut}. Truncate every Schmidt expansion to pass from finite rank to infinite rank: the full tensor product converges in its Hilbert norm, and every $s\ge2$ cut norm is bounded by that norm.
\end{proof}
This places the network in the original cut geometry. At $s=\infty$, the formula concerns the deterministic coefficient norm; random operator-norm bounds still have the Gaussian dimensional behavior of Proposition~\ref{prop:sharpfinite}.

\subsection{Positive weights and boundary elimination}
Fix a spectral moment order $q\ge1$. Put $Q_b=B_b^*B_b$ and $\Omega_q=S_q$. The edge weight is the strictly positive class function
\begin{equation}\label{eq:physical_edge_weight}
 f_{Q,q}(\pi)=\Tr(Q^{\otimes q}R_q(\pi))
 =\prod_{a\in\operatorname{Cycles}(\pi)}\Tr(Q^{|a|}),
\end{equation}
where $R_q$ permutes tensor factors.

\begin{theorem}[Moments and boundary composition]\label{thm:physicalresponse}
A module with boundary vertices $B$ and Gaussian internal vertices $I$ has response
\begin{equation}\label{eq:physical_response}
 H_{G,q}(\pi_B)=\sum_{\pi_I\in S_q^I}
                \prod_{b=\{u,v\}}f_{Q_b,q}(\pi_u^{-1}\pi_v).
\end{equation}
For the two-anchor operator,
\[
 Z_q(K_G)=\E\Tr[(T_{K_G}^*T_{K_G})^q]=H_{G,q}(\tau_q,\Id),
 \qquad\tau_q=(1\,2\,\cdots\,q).
\]
Normalize $\widehat H_{G,q}=H_{G,q}/(q!)^{|I|}$. When modules have disjoint old internal vertices and a new shared vertex $v$ is eliminated once, their response is
\begin{equation}\label{eq:physical_eliminate}
 \widehat H_{{\rm new},q}(\pi_{B'})
 =\frac1{q!}\sum_{\sigma\in S_q}
       \prod_j\widehat H_{j,q}(\pi_{B_j\setminus\{v\}},\sigma).
\end{equation}
The final normalized scalar observation is $Z_q(K_G)/(q!)^m$.
\end{theorem}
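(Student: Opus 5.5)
The plan is to compute the response by Wick's theorem for circular Gaussians, vertex by vertex, and then to obtain the elimination formula by regrouping the internal sums. I would first fix the meaning of $H_{G,q}$ through its definition on boundary permutations. Expand $\Tr[(T^*T)^q]$, or more generally the boundary-contracted product, as a sum over matrix indices. Each Gaussian vertex family appears $q$ times conjugated and $q$ times unconjugated. By Wick's theorem for independent standard circular Gaussians, $\E\prod_{i=1}^q\overline{g_{a_i}}\prod_{i=1}^q g_{b_i}=\sum_{\pi\in S_q}\prod_i\delta_{a_i,b_{\pi(i)}}$. Independence across vertices lets these expectations factor over $I$, so each internal vertex contributes one sum over $\pi_u\in S_q$.

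Next I would use the independence of the half-edge factors to factor the remaining index contraction over edges. On the vertex space $\bigotimes_{b\ni u}$ the Wick identification at $u$ acts as the same permutation $\pi_u$ on every half-edge factor at $u$. An edge $b=\{u,v\}$ then contributes the contraction of $t_b^{\otimes q}$ with $\overline{t_b}^{\otimes q}$, with its two ends permuted by $\pi_u$ and $\pi_v$. Writing $t_b$ as the operator $B_b$ turns this into $\Tr(Q_b^{\otimes q}R_q(\pi_u^{-1}\pi_v))$. The cycle decomposition of $\pi_u^{-1}\pi_v$ then gives $\prod_a\Tr Q_b^{|a|}$, which is \eqref{eq:physical_edge_weight}. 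Each factor is strictly positive because $Q_b\ne0$.

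For the anchors, the boundary permutations are fixed by the trace structure: $\pi_e=\Id$, while $\pi_c=\tau_q$ records the cyclic pattern of $(T^*T)^q$, in which the $i$th adjoint factor is linked to the $(i+1)$th factor. Relabeling conventions may swap $\tau_q$ and $\tau_q^{-1}$, and I would pin this down so that the result matches $H_{G,q}(\tau_q,\Id)$; class-function invariance helps here. To reach infinite rank, I would truncate the Schmidt expansions and pass to the limit using the finite moments of Section~\ref{sec:spectralgrowth} together with the Hilbert-norm convergence in Proposition~\ref{prop:graphcuts}.

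The composition formula \eqref{eq:physical_eliminate} comes from splitting the sum over $\pi_I$. The old internal vertices of the different modules are disjoint, so once $\sigma=\pi_v$ is fixed, the product over edges factors into the module responses $H_{j,q}(\pi_{B_j\setminus\{v\}},\sigma)$. Summing over $\sigma$ then accounts for eliminating $v$. The normalization $(q!)^{|I_{\rm new}|}=q!\prod_j(q!)^{|I_j|}$ yields the $1/q!$ factor, and taking $|I|=m$ gives the final scalar observation.

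I expect the main obstacle to be the bookkeeping in the Wick step. I need to show that a single permutation acts on all half-edges at a vertex simultaneously, and that conjugated and unconjugated legs produce exactly $\pi_u^{-1}\pi_v$ rather than its inverse. Since $f$ is a class function invariant under inversion, a small orientation mismatch would be harmless except at the anchors.
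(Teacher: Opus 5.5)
Your proposal is correct and is essentially the paper's proof. It uses the same steps: one Wick permutation per vertex acting on all of that vertex's half-edge factors, edge contractions giving $f_{Q_b,q}(\pi_u^{-1}\pi_v)$, a full cycle at one anchor and the identity at the other, and regrouping with one extra factor $q!$ for the eliminated vertex. The $\tau_q$ versus $\tau_q^{-1}$ ambiguity you flag is harmless, because inverting all labels simultaneously preserves the response.

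The only difference is the infinite-rank step. The paper proves the sharper bound $\norm{T_K}_{L^{2q}(\Sch_2)}\le(q!)^{m/(2q)}\norm K_2$. Your appeal to the fixed-degree forward bounds at Schatten exponent two also gives continuity of $Z_q$ under Schmidt truncation for each fixed $q$. To pass the identity to the limit you should also state that the edge traces $\Tr Q_b^a$ converge, which follows from trace-norm convergence of the truncated $Q_b$.
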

\begin{proof}
First take finite edge ranks. Expanding the spectral trace gives $q$ copies of the tensor and $q$ conjugate copies. At a Gaussian vertex, a Wick pairing is a bijection between the unconjugated and conjugated coordinate copies, indexed by a permutation $\pi_v\in S_q$. Along an edge, the relative identification is $\pi_u^{-1}\pi_v$. A cycle of length $a$ identifies its Schmidt indices into
$\sum_js_{b,j}^{2a}=\Tr Q_b^a$.
All edge contributions multiply. The external Schatten trace supplies a full cycle at one anchor and the identity at the other. This proves \eqref{eq:physical_response}, the standard permutation-pairing mechanism for Gaussian tensor networks \cite{Cheng}, with the boundary labels retained.

Composition first multiplies the module responses and then sums the new internal label. Each internal Gaussian vertex contributes one factor $q!$ to the normalization; the new shared vertex contributes exactly one additional such factor. Regrouping the finite sums proves \eqref{eq:physical_eliminate}.

For infinite ranks, a Hilbert-valued circular Gaussian sum obeys
\[
 \E\left\|\sum_jg_jv_j\right\|^{2q}
 \le q!\left(\sum_j\norm{v_j}^2\right)^q.
\]
Diagonalizing its covariance expresses the left side as $q!$ times the complete homogeneous symmetric polynomial of its eigenvalues, bounded by the $q$th power of their sum. Applying the estimate to successive independent Gaussian legs, with Minkowski for the remaining probability norms, gives
\begin{equation}\label{eq:physical_hilbert_moment}
 \norm{T_K}_{L^{2q}(\Sch_2)}\le(q!)^{m/(2q)}\norm K_2.
\end{equation}
Schmidt truncations converge in the full Hilbert norm $\norm{K_G}_2=H_G$, hence in the required random $L^{2q}$ norm. Their spectral moments converge by H\"older. The edge power traces converge by trace-norm convergence of $Q_b$, and the permutation sum is finite for each $q$. All formulas pass to the limit.
\end{proof}

The index $q$ is a spectral moment order, independent of the source degree $m$. Parallel module multiplication takes place at the same $q$. On generating functions it is coefficientwise, or Hadamard, multiplication. For instance two scalar independent circular Gaussians each have normalized moment sequence one, as does their product after the appropriate source-degree normalization; Cauchy multiplication of $(1-z)^{-1}$ would instead produce coefficients $q+1$.

\subsection{All continuations determine the minimal state}
Let $\kappa(x,y)$ be the cycle type of $x^{-1}y$, and let
\[
 \Gamma_q=\{\gamma\in\operatorname{Sym}(\Omega_q):
            \kappa(\gamma x,\gamma y)=\kappa(x,y)\ \forall x,y\}.
\]
For $k$ ordered boundary points, define the real space
$V_{q,k}=\R^{\Omega_q^k/\Gamma_q}$
with the maximum norm. Let $\mathcal B_{q,k}$ be the real linear span of all finite physical-module responses, allowing arbitrary nonzero finite-rank positive edge spectra.

\begin{lemma}[Positive spectra span the color tests]\label{lem:positivecolors}
For fixed $q$, finitely many rank-$q$ positive operators with strictly positive rational eigenvalues have weights $f_{Q,q}$ spanning all real class functions on $S_q$.
\end{lemma}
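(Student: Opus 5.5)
Write $Q$ with eigenvalues $x_1,\dots,x_q>0$. By \eqref{eq:physical_edge_weight}, $f_{Q,q}(\pi)=\prod_{a}p_{|a|}(x)$ is the power-sum symmetric polynomial $p_{\lambda(\pi)}(x)$, where $\lambda(\pi)$ is the cycle type of $\pi$. Real class functions on $S_q$ are exactly real functions of the cycle type, i.e. vectors in $\R^{\mathcal P(q)}$ indexed by partitions of $q$. So the claim is that the finitely many evaluation vectors $(p_\lambda(x))_{\lambda\vdash q}$, for $x$ ranging over positive rational $q$-tuples, span $\R^{\mathcal P(q)}$.

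The plan is a duality argument. The span of $\{(p_\lambda(x))_\lambda : x\in(\mathbb Q_{>0})^q\}$ is a subspace $W\subset\R^{\mathcal P(q)}$. If $W$ were proper, there would be a nonzero $c\in\R^{\mathcal P(q)}$ with $\sum_\lambda c_\lambda p_\lambda(x)=0$ for every positive rational $x$.

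The polynomial $P=\sum_\lambda c_\lambda p_\lambda$ in $q$ variables then vanishes on the set $(\mathbb Q_{>0})^q$. This set is Zariski dense, since a polynomial vanishing on a product of infinite sets is zero, by induction on the number of variables. Hence $P=0$ identically in $\R[x_1,\dots,x_q]$.

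The $p_\lambda$ with $\lambda\vdash q$ and at most $q$ variables are linearly independent in the ring of symmetric polynomials of degree $q$. The reason is standard: the restriction map from symmetric functions of degree $q$ to $q$ variables is injective, because the $m_\lambda$ with $\ell(\lambda)\le q$ survive and every $\lambda\vdash q$ has length at most $q$. Moreover $\{p_\lambda\}$ is a $\mathbb Q$-basis of degree-$q$ symmetric functions. Therefore $c=0$, a contradiction, so $W=\R^{\mathcal P(q)}$.

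Since $W$ is spanned by the evaluation vectors and has finite dimension $|\mathcal P(q)|$, we can extract finitely many tuples $x^{(1)},\dots,x^{(|\mathcal P(q)|)}$ whose vectors form a basis. The corresponding operators are $Q_i=\diag(x^{(i)})$ on $\C^q$, which are rank $q$ with strictly positive rational eigenvalues.

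The only delicate point is the citation that power sums restricted to $q$ variables remain independent in degree $q$. This uses only that each partition of $q$ has at most $q$ parts, together with the triangularity of $p_\lambda$ against the monomial basis $m_\mu$. If preferred, one can instead argue directly: the coefficient matrix expressing $p_\lambda$ in the $m_\mu$ basis is triangular with nonzero diagonal for the dominance order.
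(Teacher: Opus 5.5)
Your proposal is correct and follows the paper's proof in substance. Both identify $f_{Q,q}$ with the power-sum monomials $p_\mu$ of the eigenvalues and use their linear independence in $q$ variables: the paper gets this from algebraic independence of $p_1,\ldots,p_q$ via Newton's identities, you from the basis/triangularity argument. The one real difference is in obtaining rational points: the paper takes real positive points with an invertible evaluation matrix and perturbs them to rational ones by continuity of the determinant, while you use directly that a polynomial vanishing on $(\mathbb Q_{>0})^q$ is identically zero, which is slightly more direct.
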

\begin{proof}
For a cycle-type partition $\mu\vdash q$, the weight at spectrum $x=(x_1,\ldots,x_q)\in(0,\infty)^q$ is
$p_\mu(x)=\prod_{a\in\mu}\sum_jx_j^a$.
Newton's identities make $p_1,\ldots,p_q$ polynomial generators equivalent by an invertible triangular change to the elementary symmetric generators. They are algebraically independent, so the monomials $p_\mu$ for different partitions are linearly independent.

Their evaluation vectors on the positive open set therefore span the full partition-coordinate space. Otherwise a nonzero linear combination would vanish on an open set and hence identically. Choose as many evaluation points as there are partitions with invertible evaluation matrix. Its determinant remains nonzero in a neighborhood, allowing positive rational points by density. Inverting this finite matrix expresses every cycle-type indicator as a linear combination of the resulting positive edge weights.
\end{proof}
The linear combination may have signed coefficients. It is a postprocessing of positive physical readings, whereas composition of any single positive module continues to use positive weights.

\begin{theorem}[Continuation completeness and linear minimality]\label{thm:physicalcomplete}
For every $q,k\ge1$, $\mathcal B_{q,k}=V_{q,k}$. A linear summary of module responses supporting every finite physical continuation must be injective on this space. Its minimal dimension is
$\#(S_q^k/\Gamma_q)$.
\end{theorem}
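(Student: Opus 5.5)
The plan has three steps: show every response is $\Gamma_q$-invariant, so $\mathcal B_{q,k}\subset V_{q,k}$; construct enough modules to reach every $\Gamma_q$-orbit indicator, giving equality; and read off minimality from the fact that continuations act as separating linear functionals.

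\emph{Step 1: invariance.} By Theorem~\ref{thm:physicalresponse}, a module response $H_{G,q}(\pi_B)$ is a sum over internal labels of products of $f_{Q_b,q}(\pi_u^{-1}\pi_v)$. Each factor depends only on the cycle type $\kappa(\pi_u,\pi_v)$. Apply $\gamma\in\Gamma_q$ to all boundary labels and reindex the internal sum by $\pi_I\mapsto\gamma\pi_I$, which is a bijection of $S_q^I$. Since $\gamma$ preserves $\kappa$ on every pair, each summand is unchanged. Hence every response is constant on $\Gamma_q$-orbits of $\Omega_q^k$ and lies in $V_{q,k}$.

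\emph{Step 2: spanning.} I would build indicator functions of orbits from complete graphs.

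First take a module with the $k$ boundary vertices, no internal vertices, and one edge between each pair $\{i,j\}$. Its response is $\prod_{i<j}f_{Q_{ij},q}(x_i^{-1}x_j)$. By Lemma~\ref{lem:positivecolors}, linear combinations of these edge weights, expanded multilinearly over the edges, give every product $\prod_{i<j}\mathbf 1[\kappa(x_i,x_j)=\mu_{ij}]$. These are the indicators of the pairwise cycle-type profiles.

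The key step is to show that pairwise profiles separate the $\Gamma_q$-orbits of $k$-tuples. If two tuples $(x_i)$ and $(y_i)$ have the same profile, the partial map $x_i\mapsto y_i$ preserves $\kappa$ on its domain. One must then extend it to some $\gamma\in\Gamma_q$. This extension is not automatic, because $\Gamma_q$ is defined by preserving $\kappa$ on all of $\Omega_q$.

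I would handle this by adding internal Gaussian vertices. Introduce internal vertices $z_1,\dots,z_r$ joined to every boundary vertex and to each other, with edges carrying cycle-type indicators. The response then counts extensions $(z_1,\dots,z_r)$ of the boundary tuple with a prescribed joint pairwise profile. Take $r=q!$. The counts over all profiles on $k+q!$ points determine the isomorphism type of the labelled tuple inside the edge-coloured complete graph $(\Omega_q,\kappa)$: they say how the tuple sits in the whole coloured structure. Equal counts for two tuples yield a colour-preserving bijection of $\Omega_q$ extending $x_i\mapsto y_i$. To get it, take an extension that enumerates all of $\Omega_q$ for the first tuple and match the corresponding count for the second. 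That bijection lies in $\Gamma_q$.

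So the counting responses separate orbits. The real span of a finite family of orbit-separating functions closed under pointwise products contains the constants. By finite-dimensional Stone--Weierstrass, that span is all of $V_{q,k}$. Closure under products holds because taking a disjoint union of modules glued at the boundary multiplies their responses.

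\emph{Step 3: minimality.} Every physical continuation acts on boundary data as a linear functional of the form given by the elimination formula \eqref{eq:physical_eliminate}. Continuations include pairing with an arbitrary module response, so by Step 2 these functionals span $V_{q,k}'$. A linear summary that supports every continuation must therefore factor each of these functionals through it. It must be injective on $V_{q,k}$ and so have dimension at least $\dim V_{q,k}=\#(S_q^k/\Gamma_q)$. The orbit table itself attains this dimension.

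The main obstacle is Step 2. Separating orbits requires internal vertices to witness the full $\Gamma_q$-orbit, which is more than pairwise profiles give. The extension-count argument, and checking that $q!$ internal vertices are enough, is the delicate part.
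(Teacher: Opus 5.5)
Your proposal is correct and follows the paper's proof in all three steps. Invariance comes from relabelling the internal sum; spanning comes from a calibration copy of $\Omega_q$ on $q!$ internal vertices with colour-indicator edges from Lemma~\ref{lem:positivecolors}; and minimality holds because continuation pairings with module responses span the dual of $V_{q,k}$.

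The differences are minor. Your count for the profile of $(a,\text{enumeration of }\Omega_q)$ already equals $|\operatorname{Stab}_{\Gamma_q}(a)|\,\mathbf 1_{\Gamma_q a}$, since the equality colour occurs only on the diagonal. So the Stone--Weierstrass detour is unnecessary. Its assertion that a product-closed separating span automatically contains the constants is also false in general, although here a constant-weight edge supplies them. In Step~3 you should state explicitly that the pairing is nondegenerate; the paper checks this by noting that the pairing is positive definite once the first label is fixed to the identity.
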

\begin{proof}
Each edge weight depends only on the relative cycle color of its endpoints. A simultaneous color-preserving bijection of all vertex labels permutes the internal summation variables and leaves every edge weight unchanged. Hence every physical response lies in $V_{q,k}$.

To prove that these responses span all of $V_{q,k}$, it suffices to recover an orbit indicator. Fix a boundary configuration $a=(a_1,\ldots,a_k)$. Introduce an internal calibration vertex $y_s$ for every $s\in\Omega_q$, and impose all the pairwise colors of this labelled copy of $\Omega_q$. Connect boundary vertex $i$ to $y_{a_i}$ with the equality color. The resulting formal indicator response is
\begin{equation}\label{eq:physical_calibration}
 P_a(x)=\sum_{y\in\Omega_q^{\Omega_q}}
 \prod_{s<t}\mathbf1_{\{\kappa(y_s,y_t)=\kappa(s,t)\}}
 \prod_{i=1}^k\mathbf1_{\{x_i=y_{a_i}\}}.
\end{equation}
In a nonzero summand, distinct calibration vertices receive distinct labels: the color of $(s,t)$ is not equality when $s\ne t$. Their assignment is consequently a bijection, and the pairwise constraints make it an element of $\Gamma_q$. Therefore
\[
 P_a(x)=|\operatorname{Stab}_{\Gamma_q}(a)|\,
                  \mathbf1_{\{x\in\Gamma_qa\}}.
\]
Lemma~\ref{lem:positivecolors} expresses each indicator edge as a finite signed linear combination of positive physical edge weights. Expanding this finite product expresses the orbit indicator as a linear combination of actual module responses. Thus $\mathcal B_{q,k}=V_{q,k}$. This is the graph-algebra separation mechanism~\cite{Lovasz} realized by the specified spectral tests.

For minimality use the continuation pairing
\begin{equation}\label{eq:physical_pairing}
 \langle H,F\rangle_q
 =\sum_{x_2,\ldots,x_k}H(\Id,x_2,\ldots,x_k)F(\Id,x_2,\ldots,x_k).
\end{equation}
Common left multiplication belongs to $\Gamma_q$, so fixing the first label to $\Id$ meets every orbit. This pairing is therefore positive definite on the invariant response space. It is implementable by the permitted continuations: retain the first label as the identity anchor, Gaussianize each other shared boundary vertex once, and attach a full-cycle anchor through a rank-one edge of constant weight one. Normalization introduces only a known nonzero scalar.

If every physical continuation annihilates $H-\widetilde H$, the spanning result permits a linear combination of continuations equal to $H-\widetilde H$ itself. Its pairing square is zero, forcing equality of the responses. Any linear summary with a nonzero kernel would therefore lose a permitted continuation reading. The complete orbit table is injective on $V_{q,k}$ and supports the stated composition operations, attaining the minimal dimension.
\end{proof}
The theorem fixes the class of all finite continuations. Its calibration may use $q!$ internal vertices and an ill-conditioned signed inversion. Restrictions on ranks, graph classes, or experimental budgets lead to a different visible quotient.

\subsection{Finite inversion and operational errors}
Choose the orbit-indicator basis $\phi_1,\ldots,\phi_n$ of $V_{q,k}$. Completeness of the physical continuation tests supplies $n$ actual continuations $F_i$ with invertible matrix
\[
 A_q(i,a)=\langle\phi_a,F_i\rangle_q.
\]
Indeed, otherwise their rows would fail to span the dual and would annihilate a nonzero response. For $H=\sum_ah_a\phi_a$, its readings are $y=A_qh$. Therefore
\begin{equation}\label{eq:physical_finite_inverse}
 \widehat h=A_q^{-1}\widetilde y,\qquad
 \norm{\widehat H-H}_\infty\le\kappa_q\norm{\widetilde y-y}_{\ell^2},\qquad
 \kappa_q=\norm{A_q^{-1}}_{\ell^2\to\ell^\infty}.
\end{equation}
The finite output is in the real response space; noisy inversion need not preserve positivity.

Normalized elimination is contractive in the maximum norm, and pointwise multiplication has multilinear norm one. Both preserve the relevant color invariance. Thus the same error cascade of Section~\ref{sec:transport} applies to boundary composition: \eqref{eq:physical_finite_inverse} supplies input errors, local numerical errors supply residuals, and backward dual tests are the pulled-back continuations. Joint response tables are retained after elimination.

For perturbations of physical positive edges satisfying
$|\log(\widetilde f_b/f_b)|\le\epsilon_b$,
every configuration weight has relative factor between $e^{-\sum_b\epsilon_b}$ and $e^{\sum_b\epsilon_b}$. Positivity of the sum preserves this forward relative estimate. The signed finite inverse has the separate conditioning cost $\kappa_q$.

\subsection{A completion with spectral-order tails}
For a fixed boundary size $k$ and radius $R>0$ let
\begin{equation}\label{eq:physical_czero}
 \mathscr V^0_{R,k}=\{H=(H_q)_{q\ge1}:H_q\in V_{q,k},\
                       R^q\norm{H_q}_\infty\to0\},\qquad
 \norm H_R=\sup_{q\ge1}R^q\norm{H_q}_\infty.
\end{equation}
This weighted $c_0$ sum is the completion of finite-order response tables. The larger weighted $\ell^\infty$ space need not have dense finite-order truncations: for $H_q=R^{-q}\phi_q$ and $\norm{\phi_q}_\infty=1$, every truncation tail has norm one.

\begin{theorem}[Finite reconstruction and completed responses]\label{thm:physicalcompletion}
The finite inverses \eqref{eq:physical_finite_inverse} determine a unique completed object in \eqref{eq:physical_czero}. Suppose $R_+>R$ and
$\sup_qR_+^q\norm{H_q}_\infty\le B_+$.
Retaining only orders $q\le Q$ with reading errors at most $\delta_q$, and setting higher orders to zero, gives
\begin{equation}\label{eq:physical_full_error}
 \norm{\widehat H^{\le Q}-H}_R
 \le\max_{q\le Q}R^q\kappa_q\delta_q+(R/R_+)^{Q+1}B_+.
\end{equation}
Pointwise multiplication and normalized elimination extend continuously, with
\begin{equation}\label{eq:physical_radius_ops}
 \norm{HF}_{R_1R_2}\le\norm H_{R_1}\norm F_{R_2},\qquad
 \norm{\mathsf E_vH}_R\le\norm H_R.
\end{equation}
Boundary labels are pulled back to the common boundary set before multiplication. All finite-core composition identities persist under completion.
\end{theorem}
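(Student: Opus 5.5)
The plan is to read Theorem~\ref{thm:physicalcompletion} as a weighted-$c_0$ instance of the abstract completion machinery, Theorem~\ref{thm:completion} and Theorem~\ref{thm:operationcompletion}. The orders $q$ serve as the filtration. The finite inverse $A_q^{-1}$ plays the role of the decoder $D_n$ at each order.

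\textbf{Step 1 (the completion).} For each $q$, the finite readings determine $H_q$ uniquely by invertibility of $A_q$, as in \eqref{eq:physical_finite_inverse}. I would take $P_Q$ to be truncation to orders $q\le Q$. These projections are nested and contractive in $\norm\cdot_R$. They converge strongly exactly on $\mathscr V^0_{R,k}$, because $R^q\norm{H_q}_\infty\to0$ is precisely the statement that $\norm{H-P_QH}_R=\sup_{q>Q}R^q\norm{H_q}_\infty$ tends to zero.

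Completeness of the weighted $c_0$ sum is standard, and finite-order tables are dense in it. So Theorem~\ref{thm:completion}, applied with a single norm, identifies the admissible compatible arrays with $\mathscr V^0_{R,k}$. Compatibility is automatic here, since the orders are decoupled coordinates. This gives existence and uniqueness of the completed object.

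\textbf{Step 2 (the error bound \eqref{eq:physical_full_error}).} I would split the error into a head and a tail.
\begin{itemize}
\item \emph{Head.} For $q\le Q$, the bound \eqref{eq:physical_finite_inverse} gives $R^q\norm{\widehat H_q-H_q}_\infty\le R^q\kappa_q\delta_q$.
\item \emph{Tail.} For $q>Q$ the error is $H_q$ itself, and $R^q\norm{H_q}_\infty\le (R/R_+)^qB_+\le(R/R_+)^{Q+1}B_+$.
\end{itemize}
Since $\norm\cdot_R$ is a supremum over $q$, the head and tail bounds combine into a maximum of the two, which is bounded by their sum. This proves \eqref{eq:physical_full_error}. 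The same stronger-radius hypothesis also places $H$ in $\mathscr V^0_{R,k}$.

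\textbf{Step 3 (the operations \eqref{eq:physical_radius_ops}).} Both operations act order by order.
\begin{itemize}
\item \emph{Multiplication.} Pointwise multiplication at a fixed $q$ satisfies $\norm{H_qF_q}_\infty\le\norm{H_q}_\infty\norm{F_q}_\infty$. Multiplying by $(R_1R_2)^q=R_1^qR_2^q$ gives the product bound. Membership in $c_0$ is preserved, because one factor's weighted sequence tends to zero while the other's stays bounded.
\item \emph{Elimination.} Normalized elimination \eqref{eq:physical_eliminate} averages over $S_q$ with weight $1/q!$, so it is a maximum-norm contraction at each order. Summing against the label is still an average of sup-bounded terms, so the same holds after the boundary pullback.
\end{itemize}
Both operations preserve $\Gamma_q$-invariance, as already noted in the finite-inversion subsection, so they map $V_{q,k}$ into itself.

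Continuity then extends both operations uniquely from the finite core. The finite-core composition identities (associativity, commutativity of the coefficientwise product, and elimination identities) hold orderwise. They pass to limits because each identity is a continuous multilinear relation, exactly as in the proof of Theorem~\ref{thm:operationcompletion}.

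\textbf{Main obstacle.} The step needing most care is the boundary pullback before multiplication. When modules share only part of their boundary, the product uses labels on the union boundary set. I must check two things about the pullback $V_{q,k}\to V_{q,k'}$:
\begin{itemize}
\item it is an isometry in the maximum norm, since it is composition with a coordinate projection and therefore takes the same values;
\item it respects orbit invariance, since $\Gamma_q$ acts simultaneously on all labels.
\end{itemize}
Once both are verified, the radius bounds hold uniformly and the extension argument is routine.
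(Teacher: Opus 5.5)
Your proposal is correct and is essentially the paper's own proof. It uses the weighted-$c_0$ completion with truncation tails, the head--tail split for \eqref{eq:physical_full_error}, orderwise weight multiplication and $q!$-averaging for \eqref{eq:physical_radius_ops}, and continuity to carry the finite-core identities to the completion; the paper checks completeness directly rather than through Theorem~\ref{thm:completion}, which changes nothing. One harmless slip: elimination maps $V_{q,k}$ into $V_{q,k-1}$, and pulled-back multiplication maps into the union-boundary space, rather than either mapping $V_{q,k}$ into itself; $\Gamma_q$-invariance on the new boundary set is all that is needed.
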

\begin{proof}
A Cauchy sequence in the weighted supremum norm converges in each finite-dimensional $V_{q,k}$. Its uniform small-tail property passes to the limit, so the limit lies in the weighted $c_0$ space. Conversely this tail condition is exactly convergence of finite-order truncations. The finite inverses separate the individual tables and thus the completed objects. The head error is \eqref{eq:physical_finite_inverse}; the tail uses
$R^q\norm{H_q}\le(R/R_+)^qB_+$.

The multiplication bound follows by multiplying the weights at each order. Normalized elimination averages $q!$ entries and is contractive. These operations preserve the vanishing weighted tails, so their finite-core identities extend by continuity. In particular finite elimination order is governed by rearrangement of finite sums and remains consistent in the completion.
\end{proof}

The physical modules supply the stronger-radius bound directly. Since $\Tr Q^a\le(\Tr Q)^a$, every edge weight is bounded by $(\Tr Q)^q$. Normalized internal summation is an average, hence
\begin{equation}\label{eq:physical_growth}
 \norm{\widehat H_{G,q}}_\infty\le\prod_b(\Tr Q_b)^q=H_G^{2q}.
\end{equation}
Choose $0<R<R_+<H_G^{-2}$. This gives an explicit stronger-radius budget (in particular $B_+=1$ is valid). For a final scalar table entry $h_q$ and $|z|<R_+$,
\[
 \left|\sum_{q>Q}h_qz^q\right|
 \le B_+\frac{(|z|/R_+)^{Q+1}}{1-|z|/R_+}.
\]
This sufficient joint-response radius preserves all future continuations. A final scalar spectral transform can have a larger exact radius.

\subsection{State dimensions and exact tree responses}
For $q\ge3$, the complete transposition graph automorphism theorem \cite[Theorem~1.1]{Ganesan} gives
\[
 \Gamma_q=\{x\mapsto axb,\ x\mapsto ax^{-1}b:a,b\in S_q\},
 \qquad |\Gamma_q|=2(q!)^2.
\]
These maps preserve every cycle color. Conversely a color-preserving map preserves transposition adjacency, so belongs to the group given by that theorem. Fixing the first boundary label leaves conjugation and conjugation followed by inversion. With
$c(g)=|C_{S_q}(g)|$ and $b(g)=\#\{y\in S_q:y^2=g^2\}$,
Burnside's formula yields
\begin{equation}\label{eq:physical_dimension}
 \dim V_{q,k}=\frac1{2q!}\sum_{g\in S_q}
                  \{c(g)^{k-1}+b(g)^{k-1}\}.
\end{equation}
Conjugation fixes $c(g)$ labels. The conjugation-inversion fixed-point equation is equivalent to $(xg)^2=g^2$, giving $b(g)$. For $q=1$ the dimension is one; for $q=2$ it is $2^{k-1}$. For example,
\begin{center}
\begin{tabular}{@{}c rrrr@{}}\toprule
Spectral trace degree $2q$ & $k=1$ & $k=2$ & $k=3$ & $k=4$\\\midrule
$6$ & $1$ & $3$ & $11$ & $46$\\
$8$ & $1$ & $5$ & $43$ & $550$\\\bottomrule
\end{tabular}
\end{center}

For an edge, the Schur--Weyl expansion \cite{Macdonald} is
\[
 f_{Q,q}(\pi)=\sum_{\lambda\vdash q}s_\lambda(Q)\chi^\lambda(\pi),\qquad
 (q!)^{-1}\sum_{\pi\in S_q}f_{Q,q}(\pi)=h_q(Q).
\]
\begin{theorem}[Tree formula and the exact scalar radius]\label{thm:physicaltree}
Suppose $G$ is a connected tree, let $P$ be its unique path between the two anchors, and let $\ell=|P|$. Then
\begin{equation}\label{eq:physical_tree_formula}
 \frac{Z_q(K_G)}{(q!)^m}
 =\left(\prod_{b\notin P}h_q(Q_b)\right)
 \sum_{j=0}^{q-1}(-1)^j\binom{q-1}j^{1-\ell}
                    \prod_{b\in P}s_{(q-j,1^j)}(Q_b).
\end{equation}
Moreover $\tau(K_G)^2=\prod_b\norm{Q_b}_{\op}$. The final normalized spectral generating function has exact radius $\tau(K_G)^{-2}$.
\end{theorem}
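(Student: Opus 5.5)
The plan is to start from the permutation-pairing formula of Theorem~\ref{thm:physicalresponse} and use the Fourier analysis of class functions on $S_q$. By that theorem, $Z_q(K_G)/(q!)^m$ is the normalized response $\widehat H_{G,q}(\tau_q,\Id)$ with the two anchor labels fixed. It is obtained by averaging $\prod_b f_{Q_b,q}(\pi_u^{-1}\pi_v)$ over the internal labels, with one factor $(q!)^{-1}$ for each of the $m$ Gaussian vertices.

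\emph{Off-path vertices.} I would first eliminate the vertices not on $P$, using the tree structure and the elimination rule \eqref{eq:physical_eliminate}. Consider a leaf $w\notin P$ with unique neighbour $u$. Averaging over $\pi_w$ gives $(q!)^{-1}\sum_{\pi}f_{Q_b,q}(\pi_u^{-1}\pi)$. This value is independent of $\pi_u$, because it is an average over the whole group, and by the Schur--Weyl identity quoted just before the theorem it equals $h_q(Q_b)$. Removing leaves inductively strips every subtree hanging off $P$. Each off-path edge contributes exactly one factor $h_q(Q_b)$, and each off-path vertex is absorbed by its own normalization.

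\emph{The path.} What remains is the path $c=v_0,\dots,v_\ell=e$ with $\ell-1$ internal vertices. The normalized sum over their labels is a normalized convolution of class functions,
\[
 (q!)^{-(\ell-1)}\,(f_{Q_{b_1},q}*\cdots*f_{Q_{b_\ell},q})(\tau_q^{\pm1}),
\]
and the choice of sign is immaterial since $\tau_q^{-1}$ is again a full cycle. I would insert $f_{Q,q}=\sum_\lambda s_\lambda(Q)\chi^\lambda$ and use the convolution identity $\chi^\lambda*\chi^\mu=\delta_{\lambda\mu}(q!/\dim\lambda)\chi^\lambda$. The powers of $q!$ then cancel exactly, leaving
\[
 \sum_\lambda(\dim\lambda)^{1-\ell}\chi^\lambda(\tau_q)\prod_{b\in P}s_\lambda(Q_b).
\]
By Murnaghan--Nakayama, $\chi^\lambda$ vanishes on a $q$-cycle unless $\lambda=(q-j,1^j)$ is a hook, where its value is $(-1)^j$ and $\dim\lambda=\binom{q-1}{j}$. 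Substituting these values gives \eqref{eq:physical_tree_formula}. For infinite-rank $Q_b$, I would pass to the limit through Schmidt truncation exactly as in the proof of Theorem~\ref{thm:physicalresponse}, since $s_\lambda$ and $h_q$ are continuous in trace norm.

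\emph{The value of $\tau$ and the radius.} For $\tau(K_G)^2=\prod_b\norm{Q_b}_{\op}$, the lower bound comes from taking at each vertex the product of the top singular vectors of its half-edges. The contraction then factorizes into a scalar $\prod_{b\notin P}\norm{B_b}_{\op}$ times a rank-one operator of norm $\prod_{b\in P}\norm{B_b}_{\op}$. The upper bound is where I expect the main obstacle, because the vertex vectors $u_v$ in \eqref{eq:multilinear_tau} may be entangled across half-edges. I would handle this by peeling leaves again. Contracting a unit leaf vector against an edge tensor $t_b$ is a map of norm at most $\norm{B_b}_{\op}$ into the neighbouring half-edge factor, tensored with the identity on the rest. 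So each step removes one edge at cost $\norm{B_b}_{\op}$ and leaves a contraction of the same type on a smaller tree. At the final path stage the same argument applies to a composition of operators, each of norm at most $\norm{B_b}_{\op}$. Once both bounds hold, the radius statement is immediate from \eqref{eq:spectral_borel_radius} in Theorem~\ref{thm:factorial_radius}. The final normalized generating function is exactly $\mathcal B_{K_G,1}$, so its radius is $\tau(K_G)^{-2}$.
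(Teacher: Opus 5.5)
Your proposal is correct and follows the paper's proof essentially step by step. The shared steps are: leaf elimination off the anchor path, giving one factor $h_q(Q_b)$ per off-path edge; the path as a normalized convolution of central functions whose $\lambda$-channels multiply with weight $d_\lambda^{1-\ell}$; hook characters at the full cycle; Schmidt truncation for infinite rank; leaf-peeling for the upper bound on $\tau(K_G)$ and top Schmidt vectors for the lower bound; and finally Theorem~\ref{thm:factorial_radius} for the radius.

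One cosmetic point: an off-path edge incident to an anchor contributes a vector factor on that anchor's space rather than a scalar. The tested operator is still rank one with norm $\prod_b\norm{B_b}_{\op}$, so your lower bound is unaffected.
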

\begin{proof}
Eliminate leaves off the anchor path. Their relative permutation is averaged uniformly in $S_q$, giving one factor $h_q(Q_b)$ each. The remaining path gives normalized convolution of central functions. Character orthogonality multiplies their common $\lambda$-channel coefficients and contributes $d_\lambda^{1-\ell}$. At a full $q$-cycle, the only nonzero irreducible character values are those of hooks $\lambda=(q-j,1^j)$; their values are $(-1)^j$ and their dimensions are $\binom{q-1}j$. Substitution proves \eqref{eq:physical_tree_formula}. Infinite ranks follow from the Hilbert moment approximation \eqref{eq:physical_hilbert_moment}.

To compute $\tau$, contract the tensor with unit vectors at the Gaussian vertices and with unit input and output vectors at the anchors. Starting at any leaf, contraction across its adjacent edge costs at most $\norm{B_b}_{\op}$. Absorb the resulting vector into the adjacent vertex tensor and continue inductively. Partial contraction of a Hilbert tensor has norm at most the product of its norms, so this proves
$\tau(K_G)\le\prod_b\norm{B_b}_{\op}$.
For the reverse bound, choose top Schmidt vectors on every edge and their tensor products at every vertex. All these test vectors have norm one and give the product of top singular values. Each nonzero compact edge operator attains its largest singular value. Theorem~\ref{thm:factorial_radius} gives the exact generating radius.
\end{proof}

The hook restriction belongs to the final full-cycle readout. Intermediate response tables preserve the other channels needed by general continuations. Local determinants provide $e_j,h_j$, and the recursion
\[
 s_{(q)}=h_q,\qquad
 s_{(q-j,1^j)}=h_{q-j}e_j-s_{(q-j+1,1^{j-1})}
\]
computes the required hooks. Given the local spectra, all tree readings through order $Q$ require $O(|E(G)|Q^2)$ scalar arithmetic operations, excluding the acquisition of spectral data and bit complexity.

There is also an explicit final tail. Suppose every edge spectrum has finite rank and a simple largest eigenvalue $a_b$. Define
\[
 C_b=\prod_{\lambda<a_b}(1-\lambda/a_b)^{-1},\qquad
 A_G=\left(\prod_bC_b\right)
       \sum_{j=0}^{r_*-1}\prod_{b\in P}\frac{e_j(Q_b)}{a_b^j},
 \qquad r_* =\min_{b\in P}\rank Q_b.
\]
The bounds $h_q(Q_b)\le C_ba_b^q$ and $s_{(q-j,1^j)}\le h_{q-j}e_j$, together with $\ell\ge1$, give
$0\le Z_q/(q!)^m\le A_G\tau(K_G)^{2q}$.
Therefore
\begin{equation}\label{eq:physical_tree_tail}
 0\le\mathcal B_{K_G,1}(z)-\sum_{q=1}^Q\frac{Z_q(K_G)}{(q!)^m}z^q
 \le A_G\frac{(z\tau(K_G)^2)^{Q+1}}{1-z\tau(K_G)^2},
 \qquad0\le z<\tau(K_G)^{-2}.
\end{equation}
Repeated largest eigenvalues require an additional polynomial factor in the moment envelope.

The network application thus realizes each stage of structural reconstruction: edge spectra determine original norm costs; continuation experiments determine the minimal response space and a finite inverse; elimination supplies its operations; and stronger-radius bounds certify the completed response. On trees the resulting scalar target and its convergence radius are explicit.

\section*{Concluding perspective}
The recovered object is an algebra together with the norm scale in which its operations and approximations are controlled. Observable quotients specify its relations; Gaussian duality recovers its coefficient geometry; and ordered contractions transport ordinary multiplication. In the Hilbert geometry, symmetric site systems provide a microscopic realization of the same multiplication, including a quantitative finite-size correction and a compatible finite inverse.

The limiting topology remains part of this structure. Positive mass and source innovations select compact coefficient families, while anchored Schwarz defects and spectral mass defects distinguish genuine realizations inside larger observation closures. These mechanisms preserve different information, but use the same sequence of tasks: identify the target, establish its inverse estimate, transport its operations, and control its unresolved directions.

\paragraph{Acknowledgment.}
The author used ChatGPT (GPT-5.5, GPT-5.6, and GPT-6.0; OpenAI) interactively in developing proofs and refining the presentation, and takes full responsibility for the mathematical content.

\end{document}